\documentclass[10pt]{article}

\usepackage{graphicx} % Required for inserting images
\usepackage{scrextend}
\usepackage[utf8]{inputenc}
\usepackage{mathtools}
\usepackage{placeins}
\usepackage{lipsum}
\usepackage{float}
\usepackage{verbatim}
\usepackage{tikz-cd, tikz}
\usepackage{amsmath}
\usepackage{amsfonts}
\usepackage{nccmath}
\usepackage{amsthm}
\usepackage{amssymb}
\usepackage{setspace}
\usepackage{pifont}
\usepackage{array}
\usepackage{enumitem}
\usepackage[english]{babel}
\usepackage[all]{xy}
\usepackage[autostyle]{csquotes}
\usepackage{aliascnt} 
\usepackage[
  colorlinks=true,
  urlcolor=black,
  linkcolor=blue,
  citecolor=blue, 
  linktocpage=true,
]{hyperref}
\usepackage{cleveref}
\usepackage{csquotes} % highly recommended with biblatex

\usepackage[
  backend=biber,
  style=alphabetic,
  sorting=nyt,
  maxnames=9,
  maxalphanames=9,
  backref=page
]{biblatex}
\renewbibmacro{in:}{}

\usepackage{geometry}
\usetikzlibrary{plotmarks, positioning, fit}
\usetikzlibrary{calc}

\numberwithin{thmcounter}{section}     %include section number
\newaliascnt{thmauto}{thmcounter}

\newaliascnt{Defauto}{thmcounter}

\newaliascnt{lemauto}{thmcounter}

\newaliascnt{propauto}{thmcounter}

\newaliascnt{corauto}{thmcounter}

\newaliascnt{remauto}{thmcounter}

\newaliascnt{notauto}{thmcounter}

\newaliascnt{conauto}{thmcounter}

\newaliascnt{obsauto}{thmcounter}

\newaliascnt{exauto}{thmcounter}

\addto\extrasenglish{}
\addto\extrasenglish{}
\addto\extrasenglish{}

\newcommand{\suchthat}{\ifnum\currentgrouptype=16\middle\fi|}

\makeatletter
\newcommand{\oset}[2]{%
  {\mathop{#2}\limits^{\vbox to 15\ex@{\kern-\tw@\ex@
   \hbox{\scriptsize #1}\vss}}}}
\makeatother

\newtheorem{atheorem}{Theorem}

\newtheorem{theorem}[thmauto]{Theorem}
\newtheorem{lemma}[lemauto]{Lemma}
\newtheorem{proposition}[propauto]{Proposition}
\newtheorem{corollary}[corauto]{Corollary}

\theoremstyle{definition}
\newtheorem{definition}[Defauto]{Definition}
\newtheorem{notation}[notauto]{Notation}

\theoremstyle{remark}
\newtheorem{remark}[remauto]{Remark}

\newtheorem*{claim}{\bf Claim}
\newtheorem*{cong sub}{\bf Congruence Subgroup}
\newtheorem*{st mod}{\bf Steinberg module}

\newtheorem*{outline}{\bf Outline}

\newtheorem*{acknowledgments}{\bf Acknowledgments}

\DeclareMathOperator{\SL}{SL}
\DeclareMathOperator{\GL}{GL}
\DeclareMathOperator{\Pb}{P}
\DeclareMathOperator{\B}{B}

\DeclareMathOperator{\BA}{BA}
\DeclareMathOperator{\BDA}{BDA}
\DeclareMathOperator{\BD}{BD}
\DeclareMathOperator{\C}{C}
\DeclareMathOperator{\coker}{coker}
\DeclareMathOperator{\Image}{im}
\DeclareMathOperator{\St}{St}
\DeclareMathOperator{\redhom}{\widetilde{H}}
\DeclareMathOperator{\redchain}{\widetilde{C}}
\DeclareMathOperator{\MDA}{MDA}
\DeclareMathOperator{\MD}{MD}
\DeclareMathOperator{\SB}{SB}
\DeclareMathOperator{\SBA}{SBA}
\DeclareMathOperator{\SBDA}{SBDA}
\DeclareMathOperator{\SBD}{SBD}
\DeclareMathOperator{\E}{E}
\DeclareMathOperator{\Face}{F}
\DeclareMathOperator{\D}{D}
\DeclareMathOperator{\DA}{DA}
\DeclareMathOperator{\TA}{TA}

\DeclareMathOperator{\homology}{H}

\DeclareMathOperator{\sign}{sign}
\DeclareMathOperator{\Id}{Id}
\DeclareMathOperator{\stab}{Stab}
\DeclareMathOperator{\Aut}{Aut}
\DeclareMathOperator{\rel}{rel}
\DeclareMathOperator{\Link}{Link}
\DeclareMathOperator{\oc}{\mathcal{O}}

\providecommand{\N}{\mathbb N}
\providecommand{\F}{\mathbb{F}}
\providecommand{\Z}{\mathbb Z}
\providecommand{\Q}{\mathbb Q}
\providecommand{\T}{\mathbb T}

\newcounter{cases}
\newcounter{subcases}[cases]
\newenvironment{mycases}
  {%
    \setcounter{cases}{0}%
    \setcounter{subcases}{0}%
    \def\case
      {%
        \par\noindent
        \refstepcounter{cases}%
        \textbf{Case \thecases.}
      }%
    \def\subcase
      {%
        \par\noindent
        \refstepcounter{subcases}%
        \textit{Subcase (\thesubcases):}
      }%
  }
  {%
  }
\renewcommand*\thecases{\arabic{cases}}
\renewcommand*\thesubcases{\roman{subcases}}
\numberwithin{equation}{section}

\usepackage{titlesec}
\titlespacing*{\section}
{0pt}{2ex plus 1ex minus .2ex}{1ex plus .2ex}
\titlespacing*{\subsection}
{0pt}{2ex plus 1ex minus .2ex}{1ex plus .2ex}
\titlespacing*{\subsubsection}
{0pt}{2ex plus 1ex minus .2ex}{1ex plus .2ex}
\titleformat{\paragraph}[hang]{\normalfont\normalsize\bfseries}{\theparagraph}{1em}{}[]
\titlespacing*{\paragraph}{0pt}{1em}{1em}

\usepackage[textwidth=25mm, textsize=tiny]{todonotes}

\title{Top-dimensional rational cohomology of the congruence subgroup $\Gamma_{0,n}^+(p)$}
\author{Tatiana Abdelnaim}
\date{}

\begin{document}
\maketitle

\begin{abstract}
    Let $\Gamma_{0,n}^+(p)\subset \SL_n(\Z)$ be the congruence subgroup of level-$p$ whose first column is of the form $(*,0,\dots,0)^t\bmod p$. We prove that the top-dimensional cohomology group $\homology^{\binom{n}{2}}(\Gamma_{0,n}^+(p);\Q)$ vanishes for $p\in\{2,3,5,7,13\}$ if $n \geq 3$, as well as for $p \leq 6n-14$.
    Additionally, we prove a non-vanishing result, showing that this cohomology group is nonzero for $n = 2$ for every prime $p$, and for $n=3$ for all primes $p \notin \{2,3,5,7,13\}$.
\end{abstract}
\tableofcontents

\section{Introduction}

Many problems in algebraic $K$-theory and algebraic number theory could be better understood with improved knowledge of the cohomology of arithmetic groups, such as congruence subgroups of the general linear group $\GL_n(\Z)$.

Borel--Serre \cite{BS73} proved that the virtual cohomological dimension of 
$\GL_n(\Z)$ and $\SL_n(\Z)$, and of all their finite-index subgroups, is $\binom{n}{2}$.
In particular, for any finite-index subgroup $\Gamma$, the rational cohomology
$\homology^k(\Gamma;\Q)$ vanishes for all $k > \binom{n}{2}$. On the other hand, the top degree cohomology is only known in specific cases such as $\SL_n(\Z)$ (\cite{LS}) and the principal congruence subgroup $\Gamma_n(p)$ (\cite{MPP}). 

The goal of this paper is to study the cohomology in the top possible degree for the natural family 
\begin{equation}\label{congsub}
\Gamma_{0,n}^+(p)=\left\{A\in\SL_n(\Z)\middle|A\equiv
\begin{pmatrix*}
* & * & \cdots& * \\
0 & * &\cdots&*\\
\vdots &\vdots & \ddots&\vdots \\
0 & * & \cdots & *
\end{pmatrix*}
\bmod p\right\},\end{equation} for a prime $p$. For $n=2$, the cohomology of these subgroups contains classes coming from modular forms of weight $2$, while in larger $n$ it contains classes coming from automorphic forms. In the case $n = 2$, $\Gamma_{0}^+(p)$ is commonly denoted $\Gamma_{0}(p)$, and sometimes called the \emph{Hecke congruence subgroup} of level $p$. 

Our first theorem establishes a wide-ranging vanishing result.
\begin{atheorem}\label{thA}
Let $p$ be a prime and $n\geq 3$. If $p \in\{2,3,5,7,13\}$ or $ p \leq 6n-14$, the cohomology group
    $\homology^{\binom{n}{2}}\left(\Gamma_{0,n}^+(p);\Q\right)$ vanishes. 
\end{atheorem}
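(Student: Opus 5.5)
Via Borel--Serre duality, the top cohomology is
\[
\homology^{\binom{n}{2}}\left(\Gamma_{0,n}^+(p);\Q\right)\cong \homology_0\left(\Gamma_{0,n}^+(p);\St_n(\Q)\otimes\Q\right)=\left(\St_n(\Q)\otimes \Q\right)_{\Gamma_{0,n}^+(p)},
\]
where $\St_n$ is the Steinberg module of $\Q^n$. So it suffices to show that the coinvariants of the Steinberg module under $\Gamma=\Gamma_{0,n}^+(p)$ vanish. I would use the Bykovski\u{\i} presentation of $\St_n(\Q)$ (as used by Church--Putman and by Miller--Patzt--Putman for $\Gamma_n(p)$). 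Its generators are integral apartment classes $[v_1,\dots,v_n]$ with $(v_1,\dots,v_n)$ a basis of $\Z^n$. Its relations are sign/permutation relations, $[v_1,\dots]=[-v_1,\dots]$, and the Manin three-term relation $[v_1,v_2,\dots]=[v_1,v_1+v_2,\dots]+[v_1+v_2,v_2,\dots]$. The coinvariants are then spanned by $\Gamma$-orbits of such bases modulo these relations. Since $\Gamma$ contains $\Gamma_n(p)$, an orbit is determined by data mod $p$: roughly, a frame in $\F_p^n$ up to the action of the parabolic image of $\Gamma$, which stabilizes the line $\langle e_1\rangle$ in $\F_p^n$.

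Next I would classify the generators according to how the line $L=\langle e_1\rangle$ sits relative to the frame $(\bar v_1,\dots,\bar v_n)$ in $\F_p^n$. The dual viewpoint is cleaner here: $\Gamma$ is the stabilizer of a line, so what matters is the position of $L$ relative to the coordinate hyperplanes of the frame. This position is recorded by a vector $c\in\F_p^n$ up to scaling coordinates by signs and by the stabilizer action, i.e. by the support of $c$ and the ratios of its entries. First step: any symbol in which $L$ lies in a coordinate subspace spanned by at most $n-1$ of the $\bar v_i$ (some $c_i=0$) can be killed. Such a symbol is fixed in the coinvariants by an element of $\Gamma$ acting as a transvection or as a sign change on the orthogonal part. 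Combining this with the Manin relation applied in two coordinates not involved, the class is forced to equal its own negative, or to be a sum of things already known to vanish. This uses $n\ge3$ essentially; for $n=2$ there is no spare coordinate, consistent with the non-vanishing there. Then an induction on the support of $c$ reduces to the generic case in which all $c_i\neq 0$.

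For the generic symbols I would apply the Manin relation in coordinates $i,j$. It replaces the pair $(c_i,c_j)$ by pairs such as $(c_i,c_j-c_i)$ and $(c_i-c_j,c_j)$, depending on the dualization. Each time the result has a zero coordinate, that term dies. So we obtain relations among generic symbols indexed by ratios in $\F_p^\times/\{\pm1\}$, and with $n$ coordinates there are many such relations. The claim is that for $p\le 6n-14$ these linear relations force everything to zero. Counting suggests this: each generic class carries $n-1$ ratio parameters, and the relations chain them. The bound presumably comes from being able to choose three coordinates and use a pigeonhole or additive-combinatorics argument, finding $i,j$ with $c_i=\pm c_j$ after moves, which produces a zero. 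For the small primes $\{2,3,5,7,13\}$, which are exactly the primes with $X_0(p)$ of genus $0$, I would reduce to the case of the $2\times2$ block. There the relevant $\Gamma_0(p)$ coinvariant of $\St_2$, i.e. $\homology^1$ cuspidal-type information, is spanned by classes that are killed once a third coordinate is available. Genus zero means there are no cusp forms, so only Eisenstein or cusp classes appear, and these die by the first step.

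The main obstacle is the combinatorial step: proving that the system of Manin relations on generic symbols (ratio vectors in $(\F_p^\times)^n$ modulo signs and scaling) has only the zero solution when $p\le 6n-14$. I would first try the case $n=3$ explicitly by computer-style reasoning to find the shape of the killing chain. Then I would induct on $n$, restricting to coordinate triples, with each extra coordinate adding enough room (roughly $6$ new residues) to guarantee a collision.
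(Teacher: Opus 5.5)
Your first step fails, and not for a technical reason. The symbols in which $L=\langle e_1\rangle$ lies in a hyperplane spanned by $n-1$ of the $\bar v_i$ (exactly one $c_k=0$) are \emph{not} killed for general $p$, even when $n\ge 3$. They are precisely where the arithmetic of $p$ lives.

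Take $n=3$ and $p=11$. The paper shows $(\St_3(\Q)\otimes\Q)_{\Gamma_{0,3}^\pm(p)}\neq 0$, and this is a quotient of your $\Gamma_{0,3}^+(p)$-coinvariants. It detects the nonvanishing through the boundary map to $\redhom_1$ of the quotient of the proper subcomplex $\SBDA(\F_p^3)'$. That group is isomorphic to $\redhom_1(\Pb_2^\pm(\F_p)\backslash\SBDA(\F_p^2);\Q)$, whose dimension is the genus of $X_0(p)$, namely $1$ for $p=11$.

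A generic symbol (all $c_i\neq 0$) has every codimension-one face spanning a plane that does not contain $e_1$. Such faces are orientation-reversed by the group and vanish, so generic symbols map to $0$. Hence the nonzero image must come from symbols with exactly one $c_k=0$, and these therefore survive in the coinvariants.

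This matches the failure of the symmetry you invoke. The sign change $v_k\mapsto -v_k$ lies in $\Gamma_{0,n}^\pm(p)$ but has determinant $-1$, so it is not in $\Gamma_{0,n}^+(p)$. It fixes the symbol with sign $+1$ in $\St_n(\Q)\otimes\Q$, so it kills only the $\Q^{\det}$-component. For most such symbols the stabilizer contains no orientation-reversing element at all. Your small-prime paragraph (``cusp classes \ldots die by the first step'') inherits the same error.

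What is missing is an inductive mechanism showing that the untwisted part carried by these hyperplane symbols is governed by the rank-$(n-1)$ problem. The paper does this in three moves:
\begin{itemize}
\item It proves that the quotient $\Gamma_{0,n}^\pm(p)\backslash\T_n(\Q)$ has contractible order complex for $n\geq 3$.
\item It uses a map-of-posets spectral sequence to identify $\redhom_{n-2}$ of the quotient of $\SBDA(\F_p^n)'$ with $\redhom_{n-2}(\Pb_{n-1}^\pm(\F_p)\backslash\SBDA(\F_p^{n-1});\Q)$.
\item That group vanishes for $p\le 6(n-1)-8=6n-14$, and for $p\in\{2,3,5,7,13\}$ starting from the genus-zero base case $n=2$, plus a separate computation in rank $3$ for $p=13$.
\end{itemize}
This is the true origin of the bound $6n-14$.

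By contrast, your treatment of generic symbols is essentially the paper's argument. You take a Manin relation in two coordinates and apply the Dias da Silva--Hamidoune theorem to $A=\{\pm c_i\}$, $|A|=2n$, to find $c_i-c_j=\pm c_\ell$. This needs only $p\le 6n-8$, provided you split the $\Gamma_{0,n}^+(p)$-coinvariants into the $\Q$- and $\Q^{\det}$-parts over $\Gamma_{0,n}^\pm(p)$:
\begin{itemize}
\item In the untwisted part, the resulting terms with $c_i=\pm c_j$ vanish directly.
\item In the twisted part, terms with a zero coordinate vanish directly, and for odd $n$ every top-degree term vanishes.
\end{itemize}
So your proposal puts the difficulty on the wrong class of symbols: the generic ones are the easy case, and the hyperplane symbols carry the obstruction.
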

Moreover, we show that this group does not vanish in certain cases for small values of $n$. This in fact leads to the following natural question: given a prime $p$, what is the largest $n$ for which this group does not vanish? We expect this question to be challenging and to involve arithmetic features of the prime $p$.
\begin{atheorem}\label{thB}
    Let $p$ be a prime. The cohomology group $\homology^{\binom{n}{2}}\left(\Gamma_{0,n}^+(p);\Q\right)$ does not vanish for $n=2$ and all primes $p$, and for $n=3$ whenever $p \notin \{2,3,5,7,13\}$.
\end{atheorem}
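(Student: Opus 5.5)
Via Borel--Serre duality, $\Gamma=\Gamma_{0,n}^+(p)$ is a finite-index subgroup of $\SL_n(\Z)$, so
\[
\homology^{\binom{n}{2}}(\Gamma;\Q)\cong \homology_0(\Gamma;\St_n(\Q)\otimes\Q)=(\St_n(\Q)\otimes\Q)_{\Gamma}.
\]
By Shapiro's lemma the coinvariants equal $\St_n(\Q)\otimes_{\SL_n(\Z)}\Q[\SL_n(\Z)/\Gamma]$. The coset space $\SL_n(\Z)/\Gamma$ is the set of lines in $\F_p^n$, i.e.\ $\mathbb{P}^{n-1}(\F_p)$, since $\Gamma$ is the stabiliser of the line spanned by $e_1$ modulo $p$. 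So the task is to show that the coinvariants $(\St_n(\Q)\otimes\Q[\mathbb{P}^{n-1}(\F_p)])_{\SL_n(\Z)}$ are nonzero in the stated cases.

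For $n=2$ the approach is classical. The identification above gives $\homology^1(\Gamma_0(p);\Q)$, and this is nonzero because the modular curve $X_0(p)$ has at least two cusps, namely $0$ and $\infty$. The Borel--Serre boundary then contributes: restriction to the boundary, $\homology^1(\Gamma_0(p);\Q)\to\bigoplus_{\text{cusps}}\homology^1(\Z;\Q)$, has image of dimension at least $\#\text{cusps}-1\geq 1$ (Eisenstein classes). Equivalently, one can use the Bykovskii/Manin presentation of $\St_2$ by modular symbols $[v_1,v_2]$ and construct an explicit nonzero functional on the coinvariants. Take the functional $\phi$ that sends $[v_1,v_2]\otimes L$ to the number of $v_i$ (with sign) whose reduction lies in the line $L$, and check that it kills the Manin relations. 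Concretely, $\phi([v_1,v_2]\otimes L)=\delta(\bar v_2\in L)-\delta(\bar v_1\in L)$ vanishes on the relations $[v_1,v_2]=-[v_2,v_1]$ and $[v_1,v_2]+[v_2,v_3]+[v_3,v_1]=0$, and it is $\SL_2(\Z)$-invariant. It is also nonzero: for $L=\langle e_1\rangle$ we get $\phi([e_1,e_2]\otimes L)=-1$. This settles $n=2$ for every prime $p$.

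For $n=3$ I would again work with the Bykovskii presentation of $\St_3(\Q)$. Its generators are integral modular symbols $[v_1,v_2,v_3]$ with $\det=\pm1$. The relations are permutations with sign, $[-v_1,v_2,v_3]=[v_1,v_2,v_3]$, and the Manin three-term relation $[v_1,v_2,v_3]=[v_1+v_2,v_2,v_3]+[v_1,v_1+v_2,v_3]$. The coinvariants are then generated by symbols $[e_1,e_2,e_3]\otimes L$ with $L\in\mathbb{P}^2(\F_p)$, modulo relations induced by the Manin relation and the permutation/sign relations. These are all the $\SL_3(\Z)$-translates, which translate to relations among the lines $L$ under the action of elementary and monomial matrices over $\F_p$. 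This yields an explicit finite presentation: a $\Q$-vector space on $\mathbb{P}^2(\F_p)$ modulo the relations
\[
L+gL\cdot(\pm1)=\dots,\qquad g\in\{\text{signed permutation matrices}\},\qquad L=E L+E' L,
\]
where $E,E'$ are the two elementary matrices from the Manin relation. The plan is to exhibit a nonzero linear functional $f:\mathbb{P}^2(\F_p)\to\Q$ satisfying these relations: $f(gL)=\sign(g)f(L)$ for signed permutations and $f(L)=f(EL)+f(E'L)$. A natural candidate comes from parametrising lines by $[x:y:z]$ and defining $f$ through an arithmetic function on $\F_p$, for instance a quadratic character or a Dedekind-sum-like expression, chosen to be odd under transpositions.

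The main obstacle is this $n=3$ step. The relations are numerous, and the answer must be zero exactly for $p\in\{2,3,5,7,13\}$. Those are precisely the primes with $X_0(p)$ of genus $0$, which suggests that the obstruction is related to weight-$2$ cusp forms for $\Gamma_0(p)$. I would therefore try to build $f$ from the $n=2$ picture. Restrict to lines of the form $[x:y:0]$ and to the parabolic/boundary structure, and map the $n=3$ coinvariants onto a quotient of $\homology_1(X_0(p),\text{cusps};\Q)$-type modular symbols, or onto a cuspidal piece of it. Once the relations are verified, this quotient is nonzero as soon as the genus is positive, i.e.\ for $p\notin\{2,3,5,7,13\}$. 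Verifying that the Manin relation in rank $3$ descends correctly to the rank-$2$ Manin relations is where I expect the real work to lie.
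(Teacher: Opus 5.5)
Your reduction to $(\St_n(\Q)\otimes\Q[\mathbb{P}^{n-1}(\F_p)])_{\SL_n(\Z)}$ is correct, and so is your $n=2$ argument. The functional $\phi$ is simply the cusp map $\St_2(\Q)=\redhom_0(\mathbb{P}^1(\Q))\to\redhom_0(\text{two cusps})$, which is more direct than the paper's exact count of $\dim(\St_2(\Q)\otimes\Q)_{\Gamma_0^\pm(p)}$.

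The genuine gap is the entire case $n=3$. You never define $f$ or the descent map, the relations are only schematic (``$L+gL\cdot(\pm1)=\dots$''), and nothing is verified. The candidates you float are also unpromising. For $n=3$, the analogue of your $n=2$ functional is the map to $\redhom_1(\Gamma^\pm_{0,3}(p)\backslash\T_3(\Q);\Q)$, and it is zero because that quotient is contractible. By Theorem~A, any invariant functional must vanish for $p\in\{2,3,5,7,13\}$. So what you need is genuinely cuspidal: it has to see weight-$2$ cusp forms on $\Gamma_0(p)$, and there is no reason to expect a quadratic-character or Dedekind-sum formula. You also flag only the well-definedness of the descent as the real work. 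But you then assert that the map is \emph{onto} a nonzero cuspidal space without giving any reason, and nonvanishing of the image is exactly what the theorem requires.

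\textbf{How the paper does it.} It identifies $(\St_3(\Q)\otimes\Q)_{\Gamma^\pm_{0,3}(p)}$ with $\homology_2$ of the pair $(\Pb_3^\pm(\F_p)\backslash\SBDA(\F_p^3),\Pb_3^\pm(\F_p)\backslash\SBDA(\F_p^3)')$. Surjectivity of the connecting map then follows from $\redhom_1(\Pb_3^\pm(\F_p)\backslash\SBDA(\F_p^3);\Q)=0$. A map-of-posets spectral sequence over the contractible quotient Tits building identifies the target with $\redhom_1(\Pb_2^\pm(\F_p)\backslash\SBDA(\F_p^2);\Q)$, which is nonzero exactly for $p\notin\{2,3,5,7,13\}$.

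\textbf{A cheaper route to your own idea.} Deleting the plane vertices of $\mathcal{T}_3(\Q)$ gives an equivariant exact sequence $0\to\St_3(\Q)\to\bigoplus_W\St(W)\to\redhom_0(\mathbb{P}^2(\Q))\to0$, with $W$ running over rational planes. This is a descent map with no Manin relations to check.
\begin{itemize}
\item Coinvariants are right exact. So the image of $(\St_3(\Q)\otimes\Q)_\Gamma$ is the kernel of $\bigoplus_{[W]}(\St(W)\otimes\Q)_{\Gamma_W}\to(\redhom_0(\mathbb{P}^2(\Q))\otimes\Q)_\Gamma$.
\item For $\Gamma=\Gamma^+_{0,3}(p)$ there are two orbits of planes. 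If the reduction of $W$ misses $e_1$, then $\Gamma_W$ acts on $W$ through all of $\GL_2(\Z)$. That summand is then a quotient of $\homology^1(\SL_2(\Z);\Q)=0$.
\item For $W=\langle e_1,e_2\rangle$, $\Gamma_W$ acts through $\Gamma_0^\pm(p)$. This gives $(\St_2(\Q)\otimes\Q)_{\Gamma_0^\pm(p)}$, of dimension $1+g(X_0(p))$ by the paper's $n=2$ count.
\item The target has dimension $1$, since $\homology_1(\Gamma;\Q)=0$ by property (T) and there are two orbits of rational lines.
\end{itemize}
Hence the image has dimension at least $g(X_0(p))$, which is positive exactly when $p\notin\{2,3,5,7,13\}$.
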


\begin{st mod}
A key tool in our approach is the \emph{Steinberg module} which arises from the topology of the Tits building. Let $\mathcal{T}_n(\Q)$ denote the \emph{Tits building} associated with $\GL_n(\Q)$; this is a simplicial complex whose simplices correspond to flags of proper nontrivial subspaces of $\Q^n$. By the Solomon--Tits theorem \cite{solomon-tits}, $\mathcal{T}_n(\Q)$ is homotopy equivalent to a wedge of $(n-2)$-spheres. In particular, its reduced homology is concentrated in degree $n-2$. 

The \emph{Steinberg module} $\St_n(\Q)$ is defined as \[\St_n(\Q):=\redhom_{n-2}(\mathcal{T}_n(\Q);\Z).\] 
The group $\GL_n(\Z)$ acts on $\mathcal{T}_n(\Q)$ by simplicial automorphisms, and hence the Steinberg module is naturally a $\GL_n(\Z)$-module. Subgroups of $\GL_n(\Z)$ then act on $\St_n(\Q)$ by restriction. 
\end{st mod}

The Borel–Serre duality theorem \cite{BS73} identifies high degree rational cohomology of $\Gamma_{0,n}^+(p)$ with low degree homology with coefficients in the Steinberg module:
\[\homology^{\binom{n}{2}-k}\left(\Gamma_{0,n}^+(p);\Q\right)\cong \homology_k\left(\Gamma_{0,n}^+(p);\St_n(\Q)\otimes \Q\right).\]
Let $\Gamma_{0,n}^\pm(p)$ be defined as \begin{equation}\label{congsub1}\Gamma_{0,n}^\pm(p)=\left\{A\in\GL_n(\Z)~\middle\vert\ A\equiv \begin{pmatrix*}
* & * & \cdots& * \\
0 & * &\cdots&*\\
\vdots &\vdots & \ddots&\vdots \\
0 & * & \cdots & *
\end{pmatrix*}\bmod p \right\}.\end{equation} By Borel--Serre duality, we obtain
\begin{equation}\label{rel:SL,GL}\homology^{\binom{n}{2}-k}\left(\Gamma_{0,n}^+(p);\Q\right)\cong \homology_k\left(\Gamma_{0,n}^\pm(p);\St_n(\Q)\otimes\Q\right)\oplus \homology_k\left(\Gamma_{0,n}^\pm(p);\St_n(\Q)\otimes \Q^{\det}\right),\end{equation}where $\Q^{\det}$ is $\Q$ endowed with the determinant representation.
In particular, the top cohomological dimension of the subgroup $\Gamma_{0,n}^+(p)$ is determined by the coinvariants \[\left(\St_n(\Q)\otimes\Q\right)_{\Gamma_{0,n}^\pm(p)}\quad\text{and}\quad \left(\St_n(\Q)\otimes\Q^{\det}\right)_{\Gamma_{0,n}^\pm(p)}.\] 
Understanding the vanishing or non-vanishing of these coinvariants becomes a main goal in proving \autoref{thA} and \autoref{thB}, and motivates our next set of results.

\begin{atheorem}\label{thC}
Let $p$ be a prime and $n\geq 3$. If $p\in\{2,3,5,7,13\}$ or $p \leq 6n-14$, then
    \[\left(\St_n(\Q)\otimes\Q\right)_{\Gamma_{0,n}^\pm(p)}\cong 0.\]
\end{atheorem}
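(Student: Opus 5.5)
We will work with the Ash–Rudolph description of the Steinberg module. $\St_n(\Q)\otimes\Q$ is generated by modular symbols $[v_1,\dots,v_n]$, where $(v_1,\dots,v_n)$ runs over bases of $\Z^n$ (columns of matrices in $\GL_n(\Z)$). These symbols satisfy three relations: they are alternating in the $v_i$, they are invariant under $v_i\mapsto -v_i$, and they obey the Manin (three-term) relation $[v_1,v_2,\dots]=[v_1,v_1+v_2,\dots]+[v_1+v_2,v_2,\dots]$.

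In the coinvariants $(\St_n\otimes\Q)_{\Gamma}$ with $\Gamma=\Gamma_{0,n}^\pm(p)$, a symbol $[M]$ depends only on the coset $\Gamma M$. Since $\Gamma$ is the stabilizer of the line $\langle e_1\rangle$ in $\mathbb{P}^{n-1}(\F_p)$, we have $\Gamma M=\Gamma M'$ iff $M^{-1}e_1$ and $M'^{-1}e_1$ agree up to scalar mod $p$. Equivalently, the coset is recorded by the first row of $M$ mod $p$, a point $[a_1:\dots:a_n]\in\mathbb{P}^{n-1}(\F_p)$, where $a_i$ is the first coordinate of $v_i$. The coinvariants are therefore a quotient of the $\Q$-span of symbols $x_{[a_1:\dots:a_n]}$. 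The relations become: $x$ changes sign under permuting coordinates by odd permutations, $x$ is invariant under sign changes of coordinates, and the Manin relation gives $x_{[a_1:a_2:\dots]}=x_{[a_1:a_1+a_2:\dots]}+x_{[a_1+a_2:a_2:\dots]}$. Any row vector mod $p$ lifts to a unimodular matrix, so every such relation holds for all points.

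The plan is to show every $x_a$ vanishes. Three steps do the work.
\begin{enumerate}
\item If two coordinates of $a$ are equal, or one is the negative of another, then $x_a=0$: swapping those columns (after a sign change) fixes the point but flips the sign, and we are over $\Q$.
\item If two coordinates are $0$, they are equal, so step 1 applies. For $n\ge 3$ we can then use the Manin relation in two coordinates while fixing a third.
\item Reduce to points with many zeros or repeats. Put $a_1=0$ in the Manin relation to get $x_{[0:a_2:\dots]}=x_{[0:a_2:\dots]}+x_{[a_2:a_2:\dots]}$, which only recovers step 1. More usefully, apply Manin in the pair $(i,j)$ together with permutation symmetry to perform a Euclidean-type descent on the values mod $p$. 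This expresses $x_a$ as a combination of $x_b$ whose coordinates contain a repetition up to sign.
\end{enumerate}

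The last step is the main obstacle, and it is where the bound $p\le 6n-14$ should enter. After the symmetries, a point with no two coordinates equal up to sign and at most one zero needs $n$ distinct classes in $(\F_p^\times/\pm1)\cup\{0\}$. This set has $(p+1)/2$ elements, so if $n>(p+1)/2$ pigeonhole forces a repeat and $x_a=0$ immediately. That alone gives roughly $p\le 2n-3$. To reach $6n-14$, I would use the Manin relation to show that a point whose coordinate set contains certain small configurations, such as $\{a,b,a+b\}$ or $\{a,2a\}$ up to sign, is already zero. Then I would count: a set of $n$ distinct classes in $\F_p/\pm1$ avoiding all these configurations has size below about $p/6+2$, which is exactly what $p\le 6n-14$ rules out.

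For the sporadic primes $2,3,5,7,13$ and arbitrary $n\ge3$, I would instead reduce to $n=3$. Fixing one coordinate and using the three-term relations among the remaining coordinates should make the case $n=3$ imply the general case. For $n=3$ the points of $\mathbb{P}^2(\F_p)$ form a finite list, and for these primes I would check directly, possibly by computer, that the resulting linear system kills every $x_a$. These primes are the ones for which $X_0(p)$ has genus $0$, which is consistent with the non-vanishing of \autoref{thB} for all other $p$.
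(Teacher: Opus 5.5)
Your route differs from the paper's, and its skeleton is sound. The images of unimodular symbols span $(\St_n(\Q)\otimes\Q)_{\Gamma_{0,n}^\pm(p)}$, and each image depends only on a point of $\mathbb{P}^{n-1}(\F_p)$. For a vanishing statement you only need generation plus validity of the three relations, not Bykovski\u{\i}'s completeness. The paper instead identifies the coinvariants with $\homology_{n-1}(\rel'_n(p);\Q)$ and compares them with the contractible quotient of the Tits building through a map-of-posets spectral sequence.

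One slip: describing the coset by $[M^{-1}e_1]$, i.e.\ by the coefficients in $e_1\equiv\sum a_iv_i$, is right, but ``equivalently, the first row of $M$'' is not. The first row is not invariant under $M\mapsto AM$ for $A\in\Gamma_{0,n}^\pm(p)$; it is the invariant for the transposed group. In the correct coordinates the three-term relation reads $x_a=x_{a-a_2e_1}+x_{a-a_1e_2}$. After flipping the sign of $a_2$ this is the same family as yours, so nothing downstream breaks.

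The genuine gap is the step that produces $6n-14$. No descent is needed. If $\pm a_i\pm a_j\pm a_\ell=0$ with $i,j,\ell$ distinct, then after sign changes $a_i-a_j=\pm a_\ell$. The single relation on the pair $(i,j)$ then writes $x_a$ as two symbols, each containing a repetition up to sign, so $x_a=0$.

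However, $\{a,2a\}$ is not a vanishing configuration. On that pair the relation is the tautology $x_{[a:2a:\cdots]}=x_{[-a:2a:\cdots]}+x_{[a:a:\cdots]}$, and such symbols survive in general: for $n=2$, $p=11$ one finds $x_{[1:2]}=2x_{[1:3]}\neq 0$. That configuration is exactly what would absorb the non-distinct solutions $2b+b'=0$ if you used Cauchy--Davenport. Without it, you need the lower bound $\min\{p,3|A|-8\}$ for the number of sums of three \emph{distinct} elements of $A=\{\pm a_i : a_i\neq 0\}$. That is the Dias da Silva--Hamidoune theorem (\autoref{DH}, the Erd\H{o}s--Heilbronn conjecture for three summands), not a counting argument; pigeonhole only gives $p\le 2n-3$.

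With that theorem your plan closes. A surviving symbol has at most one zero coordinate, so its nonzero coordinates give $m\ge n-1$ distinct classes. Then $6m-8\ge p$ once $p\le 6(n-1)-8=6n-14$; for $p\le 5$ pigeonhole suffices.

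For the sporadic primes, only a \emph{zero} coordinate can be fixed. Modulo your Step~1, each three-term relation involves either only symbols with a zero coordinate or only symbols without one. The map $y_{[b]}\mapsto x_{[b:0]}$ sends the level-$(n-1)$ coinvariants onto the former, killing level-$(n-1)$ symbols that already contain a zero.

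Outside $p\le 6n-14$, the sporadic cases are $(p,n)=(5,3),(7,3),(13,3),(13,4)$. The case $(13,4)$ follows from $(13,3)$ this way, plus \autoref{DH}, since $13\le 6\cdot 4-8$. Your $n=3$ check therefore in effect contains two things:
\begin{itemize}
\item the statement that $(\St_2(\Q)\otimes\Q)_{\Gamma_0^\pm(p)}$ is spanned by $x_{[1:0]}$, which is precisely the genus-zero computation of \autoref{n=2,BA};
\item for $p=13$, the all-nonzero symbols such as $[1:2:4]\in\mathbb{P}^2(\F_{13})$, which survive the triple criterion and need further relations, as in \autoref{BA ctd}.
\end{itemize}

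Repaired this way, your argument avoids symmetric $\Delta$-complexes, the Tits building and the spectral sequence. It also shows directly that $6n-14=6(n-1)-8$ comes from symbols with one zero coordinate. The paper's framework is what it reuses for \autoref{thB}.
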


\begin{atheorem}\label{thD}
   Let $p$ be a prime and $n\geq 2$. Suppose that one of the following three conditions holds\begin{itemize}
       \item $n=2$ and $p\in\{2,3,5,7,13\}$, 
       \item $n$ is odd,
       \item $p\leq 6n-8$.
   \end{itemize} Then \[\left(\St_n(\Q)\otimes\Q^{\det}\right)_{\Gamma_{0,n}^\pm(p)}\cong 0.\]
\end{atheorem}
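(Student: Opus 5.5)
The approach is to present $\St_n(\Q)$ through integral apartments and reduce the vanishing of the twisted coinvariants to a combinatorial statement about orbits of bases. By the theorem of Ash--Rudolph, $\St_n(\Q)$ is generated by the integral apartment classes $[v_1,\dots,v_n]$, where $(v_1,\dots,v_n)$ is a basis of $\Z^n$. Together with Bykovskii's presentation, the relations are:
\begin{itemize}
\item permuting the $v_i$ multiplies the class by the sign of the permutation;
\item $[-v_1,v_2,\dots,v_n]=[v_1,\dots,v_n]$;
\item $[v_1,v_2,v_3,\dots]-[v_1+v_2,v_2,v_3,\dots]-[v_1,v_1+v_2,v_3,\dots]=0$.
\end{itemize}
Tensoring with $\Q^{\det}$ and passing to $\Gamma_{0,n}^\pm(p)$-coinvariants, we get that $\left(\St_n(\Q)\otimes\Q^{\det}\right)_{\Gamma_{0,n}^\pm(p)}$ is spanned by symbols indexed by $\Gamma_{0,n}^\pm(p)$-orbits of unimodular bases. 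Here $g\cdot[v]\otimes 1$ is identified with $\det(g)[v]\otimes1$. Since $\Gamma_{0,n}^\pm(p)$ is the stabilizer of the line $\langle e_1\rangle\subset\F_p^n$ acting through the transpose, the orbit of a basis $(v_1,\dots,v_n)$, viewed as the columns of $A\in\GL_n(\Z)$, is determined by the coset $\Gamma_{0,n}^\pm(p)A$. Equivalently, it is determined by the point of $\mathbb P^{n-1}(\F_p)$ given by the first row of $A$ modulo $p$, i.e. by the vector $(\lambda(v_1),\dots,\lambda(v_n))\in\F_p^n$ up to scalar, with $\lambda$ the first-coordinate functional. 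Hence every generator is a symbol $[a_1:\dots:a_n]$ with $a_i\in\F_p$, not all zero. The twist by $\det$ combines with the relations to give sign rules: elements of $\Gamma_{0,n}^\pm(p)$ of determinant $-1$ act on these labels.

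In the odd $n$ case the plan is to show that each symbol equals its own negative. The element $-I\in\Gamma_{0,n}^\pm(p)$ has determinant $(-1)^n=-1$ and fixes every projective label. It acts on an apartment by $[v]\mapsto[-v_1,\dots,-v_n]=[v]$, using the sign relation $n$ times. In the twisted coinvariants, however, this identifies $[v]$ with $\det(-I)[v]=-[v]$. So $2[v]=0$, and rationally every generator vanishes. This case is immediate.

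For $n=2$ with $p\in\{2,3,5,7,13\}$, the symbols are $[a:b]$ with $a,b$ indexed by $\mathbb P^1(\F_p)$, i.e. Manin symbols. The twisted coinvariants are then the minus part of the Manin symbol space, and since the boundary symbols cancel, it is dual to the odd part of $\homology_1$ of $X_0(p)$ relative to cusps. The relevant cuspidal part vanishes exactly when the genus of $X_0(p)$ is $0$, which happens for these primes. I would verify this directly with the Manin relations
\[
[a:b]=-[b:-a],\qquad [a:b]=[a:a+b]+[a+b:b],
\]
together with the twist rule $[a:b]=-[-a:b]$. Together these kill everything, and the computation for these five primes is finite.

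For $p\le 6n-8$, the plan is an induction, or rather a reduction, on labels. Using the first two relations, together with the freedom to change the basis within the orbit, a label $(a_1,\dots,a_n)$ can be rewritten to have many zero entries or repeated entries. A twisted element, namely a diagonal matrix $\mathrm{diag}(1,\dots,-1,\dots)$ with $-1$ in a later position, or a transposition of coordinates $2,\dots,n$, fixes the label, has determinant $-1$, and forces the symbol to be $0$. Concretely, if two entries $a_i=a_j$ coincide with $i\ne j$, the swap $v_i\leftrightarrow v_j$ preserves the label and gives $[v]=-[v]$ up to the orientation relation, so the symbol vanishes. Similarly, if some $a_i=0$, replacing $v_i$ by $-v_i$ fixes the label and the class but introduces the sign, so the symbol again vanishes. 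It therefore suffices to show that, via the Bykovskii relation applied to pairs, every symbol is a combination of symbols having a zero or a repeated entry. This happens when $p$ is small relative to $n$: a label with $n$ entries, all nonzero and distinct up to the scaling we can perform, is forced to contain a repeat by pigeonhole once $n-1>(p-1)/2$. Sharpening this to $p\le 6n-8$ requires also using the Bykovskii relation to produce entries $a_i\pm a_j$ and $2a_i$, which yields a larger forbidden set, roughly $6$ residues per coordinate. This counting is the main obstacle. The first attempt would be a greedy argument: choose $a_1$ and show that among $\{\pm a_i,\pm 2a_i,\pm a_i/2\}$-type values one obtains a collision unless $p>6n-8$, with the $n=2$ base handled by the Manin computation above.
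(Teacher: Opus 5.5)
Your odd-$n$ argument is correct and is the paper's: every top standard simplex is killed by $\lambda=-1$, $X=(\mathrm{Id},-1,\dots,-1)$, i.e.\ by $-I$. For $n=2$, the Manin-symbol route is a viable alternative to the paper's orbit count, whose formula indeed gives the genus of $X_0(p)$. The gap is in the case $n$ even, $p\le 6n-8$, and it starts with your vanishing criterion. In $(\St_n(\Q)\otimes\Q^{\det})_{\Gamma_{0,n}^\pm(p)}$ one has $[gv]=\det(g)[v]$. If $gv=v\cdot(\pi,\varepsilon_1,\dots,\varepsilon_n)$ is a signed permutation of $v$, then $[gv]=\sign(\pi)[v]$ while $\det g=\sign(\pi)\prod_i\varepsilon_i$, so the only information is $[v]=\prod_i\varepsilon_i\,[v]$.

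The swap $v_i\leftrightarrow v_j$ for $a_i=a_j$ has all $\varepsilon_i=1$: its determinant $-1$ exactly cancels the antisymmetry sign, and you get no relation. Your swap argument is the one for the untwisted coinvariants $(\St_n(\Q)\otimes\Q)_{\Gamma_{0,n}^\pm(p)}$. Also, ``fixes the label, has determinant $-1$'' holds for every determinant $-1$ element of the group, so it cannot by itself force vanishing. The paper shows that for $n$ even a top symbol dies by symmetry only if it has a zero entry or satisfies the $(\lambda,m)$-condition. So repeated-entry symbols are in general nonzero chains, e.g.\ the label $(1,1)$ for $n=2$, $p$ odd. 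They do vanish in the coinvariants, but only via the three-term relation: applied to the repeated pair, it writes the symbol as a sum of two symbols each with a zero entry. This is the paper's computation of $\partial$ on $\DA_2^{n,n}(\underline b)$ with $a_0=a_1$. So the correct target of your reduction is ``symbols with a zero entry'', with this extra step for repeats. (Minor: for $\Gamma_{0,n}^\pm(p)$ as defined, the invariant of $\Gamma_{0,n}^\pm(p)A$ is the line of $A^{-1}e_1\bmod p$, i.e.\ the coefficients of $e_1$ in the basis. The first row of $A$ is the invariant for the transposed group.)

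The second gap is the counting, which you leave open and which a greedy argument will not supply. Once repeats are handled, take a symbol whose entries are nonzero and pairwise distinct up to sign. One three-term relation pushes it onto repeated-entry symbols whenever there are distinct $i,j,\ell$ with $a_i\pm a_j\pm a_\ell=0$. So you need that for $p\le 6n-8$, every $n$ distinct classes in $\F_p^\times/\{\pm1\}$ contain such a triple. A greedy choice only constructs relation-free sets, which bounds the extremal size from below. The excluded values $\pm a_i\pm a_j$ also grow quadratically, so no count of ``six residues per coordinate'' gives the required upper bound, and pigeonhole alone yields only $p\le 2n-1$. The missing ingredient, used in the paper, is the Dias da Silva--Hamidoune bound $|\{b_1+b_2+b_3: b_i\in A \text{ distinct}\}|\ge\min(p,3|A|-8)$. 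Apply it to $A=\{\pm a_1,\dots,\pm a_n\}$ with $|A|=2n$: for $p\le 6n-8$ the restricted sumset is all of $\F_p$, so $b_1+b_2+b_3=0$ with distinct $b_i\in A$. No two of them can be negatives of each other, since the third would then be $0$, so this gives the triple. Without this, or an equivalent Erd\H{o}s--Heilbronn-type input, the bound $p\le 6n-8$ is not established.
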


\begin{outline}
Our approach is inspired by the work of Miller--Patzt--Putman \cite{MPP}, who studied the cohomology of principal congruence subgroups $\Gamma_n(p)$. A main tool in their method is a presentation of the Steinberg module in the sense of Church–Putman \cite{CP}, originally due to Bykovski\u i \cite{Bykovskii}, in terms of simplicial complexes built from partial bases. Because our results are rational and involve non-free group actions, we work throughout in the framework of \emph{symmetric $\Delta$-complexes}, which provide a convenient setting for forming quotients and computing rational homology.

In \autoref{subsec:symm}, we review symmetric $\Delta$-complexes and their homology. In \autoref{cpx of part}, we introduce complexes of partial frames and their determinant-1 variants. These tools are then applied in \autoref{sec:n=2}, where we treat the case $n=2$ in full detail and obtain a complete description of $\homology^1\left(\Gamma_{0}(p);\Q\right)$. We list these results for $p\leq 37$ in the table below.\\[1em]
\[
\renewcommand{\arraystretch}{1.5}
\begin{tabular}{ c|c|c|c } 

 & $p=2,3,5,7,13$ & $p=11,17,19$ & $p=23,29,31,37$ \\[1ex]
\hline
$\left(\St_2(\Q)\otimes\Q\right)_{\Gamma_{0}^\pm(p)}$ & $\Q$ & $\Q^2$ & $\Q^3$ \\[1ex]
\hline
$\left(\St_2(\Q)\otimes\Q^{\det}\right)_{\Gamma_{0}^\pm(p)}$ & $0$ & $\Q$ & $\Q^2$\\[1ex]
\hline
$\homology^1\left(\Gamma_{0}(p);\Q\right)$ & $\Q$ & $\Q^3$ & $\Q^5$\\[1ex]
\hline
\end{tabular}
\]\\[1em]
Note that $\homology^1\!\left(\Gamma_{0}^+(p);\Q\right)$ can also be calculated using methods from Section 3 of \cite{DiamondFred2016Afci}.

Let $\T_n(\Q)$ denote the poset of proper nonzero subspaces of $\Q^n$, ordered by inclusion; the Tits building $\mathcal{T}_n(\Q)$ is then its order complex. Our main strategy to prove \autoref{thA} for $n\geq 3$ is to consider the quotient map \[\T_n(\Q)\longrightarrow \Gamma_{0,n}^\pm(p)\backslash \T_n(\Q),\]which induces a homomorphism \[\St_n(\Q)\otimes \Q\cong \redhom_{n-2}(\T_n(\Q);\Q)\longrightarrow \redhom_{n-2}(\Gamma_{0,n}^\pm(p)\backslash \T_n(\Q);\Q).\]
 Since this map is $\Gamma_{0,n}^\pm(p)$-equivariant, it factors through the coinvariants, yielding \begin{equation}\label{main map} \left(\St_n(\Q)\otimes \Q\right)_{\Gamma_{0,n}^\pm(p)}\longrightarrow \redhom_{n-2}(\Gamma_{0,n}^\pm(p)\backslash \T_n(\Q);\Q).\end{equation} 

In \autoref{sec:quot}, we study the quotients obtained from the action of the congruence subgroup $\Gamma_{0,n}^\pm(p)$ on the poset $\T_n(\Q)$ and on related symmetric $\Delta$-complexes. In \autoref{sec6}, we prove high-acyclicity results for these complexes in a suitable range. In \autoref{sec7}, we prove \autoref{thD} using the structure of the relevant symmetric $\Delta$-complexes, together with the presentation of the Steinberg module described in \cite{Bykovskii,CP}. In \autoref{sec8}, we prove that \[\redhom_{n-2}(\Gamma_{0,n}^\pm(p)\backslash \T_n(\Q);\Q)\cong 0\quad\text{for $n\geq 3$},\]providing a key input for the proof of \autoref{thC}. 

 Finally, in \autoref{sec:spsq}, we discuss when the map \eqref{main map} is an isomorphism and close with the proofs of \autoref{thB} and \autoref{thC}.
\end{outline}

\begin{acknowledgments}
    I would like to thank my advisor, Peter Patzt, for suggesting this project, for his endless patience and guidance throughout this work, and for the significant time and care he devoted to reading and revising this paper. I would also like to thank Jeremy Miller and Matthew Scalamandre for helpful discussions. This work was supported by the National Science Foundation grant DMS-2405310.
\end{acknowledgments}

\section{Symmetric \texorpdfstring{$\Delta$}{Lg}-complexes} \label{subsec:symm}
In this section, we set up notation and review various definitions and properties of symmetric $\Delta$-complexes.  These complexes generalize the concept of $\Delta$-complexes by allowing symmetries of simplices to act, making them well-suited for studying spaces with group actions. Our review follows {\cite[Section 3]{CGP}}. For $k\geq 0$, we set $[k]=\{0,\dots,k\}$, and denote by $\Sigma_{k+1}$ the permutation group on $[k]$. We begin by recalling the definition of $\Delta$-complexes.

\subsection{Definitions}

\begin{definition} Let $\Delta_{{\mathrm{inj}}}$ be the category whose
objects are $[k]$ for each integer $k\geq 0$ and whose morphisms $[k] \rightarrow [l]$ are the order-preserving injective maps. 
    A \emph{$\Delta$-complex} is a functor $Y\colon \Delta_{{\mathrm{inj}}}^{\text{op}}\rightarrow \text{Sets}$. Set $Y_k=Y([k])$ to be the set of $k$-simplices of a $\Delta$-complex $Y$.
    \end{definition}
Recall that the standard-simplex $\Delta^k$ is a $k$-dimensional polytope in the space $\mathbb{R}^{k+1}$ whose vertices are the $k$-standard basis vectors $e_0,\dots,e_k$, i.e. \[\Delta^k=\left\{\sum_{i=0}^kt_ie_i~\middle\vert\ \sum_{i=0}^kt_i=1~\text{and}~ t_i\geq 0 ~\forall i \right\}\subset \mathbb{R}^{k+1}.\] 
For a finite set $S$, we define the standard-simplex \[\Delta^S=\left\{a\colon S\longrightarrow [0,1]~\middle\vert\ \sum_{s\in S}a(s)=1 \right\}.\] Given a map between finite sets $\theta\colon S\longrightarrow T$, it induces a map between simplices $\theta_*\colon\Delta^S\longrightarrow \Delta^T$ by \[\left(\theta_*a\right)(t)=\sum_{\theta(s)=t}a(s).\]
The \emph{geometric realization} $|Y|$ of a $\Delta$-complex $Y$ is the quotient space \begin{equation}\label{geomreal}\left(\coprod_{\substack{k=0}}^{\infty} Y_k\times \Delta^k\right)/ \sim\end{equation}with the equivalence relation $(\sigma,\phi_* t)\sim(\phi^*\sigma,t)$ where $\phi$ is a morphism $[k]\rightarrow [l]$ in $\Delta_{{\mathrm{inj}}}$, $\phi^*\colon Y_l\longrightarrow Y_k$, $\sigma\in Y_l$, and $t \in \Delta^k$. 

In this paper, we work with certain simplicial complexes from the literature, viewing them as $\Delta$-complexes in the following way.
\begin{definition}\label{delta}
    If $Y$ is a simplicial complex, we define a $\Delta$-complex $Y^\Delta$ by fixing a total order on the vertex set $Y_0$, such that \[Y^\Delta\colon \Delta_\mathrm{inj}^\mathrm{op}\longrightarrow \mathrm{Sets}\] sends $[k]$ to the set of order-preserving injective maps $[k]\longrightarrow Y_0$ whose image is a simplex in $Y$, with morphisms given by precomposition. The geometric realizations $|Y|$ and $|Y^\Delta|$ are homeomorphic. 
\end{definition}
\begin{definition}Let $\text{S}\Delta_{{\mathrm{inj}}}$ be the category whose objects are $[k]$ for each integer $k\geq 0$ and whose morphisms $[k] \rightarrow [l]$ are all injective maps. 
  A \emph{symmetric $\Delta$-complex} is a functor $Y\colon \text{S}\Delta_{{\mathrm{inj}}}^{\text{op}}\rightarrow \text{Sets}$. 
\end{definition}
\begin{remark}
    $\Delta_{{\mathrm{inj}}}$ is a subcategory of $\mathrm{S}\Delta_{{\mathrm{inj}}}$.
\end{remark}
The \emph{geometric realization} $|Y|$ of a symmetric $\Delta$-complex $Y$ is the quotient space \begin{equation}\label{geomreal'}\left(\coprod_{\substack{k=0}}^{\infty} Y([k])\times \Delta^k\right)/ \sim\end{equation}with the equivalence relation $(\sigma,\phi_* t)\sim(\phi^*\sigma,t)$ where $\phi$ is a morphism $[k]\rightarrow [l]$ in $\mathrm{S}\Delta_{{\mathrm{inj}}}$, $\phi^*\colon Y([l])\longrightarrow Y([k])$, $\sigma\in Y([l])$, and $t \in \Delta^k$.

We now describe how a $\Delta$-complex can be viewed as a symmetric $\Delta$-complex.
\begin{definition}\label{sdelta}
     If $Y$ is a $\Delta$-complex, we define a symmetric $\Delta$-complex  \[\mathrm{S}Y\colon \mathrm{S}\Delta_{{\mathrm{inj}}}^\mathrm{op}\longrightarrow\mathrm{Sets}\] by setting $\mathrm{S}Y([k]):=Y_k\times \Sigma_{k+1}$. For an arbitrary injective map $\phi\colon [k]\rightarrow [l]$ and $(\sigma,\pi)\in Y_l \times \Sigma_{l+1}$, define \[\mathrm{S}Y(\phi)\colon Y_l\times \Sigma_{l+1}\rightarrow Y_k\times \Sigma_{k+1}\]by $\mathrm{S}Y(\phi)(\sigma,\pi)=\left(\iota_{\phi,\pi}^*(\sigma),\rho_\phi^{-1}\circ\pi\circ\phi\right)$ where \begin{itemize}
        \item $\pi\circ\phi=\iota_{\phi,\pi}\circ\tau_{\phi,\pi}$ is a unique factorization with $\iota_{\phi,\pi}\colon[k]\rightarrow[l]$ is the order-preserving injection and $\tau_{\phi,\pi}\in\Sigma_{k+1}$.
        \item $\rho_{\phi,\pi}\colon [k]\overset{\cong}\longrightarrow\mathrm{Im}(\pi\circ\phi)\subset[l]$ is the unique order-preserving bijection.
    \end{itemize} The geometric realizations $|Y|$ and $|\mathrm{S}Y|$ are homeomorphic.
\end{definition}
   
\begin{remark}\label{deltasimp}
    It follows by \autoref{delta} and \autoref{sdelta} that for any simplicial complex $Y$, we can define a symmetric $\Delta$‑complex $\mathrm{S} Y^\Delta$ where elements in $\mathrm{S}Y^\Delta([k])$ are pairs $(\{v_0<\dots<v_k\},\pi)$. The geometric realizations $|Y|$ and $|\mathrm{S}Y^\Delta|$ are homeomorphic. 
\end{remark}
\begin{notation}\label{notation:sdelta}
    Given a simplicial complex $Y$, we write for simplicity\[\mathrm{S}Y:=\mathrm{S}Y^\Delta\]for the symmetric $\Delta$-complex associated to $Y$ via the above construction. 
\end{notation}

While the definition of symmetric $\Delta$-complexes given above is convenient for many purposes, it is often helpful to work with a hands-on description. For positive integers $k\leq l$, we let $\Sigma\left(\{k+1,\dots,l\}\right)$ denote the permutation group that fixes $\{1,\dots,k\}$ and permutes $\{k+1,\dots,l\}$.

\begin{lemma}\label{lem:sdelta}
A symmetric $\Delta$-complex $Y$ is equivalent to a sequence of sets $Y_k$ for each $k\geq 0$ with a right action of the symmetric group $\Sigma_{k+1}$, and face maps $d_i\colon Y_k \rightarrow Y_{k-1}$ for $k\geq 1$ 

\[
\begin{tikzcd}[column sep=large]
\cdots \arrow[r, shift left=3, "d_0"] \arrow[r, shift left=1] 
     \arrow[r, shift right=1] \arrow[r, shift right=3, "d_3"'] & 
Y_2\arrow[loop below,"\Sigma_3"] \arrow[r, shift left=2, "d_0"] \arrow[r] \arrow[r, shift right=2, "d_2"'] &
Y_1 \arrow[loop below,"\Sigma_2"]\arrow[r, shift left=1, "d_0"] \arrow[r, shift right=1, "d_1"'] &
Y_0\arrow[loop below,"\Sigma_1"]
\end{tikzcd}
\]
such that:\begin{itemize} 
\item (Simplicial identities)\ \[d_i\circ d_j=d_{j-1}\circ d_i\quad\text{for $i<j$},\] 
\item (Compatibility with the symmetric group action) For each $\pi \in \Sigma_{k+1}$, let $\pi^*\colon Y_k \to Y_k$ denote the map induced by the $\Sigma_{k+1}$-action on $Y_k$. Then for all $0 \leq i \leq k$,\[d_i\circ \pi^* = (\pi^{(i)})^*  \circ d_{\pi(i)},\]where $\pi^{(i)}\in \Sigma_k$ is the unique permutation satisfying $\pi\circ\delta^i=\delta^{\pi(i)}\circ\pi^{(i)}$ with $\delta^i\colon[k-1]\hookrightarrow [k]$ is the order-preserving injection whose image is $[k]\setminus\{i\}$.
\item (Triviality of permutations) Let $\delta^{0}\colon[k]\rightarrow[k+1]$. For every $\pi\in\Sigma_{k+1}$ such that \[\pi\circ\delta^0=\delta^0,\]we have \[d_0\circ\pi^*=d_0.\]\end{itemize}
\end{lemma}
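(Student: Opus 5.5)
The plan is to use the standard fact that every injective map $\phi\colon[k]\to[l]$ in $\mathrm{S}\Delta_{\mathrm{inj}}$ factors uniquely as $\phi=\iota\circ\tau$, where $\iota$ is order-preserving injective and $\tau\in\Sigma_{k+1}$. Moreover, every order-preserving injection is a composite of coface maps $\delta^i$. Thus $\mathrm{S}\Delta_{\mathrm{inj}}$ is generated by the permutations $\Sigma_{k+1}$ and the cofaces $\delta^i$. The proof then amounts to identifying a complete set of relations among these generators and checking that they match the three bullet points.

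\textbf{From a functor to the data.} Given $Y\colon \mathrm{S}\Delta_{\mathrm{inj}}^{\mathrm{op}}\to\mathrm{Sets}$, set $Y_k=Y([k])$, let $\pi^*=Y(\pi)$, and let $d_i=Y(\delta^i)$. Since $Y$ is contravariant, $\pi\mapsto \pi^*$ is a right action. The three conditions follow by applying $Y$ to relations in $\mathrm{S}\Delta_{\mathrm{inj}}$:
\begin{itemize}
\item the cosimplicial identity $\delta^j\delta^i=\delta^i\delta^{j-1}$ for $i<j$ gives the simplicial identities;
\item the defining equation $\pi\circ\delta^i=\delta^{\pi(i)}\circ\pi^{(i)}$ gives $d_i\pi^*=(\pi^{(i)})^*d_{\pi(i)}$;
\item the hypothesis $\pi\circ\delta^0=\delta^0$ gives $d_0\pi^*=d_0$.
\end{itemize}
Before using the second relation, I will check that $\pi^{(i)}$ is well defined. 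This holds because $\pi\circ\delta^i$ is injective with image $[k]\setminus\{\pi(i)\}$, so it factors uniquely through $\delta^{\pi(i)}$.

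\textbf{From the data to a functor.} Conversely, given the data, define $Y(\phi)=\tau^*\circ\iota^*$ for $\phi=\iota\tau$. Here $\iota^*$ is the composite of face maps corresponding to a factorization of $\iota$ into cofaces. The simplicial identities guarantee that $\iota^*$ is independent of the chosen factorization, by the standard $\Delta$-complex argument. The real work is functoriality: $Y(\phi\circ\psi)=Y(\psi)\circ Y(\phi)$. Write $\phi=\iota\tau$ and $\psi=\iota'\tau'$. Then $\phi\psi=\iota(\tau\iota')\tau'$, and the middle term $\tau\iota'$ must be rewritten as $\iota''\tau''$. Inducting on the number of cofaces in $\iota'$, it suffices to treat a single coface, where $\tau\delta^i=\delta^{\tau(i)}\tau^{(i)}$; this is exactly the compatibility axiom.

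\textbf{Main obstacle.} The delicate point is that the factorization-based definition and the chosen composites of cofaces must agree coherently. In particular, the permutation $\tau''$ produced by repeatedly moving permutations past cofaces should be the canonical one, and different orderings of the cofaces must yield the same map. I expect this to reduce to checking that the relations listed generate all relations of $\mathrm{S}\Delta_{\mathrm{inj}}$, i.e.\ that it has a presentation by the $\Sigma_{k+1}$, the $\delta^i$, the cosimplicial identities, and the commutation relation $\pi\delta^i=\delta^{\pi(i)}\pi^{(i)}$.

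The triviality axiom then reads as follows. Permutations $\pi$ of $[k+1]$ fixing $0$ and acting as the identity on the image of $\delta^0$ are exactly the identity. However, stabilizers in the broader sense (those $\pi$ with $\pi\delta^0=\delta^0$) must act compatibly. I would verify it by noting that it is the special case $i=0$, $\pi(0)=0$, $\pi^{(0)}=\mathrm{id}$ of the compatibility relation, so it is automatically consistent; it is listed as a convenient consequence. Finally, the two constructions are mutually inverse on objects and morphisms, giving the claimed equivalence, following \cite[Section 3]{CGP}.
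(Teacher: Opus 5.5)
The paper states this lemma without proof; it presents it as the hands-on unpacking of the functorial definition of Chan--Galatius--Payne. So there is no paper argument to compare against. Your proof is the standard one and is correct: it uses the unique factorization $\phi=\iota\tau$, takes cofaces and permutations as generators, and checks three kinds of relations (the group law, the cosimplicial identities, and $\pi\delta^i=\delta^{\pi(i)}\pi^{(i)}$).

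Two small remarks.

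First, the coherence problem you raise under ``main obstacle'' is already handled by your own induction.
\begin{itemize}
\item The permutation $\tau''$ you get by pushing $\tau$ past the cofaces of $\iota'$ one at a time is automatically the canonical one. The resulting composite has the form order-preserving map followed by a permutation, so uniqueness of the factorization $\tau\iota'=\iota''\tau''$ identifies it.
\item Independence of the order of the cofaces is exactly the well-definedness of $\iota^*$, which follows from the simplicial identities.
\end{itemize}
So you do not need a separate presentation theorem for $\mathrm{S}\Delta_{\mathrm{inj}}$.

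Second, the ``triviality'' axiom is vacuous, not a genuine extra condition. For $\pi\circ\delta^0$ to make sense with $\delta^0\colon[k]\to[k+1]$, one must have $\pi\in\Sigma_{k+2}$ rather than $\Sigma_{k+1}$. Then $\pi\circ\delta^0=\delta^0$ means $\pi$ fixes $1,\dots,k+1$, hence also fixes $0$, so $\pi=\mathrm{id}$. You say this correctly at first. Your later sentence about ``stabilizers in the broader sense'' should be dropped, because there are no other such $\pi$. Your observation that the condition is the case $i=0$, $\pi(0)=0$, $\pi^{(0)}=\mathrm{id}$ of the compatibility axiom is the right justification: it holds automatically in one direction and is not needed in the other.
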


For a symmetric $\Delta$-complex $Y$, we will write $Y_k=Y([k])$. By abuse of terminology, we refer to an element $\sigma\in Y_k$ as a \emph{$k$-simplex}.
\begin{definition}\label{chaincpx} Let $K$ be a field of characteristic $0$ and $Y$ be a symmetric $\Delta$-complex.
    For $k\geq -1$, the group of cellular $k$-chains $\redchain_k(Y;K)$ is \[\redchain_k(Y; K)=\begin{cases} K&\text{$k=-1$}\\ K[Y_k] \otimes_{K[\Sigma_{k+1}]} K^\mathrm{sgn}&\text{$k\geq 0$}\end{cases}\]
where $K^\mathrm{sgn}$ denotes the action of $\Sigma_{k+1}$ on $K$ via the sign.
\end{definition} 
The differential map $\partial_k \colon \redchain_k(Y;K)\rightarrow \redchain_{k-1}(Y;K)$ is given by $\sum\limits_{i=0}^k(-1)^id_i,$ where $d_i:Y_k\rightarrow Y_{k-1}$ are the face maps. Additionally, there is an augmentation map $\varepsilon\colon \redchain_0(Y;K)\longrightarrow K$ sending a $0$-simplex $\sigma$ to $1$. The homology $\redhom_k(Y;K)$ is defined to be the homology $\homology_k(\redchain_*(Y;K))$.

Chan--Galatius--Payne \cite{CGP} defined a natural transformation between $\redchain_k(Y;K)$ and $\redchain_k^{\text{sing}}(|Y|;K)$ and proved the following. 

\begin{proposition}[{\cite[Lemma 3.8]{CGP}}]\label{h}
Let $K$ be a field of characteristic $0$ and let $Y$ be a symmetric $\Delta$-complex. For $k\geq -1$,
   \[\redhom_k(Y; K) \cong \redhom_k^{\mathrm{sing}}(|Y|; K).\]
\end{proposition}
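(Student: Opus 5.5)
The plan is to compare the cellular chain complex $\redchain_*(Y;K)$ with the singular chains of $|Y|$ through an intermediate object: the $\Delta$-complex obtained by ordering vertices. There is a natural choice. Let $\widetilde Y$ be the $\Delta$-complex with $\widetilde Y_k := Y_k$, using only the face maps coming from $\Delta_{\mathrm{inj}}\subset \mathrm{S}\Delta_{\mathrm{inj}}$. It carries a simplicial action of $\Sigma_{k+1}$ on each $Y_k$, but these actions do not commute with the face maps in the $\Delta$-sense. So I would instead use barycentric subdivision, which turns the symmetric structure into an honest group action.

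\textbf{Step 1: a $\Delta$-complex model with a free-enough action.} Take the $\Delta$-complex $\mathrm{sd}\,Y$ whose $k$-simplices are chains $\sigma_0\to\cdots\to\sigma_k$ of simplices connected by face morphisms. The key point is that $|\mathrm{sd}\,Y|\cong|Y|$. Alternatively, I would realize $|Y|$ directly as a quotient $\bigl(\coprod_k Y_k\times\Delta^k\bigr)/\sim$, and observe that each cell is the image of $\Delta^k$ modulo the stabilizer $G_\sigma\subset\Sigma_{k+1}$ of $\sigma$. This gives a CW-like filtration of $|Y|$ by skeleta $|Y|^{(k)}$, with
\[
|Y|^{(k)}/|Y|^{(k-1)}\cong\bigvee_{[\sigma]\in Y_k/\Sigma_{k+1}} \bigl(\Delta^k/\partial\Delta^k\bigr)/G_\sigma .
\]

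\textbf{Step 2: homology of the filtration quotients.} Over a field $K$ of characteristic $0$, transfer gives
\[
\redhom_*\bigl(S^k/G_\sigma;K\bigr)\cong \redhom_*(S^k;K)_{G_\sigma}.
\]
Here $G_\sigma$ acts on $\redhom_k(S^k)=K$ by the sign character. So the group is $K$ in degree $k$ when $G_\sigma$ consists of even permutations only, and $0$ otherwise. This matches exactly the summand of $K[Y_k]\otimes_{K[\Sigma_{k+1}]}K^{\mathrm{sgn}}$ indexed by the orbit $[\sigma]$, which is $K$ or $0$ by the same criterion. Hence the relative homology $\homology_*(|Y|^{(k)},|Y|^{(k-1)};K)$ is concentrated in degree $k$ and equals $\redchain_k(Y;K)$. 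The spectral sequence of the skeletal filtration then collapses to a chain complex: the cellular complex.

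\textbf{Step 3: identify the differentials.} I need to check that the connecting map $\homology_k(|Y|^{(k)},|Y|^{(k-1)})\to \homology_{k-1}(|Y|^{(k-1)},|Y|^{(k-2)})$ equals $\sum_i(-1)^i d_i$. I would first check it for the $\Delta$-complex case, where it is classical. Then I would use naturality along $\Delta^k\to |Y|$ for the characteristic map of $\sigma$: the boundary facets of $\Delta^k$ map to the faces $d_i\sigma$ with the orientations recorded by the sign convention. Identifying the $(k-1)$-cell of $d_i\sigma$ in the quotient uses the compatibility identity $d_i\circ\pi^*=(\pi^{(i)})^*\circ d_{\pi(i)}$ from \autoref{lem:sdelta}, which makes the result independent of the chosen representative. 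Finally, I would check that the augmentation in degree $-1$ matches reduced singular homology.

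\textbf{Main obstacle.} I expect the hardest part to be Step 1. The issue is justifying that $|Y|$ has the claimed skeletal filtration with those quotients, i.e.\ that the interior of each $\Delta^k$ maps onto its open cell with identifications exactly by $G_\sigma$. This needs the triviality-of-permutations axiom, which ensures that no additional identifications arise from faces. I would handle it by proving that each point of $|Y|$ has a unique representative $(\sigma,t)$ with $t$ in the open simplex, up to the $\Sigma_{k+1}$-action. Everything after that is bookkeeping, and the characteristic-$0$ hypothesis is used only in Step 2, where coinvariants of finite groups compute the homology of the quotients.
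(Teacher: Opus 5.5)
Your argument is correct. It is the standard cellular-homology proof: filter $|Y|$ by skeleta, compute the subquotients $\bigvee_{[\sigma]}(\Delta^k/\partial\Delta^k)/G_\sigma$ by rational transfer, and read off the connecting maps through characteristic simplices.

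The paper gives no proof of its own. It cites Chan--Galatius--Payne and describes their argument as a natural transformation $\redchain_*(Y;K)\to\redchain^{\mathrm{sing}}_*(|Y|;K)$. One such map is $\sigma\otimes 1\mapsto\frac{1}{(k+1)!}\sum_{\pi}\sign(\pi)\,\chi_{\sigma\pi}$, where $\chi_\tau\colon\Delta^k\to|Y|$ is the characteristic simplex of $\tau$. It respects the sign relations and commutes with the differentials by the identity $d_i\circ\pi^*=(\pi^{(i)})^*\circ d_{\pi(i)}$.

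The mathematical core is the same in both routes. Proving that such a map is a quasi-isomorphism reduces, after filtering by skeleta, exactly to your Step~2 computation: $\redhom_*\bigl((\Delta^k/\partial\Delta^k)/G_\sigma;K\bigr)$ is $(K^{\mathrm{sgn}})_{G_\sigma}$ in degree $k$ and $0$ otherwise. The chain-level map makes naturality, the relative version for pairs, and compatibility with connecting homomorphisms automatic, and the paper uses all of these repeatedly. In your packaging they need a separate, routine check that the cellular identification is natural for filtration-preserving maps of pairs. In exchange, your version never has to write down a map to singular chains.

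Two side remarks are off, though your argument does not depend on either.

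\begin{itemize}
\item The barycentric-subdivision detour in Step~1 is unnecessary. As phrased (chains of face morphisms, i.e.\ the nerve of the category of simplices), its realization is not homeomorphic to $|Y|$. For a single edge whose symmetry swaps its endpoints, $|Y|$ is an interval, while that nerve is homotopy equivalent to $B(\Z/2)$.
\item Uniqueness of a representative $(\sigma,t)$ with $t$ interior, up to $\Sigma_{k+1}$, follows from two facts: every injection factors uniquely as a permutation followed by an order-preserving injection, and $Y$ is functorial. The triviality-of-permutations axiom plays no special role here; it is itself a consequence of functoriality.
\end{itemize}
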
 
Analogous definitions and results hold for a pair $(Y,Z)$ of symmetric $\Delta$-complexes. 

\begin{definition}
    Let $K$ be a field of characteristic $0$ and let $Y,Z$ be symmetric $\Delta$-complexes with $Z\subseteq Y$. For $k\geq 0$, the group of relative cellular $k$-chains $\C_k(Y,Z;K)$ is given by \[\redchain_k(Y;K)/\redchain_k(Z;K).\]
\end{definition}
The differential map $\partial_k\colon \C_k(Y,Z;K)\longrightarrow \C_{k-1}(Y,Z;K)$ is induced by the differential map $\partial_k \colon \redchain_k(Y;K)\rightarrow \redchain_{k-1}(Y;K)$ that takes $\redchain_k(Z;K)$ to $\redchain_{k-1}(Z;K)$. The relative homology group $\homology_k(Y,Z;K)$ is defined to be the homology of $\homology_k(\C_*(Y,Z;K))$. The following result follows from \autoref{h}.
\begin{proposition}
   Let $K$ be a field of characteristic $0$ and let $Y,Z$ be symmetric $\Delta$-complexes with $Z\subseteq Y$. For $k\geq 0$, \[\homology_k(Y,Z;K)\cong \homology_k^\mathrm{sing}(|Y|,|Z|;K).\]
\end{proposition}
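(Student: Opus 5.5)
The plan is to deduce the relative statement from \autoref{h} by comparing two long exact sequences and applying the five lemma.

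\textbf{Step 1: the cellular sequence.} Since $Z\subseteq Y$ is a sub-symmetric $\Delta$-complex, each $Z_k$ is a $\Sigma_{k+1}$-stable subset of $Y_k$, and the face maps of $Y$ restrict to those of $Z$. So $K[Z_k]$ is a direct summand of $K[Y_k]$ as a $K[\Sigma_{k+1}]$-module, with complement $K[Y_k\setminus Z_k]$. Tensoring with $K^{\mathrm{sgn}}$ over $K[\Sigma_{k+1}]$ preserves this splitting, so $\redchain_k(Z;K)\to\redchain_k(Y;K)$ is injective. Hence
\[0\to \redchain_*(Z;K)\to \redchain_*(Y;K)\to \C_*(Y,Z;K)\to 0\]
is a short exact sequence of chain complexes in degrees $k\geq 0$. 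In degree $-1$ both reduced complexes have $K$ with the identity map, so the relative complex is $0$ there and this causes no trouble. The sequence yields a long exact sequence in reduced homology.

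\textbf{Step 2: the singular sequence.} On the singular side, $|Z|\subseteq |Y|$ is a subspace (a subcomplex of the CW structure), and there is the usual long exact sequence of reduced singular homology of the pair.

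\textbf{Step 3: the comparison map.} The natural transformation of Chan--Galatius--Payne $\redchain_*(Y;K)\to\redchain^{\mathrm{sing}}_*(|Y|;K)$ behind \autoref{h} is natural in $Y$. It is built from characteristic maps of simplices, symmetrized with sign. Naturality with respect to the inclusion $Z\hookrightarrow Y$ gives a commutative square of chain maps, and therefore an induced chain map on quotient complexes
\[\C_*(Y,Z;K)\to \redchain^{\mathrm{sing}}_*(|Y|;K)/\redchain^{\mathrm{sing}}_*(|Z|;K).\]
The result is a map of short exact sequences of chain complexes, hence a map between the two long exact sequences. The connecting homomorphisms commute with it by naturality of the snake lemma construction.

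\textbf{Step 4: the five lemma.} By \autoref{h}, the comparison maps on $\redhom_*(Z)$ and $\redhom_*(Y)$ are isomorphisms in all degrees $k\geq -1$. The five lemma then gives $\homology_k(Y,Z;K)\cong \homology^{\mathrm{sing}}_k(|Y|,|Z|;K)$ for all $k\geq 0$. For $k=0$, the terms $\redhom_{-1}$ on both sides vanish or agree, so the lemma still applies.

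\textbf{Main obstacle.} The only real subtlety is that \autoref{h} must be an isomorphism \emph{induced by a natural chain map}, not merely an abstract isomorphism. I would take this naturality from the construction in \cite{CGP}. If only the abstract statement were available, the fallback would be to argue directly. One would replace $|Y|$ by the quotient $|Y|/|Z|$, which is legitimate because $(|Y|,|Z|)$ is a good pair as a CW pair. One would then realize $|Y|/|Z|$ as the realization of a symmetric $\Delta$-complex $Y/Z$, obtained by collapsing $Z$ to a point, whose reduced cellular chains are $\C_*(Y,Z;K)$. Applying \autoref{h} to $Y/Z$ then gives the result.
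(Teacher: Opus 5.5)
Your proposal is correct and takes essentially the same approach as the paper. The paper only says the relative statement follows from the absolute comparison of Chan--Galatius--Payne, and your argument supplies the omitted deduction: injectivity of $\redchain_*(Z;K)\to\redchain_*(Y;K)$, the natural chain map inducing a morphism of long exact sequences, and the five lemma, including the degree $-1$ check at $k=0$. The fallback of collapsing $Z$ is unnecessary, since the paper already states that the \cite{CGP} comparison is a natural transformation.
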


\subsection{Group actions}\label{subsec:gpact}
In this section, we define how a discrete group $G$ acts on a symmetric $\Delta$-complex $Y$, ensuring that the simplicial structure of $Y$ is preserved.
\begin{definition}
    Let $Y$ be a symmetric $\Delta$-complex. An \emph{automorphism} of $Y$ is a natural isomorphism $\alpha\colon Y \longrightarrow Y$; that is, a 
    collection of bijections  $\alpha_k\colon Y_k\longrightarrow Y_k$ for all $k\geq 0$, such that for every $\phi\colon [l]\longrightarrow [k]$ in $\mathrm{S}\Delta_\mathrm{inj}$,\[\phi^*\circ \alpha_k=\alpha_{\ell}\circ\phi^*.\]
    Denote by $\Aut(Y)$ the group of automorphisms of $Y$. 
\end{definition}
A left group action of $G$ on $Y$ is a group homomorphism $\rho\colon G\longrightarrow \Aut(Y)$, where $g\cdot y:=\rho(g)y$. Throughout, all group actions are left actions, except for the right action of the symmetric group $\Sigma_{k+1}$ on each $Y_k$.

\begin{definition}\label{qut}
    Let $G$ be a group acting on a symmetric $\Delta$-complex $Y$. The symmetric $\Delta$-complex $G\backslash Y$ is defined by \[\left(G\backslash Y\right)_k:=G\backslash Y_k,\] with face maps \[G\backslash Y_k\longrightarrow G\backslash Y_{k-1}\] and symmetric group actions induced from those on $Y$.
\end{definition}

In this paper, we consider quotients of simplicial complexes by group actions where the resulting spaces are not necessarily simplicial complexes. To study their homology groups, we reinterpret the original simplicial complex as a symmetric $\Delta$-complex, a setting in which quotient constructions are better behaved. 
\begin{proposition}\label{chaincoinv}
    Let $K$ be a field of characteristic $0$ and $G$ be a group acting on a symmetric $\Delta$-complex $Y$. The group of cellular $k$-chains $\redchain_k(G\backslash Y;K)$ is isomorphic to the coinvariants $\redchain_k(Y;K)_G$.
\end{proposition}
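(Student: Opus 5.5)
The plan is to first prove the statement for a single simplex-set level with $K$ as the coefficients, and then pass to the tensor product. By \autoref{chaincpx}, for $k\geq 0$ we have
\[\redchain_k(Y;K)=K[Y_k]\otimes_{K[\Sigma_{k+1}]}K^{\mathrm{sgn}}\quad\text{and}\quad \redchain_k(G\backslash Y;K)=K[G\backslash Y_k]\otimes_{K[\Sigma_{k+1}]}K^{\mathrm{sgn}}.\]
For $k=-1$ both sides are $K$ with trivial $G$-action, so the claim is immediate there.

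The first step is the identification $K[G\backslash Y_k]\cong K[Y_k]_G$ as right $K[\Sigma_{k+1}]$-modules. The map $K[Y_k]\to K[G\backslash Y_k]$, $\sigma\mapsto G\sigma$, is surjective and kills exactly the span of the elements $g\sigma-\sigma$. Indeed, a permutation basis modulo the span of $g\sigma-\sigma$ is the free vector space on orbits. The map is $\Sigma_{k+1}$-equivariant because the $G$-action is by automorphisms of the functor, which commute with the structure maps $\pi^*$. This is exactly why the quotient in \autoref{qut} inherits the right $\Sigma_{k+1}$-action.

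The second step is to note that coinvariants commute with the tensor product. The left $G$-action and the right $\Sigma_{k+1}$-action on $K[Y_k]$ commute, so $K[Y_k]$ is a $(K[G],K[\Sigma_{k+1}])$-bimodule. We then have
\[\left(K[Y_k]\otimes_{K[\Sigma_{k+1}]}K^{\mathrm{sgn}}\right)_G\cong K\otimes_{K[G]}K[Y_k]\otimes_{K[\Sigma_{k+1}]}K^{\mathrm{sgn}}\cong K[Y_k]_G\otimes_{K[\Sigma_{k+1}]}K^{\mathrm{sgn}},\]
by associativity of the tensor product (both are right exact quotients of the same module). Combining this with the first step gives $\redchain_k(Y;K)_G\cong\redchain_k(G\backslash Y;K)$. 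Finally, we check that these isomorphisms commute with the differentials: the face maps of $G\backslash Y$ are induced from those of $Y$, so the quotient map is a chain map.

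The only point requiring care, which I expect to be the main (mild) obstacle, is well-definedness and equivariance. One must check that the induced $\Sigma_{k+1}$-action on orbits is well defined, i.e.\ $(g\sigma)\pi^*=g(\sigma\pi^*)$, and this follows from naturality of automorphisms. Note also that the characteristic $0$ assumption is not even needed for this chain-level statement; it matters only for comparison with singular homology.
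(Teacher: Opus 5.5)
Your proposal is correct and follows essentially the same route as the paper: identify $K[Y_k]_G\cong K[G\backslash Y_k]$, use the commuting $G$- and $\Sigma_{k+1}$-actions to pass coinvariants through $-\otimes_{K[\Sigma_{k+1}]}K^{\mathrm{sgn}}$, and check that the $G$-equivariant face maps descend to the differentials of $G\backslash Y$. Your extra remarks on the $k=-1$ case, the equivariance coming from naturality, and the irrelevance of characteristic $0$ at the chain level are accurate additions to that same argument.
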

\begin{proof}
    By \autoref{qut}, the chain complex of $G\backslash Y$ is given by \[\redchain_k\left(G\backslash Y;K\right)= K[G \backslash Y_k]\otimes_{K[\Sigma_{k+1}]} K^\mathrm{sgn}.\] Moreover, \[\redchain_k\left(Y;K\right)_G= \left(K[Y_k]\otimes_{K[\Sigma_{k+1}]} K^\mathrm{sgn}\right)_G.\] Since the actions of $G$ and the permutation group commute, we have that \[\left(K[Y_k]\otimes_{K[\Sigma_{k+1}]} K^\mathrm{sgn}\right)_G\cong K[Y_k]_G\otimes_{K[\Sigma_{k+1}]} K^\mathrm{sgn}.\] Moreover, $K[Y_k]_G\cong K[G\backslash Y_k]$, giving the desired identification of chain groups.

It remains to check compatibility with the differentials. The face maps $ Y_k\rightarrow Y_{k-1}$, which are $G$-equivariant and $\Sigma_{k+1}$ by functoriality of the symmetric $\Delta$-complex structure, induce the boundary maps 
    \[K[Y_k]\otimes_{K[\Sigma_{k+1}]} K^\mathrm{sgn}\rightarrow K[Y_{k-1}]\otimes_{K[\Sigma_{k}]} K^\mathrm{sgn}.\]These boundary maps are then $G$-equivariant. Therefore, they descend to well defined maps on coinvariants and the following diagram commutes \[ \begin{tikzcd}[column sep=22pt,row sep=20pt]  \redchain_k(Y;K)_G\arrow[r,"\Phi_k "] \arrow[d,swap,"\partial"] & \redchain_k(G(G\backslash Y;K) \arrow[d,"\partial"] \\ \redchain_{k-1}(Y;K)_G \arrow[r,"\Phi_{k-1}"] & \redchain_{k-1}(G\backslash Y;K)\end{tikzcd}\qedhere\] \end{proof}
   We record the following result for later use in \autoref{cpx of part}.
    \begin{lemma}\label{lem:Sy}
    Let $G$ be a group acting on a simplicial complex $Y$, such that $G\backslash Y$ is again a simplicial complex. Then \[G\backslash SY\cong S\left(G\backslash Y\right).\]
\end{lemma}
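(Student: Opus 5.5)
The plan is to avoid the auxiliary total order on $Y_0$ from \autoref{delta} by first giving an order-free description of $\mathrm{S}Y$. Concretely, I would identify $\mathrm{S}Y([k])=Y^\Delta_k\times\Sigma_{k+1}$ with the set $\mathcal{O}_k(Y)$ of injective maps $f\colon[k]\to Y_0$ whose image is a $k$-simplex of $Y$. The identification sends a pair $(\{v_0<\dots<v_k\},\pi)$ to the map $i\mapsto v_{\pi(i)}$, i.e.\ the composite of the order-preserving injection with $\pi$. Unwinding \autoref{sdelta}, for an injective $\phi\colon[l]\to[k]$ the structure map $\phi^*$ becomes precomposition $f\mapsto f\circ\phi$. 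In particular the right $\Sigma_{k+1}$-action is precomposition and the face maps $d_i$ are restriction along $\delta^i$. The factorization $\pi\circ\phi=\iota\circ\tau$ is exactly what makes this bookkeeping consistent, so this step is a routine check.

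In this model, a simplicial $G$-action on $Y$ acts on $\mathcal{O}_k(Y)$ by postcomposition, $g\cdot f=g\circ f$. Postcomposition visibly commutes with precomposition, so it is an action by automorphisms in the sense of \autoref{subsec:gpact}. This avoids the fact that $G$ need not preserve the chosen order on $Y_0$, which is the only reason the statement is not a tautology. Applying the same description to the simplicial complex $G\backslash Y$ identifies $\mathrm{S}(G\backslash Y)_k$ with $\mathcal{O}_k(G\backslash Y)$. Let $q\colon Y_0\to (G\backslash Y)_0=G\backslash Y_0$ be the quotient map. I then define
\[\Phi_k\colon G\backslash\mathcal{O}_k(Y)\longrightarrow \mathcal{O}_k(G\backslash Y),\qquad [f]\longmapsto q\circ f .\]
This is well defined because $q\circ g=q$. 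It commutes with precomposition by every injective $\phi$, so the $\Phi_k$ assemble into a natural transformation of functors $\mathrm{S}\Delta_{\mathrm{inj}}^{\mathrm{op}}\to\mathrm{Sets}$. It remains to show that each $\Phi_k$ is a well-defined bijection.

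The main obstacle is pinning down what ``$G\backslash Y$ is again a simplicial complex'' must mean so that $\Phi_k$ is well defined and bijective. I would make it precise as the standard regularity conditions: the simplices of $G\backslash Y$ are exactly the images $q(\sigma)$ of simplices $\sigma$ of $Y$, and the quotient of the realization $G\backslash|Y|$ is the realization of $G\backslash Y$. Equivalently:
\begin{enumerate}
\item[(a)] no two distinct vertices of a simplex lie in the same $G$-orbit;
\item[(b)] if $\sigma,\sigma'$ are simplices with $q(\sigma)=q(\sigma')$ compatibly with vertices, then some $g\in G$ carries $\sigma$ to $\sigma'$ vertexwise.
\end{enumerate}
From these conditions the three properties of $\Phi_k$ follow:
\begin{itemize}
\item Condition (a) gives that $q\circ f$ is injective with image a $k$-simplex, so $\Phi_k$ lands in $\mathcal{O}_k(G\backslash Y)$.
\item Surjectivity comes from lifting a simplex of $G\backslash Y$ to a simplex of $Y$ and ordering its vertices accordingly; the lift has the same dimension by (a).
\item Injectivity is condition (b): if $q\circ f=q\circ f'$, then there is $g$ with $g\circ f=f'$.
\end{itemize}
If the paper's intended hypothesis is only the weaker statement that the quotient CW structure is simplicial, I would derive (a) and (b) from it by comparing cells of $G\backslash|\mathrm{S}Y|$ with simplices of $G\backslash Y$. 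This is where I expect the care to be needed.
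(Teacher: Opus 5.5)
Your proposal is correct and follows the same strategy as the paper. The paper writes $(\mathrm{S}Y)_k=Y_k\times\Sigma_{k+1}$ and $\mathrm{S}(G\backslash Y)_k=G\backslash Y_k\times\Sigma_{k+1}$, then simply asserts a levelwise bijection $G\backslash(\mathrm{S}Y)_k\to\mathrm{S}(G\backslash Y)_k$ compatible with face maps and the $\Sigma_{k+1}$-actions (via \autoref{lem:sdelta}). Your order-free model, with injective maps $[k]\to Y_0$ and $G$ acting by postcomposition, makes that bijection canonical and its naturality automatic. It also makes explicit two points the paper leaves implicit: $G$ need not preserve the chosen vertex order, and the hypothesis that $G\backslash Y$ is a simplicial complex must supply exactly your regularity conditions (a) and (b).
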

\begin{proof}
   We have by \autoref{sdelta} and \autoref{qut} \[\left(\mathrm{S}Y\right)_k=Y_k\times \Sigma_{k+1}\quad\text{and}\quad \left(\mathrm{S}(G\backslash Y)\right)_k=G\backslash Y_k\times\Sigma_{k+1}.\] 
   Thus, we have a bijection \[G\backslash \left(\mathrm{S}Y\right)_k\rightarrow \mathrm{S}\left(G\backslash Y\right)_k.\] 
   The result then follows by \autoref{lem:sdelta}, since the bijection is compatible with the face maps and the symmetric group actions.
\end{proof}
\subsection{Orientation-preserving and reversing actions}
Fix a field $K$ of characteristic $0$. We will review the notions of orientation-preserving and orientation-reversing elements, and we will compute a basis of $\redchain_k(G\backslash Y;K)$.

\begin{definition} \label{def:or} Let $G$ be a group acting on a symmetric $\Delta$-complex $Y$. Define the following.
\begin{itemize}
\item Let $Y_k^\mathrm{rv}$ be the set of all $k$-simplices $\sigma$ of $Y$ on which the action of $G$ is \emph{orientation-reversing}; that is, there exist an element $g\in G$ and a permutation $\pi\in \Sigma_{k+1}$ with $\sign(\pi)=-1$ such that $g\cdot\sigma = \sigma\cdot \pi$. 
\item  Let $Y_k^\mathrm{pr}$ be the set of all $k$-simplices $\sigma$ of $Y$ on which the action of $G$ is \emph{orientation-preserving}; that is \[Y_k^\mathrm{pr}=Y_k\setminus Y_k^\mathrm{rv}.\]
\end{itemize}\end{definition} 

\begin{remark}\label{rmk:triv} If $\sigma\in Y_k^\mathrm{pr}$, the stabilizer of $G\sigma$ in $\Sigma_{k+1}$, \[\stab_{\Sigma_{k+1}}\left(G\sigma\right)=\left\{\pi\in \Sigma_{k+1}\mid g\cdot\sigma=\sigma\cdot\pi~\text{for some $g\in G$} \right\}\]is contained in the alternating group $A_{k+1}$. While if $\sigma\in Y_k^\mathrm{rv}$, \[\stab_{\Sigma_{k+1}}(G\sigma)\cap\left( \Sigma_{k+1}\setminus A_{k+1}\right)\neq \emptyset.\] 
\end{remark}
We now prove that the actions of $G$ and $\Sigma_{k+1}$ on $Y_k$ descend to actions on each of $Y_k^\mathrm{rv}$ and $Y_k^\mathrm{pr}$.
\begin{proposition}
   Let $G$ be a group acting on a symmetric $\Delta$-complex $Y$ and let $k\geq 0$. Then the actions of $\Sigma_{k+1}$ and $G$ descend to actions on both $Y_k^\mathrm{pr}$ and $Y_k^\mathrm{rv}$. 
\end{proposition}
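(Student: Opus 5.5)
Since $Y_k^\mathrm{pr}=Y_k\setminus Y_k^\mathrm{rv}$ and both $G$ and $\Sigma_{k+1}$ act by bijections of $Y_k$, it suffices to show that $Y_k^\mathrm{rv}$ is closed under both actions. Invariance of $Y_k^\mathrm{pr}$ then follows at once: a bijection that maps a subset into itself, and whose inverse does too, also preserves the complement. The only facts needed are that the $G$-action commutes with the right $\Sigma_{k+1}$-action on $Y_k$, and that $\sign$ is a homomorphism, so it is invariant under conjugation.

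First I record the commutation. Each $g\in G$ acts by an automorphism $\alpha=\rho(g)$. By definition, $\phi^*\circ\alpha_k=\alpha_k\circ\phi^*$ for every morphism $\phi\colon[k]\to[k]$ in $\mathrm{S}\Delta_\mathrm{inj}$, and in particular for every permutation $\pi\in\Sigma_{k+1}$. Hence
\[g\cdot(\sigma\cdot\pi)=(g\cdot\sigma)\cdot\pi\qquad\text{for all }\sigma\in Y_k,\ \pi\in\Sigma_{k+1},\ g\in G.\]

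Next, $G$-invariance. Let $\sigma\in Y_k^\mathrm{rv}$, witnessed by $g\cdot\sigma=\sigma\cdot\pi$ with $\sign(\pi)=-1$, and let $h\in G$. Put $g'=hgh^{-1}$. Using the commutation,
\[g'\cdot(h\cdot\sigma)=h\cdot(g\cdot\sigma)=h\cdot(\sigma\cdot\pi)=(h\cdot\sigma)\cdot\pi,\]
so $h\cdot\sigma\in Y_k^\mathrm{rv}$, with the same odd permutation $\pi$ as witness.

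Then $\Sigma_{k+1}$-invariance. For $\tau\in\Sigma_{k+1}$, the commutation together with the fact that the symmetric group acts on the right gives
\[g\cdot(\sigma\cdot\tau)=(g\cdot\sigma)\cdot\tau=\sigma\cdot(\pi\tau)=(\sigma\cdot\tau)\cdot(\tau^{-1}\pi\tau).\]
Since $\sign(\tau^{-1}\pi\tau)=\sign(\pi)=-1$, we get $\sigma\cdot\tau\in Y_k^\mathrm{rv}$.

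Finally, for complements: if $\sigma\in Y_k^\mathrm{pr}$ and $h\cdot\sigma\in Y_k^\mathrm{rv}$, then applying $h^{-1}$ gives $\sigma=h^{-1}\cdot(h\cdot\sigma)\in Y_k^\mathrm{rv}$, a contradiction. The same argument works with $\tau^{-1}$ in place of $h^{-1}$. So both actions also preserve $Y_k^\mathrm{pr}$.

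There is no real obstacle here. The only point needing care is checking the direction conventions: the action is on the right, so the correct conjugate is $\tau^{-1}\pi\tau$, and the commutation must be derived from the naturality condition on automorphisms.
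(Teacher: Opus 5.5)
Your proof is correct and matches the paper's argument: you show $Y_k^\mathrm{rv}$ is stable under $G$ and $\Sigma_{k+1}$ using the witnesses $hgh^{-1}$ and $\tau^{-1}\pi\tau$, and then deduce stability of the complement $Y_k^\mathrm{pr}$. Beyond the paper, you also derive the commutation $g\cdot(\sigma\cdot\pi)=(g\cdot\sigma)\cdot\pi$ from the naturality of automorphisms and write out the complement step, both of which the paper leaves implicit.
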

\begin{proof}
Since $G$ and $\Sigma_{k+1}$ act on $Y_k$, it is enough to prove that $G$ and $\Sigma_{k+1}$ act on $Y_k^\mathrm{rv}$.   Let $\sigma\in Y_k^\mathrm{rv}$. Then there exist $g\in G$ and $\pi\in \Sigma_{k+1}$ such that $g\cdot \sigma=\sigma\cdot \pi$ with $\sign(\pi)=-1$. Let $h\in G$ and $\tau\in \Sigma_{k+1}$. We have \[(hgh^{-1}) (h\sigma)=(h\sigma)\pi \quad\text{with $hgh^{-1}\in G$},\] and\[g (\sigma\tau)=(\sigma\tau)(\tau^{-1}\pi\tau)\quad\text{with $\sign(\tau^{-1}\pi\tau)=\sign(\pi)=-1$}.\] Thus $h\sigma$ and $\sigma\tau$ are in $Y_k^\mathrm{rv}$. 
\end{proof}
\begin{proposition} \label{coinv} Let $G$ be a group acting on a symmetric $\Delta$-complex $Y$. Then for $k\geq 1$, \[\redchain_k(G\backslash Y ; K)\cong K\left[G\backslash Y_k\right]\otimes_{K[\Sigma_{k+1}]}K^\mathrm{sgn},\]with the following relations:\begin{itemize}
    \item $G\sigma\otimes 1=G\sigma\pi\otimes\sign(\pi)$ if $\sigma\in Y_k$ and $\pi\in\Sigma_{k+1},$
     \item $G\sigma\otimes 1=0$ if $\sigma\in Y_k^\mathrm{rv}$.
\end{itemize} In particular, \[\redchain_k(G\backslash Y ; K)\cong K\left[G\backslash Y_k^\mathrm{pr}\right]\otimes_{K[\Sigma_{k+1}]}K^\mathrm{sgn}.\]Moreover, $\redchain_k(G\backslash Y ; K)$ has a non-canonical basis bijective to $G\backslash Y_k^\mathrm{pr}/\Sigma_{k+1}$, given by a choice of orientation for every simplex. 
     \end{proposition}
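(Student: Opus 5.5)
The plan is to begin from \autoref{chaincoinv}, which identifies $\redchain_k(G\backslash Y;K)$ with the $K$-vector space $K[G\backslash Y_k]\otimes_{K[\Sigma_{k+1}]}K^\mathrm{sgn}$. Here $\Sigma_{k+1}$ acts on $G\backslash Y_k$ by $(G\sigma)\cdot\pi=G(\sigma\pi)$, which is well defined because the two actions commute. The tensor product over $K[\Sigma_{k+1}]$ is, by definition, the quotient of $K[G\backslash Y_k]$ by the relations $G\sigma\pi\otimes 1=G\sigma\otimes\sign(\pi)$. This is equivalent to the first listed relation, since $\sign(\pi)^2=1$.

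The second relation follows from the first. If $\sigma\in Y_k^\mathrm{rv}$, choose $g\in G$ and an odd $\pi$ with $g\sigma=\sigma\pi$. Then $G\sigma=G(g\sigma)=G(\sigma\pi)$, so
\[G\sigma\otimes 1=G\sigma\pi\otimes 1=G\sigma\otimes\sign(\pi)=-\,G\sigma\otimes 1.\]
Since $\operatorname{char}K=0$, this gives $G\sigma\otimes 1=0$.

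Next, decompose over orbits. The set $G\backslash Y_k$ is a right $\Sigma_{k+1}$-set, and by the preceding proposition it splits as $G\backslash Y_k^\mathrm{pr}\sqcup G\backslash Y_k^\mathrm{rv}$, with both pieces $\Sigma_{k+1}$-stable. The permutation module $K[G\backslash Y_k]$ therefore decomposes as a direct sum over $\Sigma_{k+1}$-orbits $\mathcal{O}=G\sigma\cdot\Sigma_{k+1}$, and tensoring commutes with direct sums. For a single orbit, $K[\mathcal{O}]\cong K[H\backslash\Sigma_{k+1}]$, where $H=\stab_{\Sigma_{k+1}}(G\sigma)$ is the stabilizer described in \autoref{rmk:triv}. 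Hence
\[K[\mathcal{O}]\otimes_{K[\Sigma_{k+1}]}K^\mathrm{sgn}\cong (K^\mathrm{sgn})_H,\]
which is $K$ if $H\subseteq A_{k+1}$ and $0$ otherwise. The coinvariant computation with a chosen coset representative gives this directly.

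By \autoref{rmk:triv}, the first case is exactly the orbits in $G\backslash Y_k^\mathrm{pr}$ and the second is exactly those in $G\backslash Y_k^\mathrm{rv}$. Summing over orbits yields the isomorphism with $K[G\backslash Y_k^\mathrm{pr}]\otimes_{K[\Sigma_{k+1}]}K^\mathrm{sgn}$. It also yields a basis indexed by $G\backslash Y_k^\mathrm{pr}/\Sigma_{k+1}$, obtained by picking one representative $G\sigma$ in each orbit, that is, a choice of orientation. A different choice changes the basis vector only by the sign $\sign(\pi)$, so the basis is non-canonical.

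The main obstacle is bookkeeping rather than substance. One must verify that the stabilizer of the orbit $G\sigma$ under the induced right action coincides with the set defined in \autoref{rmk:triv}: $G\sigma\cdot\pi=G\sigma$ if and only if $\sigma\pi=g\sigma$ for some $g\in G$. One must also check that the identification with $(K^\mathrm{sgn})_H$ is independent of the chosen representative, using that conjugation preserves sign.
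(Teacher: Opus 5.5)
Your proposal is correct and follows essentially the same route as the paper. Both proofs identify the chains with $K[G\backslash Y_k]\otimes_{K[\Sigma_{k+1}]}K^{\mathrm{sgn}}$, kill orientation-reversed orbits via $G\sigma\otimes 1=-G\sigma\otimes 1$ in characteristic $0$, and split off the orientation-preserving part; your orbit-by-orbit computation $K[H\backslash\Sigma_{k+1}]\otimes_{K[\Sigma_{k+1}]}K^{\mathrm{sgn}}\cong (K^{\mathrm{sgn}})_H$ simply makes explicit the paper's brief remark that the $\Sigma_{k+1}$-stabilizers of orientation-preserving orbits act trivially on $K^{\mathrm{sgn}}$, which yields the basis indexed by $G\backslash Y_k^{\mathrm{pr}}/\Sigma_{k+1}$.
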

     See {\cite[Lemma 3.10]{CGP}}.
     \begin{proof} We have the isomorphism \[ \redchain_k(G\backslash Y;K) = \left(K[Y_k]\otimes K^\mathrm{sgn}\right)_G\cong K[G\backslash Y_k]\otimes K^\mathrm{sgn}.\]  The tensor product over $K[\Sigma_{k+1}]$ imposes the relations\[
G\sigma\pi\otimes 1=
G\sigma\otimes \sign(\pi)\]
for all $\sigma\in Y_k$ and $\pi\in \Sigma_{k+1}$. We now show the second relation. Let $\sigma\in Y_k^\mathrm{rv}$. Then there exist $g \in G$ and $\pi\in\Sigma_{k+1}$ such that $g\cdot\sigma=\sigma\cdot \pi$ with $\sign(\pi)=-1$. This implies that \[G\sigma\otimes 1=Gg\cdot \sigma\otimes 1=G\sigma \cdot \pi\otimes 1=G\sigma\otimes -1=-\left(G\sigma\otimes 1\right),\]which gives that \[G\sigma\otimes 1=0.\]Thus\begin{align*} \redchain_k(G\backslash Y;K) &\cong
     \left(K[G\backslash Y_k^\mathrm{pr}]\otimes_{K[\Sigma_{k+1}]}K^{\mathrm{sgn}}\right)\quad\oplus\quad \left(K[G\backslash Y_k^\mathrm{rv}]\otimes_{K[\Sigma_{k+1}]}K^{\mathrm{sgn}}\right)\\&\cong K[G\backslash Y_k^\mathrm{pr}]\otimes_{K[\Sigma_{k+1}]}K^{\mathrm{sgn}}.\end{align*}
 Additionally, the $\Sigma_{k+1}$-stabilizers act trivially on $K^\mathrm{sgn}$. So \[ K[G\backslash Y_k^\mathrm{pr}]\otimes_{K[\Sigma_{k+1}]}K^{\mathrm{sgn}}\cong K\left[G\backslash Y_k^{\mathrm{pr}}/\Sigma_{k+1}\right].\]Note that this is a non-canonical isomorphism, where a choice of basis of $K[G\backslash Y_k^\mathrm{pr}]\otimes_{K[\Sigma_{k+1}]}K^{\mathrm{sgn}}$ is equivalent to a choice of orientation for every simplex in $G\backslash Y_k^{\mathrm{pr}}/\Sigma_{k+1}.$ 
    \end{proof}

Thus $\redhom_k\left(G\backslash Y;K\right)$ may be calculated from a chain complex with one generator for each element in a set of representatives for orbits of elements whose orientations are preserved by the group $G$.

\subsection{Twisted and untwisted actions}
Fix a field $K$ of characteristic $0$. We will introduce the twisted and the untwisted actions related to a character (i.e. a group homomorphism) $\chi\colon G\longrightarrow K^\times$ of a group $G$. Our main examples will be subgroups of $\GL_n(\Z)$ with $\chi=\det$.

\begin{definition}\label{def:tw}
Let $G$ be a group acting on a symmetric $\Delta$-complex $Y$. Define the following.\begin{itemize}
    \item We say a simplex $\sigma$ is \emph{twisted} if there exist an element $g\in G$ and a permutation $\pi\in\Sigma_{k+1}$ with 
$\chi(g)\sign(\pi)\neq 1$ such that $g\cdot\sigma=\sigma\cdot \pi$. Let $Y_k^\mathrm{tw}$ denote the set of all twisted simplices. 
    \item We say a simplex $\sigma$ is \emph{untwisted} if $\sigma\in Y_k\setminus Y_k^\mathrm{tw}$. Let $Y_k^\mathrm{utw}=Y_k\setminus Y_k^\mathrm{tw}$ denote the set of all untwisted simplices. 
\end{itemize}
\end{definition}
\begin{comment}\begin{remark} If $\sigma\in Y_k^\mathrm{utw}$, the stabilizer of $\sigma$ in $\Sigma_{k+1}$, \[\stab_{\Sigma_{k+1}}\left(\sigma\right)=\left\{\pi\in \Sigma_{k+1}\mid \sigma=\sigma\cdot \pi~\text{for some $\pi\in \Sigma_{k+1}$}\right\}=\left\{\pi\in \Sigma_{k+1}\mid \Id\cdot\sigma=\sigma\cdot \pi~\text{for some $\pi\in \Sigma_{k+1}$}\right\}\]is contained in the alternating group $A_{k+1}$ since $\chi(\Id)=1$ While if $\sigma\in Y_k^\mathrm{tw}$, \[\stab_{\Sigma_{k+1}}(\sigma)\cap\left(\Sigma_{k+1}\backslash A_{k+1}\right)\neq \emptyset.\]
\end{remark}\end{comment}
We will prove that the actions of $G$ and $\Sigma_{k+1}$ on $Y_k$ induce well-defined actions on $Y_k^\mathrm{tw}$ and $Y_k^\mathrm{utw}$.
\begin{proposition}
   Let $G$ be a group acting on a symmetric $\Delta$-complex $Y$ and let $k\geq 0$. Then the actions of $\Sigma_{k+1}$ and $G$ descend to actions on both $Y_k^\mathrm{tw}$ and $Y_k^\mathrm{utw}$. 
\end{proposition}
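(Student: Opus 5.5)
The plan is to mirror the earlier proof for $Y_k^\mathrm{pr}$ and $Y_k^\mathrm{rv}$. Since $G$ and $\Sigma_{k+1}$ act on $Y_k$ and $Y_k^\mathrm{utw}=Y_k\setminus Y_k^\mathrm{tw}$, it suffices to show that $Y_k^\mathrm{tw}$ is closed under both actions. Indeed, if a subset $A$ of a set $X$ is stable under a group action, so is its complement: if $x\notin A$ but $hx\in A$, then $x=h^{-1}(hx)\in A$, a contradiction. The same argument applies to the right action of $\Sigma_{k+1}$.

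So let $\sigma\in Y_k^\mathrm{tw}$. By \autoref{def:tw} there are $g\in G$ and $\pi\in\Sigma_{k+1}$ with $g\cdot\sigma=\sigma\cdot\pi$ and $\chi(g)\sign(\pi)\neq 1$. Fix $h\in G$ and $\tau\in\Sigma_{k+1}$.

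For the $G$-action, we use that automorphisms commute with the $\Sigma_{k+1}$-action, since $\pi$ acts via $\pi^*$ and $\alpha_k\circ\pi^*=\pi^*\circ\alpha_k$. This gives
\[(hgh^{-1})\cdot(h\sigma)=h\cdot(\sigma\cdot\pi)=(h\sigma)\cdot\pi.\]
Because $K^\times$ is abelian, $\chi(hgh^{-1})=\chi(g)$. Hence $\chi(hgh^{-1})\sign(\pi)=\chi(g)\sign(\pi)\neq 1$, so $h\sigma\in Y_k^\mathrm{tw}$.

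For the $\Sigma_{k+1}$-action, again using that the two actions commute, we get
\[g\cdot(\sigma\tau)=(g\sigma)\tau=\sigma\pi\tau=(\sigma\tau)(\tau^{-1}\pi\tau).\]
Since $\sign(\tau^{-1}\pi\tau)=\sign(\pi)$, the product $\chi(g)\sign(\tau^{-1}\pi\tau)$ is unchanged and still differs from $1$. Thus $\sigma\tau\in Y_k^\mathrm{tw}$.

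The only point needing care is the commutation of the left $G$-action with the right $\Sigma_{k+1}$-action, which is the naturality condition in the definition of an automorphism of $Y$ applied to $\phi=\pi\in\mathrm{S}\Delta_\mathrm{inj}$. Together with the invariance of $\chi$ under conjugation, this is all the argument requires, so I expect no genuine obstacle.
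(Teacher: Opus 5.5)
Your proposal is correct and follows the paper's proof essentially verbatim: you reduce to stability of $Y_k^\mathrm{tw}$, conjugate $g$ by $h$ using that $\chi$ is constant on conjugacy classes, and conjugate $\pi$ by $\tau$ using that $\sign$ is conjugation-invariant. The two points you spell out explicitly, that the complement of a stable set is stable and that the $G$-action commutes with the $\Sigma_{k+1}$-action by naturality of automorphisms, are used only implicitly in the paper.
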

\begin{proof}
Since $G$ and $\Sigma_{k+1}$ act on $Y_k$, it is enough to prove that $G$ and $\Sigma_{k+1}$ act on $Y_k^\mathrm{tw}$.  
    Let $\sigma\in Y_k^\mathrm{tw}$. Then there exist $g\in G$ and $\pi\in \Sigma_{k+1}$ such that $g\cdot \sigma=\sigma\cdot \pi$ with $\chi(g)\sign(\pi)\neq 1$. Let $h\in G$ and $\tau\in \Sigma_{k+1}$. We have \[(hgh^{-1}) (h\sigma)=(h\sigma)\pi, \]with $hgh^{-1}\in G$ and \[\chi(hgh^{-1})\sign(\pi)=\chi(g)\sign(\pi)\neq 1.\]Moreover, \[g(\sigma\tau)=(\sigma\tau)(\tau^{-1}\pi\tau),\]with \[\chi(g)\sign(\tau^{-1}\pi\tau)=\chi(g)\sign(\pi)\neq 1.\] Thus, $h\sigma$ and $\sigma\tau\in Y_k^\mathrm{tw}$. 
\end{proof}
\begin{definition}
 For a $K$-vector space $V$, we denote by $V^\chi$, $V$ endowed with the $K[G]$-module structure arising from the action \[g\cdot v=\chi(g)\cdot v\quad\text{for $g\in G$ and $v\in V$}.\]
\end{definition}
\begin{proposition}\label{coinv,tw}
      Let $G$ be a group acting on a symmetric $\Delta$-complex $Y$. Then for $k\geq 0$, \[\left(\redchain_k(Y;K)\otimes K^{\chi}\right)_G=\left(K[Y_k]\otimes_{K[\Sigma_{k+1}]}K^{\mathrm{sgn}}\otimes K^{\chi}\right)_G,\]with the following relations:\begin{itemize}
    \item $g\cdot \sigma\cdot \pi\otimes 1\otimes1=\sigma\otimes\sign(\pi)\otimes\chi(g)$ for $\sigma\in Y_k$, $\pi\in\Sigma_{k+1},$ and $g\in G$,
     \item $ \sigma\otimes 1\otimes 1=0$ if $\sigma\in Y_k^\mathrm{rv}$.
\end{itemize} In particular, \[\left(\redchain_k( Y ; K)\otimes K^\chi\right)_G\cong \left(K\left[ Y_k^\mathrm{utw}\right]\otimes_{K[\Sigma_{k+1}]}K^{\mathrm{sgn}}\otimes K^{\chi}\right)_G.\]Moreover, $\left(\redchain_k( Y ; K)\otimes K^\chi\right)_G$ has a non-canonical basis bijective to $G\backslash Y_k^\mathrm{utw}/\Sigma_{k+1}$, given by a choice of twisting for every simplex.
\end{proposition}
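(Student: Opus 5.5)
The plan follows the proof of \autoref{coinv}, with the sign character replaced by the product character $\chi(g)\sign(\pi)$. We work in the $K$-vector space $K[Y_k]\otimes K^{\mathrm{sgn}}\otimes K^\chi$ and pass to coinvariants for $\Sigma_{k+1}$ on the right and $G$ on the left. Since the two actions commute, the order in which we take coinvariants is irrelevant. Tensoring over $K[\Sigma_{k+1}]$ imposes $\sigma\pi\otimes 1\otimes 1=\sigma\otimes\sign(\pi)\otimes 1$. Taking $G$-coinvariants of the diagonal action on $K[Y_k]\otimes K^\chi$ imposes $g\sigma\otimes 1\otimes\chi(g)=\sigma\otimes 1\otimes 1$, that is, $g\sigma\otimes1\otimes1=\sigma\otimes 1\otimes\chi(g)^{-1}$. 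Combining the two gives the first relation. The exact normalization ($\chi(g)$ versus $\chi(g)^{-1}$) needs to be checked, but it is harmless: the group $\chi(G)$ is closed under inverses, and for the intended case $\chi=\det$ we have $\chi(g)=\pm1$.

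For the vanishing relation, the hypothesis should be read as $\sigma\in Y_k^{\mathrm{tw}}$ (this seems to be the intended meaning of ``$Y_k^{\mathrm{rv}}$'' in the statement). Suppose $g\sigma=\sigma\pi$ with $c:=\chi(g)\sign(\pi)\neq 1$. Applying the first relation gives
\[\sigma\otimes1\otimes1=g\sigma\otimes 1\otimes\chi(g)=\sigma\pi\otimes1\otimes\chi(g)=c\,(\sigma\otimes1\otimes1).\]
Hence $(1-c)(\sigma\otimes1\otimes1)=0$, and since $c\neq1$ in the field $K$, the class $\sigma\otimes1\otimes1$ is zero. By the preceding proposition, $Y_k$ decomposes as $Y_k^{\mathrm{tw}}\sqcup Y_k^{\mathrm{utw}}$ into $G\times\Sigma_{k+1}$-stable pieces. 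So the chain group splits as a direct sum, the twisted summand vanishes, and this gives the ``in particular'' isomorphism.

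For the basis statement, decompose $K[Y_k^{\mathrm{utw}}]$ as a direct sum over the $(G\times\Sigma_{k+1})$-orbits, and show that each orbit contributes exactly a copy of $K$. Fix an orbit representative $\sigma$. The coinvariants of $K[\text{orbit}]\otimes K^{\mathrm{sgn}}\otimes K^\chi$ are the coinvariants of the induced module, which is $K$ modulo the character $(g,\pi)\mapsto\chi(g)\sign(\pi)$ restricted to the stabilizer $\{(g,\pi): g\sigma=\sigma\pi\}$. Untwistedness says precisely that this character is trivial on the stabilizer, so the orbit contributes a one-dimensional space spanned by $\sigma\otimes1\otimes1$. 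Choosing the representative $\sigma$, which amounts to choosing a ``twisting'' (sign normalization), gives the non-canonical basis indexed by $G\backslash Y_k^{\mathrm{utw}}/\Sigma_{k+1}$.

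The main obstacle is this last step: we must verify that no further relations collapse an untwisted generator. The cleanest route is the Shapiro-type identity $\bigl(K[H\backslash(G\times\Sigma)]\otimes L\bigr)_{G\times\Sigma}\cong L_H$ for the one-dimensional character $L$, where $H$ is the stabilizer. One can also construct an explicit linear functional that sends $g\sigma\pi\otimes1\otimes1\mapsto \chi(g)^{-1}\sign(\pi)$; this is well defined exactly because $\sigma$ is untwisted, and it detects the generator.
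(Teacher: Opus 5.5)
Your proposal is correct and is essentially the paper's proof. It imposes the $\Sigma_{k+1}$- and $G$-relations, shows that a twisted $\sigma$ satisfies $(1-\chi(g)\sign(\pi))\,\sigma\otimes1\otimes1=0$, splits off the twisted summand, and uses that $\chi(g)\sign(\pi)$ is trivial on the $G\times\Sigma_{k+1}$-stabilizers of untwisted simplices to get one basis vector per double coset. Your reading of $Y_k^{\mathrm{rv}}$ as $Y_k^{\mathrm{tw}}$ is what the paper's proof actually uses. Your $\chi(g)$ versus $\chi(g)^{-1}$ caveat is right but immaterial, since the twisted/untwisted dichotomy is unchanged under $\chi\mapsto\chi^{-1}$. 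Finally, your Shapiro/explicit-functional argument supplies the detail behind the last isomorphism, which the paper only asserts.
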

 \begin{proof} We have by definition\[\left(\redchain_k(Y;K)\otimes K^{\chi}\right)_G =\left(K[Y_k]\otimes_{K[\Sigma_{k+1}]}K^{\mathrm{sgn}}\otimes K^{\chi}\right)_G.\] The tensor product over $K[\Sigma_{k+1}]$ imposes the relations\[
g\cdot \sigma\cdot \pi\otimes 1\otimes1=\sigma\otimes\sign(\pi)\otimes\chi(g)\]
for all $\sigma\in Y_k$, $\pi\in \Sigma_{k+1}$ and $g\in G$.
    Let $\sigma\in Y_k^\mathrm{tw}$. Then, there exist $g\in G$ and $\pi\in \Sigma_{k+1}$ such that $g\cdot \sigma=\sigma\cdot \pi$ with $\chi(g)\sign(\pi)\neq1$. Thus, we have the following in $\left(K[Y_k^{\text{tw}}]\otimes_{K[\Sigma_{k+1}]}K^{\mathrm{sgn}}\otimes K^{\chi}\right)_G$:
\[\sigma\otimes 1\otimes 1=g\cdot\left(\sigma\otimes1\otimes 1\right)=g\cdot \sigma \otimes 1\otimes \chi(g)=\sigma\cdot \pi\otimes 1\otimes \chi(g)=\sigma\otimes\sign(\pi)\otimes \chi(g)=\chi(g)\sign(\pi)\left(\sigma\otimes1\otimes 1\right).\]
This leads to \[ \left(1-\chi(g)\sign(\pi)\right)\sigma\otimes 1\otimes 1=0.\] As $\chi(g)\sign(\pi)\neq 1$, we conclude that \[\sigma\otimes 1\otimes 1=0.\] Thus $\left(K[Y_k^{\text{tw}}]\otimes_{K[\Sigma_{k+1}]}K^{\mathrm{sgn}}\otimes K^{\chi}\right)_G=0$. Moreover, \begin{align*}\left(\redchain_k(Y;K)\otimes K^{\chi}\right)_G &\cong \left(K[Y_k^{\text{utw}}]\otimes_{K[\Sigma_{k+1}]}K^{\mathrm{sgn}}\otimes K^{\chi}\right)_G\quad\oplus\quad \left(K[Y_k^{\text{tw}}]\otimes_{K[\Sigma_{k+1}]}K^{\mathrm{sgn}}\otimes K^{\chi}\right)_G\\&\cong\left(K[Y_k^{\text{utw}}]\otimes_{K[\Sigma_{k+1}]}K^{\mathrm{sgn}}\otimes K^{\chi}\right)_G.\end{align*}
 Additionally, $G\times\Sigma_{k+1}$ acts on $Y_k^\mathrm{utw}$ via the action $\sigma\cdot(g,\pi)=g^{-1}\cdot\sigma\cdot\pi $, and acts on $K^\chi\otimes K^\mathrm{sgn}$ via $(g,\pi)\cdot 1\otimes1=\chi(g)\otimes\sign(\pi)$. Since the $G\times\Sigma_{k+1}$-stabilizers act trivially on $K^\chi\otimes K^\mathrm{sgn}$, we obtain \[\left(K[Y_k^{\text{utw}}]\otimes_{K[\Sigma_{k+1}]}K^{\mathrm{sgn}}\otimes K^{\chi}\right)_G\cong K[Y_k^\mathrm{utw}]\otimes_{K[G\times\Sigma_{k+1}]}(K^\chi\otimes K^\mathrm{sgn})\overset{\star}\cong K\left[G\backslash Y_k^{\mathrm{utw}}/\Sigma_{k+1}\right].\]Note that $\star$ is a non-canonical isomorphism, where a choice of basis of $\left(K[Y_k^{\text{utw}}]\otimes_{K[\Sigma_{k+1}]}K^{\mathrm{sgn}}\otimes K^{\chi}\right)_G$ is equivalent to a choice of twisting for each simplex in $G\backslash Y_k^{\mathrm{utw}}/\Sigma_{k+1}.$\qedhere

\end{proof}
\section{Complexes of partial frames}\label{cpx of part}
Fix a Euclidean domain $R$. In this section, we consider simplicial complexes associated to a free $R$-module and their quotients. Via \autoref{deltasimp}, we treat these complexes as symmetric $\Delta$-complexes, as this approach is well-suited for studying quotients that are no longer simplicial complexes and for computing their rational homology.

 \begin{definition} \
\begin{itemize}
\item A \emph{partial basis} for $R^n$ is a set $\left\{v_1,\dots,v_{k}\right\}$ that can be completed to a basis for $R^n$. 
    \item An \emph{augmented basis} for $R^n$ is a set $\{v_0, v_1, \hdots, v_n\}$ of vectors of $R$ such that $\{v_1, \hdots, v_n\}$ is a basis for $R$ and $\pm v_0\pm v_1 \pm v_2 = 0$ for some choice of signs.
    \item A \emph{partial augmented basis} for $R^n$ is a set of vectors that is either a basis or an augmented basis for a direct summand of $R$.\end{itemize}
\end{definition}

\begin{definition}
    A partial basis $\{v_1, \hdots, v_k\}$ for $R^n$ is a \emph{determinant-$1$ partial basis} if it satisfies the following conditions:
     \begin{itemize}
        \item If $k=n$,  the matrix $\left(v_1| v_2| \dots| v_n\right)$ whose columns are the vectors $v_i$ has determinant equal to either $1$ or $-1$. Note that this is independent of the order of the vectors $v_i$.
        \item If $k<n$, no additional condition is needed.
    \end{itemize}
\end{definition}

\begin{definition}
    A partial augmented basis $\{v_0, v_1, \hdots, v_k\}$ for $R^n$ with $\pm v_0\pm v_1\pm v_2=0$ is a \emph{determinant-$1$ partial augmented basis} if $\{v_1, \hdots, v_k\}$ is a determinant-$1$ partial basis.
\end{definition}
We now introduce the $\pm$-vectors and we make analogous definitions.
\begin{definition} \ \begin{itemize}
 \item A \emph{$\pm$-vector} in $R^n$ is a $2$-element set $\{v, -v\}$ of primitive vectors in $R^n$ denoted by $v^{\pm}$. 
   \item A \emph{frame} for $R^n$ is a set $\{v_1^{\pm}, \hdots, v_n^{\pm}\}$ such that $\{v_1, \hdots, v_n\}$ is a basis for $R^n$.
    \item A \emph{partial frame} for $R^n$ is a set $\{v_1^{\pm}, \hdots, v_k^{\pm}\}$ such that $\{v_1, \hdots, v_k\}$ is a partial basis for $R^n$. \end{itemize}
Define the \emph{complex of partial frames for $R^n$}, denoted $\B(R^n)$, to be the $(n-1)$-dimensional simplicial complex whose $k$-simplices are partial frames for $R^n$ of cardinality $(k+1)$. 
\end{definition}
\begin{definition} \
 \begin{itemize}
    \item An \emph{augmented frame} for $R^n$ is a set $\{v_0^{\pm}, v_1^{\pm}, \hdots, v_n^{\pm}\}$ of $\pm$-vectors of $R$ such that $\{v_0, \hdots, v_n\}$ is an augmented basis for $R^n$.
    \item A \emph{partial augmented frame} for $R^n$ is a set $\{v_0^{\pm}, v_1^{\pm}, \hdots, v_k^{\pm}\}$ of $\pm$-vectors of $R$ such that $\{v_0, v_1 \hdots, v_k\}$ is a partial augmented basis for $R^n$.
\end{itemize}
Define the \emph{complex of partial augmented frames for $R^n$}, denoted $\BA_n(R)$, to be the $n$-dimensional simplicial complex whose $k$-simplices are partial augmented frames for $R^n$ of cardinality ($k+1$).
\end{definition}

\begin{definition}
  A \emph{determinant-$1$ partial frame} is a set $\{v_1^\pm,\dots,v_k^\pm\}$ such that $\{v_1,\dots,v_k\}$ is a determinant-$1$ partial basis for $R^n$.
  
Define \emph{the complex of determinant-$1$ partial frames} for $R^n$, denoted $\BD(R^n)$, to be the $(n-1)$-dimensional simplicial complex whose $k$-simplices are determinant-$1$ partial frames for $R^n$. This definition is independent of the choice of representatives $v_i$.
\end{definition}
\begin{definition} A \emph{determinant-$1$ partial augmented frame} is a set $\{v_0^\pm,v_1^\pm,\dots,v_k^\pm\}$ such that $\{v_0,v_1,\dots,v_k\}$ is a determinant-$1$ partial augmented basis for $R^n$.

Define \emph{the complex of determinant-$1$ partial augmented frames} for $R^n$, denoted $\BDA(R^n)$, to be the $n$-dimensional simplicial complex whose $k$-simplices are determinant-$1$ partial augmented frames for $R^n$. 
\end{definition}
\begin{definition}
    We define a left $\GL_n(R)$-action on $\BA_n(R)$, as follows. Let $A\in \GL_n(R),$ \[A\cdot \{v_0^\pm,v_1^\pm,\dots,v_k^\pm\} = \{(A\cdot v_0)^\pm,(A\cdot v_1)^\pm,\dots,(A \cdot v_k)^\pm\}.\]
\end{definition} 
It is important to note that $\B(R^n), \BD(R^n)$, and $ \BDA(R^n)$ are subcomplexes of $\BA_n(R)$ and they remain invariant under the $\GL_n(R)$-action. 

In \cite{CP}, Church and Putman use the complexes of partial (augmented) frames over $\Z$ to find a presentation of $\St_n(\Q)$. This presentation, originally proven by Bykovski\u i \cite{Bykovskii}, phrased in terms of these complexes is the following.

\begin{proposition} \label{resol}
For all $n\geq 1$,
\begin{equation}\label{eq:resol}\redchain_n(\BA(\Z^n);\Q) \rightarrow \redchain_{n-1}(\B(\Z^n);\Q)\rightarrow  \St_n(\Q)\otimes \Q \rightarrow 0\end{equation} is a flat $\SL_n(\Z)$-resolution of $\St_n(\Q)\otimes \Q$.
\end{proposition}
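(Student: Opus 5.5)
The plan is to deduce the statement from the Bykovski\u\i\ presentation as formulated by Church--Putman \cite{CP}, rewritten for symmetric $\Delta$-complexes and with rational coefficients. Three facts are needed: the map $\redchain_{n-1}(\B(\Z^n);\Q)\to\St_n(\Q)\otimes\Q$ is surjective, the composite $\redchain_n(\BA(\Z^n);\Q)\to\redchain_{n-1}(\B(\Z^n);\Q)\to\St_n(\Q)\otimes\Q$ is zero, and the sequence is exact at the middle term. I will also check that each of the two chain groups is a flat $\Q[\SL_n(\Z)]$-module.

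\textbf{Step 1: the map and surjectivity.} Send an $(n-1)$-simplex, that is, a frame $\{v_1^\pm,\dots,v_n^\pm\}$ with a chosen ordering, to its integral apartment class $[v_1,\dots,v_n]\in\St_n(\Q)$. This is the fundamental class of the apartment sphere spanned by the lines $\Q v_i$. The class changes sign under odd permutations, which matches the tensor with $\Q^{\mathrm{sgn}}$ in \autoref{chaincpx}. It is also invariant under replacing $v_i$ by $-v_i$, so the map is well defined on $\redchain_{n-1}(\B(\Z^n);\Q)$. Surjectivity is Ash--Rudolph: integral apartment classes generate $\St_n(\Q)$. I will cite this as the input behind \cite{CP}.

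\textbf{Step 2: composite zero and exactness.} The composite vanishes because of the three-term relation. For an augmented frame with $v_0=\pm v_1\pm v_2$, the boundary gives
\[
[v_1,v_2,\dots]-[v_0,v_2,\dots]+[v_0,v_1,\dots]=0,
\]
up to signs. This is the Bykovski\u\i\ relation, which holds because it holds already in $\St_2$ on the span of $v_1,v_2$. Faces that omit some $v_i$ with $i\ge3$ are not bases, so they do not occur in the relevant chain group. Exactness at the middle term, i.e.\ that these relations together with the sign relations give a presentation, is exactly Bykovski\u\i's theorem in the form proved by Church--Putman \cite[Thm.~A]{CP}. That theorem is stated integrally, for $\GL_n(\Z)$ and ordered frames. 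Passing to $\Q$ is harmless because $-\otimes\Q$ is exact, and the symmetric-$\Delta$ chain groups agree with the usual oriented simplicial chains of these simplicial complexes.

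\textbf{Step 3: flatness.} Over $\Q$, the module $\redchain_k$ is a direct sum over $\SL_n(\Z)$-orbits of simplices. Each summand is an induced module $\Q[\SL_n(\Z)]\otimes_{\Q[\stab]}\Q^{\epsilon}$ for a character $\epsilon$ of the stabilizer. The stabilizer of a frame or augmented frame (acting on the set of $\pm$-vectors) is finite, since it lies inside the signed permutations of a basis. Rationally, $\Q^{\epsilon}$ is therefore a direct summand of $\Q[\stab]$, so each induced module is projective, hence flat.

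The main obstacle is Step 2's exactness. If I had to prove it rather than cite it, I would follow Church--Putman's argument: show that $\BA$ is $(n-1)$-connected relative to $\B$ using a Euclidean-algorithm retraction, then read off the presentation via the long exact sequence and Solomon--Tits.
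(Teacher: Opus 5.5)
Your proposal is correct and takes essentially the same approach as the paper. The paper gives no independent proof: it quotes the Bykovski\u{\i} presentation as proved by Church--Putman \cite{CP}, and your supplementary checks are sound (well-definedness of the apartment-class map, vanishing of the composite via the three-term relation, and projectivity over $\Q[\SL_n(\Z)]$ from finiteness of simplex stabilizers in characteristic $0$).

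One correction applies only to your closing aside. The presentation is not read off from connectivity of $\BA_n(\Z)$ relative to $\B(\Z^n)$. It comes from the pair $(\BA_n(\Z),\BA_n(\Z)')$, where $\BA_n(\Z)'$ consists of the simplices of proper span, using two facts: $\BA_n(\Z)$ is $(n-1)$-connected, and the span map $\BA_n(\Z)'\to\mathcal{T}_n(\Q)$ is an isomorphism on $\redhom_{n-2}$. The second fact is a fiber-lemma argument that uses connectivity of the lower-rank complexes $\BA_k(\Z)$, not Solomon--Tits alone.
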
 
We recall that our goal is to study the coinvariants of the $\St_n(\Q)\otimes\Q$ under the congruence subgroup  \[\Gamma_{0,n}^\pm(p)=\left\{A\in\GL_n(\Z)~\middle\vert\ A\equiv \begin{pmatrix*}
* & * & \cdots& * \\
0 & * &\cdots&*\\
\vdots &\vdots & \ddots&\vdots \\
0 & * & \cdots & *
\end{pmatrix*}\bmod p\right\}.\] To this end, we study the action of $\Gamma_{0,n}^\pm(p)$ on the complex of partial (augmented) frames, $\B(\Z^n)$ and $\BA(\Z^n)$. Since their quotients by $\Gamma_{0,n}^\pm(p)$ are no longer simplicial complexes, we instead view them as symmetric $\Delta$-complexes, to better understand their chain complexes.

We denote by $\SB(R^n), \SBA(R^n), \SBD(R^n)$, and $\SBDA(R6n)$ the symmetric $\Delta$-complexes associated to $\B(R^n), \BA(R^n), \BD(R^n)$, and $\BDA(R^n)$, respectively. We also recall that the geometric realizations of these symmetric $\Delta$-complexes are homeomorphic to those of the corresponding simplicial complexes. As noted in \autoref{deltasimp}, simplices in these complexes are given by pairs $(\sigma, \pi)$, where $\sigma$ is an ordered partial (augmented) frame $\{v_0^\pm<\dots <v_k^\pm\}$ and $\pi$ is a permutation. For simplicity, we will denote our pairs by tuples of the form $(v_0^\pm, \dots, v_k^\pm)$.

These complexes admit a $\GL_n(R)$-action, which respects the symmetric $\Delta$-complex structure. Reformulating \eqref{eq:resol} in terms of the corresponding symmetric $\Delta$-complexes gives the following presentation
\begin{equation}\label{STresol}\redchain_n(\SBA(\Z^n);\Q) \rightarrow \redchain_{n-1}(\SB(\Z^n);\Q)\rightarrow  \St_n(\Q)\otimes\Q \rightarrow 0\end{equation} of $\St_n(\Q)\otimes \Q$.
\begin{definition}
Let $p$ be a prime. Let $\Pb_n^\pm(\F_p)$ denote the subgroup of $\GL_n(\F_p)$ defined as
 \begin{align*}\Pb_n^\pm(\F_p)&=\left\{A=\begin{pmatrix*}
* & * & \cdots& * \\
0 & * &\cdots&*\\
\vdots &\vdots & \ddots&\vdots \\
0 & * & \cdots & *
\end{pmatrix*}\in\GL_n(\F_p)~\middle\vert\ \det(A)=\pm 1\right\}\\&=\left\{A\in \GL_n(\F_p)\mid \det(A)=\pm 1 ~\text{and}~A\cdot e_1=\lambda e_1~\text{for some $\lambda\in\F_p^\times$} \right\}\end{align*}where $e_1\in\F_p^n$ is the first standard basis vector. Note then that $\Pb_n^\pm(\F_p)$ stabilizes the line spanned by $e_1$.
\end{definition}
 With this definition, we have the following short exact sequence 
     \begin{equation} \label{ses} 1 \longrightarrow \Gamma_n(p) \longrightarrow \Gamma_{0,n}^\pm(p) \longrightarrow  \Pb_n^\pm(\F_p) \longrightarrow 1 
\end{equation} where $\Gamma_n(p)$ is the kernel of the surjection $\SL_n(\Z)\rightarrow \SL_n(\F_p)$. 

Miller--Patzt--Putman \cite{MPP} proved the following two results.

\begin{lemma}[{\cite[Lemma 2.35]{MPP}}]\label{lem1} For a prime $p$, we have $\Gamma_n(p)\backslash\B(\Z^n) \cong \BD(\F_p^n)$ for all $n \geq 1$.
\end{lemma}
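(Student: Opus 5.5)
We need to prove that $\Gamma_n(p)\backslash\B(\Z^n)\cong\BD(\F_p^n)$. The plan is to exhibit the natural reduction-mod-$p$ map and show that it is a simplicial isomorphism on the quotient.

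\textbf{Step 1: the map.} Define $r\colon \B(\Z^n)\to\BD(\F_p^n)$ by sending a $\pm$-vector $v^\pm$ to $\bar v^\pm$, where $\bar v$ is the reduction of $v$ mod $p$. A primitive vector reduces to a nonzero vector. A partial basis $\{v_1,\dots,v_k\}$ of $\Z^n$ extends to a basis, i.e.\ to a matrix in $\GL_n(\Z)$. Its reduction lies in $\GL_n(\F_p)$ with determinant $\pm1$. Hence $\{\bar v_1,\dots,\bar v_k\}$ is a determinant-$1$ partial basis of $\F_p^n$: for $k<n$ nothing more is required, and for $k=n$ the determinant is $\pm1$. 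So $r$ is simplicial. Since $\Gamma_n(p)$ acts trivially mod $p$, $r$ factors through $\Gamma_n(p)\backslash\B(\Z^n)$.

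\textbf{Step 2: surjectivity on simplices.} Let $\{\bar w_1,\dots,\bar w_k\}$ be a determinant-$1$ partial basis of $\F_p^n$. Complete it to a basis with determinant $\pm1$. If $k<n$, rescale the last added vector to achieve this; if $k=n$, the condition is already given. Change the sign of one column if needed to get determinant $1$, which is harmless because we are working with $\pm$-vectors. By surjectivity of $\SL_n(\Z)\to\SL_n(\F_p)$ (strong approximation, or elementary matrices), lift this matrix to $A\in\SL_n(\Z)$. Its first $k$ columns form a partial basis of $\Z^n$ reducing to the given frame.

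\textbf{Step 3: injectivity on orbits, the main step.} Suppose two simplices $\sigma=\{v_i^\pm\}$ and $\tau=\{w_i^\pm\}$ have the same image. First match the vertices of $\tau$ to those of $\sigma$ and choose signs so that $\bar v_i=\bar w_i$ for all $i$; these are just choices of representatives.
\begin{itemize}
\item Complete both to bases $A,B\in\GL_n(\Z)$, arranging that $\bar A$ and $\bar B$ agree in the first $k$ columns and that $\det A=\det B=1$.
\item If $k<n$, adjust the remaining columns of $B$. Any two completions mod $p$ of the same partial basis with the same determinant differ by an element of $\SL_n(\F_p)$ that fixes the first $k$ basis vectors. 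Lift that element to $\SL_n(\Z)$ by the same surjectivity, now applied to the stabilizer. Concretely, replace $B$ by $B C$, where $C\in\SL_n(\Z)$ has the block form $\begin{pmatrix} I & * \\ 0 & D\end{pmatrix}$ with $\bar C$ prescribed. This is possible because $\SL_{n-k}(\Z)\to\SL_{n-k}(\F_p)$ is surjective and the upper-right block lifts freely. The result is $\bar A=\bar B$ while the first $k$ columns of $B$ are unchanged.
\item If $k=n$, the determinants of $A$ and $B$ are $\pm1$ with the same reductions, so they agree after sign changes of vectors, provided $p>2$. For $p=2$ signs are invisible mod $p$ anyway.
\end{itemize}
Then $g=BA^{-1}\in\Gamma_n(p)$ (when $p=2$, use $\pm$ to get determinant $1$) and $g\sigma=\tau$.

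Injectivity on orbits of simplices, together with surjectivity, identifies $\Gamma_n(p)\backslash\B(\Z^n)$ with $\BD(\F_p^n)$. For the quotient to be a simplicial complex, we also need that a simplex is determined by its vertices in the quotient. This follows from the same argument applied vertex-wise, since the map to $\BD(\F_p^n)$ is injective on the vertex set of each simplex. I expect the careful determinant and sign bookkeeping in Step 3, especially the $k=n$ and $k=n-1$ cases, to be the main obstacle.
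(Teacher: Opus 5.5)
Your proof is correct, and it is essentially the standard argument behind \cite[Lemma 2.35]{MPP}, which the paper cites without reproducing: reduction mod $p$ gives a $\Gamma_n(p)$-invariant simplicial map, surjectivity comes from lifting through $\SL_n(\Z)\to\SL_n(\F_p)$, and injectivity on orbits comes from lifting the block upper-triangular mod-$p$ stabilizer of the first $k$ columns (taking $\bar C=\bar B^{-1}\bar A$), with $\det A\equiv\det B \bmod p$ handling the signs when $k=n$. Your observation that reduction is injective on the vertex set of each simplex is what rules out folding, and the fact that your $g$ realizes the prescribed vertex matching is what makes the quotient genuinely the simplicial complex $\BD(\F_p^n)$.
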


\begin{lemma}[{\cite[Lemma 2.43]{MPP}}]\label{lem2} For a prime $p$, we have $\Gamma_n(p)\backslash\BA(\Z^n) \cong \BDA(\F_p^n)$ for all $n \geq 2$.
\end{lemma}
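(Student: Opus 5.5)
The plan is to build the map explicitly by reduction modulo $p$ and show it is a bijection on orbits of simplices. Define $r\colon \BA(\Z^n)\to \BDA(\F_p^n)$ on vertices by $v^\pm\mapsto (\bar v)^\pm$, where $\bar v$ is the reduction of $v$ mod $p$. The steps are as follows.

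\textbf{Step 1: $r$ is a well-defined simplicial map.}
A primitive vector reduces to a nonzero vector. If $\{v_1,\dots,v_k\}$ is a partial basis of $\Z^n$, then $\{\bar v_1,\dots,\bar v_k\}$ is a partial basis of $\F_p^n$. When $k=n$ we have $\det(v_1|\cdots|v_n)=\pm1$, which reduces to $\pm1$, so the image is a determinant-$1$ partial frame. An augmented simplex with $v_0=\pm v_1\pm v_2$ reduces to one with $\bar v_0=\pm\bar v_1\pm\bar v_2$.

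\textbf{Step 2: $r$ descends to a map on the quotient.}
Elements of $\Gamma_n(p)$ reduce to the identity, so $r$ is $\Gamma_n(p)$-invariant. It therefore factors through $\Gamma_n(p)\backslash\BA(\Z^n)$.

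\textbf{Step 3: surjectivity on simplices.}
Take a determinant-$1$ partial frame $\{\bar v_1^\pm,\dots,\bar v_k^\pm\}$ over $\F_p$ and complete it to a basis with determinant $\pm1$. Rescaling one vector when $k<n$, we may assume the determinant is $1$. Since $\SL_n(\Z)\to\SL_n(\F_p)$ is surjective, we can lift this to a matrix in $\SL_n(\Z)$; its first $k$ columns give an integral partial frame reducing to the given one. When $k=n$ and the determinant is $-1$, we flip one sign first, which does not change the $\pm$-vectors. For augmented simplices, we lift $\bar v_1,\dots,\bar v_k$ as above and set $v_0=\pm v_1\pm v_2$ with the same signs. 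Then $v_0$ is primitive, being part of a basis together with $v_2,\dots$, and it reduces correctly.

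\textbf{Step 4: injectivity on orbits (the main obstacle).}
Suppose two integral partial frames $\{v_i^\pm\}$ and $\{w_i^\pm\}$ have the same image. After re-choosing signs of the representatives we may assume $\bar w_i=\bar v_i$ for all $i$. We must find $g\in\Gamma_n(p)$ with $g v_i=\pm w_i$.

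Complete $(v_1,\dots,v_k)$ to a matrix $V\in\SL_n(\Z)$, or to $\det=\pm1$ if $k=n$, and similarly complete $(w_i)$ to $W'$. The matrix $\bar W'^{-1}\bar V$ fixes $e_1,\dots,e_k$, so it lies in the block group
\[
\begin{pmatrix} I_k & * \\ 0 & * \end{pmatrix}
\]
intersected with $\SL_n(\F_p)$. This group is a unipotent radical times $\SL_{n-k}(\F_p)$, and each factor lifts to the corresponding integral block subgroup of $\SL_n(\Z)$, using surjectivity of $\SL_{n-k}(\Z)\to\SL_{n-k}(\F_p)$. Let $U$ be such a lift and put $W=W'U$. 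Then $W$ still begins with $w_1,\dots,w_k$ and satisfies $\bar W=\bar V$, so $g=WV^{-1}\in\Gamma_n(p)$ and $g v_i=w_i$.

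The delicate case is $k=n$. There $\det V=\pm1$ and $\det W'=\pm1$ are already forced, and equal reductions force equal determinants when $p>2$. When $p=2$, replacing $w_n$ by $-w_n$ fixes the sign without changing the $\pm$-vector.

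For augmented simplices $\{v_0,\dots,v_k\}$, the vertex $v_0$ is determined by $v_1,v_2$ and the signs. If $g v_1=w_1$ and $g v_2=w_2$, then $g v_0=\pm w_0$, because the signs are detected mod $p$ for $p>2$ and are irrelevant for $p=2$ up to $\pm$. Here one has to be careful with the relabelling: the relation $\pm v_0\pm v_1\pm v_2=0$ is symmetric, so the two simplices should be compared using the pair that forms part of a basis. This completes injectivity.

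\textbf{Step 5: compatibility with faces.}
Since $r$ is compatible with faces, we obtain a bijection of simplicial complexes $\Gamma_n(p)\backslash\BA(\Z^n)\cong\BDA(\F_p^n)$. The vertex case of Step 4 ($k=1$) shows in particular that the quotient is again a simplicial complex. The hypothesis $n\ge2$ is used so that augmented simplices exist and so that the lifting in Step 4 has room.
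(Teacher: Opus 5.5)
Your Steps 1--3 and the standard-simplex part of Step 4 are fine. The augmented case of Step 4, however, breaks at $p=2$.

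\textbf{Where Step 4 fails.} You claim that once $gv_1=w_1$ and $gv_2=w_2$, one gets $gv_0=\pm w_0$ because the signs are irrelevant mod $2$. As written this is false. Write $v_0=\epsilon_1v_1+\epsilon_2v_2$ and $w_0=\delta_1w_1+\delta_2w_2$. Then $gv_0=\epsilon_1w_1+\epsilon_2w_2$, and reduction mod $2$ gives no information about whether $(\delta_1,\delta_2)=\pm(\epsilon_1,\epsilon_2)$.

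The right move is to re-choose the representative of $w_2$ \emph{before} building $g$, so that $(\delta_1,\delta_2)=(\epsilon_1,\epsilon_2)$; this is harmless mod $2$. But when $k=n$ it flips the sign of $\det(w_1|\cdots|w_n)$, which you also needed to match. If $k<n$, the completion absorbs this. If $k=n\geq 3$, you can flip $w_n$, which does not occur in the relation. So your argument does go through for $p$ odd, and for $p=2$ with $n\geq 3$.

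For $k=n=2$ and $p=2$ the two normalizations are incompatible. Label the three vertices of a $2$-simplex by their reductions mod $2$, which are pairwise distinct. The sign $\epsilon_1\epsilon_2\det(v_1|v_2)$ is then unchanged by re-choosing representatives and by the action of $\Gamma_2(2)$, yet it is invisible mod $2$.

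\textbf{The statement itself fails at $(n,p)=(2,2)$.} No argument can close this gap. Consider the $2$-simplices
$\sigma=\{e_1^\pm,e_2^\pm,(e_1+e_2)^\pm\}$ and $\tau=\{e_1^\pm,e_2^\pm,(e_1-e_2)^\pm\}$ of $\BA(\Z^2)$. Both reduce to the unique $2$-simplex of $\BDA(\F_2^2)$.

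Suppose $g\in\Gamma_2(2)$ satisfied $g\sigma=\tau$. Then $g$ preserves reductions mod $2$, and $\bar e_1,\bar e_2,\bar e_1+\bar e_2$ are distinct, so $ge_i=\pm e_i$ for $i=1,2$. Since $\det g=1$, this forces $g=\pm I$, which fixes $\sigma$ rather than sending it to $\tau$.

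The global picture confirms this. The image of $\Gamma_2(2)$ in $\mathrm{PSL}_2(\Z)$ is torsion-free of index $6$. In the Farey complex $\BA(\Z^2)$, triangle stabilizers have order $3$ and edge stabilizers order $2$. Hence $\Gamma_2(2)\backslash\BA(\Z^2)$ has $3$ vertices, $3$ edges and $2$ triangles, so it is a $2$-sphere. By contrast, $\BDA(\F_2^2)$ is a single triangle.

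The two triangles are exchanged by $\mathrm{diag}(1,-1)\in\Gamma_{0,2}^\pm(2)$. So the defect disappears in the $\Gamma_{0,2}^\pm(2)$-quotient used later in the paper, but the lemma as stated for $\Gamma_2(2)$ is false.
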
 

It then follows by \eqref{ses}, that we can describe the quotients $\Gamma_{0,n}^\pm(p)\backslash\SB(\Z^n)$ and $\Gamma_{0,n}^\pm(p)\backslash\SBA(\Z^n)$ as in following corollary.

\begin{corollary}\label{good q} For all primes $p$ and $n\geq 1$, 
    \[\Gamma_{0,n}^\pm(p)\backslash \SB(\Z^n)\cong \Pb_n^\pm(\F_p)\backslash\SBD(\F_p^n)\]and\[\Gamma_{0,n}^\pm(p)\backslash \SBA(\Z^n)\cong \Pb_n^\pm(\F_p)\backslash\SBDA(\F_p^n).\]
\end{corollary}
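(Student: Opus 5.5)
The plan is to quotient in two stages, using the short exact sequence \eqref{ses}. Since $\Gamma_n(p)$ is normal in $\Gamma_{0,n}^\pm(p)$, for any symmetric $\Delta$-complex $Y$ with a $\Gamma_{0,n}^\pm(p)$-action we have, levelwise,
\[\Gamma_{0,n}^\pm(p)\backslash Y_k\cong \left(\Gamma_{0,n}^\pm(p)/\Gamma_n(p)\right)\backslash\left(\Gamma_n(p)\backslash Y_k\right)\cong \Pb_n^\pm(\F_p)\backslash\left(\Gamma_n(p)\backslash Y_k\right).\]
These bijections are compatible with the face maps and the $\Sigma_{k+1}$-actions, because all of these structures are induced from $Y$ by \autoref{qut}. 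So $\Gamma_{0,n}^\pm(p)\backslash Y\cong \Pb_n^\pm(\F_p)\backslash(\Gamma_n(p)\backslash Y)$ as symmetric $\Delta$-complexes. We apply this with $Y=\SB(\Z^n)$ and $Y=\SBA(\Z^n)$.

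Next we identify the inner quotient. By \autoref{lem1}, $\Gamma_n(p)\backslash\B(\Z^n)\cong\BD(\F_p^n)$, and this quotient is a simplicial complex. So \autoref{lem:Sy} gives $\Gamma_n(p)\backslash\SB(\Z^n)\cong\mathrm{S}(\Gamma_n(p)\backslash\B(\Z^n))\cong\SBD(\F_p^n)$. Likewise, \autoref{lem2} together with \autoref{lem:Sy} gives $\Gamma_n(p)\backslash\SBA(\Z^n)\cong\SBDA(\F_p^n)$ for $n\ge 2$. The case $n=1$ is trivial, since both sides reduce to a point, or to the analogous degenerate complex.

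One subtlety concerns the total orders used in \autoref{delta}. For \autoref{lem:Sy} to apply, the vertex orderings on $\B(\Z^n)$ and on the quotient must be chosen compatibly, for instance by ordering vertices upstairs by first comparing their images. Alternatively, one can note that the symmetric $\Delta$-complex $\mathrm{S}Y$ is independent of the order up to canonical isomorphism: its $k$-simplices are just ordered $(k+1)$-tuples of vertices spanning a simplex.

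The main step to verify is that the isomorphisms of \autoref{lem1} and \autoref{lem2} are equivariant. They are induced by reduction mod $p$, $v^\pm\mapsto \bar v^\pm$. Therefore the residual action of $\Gamma_{0,n}^\pm(p)/\Gamma_n(p)\cong\Pb_n^\pm(\F_p)$ on $\Gamma_n(p)\backslash\SB(\Z^n)$ corresponds to the natural linear action of $\Pb_n^\pm(\F_p)$ on $\SBD(\F_p^n)$. This holds because $\overline{Av}=\bar A\bar v$, and the determinant-$\pm1$ condition is preserved since $\det\bar A=\pm1$. Taking $\Pb_n^\pm(\F_p)$-quotients of equivariantly isomorphic symmetric $\Delta$-complexes then yields both stated isomorphisms.

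We also need the surjectivity in \eqref{ses}, which is what makes the quotient group exactly $\Pb_n^\pm(\F_p)$. It follows from strong approximation: $\SL_n(\Z)\to\SL_n(\F_p)$ is surjective. For a determinant $-1$ element of $\Pb_n^\pm(\F_p)$, compose with $\mathrm{diag}(-1,1,\dots,1)$, which lies in $\Gamma_{0,n}^\pm(p)$.
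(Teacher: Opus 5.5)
Your argument follows the same route as the paper: quotient first by $\Gamma_n(p)$ using \autoref{lem1}, \autoref{lem2} and \autoref{lem:Sy}, then by the residual group via \eqref{ses}. You are more careful than the paper in three places: you check that the mod-$p$ identification is equivariant, that $\mathrm{S}Y$ does not depend on the vertex order, and that reduction maps onto $\Pb_n^\pm(\F_p)$.

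One step is false as written. The identification $\Gamma_{0,n}^\pm(p)/\Gamma_n(p)\cong\Pb_n^\pm(\F_p)$ holds only for odd $p$, where $A\equiv I \bmod p$ forces $\det A=1$.

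For $p=2$, take $D=\mathrm{diag}(-1,1,\dots,1)$. It lies in $\Gamma_{0,n}^\pm(2)$ and reduces to the identity mod $2$, but $\det D=-1$, so $D\notin\Gamma_n(2)$. Hence the kernel of reduction $\Gamma_{0,n}^\pm(2)\to\Pb_n^\pm(\F_2)$ contains $\Gamma_n(2)$ with index $2$. So \eqref{ses}, as stated in the paper too, is not exact in the middle when $p=2$.

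This does not affect the conclusion, and the repair is already in your equivariance paragraph. The induced action of $\Gamma_{0,n}^\pm(p)$ on $\Gamma_n(p)\backslash Y\cong\SBD(\F_p^n)$ (resp.\ $\SBDA(\F_p^n)$) factors through the reduction homomorphism $\Gamma_{0,n}^\pm(p)\to\Pb_n^\pm(\F_p)$. Since that homomorphism is surjective, the $\Gamma_{0,n}^\pm(p)$-orbits on $\Gamma_n(p)\backslash Y_k$ are exactly the $\Pb_n^\pm(\F_p)$-orbits on $\SBD(\F_p^n)_k$, whatever the kernel is.

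So phrase the second stage as ``the action factors through a surjection onto $\Pb_n^\pm(\F_p)$'' rather than ``the quotient group is $\Pb_n^\pm(\F_p)$''. With that change the proof covers all primes; when $p=2$, $D$ simply acts trivially.
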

\begin{proof}
  It follows from \autoref{lem:Sy} the isomorphisms  \[\Gamma_n(p)\backslash \SB(\Z^n)\cong  \SBD(\F_p^n),\]and  \[\Gamma_n(p)\backslash \SBA(\Z^n)\cong  \SBDA(\F_p^n).\]
    Therefore, using \eqref{ses}, we conclude that  \[\Gamma^\pm_{0,n}(p)\backslash \SB(\Z^n)\cong \Pb_n^\pm(\F_p)\backslash\left(\Gamma_n(p)\backslash \SB(\Z^n)\right)\cong  \Pb_n^\pm(\F_p)\backslash \SBD(\F_p^n),\]and similarly,\[\Gamma^\pm_{0,n}(p)\backslash \SBA(\Z^n)\cong \Pb_n^\pm(\F_p)\backslash\left(\Gamma_n(p)\backslash \SBA(\Z^n)\right)\cong  \Pb_n^\pm(\F_p)\backslash \SBDA(\F_p^n).\qedhere\]\end{proof}
We now introduce two sets of matrices, which will be used to describe combinatorial data associated with the complexes $\SBD(\F_p^n)$ and $\SBDA(\F_p^n)$. For vectors $v_1,\dots,v_k\in\F_p^n$, we write \[\left(v_1|\dots|v_k\right)\]for the matrix whose columns are the vectors $v_1,\dots,v_k$.
\begin{definition} \label{matrix}\
    \begin{itemize}
        \item Let $k\leq n-1$. Define \[\MD(\F_p^n)_k=\left\{(v_0|\dots|v_k)~\middle\vert\begin{array}{c}v_0,\dots,v_k~\text{are linearly independent vectors in $\F_p^n$}, \\\text{and}~\det\left(v_0|\dots|v_k\right)
=\pm 1 ~\text{if $k=n-1$}\end{array}\right\}.\]
        \item  Let $2\leq k\leq n$. Define \[\MDA(\F_p^n)_k=\left\{(v_0|\dots| v_k)~\middle\vert\begin{array}{c}v_1,\dots,v_k~\text{are linearly independent vectors in $\F_p^n$}, \\v_0+v_1+v_2=0,~\text{and}~\det\left(v_1|\dots| v_k\right)
=\pm 1 ~\text{if $k=n$}\end{array}\right\}.\]
    \end{itemize}
\end{definition}
We will define the symmetry groups of $\MD(\F_p^n)_k$ and $\MDA(\F_p^n)_k$.

\begin{definition}
    For $k \in\N$. Set \[T_k=\Sigma_{k+1}\ltimes \{-1,1\}^{k+1}.\] For $k\geq 2$, set \[G_k=\Sigma_{\{0,1,2\}}\times \{-1,1\}\times\left(\Sigma_{\{3,\dots,k\}}\ltimes \{-1,1\}^{k-2}\right).\]Note that $T_0=\{-1,1\}$ and $G_2=\Sigma_3\times \{-1,1\}$.
\end{definition}
We write elements in $T_k$ and $G_k$ in the form $X=(\pi,\varepsilon_0,\dots,\varepsilon_k)$ and $X=(\pi,\varepsilon,\tau,\varepsilon_3,\dots,\varepsilon_k)$, respectively. $T_k$ is a group with multiplication given by \[(\pi,\varepsilon_0,\dots,\varepsilon_k)\cdot(\pi',\varepsilon_0',\dots,\varepsilon_k')=(\pi\pi',\varepsilon_{\pi'(0)}\varepsilon'_0,\dots,\varepsilon_{\pi'(k)}\varepsilon_k')\]and $G_k$ is a group with multiplication defined by  \[(\pi,\varepsilon,\tau,\varepsilon_3,\dots,\varepsilon_k)\cdot(\pi',\varepsilon',\tau',\varepsilon_3,',\dots,\varepsilon_k')=(\pi\pi',\varepsilon\varepsilon',\tau\tau',\varepsilon_{\tau'(3)}\varepsilon_3',\dots,\varepsilon_{\tau'(k)}\varepsilon_k').\]

We next define an action of the symmetry groups $T_k$ and $G_k$ on the matrix sets $\MD(\F_p^n)_k$ and $\MDA(\F_p^n)_k$, respectively. This will allow us to describe the simplices of our symmetric $\Delta$-complexes as orbits under these group actions.
\begin{definition}
    Let $p$ be a prime and $n\geq 1$. For all $0\leq k\leq n-1$, the group $T_k$ acts on $\MD(\F_p^n)_k$ as follows: for $B=(v_0|\dots|v_k)\in \MD(\F_p^n)_k$ and $X=(\pi,
    \varepsilon_0|\dots | \varepsilon_k)\in T_k$, we define \[B\cdot X=\left(\varepsilon_0v_{\pi(0)}\mid\dots\mid\varepsilon_kv_{\pi(k)}\right).\]For all $2\leq k\leq n$, the group $G_k$ acts on $\MDA(\F_p^n)_k$ as follows: for $B=(v_0|\dots|v_k)\in \MDA(\F_p^n)_k$ and $X=(\pi,\varepsilon,\tau,
    \varepsilon_3,\dots,\varepsilon_k)\in G_k$, we define \[B\cdot X=\left(\varepsilon v_{\pi(0)},\varepsilon v_{\pi(1)},\varepsilon v_{\pi(2)},\varepsilon_3v_{\tau(3)},\dots,\varepsilon_kv_{\tau(k)}\right).\]\end{definition}
    
\begin{lemma} \label{matrixrep} Let $p$ be a prime and $n\geq 2$. For $k\geq 0$, \[\SBD(\F_p^n)_k/\Sigma_{k+1} \cong \MD(\F_p^n)_k / T_k,\]\[\Q\left[\Pb_n^\pm(\F_p)\backslash \SBD(\F_p^n)_k\right]\otimes_{\Q[\Sigma_{k+1}]}\Q^\mathrm{sgn} \cong\Q\left[\Pb_n^\pm(\F_p)\backslash \MD(\F_p^n)_k\right]\otimes_{\Q[T_k]}\Q^\mathrm{sgn}\]and for $k\geq 2$,\[  \left(\SBDA(\F_p^n)_k\setminus \SBD(\F_p^n)_k\right)/\Sigma_{k+1} \cong \MDA(\F_p^n)_k / G_k,\]\[\Q\left[\Pb_n^\pm(\F_p)\backslash\left( \SBDA(\F_p^n)_k-\SBD(\F_p^n)_k\right)\right]\otimes_{\Q[\Sigma_{k+1}]}\Q^\mathrm{sgn}\cong \Q\left[\Pb_n^\pm(\F_p)\backslash \MDA(\F_p^n)_k\right]\otimes_{\Q[G_k]}\Q^\mathrm{sgn}.\]
\end{lemma}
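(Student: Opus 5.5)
The plan is to realize each simplex set as a quotient of the relevant matrix set by a group that acts freely, and then rewrite the orbit quotients and coinvariant modules as iterated quotients of one common set.

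\emph{First isomorphism.} Define a map $\Phi\colon \MD(\F_p^n)_k\to \SBD(\F_p^n)_k$ by sending $B=(v_0|\dots|v_k)$ to the ordered tuple $(v_0^\pm,\dots,v_k^\pm)$. In the notation of \autoref{deltasimp}, this tuple is the pair consisting of the sorted set $\{v_0^\pm,\dots,v_k^\pm\}$ and the permutation recording the order. The image is a determinant-$1$ partial frame because the vectors are linearly independent, and when $k=n-1$ the determinant condition is imposed exactly by the definition of $\MD$; the sign condition is insensitive to replacing $v_i$ by $-v_i$. The map $\Phi$ is surjective, since any choice of representatives of the $\pm$-vectors gives a preimage. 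Two matrices have the same image exactly when they differ by sign changes in the columns, i.e.\ by the subgroup $\{\pm1\}^{k+1}\subset T_k$. Moreover, $\Phi(B\cdot(\pi,\varepsilon))=\Phi(B)\cdot\pi$ for the right $\Sigma_{k+1}$-action. Hence $\SBD(\F_p^n)_k\cong \MD(\F_p^n)_k/\{\pm1\}^{k+1}$ equivariantly for $\Sigma_{k+1}=T_k/\{\pm1\}^{k+1}$, and quotienting by $\Sigma_{k+1}$ gives $\MD/T_k$.

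\emph{Second isomorphism.} The left action of $\Pb_n^\pm(\F_p)$ is by matrix multiplication on $\MD$; it commutes with the right $T_k$-action and preserves the determinant condition since elements have determinant $\pm1$. Let $\Q^{\mathrm{sgn}}$ be the $T_k$-module pulled back via $T_k\to\Sigma_{k+1}$, so the signs $\varepsilon_i$ act trivially. Then
\[
\Q[\Pb\backslash\MD_k]\otimes_{\Q[T_k]}\Q^{\mathrm{sgn}}\cong\bigl(\Q[\Pb\backslash\MD_k]_{\{\pm1\}^{k+1}}\bigr)\otimes_{\Q[\Sigma_{k+1}]}\Q^{\mathrm{sgn}}.
\]
Since $\{\pm1\}^{k+1}$ is normal and the actions commute, the inner coinvariants equal $\Q[\Pb\backslash\MD_k/\{\pm1\}^{k+1}]\cong\Q[\Pb\backslash\SBD_k]$, and this identification is $\Sigma_{k+1}$-equivariant by the first step. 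This gives the second isomorphism.

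\emph{Augmented case.} For $k\ge2$, a simplex in $\SBDA_k\setminus\SBD_k$ is an ordered tuple of $\pm$-vectors in which some three members are linearly dependent with $\pm w\pm w'\pm w''=0$. A matrix in $\MDA_k$ is mapped to its tuple of $\pm$-vectors. The image is not in $\SBD$, since $v_0,v_1,v_2$ are dependent, and it is a determinant-$1$ augmented frame. To get surjectivity onto all orderings, let $\Sigma_{k+1}$ act on the target and note that $\SBDA_k\setminus\SBD_k$ is a union of $\Sigma_{k+1}$-orbits of tuples with the dependent triple in positions $0,1,2$. So I will prove that $\MDA_k/G_k\to(\SBDA_k\setminus\SBD_k)/\Sigma_{k+1}$ is a bijection by checking three things:
\begin{itemize}
\item \emph{Surjectivity.} Choose representatives with $v_0+v_1+v_2=0$. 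This is possible by flipping signs; $v_1,v_2$ are independent because the frame is augmented.
\item \emph{Uniqueness of the triple.} The dependent triple is unique, since any three other vectors are part of a basis.
\item \emph{Uniqueness of signs.} The remaining sign freedom is exactly $G_k$: a simultaneous sign $\varepsilon$ on the triple preserves $v_0+v_1+v_2=0$ and is the only freedom there (for $p>2$; for $p=2$ signs are trivial anyway), together with arbitrary permutations of the triple, independent signs $\varepsilon_i$, and permutations of the rest.
\end{itemize}
For the coinvariant statement I repeat the second-step argument with the stabilizer subgroup $\Sigma_{\{0,1,2\}}\times\Sigma_{\{3..k\}}\subset\Sigma_{k+1}$. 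The main obstacle is the sign bookkeeping here: $\Q[\text{orbit }\Sigma_{k+1}/H]\otimes_{\Sigma_{k+1}}\Q^{\mathrm{sgn}}$ reduces to $\otimes_{H}\Q^{\mathrm{sgn}}$ (Shapiro/induction), and one must check that the sign of $G_k$ pulled back through $H$ matches. I also need to verify that the sign group $\{\pm1\}\times\{\pm1\}^{k-2}$ acts with the right stabilizers, including the degenerate case $p=2$, where $v=-v$ makes the sign subgroup act non-freely but only trivially, so the coinvariants are unaffected.
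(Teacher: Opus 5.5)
Your proposal is correct and follows essentially the same route as the paper. You quotient out the sign subgroup to pass from matrices to ordered frames; in the augmented case you normalize to $v_0+v_1+v_2=0$, use uniqueness of the dependent triple to confine permutations to $\Sigma_{\{0,1,2\}}\times\Sigma_{\{3,\dots,k\}}$, and use $p\neq 2$ to force equal signs on the triple. Your handling of the two coinvariant statements is a cleaner and more complete version of the paper's generator-level map and its ``proved similarly''. There you take coinvariants in stages over the normal sign subgroup, then induce from $H=\Sigma_{\{0,1,2\}}\times\Sigma_{\{3,\dots,k\}}$, using that $\sign_{G_k}$ is the pullback of $\sign_{\Sigma_{k+1}}|_H$.
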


\begin{proof}
We recall that elements in $\SBD(\F_p^n)_k$ are ordered determinant-$1$ partial frames $(v_0^\pm,\dots,v_k^\pm)$ where $v_{i}^\pm=\{v_i,-v_i\}$. Since $T_k\cong \{\pm\}^{k+1}\ltimes \Sigma_{k+1}$, we conclude the isomorphism \[\SBD(\F_p^n)_k/\Sigma_{k+1} \cong \MD(\F_p^n)_k / T_k.\]
As for the second isomorphism, let $\sigma\in \SBD(\F_p^n)_k$. Choose an element $B\in\MD(\F_p^n)_k$ and $\pi\in \Sigma_{k+1}$ such that $\sigma\cdot \pi=B\cdot X$ for some $X\in T_k$ with $\sign(X)=\sign(\pi).$ Define the map $\Phi$ sending $\Pb_n^\pm(\F_p)\sigma\otimes 1$ to $\Pb_n^\pm(\F_p)B\otimes 1$. We also define the inverse map the following way: Let $B=(v_0,\dots,v_k)\in\MD(\F_p^n)_k$, \[\Phi^{-1}(\Pb_n^\pm(\F_p)B\otimes1)=\Pb_n^\pm(\F_p)(v_0^\pm,\dots,v_k^\pm)\otimes1.\] 

We now prove the isomorphism \[  \left(\SBDA(\F_p^n)_k-\SBD(\F_p^n)_k\right)/\Sigma_{k+1} \cong \MDA(\F_p^n)_k / G_k.\]we first observe that for every $\sigma\in  \SBDA(\F_p^n)_k\setminus \SBD(\F_p^n)_k$, we can write $\sigma=\sigma_1\pi$ where $\sigma_1=(v_0^\pm,\dots,v_k^\pm)$ satisfies $\pm v_0\pm v_1\pm v_2=0$ for the first three vectors, for some $\pi\in \Sigma_{k+1}$. Furthermore, since $v^\pm=\{v,-v\}$, we may choose $\sigma_1$ such that $v_0+v_1+v_2=0$. Thus, we may assume that elements in the quotient $ \left(\SBDA(\F_p^n)_k\setminus \SBD(\F_p^n)_k\right)/\Sigma_{k+1} $ are determinant-$1$ partial augmented frames of the form $\sigma=(v_0^\pm,\dots,v_k^\pm)$ with $v_0+v_1+v_2=0$. Consider the map \[\psi\colon \left(\SBDA(\F_p^n)_k\setminus \SBD(\F_p^n)_k\right)/\Sigma_{k+1}\longrightarrow \MDA(\F_p^n)_k/G_k\] defined by \[\sigma\Sigma_{k+1}\mapsto (v_0|\dots|v_k)G_k.\]Take $\sigma'=(v_0'^\pm,\dots,v_k'^\pm)$ such that $ v_0'+v_1'+v_2'=0$ and $\sigma'=\sigma\cdot\pi$ for some $\pi\in \Sigma_{k+1}$. Since $v_0'+v_1'+v_2'=0$ and $v_1,\dots,v_k$ are linearly independent in $\F_p^n$, it follows that $\pi=(\pi_1,\pi_2)\in\Sigma_3\times \Sigma_{k-2}$. Moreover, we have $v_i'=\varepsilon_iv_{\pi(i)}$ for some $\varepsilon_i\in\{-1,1\}$. It implies that\[\varepsilon_0v_{\pi_1(0)}+\varepsilon_1v_{\pi_1(1)}+\varepsilon_2v_{\pi_1(2)}=0.\]
We will show that if $p\neq 2$, then $\varepsilon_0=\varepsilon_1=\varepsilon_2$. Suppose for contradiction that $\varepsilon_i\neq \varepsilon_j$ for some $i,j$. And without loss of generality, we assume that $\varepsilon_0=\varepsilon_1=-\varepsilon_2$ and $\pi_1=\Id$. Then the relation $\varepsilon_0v_{\pi_1(0)}+\varepsilon_1v_{\pi_1(1)}+\varepsilon_2v_{\pi_1(2)}=0$ becomes \[v_0+v_1-v_2=0.\] On the other hand, since $v_0+v_1+v_2=0$, it follows that $2v_2=0$. As $p\neq 2$, this forces $v_2=0$, which contradicts the assumption that $v_2\in \F_p^n\setminus\{\underline{0}\}$. Thus, we must have\[\varepsilon_0=\varepsilon_1=\varepsilon_2=\varepsilon=\pm 1.\]When $p = 2$, we have $-v_i = v_i$ for all $i$, so in particular $-\varepsilon v_i=\varepsilon v_i$.
Therefore, \begin{align*} \psi(\sigma'\Sigma_{k+1})&=\left(\varepsilon v_{\pi_1(0)}\mid\varepsilon v_{\pi_1(1)}\mid \varepsilon v_{\pi_1(2)}\mid\varepsilon_3v_{\pi_2(3)}\dots\mid \varepsilon_kv_{\pi_2(k)}\right)G_k\\&=(v_0|\dots|v_k)\cdot (\pi_1,\varepsilon,\pi_2,\varepsilon_3,\dots,\varepsilon_k)G_k\\&=\psi(\sigma\Sigma_{k+1}).
\end{align*}This implies that $\psi$ is well defined.Bijectivity follows immediately. Lastly, the final isomorphism is proved similarly to the second isomorphism.
\end{proof}
\begin{remark}\label{rmk:ident}
Via the isomorphisms in \autoref{matrixrep}, if $\sigma\in\SBD(\F_p^n)_k$, then the coset $\sigma\Sigma_{k+1}$ is identified with the coset $B\cdot T_k$ for some $B\in\MD(\F_p^n)_k$. If $\sigma\in\SBDA(\F_p^n)_k\setminus \SBD(\F_p^n)_k$, then the coset $\sigma\Sigma_{k+1}$ is identified with the coset $B\cdot G_k$ for some $B\in\MDA(\F_p^n)_k$.
 
\end{remark}
We now define a sign homomorphism on the symmetry groups $T_k$ and $G_k$.
\begin{definition} Let $k \in\N$. Define the function $\sign$ on $T_k$ as follows
\begin{align*} \sign\colon T_k &\longrightarrow \{-1,1\}\\ X=(\pi,\varepsilon_0,\dots,\varepsilon_k) &\mapsto \sign(\pi)
\end{align*}and on $G_k$ in the following way
\begin{align*} \sign\colon G_k &\longrightarrow \{-1,1\}\\ X=(\pi,\varepsilon,\tau,\varepsilon_3,\dots,\varepsilon_k) &\mapsto \sign(\pi)\sign(\tau)
\end{align*}
\end{definition}
Using the isomorphisms in \autoref{matrixrep}, we see that $\Pb_n^\pm(\F_p)$ acts naturally on $\MD(\F_p^n)_k$ and $\MDA(\F_p^n)_k$ by left multiplication. We therefore make the following definitions.
\begin{definition}\label{def:action on B}
    Let $p$ be a prime, $n\geq 1$. Define the following. \begin{itemize}
        \item Let $\MD(\F_p^n)^\mathrm{rv}_k$ be the set of all matrices $B\in \MD(\F_p^n)_k$ for which there exist $A\in \Pb_n^\pm(\F_p)$ and $X\in T_k$ with $\sign(X)=-1$ such that $A\cdot B=B\cdot X$. Let $\MD(\F_p^n)_k^\mathrm{pr}=\MD(\F_p^n)_k -  \MD(\F_p^n)_k^\mathrm{rv}$.
        \item Let $\MDA(\F_p^n)^\mathrm{rv}_k$ be the set of all matrices $B\in \MDA(\F_p^n)_k$ for which there exist $A\in \Pb_n^\pm(\F_p)$ and $X\in G_k$ with $\sign(X)=-1$ such that $A\cdot B=B\cdot X$. Let $\MDA(\F_p^n)_k^\mathrm{pr}=\MDA(\F_p^n)_k -  \MDA(\F_p^n)_k^\mathrm{rv}$.
         \item Let $\MD(\F_p^n)^\mathrm{tw}_k$ be the set of all matrices $B\in \MD(\F_p^n)_k$ for which there exist $A\in \Pb_n^\pm(\F_p)$ and $X\in T_k$ with $\det(A)\sign(X)=-1$ such that $A\cdot B=B\cdot X$. Let $\MD(\F_p^n)_k^\mathrm{utw}=\MD(\F_p^n)_k -  \MD(\F_p^n)_k^\mathrm{tw}$.
        \item Let $\MDA(\F_p^n)^\mathrm{utw}_k$ be the set of all matrices $B\in \MDA(\F_p^n)_k$ for which there exist $A\in \Pb_n^\pm(\F_p)$ and $X\in G_k$ with $\det(A)\sign(X)=-1$ such that $A\cdot B=B\cdot X$. Let $\MDA(\F_p^n)_k^\mathrm{utw}=\MDA(\F_p^n)_k -  \MDA(\F_p^n)_k^\mathrm{tw}$.
    \end{itemize}
\end{definition}
\begin{lemma}\label{lem:goodmat}
   Let $p$ be a prime, $n\geq 1$.  Then \[\SBD(\F_p^n)_k^\mathrm{rv}/\Sigma_{k+1}\cong \MD(\F_p^n)_k^\mathrm{rv}/T_k,\quad\left(\SBDA(\F_p^n)_k ^\mathrm{rv} -\SBD(\F_p^n)_k^\mathrm{rv}\right)/\Sigma_{k+1}\cong \MDA(\F_p^n)_k^\mathrm{rv}/G_k,\]\[\SBD(\F_p^n)_k^\mathrm{pr}/\Sigma_{k+1}\cong \MD(\F_p^n)_k^\mathrm{pr}/T_k,\quad\left(\SBDA(\F_p^n)_k ^\mathrm{pr} -\SBD(\F_p^n)_k^\mathrm{pr}\right)/\Sigma_{k+1}\cong \MDA(\F_p^n)_k^\mathrm{pr}/G_k.\]Moreover, \[\SBD(\F_p^n)_k^\mathrm{tw}/\Sigma_{k+1}\cong \MD(\F_p^n)_k^\mathrm{tw}/T_k,\quad\left(\SBDA(\F_p^n)_k ^\mathrm{tw}- \SBD(\F_p^n)_k^\mathrm{tw}\right)/\Sigma_{k+1}\cong \MDA(\F_p^n)_k^\mathrm{tw}/G_k,\] \[\SBD(\F_p^n)_k^\mathrm{utw}/\Sigma_{k+1}\cong \MD(\F_p^n)_k^\mathrm{utw}/T_k,\quad\left(\SBDA(\F_p^n)_k^\mathrm{utw} - \SBD(\F_p^n)_k^\mathrm{utw}\right)/\Sigma_{k+1}\cong \MDA(\F_p^n)_k^\mathrm{utw}/G_k.\]
\end{lemma}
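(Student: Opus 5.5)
The plan is to refine the bijections of \autoref{matrixrep} so that they respect the four decompositions. Concretely, I will show that each of these bijections $\SBD(\F_p^n)_k/\Sigma_{k+1}\to \MD(\F_p^n)_k/T_k$ and $\left(\SBDA(\F_p^n)_k\setminus\SBD(\F_p^n)_k\right)/\Sigma_{k+1}\to\MDA(\F_p^n)_k/G_k$ carries the orientation-reversing (respectively twisted) classes exactly onto the orientation-reversing (respectively twisted) classes of matrices. Since the complementary sets are defined by complement on both sides, the statements for $\mathrm{pr}$ and $\mathrm{utw}$ then follow immediately.

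First I check well-definedness on the matrix side. The conditions defining $\MD(\F_p^n)^\mathrm{rv}_k$ and $\MD(\F_p^n)^\mathrm{tw}_k$ must be invariant under the right $T_k$-action and the left $\Pb_n^\pm(\F_p)$-action, and the same for $G_k$. This is the same conjugation computation used for $Y_k^\mathrm{rv}$ and $Y_k^\mathrm{tw}$. If $AB=BX$, then
\[A(BY)=(BY)(Y^{-1}XY),\qquad (CAC^{-1})(CB)=(CB)X.\]
Here $\sign$ is a homomorphism on $T_k$ and $G_k$, and $\det$ is conjugation invariant, so both conditions are preserved. For $\sign$ to be a homomorphism I note that on $T_k$ it factors through $T_k\to\Sigma_{k+1}$, and on $G_k$ through $G_k\to\Sigma_{\{0,1,2\}}\times\Sigma_{\{3,\dots,k\}}$. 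In both cases this follows from the multiplication rules given above.

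Next comes the key comparison, which I carry out in the $\MD$ case. Let $\sigma=(v_0^\pm,\dots,v_k^\pm)$ correspond to $B=(v_0|\dots|v_k)$. Suppose $A\sigma=\sigma\pi$ for some $A\in\Pb_n^\pm(\F_p)$ and $\pi\in\Sigma_{k+1}$. Then $Av_i^\pm=v_{\pi(i)}^\pm$, so $Av_i=\varepsilon_i v_{\pi(i)}$ for suitable signs $\varepsilon_i$, that is, $AB=BX$ with $X=(\pi,\varepsilon_0,\dots,\varepsilon_k)$ and $\sign(X)=\sign(\pi)$. Conversely, forgetting the signs turns $AB=BX$ back into $A\sigma=\sigma\pi$ with the same sign.

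Hence $\sigma$ is reversing if and only if $B$ is, and twisted if and only if $B$ is. In the twisted case, the condition $\chi(g)\sign(\pi)\neq1$ with $\chi=\det$ taking values in $\{\pm1\}$ reads exactly $\det(A)\sign(X)=-1$.

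Here I need to be careful about the convention for the permutation in $B\cdot X$, namely $\varepsilon_i v_{\pi(i)}$ versus $v_{\pi^{-1}(i)}$. This only replaces $\pi$ by $\pi^{-1}$, which has the same sign, so it is harmless.

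The augmented case is the step I expect to be the main obstacle. Given $\sigma\in\SBDA_k\setminus\SBD_k$ normalized with $v_0+v_1+v_2=0$, the element $A$ preserves the unique relation among the first three vectors. By the argument in the proof of \autoref{matrixrep}, $\pi$ must then lie in $\Sigma_{\{0,1,2\}}\times\Sigma_{\{3,\dots,k\}}$, and for $p\neq2$ the signs on $v_0,v_1,v_2$ must be equal. For $p=2$ the signs are irrelevant. This produces an $X=(\pi_1,\varepsilon,\pi_2,\varepsilon_3,\dots,\varepsilon_k)\in G_k$ with $AB=BX$ and $\sign(X)=\sign(\pi_1)\sign(\pi_2)=\sign(\pi)$. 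The converse is again obtained by forgetting signs.

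Applying the bijections of \autoref{matrixrep} class by class then yields all eight isomorphisms.
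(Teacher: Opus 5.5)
Your proposal is correct and follows essentially the same route as the paper. Like the paper, you translate $A\sigma=\sigma\pi$ into $A\cdot B=B\cdot X$ with $\sign(X)=\sign(\pi)$ and the same $\det(A)$, restrict the bijections of \autoref{matrixrep} to the reversing (resp.\ twisted) classes, and pass to complements for $\mathrm{pr}$ and $\mathrm{utw}$; you additionally check that the matrix-side conditions are invariant under $T_k$, $G_k$ and $\Pb_n^\pm(\F_p)$, and spell out the augmented case via the uniqueness of the relation $v_0+v_1+v_2=0$, both of which the paper leaves to ``similar reasoning''.
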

\begin{proof}
   We will only prove the isomorphisms \[\SBD(\F_p^n)_k^\mathrm{rv}/\Sigma_{k+1}\cong \MD(\F_p^n)_k^\mathrm{rv}/T_k\quad\text{and}\quad \SBD(\F_p^n)_k^\mathrm{pr}/\Sigma_{k+1}\cong \MD(\F_p^n)_k^\mathrm{pr}/T_k,\] as the remaining isomorphisms follow by similar reasoning.
    
    We recall from \autoref{def:or} that $\SBD(\F_p^n)_k^\mathrm{rv}$ is the set of all $k$-simplices $\sigma$ for which there exist $A\in \Pb_n^\pm(\F_p)$ and $\pi\in \Sigma_{k+1}$ with $\sign(\pi)=-1$ such that $A\cdot \sigma=\sigma\cdot \pi$. Let $\sigma=(v_0^\pm,\dots,v_k^\pm)\in\SBD(\F_p^n)_k^\mathrm{rv}$ and let $B=(v_0|\dots|v_k)$. Then, there exists $A\in \Pb_n^\pm(\F_p)$ such that \[
    \left((A\cdot v_0)^\pm,\dots,(A\cdot v_k)^\pm \right)=A\cdot\sigma=\sigma\cdot\pi=(v_{\pi(0)}^\pm,\dots,v_{\pi(k)}^\pm).\]This is equivalent to \[A\cdot B=B\cdot (\pi,\varepsilon_0,\dots,\varepsilon_k),\]for some $\varepsilon_i\in\{-1,1\}$ with $\sign\left((\pi,\varepsilon_0,\dots,\varepsilon_k)\right)=\sign(\pi)=-1$. 
    So the isomorphism in \autoref{matrixrep} induces an isomorphism \begin{align*}
        \SBD(\F_p^n)_k^\mathrm{rv}/\Sigma_{k+1}&\rightarrow \MD(\F_p^n)_k^\mathrm{rv}/T_k\\
        \sigma\Sigma_{k+1}&\mapsto B\cdot T_k. 
    \end{align*}
Furthermore, we have by \autoref{matrixrep}, \[\SBD(\F_p^n)_k/\Sigma_{k+1}\cong \MD(\F_p^n)_k/T_k.\]
Therefore,
    \begin{align*}\SBD(\F_p^n)_k^\mathrm{pr}/\Sigma_{k+1}&\cong \left(\SBD(\F_p^n)_k/\Sigma_{k+1}\right)\setminus\left(\SBD(\F_p^n)_k^\mathrm{rv}/\Sigma_{k+1}\right)\\&\cong \left(\MD(\F_p^n)_k/T_k\right)\setminus\left( \MD(\F_p^n)_k^\mathrm{rv}/T_k\right)\\&\cong \MD(\F_p^n)_k^\mathrm{pr}/T_k.\qedhere\end{align*}
\end{proof}
\section{Case \texorpdfstring{$n=2$}{Lg}}\label{sec:n=2}
Let $p$ be a prime. In the base case $n = 2$, we find that the symmetric $\Delta$-complex $\Gamma^\pm_{0}(p)\backslash\SBA(\Z^2)$ has an explicit description. Specifically, we compute the dimensions of the chain complexes, which are then used to prove our theorems in this case. For convenience, we adopt the notation $\Gamma_{0}^\pm(p)$ for $\Gamma_{0,2}^\pm(p)$.

\subsection{The quotient \texorpdfstring{$\Gamma_{0}^\pm(p)\backslash \SBA(\Z^2)$}{Lg}}
Recall from \autoref{good q} that \[\Gamma_{0}^\pm(p)\backslash\SBA(\Z^2) \cong \Pb_2^\pm(\F_p)\backslash \SBDA(\F_p^2).\]
Also, from \autoref{matrixrep}, we have the isomorphisms\[\SBDA(\F_p^2)_k/\Sigma_{k+1}\cong \MD_2(\F_p)_k/T_k\quad\text{for $k=0,1$}\]and\[ \SBDA(\F_p^2)_2/\Sigma_3\cong \MDA_2(\F_p)_2/G_2.\]
We will describe the complex $\Gamma_{0}^\pm(p)\backslash\SBA(\Z^2)_k/\Sigma_{k+1}$ by studying the orbits of the actions of $\Pb_2^\pm(\F_p)$ and the corresponding symmetry groups on $\MD_2(\F_p)_0$, $\MD_2(\F_p)_1$ and $\MDA_2(\F_p)_2$. We start with the set $\MD_2(\F_p)_0$.

\begin{proposition}\label{vert} 
The orbits of the action of $\Pb_2^\pm(\F_p)$ on the set $\MD_2(\F_p)_0$ are
\[U=\{\lambda e_1~\text{for some $\lambda\in \F_p^\times$}\},\]and\[V = \{v \in\F_p^2 \setminus\{\underline{0}\} \mid v \notin U\}.\]

\end{proposition}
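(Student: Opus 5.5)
Here $\MD_2(\F_p)_0$ is the set of nonzero vectors in $\F_p^2$: for $k=0<n-1$ there is no determinant condition, so a $1\times 2$-column matrix $(v_0)$ lies in $\MD_2(\F_p)_0$ exactly when $v_0\neq \underline{0}$. The plan is to show three things: $U$ and $V$ partition this set, each is stable under $\Pb_2^\pm(\F_p)$, and $\Pb_2^\pm(\F_p)$ acts transitively on each.

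\textbf{Partition and invariance.} Every nonzero vector either lies on the line $\F_p e_1$ or does not, so $U$ and $V$ are disjoint and cover $\F_p^2\setminus\{\underline{0}\}$. Every $A\in \Pb_2^\pm(\F_p)$ satisfies $Ae_1=\lambda e_1$ with $\lambda\neq 0$, so $A$ preserves the line $\F_p e_1$. Since $A$ is invertible, it preserves the complement of that line among nonzero vectors as well. Hence both $U$ and $V$ are unions of orbits.

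\textbf{Transitivity on $U$.} Given $\mu\in\F_p^\times$, use the diagonal matrix $A=\mathrm{diag}(\mu,\mu^{-1})$. It has determinant $1$, is upper triangular, and sends $e_1$ to $\mu e_1$. So the orbit of $e_1$ is all of $U$.

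\textbf{Transitivity on $V$.} This is the only step needing a choice, because of the constraint $\det=\pm1$. I will show every $v\in V$ is in the orbit of $e_2$. Write $v=(a,b)^t$ with $b\neq 0$, which is exactly the condition $v\notin U$. Take
\[A=\begin{pmatrix} b^{-1} & a\\ 0 & b\end{pmatrix}.\]
This matrix is upper triangular with determinant $1$, so $A\in \Pb_2^\pm(\F_p)$, and its second column gives $Ae_2=v$.

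Combining the three steps, $U$ and $V$ are exactly the two orbits.
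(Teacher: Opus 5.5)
Your proof is correct and follows the paper's argument. Both split $\F_p^2\setminus\{\underline{0}\}$ into $U$ and $V$, get invariance from $Ae_1=\lambda e_1$, and prove transitivity with explicit upper-triangular determinant-$1$ matrices; the only cosmetic difference is that you move the base points $e_1,e_2$ onto arbitrary vectors while the paper moves arbitrary vectors onto $e_1,e_2$, and your matrix for $V$ is exactly the inverse of the paper's $A_2$.
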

\begin{proof}
We note that $\MD_2(\F_p)_0=\F_p^2\setminus\{\underline{0}\},$ so $U$ and $V$ comprise the whole set $\MD_2(\F_p)_0$.
We will show that $\Pb_2^\pm(\F_p)$ acts on $U$ and $V$ transitively.

By definition, every element of $\Pb_2^\pm(\F_p)$ fixes the one-dimensional subspace $\operatorname{span}\{e_1\}$. Thus, $\Pb_2^\pm(\F_p)$ acts on $U$, and so it acts on $V$. To prove transitivity, it is enough to show that there exist elements $A_1,A_2\in \Pb_2^\pm(\F_p)$ such that $A_1\cdot u=e_1$ and $A_2\cdot v=e_2$ for $u\in U$ and $v\in V$. Write $u=\lambda e_1$ for $\lambda\in\F_p^\times$. Define $A_1$ on the standard basis in the following way\begin{align*}
        e_1&\longmapsto \lambda^{-1}e_1\\
        e_2&\longmapsto \lambda e_2
    \end{align*}
Thus $A_1 \cdot u = e_1$. Since $A_1 \in \Pb_2^\pm(\F_p)$ and $u$ was arbitrary in $U$, the action is transitive on $U$.

Now let $v$ be given by $v=\lambda e_1+\mu e_2$, where $\lambda\in \F_p$ and $\mu\in \F_p^\times$. Define $A_2$ as follows\begin{align*}
        e_1&\longmapsto \mu e_1\\
        e_2&\longmapsto -\lambda e_1+\mu^{-1}e_2
    \end{align*}
Observe that $A_2\in \Pb_2^\pm(\F_p)$ and $A_2\cdot v=e_2$. This implies that $\Pb_2^\pm(\F_p)$ acts transitively on $V$.
\end{proof}
\begin{corollary}\label{cor:vert}
    The chain group $\redchain_0\left(\Pb_2^\pm(\F_p)\backslash \SBDA(\F_p^2);\Q\right)$ is two-dimensional with basis elements the two $\Pb_2^\pm(\F_p)$-orbits of $\MD_2(\F_p)_0$, $U$ and $V$.
\end{corollary}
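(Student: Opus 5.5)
The plan is to apply \autoref{coinv} to the action of $\Pb_2^\pm(\F_p)$ on $\SBDA(\F_p^2)$ in degree $0$. By that result, $\redchain_0\left(\Pb_2^\pm(\F_p)\backslash \SBDA(\F_p^2);\Q\right)$ has a basis in bijection with $\Pb_2^\pm(\F_p)\backslash \SBDA(\F_p^2)_0^\mathrm{pr}/\Sigma_1$. Although \autoref{coinv} is stated for $k\geq 1$, its proof applies verbatim for $k=0$; alternatively, \autoref{chaincoinv} gives the chain group directly as $\Q[\Pb_2^\pm(\F_p)\backslash \SBDA(\F_p^2)_0]\otimes_{\Q[\Sigma_1]}\Q^{\mathrm{sgn}}$.

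In degree $0$ the symmetric group $\Sigma_1$ is trivial, so its sign character is trivial. Hence no vertex can be orientation-reversing: $\SBDA(\F_p^2)_0^\mathrm{rv}=\emptyset$. The chain group is therefore simply $\Q[\Pb_2^\pm(\F_p)\backslash \SBDA(\F_p^2)_0]$.

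Next I identify the vertices. A $0$-simplex of $\SBDA(\F_p^2)$ is a single $\pm$-vector $v^\pm$ with $v\neq 0$; these are the same as the $0$-simplices of $\SBD(\F_p^2)$, since augmented simplices only appear in dimension at least $2$. By \autoref{matrixrep} with $k=0$, $\SBD(\F_p^2)_0/\Sigma_1\cong \MD_2(\F_p)_0/T_0$, and $T_0=\{\pm1\}$ acts by $v\mapsto -v$. The $\Pb_2^\pm(\F_p)$-orbits on vertices thus correspond to $\Pb_2^\pm(\F_p)$-orbits on $\MD_2(\F_p)_0/\{\pm1\}$. Since $-I\in\Pb_2^\pm(\F_p)$ (its determinant is $1$ and it preserves $\operatorname{span}\{e_1\}$), these are the same as the $\Pb_2^\pm(\F_p)$-orbits on $\MD_2(\F_p)_0$.

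By \autoref{vert}, there are exactly two such orbits, $U$ and $V$, so the chain group is two-dimensional with basis given by these two orbits. There is no real obstacle here; the only points needing care are the degenerate $k=0$ case of the orientation bookkeeping and checking that the $\pm$ identification does not merge $U$ with $V$. The latter cannot happen because both $U$ and $V$ are closed under negation.
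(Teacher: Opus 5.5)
Your proposal is correct and follows essentially the same route as the paper. Both identify the degree-$0$ chain group with $\Q$ on $\Pb_2^\pm(\F_p)\backslash\MD_2(\F_p)_0/T_0$ via \autoref{chaincoinv} and \autoref{matrixrep}, then conclude from \autoref{vert} and the fact that $U$ and $V$ are each stable under $T_0=\{\pm1\}$; your extra remarks (that the trivial $\Sigma_1$ rules out orientation-reversing vertices, and that $-I\in\Pb_2^\pm(\F_p)$) only make explicit points the paper leaves implicit.
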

\begin{proof}
    By \autoref{qut}, the quotient $\Pb_2^\pm(\F_p)\backslash \SBDA(\F_p^2)$ is a symmetric $\Delta$-complex. It then follows by \autoref{chaincpx} that \[\redchain_0(\Pb_2^\pm(\F_p)\backslash \SBDA(\F_p^2);\Q)\cong\Q\left[\Pb_2^\pm(\F_p)\backslash \SBDA(\F_p^2)_0\right]\otimes_{\Q[\Sigma_1]}\Q^\mathrm{sgn},\]has a non-canonical basis bijective to $\Pb_2^\pm(\F_p)\backslash \SBDA(\F_p^2)_0/T_0$ by \autoref{chaincoinv}, which is bijective to $\Pb_2^\pm(\F_p)\backslash \MDA_2(\F_p)_0/T_0$ by \autoref{matrixrep}.
    
    We recall from \autoref{vert} that $\Pb_2^\pm(\F_p)\backslash \MDA_2(\F_p)_0=\{U,V\}$. Since each $U$ and $V$ is invariant under the $T_0$-action, their $T_0$-orbits satisfy \[U\cdot T_0= U,\quad\text{and}\quad V\cdot T_0=V.\] We conclude that  \[\redchain_0(\Pb_2^\pm(\F_p)\backslash \SBDA(\F_p^2);\Q)\cong \Q[U,V].\qedhere\]
\end{proof}

\begin{proposition}\label{edgetype}
    For all $\underline{a}=(a_1,a_2)\in\F_p^2\setminus\{\underline{0}\}$, define the set
   \[\E(\underline{a})=\left\{ B=(v_1| v_2)\in\MD_2(\F_p)_1~\middle\vert\ \lambda e_1=a_1v_1+a_2v_2~\text{for some $\lambda\in \F_p^\times$}\right\}.\] These sets satisfy the following properties. \begin{enumerate}[label=(\alph*)]
   \item $\E(\underline{a})$ is nonempty.
       \item $
\E(\underline{a})=\E(\underline{b})\quad\Longleftrightarrow\quad \underline a= \lambda \underline b$ for some $\lambda\in \F_p^\times$.
  \item  The right action of $T_1$ on $\MD_2(\F_p)_1$ satisfies \[
       \E(\underline{a})\cdot X=\E(\underline{a}\cdot X)
  \]for all $X=(\pi,\varepsilon_1,\varepsilon_2)\in T_1$ with $\underline{a}\cdot X=\left(\varepsilon_1a_{\pi(1)},\varepsilon_2a_{\pi(2)}\right)$.
   \item  The set of the $\Pb_2^\pm(\F_p)$-orbits in $\MD_2(\F_p)_1$ is $\left\{\E(\underline{a})~\middle\vert~ \underline{a}\in\F_p^2\setminus  \{\underline{0}\}\right\}$.
   \end{enumerate}
\end{proposition}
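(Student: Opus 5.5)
The proof is an elementary computation, and the key tool is a single reformulation: for $B=(v_1|v_2)\in\MD_2(\F_p)_1$ and $\underline a=(a_1,a_2)^t$, we have $a_1v_1+a_2v_2=B\underline a$. Hence $B\in\E(\underline a)$ if and only if $B\underline a\in U$, that is, if and only if $\underline a$ spans $B^{-1}\langle e_1\rangle$. So $\E(\underline a)$ depends only on the line $[\underline a]\in\mathbb{P}^1(\F_p)$, and each $B$ lies in exactly one $\E(\underline a)$ up to scaling of $\underline a$, namely the one with $[\underline a]=[B^{-1}e_1]$. Thus the sets $\E(\underline a)$ partition $\MD_2(\F_p)_1$, indexed by lines in $\F_p^2$. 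I would prove (a)–(d) in order, using this reformulation throughout.

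For (a), I complete $\underline a$ to a matrix $C=(\underline a|\underline c)$ with $\det C=\pm1$, which is possible since $\underline a\neq 0$. Then $B=C^{-1}$ satisfies $B\underline a=e_1$ and $\det B=\pm1$. For (b), if $\underline a=\lambda\underline b$ then $B\underline a=\lambda B\underline b$, so $\E(\underline a)=\E(\underline b)$. Conversely, if $\E(\underline a)=\E(\underline b)$, pick $B$ in this common set using (a). Then $B\underline a$ and $B\underline b$ are both multiples of $e_1$, and since $B$ is invertible, $\underline a$ and $\underline b$ are proportional.

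For (c), write $B\cdot X=(\varepsilon_1v_{\pi(1)}|\varepsilon_2v_{\pi(2)})$, and note that $B\cdot X=BM_X$ for a signed permutation matrix $M_X$. I need to check carefully how the index conventions interact, i.e. that $M_X(\underline a\cdot X)$ equals $\underline a$ up to sign, possibly with an inverse appearing. This is the one place where I expect some care to be needed. The target is $\sum_i\varepsilon_i a'_i v_{\pi(i)}$, which should recover $\sum a_jv_j$ when $a'_i=\varepsilon_i a_{\pi(i)}$ (using $\varepsilon_i^2=1$). Then $B\in\E(\underline a)$ if and only if $BX\in\E(\underline a\cdot X)$, which gives $\E(\underline a)\cdot X\subseteq\E(\underline a\cdot X)$. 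Equality follows by applying the same inclusion with $X^{-1}$.

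For (d), I first check invariance: for $A\in\Pb_2^\pm(\F_p)$ we have $(AB)\underline a=A(B\underline a)\in A\,U=U$, so $\Pb_2^\pm(\F_p)$ preserves each $\E(\underline a)$. It remains to show transitivity on each $\E(\underline a)$. Given $B,B'\in\E(\underline a)$, set $A=B'B^{-1}$. Then $A\in\GL_2(\F_p)$, and $\det A=\pm1$ since both determinants are $\pm1$. Moreover $A(B\underline a)=B'\underline a$, and both $B\underline a$ and $B'\underline a$ are nonzero multiples of $e_1$, so $Ae_1\in\F_p^\times e_1$. Hence $A\in\Pb_2^\pm(\F_p)$ and $AB=B'$. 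This shows that the orbits are exactly the sets $\E(\underline a)$. The only genuine subtlety anywhere is the index bookkeeping in (c); the rest is direct.
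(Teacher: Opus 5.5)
Your proof is correct and follows the paper's argument essentially step for step. Parts (b), (c) and (d) match the paper: (c) rests on the invariance $\sum_i \varepsilon_i a_{\pi(i)}\,\varepsilon_i v_{\pi(i)} = a_1v_1+a_2v_2$ followed by applying $X^{-1}$, and (d) combines invariance with the transitivity element $A=B'B^{-1}$, which is the paper's map $v_i\mapsto u_i$. The only variation is in (a), where you take $B=C^{-1}$ for a completion $C=(\underline a|\underline c)$ instead of the paper's explicit matrix. Your hedge in (c) is unnecessary: writing $B\cdot X=BM_X$, one gets $M_X(\underline a\cdot X)=\underline a$ exactly, so $(B\cdot X)(\underline a\cdot X)=B\underline a$ with no sign or inverse appearing.
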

\begin{proof}Recall that \[\MD_2(\F_p)_1=\left\{B=(v_1| v_2)~\middle\vert\begin{array}{c}v_1,v_2~\text{are linearly independent vectors in $\F_p^2$}, \\\det\left(v_1| v_2\right)
=\pm 1 \end{array}\right\}.\] 
\noindent\emph{Proof of (a).} Let $\underline{a}=(a_1,a_2)\in\F_p^2\setminus\{\underline{0}\}$. Then $\E(\underline{a})\neq \emptyset$: $\underline{a}=(a_1,a_2)\in\F_p^2\setminus\{\underline{0}\}$, so $a_i\neq 0$ for some $i\in\{1,2\}$. If $a_1\neq 0$, set 
\[
v_1=\begin{pmatrix}
-a_1^{-1} \\
-a_2
\end{pmatrix},\quad v_2=\begin{pmatrix}
    0\\ a_1
\end{pmatrix}.\] Then \[a_1v_1+a_2v_2=e_1,\]so $B=(v_1| v_2)\in \E(\underline{a})$. If $a_2\neq 0$, swap the roles of $(a_1,v_1)$ and $(a_2,v_2)$ in the same construction. 

\noindent\emph{Proof of (b).} Let $\underline{b}=(b_1,b_2)\in \F_p^2\setminus\{\underline{0}\}$, and assume that $\E(\underline{a})=\E(\underline{b})$. Since $\E(\underline{a})\neq \emptyset$, pick an arbitrary $B=(v_1| v_2)\in \E(\underline{a})$. Then there exist $\lambda_1,\lambda_2\in \F_p^\times$ such that \[\lambda_1 e_1=a_1v_1+a_2v_2\]and\[\lambda_2 e_1=b_1v_1+b_2v_2.\] 
Thus, \[\lambda_1^{-1}a_1v_1+\lambda_1^{-1}a_2v_2=\lambda_2^{-1}b_1v_1+\lambda_2^{-1}b_2v_2.\]
Since $v_1$ and $v_2$ are linearly independent in $\F_p^2$, it implies that \[(a_1,a_2)=\lambda_1\lambda_2^{-1}(b_1,b_2).\] Conversely, suppose $\underline{a}=\mu \underline{b}$ for some $\mu\in \F_p^\times$. Then \begin{align*}
    \E(\underline{a})&=\left\{ (v_1| v_2)\in\MD_2(\F_p)_1~\middle\vert\ \lambda e_1=a_1v_1+a_2v_2~\text{for some $\lambda\in \F_p^\times$}\right\}\\
    &=\left\{ (v_1| v_2)\in\MD_2(\F_p)_1~\middle\vert\ \lambda e_1=\mu b_1v_1+\mu b_2v_2~\text{for some $\lambda\in \F_p^\times$}\right\}\\
    &=\left\{ (v_1| v_2)\in\MD_2(\F_p)_1~\middle\vert\ \mu^{-1}\lambda e_1=b_1v_1+b_2v_2~\text{for some $\lambda\in \F_p^\times$}\right\}\\
    &=\E(\underline{b}).
\end{align*}
This proves property (b).

\noindent\emph{Proof of (c).} Let $X=(\pi,\varepsilon_1,\varepsilon_2)\in T_1$ and $B=\left(v_1| v_2\right)\in\MD_2(\F_p)_1$. If $B\in \E(\underline{a})$, then $\lambda_1 e_1=a_1v_1+a_2v_2$ for some $\lambda_1\in \F_p^\times$ implies that \[\lambda_1 e_1=\varepsilon_1 a_{\pi(1)}\left(\varepsilon_1v_{\pi(1)}\right)+\varepsilon_2 a_{\pi(2)}\left(\varepsilon_2v_{\pi(2)}\right).\]
So the matrix \[B\cdot X=\left(\varepsilon_1 v_{\pi(1)}\mid \varepsilon_2 v_{\pi(2)}\right)\in E\left(\varepsilon_1 a_{\pi(1)},\varepsilon_2a_{\pi(2)}\right)= E\left(\underline{a}\cdot X\right),\]implying \begin{equation}\label{in}\E(\underline{a})\cdot X\subseteq \E(\underline{a}\cdot X)\quad\text{for all $X\in T_1$}.\end{equation} 
 Applying \eqref{in} to $X^{-1}$, we obtain \[ \E(\underline{a}\cdot X)\cdot X^{-1}\subseteq \E(\underline{a}).\] Thus \[ \E(\underline{a}\cdot X)\subseteq \E(\underline{a})\cdot X.\]concluding $\E(\underline{a}\cdot X)=\E(\underline{a})\cdot X$ 
\noindent\emph{Proof of (d).} Let $B=(v_1| v_2)\in \E(\underline{a})$ such that \[\lambda_1  e_1=a_1 v_1+a_2v_2,\]for some $\lambda_1\in\F_p^\times$. 
For any $A\in \Pb_2^\pm(\F_p)$, we know $A\cdot e_1=\lambda e_1$ for some $\lambda \in \F_p^\times$. Hence,\[a_1(A\cdot v_1)+a_2(A\cdot v_2)=\lambda_1\lambda e_1.\]Thus,  \[A\cdot B\in \E(\underline{a}),\]and so $\E(\underline{a})$ is a $\Pb_2^\pm(\F_p)$-invariant subset of $\MD_2(\F_p)_1$.
Moreover, we show that it acts transitively. Let $C=\left(u_1| u_2\right)$ be another matrix in $\E(\underline{a})$, such that \[\lambda_2  e_1=a_1 u_1+a_2u_2,\]for some $\lambda_2 \in\F_p^\times$. We now consider the element $A'$ defined on the basis as follows. \[
            v_i\longmapsto u_i\quad\text{for all $i=1,2$}.
           \]Observe that \[A'\cdot  B= C.\]
Additionally,\begin{align*}
            A'\cdot  e_1 &=\lambda_1^{-1}\left(a_1(A'\cdot  v_1)+a_2(A'\cdot  v_2)\right)\\&=\lambda_1^{-1}\left(a_1u_1+a_2u_2\right)\\&=\lambda_1^{-1}\lambda_2 e_1.
        \end{align*}
It follows that $A'\in\Pb_2^\pm(\F_p)$, and therefore $\Pb_2^\pm(\F_p)$ acts transitively on $\E(\underline{a})$.      
As the union $\bigcup\limits_{\underline{a}\in\F_p^2\setminus\{\underline{0}\}} \E(\underline{a})$ comprises the set $\MD_2(\F_p)_1$, we are done.
\end{proof}
\begin{definition}
    We define the equivalence relation $\sim$ on the elements of $\F_p$ as follows.
\[x\sim y \quad\text{if and only if}\quad \E(1,x)\cdot T_1=E(1,y)\cdot T_1~\text{in $\Pb_2^\pm(\F_p)\backslash \MD_2(\F_p)_1/T_1$}.\]
\end{definition}

\begin{proposition}\label{edge:c}
For all $c\in \F_p$, every orbit in the double quotient $\Pb_2^\pm(\F_p)\backslash \MD_2(\F_p)_1/T_1$ has the form $\E(1,c)\cdot T_1$, with \[c\sim c^{-1}\sim -c\sim -c^{-1}.\]
\end{proposition}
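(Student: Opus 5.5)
By \autoref{edgetype}(d), every $\Pb_2^\pm(\F_p)$-orbit in $\MD_2(\F_p)_1$ is of the form $\E(\underline{a})$ with $\underline{a}=(a_1,a_2)\in\F_p^2\setminus\{\underline{0}\}$. Hence every orbit of the double quotient $\Pb_2^\pm(\F_p)\backslash\MD_2(\F_p)_1/T_1$ is of the form $\E(\underline{a})\cdot T_1$. The proof therefore reduces to two things: normalizing $\underline{a}$ to the shape $(1,c)$, and then tracking how the generators of $T_1$ act on $c$. Both steps use only parts (b) and (c) of \autoref{edgetype}.

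\emph{Step 1 (normal form).} Suppose first that $a_1\neq 0$. By \autoref{edgetype}(b) we have $\E(\underline{a})=\E(1,a_1^{-1}a_2)$, so we may take $c=a_1^{-1}a_2$.

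Suppose instead that $a_1=0$, so $\underline{a}=(0,a_2)$ with $a_2\neq0$. Let $X=(\pi,1,1)\in T_1$, where $\pi$ is the transposition. Then \autoref{edgetype}(c) gives $\E(0,a_2)\cdot X=\E(a_2,0)$, and by part (b) this equals $\E(1,0)$. Since $X\in T_1$, the cosets $\E(0,a_2)\cdot T_1$ and $\E(1,0)\cdot T_1$ coincide. So this orbit is of the required form with $c=0$.

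\emph{Step 2 (the relations).} Fix $c\in\F_p$. We apply \autoref{edgetype}(c) to three elements of $T_1$, each time followed by the rescaling of part (b).

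\begin{itemize}
\item Take $X=(\Id,1,-1)$. Then $\E(1,c)\cdot X=\E(1,-c)$, so $c\sim -c$.
\item Take $X=(\pi,1,1)$ with $\pi$ the transposition, and assume $c\neq0$. Then $\E(1,c)\cdot X=\E(c,1)=\E(1,c^{-1})$, so $c\sim c^{-1}$.
\item Composing the two previous elements gives $c\sim -c^{-1}$.
\end{itemize}

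Since $\sim$ is an equivalence relation, these three relations together give the full chain $c\sim c^{-1}\sim -c\sim -c^{-1}$.

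\emph{The case $c=0$.} Here $c^{-1}$ is undefined, so the statement should be read with the convention that $0^{-1}$ denotes the class $\E(0,1)\cdot T_1$. That class was identified with $\E(1,0)\cdot T_1$ in Step 1, so $0\sim 0^{-1}$ under this convention. The paper may intend $c\in\F_p^\times$, with $c=0$ handled only by Step 1.

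\emph{Main obstacle.} The only delicate point is the index convention in the action of $T_1$. The formula $\underline{a}\cdot X=(\varepsilon_1a_{\pi(1)},\varepsilon_2a_{\pi(2)})$ has to be applied correctly so that the transposition really swaps the two coordinates, and the sign $\varepsilon_2$ really multiplies the second coordinate $c$. Once this is checked, everything else is the rescaling in \autoref{edgetype}(b).
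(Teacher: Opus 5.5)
Your proposal is correct and follows essentially the paper's own argument. You reduce via \autoref{edgetype}(d) to cosets $\E(\underline{a})\cdot T_1$, normalize to $\E(1,c)\cdot T_1$ using the rescaling of part (b) and the swap $((1\,2),1,1)$ of part (c), and then get $c\sim -c$ from $(\Id,1,-1)$ and $c\sim c^{-1}$ from the swap followed by rescaling. On $c=0$, the paper states explicitly that $c^{-1}$ and $-c^{-1}$ are only meant for $c\in\F_p^\times$, so your extra convention is unnecessary but harmless.
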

The expressions $c^{-1}$ and $-c^{-1}$ are understood only for $c\in \F_p^\times$.

\begin{proof}
    Recall from \autoref{edgetype} that $\E(\underline{a})$ are the orbits of $\Pb_2^\pm(\F_p)\backslash\MD_2(\F_p)_1$ such that \begin{equation}\label{1}\E(\underline{a})=\E(\underline{b})\Longleftrightarrow \underline{a}=\lambda\underline{b}~\text{some $\lambda\in\F_p^\times$}\end{equation}and\begin{equation}\label{2}\E(\underline{a})\cdot X=\E(\underline{a}\cdot X)\quad\text{for all $X\in T_1$}.\end{equation}
    It follows that every orbit in the double quotient $\Pb_2^\pm(\F_p)\backslash \MD_2(\F_p)_1/T_1$ has the form $\E(\underline{a})\cdot T_1$, satisfying the following equalities \[\E(a_1,a_2)\cdot T_1\overset{\text{by \eqref{1}}}{\underset{\lambda=a_1^{-1}}{=}}E(1,a_2a_1^{-1})\cdot T_1\quad\text{if $a_1\neq 0$},\] and
\[\E(a_1,a_2)\cdot T_1\overset{\text{by \eqref{2}}}{\underset{X=((1\,2),1,1)}{=}}E(a_2,a_1)\cdot T_1\overset{\text{by \eqref{1}}}{\underset{\lambda=a_2^{-1}}{=}}E(1,a_1a_2^{-1})\cdot T_1\quad\text{if $a_2\neq 0$}.\]Since at least one of the $a_i$'s must be nonzero, we thus observe that in $\Pb_2^\pm(\F_p)\backslash \MD_2(\F_p)_1/T_1$, every orbit $\E(\underline{a})\cdot T_1$ may be written in the form \[E(1,c)\cdot T_1\quad\text{for $c\in\F_p$}.\]Furthermore, it follows by the above equalities that for $c\neq 0$, we have \[E(1,c)\cdot T_1=E(c^{-1},1)\cdot T_1=E(1,c^{-1})\cdot T_1.\]Moreover, \[\E(1,c)\cdot T_1\overset{\text{by \eqref{2}}}{\underset{X=(\Id,1,-1)}{=}}\E(1,-c)\cdot T_1.\] Therefore, every orbit in $\Pb_2^\pm(\F_p)\backslash \MD_2(\F_p)_1/T_1$ has the form $\E(1,c)\cdot T_1$ for all $c\in F_p$, with \[ \E(1,c)\cdot T_1=E(1,-c)\cdot T_1=E(1, c^{-1})\cdot T_1=E(1,-c^{-1})\cdot T_1;\]that is, \[c\sim c^{-1}\sim -c\sim -c^{-1}.\qedhere \] 
\end{proof}

\begin{corollary}\label{cor:edge}  $\Pb_2^\pm(\F_p)\backslash \MD_2(\F_p)_1/T_1\cong \F_p/\sim$ where the equivalence classes of $\sim$ are \[\{c, c^{-1}, -c, -c^{-1}\}\] for $c\in \F_p$. Moreover, the only situations in which two of these elements are equal occur precisely when $c\in\{0,-1,1,-i,i\}$, where $i$ denotes an element of $\F_p$ such that $i^2=-1$. Note that the equation $i^2=-1$ has a solution in $\F_p$ if and only if $p\equiv 1\bmod 4$.
\end{corollary}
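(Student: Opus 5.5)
The corollary has two parts. The first is the identification of the orbit set with equivalence classes. The second is the determination of exactly when the four elements coincide.

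For the first part, use \autoref{edge:c}. It says every orbit in $\Pb_2^\pm(\F_p)\backslash \MD_2(\F_p)_1/T_1$ has the form $\E(1,c)\cdot T_1$ for some $c\in\F_p$, and that $c\sim c^{-1}\sim -c\sim -c^{-1}$. So the map $\F_p\to \Pb_2^\pm(\F_p)\backslash \MD_2(\F_p)_1/T_1$, $c\mapsto \E(1,c)\cdot T_1$, is surjective, and its fibres are by definition the classes of $\sim$. It remains to show these classes are \emph{exactly} $\{c,c^{-1},-c,-c^{-1}\}$, i.e. the converse: if $\E(1,x)\cdot T_1=\E(1,y)\cdot T_1$, then $y\in\{x,x^{-1},-x,-x^{-1}\}$.

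For the converse, suppose the two orbits agree. Then $\E(1,y)=\E(1,x)\cdot X=\E((1,x)\cdot X)$ for some $X=(\pi,\varepsilon_1,\varepsilon_2)\in T_1$, using part (c) of \autoref{edgetype}. By part (b), this gives $(1,y)=\lambda\,(\varepsilon_1 a_{\pi(1)},\varepsilon_2 a_{\pi(2)})$ with $(a_1,a_2)=(1,x)$ and some $\lambda\in\F_p^\times$. Split on $\pi$.
\begin{itemize}
\item If $\pi=\Id$: then $1=\lambda\varepsilon_1$, so $y=\lambda\varepsilon_2 x=\varepsilon_1\varepsilon_2 x=\pm x$.
\item If $\pi=(1\,2)$: then $1=\lambda\varepsilon_1 x$, which forces $x\neq0$, and $y=\lambda\varepsilon_2=\varepsilon_1\varepsilon_2x^{-1}=\pm x^{-1}$.
\end{itemize}
This shows the classes are precisely the sets $\{c,c^{-1},-c,-c^{-1}\}$ for $c\neq 0$, and $\{0\}$ for $c=0$.

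For the second part, I would check all pairwise coincidences among $c,-c,c^{-1},-c^{-1}$ directly.
\begin{itemize}
\item $c=-c$ gives $2c=0$, so $c=0$, or $p=2$, where $c=-c$ always holds. For $p=2$ the only elements are $0$ and $1=-1$, so this case is absorbed into $c\in\{0,\pm1\}$.
\item $c=c^{-1}$ gives $c^2=1$, so $c=\pm1$.
\item $c=-c^{-1}$ gives $c^2=-1$, so $c=\pm i$.
\item The remaining pairs reduce to these by multiplying by $-1$ or inverting.
\end{itemize}
Finally, note that $-1$ is a square in $\F_p$ iff $p=2$ or $p\equiv 1\bmod 4$. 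This follows from the cyclicity of $\F_p^\times$ of order $p-1$: an element of order $4$ exists iff $4\mid p-1$. For $p=2$ one has $i=1$, so the case is already covered.

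The main obstacle is the converse direction of the first part. It needs the precise equality criterion of \autoref{edgetype}(b) combined with (c); everything else is routine algebra.
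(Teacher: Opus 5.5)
Your proposal is correct and follows essentially the same route as the paper: use \autoref{edge:c} for surjectivity, combine parts (b) and (c) of \autoref{edgetype} and split on $\pi\in\{\Id,(1\,2)\}$ to get $c'\in\{\pm c,\pm c^{-1}\}$, then check the pairwise coincidences directly. Your caveat about $p=2$ is a correct refinement that the statement and the paper's proof omit: there $-1=1$ is a square ($i=1$) even though $p\not\equiv 1 \bmod 4$, but this case is already covered by $c\in\{0,\pm1\}$.
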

Note that $\F_p$ has a square root of $-1$ if and only if $p\equiv 1\bmod 4$.
\begin{proof}
  We show that if $\E(1,c)\cdot T_1=\E(1,c')\cdot T_1$ for some $c,c'\in\F_p$, then $c'\in\{\pm c,\pm c^{-1}\}$. Together with \autoref{edge:c}, this implies the claimed bijection.

  If $\E(1,c)\cdot T_1=\E(1,c')\cdot T_1$, then by \autoref{edgetype}, there exists $\lambda\in\F_p^\times$ such that \begin{equation}\label{eqq}1=\pm \lambda\quad\text{and}\quad c=\pm \lambda c',\end{equation}
  or \begin{equation}\label{eqq'}1=\pm \lambda c'\quad\text{and}\quad c=\pm \lambda.\end{equation}
  \eqref{eqq} implies that $c'=\pm c$ and \eqref{eqq'} gives $c'=\pm c^{-1}$.
  
  Two of the elements $\{\pm c,\pm c^{-1}\}$ are equal in $\F_p$ if and only if $c=0$, $c=\pm 1$ or $c^2=-1$. The equation $c^2=-1$ has a solution in $\F_p$ if and only if $p\equiv 1\bmod 4$, in which case $c=\pm i$.
    \end{proof}
We proceed to describe the orbits of the action of $\Pb_2^\pm(\F_p)$ on the set $\MDA_2(\F_p)_2$. We will treat the cases $p\neq 3$ and $p=3$ separately, starting with $p\neq 3$. 

\begin{proposition}\label{facetype} 
  For all primes $p\neq 3$ and $\underline{a}=(a_0,a_1,a_2)\in\F_p^3\setminus\{\underline{0}\}$ such that $a_0+a_1+a_2=0$, define the set
   \[\Face(\underline{a})=\left\{ B=(v_0|v_1| v_2)\in\MDA_2(\F_p)_2~\middle\vert\ \lambda e_1=a_0v_0+a_1v_1+a_2v_2~\text{for some $\lambda\in \F_p^\times$}\right\}.\] These sets satisfy the following properties. \begin{enumerate}[label=(\alph*)]
   \item $\Face(\underline{a})$ is nonmepty.
       \item $
\Face(\underline{a})=\Face(\underline{b})\quad\Longleftrightarrow\quad \underline{a}= \lambda \underline b$ for some $\lambda\in \F_p^\times$.
  \item  The right action of $G_2$ on $\MDA_2(\F_p)_2$ satisfies \[
       \Face(\underline{a})\cdot X=\Face(\underline{a}\cdot X)
  \]for all $X=(\pi,\varepsilon)\in G_2$ with $\underline{a}\cdot X=\left(\varepsilon a_{\pi(0)},\varepsilon a_{\pi(1)},\varepsilon a_{\pi(2)}\right)$.
   \item  The set of the $\Pb_2^\pm(\F_p)$-orbits in $\MDA_2(\F_p)_2$ is $\left\{\Face(\underline{a})~\middle\vert~ \underline{a}=(a_0,a_1,a_2)\in\F_p^3\setminus\{\underline{0}\}, a_0+a_1+a_2=0\right\}$.
   \end{enumerate}
\end{proposition}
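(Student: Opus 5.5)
The plan is to mirror the proof of \autoref{edgetype}, replacing the basis $(v_1|v_2)$ by the augmented triple $(v_0|v_1|v_2)$ with $v_0=-v_1-v_2$. The key observation is that for $B=(v_0|v_1|v_2)\in\MDA_2(\F_p)_2$ we have
\[a_0v_0+a_1v_1+a_2v_2=(a_1-a_0)v_1+(a_2-a_0)v_2 .\]
Hence the coefficient vector $\underline{a}$ with $a_0+a_1+a_2=0$ corresponds to the pair $(a_1-a_0,a_2-a_0)\in\F_p^2$. This is where $p\neq 3$ enters: the linear map $\underline{a}\mapsto(a_1-a_0,a_2-a_0)$ restricted to the plane $a_0+a_1+a_2=0$ has kernel spanned by $(1,1,1)$. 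That vector lies in the plane only when $3=0$, so for $p\neq3$ the map is an isomorphism onto $\F_p^2$. Under this identification, $\Face(\underline{a})=\{B : (v_1|v_2)\in \E(a_1-a_0,a_2-a_0)\}$, and the map $B\mapsto(v_1|v_2)$ is a bijection $\MDA_2(\F_p)_2\to\MD_2(\F_p)_1$ compatible with the left $\Pb_2^\pm(\F_p)$-action.

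Parts (a), (b) and (d) then follow directly from \autoref{edgetype}. For (a), take any $(v_1|v_2)\in\E(a_1-a_0,a_2-a_0)$, which is nonempty because the pair is nonzero, and set $v_0=-v_1-v_2$. For (b), $\Face(\underline a)=\Face(\underline b)$ is equivalent to $\E$ of the images being equal. By \autoref{edgetype}(b) this holds iff the images are proportional, and by injectivity of the linear map this holds iff $\underline a=\lambda\underline b$. For (d), the bijection $B\mapsto(v_1|v_2)$ is $\Pb_2^\pm(\F_p)$-equivariant, so orbits correspond to the orbits $\E(\cdot)$. Surjectivity of the linear map ensures that every $\E(\underline{c})$ arises from some $\underline a$ in the plane.

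For (c), I would argue directly as in \autoref{edgetype}(c) rather than through the bijection, since $G_2$ mixes $v_0$ with $v_1,v_2$. Let $X=(\pi,\varepsilon)$ and $B\in\Face(\underline a)$ with $\lambda e_1=\sum a_iv_i$. Then
\[\lambda e_1=\sum_i (\varepsilon a_{\pi(i)})(\varepsilon v_{\pi(i)}),\]
and $B\cdot X=(\varepsilon v_{\pi(0)}|\varepsilon v_{\pi(1)}|\varepsilon v_{\pi(2)})$ still lies in $\MDA_2(\F_p)_2$. The sum is zero, and any two of the three vectors form a basis with determinant $\pm1$, since $\det(v_0|v_2)=-\det(v_1|v_2)$ and similarly for the other pairs. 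Also $\underline a\cdot X$ still sums to zero. This gives $\Face(\underline a)\cdot X\subseteq\Face(\underline a\cdot X)$, and applying the same inclusion to $X^{-1}$ gives equality.

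The main point needing care is the linear-algebra isomorphism for $p\neq3$, including the case $p=2$, where $-1=1$ but $3\neq0$, so the argument is unaffected. The other point is checking that the determinant condition transfers between $(v_1|v_2)$ and permuted pairs, which is the short computation above. Everything else is routine bookkeeping.
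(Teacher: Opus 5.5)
Your proof is correct. The core computation is the same as the paper's: the identity $a_0v_0+a_1v_1+a_2v_2=(a_1-a_0)v_1+(a_2-a_0)v_2$, together with invertibility of $\underline a\mapsto(a_1-a_0,a_2-a_0)$ on the plane $a_0+a_1+a_2=0$ when $p\neq 3$. The paper's step $3\lambda_1^{-1}a_1=3\lambda_2^{-1}b_1$ in (b) is exactly injectivity of this map, and its $3^{-1}$-formulas in (d) are exactly its inverse. The difference is packaging. You reduce (a), (b), (d) to \autoref{edgetype} through the $\Pb_2^\pm(\F_p)$-equivariant bijection $\MDA_2(\F_p)_2\to\MD_2(\F_p)_1$, $B\mapsto(v_1|v_2)$, whereas the paper re-verifies each part by hand.

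Your route is shorter, and for (a) it is also more reliable. The paper's explicit witness in (a) does not satisfy the claimed identity: for that matrix one computes $a_0v_0+a_1v_1+a_2v_2=\bigl((a_1-a_0)(a_1-a_2)^{-1},\,3a_1(a_2-a_0)\bigr)^t$. This is generally not a nonzero multiple of $e_1$; for example, $p=5$ and $\underline a=(0,1,4)$ give $(3,2)^t$. Your (a) instead inherits a valid witness from \autoref{edgetype}(a) by setting $v_0=-v_1-v_2$.

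Your direct treatment of (c) matches the paper's argument. It also supplies the check, left implicit in the paper, that $B\cdot X$ stays in $\MDA_2(\F_p)_2$: every pair among $v_0,v_1,v_2$ has determinant $\pm\det(v_1|v_2)$.
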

\begin{proof} Recall that \[\MDA_2(\F_p)_2=\left\{B=(v_0|v_1| v_2)~\middle\vert\begin{array}{c}v_1,v_2~\text{are linearly independent vectors in $\F_p^2$}, \\v_0+v_1+v_2=0,~\text{and}~\det\left(v_1| v_2\right)
=\pm 1 \end{array}\right\}.\] 
\noindent\emph{Proof of (a).} Let $\underline{a}=(a_0,a_1,a_2)\in\F_p^3\setminus\{\underline{0}\}$ such that $a_0+a_1+a_2=0$, then $\Face(\underline{a})\neq \emptyset$: since $p\neq 3$, then not all $a_i$ are equal. So there exist $i\neq j$ with $a_i\neq a_j$. For instance, if $a_1\neq a_2$ set
\[B=
\begin{pmatrix}
-(a_1-a_2)^{-1} & (a_1-a_2)^{-1} & 0\\
a_0-a_1 & -(a_0-a_2)& a_1-a_2\\
\end{pmatrix}.
\]A direct check shows that \[a_0v_0+a_1v_1+a_2v_2=(a_1-a_0)(a_1-a_2)^{-1}e_1,\] and that $B\in \MDA_2(\F_p)_2$. Similar explicit constructions can be done when $a_0\neq a_1$ or $a_0\neq a_2$, showing that $\Face(\underline{a})$ is nonempty.

\noindent\emph{Proof of (b).} Let $\underline{b}=(b_0,b_1,b_2)\in \F_p^3\setminus\{\underline{0}\}$ such that $b_0+b_1+b_2=0$, and assume that $\Face(\underline{a})=\Face(\underline{b})$. Since $\Face(\underline{a})\neq \emptyset$, pick an arbitrary $B=(v_0|v_1| v_2)\in \Face(\underline{a})$. Then there exist $\lambda_1,\lambda_2\in \F_p^\times$ such \[\lambda_1 e_1=a_0v_0+a_1v_1+a_2v_2\]and\[\lambda_2 e_1=b_0v_0+b_1v_1+b_2v_2.\] Thus, \[\lambda_1^{-1}a_0v_0+\lambda_1^{-1}a_1v_1+\lambda_1^{-1}a_2v_2=\lambda_2^{-1}b_0v_0+\lambda_2^{-1}b_1v_1+\lambda_2^{-1}b_2v_2.\] As $v_0=-v_1-v_2$, it follows that \[\left(\lambda_1^{-1}a_1-\lambda_1^{-1}a_0\right)v_1+\left( \lambda_1^{-1}a_2-\lambda_1^{-1}a_0\right)v_2=\left(\lambda_2^{-1}b_1-\lambda_2^{-1}b_0\right)v_1+\left(\lambda_2^{-1}b_2-\lambda_2^{-1}b_0\right)v_2.\]
Since $a_0+a_1+a_2=b_0+b_1+b_2=0$, and $v_1,v_2$ are linearly independent in $\F_p^2$, we obtain \begin{align*}
\lambda_1^{-1}(2a_1+a_2)&=\lambda_2^{-1}(2b_1+b_2)\\
\lambda_1^{-1}(2a_2+a_1)&=\lambda_2^{-1}(2b_2+b_1)
\end{align*}
Thus, \[3\lambda_1^{-1}a_1=3\lambda_2^{-1}b_1\quad\text{and}\quad 3\lambda_1^{-1}a_2=3\lambda_2^{-1}b_2.\]
As $p\neq 3$, we conclude that $(a_1,a_2)=\lambda_1\lambda_2^{-1}(b_1,b_2)$. Therefore \[\underline{a}=\lambda_1\lambda_2^{-1}\underline{b}.\]Conversely, suppose $\underline{a}=\mu \underline{b}$ for some $\mu\in \F_p^\times$. Then \begin{align*}
    \Face(\underline{a})&=\left\{ (v_0|v_1| v_2)\in\MDA_2(\F_p)_2~\middle\vert\ \lambda e_1=a_0v_0+a_1v_1+a_2v_2~\text{for some $\lambda\in \F_p^\times$}\right\}\\
    &=\left\{ (v_0|v_1| v_2)\in\MDA_2(\F_p)_2~\middle\vert\ \lambda e_1=\mu b_0v_0+\mu b_1v_1+\mu b_2v_2~\text{for some $\lambda,\mu\in \F_p^\times$}\right\}\\
    &=\left\{ (v_0|v_1| v_2)\in\MDA_2(\F_p)_2~\middle\vert\ \mu^{-1}\lambda e_1=b_0v_0+b_1v_1+b_2v_2~\text{for some $\lambda,\mu\in \F_p^\times$}\right\}\\
    &=\Face(\underline{b}).
\end{align*}

\noindent\emph{Proof of (c).} Let $X=\left(\pi,\varepsilon\right)\in G_2$ and $B\in\MDA_2(\F_p)_2$. If $B=(v_0|v_1|v_2)\in \Face(\underline{a})$, then there exists $\lambda_1 \in\F_p^\times$ such that \[\lambda_1 e_1=a_0v_0+a_1v_1+a_2v_2=\varepsilon a_{\pi(0)}(\varepsilon v_{\pi(0)})+\varepsilon a_{\pi(1)}(\varepsilon v_{\pi(1)})+\varepsilon a_{\pi(2)}(\varepsilon v_{\pi(2)}).\]
It follows that the matrix\[B\cdot X=\left(\varepsilon v_{\pi(0)}\mid \varepsilon v_{\pi(1)}\mid \varepsilon v_{\pi(2)}\right)\in \Face(\underline{a}\cdot X),\]implying \begin{equation}\label{ine}\Face(\underline{a})\cdot X\subseteq \Face(\underline{a}\cdot X)\quad\text{for all $X\in G_2$}.\end{equation}
 Applying \eqref{ine} to $X^{-1}$, we obtain \[ \Face(\underline{a}\cdot X)\cdot X^{-1}\subseteq \Face(\underline{a}).\] Thus \[ \Face(\underline{a}\cdot X)\subseteq \Face(\underline{a})\cdot X.\]
Thus, \[B\in \Face(\underline{a})\cdot X.\]
\noindent\emph{Proof of (d).} We show that $\Pb_2^\pm(\F_p)$ acts transitively on $\Face(\underline{a})$. Suppose $B=(v_0|v_1| v_2)\in \Face(\underline{a})$. Then \[\lambda_1 e_1=a_0v_0+a_1v_1+a_2v_2~\text{for some $\lambda_1\in\F_p^\times$}.\]
For any $A\in \Pb_2^\pm(\F_p)$, we know $A\cdot e_1=\lambda e_1$ for some $\lambda \in \F_p^\times$. Hence,\[a_0(A\cdot v_0)+a_1(A\cdot v_1)+a_2(A\cdot v_2)=\lambda_1\lambda e_1.\]Thus,  \[A\cdot B\in \Face(\underline{a}).\] 
For the transitivity part, we let $C=\left(u_0| u_1| u_2\right)$ be another matrix in $\Face(\underline{a})$. Then \[\lambda_2  e_1=a_0u_0+a_1 u_1+a_2u_2\quad\text{for some $\lambda_2 \in\F_p^\times$}.\]We define $D$ on the basis $\{v_1,v_2\}$ in the following way \[
            v_i\longmapsto u_i\quad\text{for all $i=1,2$}.
           \]
Then, \[A'\cdot  v_0=-\left(A'\cdot  v_1+A'\cdot  v_2\right)=-\left(u_1+u_2\right)=u_0,\]which implies that \[A'\cdot  B= C.\]
Additionally, \[A'\cdot  e_1=\lambda_1^{-1}\left(a_0u_0+a_1u_1+a_2u_2\right)=\lambda_1^{-1}\lambda_2 e_1.\]
Therefore \[A'\in\Pb_2^\pm (\F_p).\] 

It remains to show that the union \[\bigcup\limits_{\substack{\underline{a}=(a_0,a_1,a_2)\in\F_p^3\setminus\{\underline{0}\}\\a_0+a_1+a_2=0}} \Face(\underline{a})\] comprises the whole set $\MDA_2(\F_p)_2$. We will show that for every matrix $\left(v_0|v_1| v_2\right)$ in $ \MDA_2(\F_p)_2$, there exist $\lambda\in\F_p^\times$ and $\underline{a}=(a_0,a_1,a_2)\in\F_p^3\setminus\{\underline{0}\}$, such that $a_0+a_1+a_2=0$ and \[\lambda e_1=a_0v_0+a_1v_1+a_2v_2.\]Let $\left(v_0|v_1| v_2\right)\in \MDA_2(\F_p)_2$, then $v_1$ and $v_2$ are linearly independent in $\F_p^2$ and $v_0+v_1+v_2=0$. Since $e_1\in \operatorname{span}\{v_1,v_2\},$ there exist some $\lambda\in\F_p^\times$ and $(b_1,b_2)\in\F_p^2\setminus\{\underline{0}\}$, such that\[\lambda e_1=b_1v_1+b_2v_2.\]
Let \[a_0=-3^{-1}(b_1+b_2),\]\[a_1=3^{-1}(2b_1-b_2)\]\[a_2=3^{-1}(2b_2-b_1).\]Then $\lambda e_1=a_0v_0+a_1v_1+a_2v_2$, and $a_0+a_1+a_2=0$.
\end{proof}
\begin{definition}
    For all primes $p\neq 3$, we define the equivalence relation $\approx$ on the elements of $\F_p$ as follows:
\[x\approx y \quad\text{if and only if}\quad \Face(1,x,-1-x)\cdot G_2=\Face(1,y,-1-y)\cdot G_2~\text{in $\Pb_2^\pm(\F_p)\backslash \MDA_2(\F_p)_2/G_2$}.\]
\end{definition}
\begin{proposition}\label{face:c}
For all primes $p\neq 3$ and for all $c\in\F_p$, every orbit in the double quotient $\Pb_2^\pm(\F_p)\backslash\MDA_2(\F_p)_1/G_2$ has the form $\Face(1,c,-1-c)\cdot G_2$, with \[c\approx c^{-1}\approx -(1+c)\approx -(1+c^{-1})\approx-(1+c)^{-1}\approx -1+(1+c)^{-1}.\]\end{proposition}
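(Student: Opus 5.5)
The argument mirrors the proof of \autoref{edge:c}, with \autoref{facetype} in place of \autoref{edgetype}. (The double quotient in the statement should read $\Pb_2^\pm(\F_p)\backslash\MDA_2(\F_p)_2/G_2$.) By \autoref{facetype}(d), every orbit in this double quotient has the form $\Face(\underline{a})\cdot G_2$ with $\underline{a}=(a_0,a_1,a_2)\neq\underline{0}$ and $a_0+a_1+a_2=0$. Two rules let us normalize $\underline{a}$. By \autoref{facetype}(b), $\Face(\underline{a})=\Face(\lambda\underline{a})$ for $\lambda\in\F_p^\times$. By \autoref{facetype}(c), $\Face(\underline{a})\cdot X=\Face(\underline{a}\cdot X)$ for $X=(\pi,\varepsilon)\in G_2$, so within a $G_2$-orbit we may permute the three coordinates arbitrarily and change the overall sign.

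\textbf{Normal form.} Since $\underline{a}\neq\underline{0}$, some $a_i\neq 0$. First permute so that $a_0\neq 0$, then rescale by $\lambda=a_0^{-1}$. This gives $\underline{a}=(1,c,-1-c)$ with $c=a_1a_0^{-1}\in\F_p$. Hence every orbit is of the form $\Face(1,c,-1-c)\cdot G_2$.

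\textbf{The equivalences.} Fix $c$. Apply each permutation in $\Sigma_3$ to $(1,c,-1-c)$, then rescale so that the first entry becomes $1$; this is possible whenever that entry is nonzero. The new middle entry $c'$ then satisfies $c\approx c'$.
\begin{itemize}
\item Transposing positions $1$ and $2$ gives $(1,-1-c,c)$, so $c\approx-(1+c)$.
\item Transposing positions $0$ and $1$ gives $(c,1,-1-c)$. For $c\neq0$, scaling by $c^{-1}$ gives $(1,c^{-1},-1-c^{-1})$, so $c\approx c^{-1}$. Applying the previous rule to $c^{-1}$ then gives $c\approx-(1+c^{-1})$.
\item Transposing positions $0$ and $2$ gives $(-1-c,c,1)$. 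For $c\neq-1$, scaling gives middle entry $-c(1+c)^{-1}=-1+(1+c)^{-1}$. Moving the $1$ to the middle position instead gives middle entry $-(1+c)^{-1}$.
\end{itemize}
The overall sign $\varepsilon$ is absorbed by the rescaling, so it contributes nothing new. The map $c\mapsto c^{-1}$, $c\mapsto-(1+c)$ and their composites give the six values of the anharmonic-type group, all produced by the six elements of $\Sigma_3$. As with the expressions $c^{-1}$ and $-c^{-1}$ in \autoref{edge:c}, each expression is only asserted when it is defined, i.e.\ when $c\neq0$ or $c\neq-1$ as appropriate.

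\textbf{Main obstacle.} The only real work is bookkeeping. We must check that the six $\Sigma_3$-images account for exactly the six listed expressions, and we must track which denominators may vanish. The claim is only an existence-of-form statement together with equivalences. The converse, that $\approx$-classes are no larger than this, is not asserted here; it would be proved later as in \autoref{cor:edge}.
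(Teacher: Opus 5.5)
Your proof is correct and follows the paper's argument essentially step for step. You normalize to $\Face(1,c,-1-c)\cdot G_2$ using \autoref{facetype}(b),(c),(d), then read off each equivalence by permuting coordinates via $G_2$ and rescaling the new first entry to $1$, keeping track of when $c=0$ or $c=-1$. Your correction of the double quotient to $\Pb_2^\pm(\F_p)\backslash\MDA_2(\F_p)_2/G_2$ is also right.
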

The expressions $c^{-1}$ and $(1+c)^{-1}$ are understood only for $c\in \F_p^\times$ and $c\neq -1$, respectively.

\begin{proof}
    We have by \autoref{facetype} that $\Face(-a_1-a_2,a_1,a_2)$ are the orbits of $\Pb_2^\pm(\F_p)\backslash\MDA_2(\F_p)_1$ such that \begin{equation}\label{3}\Face(\underline{a})=\Face(\underline{b})\Longleftrightarrow\underline{a}=\lambda \underline{b}~\text{for some $\lambda\in \F_p^\times$},\end{equation}and\begin{equation}\label{4}\Face(\underline{a}\cdot X)=\Face(\underline{a})\cdot X\quad\text{for all $X\in G_2$}.\end{equation}
This implies that every orbit in the double quotient $\Pb_2^\pm(\F_p)\backslash \MDA_2(\F_p)_2/G_2$ has the form $\Face(-a_1-a_2,a_1,a_2)\cdot G_2$, satisfying the following equalities:
\[\Face(a_0,a_1,a_2)\cdot G_2\overset{\text{by \eqref{3}}}{\underset{\lambda=a_0^{-1}}{=}}\Face(1,a_1a_0^{-1},a_2a_0^{-1})\cdot G_2\quad\text{if $a_0\neq 0$},\]and
\[\Face(a_0,a_1,a_2)\cdot G_2\overset{\text{by \eqref{4}}}{\underset{X=((1\,2),1)}{=}}\Face(a_1,a_0,a_2)\cdot G_2\overset{\text{by \eqref{3}}}{\underset{\lambda=a_1^{-1}}{=}}\Face(1,a_1^{-1}a_0,a_2a_1^{-1})\cdot G_2\quad\text{if $a_1\neq 0$}.\]
Since at least one of the $a_i$'s must be nonzero, it follows that every orbit in $\Pb_2^\pm(\F_p)\backslash \MDA_2(\F_p)_2/G_2$ has the form $\Face(1,c,-1-c)\cdot G_2$ for $c\in \F_p$.

Additionally, if $c\neq 0$, we have \[\Face(1,c,-1-c)\cdot G_2=\Face(c^{-1},1,-1-c^{-1})\cdot G_2=\Face(1,c^{-1},-1-c^{-1})\cdot G_2.\]
And if $c\neq -1$, we get \[\Face(1,c,-1-c)\cdot G_2=\Face(-(1+c)^{-1},-c(1+c)^{-1},1)\cdot G_2=\Face(1,-(1+c)^{-1},-c(1+c)^{-1})\cdot G_2.\] Since $-c(1+c)^{-1}=-1+(1+c)^{-1}$, we obtain  \[\Face(1,c,-1-c)\cdot G_2=\Face(1,-(1+c)^{-1},-1+(1+c)^{-1})\cdot G_2\quad (c\neq -1).\]
Thus, \begin{equation}\label{rel2}c\approx c^{-1}\approx -(1+c)\approx -(1+c^{-1})\approx -(1+c)^{-1}\approx -1+(1+c)^{-1}. \qedhere\end{equation}

\end{proof}
\begin{corollary}\label{cor:face} $\Pb_2^\pm(\F_p)\backslash \MDA_2(\F_p)_2/G_2\cong \F_p/\approx$ where the equivalence classes of $\approx$ are \[\{c,c^{-1}, -(1+c), -(1+c^{-1}), -(1+c)^{-1}, -1+(1+c)^{-1}\}\] for $c\in \F_p$. Moreover, the only situations in which two of these elements are equal occur precisely when $c\in\{0,-2,-2^{-1},-1,1,\gamma_1,\gamma_2\}$ where $\gamma_i^2+\gamma_i+1=0$ for $i=1,2$.
\end{corollary}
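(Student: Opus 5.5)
The plan mirrors the proof of \autoref{cor:edge}. \autoref{face:c} already shows that every class in $\Pb_2^\pm(\F_p)\backslash \MDA_2(\F_p)_2/G_2$ is of the form $\Face(1,c,-1-c)\cdot G_2$. It also shows that the six listed elements are $\approx$-equivalent to $c$. So two things remain to prove:
\begin{enumerate}
\item if $\Face(1,c,-1-c)\cdot G_2=\Face(1,c',-1-c')\cdot G_2$, then $c'$ is one of the six listed expressions;
\item the coincidences among these six expressions occur exactly for the listed values of $c$.
\end{enumerate}

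For the first point, suppose the two classes agree. Then there is $X=(\pi,\varepsilon)\in G_2$ with $\Face(1,c',-1-c')=\Face((1,c,-1-c)\cdot X)$. By \autoref{facetype}(b),(c) this means
\[(1,c',-1-c')=\lambda\bigl(\varepsilon a_{\pi(0)},\varepsilon a_{\pi(1)},\varepsilon a_{\pi(2)}\bigr),\qquad \underline{a}=(1,c,-1-c),\]
for some $\lambda\in\F_p^\times$. The sign $\varepsilon$ can be absorbed into $\lambda$. Hence $(1,c',-1-c')$ is a rescaling of a permutation of the coordinates of $\underline{a}$, chosen so that the first coordinate becomes $1$. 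The rescaling is forced: it is the inverse of whichever entry of $\underline{a}$ is moved to position $0$.

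I would then run through the six permutations $\pi\in\Sigma_3$, normalise by the entry in position $0$, and read off $c'$ from the second coordinate. The six outcomes should be
\[c,\quad c^{-1},\quad -(1+c),\quad -(1+c)^{-1},\quad -c(1+c)^{-1}=-1+(1+c)^{-1},\quad -(1+c^{-1}).\]
For instance, putting $-1-c$ first and then $1$ gives $-(1+c)^{-1}$, while putting $-1-c$ first and then $c$ gives $-c(1+c)^{-1}$. In each case the third coordinate is automatically $-1-c'$. This is exactly the orbit of $c$ under the anharmonic-type group of order $6$ generated by $c\mapsto c^{-1}$ and $c\mapsto -1-c$. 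In particular, $\approx$ is given by these orbits, which yields the bijection with $\F_p/\approx$.

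For the second point, I would solve the pairwise equalities directly; by the group structure it is enough to compare $c$ with each of the other five.
\begin{itemize}
\item $c=c^{-1}$ gives $c=\pm1$.
\item $c=-(1+c)$ gives $c=-2^{-1}$, which makes sense since $p\neq 2$ is implicit here. The case $p=2$ should be handled separately, since then $-2^{-1}$ is undefined and the list degenerates. I would note this, or check that the statement is read with $p$ odd where needed.
\item $c=-(1+c)^{-1}$ gives $c^2+c+1=0$, so $c=\gamma_i$.
\item $c=-1+(1+c)^{-1}$ gives $c(c+2)=0$, so $c\in\{0,-2\}$.
\item $c=-(1+c^{-1})$ again gives $c^2+c+1=0$.
\end{itemize}
The boundary values $c=0,-1$, where some expressions are undefined, also produce smaller classes; for example, $\{0,-1\}$ forms a single class. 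This accounts for the whole list $\{0,-2,-2^{-1},-1,1,\gamma_1,\gamma_2\}$.

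The only real obstacle is bookkeeping. One has to be careful with the undefined expressions at $c=0$ and $c=-1$. One also has to check that every coincidence between a pair of other expressions is the image of one of the five cases above under the group action. That holds because the six maps form a group acting on $\F_p\cup\{\infty\}$.
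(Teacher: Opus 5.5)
Your proposal is correct and follows essentially the paper's route: the paper also uses \autoref{facetype} to write $(1,c,-1-c)=\lambda\,(1,c',-1-c')\cdot X$, reads off $c'$ from a table over the six permutations $\pi\in\Sigma_3$, and then verifies the coincidences, although it tabulates all fifteen pairwise equalities rather than reducing to the five fixed-point equations as you do. Your reduction is sound, since $g(c)=h(c)$ holds exactly when $c$ is fixed by the non-identity element $h^{-1}g$ of the group acting on $\F_p\cup\{\infty\}$ (a fixed point of $h^{-1}g$, rather than an ``image'' as you phrase it), and your observation that $c=-1+(1+c)^{-1}$ also has the root $c=0$ is slightly more complete than the paper's table, which lists only $c=-2$, though $0$ is on the exceptional list anyway.
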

Note that $\gamma_i^2+\gamma_i+1=0$ occurs if and only if $p\equiv 1\bmod 3$.
\begin{proof}
  We show that if $\Face(1,c,-1-c)\cdot G_2=\Face
(1,c',-1-c')\cdot G_2$ for some $c,c'\in\F_p$, then $c'\in\{c, c^{-1},-(1+c), -(1+c^{-1}), -(1+c)^{-1}, -1+(1+c)^{-1}\}$. Together with \autoref{face:c}, this implies the claimed bijection.

  If $\Face(1,c,-1-c)\cdot G_2=\Face(1,c',-1-c')\cdot G_2$, then by \autoref{facetype}, there exists $\lambda\in\F_p^\times$ and $X=(\pi,\varepsilon)\in G_2$ such that \[(1,c,-1-c)=\lambda\cdot(1,c',-1-c')\cdot X.\]Thus one of the following equalities holds.\begin{equation*}
\begin{tabular}[b]{c | c | c}
$\pi$ & \text{Equalities} & \text{Solutions} \\
\hline 
$\pi=\Id$ & $\begin{aligned}
1 &= \lambda\\
c &=\ \lambda c'\\
-1 - c &= \lambda(-1 - c').
\end{aligned}$
& $c'=c$\\[10pt]\hline
$\pi=(0\,1)$ & $\begin{aligned}
1 &= \lambda c'\\
c &= \lambda\\
-1 - c &= \lambda(-1 - c')
\end{aligned}$
& $c' = c^{-1}\quad (c\neq 0)$ \\[10pt]\hline
$\pi=(1\,2)$ & $\begin{aligned}
1 &= \lambda,\\
c &= \lambda(-1 - c')\\
-1 - c &= \lambda c'
\end{aligned}$
& $c' = -(1+ c)$ \\[10pt]\hline
$\pi=(1,2,3)$ & $\begin{aligned}
1 &= \lambda(-1 - c')\\
c &= \lambda\\
-1 - c &= \lambda c'
\end{aligned}$
& $c' = -(1+c^{-1})\quad (c\neq 0)$\\[10pt]\hline
$\pi=(1,3,2)$ & $\begin{aligned}
1 &= \lambda c'\\
c &= \lambda(-1 - c')\\
-1 - c &= \lambda
\end{aligned}$
& $c' =-(1+ c)^{-1}\quad (c\neq -1)$ \\[10pt]\hline
 $\pi=(1,3)$ & $\begin{aligned}
1 &= \lambda(-1 - c')\\
c &= \lambda c'\\
-1 - c &= \lambda
\end{aligned}$
& $c' = -1+ (1+c)^{-1}\quad (c\neq -1)$ \\[10pt]
\end{tabular}
\end{equation*}Thus $c'$ lies in the desired list.

  We prove the second part of the corollary in the table below.
\begin{equation*}
\begin{tabular}[b]{c | c}
\text{Equalities} & \text{Solutions} \\
\hline
$c = c^{-1}$ & $c = \pm 1$ \\
$c = -(1 + c)$ & $c = -2^{-1}$ \\
$c = -(1 + c^{-1})$ & $c^2 + c + 1 = 0$ \\
$c = -(1 + c)^{-1}$ & $c^2 + c + 1 = 0$ \\
$c = -1 + (1 + c)^{-1}$ & $c = -2$ \\
$c^{-1} = -(1 + c)$ & $c^2 + c + 1 = 0$ \\
$c^{-1} = -(1 + c^{-1})$ & $c = -2$ \\
$c^{-1} = -(1 + c)^{-1}$ & $c = -2^{-1}$ \\
$c^{-1} = -1 + (1 + c)^{-1}$ & $c^2+c+1=0$ \\
$-(1 + c) = -(1 + c^{-1})$ & $c = \pm 1$ \\
$-(1 + c) = -(1 + c)^{-1}$ & $c = -2$ \\
$-(1 + c) = -1 + (1 + c)^{-1}$ & $c^2 + c + 1 = 0$ \\
$-(1 + c^{-1}) = -(1 + c)^{-1}$ & $c^2 + c + 1 = 0$ \\
$-(1 + c^{-1}) = -1 + (1 + c)^{-1}$ & $c = -2^{-1}$ \\
$-(1 + c)^{-1} = -1 + (1 + c)^{-1}$ & $c = 1$ \\
\end{tabular}\qedhere
\end{equation*}
    \end{proof}

Since \autoref{facetype} relies on dividing by $3$, it does not apply when $p=3$. We therefore classify orbits using the following alternative parameterization.
\begin{proposition}\label{F,p=3}
 For all $\underline{a}=(a_1,a_2)\in\F_3^2\setminus\{\underline{0}\}$, define the set
   \[\Face'(\underline{a})=\left\{ (v_0|v_1| v_2)\in\MDA(\F_3^2)_2~\middle\vert\ \lambda e_1=a_1v_1+a_2v_2~\text{for some $\lambda\in \F_3^\times$}\right\}.\] These sets satisfy the following properties. \begin{enumerate}[label=(\alph*)]
   \item $\Face'(\underline{a})$ is nonempty.
       \item \refstepcounter{equation}\label{prop,b}
       $
\Face'(\underline{a})=\Face'(\underline{b})\quad\Longleftrightarrow\quad \underline{a}= \pm \underline b$.
\hfill  \textup{(\theequation)}
  \item  The right action of $G_2$ on $\MDA(\F_3^2)_2$ satisfies \begin{equation}\label{Face',1}
       \Face'(a_1,a_2)\cdot X_1=\Face'(a_2,a_1)\quad\text{for some $X_1\in G_2$}
  \end{equation}\begin{equation}\label{Face',2}
       \Face'(a_1,a_2)\cdot X_2=\Face'(1,0)\quad\text{if $a_1\cdot a_2=1$} \quad\text{for some $X_2\in G_2$}.
  \end{equation}
   \item The set of the $\Pb_2^\pm(\F_3)$-orbits in $\MDA(\F_3^2)_2$ is $\left\{\Face'(\underline{a}) ~\middle\vert~\underline{a}\in\F_p^2\setminus\{\underline{0}\}\right\}$.
   \end{enumerate} 
\end{proposition}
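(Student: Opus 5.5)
We follow the pattern of \autoref{edgetype} and \autoref{facetype}, with one simplification: since $v_0=-v_1-v_2$ is determined by $(v_1,v_2)$, we may parametrize by the coefficients $(a_1,a_2)$ of $v_1,v_2$ alone. This is what makes the argument avoid division by $3$. Throughout, $\MDA(\F_3^2)_2$ is in bijection with pairs $(v_1|v_2)$ with $\det(v_1|v_2)=\pm1$. Over $\F_3$ every nonzero scalar is $\pm1$ and every matrix in $\GL_2(\F_3)$ has determinant $\pm1$, so the determinant condition is automatic.

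\emph{Parts (a), (b), (d).} For (a), we take the construction from the proof of \autoref{edgetype}(a): choose $(v_1|v_2)$ with $a_1v_1+a_2v_2=e_1$ and set $v_0=-v_1-v_2$. For (b), suppose $B\in\Face'(\underline a)\cap\Face'(\underline b)$. Then $\lambda_1e_1=a_1v_1+a_2v_2$ and $\lambda_2e_1=b_1v_1+b_2v_2$. Linear independence of $v_1,v_2$ gives $\underline a=\lambda_1\lambda_2^{-1}\underline b$ with $\lambda_1\lambda_2^{-1}\in\F_3^\times=\{\pm1\}$. The converse is the same rescaling computation as in \autoref{edgetype}(b). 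For (d), invariance under $\Pb_2^\pm(\F_3)$ holds because $A e_1=\lambda e_1$. For transitivity, given $B,C\in\Face'(\underline a)$, we define $A'$ by $v_i\mapsto u_i$ for $i=1,2$. Then $A'v_0=u_0$ automatically, and $A'e_1=\lambda_1^{-1}\lambda_2e_1$, so $A'\in\Pb_2^\pm(\F_3)$ since every determinant is $\pm1$. Finally, every $(v_0|v_1|v_2)$ lies in some $\Face'(\underline a)$, namely the one given by the coordinates of $e_1$ in the basis $v_1,v_2$.

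\emph{Part (c).} For \eqref{Face',1}, we take $X_1=((1\,2),1)\in G_2$, which swaps $v_1$ and $v_2$ and preserves $v_0$. A direct check, exactly as in \autoref{edgetype}(c), gives $\Face'(a_1,a_2)\cdot X_1\subseteq\Face'(a_2,a_1)$, and equality follows by applying the same inclusion to $X_1^{-1}$.

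For \eqref{Face',2}, the coefficient pairs with $a_1a_2=1$ are $\pm(1,1)$. Up to sign by (b), we may assume $e_1=\pm(v_1+v_2)=\mp v_0$. We choose $X_2=((0\,1),1)$, which exchanges $v_0$ and $v_1$. Then $B\cdot X_2=(v_1|v_0|v_2)$, whose new second column is $v_0=\mp e_1$, so $e_1=\mp1\cdot(\text{new }v_1)+0\cdot(\text{new }v_2)$. Hence $B\cdot X_2\in\Face'(1,0)$ up to the sign ambiguity allowed in (b). Applying the inverse gives equality.

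The only real subtlety is bookkeeping how elements of $G_2$, which permute all three columns including $v_0$, transform the coefficient pair $(a_1,a_2)$. This requires rewriting $v_0=-v_1-v_2$. The computation in \eqref{Face',2} is where this matters, and we expect it to be the main (though short) obstacle.
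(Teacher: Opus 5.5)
Your proposal is correct and follows essentially the same route as the paper. Parts (a), (b), (d) are carried over from \autoref{edgetype}, \eqref{Face',1} uses $X_1=((1\,2),1)$, and \eqref{Face',2} uses the transposition $(0\,1)$ together with $v_0=-v_1-v_2$ and $a_1=a_2$ to land in $\Face'(\pm1,0)=\Face'(1,0)$ by (b).

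One small point in \eqref{Face',2}: the reverse inclusion is not literally ``the same inclusion applied to $X_2^{-1}$'', since that inclusion was proved only when $a_1a_2=1$. It does follow immediately, though, because the right $G_2$-action commutes with the left $\Pb_2^\pm(\F_3)$-action and so sends orbits onto orbits. Alternatively, you can check directly that $u_0+u_2=-u_1$ for $(u_0|u_1|u_2)\in\Face'(1,0)$.
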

\begin{proof}
 Property (a) uses the same construction as that of property (a) in \autoref{edgetype}, and property (b) follows the same reasoning as property (b) in \autoref{edgetype}. 

\textit{Proof of (c).} Let $B=\left(v_0|v_1| v_2\right)\in \Face'(a_1,a_2)$, such that \begin{equation}\label{eq:lam}\lambda e_1=a_1 v_1+a_2v_2\end{equation}for some $\lambda\in\F_3^\times$. We observe that for $X=\left((1\,2),1\right)\in G_2$, we have \[B\cdot X=\left(v_0| v_2| v_1\right).\] 
Moreover, we can write \[\lambda e_1=a_2v_2+a_1v_1.\] Thus, 
\[\Face(a_1,a_2)\cdot X\subseteq \Face'(a_2,a_1).\]
Since $g$ is invertible, we conclude that $\Face(a_1,a_2)\cdot X= \Face'(a_2,a_1)$.

Now we assume $a_1\cdot a_2=1$; that is, $a_1=a_2=\pm 1$. Take $\pi=(1\; 0)$ the transposition on $\{0,1,2\}$ sending $1\leftrightarrow 0$, and consider $X=\left(\pi,1\right)\in G_2$. Then \[B\cdot X=\left(v_1| v_0| v_2\right).\] Writing \eqref{eq:lam} as\[\lambda e_1=-a_1 v_0+(a_2-a_1)v_2,\]and using $a_1=a_2$, it follows that \[\lambda e_1=-a_1v_0.\]Thus, \[B\cdot X\in \Face'(-a_1,0),\]and consequently, \[\Face'(a_1,a_2)\cdot X\subseteq \Face'(-a_1,0).\] Similarly, we can show that \[\Face'(a_1,a_2)\cdot X= \Face'(-a_1,0).\]
Using property (b), we conclude that $\Face'(a_1,a_2)\cdot X= \Face'(1,0).$

\textit{Proof of (d).}Let $B=\left(v_0|v_1| v_2\right)\in \Face'(a_1,a_2)$, such that \[\lambda_1 e_1=a_1 v_1+a_2v_2\]for some $\lambda_1\in\F_3^\times$. Since for any $A\in \Pb_2^\pm(\F_3)$, we have by definition that $A\cdot e_1=\lambda e_1$ for some $\lambda\in \F_3^\times$, then\begin{align*}
        a_1\left(A\cdot v_1\right)+ a_2\left(A\cdot v_2\right)&=A\left(a_1 v_1+a_2 v_2\right)\\&=A\cdot\left(\lambda_1 e_1\right)\\&=\lambda_1 \lambda e_1.
    \end{align*}
    This implies that $A\cdot B\in \Face'(a_1,a_2)$, concluding that $\Pb_2^\pm(\F_3)$ acts on $\Face'(a_1,a_2)$. To show transitivity, let $C=\left(u_0| u_1| u_2\right)$ be another matrix in $\Face'(a_1,a_2)$. Then \[\lambda_2  e_1=a_1 u_1+a_2u_2,\]for some $\lambda_2 \in\F_3^\times$. We take the element $A'$ defined as follows\[v_i\longmapsto u_i\quad\text{for each $i\in\{1,2\}$}.\]Observe that \[A'\cdot  B= C\quad\text{and}\quad A'\cdot  e_1=\lambda_1^{-1}\lambda_2  e_1.\]
   It follows that $A'\in\Pb_2^\pm(\F_3)$, proving that $\Pb_2^\pm(\F_3)$ acts transitively on $\Face'(a_1,a_2)$.
   As the union $\bigcup\limits_{\substack{\underline{a}\in\F_3^2\setminus\{\underline{0}\}}} \Face'(\underline{a})$ comprises the whole set $\MDA(\F_3^2)_2$, we are done.\end{proof}

\subsection{The coinvariants \texorpdfstring{$\left(\St_2(\Q)\otimes\Q\right)_{\Gamma_{0}^\pm(p)}$}{Lg}}\label{n=2,coinv1}

Since $\BA_2(\Z)$ is contractible (\cite[Remark 1.4]{CP}), the presentation \eqref{STresol} gives the short exact sequence \begin{equation*}\label{St:ses}0\longrightarrow \redchain_2\left(\SBA(\Z^2);\Q\right)\longrightarrow \redchain_1\left(\SBA(\Z^2);\Q\right)\longrightarrow \St_2(\Q)\otimes \Q\longrightarrow 0,\end{equation*} which induces the following long exact sequence that we will use to compute the coinvariants $\left(\St_2(\Q)\otimes\Q\right)_{\Gamma_{0}^\pm(p)}$

\FloatBarrier
\begin{figure}[H]
    \begin{tikzpicture}
        % Define styles
        \tikzstyle{myarr}  = [-stealth] % Arrows

        % Nodes
        \node (C2) at (0.6,0)       {$\redchain_2(\SBA(\Z^2);\Q)_{\Gamma_{0}^\pm(p)}$};
        \node (C1) at (5.2,0)       {$\redchain_1(\SBA(\Z^2);\Q)_{\Gamma_{0}^\pm(p)}$};
        \node (C0) at (9.5,0)       {$\left(\St_2(\Q)\otimes\Q\right)_{\Gamma_{0}^\pm(p)}$};
        \node (Zero) at (12,0)     {$0$};
        \node (dots2) at (1,1.5)   {$\dots$};
        \node (C4) at (4.5,1.5)       {$\homology_1(\Gamma_{0}^\pm(p);\redchain_1(\SBA(\Z^2);\Q))$};
        \node (C3) at (9.5,1.5)       {$\homology_1(\Gamma_{0}^\pm(p);\St_2(\Q)\otimes\Q)$};

        % Horizontal arrows
        \draw [myarr] (C2) -- (C1);
        \draw [myarr] (C1) -- (C0);
        \draw [myarr] (C0) -- (Zero);
        \draw [myarr] (dots2) -- (C4);
        \draw [myarr] (C4) -- (C3);
        
        \draw [myarr] (C3)  .. controls  (15,1.2) and (-2,1).. (C2);\end{tikzpicture}\end{figure}
Moreover, $\redchain_1\left(\SBA(\Z^2);\Q\right)$ is a flat $\Gamma_{0}^\pm(p)$-module (e.g. see \cite[Lemma 3.2]{CP}), giving \[\homology_1\left(\Gamma_{0}^\pm(p);\redchain_1\left(\SBA(\Z^2);\Q\right)\right)\cong  0.\]
In addition, results of Putman--Studenmund in \cite[Theorem C]{Putman-Studenmund}, show that \[\homology_1(\Gamma_{0}^\pm(p);\St_2(\Q)\otimes \Q)\cong \homology^0(\Gamma_{0}^\pm(p);\Q^{\det})\cong 0.\] 
As a consequence, the dimension of $(\St_2(\Q)\otimes\Q)_{\Gamma_{0}^\pm(p)}$ is\begin{equation}\label{dim=St}\dim \left((\St_2(\Q)\otimes\Q)_{\Gamma_{0}^\pm(p)}\right)=\dim\left(\redchain_1\left(\SBA(\Z^2);\Q\right)_{\Gamma_{0}^\pm(p)}\right)-\dim\left(\redchain_2\left(\SBA(\Z^2);\Q\right)_{\Gamma_{0}^\pm(p)}\right).\end{equation} 
Moreover, \autoref{chaincoinv} gives \[\redchain_1\left(\SBA(\Z^2);\Q\right)_{\Gamma_{0}^\pm(p)}\cong \redchain_1\left(\Gamma_{0}^\pm(p)\backslash \SBA(\Z^2);\Q\right),\]which is isomorphic to $\redchain_1\left(\Pb_n^\pm(\F_p)\backslash \SBA_2(\F_p);\Q\right)$ by \autoref{good q}. We then deduce from \autoref{coinv} that $\redchain_1\left(\SBA(\Z^2);\Q\right)_{\Gamma_{0}^\pm(p)}$ has a non-canonical basis bijective to $\Pb_2^\pm(\F_p)\backslash \SBDA(\F_p^2)_1^\mathrm{pr}/\Sigma_{2}$, and that $\redchain_2\left(\SBA(\Z^2);\Q\right)_{\Gamma_{0}^\pm(p)}$ has a non-canonical basis bijective to $\Pb_2^\pm(\F_p)\backslash \SBDA(\F_p^2)_2^\mathrm{pr}/\Sigma_{3}$. 
Therefore, to determine $\left(\St_2(\Q)\otimes\Q\right)_{\Gamma_{0}^\pm(p)}$, we will find all the simplices in the symmetric $\Delta$-complex $\SBDA(\F_p^2)$ on which the action of $\Pb_2^\pm(\F_p)$ is orientation-preserving. We start with the $1$-simplices.

Recall from \autoref{matrixrep} that \[\SBDA(\F_p^2)_1/\Sigma_2\cong \MD_2(\F_p)_1/T_1.\] 
Thus, as mentioned in \autoref{rmk:ident}, we may identify the coset $\sigma \Sigma_2$ with the coset $B\cdot T_1$ for some $B\in \MD_2(\F_p)_1$. Moreover, we recall from \autoref{edge:c}, that every orbit of $\Pb_2^\pm(\F_p)\backslash \MD_2(\F_p)_1/T_1$ has the form $E(1,c)\cdot T_1$ for some $c\in\F_p$. 

\begin{proposition} \label{edgeinp=1mod4} Let $\sigma\in \SBDA(\F_p^2)_1$ and let $B\in \MD_2(\F_p)_1$ such that $\sigma\Sigma_2=B\cdot T_1$. Suppose $B\in \E(1,c)$ for some $c\in\F_p$. Then the action of $\Pb_2^\pm(\F_p)$ on $\sigma$ is orientation-reversing if and only if $c^4=1$.
\end{proposition}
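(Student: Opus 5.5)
By \autoref{lem:goodmat}, $\sigma$ is orientation-reversing if and only if $B\in\MD_2(\F_p)_1^{\mathrm{rv}}$. That is, there must exist $A\in\Pb_2^\pm(\F_p)$ and $X=(\pi,\varepsilon_1,\varepsilon_2)\in T_1$ with $\sign(\pi)=-1$ and $A\cdot B=B\cdot X$. Since $T_1$ has only two permutations, $\sign(\pi)=-1$ forces $\pi=(1\,2)$. So the condition is the existence of $A\in\Pb_2^\pm(\F_p)$ and signs $\varepsilon_1,\varepsilon_2$ with
\[A v_1=\varepsilon_1 v_2,\qquad A v_2=\varepsilon_2 v_1.\]

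The plan is to convert this into a condition on $c$ using the orbit description of \autoref{edgetype}. Since $\Pb_2^\pm(\F_p)$ preserves each $\E(\underline a)$, the equation $A\cdot B=B\cdot X$ gives
\[B\in\E(1,c)\quad\text{and}\quad B\cdot X\in\E(1,c).\]
By \autoref{edgetype}(c), $B\cdot X\in\E((1,c)\cdot X)$. Hence $\E((1,c)\cdot X)=\E(1,c)$, and by \autoref{edgetype}(b), $(1,c)\cdot X=\lambda(1,c)$ for some $\lambda\in\F_p^\times$.

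Conversely, suppose $\E((1,c)\cdot X)=\E(1,c)$. Then $B$ and $B\cdot X$ lie in the same $\Pb_2^\pm(\F_p)$-orbit by \autoref{edgetype}(d), so some $A$ satisfies $AB=BX$. There is one check here: the transitivity argument in (d) produced a matrix $A'$ mapping $v_i\mapsto u_i$, and it must have determinant $\pm1$. This holds because both $B$ and $B\cdot X$ have determinant $\pm1$, so $\det A'=\det(BX)\det(B)^{-1}=\pm1$. I would note this explicitly since (d) glossed over it.

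It then remains to solve $(\varepsilon_1 c,\varepsilon_2\cdot 1)=\lambda(1,c)$ with $\lambda\in\F_p^\times$ and $\varepsilon_i=\pm1$. Using the convention $\underline a\cdot X=(\varepsilon_1a_{\pi(1)},\varepsilon_2a_{\pi(2)})$, the first coordinate gives $\lambda=\varepsilon_1 c$, so $c\neq0$. The second coordinate gives $\varepsilon_2=\lambda c=\varepsilon_1c^2$, so $c^2=\pm1$ and hence $c^4=1$. Conversely, if $c^4=1$, set $\varepsilon_1=1$, $\varepsilon_2=c^2\in\{\pm1\}$ and $\lambda=c$; this solves both equations.

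Finally, if $c=0$, i.e.\ $B\in\E(1,0)$ with $v_1=\lambda e_1$, an orientation-reversing $A$ would need to send $v_1$ to $\pm v_2\notin\langle e_1\rangle$, which is impossible. This agrees with $0^4\neq1$. The only delicate step is the determinant check in the converse direction; everything else is bookkeeping with \autoref{edgetype}.
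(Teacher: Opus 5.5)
Your argument is correct and is essentially the paper's proof: you reduce via \autoref{lem:goodmat} and parts (b)--(d) of \autoref{edgetype} to the equation $(1,c)\cdot X=\lambda(1,c)$ with $X=((1\,2),\varepsilon_1,\varepsilon_2)$, and then solve $\lambda=\varepsilon_1 c$, $\lambda c=\varepsilon_2$. Your explicit determinant check in the transitivity step, and the $c=0$ sanity check, make precise what the paper leaves implicit.
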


\begin{proof}
Recall that for $(a_1,a_2)\in\F_p^2\setminus\{\underline{0}\}$, \[\E(a_1,a_2)=\left\{ B=(v_1| v_2)\in\MD_2(\F_p)_1~\middle\vert\ \lambda e_1=a_1v_1+a_2v_2~\text{for some $\lambda\in \F_p^\times$}\right\}.\]
Let $ \sigma \Sigma_2= B\cdot T_1$ with $B\in \E(1,c)$ for some $c\in\F_p$. First, We will find an equivalent condition for the action of $\Pb_2^\pm(\F_p)$ on $\sigma$ to be orientation-reversing. We have by \autoref{lem:goodmat} that \[\SBDA(\F_p^2)_1^\mathrm{rv}/\Sigma_{2}\cong \MDA_2(\F_p)_1^\mathrm{rv}/T_1.\]
Thus, the action of $\Pb_2^\pm(\F_p)$ on $\sigma$ is orientation-reversing if and only if the action of $\Pb_2^\pm(\F_p)$ on $B$ is orientation-reversing. By \autoref{def:action on B}, this means that there exist \[A\in \Pb_2^\pm(\F_p),\quad X\in T_1 \text{ with $\sign(X)=-1$},\] such that \[B\cdot X=A\cdot B.\]
Since $\E(1,c)=\Pb_2^\pm(\F_p)\cdot B$, property (c) of \autoref{edgetype} gives that this condition is equivalent to \[E((1,c)\cdot X)=E(1,c).\] 
Using property (b) and property (c) of \autoref{edgetype}: \[E((1,c)\cdot X)=E(1,c)\Longleftrightarrow (1,c)\cdot X=\lambda(1,c)~\text{for some $\lambda\in \F_p^\times$}.\] It follows that the action is orientation-reversing if and only if there exist $\lambda\in \F_p^\times$ and $X\in T_1$ with $\sign(X)=-1$ such that \begin{equation}\label{X}(1,c)\cdot X=(\lambda,\lambda c).\end{equation}Note that $\sign(X)=-1$ if and only if $X=((1\; 2),\varepsilon_1,\varepsilon_2)\in T_1$.
Writing $X=((1\; 2),\varepsilon_1,\varepsilon_2)\in T_1$ with $\varepsilon_1,\varepsilon_2\in\{-1,1\}$, the condition \eqref{X} is equivalent to the system
\begin{equation}\label{syst}\begin{cases}
\lambda = \varepsilon_1 c \\
\lambda c = \varepsilon_2.\end{cases}
\end{equation}
If \eqref{syst} holds, then
\[c^2=\varepsilon_1\varepsilon_2=\pm1,\]
and hence
\[c^4=1.\]
This proves the ``only if'' direction.

Conversely, if $c^2=\pm1$, choose $\varepsilon_1,\varepsilon_2\in\{\pm1\}$ such that
$\varepsilon_1\varepsilon_2=c^2$, and set $\lambda=\varepsilon_1 c$.
Then
\[\lambda c=\varepsilon_1 c^2=\varepsilon_2,\]
so \eqref{syst} is satisfied. This completes the proof. \end{proof} 
We are now ready to compute the dimension of $\Q\left[\Pb_2^\pm(\F_p)\backslash \SBDA(\F_p^2)_1^\mathrm{pr}/\Sigma_2\right]$.
\begin{corollary} \label{edgecount}
The dimension of $\Q\left[\Pb_2^\pm(\F_p)\backslash \SBDA(\F_p^2)_1^\mathrm{pr}/\Sigma_2\right]$ is 
    \[\dim\left( \Q\left[\Pb_2^\pm(\F_p)\backslash \SBDA(\F_p^2)_1^\mathrm{pr}/\Sigma_2\right]\right)=\begin{cases}
    1& ~\text{if}~ p =2,\\[1em]
    \frac{p+1}{4} & ~\text{if}~ p \equiv 3 \bmod 4,\\[1em]
    \frac{p-1}{4} & ~\text{if}~ p \equiv 1 \bmod 4.\\[1em]
    \end{cases}\]
\end{corollary}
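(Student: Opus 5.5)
The plan is to count the orientation-preserving edge orbits by combining \autoref{cor:edge} with \autoref{edgeinp=1mod4}. By \autoref{lem:goodmat} and \autoref{rmk:ident}, the set $\Pb_2^\pm(\F_p)\backslash \SBDA(\F_p^2)_1^\mathrm{pr}/\Sigma_2$ is in bijection with $\Pb_2^\pm(\F_p)\backslash \MD_2(\F_p)_1^\mathrm{pr}/T_1$. By \autoref{cor:edge}, this is the set of $\sim$-classes $\{c,c^{-1},-c,-c^{-1}\}$ in $\F_p$ with orientation-reversing classes removed. By \autoref{edgeinp=1mod4}, a class $\E(1,c)\cdot T_1$ is orientation-reversing exactly when $c^4=1$. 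Note that the condition $c^4=1$ is invariant under $c\mapsto \pm c^{\pm1}$, so it is a condition on the class, consistent with the fact that orientation type is $T_1$-invariant. Hence the dimension equals the number of $\sim$-classes of elements $c\in\F_p$ with $c^4\neq 1$.

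The count then proceeds as follows.

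First treat $c=0$. Its class is $\{0\}$ and $0^4\neq1$, so it always contributes exactly one orientation-preserving class.

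Next treat $c\in\F_p^\times$ with $c^4\neq 1$. By \autoref{cor:edge}, two of $\pm c^{\pm1}$ coincide only when $c\in\{0,\pm1,\pm i\}$, and all of these satisfy $c^4=1$ except $0$. So on the remaining elements the classes have size exactly $4$. The number of $c\in\F_p^\times$ with $c^4=1$ is $\gcd(4,p-1)$:
\begin{itemize}
\item it is $4$ if $p\equiv1\bmod 4$;
\item it is $2$ if $p\equiv 3\bmod 4$;
\item it is $1$ if $p=2$.
\end{itemize}
For odd $p$ this gives $(p-1-\gcd(4,p-1))/4$ classes of size four, plus the class of $0$. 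For $p\equiv3\bmod4$ the total is $(p-3)/4+1=(p+1)/4$. For $p\equiv1\bmod4$ it is $(p-5)/4+1=(p-1)/4$. For $p=2$ we have $\F_2=\{0,1\}$; the class of $1$ is reversing and the class of $0$ survives, giving $1$.

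The main point needing care is the $p=2$ case, where $\pm$ coincide and the general orbit-size statements degenerate. I would check directly that \autoref{edgeinp=1mod4} still applies there: $c=1$ satisfies $c^4=1$ and is reversing, while $c=0$ is not, since the system \eqref{syst} forces $\lambda c=\varepsilon_2=1$, which is impossible for $c=0$. This confirms the value $1$ for $p=2$.
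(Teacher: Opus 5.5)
Your proposal is correct and follows essentially the same route as the paper. It identifies the orbit set with $\sim$-classes via \autoref{cor:edge}, discards the classes with $c^4=1$ using \autoref{edgeinp=1mod4}, and counts the class of $0$ together with the remaining classes of size four. Your $\gcd(4,p-1)$ bookkeeping and the explicit check that $c=0$ is orientation-preserving when $p=2$ only make the paper's case-by-case count slightly more explicit.
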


\begin{proof} 
We recall from \autoref{lem:goodmat} that \[\Pb_2^\pm(\F_p)\backslash\SBDA(\F_p^2)^\mathrm{pr}/\Sigma_2\cong \Pb_2^\pm(\F_p)\backslash\MD_2(\F_p)_1^\mathrm{pr}/T_1.\]We also recall from \autoref{edge:c} that every orbit of $\Pb_2^\pm(\F_p)\backslash\MD_2(\F_p)_1/T_1$ may be written in the from $\E(1,c)\cdot T_1$ for some $c\in\F_p$.

It follows from \autoref{cor:edge} the bijection \begin{align*}\Pb_2^\pm(\F_p)\backslash\MD_2(\F_p)_1/T_1&\overset{\cong}\rightarrow\F_p/\sim\\\E(1,c)\cdot T_1&\mapsto c\end{align*} where the equivalence classes of $\sim$ are \begin{equation*}\{c, c^{-1}, -c, -c^{-1}\}\end{equation*} and the only values of $c$ for which two of these elements coincide are $c\in\{0,-1,1,-i,i\}$ where $i$ denotes an element of $\F_p$ such that $i^2=-1$. Note that the equation $i^2=-1$ has a solution in $\F_p$ if and only if $p\equiv 1\bmod 4$.
 
Additionally, by \autoref{edgeinp=1mod4}, the value $c$ corresponds to an orientation-reversing action if and only if $c^4=1$; that is $c\in\{\pm1, \pm i\}$. 
Therefore, \begin{itemize}
    \item If $p=2$, then $c\in\{0,1\}$. The value $c=1$ corresponds to an orientation-reversing action. So the \[
\dim \Q\left[\Pb_2^\pm(\F_2)\backslash
\SBDA(\F_2^2)_1^{\mathrm{pr}}/\Sigma_2\right]=1.\]
    \item If $p\equiv 3\bmod 4$, two of the elements $c,c^{-1},-c-c^{-1}$ coincide if and only if $c\in\{ 0,-1,1\}$. The remaining $p-3$ values of $c$ form $\frac{p-3}{4}$ orientation-preserving actions. The values $c=\pm 1$ correspond to orientation-reversing actions, so the dimension is \[1+\frac{p-3}{4}=\frac{p+1}{4}.\]
    \item If $p\equiv 1\bmod4$, two of the elements $c,c^{-1},-c-c^{-1}$ coincide if and only if $c\in\{0,-1,1,-i,i\}$.
The remaining $p-5$ values of $c$ form $\frac{p-5}{4}$ orientation-preserving actions. The values $c=\pm1,\pm i$ correspond to orientation-reversing simplices,  giving dimension \[1+\frac{p-5}{4}=\frac{p-1}{4}.\qedhere\]
\end{itemize}
\end{proof}
Next, we study the $2$-simplices of $\SBDA(\F_p^2)$. Recall from \autoref{matrixrep} that \[\SBDA(\F_p^2)_2/\Sigma_3\cong \MDA_2(\F_p)_2/G_2.\]
Let $\sigma\in \SBDA(\F_p^2)_2$. It follows that, under this isomorphism, we may identify the coset $\sigma \Sigma_3$ with the coset $B\cdot X_2$ for some $B\in \MDA_2(\F_p)_2$.

\begin{proposition}\label{faceorbit} 
Let $\sigma \in \SBDA(\F_p^2)_2$ and let $B\in \MDA_2(\F_p)_2$ such that $\sigma\Sigma_3 = B\cdot X_2$.
    \begin{itemize}
        \item If $p=3$, then the action of $\Pb_2^\pm(\F_p)$ on $\sigma$ is orientation-reversing.
        \item If $p\neq 3$ and $B\in \Face(\underline{a})$ for some $\underline{a}=(a_0,a_1,a_2) \in \F_p^3\setminus\{\underline{0}\}$, then the action of $\Pb_2^\pm(\F_p)$ on $\sigma$ is orientation-reversing if and only if $a_i=\pm a_j$ for some $i,j$.
    \end{itemize}\end{proposition}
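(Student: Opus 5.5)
I will follow the pattern of \autoref{edgeinp=1mod4}. By \autoref{lem:goodmat}, $\sigma$ is orientation-reversing if and only if $B\in\MDA_2(\F_p)_2^\mathrm{rv}$, i.e.\ there exist $A\in\Pb_2^\pm(\F_p)$ and $X=(\pi,\varepsilon)\in G_2$ with $\sign(\pi)=-1$ and $A\cdot B=B\cdot X$. Since the orbit of $B$ under $\Pb_2^\pm(\F_p)$ is exactly the set containing it (by part (d) of \autoref{facetype}, resp.\ \autoref{F,p=3}), this is equivalent to $B\cdot X$ lying in the same $\Pb_2^\pm(\F_p)$-orbit as $B$ for some odd $X$.

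For $p\neq3$, write $B\in\Face(\underline a)$. Parts (b) and (c) of \autoref{facetype} turn the condition into $\Face(\underline a\cdot X)=\Face(\underline a)$, i.e.\ $\varepsilon a_{\pi(j)}=\lambda a_j$ for all $j$, for some $\lambda\in\F_p^\times$, $\varepsilon=\pm1$, and a transposition $\pi$ (the odd permutations of $\{0,1,2\}$). Say $\pi=(i\,j)$ and let $k$ be the fixed index. Then $\varepsilon a_j=\lambda a_i$, $\varepsilon a_i=\lambda a_j$, $\varepsilon a_k=\lambda a_k$. For the ``only if'' direction: if $a_k\neq0$ then $\lambda=\varepsilon$ and $a_i=a_j$. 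If $a_k=0$ then $a_i=-a_j$ since $\sum a=0$. Either way $a_i=\pm a_j$. For the ``if'' direction: if $a_i=a_j$, take $\pi=(i\,j)$, $\varepsilon=\lambda=1$. If $a_i=-a_j$, then $a_k=0$ from the sum relation, and $\pi=(i\,j)$, $\varepsilon=1$, $\lambda=-1$ works. So the criterion is exactly $a_i=\pm a_j$ for some $i\neq j$. This is the routine part.

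For $p=3$, I will exhibit an explicit orientation-reversing symmetry for every orbit rather than classify orbits. Using \autoref{F,p=3}, every $B$ lies in some $\Face'(a_1,a_2)$, and by (b) up to sign $(a_1,a_2)\in\{(1,0),(0,1),(1,1),(1,-1)\}$. The transposition $X_1=((1\,2),1)$ sends $\Face'(a_1,a_2)$ to $\Face'(a_2,a_1)$. For $(1,1)$ and $(1,-1)$ this is $\Face'(1,1)$ and $\Face'(-1,1)=\Face'(1,-1)$ respectively, so these orbits are fixed by an odd element. For $(1,0)$, I will use instead the transposition $(1\,0)$ swapping $v_0,v_1$. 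Writing $\lambda e_1=v_1$, the new matrix $(v_1|v_0|v_2)$ has second column $v_0=-v_1-v_2$. Since in $\F_3$ we have $v_1=-v_0-v_2$, it lies in $\Face'(-1,-1)=\Face'(1,1)$, which does not directly close the loop. Better is the transposition $(0\,2)$, which fixes column $1$ and swaps $v_0,v_2$. The new matrix $(v_2|v_1|v_0)$ still satisfies $\lambda e_1=v_1$, so it stays in $\Face'(1,0)$. For $(0,1)$ the symmetric choice $(0\,1)$ fixes $v_2$ and works the same way. In all cases an odd $X$ with $\Face'(\underline a)\cdot X=\Face'(\underline a)$ exists, and transitivity from (d) gives the required $A$.

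The main obstacle is the $p=3$ case. There \autoref{F,p=3} only records the specific identities \eqref{Face',1} and \eqref{Face',2}, so I must verify by hand that each chosen odd permutation preserves the defining relation $\lambda e_1=a_1v_1+a_2v_2$, with the roles of $v_0,v_1,v_2$ tracked correctly. I also need that $\det(v_1|v_2)=\pm1$ is preserved; this holds because any two of $v_0,v_1,v_2$ have determinant $\pm$ that of $(v_1|v_2)$.
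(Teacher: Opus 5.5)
Your proof is correct and follows the paper's route. \autoref{lem:goodmat} and the orbit descriptions reduce the question to finding an odd $X\in G_2$ with $\underline a\cdot X=\lambda\underline a$ (for $p\neq 3$), or with $\Face'(\underline a)\cdot X=\Face'(\underline a)$ (for $p=3$), and you then solve or exhibit these case by case as the paper does. The only differences are cosmetic: for $p\neq 3$ you split on whether the fixed coordinate $a_k$ vanishes instead of deducing $\lambda=\pm1$, and for $\Face'(1,0)$ and $\Face'(0,1)$ you write the odd symmetry down directly (e.g.\ $((0\,2),1)$), which is the paper's conjugate $YXY^{-1}$ of $((1\,2),1)$ by the element from \eqref{Face',2}.
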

    
    \begin{proof} Let $\sigma \Sigma_3= B\cdot X_2$. We begin with the case $p=3$. By property (c) of \autoref{F,p=3}, each $\Pb_2^\pm(\F_3)$-orbit of $\MDA(\F_3^2)_2$ is of the form $\Face'(a_1,a_2)$ for some $(a_1,a_2) \in \F_3^2 \backslash  {0}$. Thus, up to the action of $\Pb_2^\pm(\F_3)$, we may assume $B\in \Face'(a_1,a_2)$. Recall that for $(a_1,a_2)\in\F_3^2\setminus\{\underline{0}\},$\[\Face'(a_1,a_2)=\left\{ (v_0|v_1| v_2)\in\MDA(\F_3^2)_2~\middle\vert\ \lambda e_1=a_1v_1+a_2v_2~\text{for some $\lambda\in \F_3^\times$}\right\}.\] 
Since \[\SBDA(\F_3^2)_2^\mathrm{rv}/\Sigma_{3}\cong \MDA(\F_3^2)_2^\mathrm{rv}/G_2\] by \autoref{lem:goodmat}, we have the following equivalences:
   \begin{align*}\Pb_2^\pm(\F_3)~\text{reverses the orientation} &~\text{of $\sigma$}\\&\Longleftrightarrow \text{there exists $X\in G_2$ with $\sign(X)=-1$ and $B\cdot X=A\cdot B$ for some $A\in \Pb_2^\pm(\F_3)$}\\&\Longleftrightarrow \text{there exists $X\in G_2$ with $\sign(X)=-1$ and $B\cdot X\in \Face'(\underline{a})$}\\ &\Longleftrightarrow \text{there exists $X\in G_2$ with $\sign(X)=-1$ and $\Face'(\underline{a})\cdot X=\Face'(\underline{a})$},\end{align*}where the last equivalence follows from property (c) of \autoref{F,p=3}. 
   
   Note that $a_1,a_2\in\F_3$. So either \[a_1=\pm a_2\quad\text{or}\quad (a_1,a_2)\in\{(\pm 1,0), (0,\pm 1)\}.\]
  \begin{itemize}
    \item If $a_1=-a_2$, then the element $X=((1\,2),-1)\in G_2$ has
$\sign(X)=-1$ and \[\Face'(\underline{a})\cdot X= \left\{\left(-v_0| -v_2| -v_1\right)\in\MDA(\F_3^2)_2~\middle\vert~\begin{array}{c}\lambda e_1= a_1v_1+a_2v_2 \\ \text{for some $\lambda\in\F_3^\times$}\end{array}\right\}.\]Using $a_1=-a_2$, we obtain\begin{align*}\Face'(\underline{a})\cdot X&=\left\{\left(-v_0| -v_2| -v_1\right)\in\MDA(\F_3^2)_2~\middle\vert~\begin{array}{c}\lambda e_1= -a_2v_1-a_1v_2 \\ \text{for some $\lambda\in\F_3^\times$}\end{array}\right\}\\&=\left\{\left(-v_0| -v_2| -v_1\right)\in\MDA(\F_3^2)_2~\middle\vert~\begin{array}{c}\lambda e_1= a_2(-v_1)+a_1(-v_2) \\ \text{for some $\lambda\in\F_3^\times$}\end{array}\right\}\\&=\left\{\left(-v_0| -v_2| -v_1\right)\in\MDA(\F_3^2)_2~\middle\vert~\begin{array}{c}\lambda e_1= a_1(-v_2)+a_2(-v_1) \\ \text{for some $\lambda\in\F_3^\times$}\end{array}\right\}\\\\&=\Face'(\underline{a}).\end{align*}
\item If $a_1=a_2$, then the element $X=((1\,2),1)$ has $\sign(X)=-1$ and
again \[\Face'(\underline{a})\cdot X= \left\{\left(v_0| v_2| v_1\right)\in\MDA(\F_3^2)_2~\middle\vert~\begin{array}{c}\lambda e_1= a_1v_1+a_2v_2 \\ \text{for some $\lambda\in\F_3^\times$}\end{array}\right\}.\]Using $a_1=a_2$, we obtain\begin{align*}\Face'(\underline{a})\cdot X&=\left\{\left(v_0| v_2| v_1\right)\in\MDA(\F_3^2)_2~\middle\vert~\begin{array}{c}\lambda e_1= a_1(v_2)+a_2(v_1) \\ \text{for some $\lambda\in\F_3^\times$}\end{array}\right\} \\&=\Face'(\underline{a}).
\end{align*}
    \item If $(a_1,a_2)\in\{(\pm1,0),(0,\pm1)\}$, then by properties (a) and (c)
of \autoref{F,p=3}, there exists $Y\in G_2$ such that $\Face'(a_1,a_2)\cdot Y=\Face'(1,1)$. Let $X=((1\,2),1)$, be as in the previous case. Then, \[\Face'(\underline{a})\cdot YXY^{-1}=\Face'(1,1)\cdot XY^{-1}=\Face'(1,1)\cdot Y^{-1}=\Face'(\underline{a}),\]
    where $\sign(YXY^{-1})=\sign(X)=-1$.
    \end{itemize}Thus, in all cases, the action of $\Pb_2^\pm(\F_3)$ on $\sigma$ is orientation-reversing.

    Now suppose $p\neq 3$ and that $B\in \Face(\underline{a})$ for some $\underline{a}=(a_0,a_1,a_2)\in\F_p^3\setminus\{\underline{0}\}$ satisfying $a_0+a_1+a_2=0$. By \autoref{lem:goodmat}, the action of $\Pb_2^\pm(\F_p)$ on $\sigma$ is orientation-reversing if and only if there exist $A\in \Pb_2^\pm(\F_p)$ and $X\in G_2$ with $\sign(X)=-1$ such that \[B\cdot X=A\cdot B.\]
    By property (b) of \autoref{faceorbit}, this condition is equivalent to the existence of an element $X\in G_2$ with $\sign(X)=-1$ such that
\[\Face(\underline{a})\cdot X=\Face(\underline{a}).\]
Using property (a) of \autoref{faceorbit}, this holds if and only if there exist $\lambda\in \F_p^\times$ and $X\in G_2$ with $\sign(X)=-1$ such that
\[\lambda\,\underline{a}=\underline{a}\cdot X.\] 
This occurs precisely when there exists $\lambda\in\F_p^\times$ and $\varepsilon\in\{-1,1\}$ such that
\begin{equation}\label{face:rv}
\lambda a_{i}=\varepsilon a_{i},\quad
\lambda a_j= \varepsilon a_\ell,\quad
\lambda a_\ell= \varepsilon a_j,
\end{equation}for distinct indices $i,j,\ell\in\{0,1,2\}$. We now proceed with the proof, using that \eqref{face:rv} characterizes when the action of $\Pb_2^\pm(\F_3)$ on $\sigma$ is orientation-reversing.

\begin{itemize}
         \item If $a_j=0$, then \eqref{face:rv} implies that $a_{\ell}=0$ since $\lambda\neq 0$. It follows by the equality $a_0+a_1+a_2=0$ that $\underline{a}=\underline{0}$, which is a contradiction.
        \item If $a_j\neq 0$, then also $a_\ell\neq 0$, and \eqref{face:rv} implies $\lambda ^2=\varepsilon^2=1$.
       
    \end{itemize}
   We conclude that $\eqref{face:rv}$ implies $\lambda =\pm 1$ and therefore $a_j=\pm a_\ell$. This proves the ``only if'' direction.

To show the converse, assume that $a_{i_0}=\pm a_{i_1}$ for some distinct indices $i_0,i_1\in\{0,1,2\}$, and let $i_2\in\{0,1,2\}\setminus\{i_1,i_1\}$. We consider two cases.\begin{itemize}
    \item If $a_{i_0}=a_{i_1}$, take $\lambda =\varepsilon=1$ to satisfy
    \eqref{face:rv}.
    \item If $a_{i_0}=- a_{i_1}$. Then $a_{i_2}=0$ since $a_0+a_1+a_2=0$. In this case take $\lambda=-\varepsilon=-1$ to satisfy \eqref{face:rv}.
\end{itemize} 
 This completes the proof.\end{proof}

\begin{corollary}\label{facecount}The dimension of $\Q\left[\Pb_2^\pm(\F_p)\backslash \SBDA(\F_p^2)^\mathrm{pr}_2/\Sigma_3\right]$ is
$$\dim\left(\Q\left[\Pb_2^\pm(\F_p)\backslash \SBDA(\F_p^2)^\mathrm{pr}_2/\Sigma_3\right]\right)=\begin{cases}
0 & ~\text{if}~ p = 2,3,\\[.5em]
    \frac{p-5}{6} & ~\text{if}~ p \equiv -1 \bmod 3,\\[1em]
    \frac{p-1}{6} & ~\text{if}~ p \equiv 1 \bmod 3.
    \end{cases} $$
\end{corollary}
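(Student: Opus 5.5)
The plan is to count the $\Pb_2^\pm(\F_p)$-orbits of $2$-simplices on which the action is orientation-preserving. This follows the same pattern as the proof of \autoref{edgecount}. By \autoref{lem:goodmat}, we have $\Pb_2^\pm(\F_p)\backslash\SBDA(\F_p^2)_2^\mathrm{pr}/\Sigma_3\cong \Pb_2^\pm(\F_p)\backslash\MDA_2(\F_p)_2^\mathrm{pr}/G_2$, so it suffices to determine which orbits $\Face(\underline a)\cdot G_2$ are orientation-reversing (via \autoref{faceorbit}) and then count the remaining classes using \autoref{cor:face}.

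The case $p=3$ is immediate: by the first item of \autoref{faceorbit} every $2$-simplex is orientation-reversing, so the dimension is $0$. For $p\neq 3$, every orbit has the form $\Face(1,c,-1-c)\cdot G_2$ with $c\in\F_p$, by \autoref{face:c}. By the second item of \autoref{faceorbit}, this orbit is orientation-reversing exactly when two entries of $\underline a=(1,c,-1-c)$ agree up to sign. I will solve these conditions one pair of entries at a time:
\begin{itemize}
\item $c=\pm1$;
\item $1=\pm(1+c)$, giving $c\in\{0,-2\}$;
\item $c=-(1+c)$, giving $c=-2^{-1}$;
\item $c=1+c$, which has no solution.
\end{itemize}
Hence the orientation-reversing values are exactly $R=\{0,\pm1,-2,-2^{-1}\}$.

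For $p=2$, we have $R\supseteq \F_2$, so the dimension is $0$. Now assume $p\geq 5$. I will compare $R$ with the degenerate set $\{0,-2,-2^{-1},\pm1,\gamma_1,\gamma_2\}$ of \autoref{cor:face}, where some elements of a class $\{c,c^{-1},\dots\}$ coincide. The values in $R$ form two $\approx$-classes, $\{0,-1\}$ and $\{1,-2,-2^{-1}\}$, and both are discarded. The only other degenerate values are the roots $\gamma_{1},\gamma_2$ of $x^2+x+1$, which exist iff $p\equiv 1\bmod 3$; they form the single class $\{\gamma_1,\gamma_2=\gamma_1^{-1}\}$. I will check that this class is orientation-preserving: for $c=\gamma$ we get $\underline a=(1,\gamma,\gamma^2)$, and $\gamma^{2}=\pm1$ or $\gamma=\pm1$ would force $\gamma=1$, which is impossible since $p\neq 3$.

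Every other value of $c$ lies in an orientation-preserving class of exactly six distinct elements, again by \autoref{cor:face}. For $p\equiv -1\bmod 3$ this leaves $p-5$ values, giving $\frac{p-5}{6}$ classes. For $p\equiv 1\bmod 3$ it leaves $p-7$ values, giving $\frac{p-7}{6}$ classes, plus the one $\gamma$-class, for a total of $\frac{p-1}{6}$.

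The main obstacle is bookkeeping rather than any single hard step. One must correctly match the orientation-reversing values with the degenerate $\approx$-classes, in particular confirming that $0\approx -1$ and $1\approx-2\approx-2^{-1}$, and that the $\gamma$-class is preserving, not reversing.
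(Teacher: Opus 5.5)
Your proposal is correct and follows the paper's proof step for step. You reduce via \autoref{lem:goodmat} and \autoref{cor:face} to counting $\approx$-classes of $c$, read off the orientation-reversing values $\{0,\pm1,-2,-2^{-1}\}$ from \autoref{faceorbit}, and count the size-six classes, plus the single $\{\gamma_1,\gamma_2\}$ class when $p\equiv 1\bmod 3$. Your explicit checks that the $\gamma$-class is orientation-preserving and that the reversing values form the two classes $\{0,-1\}$ and $\{1,-2,-2^{-1}\}$ spell out what the paper leaves implicit.
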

\begin{proof}
The case $p=3$ is immediate from \autoref{faceorbit}, which shows that $\SBDA(\F_3^2)$ contains no simplices on which the action of $\Pb_2^\pm(\F_3)$ is orientation-preserving. 
Thus,
\[
\dim\Q\left[\Pb_2^\pm(\F_3)\backslash
\SBDA(\F_3^2)^\mathrm{pr}_2/\Sigma_3\right]=0.\] 
Now let $p\neq 3$. We recall from \autoref{lem:goodmat} that \[\Pb_2^\pm(\F_3)\backslash
\SBDA(\F_3^2)^\mathrm{pr}_2/\Sigma_3\cong \Pb_2^\pm(\F_3)\backslash
\MDA_2(\F_p)_2^\mathrm{pr}/G_2.\]
We also recall from \autoref{face:c} that every orbit of $\Pb_2^\pm(\F_p)\backslash\MDA_2(\F_p)_2/G_2$ may be written in the from $\E(1,c)\cdot T_1$ for some $c\in\F_p$.

It follows from \autoref{cor:face} the bijection  \begin{align*}\Pb_2^\pm(\F_p)\backslash \MDA_2(\F_p)_2/G_2&\overset{\cong}\rightarrow \F_p/\approx\\
\Face(1,c,-1-c)\cdot G_2&\mapsto c\end{align*} where the equivalence class of $\approx$ is 
\begin{equation}\label{rel3}\{c, c^{-1}, -(1+c), -(1+c^{-1}), -(1+c)^{-1}, -1+(1+c)^{-1}\}\end{equation}
and two of these elements coincide if and only if \[c\in\{0,-2,-2^{-1},-1,1,\gamma_1,\gamma_2\}\quad\text{with }\gamma_i^2+\gamma_i+1=0.\]Note that that $\gamma_i^2+\gamma_i+1=0$ has a solution in $\F_p$ if and only if $p\equiv 1 \bmod 3$.

Additionally, by \autoref{faceorbit}, the value $c$ corresponds to an orientation-reversing action if and only if \[c=\pm 1,\quad c=\pm(-1-c)\quad\text{or}\quad -1-c=\pm 1;\] that is, $c\in\{0,-2,-2^{-1},-1,1\}$ where $-2^{-1}$ is defined for $p\neq 2$. 

\begin{itemize}
\item If $p=2$, every value of $c$ corresponds to an orientation-reversing action. So \[\dim\Q\left[\Pb_2^\pm(\F_2)\backslash \SBDA(\F_2^2)^\mathrm{pr}_2/\Sigma_3\right]=0.\]
    \item If $p\equiv -1\bmod3$, an easy calculation shows that the five values $c\in\{0,-2,-2^{-1},-1,1\}$ are exactly those for which the action is orientation-reversing. The remaining $p-5$ values of $c$ split into equivalence classes of size $6$, yielding $\frac{p-5}{6}$ orientation-preserving orbits.
    \item If $p\equiv 1\bmod 3$, there are seven exceptional values of $c$, namely $\{0,-2,-2^{-1},-1,1,\gamma_1,\gamma_2\}$, five of which correspond to orientation-reversing actions. For the remaining two, $\gamma_1$ and $\gamma_2$, we have the following equalities:
    
    \[\Face(1,\gamma_1,-1-\gamma_1)\cdot G_2\overset{\gamma_1=\gamma_2^{-1}}{\underset{\gamma_2=-1-\gamma_1}{=}}\Face(1,\gamma_2^{-1},\gamma_2)\cdot G_2=\Face(\gamma_2,1,\gamma_2^{-1})\cdot G_2\overset{\gamma_2^2=-1-\gamma_2}
    =\Face(\gamma_2,1,-1-\gamma_2)\cdot G_2.\]It follows that \[\Face(1,\gamma_1,-1-\gamma_1)\cdot G_2=\Face(1,\gamma_2,-1-\gamma_2)\cdot G_2.\]Thus, $\gamma_1$ and $\gamma_2$ lie in the same orbit in \[\Pb_2^\pm(\F_p)\backslash \MDA_2(\F_p)_2^\mathrm{pr}/G_2,\cong \Pb_2^\pm(\F_p)\backslash \SBDA(\F_p^2)_2^\mathrm{pr}/\Sigma_3.\] Therefore, the dimension is $1+\frac{p-7}{6}=\frac{p-1}{6}$. \qedhere
\end{itemize}
\end{proof}

\begin{theorem}\label{dim: St2}
The dimension of $\left(\St_2(\Q)\otimes\Q\right)_{\Gamma_{0}^\pm(p)}$ is
    \[\dim\left(\St_2(\Q)\otimes\Q\right)_{\Gamma_{0}^\pm(p)}=\begin{cases}
        1&\text{if $p=2,3$},\\[1em]
        \frac{p-1}{12}&\text{if $p\equiv 1\bmod 12$},\\[1em]
        \frac{p+7}{12}&\text{if $p\equiv 5\bmod 12$},\\[1em]
        \frac{p+5}{12}&\text{if $p\equiv 7\bmod 12$},\\[1em]
        \frac{p+13}{12}&\text{if $p\equiv 11\bmod 12$}.\\[1em]
    \end{cases}\]
\end{theorem}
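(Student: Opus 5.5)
The plan is to combine the dimension formula \eqref{dim=St} with the two counts already computed. By \eqref{dim=St}, the dimension of $\left(\St_2(\Q)\otimes\Q\right)_{\Gamma_{0}^\pm(p)}$ is the dimension of $\redchain_1(\SBA(\Z^2);\Q)_{\Gamma_0^\pm(p)}$ minus that of $\redchain_2(\SBA(\Z^2);\Q)_{\Gamma_0^\pm(p)}$. By \autoref{chaincoinv}, \autoref{good q} and \autoref{coinv}, these have bases in bijection with $\Pb_2^\pm(\F_p)\backslash \SBDA(\F_p^2)_1^\mathrm{pr}/\Sigma_2$ and $\Pb_2^\pm(\F_p)\backslash \SBDA(\F_p^2)_2^\mathrm{pr}/\Sigma_3$. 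Their cardinalities are given in \autoref{edgecount} and \autoref{facecount}, respectively. So the proof reduces to subtracting the face count from the edge count, splitting into cases according to $p \bmod 4$ and $p \bmod 3$, which amounts to $p \bmod 12$.

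First I treat the small primes. For $p=2$, the edge count is $1$ and the face count is $0$, giving $1$. For $p=3$, we have $p\equiv 3\bmod 4$, so the edge count is $(3+1)/4=1$, and \autoref{facecount} gives face count $0$; the difference is $1$.

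For $p\geq 5$ the four residue classes mod $12$ are as follows.
\begin{itemize}
\item If $p\equiv 1\bmod 12$, then $p\equiv 1\bmod 4$ and $p\equiv 1\bmod 3$, giving $\frac{p-1}{4}-\frac{p-1}{6}=\frac{p-1}{12}$.
\item If $p\equiv 5\bmod 12$, then $p\equiv 1\bmod 4$ and $p\equiv -1\bmod 3$, giving $\frac{p-1}{4}-\frac{p-5}{6}=\frac{3p-3-2p+10}{12}=\frac{p+7}{12}$.
\item If $p\equiv 7\bmod 12$, then $p\equiv 3\bmod 4$ and $p\equiv 1\bmod 3$, giving $\frac{p+1}{4}-\frac{p-1}{6}=\frac{p+5}{12}$.
\item If $p\equiv 11\bmod 12$, then $p\equiv 3\bmod 4$ and $p\equiv -1\bmod 3$, giving $\frac{p+1}{4}-\frac{p-5}{6}=\frac{p+13}{12}$.
\end{itemize}

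The main point to check is the justification of \eqref{dim=St} itself. It requires three inputs, all quoted in the preceding subsection:
\begin{itemize}
\item contractibility of $\BA_2(\Z)$, which makes \eqref{STresol} a short exact sequence;
\item flatness of $\redchain_1$, which kills $\homology_1$ with those coefficients;
\item the Putman--Studenmund vanishing of $\homology_1(\Gamma_0^\pm(p);\St_2(\Q)\otimes\Q)$.
\end{itemize}
With these, the coinvariant long exact sequence gives injectivity of $\redchain_2(\cdot)_{\Gamma_0^\pm(p)}\to \redchain_1(\cdot)_{\Gamma_0^\pm(p)}$, so dimensions subtract. One should also note that all these spaces are finite-dimensional, since the orbit sets are finite. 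Beyond this, I expect no obstacle; it is bookkeeping.
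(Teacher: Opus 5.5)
Your proposal is correct and matches the paper's proof. The paper likewise gets the dimension from \eqref{dim=St} as the count in \autoref{edgecount} minus the count in \autoref{facecount}, splitting into $p=2,3$ and the residue classes of $p$ modulo $12$, and the arithmetic is the same. Your justification of \eqref{dim=St} (contractibility of $\BA_2(\Z)$, flatness of $\redchain_1$, and the Putman--Studenmund vanishing of $\homology_1$) is also the argument the paper gives just before the theorem.
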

\begin{proof}
    From \eqref{dim=St}, we have \[\dim \left((\St_2(\Q)\otimes\Q)_{\Gamma_{0}^\pm(p)}\right)=\dim\Q\left[\Pb_2^\pm(\F_p)\backslash \SBDA(\F_p^2)_1^\mathrm{pr}/\Sigma_2\right]-\dim\Q\left[\Pb_2^\pm(\F_p)\backslash \SBDA(\F_p^2)_2^\mathrm{pr}/\Sigma_3\right].\]It follows from \autoref{edgecount} and \autoref{facecount} that  \[\begin{gathered}[b]\dim\left(\St_2(\Q)\otimes\Q\right)_{\Gamma_{0}^\pm(p)}=\begin{cases}
        1-0=1&\text{if $p=2,3$},\\[1em]
        \frac{p-1}{4}-\frac{p-1}{6}= \frac{p-1}{12}&\text{if $p\equiv 1\bmod 12$},\\[1em]
        \frac{p-1}{4}-\frac{p-5}{6}=\frac{p+7}{12}&\text{if $p\equiv 5\bmod 12$}.\\[1em]
        \frac{p+1}{4}-\frac{p-1}{6}=\frac{p+5}{12}&\text{if $p\equiv 7\bmod 12$},\\[1em]
         \frac{p+1}{4}-\frac{p-5}{6}= \frac{p+13}{12}&\text{if $p\equiv 11\bmod 12$}.
    \end{cases}\\ \end{gathered}\qedhere\]
\end{proof}

We also prove that the symmetric $\Delta$-complex $\Gamma^\pm_{0}(p)\backslash \SBA(\Z^2)$ is highly $\Q$-acyclic, which is a main step in showing the acyclicity in high dimensions.
\begin{proposition}\label{n=2,BA}
    The homology group $\redhom_k\left(\Gamma^\pm_{0}(p)\backslash \SBA(\Z^2);\Q\right)\cong 0$ for all $k\leq 1$ if and only if $p\in\{2,3,5,7,13\}$.
\end{proposition}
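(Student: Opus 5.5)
The plan is to compute the reduced Euler characteristic of the quotient $Y:=\Gamma^\pm_{0}(p)\backslash \SBA(\Z^2)\cong \Pb_2^\pm(\F_p)\backslash\SBDA(\F_p^2)$ and combine it with two structural facts: $\redhom_2(Y;\Q)=0$ and $\redhom_0(Y;\Q)=0$. These two facts reduce the statement to the numerical condition $\dim(\St_2(\Q)\otimes\Q)_{\Gamma_0^\pm(p)}=1$, which can then be read off from \autoref{dim: St2}.

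\emph{Step 1: $\redhom_2(Y;\Q)=0$.} By \autoref{chaincoinv}, $\redchain_k(Y;\Q)\cong \redchain_k(\SBA(\Z^2);\Q)_{\Gamma_0^\pm(p)}$, compatibly with the differentials. In the long exact sequence displayed above, the term preceding $\redchain_2(\SBA(\Z^2);\Q)_{\Gamma_0^\pm(p)}$ is $\homology_1(\Gamma_0^\pm(p);\St_2(\Q)\otimes\Q)$, which vanishes by Putman--Studenmund. Hence $\partial_2\colon \redchain_2(Y;\Q)\to\redchain_1(Y;\Q)$ is injective. Since $Y$ is $2$-dimensional, this gives $\redhom_2(Y;\Q)=\ker\partial_2=0$.

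\emph{Step 2: $\redhom_0(Y;\Q)=0$.} By \autoref{cor:vert}, $\redchain_0(Y;\Q)=\Q[U,V]$, so it suffices to find an orientation-preserving $1$-simplex joining $U$ and $V$. Take $c=0$, i.e.\ the orbit $\E(1,0)$. A matrix $(v_1|v_2)\in\E(1,0)$ has $v_1=\lambda e_1\in U$ and $v_2\notin \operatorname{span}\{e_1\}$, so $v_2\in V$. Since $0^4\neq 1$, \autoref{edgeinp=1mod4} shows this edge is orientation-preserving for every $p$, including $p=2$. Its boundary is $\pm(V-U)$, so $\redhom_0(Y;\Q)=0$. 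Also $\redhom_{-1}(Y;\Q)=0$, because the augmentation is onto.

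\emph{Step 3: Euler characteristic.} The reduced chain complex has dimensions $1,2,e,f$ in degrees $-1,0,1,2$, where $e$ and $f$ are the counts of \autoref{edgecount} and \autoref{facecount}. By \eqref{dim=St}, $e-f=d:=\dim(\St_2(\Q)\otimes\Q)_{\Gamma_0^\pm(p)}$. The Euler characteristic identity gives
\[
-1+2-e+f=\dim\redhom_0-\dim\redhom_1+\dim\redhom_2 .
\]
Using Steps 1 and 2, this becomes $\dim \redhom_1(Y;\Q)=d-1$. Therefore $\redhom_k(Y;\Q)=0$ for all $k\le 1$ if and only if $d=1$.

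\emph{Step 4: when $d=1$.} We check against \autoref{dim: St2}.
\begin{itemize}
\item For $p=2,3$, $d=1$.
\item For $p\equiv 5\bmod 12$, $d=(p+7)/12=1$ exactly when $p=5$.
\item For $p\equiv 7\bmod 12$, $d=(p+5)/12=1$ exactly when $p=7$.
\item For $p\equiv 1\bmod 12$, $d=(p-1)/12=1$ exactly when $p=13$.
\item For $p\equiv 11\bmod 12$, $d=(p+13)/12\ge 2$ always.
\end{itemize}
So $d=1$ precisely for $p\in\{2,3,5,7,13\}$.

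The only genuinely non-formal input is Step 1, the injectivity of $\partial_2$ on coinvariants. It relies on the Putman--Studenmund vanishing already quoted in the text and on identifying the quotient chain complex with coinvariants, so I expect it to go through. Step 2 requires only that the $c=0$ edge class is nondegenerate and orientation-preserving.
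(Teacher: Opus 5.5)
Your proof is correct and is essentially the paper's argument. Both proofs kill $\redhom_0$ with the $c=0$ edge $\E(1,0)$, reduce to $\dim\redhom_1 = d-1$, and read off $d$ from \autoref{dim: St2}. The only difference is bookkeeping. The paper gets $d-1$ from the long exact sequence of the pair (quotient, its $0$-skeleton), together with the identification of $\homology_1$ of that pair with $(\St_2(\Q)\otimes\Q)_{\Gamma_0^\pm(p)}$ by right-exactness of coinvariants. You instead use the Euler characteristic plus $\redhom_2=0$, which rests on the same Putman--Studenmund input already used to compute $d$.
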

\begin{proof}
We recall by \autoref{good q} the isomorphism \[\Gamma^\pm_{0}(p)\backslash \SBA(\Z^2)\cong \Pb_2^\pm(\F_p)\backslash \SBDA(\F_p^2).\]
Thus, by \autoref{cor:vert}, the chain complex in degree $0$, $\redchain_0(\Gamma^\pm_{0}(p)\backslash \SBA(\Z^2);\Q)$, is generated by the vertices\[U=\{\lambda e_1\mid \lambda\in \F_p^\times\}\quad\text{and}\quad V = \{v \in \F_p^2 \setminus\{\underline{0}\} \mid v \notin U\}.\] It implies that the augmentation map \[\varepsilon\colon \redchain_0(\Gamma^\pm_{0}(p)\backslash \SBA(\Z^2);\Q)\longrightarrow \redchain_{-1}(\Gamma_0^\pm(\F_p)\backslash\SBA(\Z^2);\Q)=\Q,\] has kernel \[\ker \varepsilon\cong \Q\left[U-V\right].\] 
For later use, we observe that the kernel has dimension $1$.

Moreover, we have from \autoref{good q} that the chain complex in degree $1$ is \[\redchain_1(\Gamma^\pm_{0}(p)\backslash \SBA(\Z^2);\Q)\cong\redchain_1\left(\Pb_2^\pm(\F)\backslash \SBDA(\F_p^2);\Q\right),\]which is again isomorphism to $\Q\left[\Pb_2^\pm(\F)\backslash \SBDA(\F_p^2)_1^\mathrm{pr}/ T_1\right]$ by \autoref{coinv}. Furthermore, this is isomorphic to  $\Q\left[\Pb_2^\pm(\F_p)\backslash \MD_2(\F_p)_1^\mathrm{pr}/T_1\right]$ by \autoref{lem:goodmat}.

We know from \autoref{edge:c} that orbits of $\Pb_2^\pm(\F_p)\backslash \MD_2(\F_p)_1/T_1$ have the form $E(1,c)\cdot T_1$ for some $c\in\F_p$. 

Let $B=(v_1|v_2)\in \E(1,0)$. Then $\lambda e_1=1\cdot v_1+0\cdot v_2$ for some $\lambda\in\F_p^\times$. It implies that $v_1\in U$ and $v_2\in V$. Thus the degree-$1$ differential map $\partial_1$, sends $B\cdot T_1$ to \[\partial_1(B\cdot T_1)= V\cdot T_0-U\cdot T_0=V-U.\]
As $\ker \varepsilon$ is generated by such elements, we obtain $\operatorname{im}(\partial_1)=\ker \varepsilon$. Therefore\[\redhom_0\left(\Gamma^\pm_0(p)\backslash\SBA(\Z^2);\Q\right)\cong 0.\] 
It remains to show that,\[\redhom_1\left(\Gamma^\pm_0(p)\backslash\SBA(\Z^2);\Q\right)\cong 0\quad\text{if and only if}\quad p\in\{2,3,5,7,13\}.\]
Let $\SBDA(\F_p^2)'$ be the discrete $0$-dimensional subcomplex of $\SBDA(\F_p^2)$. Then \[
\redchain_k\left(\Pb_n^\pm(\F_p)\backslash\SBDA(\F_p^2)';\Q\right)=\begin{cases}
\redchain_k\left(\Pb_n^\pm(\F_p)\backslash\SBDA(\F_p^2);\Q\right)&\text{if $k=-1,0$}\\
    0&\text{otherwise}
\end{cases}\]
Let $\rel_2(p)'=\left(\Pb_2^\pm(\F_p)\backslash\SBDA(\F_p^2),\Pb_2^\pm(\F_p)\backslash\SBDA(\F_p^2)' \right)$. 
Since \[\redhom_1\left(\Pb_2^\pm(\F_p)\backslash\SBDA(\F_p^2)';\Q\right)\cong \redhom_0\left(\Pb_2^\pm(\F_p)\backslash\SBDA(\F_p^2);\Q\right)\cong 0,\] it follows by the long exact sequence \[0\rightarrow \redhom_1\left(\Pb_2^\pm(\F_p)\backslash\SBDA(\F_p^2);\Q\right)\rightarrow \homology_1(\rel_2(p)';\Q)\rightarrow \redhom_0\left(\Pb_2^\pm(\F_p)\backslash\SBDA(\F_p^2)';\Q\right)\rightarrow  0\]that \[\dim \redhom_1\left(\Pb_2^\pm(\F_p)\backslash\SBDA(\F_p^2);\Q\right)=\dim \homology_1(\rel_2(p)';\Q)-\dim \redhom_0\left(\Pb_2^\pm(\F_p)\backslash\SBDA(\F_p^2)';\Q\right).\]
We have 
\[\redhom_0\left(\Pb_2^\pm(\F_p)\backslash\SBDA(\F_p^2)';\Q\right)=\ker\left(\redchain_0\left(\Pb_2^\pm(\F_p)\backslash\SBDA(\F_p^2)';\Q\right)\longrightarrow \Q\right)=\ker \varepsilon,\]which as mentioned above, has dimension $1$. As for $\homology_1(\rel_2(p)';\Q)$, we will show that it is isomorphic to the coinvariants $\left(\St_2(\Q)\otimes\Q\right)_{\Gamma_{0}^\pm(p)}$, which we know its dimension from \autoref{dim: St2}. 

The relative chains are \[\C_k\left(\rel_2(p)';\Q\right)=\begin{cases}
\redchain_k\left(\Pb_2^\pm(\F_p)\backslash\SBDA(\F_p^2);\Q\right)&\text{if $k=1,2$}\\
0&\text{otherwise}.
\end{cases}\]
Thus, \[\homology_1(\rel_2(p)';\Q)=\coker\left(\redchain_2(\Pb_2^\pm(\F_p)\backslash\SBDA(\F_p^2);\Q)\longrightarrow \redchain_1(\Pb_2^\pm(\F_p)\backslash\SBDA(\F_p^2);\Q)\right).\] 
Moreover, we have by \autoref{chaincoinv} that \[\redchain_k(\Pb_2^\pm(\F_p)\backslash\SBDA(\F_p^2);\Q)\cong \redchain_k\left(\SBDA(\F_p^2);\Q\right)_{\Pb_2^\pm(\F_p)}.\]
It follows by \autoref{good q}, \[\redchain_k(\Pb_2^\pm(\F_p)\backslash\SBDA(\F_p^2);\Q)\cong \redchain_k\left(\SBA(\Z^2);\Q\right)_{\Gamma_{0}^\pm(p)}.\]
It then implies that \[\homology_1(\rel_2(p)';\Q)\cong \coker\left( \redchain_2\left(\SBA(\Z^2);\Q\right)_{\Gamma_{0}^\pm(p)}\longrightarrow \redchain_1\left(\SBA(\Z^2);\Q\right)_{\Gamma_{0}^\pm(p)}\right).\] Using now the short exact sequence \eqref{St:ses}  \[0\longrightarrow \redchain_2\left(\SBA(\Z^2);\Q\right)\longrightarrow \redchain_1\left(\SBA(\Z^2);\Q\right)\longrightarrow \St_2(\Q)\otimes \Q\longrightarrow 0,\]and the fact that the coinvariants is a right exact functor, we conclude that \[\homology_1(\rel_2(p)';\Q)\cong \left(\St_2(\Q)\otimes\Q\right)_{\Gamma_{0}^\pm(p)}.\] 
We recall from \autoref{dim: St2} that \[\dim\left(\St_2(\Q)\otimes\Q\right)_{\Gamma_{0}^\pm(p)}=\begin{cases}
        1&\text{if $p=2,3$},\\[1em]
         \frac{p-1}{12}&\text{if $p\equiv 1\bmod 12$},\\[1em]
        \frac{p+7}{12}&\text{if $p\equiv 5\bmod 12$},\\[1em]
        \frac{p+5}{12}&\text{if $p\equiv 7\bmod 12$},\\[1em]
         \frac{p+13}{12}&\text{if $p\equiv 11\bmod 12$}.\\[1em]
    \end{cases}\]
Therefore,  
\[\redhom_1(\Gamma_{0}^\pm(p)\backslash \SBA(\Z^2);\Q)=\dim \left(\St_2(\Q)\otimes\Q\right)_{\Gamma_{0}^\pm(p)}-1=\begin{cases}
     0  & ~\text{if}~ p = 2,3,\\[.5em]
    \frac{p-13}{12} & ~\text{if}~ p \equiv 1 \bmod 12,\\[1em]
    \frac{p-5}{12} & ~\text{if}~ p \equiv 5 \bmod 12,\\[1em]
    \frac{p-7}{12} & ~\text{if}~ p \equiv 7 \bmod 12,\\[1em]
    \frac{p+1}{12} & ~\text{if}~ p \equiv 11 \bmod 12.
    \end{cases}\\ \]
    Consequently, $\redhom_1(\Gamma_{0}^\pm(p)\backslash \SBA(\Z^2);\Q)\cong 0$ if and only if $p\in\{2,3,5,7,13\}$.
\end{proof}

\begin{comment}\begin{remark}
   The reader might notice that \[\redhom_1(\Gamma_{0}^\pm(p)\backslash \SBA(\Z^2);\Q)\]
has dimension equal to twice the genus of the classical compactified modular curve $X_0(p)$(i.e., the modular curve with cusps filled in). Miller, Patzt, and Putman proved in \cite[Lemma 2.44]{MPP} that the quotient $\Gamma_0(p)\backslash \BA_2(\Z)$
is homeomorphic to the level $p$-modular curve $X(p)$ of genus \[\frac{(p+2)(p-3)(p-5)}{24}.\] 
Consequently, our quotient
$\Gamma_{0}^\pm(p)\backslash \SBA(\Z^2)$ is homeomorphic to the classical modular curve compactified by filling in the cusps.
\end{remark}\end{comment}

\subsection{The coinvariants \texorpdfstring{$\left(\St_2(\Q)\otimes\Q^{\det}\right)_{\Gamma_{0}^\pm(p)}$}{Lg}}\label{n=2,coinv2}

\FloatBarrier
We recall from the introduction of \autoref{n=2,coinv1}, that the presentation of the Steinberg module induces the long exact sequence 

\begin{figure}[H]
    \begin{tikzpicture}
       
        \tikzstyle{myarr}  = [-stealth] 
        \node (A2) at (0.4,0) {$\left(\redchain_2(\SBA(\Z^2);\Q)\otimes\Q^{\det}\right)_{\Gamma_{0}^\pm(p)}$};
        \node (A1) at (5.7,0)     {$\left(\redchain_1(\SBA(\Z^2);\Q)\otimes\Q^{\det}\right)_{\Gamma_{0}^\pm(p)}$};
        \node (A0) at (10.5,0)       {$\left(\St_2(\Q)\otimes\Q^{\det}\right)_{\Gamma_{0}^\pm(p)}$};
        \node (Zeroo) at (13,0)     {$0$};
        \node (A4) at (4.7,1.5)       {$0$};
        \node (A3) at (8,1.5)       {$\homology_1\left(\Gamma_{0}^\pm(p);\St_2(\Q)\otimes\Q^{\det}\right)$};
        \draw [myarr] (A2) -- (A1);
        \draw [myarr] (A1) -- (A0);
        \draw [myarr] (A0) -- (Zeroo);
        \draw [myarr] (A4) -- (A3);
        
        \draw [myarr] (A3)  .. controls (15,1.2) and (-2,1) .. (A2);
    \end{tikzpicture}\end{figure}
Putman--Studenmund in \cite[Theorem C]{Putman-Studenmund} showed that \[\homology_1(\Gamma_{0}^\pm(p);\St_2(\Q)\otimes\Q^{\det})\cong \homology^0(\Gamma_{0}^\pm(p);\Q)\cong \Q.\]
As a consequence, the dimension of $(\St_2(\Q)\otimes\Q^{\det})_{\Gamma_{0}^\pm(p)}$ is given by\begin{equation}\label{dim:St,tw}\dim \left((\St_2(\Q)\otimes\Q^{\det})_{\Gamma_{0}^\pm(p)}\right)=1-\dim\left(\left(\redchain_2\left(\SBA(\Z^2);\Q\right)\otimes\Q^{\det}\right)_{\Gamma_{0}^\pm(p)}\right)+\dim\left(\left(\redchain_1\left(\SBA(\Z^2);\Q\right)\otimes\Q^{\det}\right)_{\Gamma_{0}^\pm(p)}\right).\end{equation} 
By \eqref{ses}, we have for $k=1,2$, \[\left(\redchain_k\left(\SBA(\Z^2);\Q\right)\otimes\Q^{\det}\right)_{\Gamma_{0}^\pm(p)}\cong \left(\left(\redchain_k\left(\SBA(\Z^2);\Q\right)\otimes\Q^{\det}\right)_{\Gamma_{2}(p)}\right)_{\Pb_2^\pm(\F_p)}\cong \left(\redchain_k\left(\SBA(\Z^2);\Q\right)_{\Gamma_{2}(p)}\otimes\Q^{\det}\right)_{\Pb_2^\pm(\F_p)}.\]
By \autoref{chaincoinv}, there is an isomorphism $\redchain_k\left(\SBA(\Z^2);\Q\right)_{\Gamma_{2}(p)}\cong \redchain_k\left(\Gamma_2(p)\backslash\SBA(\Z^2);\Q\right)$. Moreover, \autoref{chaincoinv} together with \autoref{lem1} and \autoref{lem2} give that
\[\left(\redchain_k\left(\SBA(\Z^2);\Q\right)\otimes\Q^{\det}\right)_{\Gamma_{0}^\pm(p)}\cong
\left(\redchain_k\left(\SBDA(\Z^2);\Q\right)\otimes\Q^{\det}\right)_{\Pb_2^\pm(\F_p)}\quad\text{for $k=1,2$}.\]
From \autoref{coinv,tw}, $\left(\redchain_1\left(\SBA(\Z^2);\Q\right)\otimes\Q^{\det}\right)_{\Gamma_{0}^\pm(p)}$ has a non-canonical basis bijective to  $\Pb_2^\pm(\F_p)\backslash\SBDA(\F_p^2)_1^\mathrm{utw}/\Sigma_2$,
and $\left(\redchain_2\left(\SBA(\Z^2);\Q\right)\otimes\Q^{\det}\right)_{\Gamma_{0}^\pm(p)}$ has a non-canonical basis bijective to $\Pb_2^\pm(\F_p)\backslash\SBDA(\F_p^2)_2^\mathrm{utw}/\Sigma_3$. To determine their dimensions, we will find all the untwisted $1$ and $2$-simplices in $\SBDA(\F_p^2)$. We start with the $1$-simplices.

Recall from \autoref{rmk:ident}, that for every $\sigma\in \SBDA(\F_p^2)_1$, we may write $\sigma \Sigma_2= B\cdot T_1$ with $B\in \MD_2(\F_p)_1$. Moreover, we recall from \autoref{edge:c}, that every orbit of $\Pb_2^\pm(\F_p)\backslash \MD_2(\F_p)_1/T_1$ has the form $E(1,c)\cdot T_1$ for some $c\in\F_p$. 

\begin{proposition}\label{twisted edge}
    Let $\sigma\in \SBD(\F_p^2)_1$ and let $B\in \MD_2(\F_p)_1$ such that $ \sigma \Sigma_2=B\cdot T_1$. Suppose $B\in \E(1,c)$ for some $c\in\F_p$. Then $\sigma$ is twisted under the action of $\Pb_2^\pm(\F_p)$ if and only if one of the following is satisfied.\begin{itemize}
    \item $p=2$,
    \item $c=0$,
    \item $c^2=-1$.
    \end{itemize}  
\end{proposition}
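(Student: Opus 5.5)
We mirror the proof of \autoref{edgeinp=1mod4}, replacing $\sign(X)$ by $\det(A)\sign(X)$. By \autoref{lem:goodmat}, $\sigma$ is twisted if and only if $B\in\MD_2(\F_p)_1^\mathrm{tw}$. That is, there exist $A\in\Pb_2^\pm(\F_p)$ and $X\in T_1$ with $\det(A)\sign(X)=-1$ and $A\cdot B=B\cdot X$. Since $B$ is a basis, $A$ is determined by $X$: it is the unique linear map sending $(v_1,v_2)$ to $(\varepsilon_1v_{\pi(1)},\varepsilon_2v_{\pi(2)})$. Its determinant is $\varepsilon_1\varepsilon_2\sign(\pi)$, because $\det(B\cdot X)=\varepsilon_1\varepsilon_2\sign(\pi)\det B$. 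Hence
\[\det(A)\sign(X)=\varepsilon_1\varepsilon_2,\]
and this automatically lies in $\{\pm1\}$. So the condition becomes: there is $X=(\pi,\varepsilon_1,\varepsilon_2)$ with $\varepsilon_1\varepsilon_2=-1$ such that the induced $A$ fixes the line $\langle e_1\rangle$. By properties (b) and (c) of \autoref{edgetype}, as in \autoref{edgeinp=1mod4}, this is equivalent to
\[(1,c)\cdot X=\lambda(1,c)\quad\text{for some }\lambda\in\F_p^\times,\]
with $\varepsilon_1\varepsilon_2=-1$. When $p=2$ we have $-1=1$, so the twist condition $\det(A)\sign(X)\neq1$ can never hold. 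The first bullet therefore needs care: I would check whether the paper's convention makes every simplex twisted or untwisted when $p=2$. Since the statement lists $p=2$ as twisted, I expect the intended reading is that the $\pm$-convention degenerates and all $\sigma$ count as twisted. I would reconcile this by noting $\Q^{\det}$ is trivial, and flag it as the delicate point. Henceforth take $p$ odd.

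\textbf{Case $\pi=\Id$.} The equations are $\varepsilon_1=\lambda$ and $\varepsilon_2c=\lambda c$, with $\varepsilon_2=-\varepsilon_1$. These force $2\lambda c=0$, so $c=0$. Conversely, if $c=0$, take $X=(\Id,1,-1)$ and $\lambda=1$.

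\textbf{Case $\pi=(1\,2)$.} The equations are $\varepsilon_1c=\lambda$ and $\varepsilon_2=\lambda c$. This gives $c^2=\varepsilon_1\varepsilon_2=-1$. Conversely, if $c^2=-1$, take $\varepsilon_1=1$, $\varepsilon_2=-1$ and $\lambda=c$.

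Combining the two cases gives the equivalence for odd $p$. The main obstacle is the bookkeeping that ties $\det(A)$ to $X$, namely checking that the $A$ realizing $A\cdot B=B\cdot X$ really is in $\Pb_2^\pm(\F_p)$, i.e.\ fixes $\langle e_1\rangle$. This is exactly what property (c) of \autoref{edgetype} encodes. The $p=2$ convention is the other point to settle.
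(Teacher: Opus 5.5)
For odd $p$ your argument is the paper's: the same reduction through \autoref{lem:goodmat}, the same identity $\det(A)\sign(X)=\varepsilon_1\varepsilon_2$, and the same two systems for $\pi=\Id$ and $\pi=(1\,2)$. The gap is the first bullet. You never prove that every edge is twisted when $p=2$, and both things you say about that case are wrong.

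First, the claim that the twist condition ``can never hold'' for $p=2$ is false. $\sign(X)$ is a sign in $\Q$ and is not reduced mod $2$, so at the very least an $X$ with $\pi=(1\,2)$ can twist.

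Second, your proposed reconciliation via ``$\Q^{\det}$ is trivial'' points the wrong way. If $\det$ were the trivial character of $\Pb_2^\pm(\F_2)$, then twisted would mean orientation-reversing. By \autoref{edgeinp=1mod4} only the class $c=1$ would then be twisted, and the class $c=0$ would be a counterexample to the statement.

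What is missing is the convention under which the statement holds, which is the one the paper uses. The sign attached to $X=(\pi,\varepsilon_1,\varepsilon_2)$ is the formal product $\varepsilon_1\varepsilon_2\in\{\pm1\}\subset\Q$. By contrast, the compatibility condition $(1,c)\cdot X=\lambda(1,c)$ is an equation in $\F_p$. With this reading you should not split off $p=2$ at all:
\begin{itemize}
\item In the case $\pi=\Id$, the equations $\lambda=\varepsilon_1$ and $\lambda c=-\varepsilon_1 c$ give $2\varepsilon_1 c=0$, i.e.\ $p=2$ or $c=0$. This is exactly the step where you divided by $2$.
\item Conversely, for $p=2$ the choice $X=(\Id,1,-1)$ and $\lambda=1$ works for every $c$, because $-c=c$ in $\F_2$.
\end{itemize}

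This convention is the right one for the underlying integral problem. For an edge $\{v_1^\pm,v_2^\pm\}$ of $\B(\Z^2)$, let $g\in\GL_2(\Z)$ be defined by $gv_1=v_1$ and $gv_2=-v_2$. Then $g$ is congruent to the identity mod $2$, so it lies in $\Gamma_0^\pm(2)$. It has $\det g=-1$ and fixes the edge with $\pi=\Id$, so every edge really is twisted for $p=2$. In particular, for $p=2$ the integral determinant does not factor through $\Pb_2^\pm(\F_2)$. This is why computing $\det(A)$ inside $\F_2$ gives the wrong answer.
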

\begin{proof}
     Let $\sigma \Sigma_2=B\cdot T_1$ with $B\in \E(1,c)$ for some $c\in\F_p$. By \autoref{good q}, we have \[\SBDA(\F_p^2)_1^\mathrm{tw}/\Sigma_{2}\cong \MDA_2(\F_p)_1^\mathrm{tw}/T_1.\]
     Thus $\sigma$ is twisted if and only if there exist some $A\in \Pb_2^\pm(\F_p)$ and $X=(\pi,\varepsilon_1,\varepsilon_2)\in T_1$ such that \[\sign(X)\det(A)=-1\quad\text{and}\quad B\cdot X=A\cdot B.\] Taking determinants in the second equality, we get \[\det(A)=\det(X)=\sign(X)\varepsilon_1\varepsilon_2.\] Therefore, \[\sign(X)\det(A)=\sign(X)^2\varepsilon_1\varepsilon_2=\varepsilon_1\varepsilon_2,\] and so the above conditions are equivalent to \[\varepsilon_1\varepsilon_2=-1\quad\text{and}\quad B\cdot X=A\cdot B.\]
     By properties (c) and (d) of \autoref{edgetype}, the condition $B\cdot X = A\cdot B$ is equivalent to \begin{equation*}\label{orbit cond}\E((1,c)\cdot X)=E(1,c).\end{equation*}
      Moreover, by property (b) of \autoref{edgetype}, we have \[E((1,c)\cdot X)=E(1,c)\Longleftrightarrow (1,c)\cdot X=\lambda(1,c)~\text{for some $\lambda\in \F_p^\times$}.\]
Combining this with the condition $\varepsilon_1\varepsilon_2=-1$, we conclude that $\sigma$ is twisted if and only if there exist $\lambda \in \F_p^\times$ and $X=(\pi,\varepsilon_1,\varepsilon_2)\in T_1$ such that
\[
\varepsilon_1 \varepsilon_2 = -1
\quad \text{and} \quad
(1,c)\cdot X = (\lambda, \lambda c).
\]
Consequently, $\sigma$ is twisted if and only if there exist $\lambda \in \F_p^\times$, $\varepsilon_1=-\varepsilon_2 \in \{-1,1\}$ such that one of the following systems holds, depending on whether $\pi=\Id$ or $\pi=(1\; 2)$.
    \[
\begin{cases}
\lambda = \varepsilon_1, \\
\lambda c = -\varepsilon_1 c,
\end{cases}
\quad
\begin{cases}
\lambda = \varepsilon_1 c, \\
\lambda c = -\varepsilon_1.
\end{cases}
\] 
We now prove the proposition using this equivalence.

If $\sigma$ is twisted, the first system implies that $p=2$ or $c=0$, while the second system implies that $c^2=-1$. This proves the ``only if '' direction. 
     
     Conversely, if $p=2$, $\lambda=\varepsilon$ satisfies the first system. If $c=0$, choosing $\lambda=\varepsilon$ satisfies the first system. If $c^2=-1$, the second system is satisfied by taking $\varepsilon=-1$ and $\lambda=-c$. 

     This completes the proof.
\end{proof}
We compute the dimension of $\Q\left[\Pb_2^\pm(\F_p)\backslash\SBDA(\F_p^2)_1^\mathrm{utw}/\Sigma_2\right]$ in the following lemma.

\begin{proposition}\label{edgeutwcount}
   The dimension of $\Q\left[\Pb_2^\pm(\F_p)\backslash\SBDA(\F_p^2)_1^\mathrm{utw}/\Sigma_2\right]$ is \[\dim\left(\Q\left[\Pb_2^\pm(\F_p)\backslash\SBDA(\F_p^2)_1^\mathrm{utw}/\Sigma_2\right]\right)=\begin{cases}
    0 &\text{if $p=2$},\\[1em]
    \frac{p+1}{4}&\text{if $p\equiv 3 \bmod 4$},\\[1em]
    \frac{p-1}{4}&\text{if $p\equiv 1 \bmod 4$}.\\[1em]
\end{cases}\]
\end{proposition}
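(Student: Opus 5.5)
We follow the template of \autoref{edgecount}, replacing the orientation-reversing criterion of \autoref{edgeinp=1mod4} with the twisting criterion of \autoref{twisted edge}. By the twisted analogue of \autoref{lem:goodmat}, we have
\[\Pb_2^\pm(\F_p)\backslash\SBDA(\F_p^2)_1^\mathrm{utw}/\Sigma_2\cong \Pb_2^\pm(\F_p)\backslash\MD_2(\F_p)_1^\mathrm{utw}/T_1 .\]
Moreover, by \autoref{cor:edge} the full double quotient $\Pb_2^\pm(\F_p)\backslash\MD_2(\F_p)_1/T_1$ is in bijection with $\F_p/\sim$, where the classes are $\{c,c^{-1},-c,-c^{-1}\}$. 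Being twisted is invariant under $\Pb_2^\pm(\F_p)$ and $T_1$. Hence the dimension we want equals the number of $\sim$-classes containing some $c$ that avoids the three conditions of \autoref{twisted edge}. Note also that the condition $c=0$ or $c^2=-1$ is closed under $c\mapsto \pm c^{\pm1}$, so it is a condition on whole classes.

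We count by cases.

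\emph{Case $p=2$.} Every simplex is twisted by \autoref{twisted edge}, so the dimension is $0$.

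\emph{Case $p\equiv 3\bmod 4$.} Here $-1$ is not a square, so only $c=0$ is excluded. The class $\{0\}$ is removed. The class $\{1,-1\}$ is untwisted and contributes $1$. The remaining $p-3$ elements fall into classes of size exactly $4$, by the coincidence analysis in \autoref{cor:edge}, giving $\frac{p-3}{4}$ classes. The total is $1+\frac{p-3}{4}=\frac{p+1}{4}$.

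\emph{Case $p\equiv 1\bmod 4$.} Now both $\{0\}$ and $\{i,-i\}$ are twisted, and $\{\pm1\}$ remains, contributing $1$. The other $p-5$ elements form $\frac{p-5}{4}$ classes of size $4$. The total is $1+\frac{p-5}{4}=\frac{p-1}{4}$.

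The main point to be careful about is not the counting but the identification of the basis: \autoref{coinv,tw} gives a basis indexed by $G\backslash Y_1^\mathrm{utw}/\Sigma_2$, and we must check this matches $\Pb_2^\pm(\F_p)\backslash\MD_2(\F_p)_1^\mathrm{utw}/T_1$. This holds because \autoref{lem:goodmat} already passes the twisted/untwisted decomposition through the isomorphism of \autoref{matrixrep}. A second point is that the edges we consider lie in $\SBD$ rather than the augmented part, which is fine since in degree $1$ we have $\SBDA(\F_p^2)_1=\SBD(\F_p^2)_1$. Beyond these checks the proof is a short case count.
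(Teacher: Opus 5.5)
Your proposal is correct and follows the paper's proof essentially step for step. Both arguments identify the untwisted edges with $\Pb_2^\pm(\F_p)\backslash\MD_2(\F_p)_1^{\mathrm{utw}}/T_1$ via \autoref{lem:goodmat}, use the classes $\{\pm c^{\pm1}\}$ from \autoref{cor:edge}, discard $\{0\}$ (and $\{\pm i\}$ when $p\equiv 1\bmod 4$) by \autoref{twisted edge}, and count. Your two extra checks are points the paper leaves implicit: that twistedness is constant on $\sim$-classes, and that $\SBDA(\F_p^2)_1=\SBD(\F_p^2)_1$.
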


\begin{proof}
We recall from \autoref{lem:goodmat} that \[\Pb_2^\pm(\F_p)\backslash\SBDA(\F_p^2)^\mathrm{pr}/\Sigma_2\cong \Pb_2^\pm(\F_p)\backslash\MD_2(\F_p)_1^\mathrm{pr}/T_1.\]
We also recall from \autoref{edge:c} that every orbit of $\Pb_2^\pm(\F_p)\backslash\MD_2(\F_p)_1/T_1$ may be written in the from $\E(1,c)\cdot T_1$ for some $c\in\F_p$.

It follows from \autoref{cor:edge} the bijection \begin{align*}\Pb_2^\pm(\F_p)\backslash\MD_2(\F_p)_1/T_1&\overset{\cong}\rightarrow\F_p/\sim\\\E(1,c)\cdot T_1&\mapsto c\end{align*} where the equivalence classes of $\sim$ are\begin{equation*}\{c, c^{-1}, -c,-c^{-1}\}.\end{equation*}  The only values of $c$ for which two of the elements $c,c^{-1},-c,-c^{-1}$ coincide are $c\in\{0,-1,1,-i,i\}$ where $i$ denotes an element in $\F_p$ such that $i^2=-1$. Note that the equation $i^2=-1$ has a solution in $\F_p$ if and only if $p\equiv 1\bmod 4$.
 
Additionally, by \autoref{twisted edge}, the value $c$ corresponds to a twisted action if and only if either $p=2$, $c=0$ or $c=\pm i$. Therefore, \begin{itemize}
    \item For $p=2$, all the values of $c$ correspond to twisted actions, so \[\dim\Q\left[\Pb_2^\pm(\F_2)\backslash\SBDA(\F_2^2)_1^\mathrm{utw}/\Sigma_2\right]=0.\]
    \item If $p\equiv 3\bmod 4$, two of the elements $c,c^{-1},-c-c^{-1}$ coincide if and only if $c\in\{0,-1,1\}$. The remaining $p-3$ values of $c$ form $\frac{p-3}{4}$ untwisted orbits. The value $c=0$ is the only value corresponding to a twisted action. Also, since $1\sim -1$, the values $c=1$ and $c=-1$ lie in the same orbit in $\Pb_2^\pm(\F_p)\backslash\MD_2(\F_p)_1^\mathrm{pr}/T_1$. Thus the dimension is $1+\frac{p-3}{4}=\frac{p+1}{4}$.
    \item If $p\equiv 1\bmod 4$, two of the elements $c,c^{-1},-c-c^{-1}$ coincide if and only if  $c\in\{0,-1,1,-i,i\}$.  The remaining $p-5$ values of $c$ form $\frac{p-5}{4}$ untwisted orbits. The values $c=0,-i,i$ are the only values corresponding to twisted actions. Also, the values $c=-1$ and $c=1$ lie in the same orbit. Thus the dimension is $1+\frac{p-5}{4}=\frac{p-1}{4}$.\qedhere\end{itemize}\end{proof}
We proceed to show that $\SBDA(\F_p^2)$ does not contain any twisted $2$-simplices.

\begin{proposition}\label{facetwisted}
     Let $\sigma$ be a $2$-simplex in $\SBDA(\F_p^2)$. Then $\sigma$ is not twisted under the action of $\Pb_2^\pm(\F_p)$.
\end{proposition}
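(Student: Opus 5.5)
We follow the same pattern as \autoref{twisted edge}, now for $2$-simplices. By \autoref{lem:goodmat}, it suffices to work with a matrix $B=(v_0|v_1|v_2)\in\MDA_2(\F_p)_2$ representing $\sigma\Sigma_3$. The simplex $\sigma$ is twisted if and only if there exist $A\in\Pb_2^\pm(\F_p)$ and $X=(\pi,\varepsilon)\in G_2$ with
\[\det(A)\sign(X)=-1\quad\text{and}\quad A\cdot B=B\cdot X.\]
The plan is to show that the relation $A\cdot B=B\cdot X$ alone forces $\det(A)=\sign(\pi)$. Then $\det(A)\sign(X)=\sign(\pi)^2=1$, so no twisting element exists.

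To compute $\det(A)$, we use that $\{v_1,v_2\}$ is a basis of $\F_p^2$ and that $A$ is determined by its images of $v_1$ and $v_2$. Since $A\cdot B=B\cdot X$, we have $Av_j=\varepsilon v_{\pi(j)}$ for $j=0,1,2$. Hence
\[\det(A)\det(v_1|v_2)=\det(Av_1|Av_2)=\varepsilon^2\det(v_{\pi(1)}|v_{\pi(2)})=\det(v_{\pi(1)}|v_{\pi(2)}).\]
So the problem reduces to the following identity, to be checked for every $\pi\in\Sigma_{\{0,1,2\}}$:
\[\det(v_{\pi(1)}|v_{\pi(2)})=\sign(\pi)\det(v_1|v_2).\]

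The key observation is that, because $v_0+v_1+v_2=0$, we have
\[\det(v_0|v_1)=\det(-v_1-v_2|v_1)=\det(v_1|v_2),\]
and similarly $\det(v_2|v_0)=\det(v_1|v_2)$. Thus the alternating form $(i,j)\mapsto\det(v_i|v_j)$ takes the same value $d=\det(v_1|v_2)$ on the cyclically ordered pairs $(0,1)$, $(1,2)$, $(2,0)$, and the value $-d$ on the reversed pairs.

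Checking the six permutations then finishes the argument. For $\pi$ the identity or a $3$-cycle, the pair $(\pi(1),\pi(2))$ is cyclically ordered, so the determinant is $d$. For $\pi$ a transposition, the pair is anti-cyclic, so the determinant is $-d$. This gives exactly $\sign(\pi)\,d$, as required.

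The main potential obstacle is $p=2$, where signs collapse. There $\det(A)=\pm1=1$ and $\sign$-twisting only involves $\det(A)\sign(X)=-1\neq 1$ in $\Q$. However, $\det$ is a character valued in $\{\pm1\}\subset\F_p$ lifted to $\Q$, and for $p=2$ it is trivial, so the identity $\det(A)=\sign(\pi)$ read in $\F_2$ carries no information. I would treat $p=2$ separately. The approach is to observe that $\Pb_2^\pm(\F_2)$ has trivial $\det$-character, so twisted means orientation-reversing. Then one would need to recheck against \autoref{faceorbit}, which says every $2$-simplex is orientation-reversing for $p=2$. That would contradict the statement unless the paper's convention interprets $\det$ differently, so here I would defer to how the paper defines the $\det$-character on $\Pb_n^\pm(\F_p)$ via the lift from $\Gamma_{0}^\pm(p)$. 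In that lift, $\det(A)$ as an integer matrix equals $\sign(\pi)$ by the same computation done over $\Z$ using lifted vectors, and the argument goes through uniformly.
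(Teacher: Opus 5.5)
Your argument is correct and is essentially the paper's proof. The paper asserts that $A\cdot B=B\cdot X$ forces $\det(A)=\varepsilon^2\sign(X)$, which is the identity $\det(v_{\pi(1)}|v_{\pi(2)})=\sign(\pi)\det(v_1|v_2)$; this is established in \autoref{product} by checking the generators $(0\,1)$ and $(1\,2)$. You verify the same identity through the cyclic invariance of $(i,j)\mapsto\det(v_i|v_j)$ coming from $v_0+v_1+v_2=0$. For odd $p$, where $1\neq -1$ in $\F_p$, this gives $\det(A)=\sign(X)$ in $\{\pm1\}$, hence $\det(A)\sign(X)=1$.

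Your concern about $p=2$ is justified, and the paper's proof does not address it. Read literally, $\det$ is trivial on $\Pb_2^\pm(\F_2)$. The element $u=\begin{pmatrix}1&1\\0&1\end{pmatrix}$ sends $\sigma=(e_1+e_2,e_1,e_2)$ to $(e_2,e_1,e_1+e_2)=\sigma\cdot(0\,2)$, so $\sigma$ would be twisted.

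Your resolution by lifting is the right one, but two points need fixing.
\begin{itemize}
\item There is no $\det$-character on $\Pb_2^\pm(\F_2)$ ``via the lift'', because the integer determinant does not descend: $\mathrm{diag}(-1,1)\in\Gamma_0^\pm(2)$ reduces to the identity mod $2$ but has determinant $-1$. The statement should instead be made directly for $\Gamma_0^\pm(p)$ acting on $\SBA(\Z^2)$, which is what the twisted coinvariants $\left(\redchain_2(\SBA(\Z^2);\Q)\otimes\Q^{\det}\right)_{\Gamma_0^\pm(p)}$ actually require.
\item Over $\Z$ you must justify that an element $g$ with $g v_j=\varepsilon_j v_{\pi(j)}$ uses a single sign. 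Applying $g$ to $v_0+v_1+v_2=0$ gives $\sum_j\varepsilon_j v_{\pi(j)}=0$. Unequal signs would then force $2v_j=0$ for some primitive $v_j$, which is impossible.
\end{itemize}
With these two points, your determinant computation carried out in $\Z$, where $\det(v_1|v_2)=\pm1$, gives $\det(g)=\sign(\pi)$ uniformly in $p$.
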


\begin{proof}
    Let $\sigma \Sigma_3=B\cdot X_2$ with $B\in\MDA_2(\F_p)_2$. Suppose by contradiction that $\sigma$ is twisted. 
    By \autoref{lem:goodmat}, we have \[\SBDA(\F_p^2)_2^\mathrm{tw}/\Sigma_{3}\cong \MDA_2(\F_p)_2^\mathrm{tw}/G_2.\]Then there exist some $A\in \Pb_2^\pm(\F_p)$ and $X=(\pi,\varepsilon)\in G_2$ such that \[\sign(X)\det(A)=-1\quad\text{and}\quad B\cdot X=A\cdot B.\] The relation $B\cdot X=A\cdot B$ forces\[\varepsilon^2\sign(X)=\det(g)=\det(A).\]It then follows that $\varepsilon^2=-1$, which gives a contradiction to $\varepsilon\in\{-1,1\}$. Therefore, $\sigma$ is not twisted.
\end{proof}

\begin{corollary}\label{faceutwcount}
The dimension of $\Q\left[\Pb_2^\pm(\F_p)\backslash\SBDA(\F_p^2)_2^\mathrm{utw}/\Sigma_3\right]$ is 
\[\dim\left(\Q\left[\Pb_2^\pm(\F_p)\backslash\SBDA(\F_p^2)_2^\mathrm{utw}/\Sigma_3\right]\right)=\begin{cases}
    1 &\text{if $p=2$},\\[1em]
    2&\text{if $p=3$},\\[1em]
    \frac{p+7}{6}&\text{if $p\equiv -1 \bmod 3$},\\[1em]
    \frac{p+11}{6}&\text{if $p\equiv 1 \bmod 3$}.\\[1em]
\end{cases}\] 
    
\end{corollary}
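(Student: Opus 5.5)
By \autoref{facetwisted}, no $2$-simplex of $\SBDA(\F_p^2)$ is twisted, so $\SBDA(\F_p^2)_2^\mathrm{utw}=\SBDA(\F_p^2)_2$. By \autoref{lem:goodmat} and \autoref{matrixrep}, the dimension in question is therefore the number of double cosets in $\Pb_2^\pm(\F_p)\backslash \MDA_2(\F_p)_2/G_2$. My plan is to count these double cosets: for $p\neq 3$ through the bijection with $\F_p/\approx$ of \autoref{cor:face}, and for $p=3$ through the parametrization $\Face'(\underline a)$ of \autoref{F,p=3}.

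For $p\neq 3$, I will compute the $\approx$-classes explicitly. A generic class $\{c,c^{-1},-(1+c),-(1+c^{-1}),-(1+c)^{-1},-1+(1+c)^{-1}\}$ has six elements. The exceptional values are $c\in\{0,-2,-2^{-1},-1,1,\gamma_1,\gamma_2\}$, and I will determine their classes directly from the list, discarding the entries that are undefined.
\begin{itemize}
\item For $c=0$, the defined entries are $0,\,-1,\,-1,\,0$, so the class is $\{0,-1\}$.
\item For $c=1$, the entries give the class $\{1,-2,-2^{-1}\}$.
\item For $c=\gamma_1$, the class is $\{\gamma_1,\gamma_2\}$, as already computed in the proof of \autoref{facecount}.
\end{itemize}
Consequently, if $p\equiv -1\bmod 3$ and $p\neq 2$, the five exceptional values form $2$ classes and the remaining $p-5$ values form $(p-5)/6$ classes, for a total of $(p+7)/6$. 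If $p\equiv 1\bmod 3$, the seven exceptional values form $3$ classes and the remaining values form $(p-7)/6$ classes, for a total of $(p+11)/6$. For $p=2$ we have $\F_2=\{0,1\}$ with $1=-1$, so $0\approx 1$ and there is exactly one class.

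For $p=3$, property (b) of \autoref{F,p=3} shows that the $\Pb_2^\pm(\F_3)$-orbits $\Face'(a_1,a_2)$ correspond to $(a_1,a_2)$ up to sign. This gives four orbits, represented by $(1,0)$, $(0,1)$, $(1,1)$ and $(1,-1)$. By \eqref{Face',1} and \eqref{Face',2}, $G_2$ identifies $(0,1)$, $(1,0)$ and $(1,1)$, so at most two double cosets remain.

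The main obstacle is showing that $(1,-1)$ is not $G_2$-equivalent to $(1,0)$. I plan to use an invariant. Since $v_0+v_1+v_2=0$, a relation $\lambda e_1=a_1v_1+a_2v_2$ corresponds to the coefficient triple $(0,a_1,a_2)$, which is well defined only up to adding a constant vector $(t,t,t)$. The group $G_2$ acts on this triple by permutations and a global sign, and $\lambda$ acts by scaling. The property ``the triple has a repeated entry'' is invariant under all of these operations. The triple $(0,1,-1)$ has pairwise distinct entries, whereas $(0,1,0)$ has a repeated entry. Hence the two double cosets are distinct, and the dimension for $p=3$ is $2$.
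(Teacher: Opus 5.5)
Your proposal is correct and follows the same route as the paper. It rules out twisted $2$-simplices, reduces to counting $\Pb_2^\pm(\F_p)\backslash\MDA_2(\F_p)_2/G_2$, and uses $\F_p/\approx$ for $p\neq 3$ and the orbits $\Face'(\underline a)$ with \eqref{Face',1}--\eqref{Face',2} for $p=3$.

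The differences are minor. For $p\neq 3$, the paper adds the orientation-preserving count of \autoref{facecount} to the two orientation-reversing classes $\{0,-1\}$ and $\{1,-2,-2^{-1}\}$, whereas you count all $\approx$-classes directly. For $p=3$, the paper separates $\Face'(1,1)\cdot G_2$ from $\Face'(1,-1)\cdot G_2$ by a case check over $\pi$. Your repeated-entry invariant, which just records whether $\langle e_1\rangle$ is one of the three lines of the simplex, does this more cleanly.
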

\begin{proof}
We have by \autoref{facetwisted} that $\SBDA(\F_p^2)$ does not contain any twisted $2$-simplices. Thus \[\dim\left(\Q\left[\Pb_2^\pm(\F_p)\backslash\SBDA(\F_p^2)_2^\mathrm{utw}/\Sigma_3\right]\right)=\dim\left(\Q\left[\Pb_2^\pm(\F_p)\backslash\SBDA(\F_p^2)_2/\Sigma_3\right]\right).\]
It then follows from \autoref{matrixrep} that \begin{equation}\label{=}\dim\left(\Q\left[\Pb_2^\pm(\F_p)\backslash\SBDA(\F_p^2)_2^\mathrm{utw}/\Sigma_3\right]\right)=\dim\left(\Q\left[\Pb_2^\pm(\F_p)\backslash\MDA_2(\F_p)_2/G_2\right]\right).\end{equation}\begin{itemize}
\item For $p=3$, we have by property (d) of \autoref{F,p=3} that the orbits in $\Pb_2^\pm(\F_3)\backslash\MDA(\F_3^2)_2/G_2$ have the form $\Face'(a_1,a_2)\cdot G_2$ with $(a_1,a_2)\in\F_3^2\setminus\{0,0\}$. Moreover, \[
\Face'(1,1)\cdot G_2 
\overset{\eqref{prop,b}}{=} 
\Face'(-1,-1)\cdot G_2 
\overset{\eqref{prop,b}}{\underset{\eqref{Face',2}}{=}} 
\Face'(\pm 1,0)\cdot G_2 
\overset{\eqref{Face',1}}{=} 
\Face'(0,\pm 1)\cdot G_2.
\]and \[\Face'(1,-1)\cdot G_2\overset{\eqref{prop,b}}=\Face'(-1,1)\cdot G_2.\] 
We show that $\Face'(1,1)\cdot G_2$ and $\Face'(1,-1)\cdot G_2$ are distinct. Suppose, for contradiction, that there exist $B=(v_0|v_1|v_2)\in\Face'(1,1)$ and $X=(\pi,\varepsilon)\in G_2$ such that $B\cdot X\in\Face'(1,-1)$. Then there exists $\lambda\in\F_p^\times$ with 
\[v_{\pi(1)}-v_{\pi(2)}=\lambda (v_1+v_2),\]since both $v_{\pi(1)}-v_{\pi(2)}$ and $v_1+v_2$ are multiples of $e_1$.
We consider the possible permutations $\pi$.\begin{itemize}
    \item If $\pi=\Id$, then $v_1-v_2=\lambda(V_1+v_2)$. By linear independence of $v_1$ and $v_2$, this forces $\lambda = 0$, a contradiction.
    \item If $\pi=(1\; 2)$, then $v_2-v_1=\lambda (v_1+v_2)$, which again implies $\lambda = 0$, contradicting $\lambda \in \F_p^\times$.
    \item If $\pi(i)=0$ for some $i \in \{1,2\}$, assume without loss of generality that $\pi(1)=0$. Using $v_0 = -v_1 - v_2$, we obtain $v_0-v_{\pi(2)}=-\lambda v_0.$ This implies that $v_{\pi(2)} = (1+\lambda)v_0$, which contradicts the assumption that $v_{\pi(2)} \neq 0$ and is independent from $v_0$.
\end{itemize}
Thus $\Pb_2^\pm(\F_3)\backslash\MDA(\F_3^2)_2^\mathrm{rv}/G_2$ contains exactly the two orbits, $\Face'(1,1)\cdot G_2$ and $\Face'(1,-1)\cdot G_2$. So the dimension is \[\dim\left(\Q\left[\Pb_2^\pm(\F_3)\backslash\MDA(\F_3^2)_2/G_2\right]\right)=2.\]
\item For $p\neq 3$, we use the following equality \[\dim\left(\Q\left[\Pb_2^\pm(\F_p)\backslash\MDA_2(\F_p)_2/G_2\right]\right)=\dim\left(\Q\left[\Pb_2^\pm(\F_p)\backslash\MDA_2(\F_p)_2^\mathrm{pr}/G_2\right]\right)+\dim\left(\Q\left[\Pb_2^\pm(\F_p)\backslash\MDA_2(\F_p)_2^\mathrm{rv}/G_2\right]\right).\]
From \autoref{face:c}, every orbit in $\Pb_2^\pm(\F_p)\backslash \MDA_2(\F_p)_2/G_2$ has the form $\Face(1,c,-1-c)\cdot G_2$ for some $c\in\F_p$. Moreover, by \autoref{cor:face}, there is a bijection \begin{align*}\Pb_2^\pm(\F_p)\backslash \MDA_2(\F_p)_2/G_2&\overset{\cong}\rightarrow \F_p/\approx\\
\Face(1,c,-1-c)\cdot G_2&\mapsto c\end{align*} where the equivalence class of $\approx$ is 
\begin{equation}\label{rel4}\{c, c^{-1}, -(1+c), -(1+c^{-1}), -(1+c)^{-1}, -1+(1+c)^{-1}\}.\end{equation} Furthermore, by \autoref{faceorbit}, an orbit corresponds to an orientation-reversing action if and only if $c\in\{0,-2,-2^{-1},-1,1\}$, where $-2^{-1}$ is defined for $p\neq 2$. 
\begin{itemize}
    \item For $p=2$, $c\in\{0,1\}$. So the double quotient $\Pb_2^\pm(\F_2)\backslash\MDA_2(\F_2)_2^\mathrm{rv}/G_2$ contains a single orbit, represented by $\Face(1,1,0)\cdot G_2$. Therefore,  \[
    \dim\left(\Q\left[\Pb_2^\pm(\F_2)\backslash\MDA_2(\F_2)_2^{\mathrm{rv}}/G_2\right]\right)=1.
    \]
    \item For $p\geq 5$, using \eqref{rel4}, we compute \[
    0 \approx -(1+0) = -1, \quad
    1 \approx -(1+1) = -2 \approx (-2)^{-1} = -2^{-1}.
    \] Thus the values of $c$ corresponding to orientation-reversing actions form exactly two equivalence classes, represented by $0$ and $1$. It follows that the double quotient $\Pb_2^\pm(\F_p)\backslash\MDA_2(\F_p)_2^{\mathrm{rv}}/G_2$
    contains exactly two orbits. Hence,
    \[\dim\left(\Q\left[\Pb_2^\pm(\F_p)\backslash\MDA_2(\F_p)_2^{\mathrm{rv}}/G_2\right]\right)=2.\]
\end{itemize}
Furthermore, by \autoref{good q}, we have \[\Pb_2^\pm(\F_p)\backslash\MDA_2(\F_p)_2^\mathrm{pr}/G_2\cong \Pb_2^\pm(\F_p)\backslash\SBDA(\F_p^2)_2^\mathrm{pr}/\Sigma_3.\] Consequently, using \autoref{facecount}, we obtain 
\[
\begin{gathered}[b]\dim\left(\Q\left[\Pb_2^\pm(\F_p)\backslash \SBDA(\F_p^2)^\mathrm{utw}_2/\Sigma_3\right]\right)=\begin{cases}
0+1=1& \text{if $p=2$},\\[1em]

    \frac{p-5}{6}+2=\frac{p+7}{6} & ~\text{if}~ p \equiv -1 \bmod 3,\\[1em]
    \frac{p-1}{6}+2=\frac{p+11}{6} & ~\text{if}~ p \equiv 1 \bmod 3.
    \end{cases}\\ \end{gathered}\qedhere\]
    \end{itemize}
\end{proof}
\begin{theorem}\label{det St}
The dimension of $\left(\St_2(\Q)\otimes\Q^{\det}\right)_{\Gamma_{0}^\pm(p)}$ is
    \[\dim\left(\St_2(\Q)\otimes\Q^{\det}\right)_{\Gamma_{0}^\pm(p)}=\begin{cases}
        0&\text{if $p=2,3$},\\[1em]
        \frac{p-13}{12}&\text{if $p\equiv 1\bmod 12$},\\[1em]
        \frac{p-5}{12}&\text{if $p\equiv 5\bmod 12$},\\[1em]
        \frac{p-7}{12}&\text{if $p\equiv 7\bmod 12$},\\[1em]
        \frac{p+1}{12}&\text{if $p\equiv 11\bmod 12$}.\\[1em]
    \end{cases}\]
\end{theorem}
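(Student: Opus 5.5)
The proof is a direct dimension count: we substitute the chain-group dimensions already computed into the Euler-characteristic type identity \eqref{dim:St,tw}. That identity was obtained from the long exact sequence in $\Gamma_0^\pm(p)$-homology of the presentation \eqref{STresol}, using the vanishing of $\homology_1$ with flat coefficients and Putman--Studenmund's $\homology_1(\Gamma_0^\pm(p);\St_2(\Q)\otimes\Q^{\det})\cong\Q$. It reads
\[\dim\left(\St_2(\Q)\otimes\Q^{\det}\right)_{\Gamma_0^\pm(p)} = 1 - d_2(p) + d_1(p),\]
where $d_k(p)=\dim\left(\left(\redchain_k(\SBA(\Z^2);\Q)\otimes\Q^{\det}\right)_{\Gamma_0^\pm(p)}\right)$. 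By \autoref{coinv,tw}, \autoref{chaincoinv}, \autoref{lem1}, \autoref{lem2} and the short exact sequence \eqref{ses}, $d_k(p)$ is the number of $\Pb_2^\pm(\F_p)\times\Sigma_{k+1}$-orbits of untwisted $k$-simplices of $\SBDA(\F_p^2)$. These counts are given by \autoref{edgeutwcount} for $k=1$ and by \autoref{faceutwcount} for $k=2$.

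We then split into cases by $p \bmod 12$, so that both congruences (mod $4$ for the edges, mod $3$ for the faces) are determined simultaneously, and handle $p=2,3$ separately.
\begin{itemize}
\item For $p=2$ we have $d_1=0$ and $d_2=1$, which gives $1-1+0=0$.
\item For $p=3$ we have $p\equiv 3\bmod 4$, so $d_1=\tfrac{3+1}{4}=1$, while $d_2=2$ by the special $p=3$ case; this gives $1-2+1=0$.
\item For $p\equiv 1\bmod 12$: $1+\tfrac{p-1}{4}-\tfrac{p+11}{6}=\tfrac{p-13}{12}$.
\item For $p\equiv 5\bmod 12$, so $p\equiv -1\bmod 3$: $1+\tfrac{p-1}{4}-\tfrac{p+7}{6}=\tfrac{p-5}{12}$.
\item For $p\equiv 7\bmod 12$: $1+\tfrac{p+1}{4}-\tfrac{p+11}{6}=\tfrac{p-7}{12}$.
\item For $p\equiv 11\bmod 12$: $1+\tfrac{p+1}{4}-\tfrac{p+7}{6}=\tfrac{p+1}{12}$.
\end{itemize}

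The only delicate point is making sure the inputs to \eqref{dim:St,tw} are legitimate. First, the identification of the twisted coinvariants of the chain groups with the untwisted orbit counts must pass correctly through the quotient by $\Gamma_2(p)$. This works because $\Gamma_2(p)\subset\SL_2(\Z)$ acts trivially on $\Q^{\det}$, so taking coinvariants in stages along \eqref{ses} is compatible with the twist. Second, the case $p=3$ must be read off from the $p=3$ branches of the two counts rather than from the generic congruence formulas. Once these points are checked, the theorem follows by the arithmetic above.
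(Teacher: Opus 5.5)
Your proposal is correct and is the paper's own argument. You substitute the untwisted edge and face orbit counts from \autoref{edgeutwcount} and \autoref{faceutwcount} into \eqref{dim:St,tw}, then do the arithmetic case by case modulo $12$, reading off $p=2,3$ from the special branches exactly as the paper does.
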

\begin{proof}
    From \eqref{dim:St,tw}, we have \[\dim \left((\St_2(\Q)\otimes\Q^{\det})_{\Gamma_{0}^\pm(p)}\right)=1+ \dim\Q\left[\Pb_2^\pm(\F_p)\backslash\SBDA(\F_p^2)_1^\mathrm{utw}/\Sigma_2\right] -\dim\Q\left[\Pb_2^\pm(\F_p)\backslash\SBDA(\F_p^2)_2^\mathrm{utw}/\Sigma_3\right].\]It follows by \autoref{edgeutwcount} and \autoref{faceutwcount} that  \[\begin{gathered}[b]\dim\left(\St_2(\Q)\otimes\Q^{\det}\right)_{\Gamma_{0}^\pm(p)}=\begin{cases}
        1+0-1=0&\text{if $p=2$},\\[1em]
        1+1-2=0&\text{if $p=3$},\\[1em]
        1+\frac{p-1}{4}-\frac{p+11}{6}= \frac{p-13}{12}&\text{if $p\equiv 1\bmod 12$},\\[1em]
        1+\frac{p-1}{4}-\frac{p+7}{6}=\frac{p-5}{12}&\text{if $p\equiv 5\bmod 12$},\\[1em]
        1+\frac{p+1}{4}-\frac{p+11}{6}=\frac{p-7}{12}&\text{if $p\equiv 7\bmod 12$},\\[1em]
         1+\frac{p+1}{4}-\frac{p+7}{6}= \frac{p+1}{12}&\text{if $p\equiv 11\bmod 12$}.
    \end{cases}\\ \end{gathered}\qedhere\]
\end{proof}

\section{Quotients}\label{sec:quot}
Let $p$ be a prime. In this section, we study how the congruence subgroup $\Gamma_{0,n}^\pm(p)$ acts on both the poset $\T_n(\Q)$ and the partial (augmented) frames complex over $\Z$ for a general $n$, and we give an explicit description of the resulting quotient spaces.

\subsection{Tits building}
\begin{definition}\label{titsbldg}
    Let $F$ be a field and let $V$ be a finite dimensional vector space over $F$. Define $\T(V)$ to be the poset of proper nonzero subspaces of $V$, ordered by inclusion. We set $\T_n(F)=\T(F^n)$. We call the associated order complex of $\T_n(F)$, the \emph{Tits building} and we denote it by $\mathcal{T}_n(F)$.
\end{definition}

\begin{definition}
    Let $F$ be a field and let $V$ be a $d$-dimensional vector space. We call an element $\omega \in \wedge ^d V \cong F$ an \emph{orientation} on $V$ if it generates it as an $\F$-vector space. A $\pm$-\emph{orientation} on $V$ is a $\pm$-vector $\pm \omega$, where $\omega$ is an orientation on $V$.
\end{definition}
\begin{definition}\label{pmTitsbldg}
    Let $F$ be a field and let $V$ be a $d$-dimensional vector space. Define $\T^{\pm}(V)$ to be the poset whose elements are pairs $(U,\pm \omega)$, where $U\subsetneq V$ is a proper nonzero subspace and $\pm\omega$ is a $\pm$-orientation on $U$. The order relation is
    \[(U,\pm \omega)\subseteq (U',\pm \omega')\quad\text{if and only if}\quad U\subseteq U'.\]We set $\T^{\pm}(F^n) = \T^{\pm}_n(F)$. We call the associated order complex of $\T^\pm_n(F)$, the \emph{$\pm$-oriented Tits building} and we denote it by $\mathcal{T}^\pm_n(F)$.
\end{definition}

In \cite{MPP}, Miller--Patzt--Putman proved the following proposition.

\begin{proposition}[{\cite[Proposition 3.16]{MPP}}]\label{TitsMPP} For all $n \geq 1$, $\Gamma_n(p)\backslash \T_n(\Q)\cong \T^{\pm}_n(\F_p)$.
\end{proposition}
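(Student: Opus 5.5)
The approach is to build an explicit $\Gamma_n(p)$-invariant, order-preserving map of posets
\[\Phi\colon \T_n(\Q)\longrightarrow \T^\pm_n(\F_p),\]
and then show that it induces an isomorphism $\Gamma_n(p)\backslash \T_n(\Q)\cong \T^\pm_n(\F_p)$.

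\textbf{Construction of $\Phi$.} Let $W\subset\Q^n$ be a proper nonzero subspace of dimension $d$. Then $W\cap\Z^n$ is a direct summand of $\Z^n$ of rank $d$, so we may choose a $\Z$-basis $w_1,\dots,w_d$ of it. Set
\[\Phi(W)=\bigl(\overline{W},\ \pm\,\bar w_1\wedge\dots\wedge\bar w_d\bigr),\]
where $\overline{W}$ is the image of $W\cap\Z^n$ in $\F_p^n$. Since $W\cap\Z^n$ is a summand, $\overline{W}$ has dimension $d$ and the wedge is nonzero. A change of $\Z$-basis changes the wedge by a factor $\det\in\{\pm1\}$, so the $\pm$-orientation is well defined. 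Inclusions $W\subseteq W'$ give $\overline W\subseteq\overline{W'}$, so $\Phi$ is order-preserving. Any $g\in\Gamma_n(p)$ reduces to the identity mod $p$ and carries a $\Z$-basis of $W\cap\Z^n$ to one of $gW\cap\Z^n$, so $\Phi$ is $\Gamma_n(p)$-invariant. It therefore descends to $\bar\Phi\colon \Gamma_n(p)\backslash\T_n(\Q)\to\T^\pm_n(\F_p)$.

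\textbf{Surjectivity.} We need the map to be surjective on flags, not just on vertices, so that it is surjective on the order complexes. Given a flag $(U_1,\pm\omega_1)\subsetneq\dots\subsetneq(U_k,\pm\omega_k)$, choose a basis $\bar u_1,\dots,\bar u_n$ of $\F_p^n$ adapted to the flag. Rescale it so that each $\bar u_1\wedge\dots\wedge\bar u_{\dim U_i}$ equals $\omega_i$; this is possible by scaling the last basis vector in each successive block. The vectors beyond $U_k$ can then be scaled so that the full determinant is $1$. Because $\SL_n(\Z)\to\SL_n(\F_p)$ is surjective, this matrix lifts to $A\in\SL_n(\Z)$. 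The rational flag spanned by the corresponding columns of $A$ maps to the given flag.

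\textbf{Injectivity on flags (the main obstacle).} Suppose two rational flags $W_\bullet$ and $W'_\bullet$ have the same image. Choose a $\Z$-basis of $\Z^n$ adapted to $W_\bullet$, meaning each $W_i\cap\Z^n$ is spanned by an initial segment; choose one adapted to $W'_\bullet$ in the same way. Arrange both matrices to lie in $\SL_n(\Z)$, using the sign freedom in the last vector. We seek $g\in\Gamma_n(p)$ carrying the first flag to the second. The mod-$p$ reductions $\bar A$ and $\bar A'$ both lie in the parabolic stabilizer of the common $\F_p$-flag, and the equality of $\pm$-orientations says that the diagonal blocks of $\bar A^{-1}\bar A'$ have determinant $\pm1$. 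Adjust $A'$ by an integer block upper-triangular matrix in $\SL_n(\Z)$; this is still adapted to $W'_\bullet$. We want this adjustment to reduce to $\bar A^{-1}\bar A'$, possibly after first fixing signs. That requires lifting elements of the mod-$p$ parabolic whose blocks have determinant $\pm1$ to integral parabolic matrices. Since $\SL_m(\Z)\to\SL_m(\F_p)$ is surjective for each block and unipotent parts lift trivially, this works once the $\pm$ signs are absorbed by multiplying basis vectors by $-1$ (ensuring total determinant $1$). Then $g=A\,\tilde P\,A'^{-1}$, or its inverse, lies in $\Gamma_n(p)$ and maps one flag to the other.

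This sign bookkeeping is where I expect the most care to be needed, particularly the requirement that block determinants be $\pm1$ rather than $1$, and the case where a sign is forced on the top block. Applying the argument to single subspaces gives injectivity on vertices, and applying it to flags gives injectivity on simplices. Together these show that $\bar\Phi$ is an isomorphism of posets, and hence of the order complexes.
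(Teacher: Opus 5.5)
Your proposal is correct. The paper gives no argument of its own here (it imports the result from \cite{MPP}), and your route is the standard proof: reduce $W\cap\Z^n$ mod $p$ with the $\pm$-orientation of a $\Z$-basis, lift oriented flags using surjectivity of $\SL_n(\Z)\to\SL_n(\F_p)$, and lift the block upper-triangular matrix $\bar A^{-1}\bar A'$ to an integral block upper-triangular matrix. Note that it is this product, not $\bar A$ and $\bar A'$ themselves, that lies in the standard parabolic.

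The sign bookkeeping you flag causes no trouble. Lift each diagonal block of determinant $-1$ to an element of $\GL_m(\Z)$ of determinant $-1$. For odd $p$ the lift $\tilde P$ then has $\det\tilde P\in\{\pm1\}$ and $\det\tilde P\equiv 1 \pmod p$, which forces $\det\tilde P=1$; for $p=2$ every block already has determinant $1$.
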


\begin{lemma}\label{Titsbldg:dq}
For all $n\geq 1$,
    \[\Gamma_{0,n}^\pm(p)\backslash \T_n(\Q) \cong \Pb_n^\pm(\F_p)\backslash \T^{\pm}_n(\F_p).\]
\end{lemma}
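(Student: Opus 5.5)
The plan is to argue exactly as in \autoref{good q}, using the short exact sequence \eqref{ses} together with \autoref{TitsMPP}. Since $\Gamma_n(p)$ is normal in $\Gamma_{0,n}^\pm(p)$, quotienting in stages gives a natural isomorphism of posets
\[\Gamma_{0,n}^\pm(p)\backslash \T_n(\Q)\cong \left(\Gamma_{0,n}^\pm(p)/\Gamma_n(p)\right)\backslash\left(\Gamma_n(p)\backslash \T_n(\Q)\right)\cong \Pb_n^\pm(\F_p)\backslash\left(\Gamma_n(p)\backslash \T_n(\Q)\right).\]
Here we use that the quotient of a poset by a group is the set of orbits, with $[x]\le[y]$ if some representatives satisfy $x\le y$; staged quotients then agree set-theoretically. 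The order relations also match, since a relation $gx\le y$ in the big quotient is detected already in the intermediate one. We then substitute $\Gamma_n(p)\backslash \T_n(\Q)\cong \T^\pm_n(\F_p)$ from \autoref{TitsMPP}.

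The only genuine point is that the isomorphism of \autoref{TitsMPP} must be equivariant for $\Pb_n^\pm(\F_p)$, where this group acts on the target $\T^\pm_n(\F_p)$ via $A\cdot(U,\pm\omega)=(AU,\pm(\wedge A)\omega)$. To check this, we recall the explicit map from \cite{MPP}. A subspace $W\subseteq\Q^n$ is sent to $(U,\pm\omega)$, where $U$ is the reduction mod $p$ of the saturated lattice $W\cap\Z^n$, and $\pm\omega$ is the reduction of $\pm(w_1\wedge\dots\wedge w_k)$ for any $\Z$-basis $w_1,\dots,w_k$ of $W\cap\Z^n$. This is well defined up to sign because base changes lie in $\GL_k(\Z)$ and so have determinant $\pm1$.

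For $g\in\Gamma_{0,n}^\pm(p)$, the lattice $gW\cap\Z^n$ equals $g(W\cap\Z^n)$ and has basis $gw_i$. Hence the image of $gW$ is $(\bar g U,\pm\wedge\bar g\,\omega)$, which is exactly the action of the reduction $\bar g\in\Pb_n^\pm(\F_p)$. So the map is equivariant with respect to the surjection $\Gamma_{0,n}^\pm(p)\to\Pb_n^\pm(\F_p)$. This surjection is part of \eqref{ses}: any $A\in\GL_n(\F_p)$ with $\det A=\pm1$ preserving $\langle e_1\rangle$ lifts to $\Gamma_{0,n}^\pm(p)$ via strong approximation for $\SL_n$ after a sign adjustment.

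The main obstacle is making sure the identification of \autoref{TitsMPP} is stated with this equivariance; I expect it to hold since it is built functorially from the $\GL_n(\Z)$-action. With equivariance in hand, the induced map on $\Pb_n^\pm(\F_p)$-orbits is a bijection preserving and reflecting the order relation, which proves the lemma. If one prefers to pass to order complexes, the corresponding statement for symmetric $\Delta$-complexes follows from \autoref{lem:Sy}-type reasoning as in \autoref{good q}.
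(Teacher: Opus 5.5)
Your proposal is correct and follows the paper's argument, which simply says the lemma follows from \eqref{ses} and \autoref{TitsMPP}; your explicit check that the identification of \autoref{TitsMPP} is equivariant for the reduction map $\Gamma_{0,n}^\pm(p)\to\Pb_n^\pm(\F_p)$ is exactly the detail the paper leaves implicit. One small caveat: for $p=2$ the kernel of that reduction map also contains determinant $-1$ matrices such as $\mathrm{diag}(-1,1,\dots,1)$, so $\Gamma_{0,n}^\pm(2)/\Gamma_n(2)$ is not literally $\Pb_n^\pm(\F_2)$; however, you showed the action on $\T_n^\pm(\F_p)$ factors through reduction mod $p$, so these extra elements act trivially and the conclusion is unaffected.
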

\begin{proof}
    It follows directly from \eqref{ses} and \autoref{TitsMPP}.
\end{proof}
\begin{lemma}\label{subspaces}
For all $n\geq 2$ and all $1\leq k\leq n-1$, the set of $\pm$-oriented $k$-dimensional subspaces of $\F_p^n$ decomposes into two $\Pb_n^\pm(\F_p)$-orbits. These are represented by
\[
(U_k,\pm \omega_k)
\quad \text{and} \quad
(V_k,\pm \omega_k'),
\]
where
\[
U_k=\operatorname{span}\{ e_1,\dots,e_k\}, 
\quad
V_k=\operatorname{span}\{ e_2,\dots,e_{k+1}\},
\]
and $\omega_k=e_1\wedge\cdots\wedge e_k$, $\omega_k'=e_2\wedge\cdots\wedge e_{k+1}$.
\end{lemma}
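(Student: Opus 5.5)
The proof splits according to whether the subspace contains the line $\operatorname{span}\{e_1\}$. Write $L=\operatorname{span}\{e_1\}$. Every $A\in\Pb_n^\pm(\F_p)$ preserves $L$, so for a $k$-dimensional subspace $W$ the property $e_1\in W$ is preserved by the action. Hence the set of $\pm$-oriented $k$-subspaces is the disjoint union of two $\Pb_n^\pm(\F_p)$-invariant subsets: those with $L\subseteq W$, and those with $L\not\subseteq W$. Both are nonempty, since $U_k$ contains $e_1$ and $V_k$ does not (this uses $k\le n-1$, so that $V_k$ makes sense). There are therefore at least two orbits, and it remains to show that each subset is a single orbit.

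\emph{Transitivity when $L\subseteq W$.} Let $(W,\pm\omega)$ be such a pair. I will choose a basis $w_1=e_1,w_2,\dots,w_k$ of $W$ and rescale $w_k$ so that $w_1\wedge\cdots\wedge w_k=\omega$; if $k=1$, I rescale $w_1=\lambda e_1$ instead. Next I extend to a basis $w_1,\dots,w_n$ of $\F_p^n$ and rescale $w_n$ so that $\det(w_1|\cdots|w_n)=1$. This is possible because $k\le n-1$, so $w_n$ is not among the vectors fixed by the orientation. Let $A$ be the matrix sending $e_i\mapsto w_i$. Then $A e_1\in L$ and $\det A=1$, so $A\in\Pb_n^\pm(\F_p)$. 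Moreover $A\cdot(U_k,\pm\omega_k)=(W,\pm\omega)$.

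\emph{Transitivity when $L\not\subseteq W$.} Let $(W,\pm\omega)$ be such a pair and choose a basis $w_2,\dots,w_{k+1}$ of $W$ with $w_2\wedge\cdots\wedge w_{k+1}=\omega$, by rescaling one vector. Since $e_1\notin W$, the set $\{e_1,w_2,\dots,w_{k+1}\}$ is linearly independent. I extend it to a basis $e_1,w_2,\dots,w_n$ of $\F_p^n$. The key observation is that the determinant can now be normalized by rescaling the first column, replacing $e_1$ by $\mu e_1$ with $\mu\in\F_p^\times$. This keeps the first column in $L$ and leaves $W$ and $\omega$ unchanged. The matrix $A=(\mu e_1|w_2|\cdots|w_n)$ with $\det A=1$ then lies in $\Pb_n^\pm(\F_p)$ and sends $(V_k,\pm\omega_k')$ to $(W,\pm\omega)$.

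The only delicate point is normalizing the determinant to $\pm1$ without disturbing the prescribed orientation. I handle it by rescaling a vector that lies outside $W$ in the first case and the vector $e_1$ in the second case. Both choices are available exactly because $1\le k\le n-1$. With this, the two orbits are distinct and each is transitive, which proves the lemma.
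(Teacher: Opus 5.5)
Your proof is correct and follows the same route as the paper. Both split according to whether $e_1\in W$, use that $\Pb_n^\pm(\F_p)$ preserves $\operatorname{span}\{e_1\}$ to separate the two orbits, and prove transitivity on each class with a matrix built from an adapted basis whose determinant is normalized by rescaling a vector outside the oriented part. The only difference is cosmetic: you map the representative to $(W,\pm\omega)$ and put $e_1$ (or a multiple of it) directly into the basis. This avoids the paper's substitution $u_1\mapsto e_1-a_1^{-1}\sum_{i\ge 2}a_ie_i$ and gives $\det A=1$ in both cases, where the paper's second case has $(-1)^{n-1}$.
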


\begin{proof}
We recall the definition of $\Pb_n^\pm(\F_p)$, \begin{equation*}\Pb_n^\pm(\F_p)=\left\{A\in \GL_n(\F_p)\mid \det(A)=\pm 1~\text{and}~A\cdot e_1=\lambda e_1~\text{for some $\lambda\in\F_p^\times$} \right\}.\end{equation*}
Let $1\leq k\leq n-1$. Let $(U,\pm\omega)$ be a $\pm$-oriented $k$-dimensional subspace of $\F_p^n$.

First suppose that $e_1\in U$. We want to find some $A\in \Pb_n^\pm(\F_p)$ such that $A\cdot (U\,\pm\omega)=(U_k,\pm\omega_k)$. Choose a basis $\{u_1,\dots,u_k\}$ of $U$ such that \[\pm\omega=\pm u_1\wedge\dots\wedge u_k.\]Additionally, there is for some $(a_1,\dots,a_k)\in \F_p^k\setminus\{\underline{0}\}$ such that
\[e_1=\sum_{i=1}^k a_i u_i.\]
 After reordering the basis if necessary, we may assume that $a_1\neq 0$. Extend $\{u_1,\dots,u_k\}$ to a basis $\{u_1,\dots,u_n\}$ of $\F_p^n$ in such a way that
\[
\det(u_1|\dots|u_n)=1.
\]
Define a linear map $A$ on this basis by
\begin{align*}
A\colon u_1&\mapsto e_1-a_1^{-1}\sum_{i=2}^k a_i e_i,\\
u_i&\mapsto e_i \quad \text{for } i\neq 1.
\end{align*}
Then
\[
A\cdot e_1=\sum_{i=1}^k a_i (A\cdot u_i)
= a_1\left(e_1-a_1^{-1}\sum_{i=2}^k a_i e_i\right)+\sum_{i=2}^k a_i e_i
= a_1 e_1,
\]
so $A\cdot e_1=a_1e_1$. Moreover,
\[
\det(A)=\det(A)\det(u_1|\dots|u_n)
=\det(A\cdot u_1|\dots|A\cdot u_n)=\det\left(e_1-a_1^{-1}\sum_{i=2}^k a_i e_i \,\middle|\, e_2 \,\middle|\, \dots \,\middle|\, e_n\right)=1,
\]
since all terms involving $e_i$ with $i\geq 2$ vanish by repetition of columns. Thus
\[
A\in \Pb_n^\pm(\F_p).
\]
Now
\[
A\cdot U=\operatorname{span}\left\{e_1-a_1^{-1}\sum_{i=2}^ka_ie_i,e_2,\dots,e_k \right\}=\operatorname{span}\{e_1,\dots,e_k\}\]
and
\begin{align*}
A\cdot(\pm\omega)
&=\pm A\cdot u_1\wedge\cdots\wedge A\cdot u_k \\
&=\pm \left(e_1-a_1^{-1}\sum_{i=2}^k a_i e_i\right)\wedge e_2\wedge\cdots\wedge e_k \\
&=\pm e_1\wedge\cdots\wedge e_k.
\end{align*}
 Therefore
\[
A\cdot (U,\pm\omega)
=
(U_k,\pm \omega_k).\]

Next suppose that $e_1\notin U$. Choose a basis $\{u_1,\dots,u_k\}$ of $U$ such that
\[
\pm \omega=\pm u_1\wedge\cdots\wedge u_k
\]is a $\pm$-orientation on $U$.
Extend it to a basis $\{u_1,\dots,u_n\}$ of $\F_p^n$ with $u_n=\lambda e_1$ for some $\lambda\in\F_p^\times$, such that
\[
\det(u_1|\dots|u_n)=1.
\]
Define $A$ by
\begin{align*}
A\colon u_i&\mapsto e_{i+1}\quad \text{for }1\leq i\leq n-1,\\
u_n&\mapsto e_1.
\end{align*}
Then
\[
A\cdot e_1=\lambda^{-1}A\cdot u_n=e_1.
\]
 Also,
\[
\det(A)=\det(A)\det(u_1|\dots|u_n)
=\det(e_2|\dots|e_n|e_1)=(-1)^{n-1}.
\]
Hence
\[
A\in \Pb_n^\pm(\F_p).
\]
Furthermore,
\[
A\cdot U=\operatorname{span}\{ e_2,\dots,e_{k+1}\},\]
and
\[
A\cdot(\pm \omega)=\pm A\cdot u_1\wedge\cdots\wedge A\cdot u_k=\pm e_2\wedge\cdots\wedge e_{k+1}.
\]
Thus
\[
A\cdot (U,\pm\omega)
=(V_k,
\pm \omega_k').\]

Since an element of $\Pb_n^\pm(\F_p)$ sends $e_1$ to a scalar multiple of $e_1$, it cannot send a subspace containing $e_1$ to one not containing $e_1$. Thus the orbits represented by $(U_k,\pm \omega_k)$ and $(V_k,\pm \omega_k')$ are distinct.
\end{proof}

\begin{proposition}\label{tits}
For all $n\ge 2$, the quotient\[\Gamma_{0,n}^\pm(p)\backslash \T_n(\Q)\]
is isomorphic to the poset whose elements are the $\Pb_n^\pm(\F_p)$-orbits of $\pm$-oriented $k$-dimensional subspaces of $\F_p^n$, indexed by the representatives\[(U_k,\pm\omega_k)\quad\text{and}\quad (V_k,\pm\omega_k'), \quad 1\le k\le n-1.\]The order relations are\[\left[(U_k,\pm\omega_k)\right]\subset \left[(U_{k+1},\pm\omega_{k+1})\right],\]\[ \left[(V_k,\pm\omega_k')\right]\subset \left[(V_{k+1},\pm\omega_{k+1}')\right],\]
\[\left[(V_k,\pm\omega_{k})\right]\subset \left[(U_{k+1},\pm\omega_{k+1})\right],\] and \[\left[(U_k,\pm\omega_k)\right]\not\subset \left[(V_{k+1},\pm\omega_{k+1}')\right].\] 
\end{proposition}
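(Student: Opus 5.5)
Combining \autoref{Titsbldg:dq} with \autoref{subspaces} settles everything except the order relations. \autoref{Titsbldg:dq} identifies $\Gamma_{0,n}^\pm(p)\backslash\T_n(\Q)$ with $\Pb_n^\pm(\F_p)\backslash\T_n^\pm(\F_p)$. The quotient of a poset by a group action is again a poset, ordered by $[x]\le[y]$ if and only if $gx\le y$ for some $g$ in the group. By \autoref{subspaces}, the elements of this quotient poset are exactly the $2(n-1)$ classes $[(U_k,\pm\omega_k)]$ and $[(V_k,\pm\omega_k')]$ for $1\le k\le n-1$. Since elements of $\Pb_n^\pm(\F_p)$ preserve dimension, distinct $k$ give distinct classes. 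So the remaining work is to determine the order relations, i.e., for each pair of classes in consecutive dimensions, decide whether some representative of the first is contained in some representative of the second. Relations between non-consecutive dimensions then follow by composing consecutive ones.

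The order on $\T^\pm_n(\F_p)$ ignores orientations, so only the underlying subspaces matter, and each relation can be checked with an explicit inclusion. First, $U_k\subset U_{k+1}$ gives $[(U_k,\pm\omega_k)]\subset[(U_{k+1},\pm\omega_{k+1})]$. Second, $V_k\subset V_{k+1}$ gives $[(V_k,\pm\omega_k')]\subset[(V_{k+1},\pm\omega_{k+1}')]$. Third, $V_k=\operatorname{span}\{e_2,\dots,e_{k+1}\}\subset U_{k+1}$, and $V_k$ does not contain $e_1$, so it lies in the $V$-orbit; hence $[(V_k,\pm\omega_k')]\subset[(U_{k+1},\pm\omega_{k+1})]$. (The statement writes $\omega_k$ here, but since orientations do not affect the order this makes no difference.)

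The last claim is that $[(U_k,\pm\omega_k)]\not\subset[(V_{k+1},\pm\omega_{k+1}')]$. By the proof of \autoref{subspaces}, the $U$-orbit consists precisely of the subspaces containing $e_1$, and the $V$-orbit of those not containing $e_1$. If $W\subset W'$ with $e_1\in W$, then $e_1\in W'$, so a member of the $U$-orbit can never lie inside a member of the $V$-orbit. Applying this to $W=gU_k$ and $W'=V_{k+1}$ for any $g\in\Pb_n^\pm(\F_p)$ gives the non-relation.

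The one point needing care is that the order relation descends correctly to the quotient. I will take the order on the quotient to be the transitive closure of ``some representative is contained in some representative'', which is consistent with taking the quotient of the order complex. The characterization of the orbits by whether $e_1$ is contained makes this relation already transitive and antisymmetric, so the quotient is an honest poset with exactly the listed relations.
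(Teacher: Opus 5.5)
Your proposal is correct and follows essentially the same route as the paper. You identify the quotient with $\Pb_n^\pm(\F_p)\backslash \T^\pm_n(\F_p)$ via \autoref{Titsbldg:dq}, take the orbit representatives from \autoref{subspaces}, read off the three relations from $U_k\subset U_{k+1}$, $V_k\subset V_{k+1}$ and $V_k\subset U_{k+1}$, and obtain the non-relation because $\Pb_n^\pm(\F_p)$ preserves $\operatorname{span}\{e_1\}$. Your extra care about the quotient order is harmless: transitivity is automatic for a group action, and antisymmetry is immediate because the action preserves dimension. You are also right that the $\omega_k$ in the third relation should be $\omega_k'$, which does not affect the order.
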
This is recorded in \ref{fig0}.
\begin{proof}
By \autoref{Titsbldg:dq}, we have \[\Gamma_{0,n}^\pm(p)\backslash \T_n(\Q) \cong \Pb_n^\pm(\F_p)\backslash \T^{\pm}_n(\F_p).\]
Recall that $\T^\pm_n(\F_p)$ is the poset of nonzero proper $\pm$-oriented subspaces of $\F_p^n$, ordered by inclusion. By \autoref{subspaces}, for each $1\leq k\leq n-1$, the set of $\pm$-oriented $k$-dimensional subspaces of $\F_p^n$ decomposes into two $\Pb_n^\pm(\F_p)$-orbits, represented by
\[
(U_k,\pm \omega_k)
\quad \text{and} \quad
(V_k,\pm \omega_k'),
\]
where
\[
U_k=\operatorname{span}\{ e_1,\dots,e_k\}, 
\quad
V_k=\operatorname{span}\{ e_2,\dots,e_{k+1}\},
\]
and $\omega_k=e_1\wedge\cdots\wedge e_k$, $\omega_k'=e_2\wedge\cdots\wedge e_{k+1}$. Thus the elements of $\Pb_n^\pm(\F_p)\backslash \T_n^\pm(\F_p)$ are exactly the orbit classes  \[\left[(U_k,\pm\omega_k)\right]\quad\text{and}\quad\left[(V_k,\pm\omega_k')\right],\quad 1\leq k\leq n-1.\]
Since $U_k\subset U_{k+1}$, we obtain \[\left[(U_k,\pm\omega_k)\right]\subset \left[(U_{k+1},\pm\omega_{k+1})\right].\] Similarly, since $V_{k}\subset V_{k+1}$, we have \[\left[ (V_k,\pm\omega_k')\right]\subset \left[(V_{k+1},\pm\omega_{k+1}')\right].\] Also,
\[V_k\subset U_{k+1},\]
so\[\left[(V_k,\pm\omega_k')\right]\subset\left[(U_{k+1},\pm\omega_{k+1})\right].\] 
Furthermore, $\Pb_n^\pm(\F_p)$ preserves the line spanned by $e_1$. Hence every subspace in the orbit of $(U_k,\pm\omega_k)$ contains $e_1$, whereas no subspace in the orbit of $(V_{k+1},\pm\omega_{k+1}')$ does. This completes the proof.\end{proof}

\begin{figure}[H]
\centering
\begin{tikzpicture}[node distance=0.8cm and 0.3cm]

  % Left column: bullet and text nodes
  \node (A1) {\tiny$\bullet$};
  \node (A1.1)[left=-0.1cm of A1]{$\left[(V_{n-1},\pm \omega_{n-1}')\right]$};
  
  \node (B1) [below=of A1] {\tiny$\bullet$}; 
  \node (B1.1)[left=-0.1cm of B1]{$\left[(V_{n-2},\pm\omega_{n-2}')\right]$};

  \node (C1) [below=of B1] {\tiny$\bullet$};

   \node (D1) [below=-0.2cm of C1]
   {$\vdots$};
   
  \node (E1) [below=of D1] {\tiny$\bullet$};
  \node (E1.1)[left=-0.1cm of E1]{$\left[(V_2,\pm\omega_2')\right]$};
  
  \node (F1) [below=of E1] {\tiny$\bullet$};
  \node (F1.1) [left=-0.1cm of F1] {$\left[(V_1,\pm \omega_1')\right]$};
  
  % Right column: bullet and text nodes
  \node (A2) [right=1cm of A1] {\tiny$\bullet$};
  \node (A2.1)[right=-0.1cm of A2]{$\left[(U_{n-1},\pm \omega_{n-1})\right]$};
  
  \node (B2) [below=of A2] {\tiny$\bullet$};
   \node (B2.1)[right=-0.1cm of B2]{$\left[(U_{n-2},\pm\omega_{n-2})\right]$};

  \node (C2) [below=of B2] {\tiny$\bullet$};
  
   \node (D2) [below=-0.2cm of C2]
   {$\vdots$};
   
  \node (E2) [below=of D2] {\tiny$\bullet$};
  \node (E2.1)[right=-0.1cm of E2]{$\left[(U_2,\pm\omega_2)\right]$};

  \node (F2) [below=of E2] {\tiny$\bullet$};
  \node (F2.1) [right=-0.1cm of F2] {$\left[(U_1,\pm \omega_1)\right]$};
 
  % Connect the bullet nodes with lines
  \draw (A1) -- (B1) -- (C1);
  \draw (A2) -- (B2) -- (C2);
  \draw (D2) -- (E2);
  \draw (D1) -- (E1);
  \draw (B1) -- (A2);
  \draw (C1) -- (B2);
  \draw (E1) -- (F1) -- (E2) -- (F2);

\end{tikzpicture}
\caption{The order complex whose poset is $\Pb_n^\pm(\F_p)\backslash \T^\pm_n(\F_p)$}\label{fig0}
\end{figure}
\subsection{\texorpdfstring{Determinant-$1$ partial frames}{Lg}}
Let $p$ be a prime. In this section, we will describe the actions of $\Pb_n^\pm(\F_p)$ and $T_k$ on $\MD(\F_p^n)_k$ for all $n$. We recall that $T_k=\Sigma_{k+1}\ltimes\{-1,1\}^{k+1}$ and \[\MD(\F_p^n)_k=\left\{B=(v_0|\dots|v_k)~\middle\vert\begin{array}{c}v_0,\dots,v_k~\text{are linearly independent vectors in $\F_p^n$}\\\text{and}~ \det(B)
=\pm 1 ~\text{if $k=n-1$}\end{array}\right\}.\]
\begin{definition}\label{def:a}
    Let $n\geq 1$. For all $0\leq k\leq n-1$, the group $T_k$ acts on $\F_p^{k+1}\setminus\{\underline{0}\}$ as follows: for $\underline{a}=(a_0,\dots,a_k)\in\F_p^{k+1}\setminus\{\underline{0}\}$ and $X=(\pi,
    \varepsilon_0,\dots,\varepsilon_k)\in T_k$, we define \[\underline{a}\cdot X=\left(\varepsilon_0a_{\pi(0)},\dots,\varepsilon_ka_{\pi(k)}\right).\]
\end{definition}

\begin{proposition} \label{BD/Pn}
    Let $n\geq 1$ and $0\leq k \leq n-1$. Define the set \[\D_1^{k,n} =  \left\{ \left(v_0|\dots|v_k\right)\in \MD(\F_p^n)_k \mid e_1\not\in \operatorname{span}\{v_0,\dots,v_k\}\right\},\]
    and for every $\underline{a}=(a_0,\dots,a_k)\in\F_p^{k+1}\setminus\{\underline{0}\}$, define the set \[\D_2^{k,n}(\underline{a})=\left\{B=\left(v_0|\dots|v_k\right)\in \MD(\F_p^n)_k ~\middle\vert \lambda e_1=a_0v_0+\dots+a_kv_k~\text{for some $\lambda\in\F_p^\times$}\right\}.\] These sets satisfy the following properties. For $\underline{a},\underline{b}\in\F_p^{k+!}\setminus\{\underline{0}\}$, \begin{enumerate}[label=(\alph*)]
    \item $\D_2^{k,n}(\underline{a})$ is nonempty.
    \item $\D_2^{k,n}(\underline{a})=\D_2^{k,n}(\underline{b})\quad\Longleftrightarrow\quad \underline{a}= \lambda \underline b$ for some $\lambda\in \F_p^\times$.
    \item The right action of $T_k$ on $\MD(\F_p^n)_k$ satisfies \begin{equation*}\label{D1,rel}\D_1^{k,n}\cdot X=\D_1^{k,n}\end{equation*} \begin{equation}\label{D2,rel2}\D_2^{k,n}(\underline{a})\cdot X=\D_2^{k,n}(\underline{a}\cdot X)\end{equation}for all $X\in T_k$.
    \item The set of the $\Pb_n^\pm(\F_p)$-orbits in $\MD(\F_p^n)_k$ is $\left\{\D_1^{k,n}\right\}\cup\left\{ \D_2^{k,n}(\underline{a})~|~ \underline{a}\in \F_p^{k+1}\setminus\{\underline{0}\}\right\}$.
    \end{enumerate}
Note that $\D_1^{k,n}$ only exists for $k<n-1$.
\end{proposition}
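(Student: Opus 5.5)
The plan is to follow the template of \autoref{edgetype}, which is the case $n=2$, $k=1$. The only new feature is the extra orbit $\D_1^{k,n}$ of matrices whose column span misses $e_1$. First observe the partition of $\MD(\F_p^n)_k$. If $e_1\in\operatorname{span}\{v_0,\dots,v_k\}$, then $e_1=\sum a_iv_i$ for a unique nonzero $\underline a$, so $B\in\D_2^{k,n}(\underline a)$; otherwise $B\in\D_1^{k,n}$. Linear independence makes the coefficient vector unique up to the scalar $\lambda$.

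For (b), pick $B\in\D_2^{k,n}(\underline a)=\D_2^{k,n}(\underline b)$. Then compare $\lambda_1^{-1}\sum a_iv_i=\lambda_2^{-1}\sum b_iv_i$ and use independence, exactly as in \autoref{edgetype}(b); the converse is a rescaling of $\lambda$. For (c), the identity $\sum a_iv_i=\sum (\varepsilon_ia_{\pi(i)})(\varepsilon_iv_{\pi(i)})$ gives $\D_2^{k,n}(\underline a)\cdot X\subseteq\D_2^{k,n}(\underline a\cdot X)$. Applying this to $X^{-1}$ gives equality. $\D_1^{k,n}$ is $T_k$-stable because the span of the columns is unchanged by permuting them and changing signs.

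For (a), I would construct an element explicitly. Choose $i$ with $a_i\neq0$, set $v_i=a_i^{-1}(e_1-\sum_{j\neq i}a_jv_j)$, and choose the other $v_j$ ($j\neq i$) to be distinct standard vectors among $e_2,\dots,e_n$. Then $\sum a_jv_j=e_1$ and the columns are independent. If $k=n-1$, the determinant is $\pm a_i^{-1}$; I would fix this by scaling one of the $v_j$ with $j\ne i$ by $a_i$, which forces a compensating change in $v_i$, or, if $k=0$, $n=1$, directly. This makes $\det=\pm1$ while keeping $\sum a_jv_j$ a nonzero multiple of $e_1$.

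The main step is (d), transitivity of $\Pb_n^\pm(\F_p)$ on each set. Invariance is clear: $A e_1=\mu e_1$ gives $\sum a_i Av_i=\lambda\mu e_1$, and $A$ preserves spans containing or missing $e_1$. For transitivity on $\D_2^{k,n}(\underline a)$, take $B,C$ with $\sum a_iv_i=\lambda_1e_1$ and $\sum a_iu_i=\lambda_2e_1$. Extend $v_0,\dots,v_k$ to a basis $v_0,\dots,v_{n-1}$ and $u_0,\dots,u_k$ to a basis $u_0,\dots,u_{n-1}$, and define $A$ by $v_i\mapsto u_i$. Then $Ae_1=\lambda_1^{-1}\lambda_2e_1$. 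The obstacle is ensuring $\det A=\pm1$. When $k<n-1$, I would rescale the extension vector $u_{n-1}$ to fix the determinant. When $k=n-1$, both $B$ and $C$ have determinant $\pm1$, so $\det A=\det C/\det B=\pm1$ automatically.

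For $\D_1^{k,n}$ (so $k<n-1$), I would extend each family by $e_1$ and then further to bases. That is, use $v_0,\dots,v_k,e_1,w_{k+2},\dots$ and $u_0,\dots,u_k,e_1,w'_{k+2},\dots$, and send $v_i\mapsto u_i$, $e_1\mapsto e_1$, $w_j\mapsto w'_j$. Rescaling one $w'$ fixes the determinant when $k+2\le n-1$. When $k=n-2$ there is no $w$, so instead I would replace $e_1\mapsto e_1$ by $e_1\mapsto \mu e_1$, which is still allowed in $\Pb_n^\pm$, and choose $\mu$ so that $\det A=\pm1$. This last determinant adjustment is the step I expect to need the most care, since all cases must be checked.
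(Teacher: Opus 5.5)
Your proposal is correct and follows essentially the same route as the paper. In (a) you solve for the column $v_i$ with $a_i\neq 0$ and choose the remaining columns so that $\det=\pm1$. In (b) you compare coefficients using linear independence, and in (c) you prove one inclusion and apply it to $X^{-1}$. In (d) you extend both column families to bases, map one to the other, and fix the determinant by rescaling an extra basis vector; for $\D_1^{k,n}$ the paper likewise uses a multiple $\mu e_1$ as the last basis vector. Your separate handling of $k=n-1$ via $\det A=\det C/\det B$ is slightly more careful than the paper's blanket phrase ``complete to determinant-$1$ bases''.
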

\begin{proof}
\noindent\emph{Proof of (a).} Let $\underline{a}\in\F_p^{k+1}\setminus\{\underline{0}\}$. Then $\D_2^{k,n}(\underline{a})\neq \emptyset $: since $\underline{a}\in\F_p^{k+1}\setminus\{\underline{0}\}$, there exists some $i_0$ such that $a_{i_0}\neq 0$. Choose some vectors $v_0,\dots,v_{i_0-1},v_{i_0+1},\dots,v_{n-1}$ in $\F_p^n$ such that \[\det\left(e_1|v_0|\dots|v_{i_0-1}|v_{i_0+1}|\dots|v_{n-1}\right)=a_{i_0}.\] Set \[v_{i_0}=a_{i_0}^{-1}\left(e_1-\sum_{\substack{i=0// i\neq i_0}}^ka_iv_i\right).\] Thus \[\sum_{i=0}^ka_iv_i=e_1,\quad \det\left(v_0|\dots|v_n\right)=1,\]and \[(v_0|\dots|v_k)\in \D_2^{k,n}(\underline{a}).\] 

\noindent\emph{Proof of (b).} Let $\underline{b}=(b_0,\dots,b_k)\in\F_p^{k+1}\setminus\{\underline{0}\}$, and suppose that $\D_2^{k,n}(\underline{a})=\D_2^{k,n}(\underline{b})$. Since $\D_2^{k,n}(\underline{a})\neq \emptyset$, choose an arbitrary  \[B=(v_0|\dots|v_k)\in \D_2^{k,n}(\underline{a}).\] Then there exist $\lambda_1,\lambda_2\in \F_p^\times$ such that \[\lambda_1 e_1=\sum_{i=0}^ka_iv_i\quad\text{and}\quad \lambda_2 e_1=\sum_{i=0}^kb_iv_i.\] Thus, \[\lambda_1^{-1}\sum_{i=0}^ka_iv_i=\lambda_2^{-1}\sum_{i=0}^kb_iv_i.\]Since the vectors $v_i$ are linearly independent in $\F_p^{n}$, it implies that \[\underline{a}=\lambda_1\lambda_2^{-1}\underline{b}.\] Conversely, suppose $\underline{a}=\mu \underline{b}$ for some $\mu\in \F_p^\times$. Then \begin{align*}
    \D_2^{k,n}(\underline{a})&=\left\{ (v_0|\dots|v_k)\in\MD(\F_p^n)_k~\middle\vert\ \lambda e_1=\sum_{i=0}^ka_iv_i~\text{for some $\lambda\in \F_p^\times$}\right\}\\
    &=\left\{ (v_0|\dots|v_k)\in\MD(\F_p^n)_k~\middle\vert\ \lambda e_1=\sum_{i=0}^k\mu b_iv_i~\text{for some $\lambda\in \F_p^\times$}\right\}\\
    &=\left\{ (v_0|\dots|v_k)\in\MD(\F_p^n)_k~\middle\vert\ \mu^{-1}\lambda e_1=\sum_{i=0}^kb_iv_i~\text{for some $\lambda\in \F_p^\times$}\right\}\\
    &=\D_2^{k,n}(\underline{b}).
\end{align*}
\noindent\emph{Proof of (c).} Let $X=(\pi,\varepsilon_0,\dots,\varepsilon_k)\in T_k$, and $B=(v_0|\dots|v_k)\in\MD(\F_p^n)_k$. We recall that \[B\cdot X =\left(\varepsilon_0 v_{\pi(0)}\mid \dots\mid \varepsilon_{k}v_{\pi(k)}\right),\]and\[\underline{a}\cdot X=(\varepsilon_0a_{\pi(0)},\dots,\varepsilon_ka_{\pi(k)}).\] 
 If $B\in \D_1^{k,n}$, then \[e_1\notin \operatorname{span}\{v_0,\dots,v_k\}.\] 
 It follows that $B\cdot X\in \D_1^{k,n}$, implying \begin{equation}\label{D1}\D_1^{k,n}\cdot X\subseteq \D_1^{k,n}.\end{equation}
 Applying \eqref{D1} to $X^{-1}$, we obtain \[ \D_1^{k,n}\cdot X^{-1}\subseteq \D_1^{k,n}.\] Thus \[ \D_1^{k,n}\subseteq \D_1^{k,n}\cdot X.\]Therefore $\D_1^{k,n}\cdot X=\D_1^{k,n}$.

If $B\in \D_2^{k,n}(\underline{a})$ for some $\underline{a}=(a_0,\dots,a_k)\in \F_p^{k+1}\setminus\{\underline{0}\}$, then $\lambda e_1=\sum\limits_{i=0}^ka_iv_i$ for some $\lambda_1\in \F_p^\times$. This implies that \[\lambda e_1=\varepsilon_0 a_{\pi(0)}\left(\varepsilon_0v_{\pi(0)}\right)+\dots+\varepsilon_k a_{\pi(k)}\left(\varepsilon_kv_{\pi(k)}\right).\]So the matrix \[B\cdot X=\left(\varepsilon_0 v_{\pi(0)}\mid \dots\mid\varepsilon_k v_{\pi(k)}\right)\in \D_2^{k,n}\left(\varepsilon_0 a_{\pi(0)},\dots,\varepsilon_ka_{\pi(k)}\right)= \D_2^{k,n}\left(\underline{a}\cdot X\right).\] Thus, \begin{equation}\label{x}\D_2^{k,n}(\underline{a})\cdot X\subseteq \D_2^{k,n}(\underline{a}\cdot X)\quad\text{for all $X\in T_k$}. \end{equation}
Applying \eqref{x} to $X^{-1}$, we obtain \[ \D_2^{k,n}(\underline{a}\cdot X)\cdot X^{-1}\subseteq \D_2^{k,n}(\underline{a}).\] Thus \[ \D_2^{k,n}(\underline{a}\cdot X)\subseteq \D_2^{k,n}(\underline{a})\cdot X,\]concluding \eqref{D2,rel2}. 

\noindent\emph{Proof of (d).}  We consider two cases.
    \begin{mycases}
        \case We show that $\D_1^{k,n}$ is a $\Pb_n^\pm(\F_p)$-orbit in $\MD(\F_p^n)_k$. Let $B=\left(v_0|\dots|v_k\right)\in \D_1^{k,n}$, then $e_1\not\in\operatorname{span}\{v_0,\dots,v_k\}$. It follows that for any $A\in \Pb_n^\pm(\F_p)$,  \[e_1\notin\operatorname{span}\left\{A\cdot v_0,\dots,A\cdot v_k\right\}.\]Thus, \[A\cdot B\in \D_1^{k,n}.\]  
        
        We will now show that $\Pb_n^\pm(\F_p)$ acts transitively on $\D_1^{k,n}$. Take another matrix $C=\left(u_0|\dots| u_k\right)\in \MD(\F_p^n)_k$ such that $e_1\notin \operatorname{span}\{u_0,\dots,u_k\}$. We complete the partial bases $\{v_0,\dots,v_k\}$ and $\{u_0,\dots,u_k\}$, respectively, to determinant-$1$ bases $\{v_0,\dots,v_{n-1}\}$, and $\{u_0,\dots,u_{n-1}\}$ of $\F_p^n$ with $v_{n-1}=\mu_1 e_1$ and $u_{n-1}=\mu_2$ for some $\mu_1,\mu_2\in\F_p^\times$. We consider the element $A'$ defined by $A'\cdot v_i=u_i$ for all $i$. We have \[A'\cdot  B=C,\]and so $\det(A')=\pm 1$. Additionally, \[A'\cdot  e_1=\mu_1^{-1}A'\cdot  v_{n-1}=\mu_1^{-1}u_{n-1}=\mu_1^{-1}\mu_2e_1.\]Thus $A'\in\Pb_n^\pm(\F_p)$, and so $\Pb_n^\pm(\F_p)$ acts transitively on $\D_1^{k,n}$.
        \case We now show that for any $(\underline{a})\in\F_p^{k+1}\setminus\{\underline{0}\}$, $\D_2^{k,n}(\underline{a})$ is a $\Pb_n^\pm(\F_p)$-orbit in $\MD(\F_p^n)_k$. Let $B=\left(v_0|\dots|v_k\right)\in \D_2^{k,n}(\underline{a})$ for some $\underline{a}=(a_0,\dots,a_k)\in\F_p^{k+1}\setminus\{\underline{0}\}$, then \begin{equation*}\label{eq,lin comb}
            \lambda_1 e_1=a_0v_0+\dots+a_kv_k
        \end{equation*}
        for some $\lambda_1\in\F_p^\times$.
        For any $A\in \Pb_n^\pm(\F_p)$, we have by definition that $A\cdot e_1=\lambda e_1$ for some $\lambda \in \F_p^\times$. Thus\begin{align*}a_0(A\cdot v_0)+\dots+ a_k(A\cdot v_k)&=A\cdot\left(a_0v_0+\dots+a_kv_k\right)\\&=A\cdot (\lambda_1 e_1)\\&=\lambda_1\lambda e_1.\end{align*}It implies that $A\cdot B\in \D_2^{k,n}(\underline{a})$. Thus,  $\D_2^{k,n}(\underline{a})$ is a $\Pb_n^\pm(\F_p)$-invariant subset of $\MD(\F_p^n)_k$.

        Subsequently, we show that it acts transitively. Let $C=\left(u_0|\dots| u_k\right)$ be another matrix in $\D_2^{k,n}(\underline{a})$. Then \[\lambda_2  e_1=a_0 u_0+\dots+a_ku_k,\]for some $\lambda_2 \in\F_p^\times$. We complete the partial bases $\{v_0,\dots,v_k\}$ and $\{u_0,\dots,u_k\}$, respectively, to determinant-$1$ bases $\{v_0,\dots,v_{n-1}\}$, and $\{u_0,\dots,u_{n-1}\}$ of $\F_p^n$. We now consider the element $A'$ defined on the basis as follows. \[
            v_i\longmapsto u_i\quad\text{for all $0\leq i\leq n-1$}.
           \]Observe that \[A'\cdot  B= C,\]and so $\det(A')=\pm 1.$ Moreover, \begin{align*}
            A'\cdot  e_1 &=A'\cdot\left(\lambda_1^{-1}\sum_{i=0}^ka_iv_i\right)\\&=\lambda_1^{-1}\left(a_0(A'\cdot  v_0)+\dots+a_k(A'\cdot  v_k)\right)\\&=\lambda_1^{-1}\left(a_0u_0+\dots+a_ku_k\right)\\&=\lambda_1^{-1}\lambda_2  e_1.
        \end{align*}It follows that $A'\in\Pb_n^\pm(\F_p)$, and therefore $\Pb_n^\pm(\F_p)$ acts transitively on $\D_2^{k,n}(\underline{a})$.
         \end{mycases}
        As the union of \[\D_1^{k,n}\quad\text{and}\quad \bigcup\limits_{\underline{a}\in \F_p^{k+1}\setminus\{\underline{0}\}} \D_2^{k,n}(\underline{a})\] comprises the whole set $\MD(\F_p^n)_k$, we are done.  
\end{proof}

\subsection{\texorpdfstring{Determinant-$1$ partial augmented frames}{Lg}}
 Let $p$ be a prime and $n\geq 2$. We will describe the actions of $\Pb_n^\pm(\F_p)$ and $G_k$ on $\MDA(\F_p^n)_k$. We recall that \[G_k=\Sigma_{\{0,1,2\}}\times \{-1,1\}\times\left(\Sigma_{\{3,\dots,k\}}\ltimes \{-1,1\}^{k-2}\right)\] and \[\MDA(\F_p^n)_k=\left\{B=(v_0|\dots|v_k)~\middle\vert\begin{array}{c}v_0,\dots,v_k~\text{are linearly independent vectors in $\F_p^n$}\\\text{and}~ \det\left(v_1|\dots|v_n \right)
=\pm 1 ~\text{if $k=n$}\end{array}\right\}.\]
\begin{definition}\label{def:a'}
    Let $2\leq k\leq n$. Define the set \[A_k=\left\{\underline{a}=(a_0,\dots,a_k)\in\F_p^{k+1}\setminus\{\underline{0}\}~\middle\vert~
a_0+a_1+a_2=0\right\}.\] The group $G_k$ acts on $A_k$ as follows: for $\underline{a}=(a_0,\dots,a_k)\in A_k$ and $X=(\pi,\varepsilon,\tau,
    \varepsilon_3,\dots,\varepsilon_k)\in G_k$, we define \[\underline{a}\cdot X=\left(\varepsilon a_{\pi(0)},\varepsilon a_{\pi(1)},\varepsilon a_{\pi(2)},\varepsilon_3 a_{\tau(3)},\dots,\varepsilon_ka_{\tau(k)}\right).\]
\end{definition}
As in \autoref{sec:n=2}, the techniques in the cases $p=3$ and $p\neq 3$ are different. So we will address these two cases separately. We begin with the case $p\neq 3$ and we prove the following lemma.

\begin{lemma}\label{lem:lin comb}
    Let $p\neq 3$ be a prime and $2\leq k\leq n$. 
    Let $v_1,\dots,v_k$ be linearly independent vectors in $\F_p^n$, such that \[\lambda e_1= b_1v_1+\dots +b_kv_k,\]for some $\lambda\in\F_p^\times$ and $(b_1,\dots,b_k)\in\F_p^{k}\setminus\{\underline{0}\}$. Let $v_0=-(v_1+v_2)$. Then there exists a nonzero vector $(a_0,\dots,a_k)\in A_{k}$ such that \begin{equation*}
    \label{system}
     \lambda e_1=a_0v_0+\dots+a_kv_k.
    \end{equation*}
    \end{lemma}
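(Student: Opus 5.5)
The plan is to generalize the explicit change of coefficients used at the end of the proof of property (d) of \autoref{facetype}. That argument treated the case $k=n=2$; here we simply carry the extra coefficients $b_3,\dots,b_k$ along unchanged. Concretely, set
\[a_0=-3^{-1}(b_1+b_2),\qquad a_1=3^{-1}(2b_1-b_2),\qquad a_2=3^{-1}(2b_2-b_1),\qquad a_i=b_i\ \ (3\le i\le k).\]
These are well defined because $p\neq 3$, so $3$ is invertible in $\F_p$.

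The first step is to check the constraint $a_0+a_1+a_2=0$. This is immediate:
\[-(b_1+b_2)+(2b_1-b_2)+(2b_2-b_1)=0.\]

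The second step is to check the linear combination. Substitute $v_0=-(v_1+v_2)$. The coefficient of $v_1$ becomes $a_1-a_0=3^{-1}\bigl(2b_1-b_2+b_1+b_2\bigr)=b_1$. Symmetrically, the coefficient of $v_2$ is $a_2-a_0=b_2$. The coefficients of $v_3,\dots,v_k$ are unchanged. Therefore
\[a_0v_0+\dots+a_kv_k=b_1v_1+\dots+b_kv_k=\lambda e_1.\]

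The last step, and the only point requiring any care, is showing $(a_0,\dots,a_k)\neq\underline{0}$, so that it lies in $A_k$. Suppose all $a_i$ vanish. Then $b_i=a_i=0$ for $i\ge 3$, and $b_1=a_1-a_0=0$, $b_2=a_2-a_0=0$. This contradicts $(b_1,\dots,b_k)\neq\underline{0}$. Alternatively, if all $a_i$ vanished the combination would equal $0\neq\lambda e_1$, since $\lambda\in\F_p^\times$.

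I expect no real obstacle. The computation is routine, and the essential input is invertibility of $3$. Linear independence of $v_1,\dots,v_k$ is not even needed for existence; it only matters for the uniqueness statements proved later.
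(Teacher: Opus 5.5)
Your proposal is correct and is exactly the paper's argument: the same explicit coefficients $a_0=-3^{-1}(b_1+b_2)$, $a_1=3^{-1}(2b_1-b_2)$, $a_2=3^{-1}(2b_2-b_1)$, $a_i=b_i$ for $i\geq 3$, justified by $a_0+a_1+a_2=0$ together with $a_1-a_0=b_1$ and $a_2-a_0=b_2$. Your explicit check that the resulting tuple is nonzero, which is needed for membership in $A_k$, is a detail the paper leaves implicit.
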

    \begin{proof}
   Let \begin{equation*}\lambda e_1=b_1v_1+\dots +b_kv_k\end{equation*}for some $\lambda\in\F_p^\times$ and $(b_1,\dots,b_k)\in\F_p^k\setminus\{\underline{0}\}$. Let $v_0=-(v_1+v_2)$. Then \[
    a_0=-3^{-1}(b_1+b_2),\quad a_1=3^{-1}(2b_1-b_2),\quad a_2=3^{-1}(2b_2-b_1),\quad a_i=b_i~\text{for all $i\geq 3$}\] satisfies \[\lambda e_1=a_0v_0+\dots+a_kv_k,\quad a_0+a_1+a_2=0.\qedhere\]
    \end{proof}
    
\begin{proposition}\label{BDA/Pn}
 Let $p\neq 3$ be a prime and $2\leq k \leq n$. Define the set \[\DA_1^{k,n} =  \left\{ \left(v_0|\dots|v_k\right)\in \MDA(\F_p^n)_k ~\middle\vert e_1\not\in \operatorname{span}\{v_0,\dots,v_k\}\right\},\]
    and for every $\underline{a}=(a_0,\dots,a_k)\in A_k$, define the set \[\DA_2^{k,n}(\underline{a})=\left\{B=\left(v_0|\dots|v_k\right)\in \MDA(\F_p^n)_k ~\middle\vert \lambda e_1=a_0v_0+\dots+a_kv_k~\text{for some $\lambda\in\F_p^\times$}\right\}.\] These sets satisfy the following properties. For $\underline{a}=\underline{b}\in A_k$,\begin{enumerate}[label=(\alph*)]
    \item $\DA_2^{k,n}(\underline{a})$ is nonempty.
    \item $\DA_2^{k,n}(\underline{a})=\DA_2^{k,n}(\underline{b})\quad\Longleftrightarrow\quad \underline{a}= \lambda \underline b$ for some $\lambda\in \F_p^\times$.
    \item The right action of $G_k$ on $\MDA(\F_p^n)_k$ satisfies \begin{equation*}\label{DA1,rel}\DA_1^{k,n}\cdot X=\DA_1^{k,n}\end{equation*} \begin{equation}\label{DA2,rel2}\DA_2^{k,n}(\underline{a})\cdot X=\DA_2^{k,n}(\underline{a}\cdot X)\end{equation}for all $X\in G_k$.
    \item The set of the $\Pb_n^\pm(\F_p)$-orbits in $\MDA(\F_p^n)_k$ is $\left\{\DA_1^{k,n}\right\}\cup \left\{\DA_2^{k,n}(\underline{a})~|~\underline{a}\in A_k\right\}$.
    \end{enumerate}
Note that $\DA_1^{k,n}$ only exists for $k<n$.
\end{proposition}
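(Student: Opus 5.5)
The plan is to mirror the proof of \autoref{BD/Pn}, with \autoref{lem:lin comb} replacing the trivial observation that $e_1$ is a linear combination of the columns. Throughout, $B=(v_0|\dots|v_k)\in\MDA(\F_p^n)_k$ satisfies $v_0=-(v_1+v_2)$, and $v_1,\dots,v_k$ are linearly independent.

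For (a), take $\underline{a}\in A_k$. Since $p\neq 3$, the triple $(a_0,a_1,a_2)$ cannot be $(a,a,a)$ with $a\neq 0$: that would force $3a=0$. So either some $a_i$ with $i\geq 3$ is nonzero, or $(a_1-a_0,a_2-a_0)\neq(0,0)$. Write $\sum a_iv_i=(a_1-a_0)v_1+(a_2-a_0)v_2+\sum_{i\geq3}a_iv_i$, and call the coefficients $b_1,\dots,b_k$; these are not all zero. Then proceed as in (a) of \autoref{BD/Pn}: choose $v_1,\dots,v_k$ independent with the appropriate determinant normalization, solve for one $v_{i_0}$ with $b_{i_0}\neq0$ so that $\sum b_iv_i=e_1$, and set $v_0=-(v_1+v_2)$.

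For (b), suppose $B$ lies in both sets. Using $v_0=-v_1-v_2$ and independence, $\lambda_1^{-1}\underline{a}$ and $\lambda_2^{-1}\underline{b}$ have the same reduced coefficients $(a_1-a_0,a_2-a_0,a_3,\dots)$. Combined with $a_0+a_1+a_2=0$, this gives $3a_1=3\mu b_1$ and $3a_2=3\mu b_2$, exactly as in (b) of \autoref{facetype}, so $\underline{a}=\mu\underline{b}$ because $3$ is invertible. The converse is a direct substitution.

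For (c), the argument is formal, as in \autoref{BD/Pn}. The identity $\sum(\underline{a}\cdot X)_i(B\cdot X)_i=\sum a_iv_i$ holds because the signs $\varepsilon$ and $\varepsilon_i$ square to $1$, and $\underline{a}\cdot X$ again lies in $A_k$. Also, $\operatorname{span}$ is invariant under $X$. This gives inclusions, and applying the same reasoning to $X^{-1}$ gives equality.

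For (d), invariance under $\Pb_n^\pm(\F_p)$ follows since $A e_1=\lambda e_1$. For transitivity, given $B,C$ in the same set, extend $\{v_1,\dots,v_k\}$ and $\{u_1,\dots,u_k\}$ to bases with matching determinant. In the $\DA_1$ case, put a multiple of $e_1$ last. Define $A'$ by $v_i\mapsto u_i$ for $i\geq1$. Then $A'v_0=u_0$ automatically, $\det A'=\pm1$, and $A'e_1\in\F_p^\times e_1$ by the same computation as before. Covering is where \autoref{lem:lin comb} is used: if $e_1\in\operatorname{span}\{v_i\}$, that lemma produces $\underline{a}\in A_k$ with $B\in\DA_2^{k,n}(\underline{a})$; otherwise $B\in\DA_1^{k,n}$. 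When $k=n$ the span is everything, so $\DA_1$ is empty.

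The main obstacle is keeping the determinant normalization straight, both in (a) and in the transitivity step when $k=n$ or $k=n-1$. The fix is to first fix the determinant of the completed basis to $\pm1$, by rescaling a vector outside the span when $k<n$, and to note that $\det(v_1|\dots|v_n)=\pm1$ is part of the hypothesis when $k=n$.
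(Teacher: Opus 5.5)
Your proposal is correct and follows essentially the same argument as the paper: the reduction via $v_0=-(v_1+v_2)$ for (a) and (b), the formal $X$/$X^{-1}$ argument for (c), and basis completion with matching determinants plus \autoref{lem:lin comb} for (d). The only difference is in (a), where you note that $(a_1-a_0,a_2-a_0,a_3,\dots,a_k)$ is nonzero in every case because $p\neq 3$; this lets you skip the paper's separate explicit construction for the case $a_3=\dots=a_k=0$.
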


\begin{proof} 
\noindent\emph{Proof of (a).} Let $\underline{a}\in A_k$. Then $\DA_2^{k,n}(\underline{a}
)\neq \emptyset$: since $\underline{a}=(a_0,\dots,a_k)$ is not zero then we consider two cases. \begin{itemize}
    \item If $a_3=\dots=a_k=0$, then $(a_o,a_1,a_2)$ is not zero. As $a_0+a_1+a_2=0$, the entries $a_0,a_1,a_2$ are not all equal. So there exist $i\neq j$ with $a_i\neq a_j$. Suppose for instance that $a_1\neq a_2$. Define \[v_1=\left((a_1-a_2)^{-1}, -(a_1-a_2),0,\dots,0\right)^t,\]  \[v_2=\left(0,a_1-a_2,0,\dots,a_k\right)^t,\] and set  \[v_0=-v_1-v_2.\] Choose vectors $v_3,\dots,v_k$ in $\F_p^n$ such that $v_1,\dots,v_k$ are linearly independent, and if $k=n$, choose them so that\[\det\left(v_1|\dots|v_n\right)=1.\]Then $v_0+v_1+v_2=0$, so $(v_0|\dots|v_k\in\MDA(\F_p^n)_k$. Moreover, \[\sum_{i=0}^ka_iv_i=(a_1-a_0)v_1+(a_2-a_0)v_2==(a_1-a_2)^{-1}(-a_0+a_1)e_1.\]Thus \[(v_0|\dots|v_k)\in\DA_2^{k,n}(\underline{a}).\] Similar explicit constructions can be done when $a_0\neq a_1$ or $a_0\neq a_2$.
    \item If $a_{i_0}\neq 0$ for some $i_0\geq 3$, define \[\underbar{b}=(b_1,\dots,b_k):=(a_1-a_0,a_2-a_0,a_3,\dots,a_k).\]Then $\underline{b}\neq 0$. By property (a) of \autoref{BD/Pn}, $\D_2^{k,n}(\underline{b})$ is nonempty. So choose linearly independent vectors $v_1,\dots,v_k\in\F_p^n$ such that, if $k=n$, then \[\det(v_1|\dots|v_n)=1,\]and \[(a_1-a_0)v_1+(a_2-a_0)v_2+a_3v_3+\dots+a_kv_k=\lambda e_1\]for some $\lambda\in\F_p^\times$. Now set $v_0=-v_1-v_2$. Then $v_0+v_1+v_2=0$, so $(v_0|\dots|v_k)\in\MDA(\F_p^n)_k$. Moreover, \[\sum_{i=0}^ka_iv_i=a_0(-v_1-v_2)+a_1v_2+a_2v_2+\sum_{i=3}^ka_iv_i=(a_1-a_0)v_1+(a_2-a_0)v_2+\sum_{i=3}^ka_iv_i=\lambda e_1.\]Therefore, \[(v_0|\dots|v_k)\in\DA_2^{k,n}(\underline{a}).\]
\end{itemize}  We conclude that $\DA_2^{k,n}(\underline{a})$ is nonempty. 

\noindent\emph{Proof of (b).} Let $\underline{b}\in A_k$, and suppose that $\DA_2^{k,n}(\underline{a})=\DA_2^{k,n}(\underline{b})$. Since this set is nonempty, fix an arbitrary \[B=(v_0|\dots|v_k)\in \DA_2^{k,n}(\underline{a}).\] Then there exist $\lambda_1,\lambda_2\in \F_p^\times$ such that \[\lambda_1 e_1=\sum_{i=0}^ka_iv_i\quad\text{and}\quad \lambda_2 e_1=\sum_{i=0}^kb_iv_i.\] Thus, \[\lambda_1^{-1}\sum_{i=0}^ka_iv_i=\lambda_2^{-1}\sum_{i=0}^kb_iv_i.\]As $v_0=-v_1-v_2$, it follows that \[\left(\lambda_1^{-1}a_1-\lambda_1^{-1}a_0\right)v_1+\left( \lambda_1^{-1}a_2-\lambda_1^{-1}a_0\right)v_2+\lambda_1^{-1}\sum_{i=3}^ka_iv_i=\left(\lambda_2^{-1}b_1-\lambda_2^{-1}b_0\right)v_1+\left(\lambda_2^{-1}b_2-\lambda_2^{-1}b_0\right)v_2+\lambda_2^{-1}\sum_{i=3}^kb_iv_i.\]
Since $a_0+a_1+a_2=b_0+b_1+b_2=0$, and $v_1,\dots,v_k$ are linearly independent, we obtain \begin{align*}
\lambda_1^{-1}(2a_1+a_2)&=\lambda_2^{-1}(2b_1+b_2)\\
\lambda_1^{-1}(2a_2+a_1)&=\lambda_2^{-1}(2b_2+b_1)\\ \lambda_1^{-1}a_i&=\lambda_2^{-1}b_i\quad\text{for all $i\geq 3$}.
\end{align*}
Thus, \[3\lambda_1^{-1}a_1=3\lambda_2^{-1}b_1\quad\text{and}\quad 3\lambda_1^{-1}a_2=3\lambda_2^{-1}b_2.\]
As $p\neq 3$ , we conclude that $\underline{a}=\lambda_1\lambda_2^{-1}\underline{b}$. 

Conversely, suppose $\underline{a}=\mu \underline{b}$ for some $\mu\in \F_p^\times$. Then \begin{align*}
    \DA_2^{k,n}(\underline{a})&=\left\{ (v_0|\dots|v_k)\in\MDA(\F_p^n)_k~\middle\vert\ \lambda e_1=\sum_{i=0}^ka_iv_i~\text{for some $\lambda\in \F_p^\times$}\right\}\\
    &=\left\{ (v_0|\dots|v_k)\in\MDA(\F_p^n)_k~\middle\vert\ \lambda e_1=\sum_{i=0}^k\mu b_iv_i~\text{for some $\lambda\in \F_p^\times$}\right\}\\
    &=\left\{ (v_0|\dots|v_k)\in\MDA(\F_p^n)_k~\middle\vert\ \mu^{-1}\lambda e_1=\sum_{i=0}^kb_iv_i~\text{for some $\lambda\in \F_p^\times$}\right\}\\
    &=\DA_2^{k,n}(\underline{b}).
\end{align*}
\noindent\emph{Proof of (c).} Let $X=(\pi,\varepsilon,\tau,\varepsilon_3,\dots,\varepsilon_k)\in G_k$ and $B=(v_0|\dots|v_k)\in\MDA(\F_p^n)_k$. We recall that \[B\cdot X=\left(\varepsilon v_{\pi(0)}\mid \varepsilon v_{\pi(1)}\mid \varepsilon v_{\pi(2)}\mid \varepsilon_{\tau(3)}v_{\tau(3)}\mid\dots\mid \varepsilon_{\tau(k)}v_{\tau(k)}\right),\]and\[\underline{a}\cdot X =\left(\varepsilon a_{\pi(0)}\mid \varepsilon a_{\pi(1)}\mid \varepsilon a_{\pi(2)}\mid \varepsilon_{\tau(3)}a_{\tau(3)}\mid\dots\mid \varepsilon_{\tau(k)}a_{\tau(k)}\right).\]
 If $B\in \DA_1^{k,n}$, then \[e_1\notin \operatorname{span}\{v_0,\dots,v_k\}.\] 
 It follows that \begin{equation}\label{DAA1}
     \DA_1^{k,n}\cdot X\subset \DA_1^{k,n}.
 \end{equation}   Applying \eqref{DAA1} to $X^{-1}$, we obtain \[ \DA_1^{k,n}\cdot X^{-1}\subseteq \DA_1^{k,n}.\] Thus \[ \DA_1^{k,n}\subseteq \DA_1^{k,n}\cdot X.\] Therefore $\DA_1^{k,n}\cdot X=\DA_1^{k,n}$.

 If $B\in \DA_2^{k,n}(\underline{a})$ for some $\underline{a}=(a_0,\dots,a_k)\in A_k$, then $\lambda_1 e_1=\sum_{i=0}^ka_iv_i$ for some $\lambda_1\in \F_p^\times$. It implies that \[\lambda_1 e_1=\varepsilon a_{\pi(0)}\left(\varepsilon v_{\pi(0)}\right)+\varepsilon a_{\pi(1)}\left(\varepsilon v_{\pi(1)}\right)+\varepsilon a_{\pi(2)}\left(\varepsilon v_{\pi(2)}\right)+\varepsilon_3 a_{\pi(3)}\left(\varepsilon_3v_{\pi(3)}\right)+\dots+\varepsilon_k a_{\pi(k)}\left(\varepsilon_kv_{\pi(k)}\right).\]So the matrix \[B\cdot X=\left(\varepsilon v_{\pi(0)}| \varepsilon v_{\pi(1)}| \varepsilon v_{\pi(2)}| \varepsilon_{\tau(3)}v_{\tau(3)}|\dots| \varepsilon_{\tau(k)}v_{\tau(k)}\right)\in  \DA_2^{k,n}\left(\underline{a}\cdot X\right),\]implying \begin{equation}\label{g}\DA_2^{k,n}(\underline{a})\cdot X\subseteq \DA_2^{k,n}(\underline{a}\cdot X)\quad\text{for all $X\in G_k$}.\end{equation} 
 Applying \eqref{g} to $X^{-1}$, we obtain \[ \DA_2^{k,n}(\underline{a}\cdot X)\cdot X^{-1}\subseteq \DA_2^{k,n}(\underline{a}).\] Thus \[ \DA_2^{k,n}(\underline{a}\cdot X)\subseteq \DA_2^{k,n}(\underline{a})\cdot X,\]
proving \eqref{DA2,rel2}.  

\noindent\emph{Proof of (d).} We  consider two different cases.
    \begin{mycases}
        \case We show that $\DA_1^{k,n}$ is a $\Pb_n^\pm(\F_p)$-orbit in $\MDA(\F_p^n)_k$. If $e_1\notin \operatorname{span}\{v_0,\dots,v_k\}$, then for any $A\in \Pb_n^\pm(\F_p)$  \[e_1\notin \operatorname{span}\left\{A\cdot v_0,\dots,A\cdot v_k\right\}.\]Thus \[A\cdot B\in \DA_1^{k,n}.\]  This shows that $\DA_1^{k,n}$ is a $\Pb_n^\pm(\F_p)$-invariant subset of $\MDA(\F_p^n)_k$. We will now show that this is a transitive action. Take another matrix $C=\left(u_0|\dots| u_k\right)\in \MDA(\F_p^n)_k$ such that $e_1\notin \operatorname{span}\{u_0,\dots,u_k\}$. We complete the partial bases $\{v_1,\dots,v_k\}$ and $\{u_1,\dots,u_k\}$, respectively, to determinant-$1$ bases $\{v_1,\dots,v_n\}$, and $\{u_1,\dots,u_n\}$ of $\F_p^n$ with $v_{n-1}=\mu_1 e_1$ and $u_{n-1}=\mu_2$ for some $\mu_1,\mu_2\in\F_p^\times$. We define the element $A'$ on the basis $\{v_1,\dots,v_n\}$ as follows. \begin{align*}
            v_n&\longmapsto u_n\\
            v_i&\longmapsto u_i\quad i\neq n,\\
        \end{align*}We have \[A'\cdot  B=C,\] and so $\det(A')=\pm 1$. We also have \[A'\cdot  e_1=\mu_1^{-1}A'\cdot  v_n=\mu_1^{-1}u_n=\mu_1^{-1}\mu_2e_1.\]
        Thus \[A'\in\Pb_n^\pm(\F_p).\]
        For later use in \autoref{BDA,p=3}, we observe that the condition $p\neq 3$ was not required to prove that $\Pb_n^\pm(\F_p)$ acts transitively on $\DA_1^{k,n}$, so transitivity holds for all primes $p$.
        \case We now show that for any $(\underline{a})\in A_k$, $\DA_2^{k,n}(\underline{a})$ is a $\Pb_n^\pm(\F_p)$-orbit in $\MDA(\F_p^n)_k$. If $e_1\in \operatorname{span}\{v_0,\dots, v_k\}$, then by \autoref{lem:lin comb}, we can write \begin{equation}\label{eq:lin comb}
            \lambda_1 e_1=a_0v_0+\dots+a_kv_k
        \end{equation}
        for some $\lambda_1\in\F_p^\times$, and $(a_0,\dots,a_k)\in A_k$. Let $B\in \DA_2^{k,n}(\underline{a})$.
        Then for any $A\in \Pb_n^\pm(\F_p)$, we have by definition that $A\cdot e_1=\lambda e_1$ for some $\lambda \in \F_p^\times$, and \begin{align*}a_0(A\cdot v_0)+\dots+ a_k(A\cdot v_k)&=A\cdot\left(a_0v_0+\dots+a_kv_k\right)\\&=A\cdot (\lambda_1 e_1)\\&=\lambda_1\lambda e_1.\end{align*}It implies that  $A\cdot B\in \DA_2^{k,n}(\underline{a})$. Thus, we showed that $\DA_2^{k,n}(\underline{a})$ is a $\Pb_n^\pm(\F_p)$-invariant subset of $\MDA(\F_p^n)_k$.

       Let $C=\left(u_0|\dots| u_k\right)$ be another matrix in $\DA_2^{k,n}(\underline{a})$. Then \[\lambda_2 e_1=a_0 u_0+\dots+a_ku_k.\]We complete the partial bases $\{v_1,\dots,v_k\}$ and $\{u_1,\dots,u_k\}$, respectively, to determinant-$1$ bases $\{v_1,\dots,v_n\}$, and $\{u_1,\dots,u_n\}$ of $\F_p^n$, and we consider the element $A'$ defined on the basis as follows. \[
            v_i\longmapsto u_i\quad\text{for all $1\leq i\leq n$}.
           \]Since $v_0+v_1+v_2=u_0+u_1+u_2=0$, then \[A'\cdot  v_0=-A'\cdot  v_1-A'\cdot  v_2=-u_1-u_2=u_0,\]and so
           \[A'\cdot  B= C.\]This implies that $\det(A')=\pm 1$. Moreover, \begin{align*}
            A'\cdot  e_1 &=\lambda_1^{-1}\left(a_0(A'\cdot  v_0)+\dots+a_k(A'\cdot  v_k)\right)\\&=\lambda_1^{-1}\left(a_0u_0+\dots+a_ku_k\right)\\&=\lambda_1^{-1}\lambda_2 e_1.
        \end{align*}It follows that $A'\in\Pb_n^\pm(\F_p)$, showing that $\Pb_n^\pm(\F_p)$ acts transitively on $\DA_2^{k,n}(\underline{a})$.
    \end{mycases}
    
     Since it follows by \autoref{lem:lin comb} that the union of \[\DA_1^{k,n}\quad\text{and}\quad \bigcup\limits_{\substack{\underline{a}=(a_0,\dots,a_k)\in\F_p^{k+1}\setminus\{\underline{0}\}\\ a_0+a_1+a_2=0}}\DA_2^{k,n}(\underline{a})\]comprises the whole set $\MDA(\F_p^n)_k$, we are done.
    \end{proof}
    
The case $p=3$ is as follows.
\begin{proposition}\label{BDA,p=3}
 Let $n\geq 2$, and $2\leq k \leq n$. Define the set $\DA_1^{k,n}$ as in \autoref{BD/Pn}, thats is; \[\DA_1^{k,n} =  \left\{ \left(v_0|\dots|v_k\right)\in \MDA(\F_3^n)_k ~\middle\vert e_1\not\in \operatorname{span}\{v_0,\dots,v_k\}\right\},\]and for every $\underline{a}=(a_1,\dots,a_k)\in \F_3^k\setminus\{\underline{0}\}$, define the set
   \[\TA_2^{k,n}(\underline{a})=\left\{\left(v_0|\dots|v_k\right)\in\MDA(\F_3^n)_k~\middle\vert~\lambda e_1= a_1v_1+\dots+a_kv_k ~\text{for some $\lambda\in\F_3^\times$}\right\}.\] These sets satisfy the following properties. For $\underline{a},\underline{b}\in\F_3^k\setminus\{\underline{0}\}$,\begin{enumerate}[label=(\alph*)]
   \item $\TA_2^{k,n}(\underline{a})$ is nonempty.
       \item $
\TA_2^{k,n}(\underline{a})=\TA_2^{k,n}(\underline{b})\quad\Longleftrightarrow\quad \underline{a}= \pm \underline b$.
  \item  The right action of $G_k$ on $\MDA(\F_3^n)_k$ satisfies  \begin{equation*}\label{DA1,rel1}  \DA_1^{k,n}\cdot X=\DA_1^{k,n}\quad\text{for all $X\in G_k$},
     \end{equation*}
     \begin{equation}\label{TA2,rel}
         \TA_2^{k,n}(\underline{a})\cdot X=\TA_2^{k,n}(\varepsilon a_{\pi(1)},\varepsilon a_{\pi(2)},\varepsilon_3a_{\tau(3)},\dots,\varepsilon_k a_{\tau(k)})
     \end{equation}for all $X=(\pi,\varepsilon, \tau,\varepsilon_3,\dots,\varepsilon_k)\in\Sigma_{\{1,2\}}\times \{-1,1\}\times\left(\Sigma_{\{3,\dots,k\}}\ltimes \{-1,1\}^{k-2}\right)\subset G_k$.
\begin{equation}\label{TA2,rel2}\TA_2^{k,n}(\underline{a})\cdot X=\TA_2^{k,n}(1,0,a_3,\dots,a_k)\quad\text{if $a_1\cdot a_2=1$ for some $X\in G_k$}.\end{equation} 
   \item  The set of the $\Pb_n^\pm(\F_3)$-orbits in $\MDA(\F_3^n)_k$ is $\left\{\DA_1^{k,n}\right\}\cup\left\{\TA_2^{k,n}(\underline{a})~|~\underline{a}\in \F_3^k\backslash \{\underline{0}\}\right\}$.
   \end{enumerate} 

\end{proposition}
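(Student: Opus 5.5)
We follow the template of \autoref{F,p=3} and \autoref{BDA/Pn}, parameterizing by the coefficients of $v_1,\dots,v_k$ only. This is natural for $p=3$, where $v_0=-v_1-v_2$ cannot be symmetrically absorbed because $3$ is not invertible.

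For (a), given $\underline{a}\in\F_3^k\setminus\{\underline{0}\}$, property (a) of \autoref{BD/Pn} applied to $(a_1,\dots,a_k)$ produces linearly independent $v_1,\dots,v_k$ with $\sum a_iv_i=\lambda e_1$, of determinant $\pm1$ when $k=n$. Setting $v_0=-v_1-v_2$ gives an element of $\TA_2^{k,n}(\underline{a})$.

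For (b), the argument of \autoref{BD/Pn}(b) uses only linear independence of $v_1,\dots,v_k$. It yields $\underline{a}=\mu\underline{b}$ with $\mu\in\F_3^\times=\{\pm1\}$, and the converse is immediate.

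For (d), each $\TA_2^{k,n}(\underline{a})$ is $\Pb_n^\pm(\F_3)$-invariant because $A e_1\in\F_3^\times e_1$. For transitivity, complete $v_1,\dots,v_k$ and $u_1,\dots,u_k$ to determinant-$1$ bases and define $A'$ by $v_i\mapsto u_i$. Then $A'v_0=u_0$, $\det A'=\pm1$, and $A'e_1\in\F_3^\times e_1$ exactly as before. Transitivity on $\DA_1^{k,n}$ was already observed to hold for all $p$ in the proof of \autoref{BDA/Pn}. Every matrix with $e_1$ in the span of $v_0,\dots,v_k$ has $e_1$ in the span of $v_1,\dots,v_k$, since $v_0$ lies in that span, so the two families of sets cover $\MDA(\F_3^n)_k$.

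For (c), the $\DA_1^{k,n}$ statement is clear, since $X$ only permutes columns up to sign and so preserves spans. The content lies in the two identities for $\TA_2^{k,n}$. For $X=(\pi,\varepsilon,\tau,\varepsilon_3,\dots)$ with $\pi$ fixing $0$, rewrite
\[\lambda e_1=\sum_{i\ge1} a_iv_i=\sum \bigl(\varepsilon a_{\pi(i)}\bigr)\bigl(\varepsilon v_{\pi(i)}\bigr)\]
(and similarly with $\tau$, $\varepsilon_j$ for $i\ge3$). This gives $\TA_2^{k,n}(\underline{a})\cdot X\subseteq \TA_2^{k,n}(\underline{a}\cdot X)$, and equality follows by applying the same inclusion to $X^{-1}$.

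For \eqref{TA2,rel2}, suppose $a_1a_2=1$, so $a_1=a_2=\pm1$. Take $X=((0\,1),1,\Id,1,\dots,1)$, giving $B\cdot X=(v_1|v_0|v_2|\dots)$. Using $v_1=-v_0-v_2$ we get
\[\lambda e_1=-a_1v_0+(a_2-a_1)v_2+\sum_{i\ge3}a_iv_i=-a_1v_0+\sum_{i\ge3}a_iv_i.\]
Hence $B\cdot X\in\TA_2^{k,n}(-a_1,0,a_3,\dots,a_k)$. If $a_1=-1$ this is already $(1,0,a_3,\dots)$. If $a_1=1$, either compose with a sign change $\varepsilon=-1$, which by \eqref{TA2,rel} maps to $\TA_2^{k,n}(1,0,-a_3,\dots)$, or use (b) with $\pm$. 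The statement says ``for some $X$'', so the $a_3,\dots$ sign bookkeeping is absorbed by choosing $\varepsilon$ appropriately: apply $\varepsilon=-1$ together with $\varepsilon_j=-1$ for $j\ge3$. This converts $(1,0,-a_3,\dots)$ back to $(1,0,a_3,\dots)$.

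The main obstacle is this last bookkeeping of signs and of the reverse inclusion. Equality is obtained by applying the inclusion to $X^{-1}$ and checking that it sends $(1,0,a_3,\dots)$ back to $(a_1,a_2,a_3,\dots)$; we argue this via the analogous computation with $v_0=-v_1-v_2$.
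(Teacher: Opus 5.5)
Your plan follows the paper's proof part by part: nonemptiness from property (a) of \autoref{BD/Pn} with $v_0=-v_1-v_2$, (b) from linear independence of $v_1,\dots,v_k$, transitivity by completing to determinant-$\pm1$ bases, and the transposition $(0\,1)$ for \eqref{TA2,rel2}. Parts (a), (b), (d) and \eqref{TA2,rel} are fine.

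The sign bookkeeping in \eqref{TA2,rel2} for $a_1=1$ is wrong as written. In \eqref{TA2,rel} the sign $\varepsilon$ multiplies only the first two coordinates. So $(\Id,-1,\Id,1,\dots,1)$ sends $\TA_2^{k,n}(-1,0,a_3,\dots,a_k)$ directly to $\TA_2^{k,n}(1,0,a_3,\dots,a_k)$, not to $\TA_2^{k,n}(1,0,-a_3,\dots)$. The element you finally propose, with $\varepsilon=-1$ and all $\varepsilon_j=-1$, is the global sign. By (b) it fixes every $\TA_2^{k,n}(\underline a)$, so it converts nothing.

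Concretely, your composite $((0\,1),-1,\Id,-1,\dots,-1)$ sends $B$ to $(-v_1|-v_0|-v_2|-v_3|\dots|-v_k)$. Since $\lambda e_1=1\cdot(-v_0)+0\cdot(-v_2)+\sum_{j\ge3}(-a_j)(-v_j)$, this lies in $\TA_2^{k,n}(1,0,-a_3,\dots,-a_k)$. By (b) that set differs from the target as soon as some $a_j\neq0$ with $j\ge3$, e.g.\ $\underline a=(1,1,1)$.

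The fix is the paper's uniform choice $X=((0\,1),-a_1,\Id,1,\dots,1)$. Then $B\cdot X=(-a_1v_1|-a_1v_0|-a_1v_2|v_3|\dots|v_k)$ and $\lambda e_1=1\cdot(-a_1v_0)+0\cdot(-a_1v_2)+\sum_{i\ge3}a_iv_i$, which lands in $\TA_2^{k,n}(1,0,a_3,\dots,a_k)$.

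For the reverse inclusion, this $X$ moves the index $0$, so you cannot obtain it from \eqref{TA2,rel} applied to $X^{-1}$. The ``analogous computation'' has to be carried out. One way is to do it directly, as the paper does. We have $X^2=\Id$. For $C=(u_0|\dots|u_k)\in\TA_2^{k,n}(1,0,a_3,\dots,a_k)$ with $C\cdot X=(w_0|\dots|w_k)$, one gets $a_1w_1+a_1w_2=-(u_0+u_2)=u_1$, so $C\cdot X\in\TA_2^{k,n}(\underline a)$.

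A cleaner way: right multiplication by any element of $G_k$ commutes with the left $\Pb_n^\pm(\F_3)$-action, hence maps orbits onto orbits. Since you prove (d) first, the inclusion of the orbit $\TA_2^{k,n}(\underline a)\cdot X$ into the orbit $\TA_2^{k,n}(1,0,a_3,\dots,a_k)$ is automatically an equality. The same remark also covers the $X^{-1}$ step in \eqref{TA2,rel}.
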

\begin{proof}

\noindent\emph{Proof of (a).} Similar to property (a) of \autoref{BDA/Pn}.

\noindent\emph{Proof of (b).} Similar to property (b) of \autoref{BDA/Pn}.

\noindent\emph{Proof of (c).} We will only show \eqref{TA2,rel2} as \eqref{TA2,rel} is similar to property (c) of \autoref{BDA/Pn}.
Let $B=\left(v_0|\dots|v_k\right)\in\MDA(\F_3^n)_k$. If $B\in \TA_2^{k,n}(\underline{a})$, such that \begin{equation}\label{e}\lambda e_1=a_1 v_1+\dots+a_k v_k\end{equation}for some $\lambda\in\F_3^\times$. Let $a_1\cdot a_2=1$; that is $a_1=a_2=\pm 1$. Take $\pi=(1\; 0)\in\Sigma_{\{0,1,2\}}$ and consider \[X=\left(\pi,-a_1,\Id,1,\dots,1\right)\in G_k.\] Then \[B\cdot X=\left(-a_1v_1|-a_1v_0|- a_1v_2|v_3|\dots|v_k\right).\]Now define new vectors:\[w_0=-a_1v_1,\quad w_1=-a_1v_0,\quad w_2=-a_1v_2,\quad w_i=v_i~\text{for $i\geq 3$}.\]So \[B\cdot X=(w_0|w_1|w_2|\dots |w_k).\] Using $v_1=-v_0-v_2$, we write \eqref{e} in the following way: \[\lambda e_1=-a_1 v_0+(a_2-a_1)v_2+a_3v_3+\dots a_k v_k.\] Since $a_1=a_2$, we obtain \[\lambda e_1=-a_1v_0+a_3v_3+\dots+a_kv_k.\]Rewriting in terms of the $w_i$, we obtain 
\[\lambda e_1=1\cdot w_1+0\cdot w_2+a_3w_3+\dots+a_kw_k.\] Therefore, \[B\cdot X\in \TA_2^{k,n}(1,0,a_3,\dots,a_k).\] 

For the same element $X=(\pi,-a_1,\tau,1,\dots,1)$, we also have
\[
\TA_2^{k,n}(1,0,a_3,\dots,a_k)\cdot X\in  \TA_2^{k,n}(\underline{a}).\]Since $X^2=\Id$, we conclude that $\TA_2^{k,n}(\underline{a})\cdot X\subseteq \TA_2^{k,n}(1,0,a_3,\dots,a_k)$.

\noindent\emph{Proof of (d).}  Note that every element of $\MDA(\F_3^n)_k$ lies either in $\DA_1^{k,n}$ or in $\TA_2^{k,n}(\underline{a})$ for some $\underline{a}\in\F_3^k\setminus\{\underline{0}\}$. As shown in \autoref{BDA/Pn}, $\Pb_n^\pm(\F_p)$ acts transitively on $\DA_1^{k,n}$ for all $p\geq 3$; in particular for $p=3$. It therefore remains to show that $\Pb_n^\pm(\F_3)$ acts transitively on each $\TA_2^{k,n}(\underline{a})$.

Let $\underline{a}=(a_1,\dots,a_k)\in\F_3^k\setminus\{\underline{0}\}$. We will adapt the same reasoning used in the proof of \autoref{BDA/Pn} and we will first check that $\Pb_n^\pm(\F_3)$ acts on $\TA_2^{k,n}(\underline{a})$. Let $B=\left(v_0|\dots|v_k\right)\in \TA_2^{k,n}(\underline{a})$, such that \[\lambda_1 e_1=a_1 v_1+\dots+ a_k v_k\]for some $\lambda_1\in\F_3^\times$. Then for any $A\in \Pb_n^\pm(\F_3)$, we have by definition that $A\cdot e_1=\lambda e_1$ for some $\lambda\in \F_3^\times$, and \begin{align*}
        a_1\left(A\cdot v_1\right)+\dots+ a_k\left(A\cdot v_k\right)&=A\left(a_1 v_1+\dots +a_k v_k\right)\\&=A\cdot\left(\lambda_1 e_1\right)\\&=\lambda_1 \lambda e_1.
    \end{align*}This implies that $A\cdot B\in \TA_2^{k,n}(\underline{a})$, and thus, $\Pb_n^\pm(\F_3)$ acts on $\TA_2^{k,n}(\underline{a})$.
    Now let $C=\left(u_0|\dots| u_k\right)$ be another matrix in $\TA_2^{k,n}(\underline{a})$. Then \[\lambda_2  e_1=a_1 u_1+\dots+a_ku_k,\]for some $\lambda_2 \in\F_3^\times$. We complete the partial bases $\{v_1,\dots,v_k\}$ and $\{u_1,\dots,u_k\}$, respectively, to determinant-$1$ bases $\{v_1,\dots,v_n\}$ and $\{u_1,\dots,u_n\}$ of $\F_3^n$, and we consider the element $A'$ defined on the basis as follows. \[
            v_i\longmapsto u_i\quad\text{for all $1\leq i\leq n$}.
           \]
     Observe that \[A'\cdot v_0=A'\cdot (-v_1-v_2)=-u_1-u_2=u_0.\]Thus \[A'\cdot B= C\quad\text{and}\quad
        A'\cdot e_1=\lambda_1^{-1}\lambda_2  e_1.
        \]
        Since $\det(A')=\pm 1$, it follows that $A'\in \Pb_n^\pm(\F_3)$, proving that $\Pb_n^\pm(\F_3)$ acts transitively on $\TA_2^{k,n}(\underline{a})$. 
\end{proof}

\begin{comment}\begin{corollary}\label{cor: TA}
    For all $n\geq 2$ and $a_3,\dots,a_k\in\F_3$, the following equalities hold in $\Pb_n^\pm(\F_3)\backslash\MDA_k(\F_3^n)/G_k$.\begin{align*}\TA_2^{k,n}(1,1,a_3,\dots,a_k)&=\TA_2^{k,n}(-1,-1,a_3,\dots,a_k)\\&=\TA_2^{k,n}(\pm 1,0,a_3,\dots,a_k)\\&=\TA_2^{k,n}(0,\pm 1,a_3,\dots,a_k),\end{align*} and \[\TA_2^{k,n}(\pm 1,a_3,\dots,a_k)=\TA_2^{k,n}(1,-1,a_3,\dots,a_k).\]
\end{corollary}
\begin{proof}
    The proof follows immediately from the relations in \autoref{BDA,p=3}.
\end{proof}\end{comment}
\section{Orientation-reversing and preserving actions}\label{sec6}
Let $p$ be a prime, and let $Y=\SBD(\F_p^n)$ or $\SBDA(\F_p^n)$.  In this section, we study the chains \[\redchain_*\left(Y;\Q^{\det}\right)_{\Pb_n^\pm(\F_p)}\]as well as their homology groups.
\subsection{Simplices with orientation-reversing and preserving actions}\label{subs:pr}

Let $p$ be a prime and $n\geq 2$. Let $Y=\SBD(\F_p^n)$ or $\SBDA(\F_p^n)$. 
We classify the simplices in $Y$ based on whether the $\Pb_n^\pm(\F_p)$-action is orientation-preserving or orientation-reversing. We refer to an element $\sigma\in \SBD(\F_p^n)_k$ as a \emph{standard $k$-simplex}, and to $\sigma\in \SBDA(\F_p^n)_k\setminus \SBD(\F_p^n)_k$ as an \emph{additive $k$-simplex}.

\paragraph{Standard simplices}
Recall from \autoref{matrixrep} that for all primes $p$, $n\geq 2$ and $0\leq k\leq n-1$,\[ \SBD(\F_p^n)_k/\Sigma_{k+1}\cong \MD(\F_p^n)_k/T_k.\] 
Thus, we may identify the coset $\sigma \Sigma_{k+1}$ with the coset $B\cdot T_k $ for some $B\in \MD(\F_p^n)_k$. We also recall from \autoref{BD/Pn}, that every orbit of $\Pb_n^\pm(\F_p)\backslash \MD(\F_p^n)_k/T_k$ has the form $\D_1^{k,n}\cdot T_k$ or $\D_2^{k,n}(\underline{a})\cdot T_k$ for some $\underline{a}\in\F_p^{k+1}\setminus\{\underline{0}\}$. 

\begin{lemma} \label{S1}
    Let $p$ be a prime, $n\geq 3$ and $1\leq k < n-1$. Let $\sigma\in \SBD(\F_p^n)_k$ and let $B\in \D_1^{k,n}$ such that $\sigma \Sigma_{k+1}=B\cdot T_k$. Then the action of $\Pb_n^\pm(\F_p)$ on $\sigma$ is orientation-reversing.\end{lemma}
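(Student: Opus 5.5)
By \autoref{lem:goodmat}, the action of $\Pb_n^\pm(\F_p)$ on $\sigma$ is orientation-reversing exactly when $B$ lies in $\MD(\F_p^n)_k^\mathrm{rv}$. So it suffices to produce some $A\in\Pb_n^\pm(\F_p)$ and some $X\in T_k$ with $\sign(X)=-1$ such that $A\cdot B=B\cdot X$. By property (d) of \autoref{BD/Pn}, $\D_1^{k,n}$ is a single $\Pb_n^\pm(\F_p)$-orbit. By property (c), it is stable under $T_k$. Hence for any $X\in T_k$ we have $B\cdot X\in\D_1^{k,n}=\Pb_n^\pm(\F_p)\cdot B$, so some $A\in\Pb_n^\pm(\F_p)$ satisfies $A\cdot B=B\cdot X$.

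It therefore only remains to pick $X$ with $\sign(X)=-1$. Since $k\geq 1$, the transposition $(0\,1)\in\Sigma_{k+1}$ exists, and we take
\[X=((0\,1),1,\dots,1).\]
Then $B\cdot X=(v_1|v_0|v_2|\dots|v_k)$ and $\sign(X)=-1$, which finishes the argument.

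One point needs checking: that the transitivity in \autoref{BD/Pn}(d) really yields an element of determinant $\pm1$. For safety I would construct $A$ explicitly. The hypothesis $k<n-1$ means $k+1\leq n-1$, so $\{v_0,\dots,v_k\}$ together with $e_1$ can be completed to a basis $v_0,\dots,v_k,w_{k+1},\dots,w_{n-2},e_1$ of $\F_p^n$, since $e_1\notin\operatorname{span}\{v_0,\dots,v_k\}$.

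Define $A$ on this basis as follows: it swaps $v_0$ and $v_1$, fixes the $w_j$ and $e_1$, and fixes $v_2,\dots,v_k$. This $A$ is a transposition of basis vectors, so $\det A=-1$. It fixes $e_1$, so $A\in\Pb_n^\pm(\F_p)$. Finally, $A\cdot B=B\cdot X$ by construction.

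This explicit construction is also where $k<n-1$ enters: it leaves room for $e_1$ in the basis. The condition $k\geq1$ is needed so that a transposition exists. I expect no real obstacle beyond this bookkeeping.
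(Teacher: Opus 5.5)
Your proof is correct and follows the paper's argument. You reduce via \autoref{lem:goodmat} to showing $B\in\MD(\F_p^n)_k^{\mathrm{rv}}$, and then combine the $T_k$-stability of $\D_1^{k,n}$ with the transitivity of $\Pb_n^\pm(\F_p)$ on it (properties (c) and (d) of \autoref{BD/Pn}). Your explicit witness, namely $A$ swapping $v_0,v_1$ while fixing $e_1$ and the rest of the completed basis, together with $X=((0\,1),1,\dots,1)$, is a valid and more self-contained choice that the paper leaves implicit. It also makes visible where $k\geq 1$ and $e_1\notin\operatorname{span}\{v_0,\dots,v_k\}$ are used.
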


\begin{proof} We recall that \[\D_1^{k,n} =  \left\{ \left(v_0|\dots|v_k\right)\in \MD(\F_p^n)_k ~\middle\vert e_1\not\in \operatorname{span}\{v_0,\dots,v_k\}\right\}.\]Let $ \sigma \Sigma_{k+1}=B\cdot T_k$ with $B\in \D_1^{k,n}$. We want to show that the action of $\Pb_n^\pm(\F_p)$ on $\sigma$ is orientation-reversing. 
We have by \autoref{lem:goodmat} that \[\SBD(\F_p^n)_k^\mathrm{rv}/\Sigma_{k+1}\cong \MD(\F_p^n)_k^\mathrm{rv}/T_k.\]
 Thus, it is enough to show that the action of $\Pb_n^\pm(\F_p)$ on $B$ is orientation-reversing.
 
We have by property (c) of \autoref{BD/Pn} that \[B\cdot X\in \D_1^{k,n}\quad\text{for all $X\in T_k$}.\]In particular, \[B\cdot X\in \D_1^{k,n}\quad\text{for $X\in T_k$ with $\sign(X)=-1$}.\]Thus, there exist $A\in \Pb_n^\pm(\F_p)$ and $X\in T_k$ such that $\sign(X)=-1$ and \[A\cdot B=B\cdot X.\]
Therefore, the action of $\Pb_n^\pm(\F_p)$ on $\sigma$ is orientation-reversing.\end{proof}

\begin{lemma}\label{rev,a}
    Let $p$ be a prime, $n\geq 2$ and $1\leq k\leq n-1$. Let $\sigma\in \SBD(\F_p^n)_k$ and let $B\in \D_2^{k,n}(\underline{a})$ for some $\underline{a}=(a_0,\dots,a_k)\in \F_p^{k+1}\setminus\{\underline{0}\}$ such that $\sigma \Sigma_{k+1}=B\cdot T_k$. Then the action of $\Pb_n^\pm(\F_p)$ on $\sigma$ is orientation-reversing if and only if there exist some $\lambda\in \F_p^\times$ and $X\in T_k$ such that $\sign(X)=-1$, and \[\lambda\underline{a}=\underline{a}\cdot X.\]
\end{lemma}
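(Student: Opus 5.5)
We reduce the question about $\sigma$ to one about the matrix $B$, and then to one about the coefficient vector $\underline{a}$, exactly as in the $n=2$ case (\autoref{edgeinp=1mod4}). By \autoref{lem:goodmat} we have $\SBD(\F_p^n)_k^\mathrm{rv}/\Sigma_{k+1}\cong \MD(\F_p^n)_k^\mathrm{rv}/T_k$. Hence the action of $\Pb_n^\pm(\F_p)$ on $\sigma$ is orientation-reversing if and only if $B\in\MD(\F_p^n)_k^\mathrm{rv}$. By \autoref{def:action on B}, this means there exist $A\in\Pb_n^\pm(\F_p)$ and $X\in T_k$ with $\sign(X)=-1$ such that $A\cdot B=B\cdot X$.

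Next we translate this condition into orbit language using \autoref{BD/Pn}. By property (d), $\D_2^{k,n}(\underline{a})$ is exactly the $\Pb_n^\pm(\F_p)$-orbit of $B$. So the existence of $A$ with $A\cdot B=B\cdot X$ is equivalent to $B\cdot X\in\D_2^{k,n}(\underline{a})$. By property (c), $B\cdot X\in\D_2^{k,n}(\underline{a})\cdot X=\D_2^{k,n}(\underline{a}\cdot X)$. Since distinct orbits are disjoint, $B\cdot X\in\D_2^{k,n}(\underline{a})$ holds if and only if $\D_2^{k,n}(\underline{a}\cdot X)=\D_2^{k,n}(\underline{a})$. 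One subtlety is that $\underline{a}\cdot X$ must again be a nonzero vector so that the set is defined; this holds because $X$ only permutes and changes signs of the entries.

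Finally, property (b) of \autoref{BD/Pn} says $\D_2^{k,n}(\underline{a}\cdot X)=\D_2^{k,n}(\underline{a})$ if and only if $\underline{a}\cdot X=\lambda\underline{a}$ for some $\lambda\in\F_p^\times$. Chaining these equivalences, with $\sign(X)=-1$ carried along unchanged throughout, yields the stated criterion. Both directions come out simultaneously, since every step is an equivalence.

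The main point requiring care is the middle step: passing from ``$A\cdot B=B\cdot X$ for some $A$'' to the equality of orbit sets. This uses transitivity from property (d) in one direction and invariance of the orbit in the other. The remaining steps are direct citations of earlier results.
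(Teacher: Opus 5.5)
Your proposal is correct and follows the paper's proof essentially step for step. You reduce to $B\in\MD(\F_p^n)_k^{\mathrm{rv}}$ via \autoref{lem:goodmat}, use properties (d) and (c) of \autoref{BD/Pn} to turn $A\cdot B=B\cdot X$ into $\D_2^{k,n}(\underline{a}\cdot X)=\D_2^{k,n}(\underline{a})$, and then use property (b) to obtain $\lambda\underline{a}=\underline{a}\cdot X$. You also make explicit the orbit-disjointness and nonvanishing of $\underline{a}\cdot X$ that the paper leaves implicit.
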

\begin{proof}Recall that \[\D_2^{k,n}(\underline{a})=\left\{B=\left(v_0|\dots|v_k\right)\in \MD(\F_p^n)_k ~\middle\vert \lambda e_1=a_0v_0+\dots+a_kv_k~\text{for some $\lambda\in\F_p^\times$}\right\}.\] 
Let $\sigma \Sigma_{k+1}=B\cdot T_k$ with $B\in \D_2^{k,n}(\underline{a})$ for some $\underline{a}=(a_0,\dots,a_k)\in \F_p^{k+1}\setminus\{\underline{0}\}$. Since  \[\SBD(\F_p^n)_k^\mathrm{rv}/\Sigma_{k+1}\cong \MD(\F_p^n)_k^\mathrm{rv}/T_k\] by \autoref{lem:goodmat}, we have the following equivalences:
   \begin{align*}\text{The action of $\Pb_n^\pm(\F_p)$ on $\sigma$} &~\text{is orientation-reversing}\\&\Longleftrightarrow \text{there exists $X\in T_k$ with $\sign(X)=-1$ and $B\cdot X=A\cdot B$ for some $A\in \Pb_n^\pm(\F_p)$}\\&\Longleftrightarrow \text{there exists $X\in T_k$ with $\sign(X)=-1$ and $B\cdot X\in \D_2^{k,n}(\underline{a})$}\\ &\Longleftrightarrow \text{there exists $X\in T_k$ with $\sign(X)=-1$ and $\D_2^{k,n}(\underline{a}\cdot X)=\D_2^{k,n}(\underline{a})$},\end{align*}where the last equivalence follows from properties (c) and (d) of \autoref{BD/Pn}. Moreover, we have by property (b) of \autoref{BD/Pn} that \[\D_2^{k,n}(\underline{a})=\D_2^{k,n}(\underline{b})\quad\Longleftrightarrow\quad \underline{a}=\lambda\underline{b}~\text{for some $\lambda\in \F_p^\times$}.\]
   This implies that the action of $\Pb_n^\pm(\F_p)$ on $\sigma$ is orientation-reversing if and only if there exist $\lambda\in \F_p^
    \times$ and $X\in T_k$ such that $\sign(X)=-1$ and \[\lambda \underline{a}=\underline{a}\cdot X.\qedhere\]
\end{proof}
\begin{definition}[\textbf{$(\lambda,m)$-condition}]\label{def1}
    Let $p$ be a prime, $n\geq 2$ and $1\leq k\leq n-1$. Let $\underline{a}=(a_0,\dots,a_k)\in \F_p^{k+1}\setminus\{\underline{0}\}$. We say that \emph{$\underline{a}$ satisfies the $(\lambda,m)$-condition} if it satisfies one of the following cases.\begin{mycases}
    \case If $k$ is odd, and there exist $\lambda\in \F_p^\times$ and an even integer $m$ satisfying \[\lambda~\text{has order $2m$},\]\[m\mid (k+1),\]\[q=\frac{k+1}{m}~\text{is an odd integer},\] and \[\underline{a}\cdot Y=
    (b_1, \lambda b_1, \dots, \lambda^{m-1} b_1,\dots,b_q, \lambda b_q, \dots, \lambda^{m-1} b_q),\]for some $Y\in T_k$ and $b_1,\dots,b_q\in \F_p^\times$.
    \case  If $k$ is even, and there exist $\lambda\in \F_p^\times$ and an even integer $m$ satisfying \[\lambda~\text{has order $2m$},\]\[m\mid k,\]\[q=\frac{k}{m}~\text{is an odd integer},\] and \[\underline{a}\cdot Y=(b_1, \lambda b_1, \dots, \lambda^{m-1} b_1,\dots,b_q, \lambda b_q, \dots, \lambda^{m-1} b_q,0),\]for some $Y\in T_k$ and $b_1,\dots,b_q\in \F_p^\times$.
\end{mycases}
\end{definition}
\begin{lemma}\label{D2,dif,rv} Let $p$ be a prime, $n\geq 2$ and $1\leq k\leq n-1$. Let $\sigma\in \SBD(\F_p^n)_k$ and let $B\in \D_2^{k,n}(\underline{a})$ for some $\underline{a}=(a_0,\dots,a_k)\in \F_p^{k+1}\setminus\{\underline{0}\}$ such that $\sigma \Sigma_{k+1}=B\cdot T_k$. If $\underline{a}$ satisfies the \hyperref[def1]{$(\lambda,m)$-condition}, then the action of $\Pb_n^\pm(\F_p)$ on $\sigma$ is orientation-reversing.
\end{lemma}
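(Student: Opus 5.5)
By \autoref{rev,a}, it suffices to find $\mu\in\F_p^\times$ and $X\in T_k$ with $\sign(X)=-1$ such that $\mu\underline{a}=\underline{a}\cdot X$. I will first reduce to the normalized vector $\underline{a}'=\underline{a}\cdot Y$ given by the \hyperref[def1]{$(\lambda,m)$-condition}. Suppose $\mu\underline{a}'=\underline{a}'\cdot X'$ with $\sign(X')=-1$. Since the $T_k$-action on $\F_p^{k+1}\setminus\{\underline{0}\}$ is a right action and commutes with scalars, we get
\[
\mu\underline{a}=\underline{a}\cdot (YX'Y^{-1}).
\]
Moreover $\sign(YX'Y^{-1})=\sign(X')=-1$, because $\sign$ is a homomorphism. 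The homomorphism property holds since $\sign(\pi,\varepsilon_0,\dots,\varepsilon_k)=\sign(\pi)$ and the multiplication in $T_k$ multiplies the permutation parts. So it is enough to treat $\underline{a}'$ itself.

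For $\underline{a}'$ I will take $\mu=\lambda$ and build $X'$ from cyclic shifts on each block, using the block structure
\[
(b_j,\lambda b_j,\dots,\lambda^{m-1}b_j), \qquad j=1,\dots,q.
\]
On each block of length $m$, let $\pi$ act as the $m$-cycle sending position $i$ to position $i+1$ within the block, wrapping around. Choose signs $\varepsilon=1$ everywhere except $\varepsilon=-1$ at the last position of each block. The first $m-1$ entries of the block then become $\lambda b_j,\dots,\lambda^{m-1}b_j$. The last entry becomes $-b_j$, and this equals $\lambda\cdot\lambda^{m-1}b_j=\lambda^m b_j$ because $\lambda$ has order $2m$, which forces $\lambda^m=-1$. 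In the even-$k$ case the final coordinate is $0$; there $\pi$ fixes it with sign $+1$, and $\lambda\cdot 0=0$. Hence $\underline{a}'\cdot X'=\lambda\underline{a}'$.

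It remains to compute the sign. Each $m$-cycle has sign $(-1)^{m-1}=-1$, since $m$ is even, so $\sign(X')=(-1)^q=-1$ because $q$ is odd. The main thing to check carefully is the index convention $\underline{a}\cdot X=(\varepsilon_ia_{\pi(i)})$. The shift must be chosen so that the entry read into position $i$ is $a_{\pi(i)}$ with $\pi(i)=i+1$; the wrap-around then lands in the last slot, which is where the sign $-1$ is placed. Once this is fixed, \autoref{rev,a} gives that the action of $\Pb_n^\pm(\F_p)$ on $\sigma$ is orientation-reversing.
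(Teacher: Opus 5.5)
Your proposal is correct and follows the paper's proof essentially verbatim. You reduce to the normalized tuple $\underline{a}\cdot Y$ via $\lambda\underline{a}=\underline{a}\cdot YXY^{-1}$, take $\mu=\lambda$ with block-wise $m$-cycles $\pi(i)=i+1$ and sign $-1$ at the last slot of each block (fixing the trailing $0$ when $k$ is even), and conclude from $\lambda^m=-1$ and $\sign(X)=(-1)^q=-1$ via \autoref{rev,a}, with your checks that the $T_k$-action is a right action and that $\sign$ is a homomorphism filling in details the paper leaves implicit.
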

\begin{proof}
Let $\underline{a}, \lambda$ be as in Case $1$ and Case $1$. Let $\sigma \Sigma_{k+1}=B\cdot T_k$ with $B\in \D_2^{k,n}(\underline{a})$. In each case, we will show that there exists $X\in T_k$ such that $\sign(X)=-1$ and \[\lambda \underline{a}=\underline{a}\cdot X.\]It will then follow by \autoref{rev,a} that the action of $\Pb_n^\pm(\F_p)$ on $\sigma$ is orientation-reversing.
    \begin{mycases} 
        \case  Let $\underline{b}=(b_1,\lambda b_1, \dots, \lambda^{m-1} b_1,\dots,b_q, \lambda b_q, \dots, \lambda^{m-1} b_q)$. Let $\pi$ be the following permutation \[(0\; 1
        \;\dots\; m-1)(m\;\dots\;2m-1)\dots(qm-m\;\dots\;qm-1)\quad\text{in $\Sigma_{k+1}$}.\]
        Observe that $\pi$ is odd since $m$ is even and $q$ is odd. Let $(\varepsilon_0,\dots,\varepsilon_k)$ be as follows \[\varepsilon_i=\begin{cases}
            -1 &\text{if $i\in\{m-1,2m-1,\dots,qm-1\}$}\\
            1&\text{otherwise}.
        \end{cases}\] Then the element $X=(\pi,\varepsilon_0,\dots,\varepsilon_k)\in T_k$ and $\sign(X)=-1$. Moreover, given that $\lambda$ has order $2m$, it follows that $\lambda^{m}=-1$. Therefore \begin{align*}\lambda \underline{b}&=\lambda(b_1, \lambda b_1, \dots, \lambda^{m-1} b_1,\dots,b_q, \lambda b_q, \dots, \lambda^{m-1} b_q)\\&=\left(\lambda b_1,\lambda^2b_1,\dots,\lambda^m b_1,\dots,\lambda b_q,\lambda^2 b_q,\dots,\lambda^m b_q\right)\\&=\left(\lambda b_1,\lambda^2b_1,\dots,- b_1,\dots,\lambda b_q,\lambda^2 b_q,\dots,- b_q\right)\\&=(b_1, \lambda b_1, \dots, \lambda^{m-1} b_1,\dots,b_q, \lambda b_q, \dots, \lambda^{m-1} b_q)\cdot X\\&=\underline{b}\cdot X.\end{align*}
         Since \[\underline{a}\cdot Y=(b_1, \lambda b_1, \dots, \lambda^{m-1} b_1,\dots,b_q, \lambda b_q, \dots, \lambda^{m-1} b_q),\] it implies that \[\lambda \underline{a}\cdot Y=\lambda\underline{b}=\underline{b}\cdot X=\underline{a}\cdot YX.\]
         Therefore, \[\lambda\underline{a}=\underline{a}\cdot YXY^{-1}.\]
         As $\sign(X)=-1$, we conclude by \autoref{rev,a} that the action of $\Pb_n^\pm(\F_p)$ on $\sigma$ is orientation-reversing.
        \case Let $\underline{b}=(b_1, \lambda b_1, \dots, \lambda^{m-1} b_1,\dots,b_q, \lambda b_q, \dots, \lambda^{m-1} b_q,0)$. Let $\pi$ be the following odd permutation in $\Sigma_{k+1}$ that fixes $k$  \[(0\;1\; \dots\; m-1)(m\;\dots\;2m-1)\dots(qm-m\;\dots\;qm-1)\quad\text{in $\Sigma_{k+1}$}.\] We consider the element $X=(\pi,\varepsilon_0,\dots,\varepsilon_k)$ with $\varepsilon_i$ as in the previous case. Then $X\in T_k$, and $\sign(X)=-1$. By the same reasoning as before, we have that \[\lambda \underline{b}=\underline{b}\cdot X,\]and thus $\lambda\underline{a}=\underline{a}\cdot YXY^{-1}$ with $\sign(YXY^{-1})=-1$.
      \end{mycases}\end{proof}

 \begin{lemma}\label{D2,dif,pv}
         Let $p$ be a prime, $n\geq 2$ and $1\leq k\leq n-1$. Let $\sigma\in \SBD(\F_p^n)_k$ and let $B\in \D_2^{k,n}(\underline{a})$ for some $\underline{a}=(a_0,\dots,a_k)\in \F_p^{k+1}\setminus\{\underline{0}\}$ such that $\sigma \Sigma_{k+1}=B\cdot T_k$. Assume $a_i\neq\pm a_j$ for all $i\neq j$. If $\underline{a}$ does not satisfy the \hyperref[def1]{$(\lambda,m)$-condition}, then the action of $\Pb_n^\pm(\F_p)$ on $\sigma$ is orientation-preserving.
     \end{lemma}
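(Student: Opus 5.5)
We argue by contraposition, using \autoref{rev,a}. Assume the action on $\sigma$ is orientation-reversing. Then there exist $\lambda\in\F_p^\times$ and $X=(\pi,\varepsilon_0,\dots,\varepsilon_k)\in T_k$ with $\sign(\pi)=-1$ and $\lambda\underline{a}=\underline{a}\cdot X$, that is, $\lambda a_i=\varepsilon_i a_{\pi(i)}$ for all $i$. We will show that $\underline{a}$ then satisfies the \hyperref[def1]{$(\lambda,m)$-condition}, contradicting the hypothesis.

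First, the zero coordinates. Since $\lambda\neq 0$, we have $a_i=0$ if and only if $a_{\pi(i)}=0$, so $\pi$ preserves the zero set $Z=\{i\mid a_i=0\}$. If $|Z|\geq 2$, then two zero entries satisfy $a_i=\pm a_j$, which is excluded. So $|Z|\leq 1$, and if $Z=\{z\}$ then $\pi(z)=z$.

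Next, the cycle structure. Let $(i_0\,i_1\,\dots\,i_{r-1})$ be a cycle of $\pi$ on the nonzero coordinates, with $\pi(i_j)=i_{j+1}$. Iterating gives $a_{i_{j+1}}=\pm\lambda a_{i_j}$, hence $a_{i_j}=\pm\lambda^j a_{i_0}$ and $a_{i_0}=\pm\lambda^r a_{i_0}$, so $\lambda^r=\pm1$. Let $d$ be the order of $\lambda$.

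We claim $\lambda^j\neq\pm1$ for $0<j<r$. Otherwise $a_{i_j}=\pm a_{i_0}$ with $i_j\neq i_0$, contradicting the distinctness hypothesis; this is the key point. Consequently:
\begin{itemize}
\item if $d$ is odd, then $-1\notin\langle\lambda\rangle$, so $\lambda^r=\pm1$ forces $\lambda^r=1$ and $r=d$;
\item if $d=2m'$ is even, then $-1=\lambda^{m'}$ and $\lambda^r=\pm1$ forces $r=m'$.
\end{itemize}
In either case every nontrivial cycle on the nonzero coordinates has the same length $r$, where $r=d$ or $r=d/2$. The case $r=1$ needs separate treatment: then $\lambda=\pm1$ and $\pi$ is the identity on the nonzero coordinates, and also trivially on $Z$, so $\pi=\Id$, contradicting $\sign(\pi)=-1$.

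Now the parity count. All cycles of $\pi$ on the $N$ nonzero coordinates have length $r$, with $q=N/r$ of them, and the zero coordinate (if any) is a fixed point. Hence $\sign(\pi)=(-1)^{(r-1)q}$, and $\sign(\pi)=-1$ forces $r$ even and $q$ odd. If $d$ were odd, $r=d$ would be odd, so $d=2r$; thus $\lambda$ has order $2m$ with $m:=r$ even. Parity also fixes $N$: since $m$ is even and $q$ is odd, $N=mq$ is even, so $N=k+1$ when $k$ is odd and $N=k$ when $k$ is even. This matches $|Z|\leq 1$ and the two cases of the definition.

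Finally, the normal form. Reorder the coordinates by an element $Y\in T_k$ that lists each cycle consecutively as $i_0,i_1,\dots$ and places the zero coordinate last, with signs chosen so that the entries become $b,\lambda b,\dots,\lambda^{m-1}b$. Here $b=a_{i_0}\neq0$. This exhibits the required shape of $\underline{a}\cdot Y$, so $\underline{a}$ satisfies the $(\lambda,m)$-condition.

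The main obstacle is bookkeeping, not a hard step. One must keep track of the $\varepsilon$-signs in the iteration, and check that the $T_k$-action convention ($\underline{a}\cdot Y$ indexes by $\pi(i)$) produces the sequence in the exact order required by \autoref{def1}.
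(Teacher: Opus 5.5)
Your proof is correct and follows essentially the same route as the paper. Both argue by contradiction via \autoref{rev,a}, iterate $\lambda a_i=\varepsilon_i a_{\pi(i)}$ along the cycles of $\pi$, use $a_i\neq\pm a_j$ to force a single common even cycle length $m$ with $\lambda^m=-1$ and at most one fixed point (at the zero entry), read off that the number of cycles is odd from $\sign(\pi)=-1$, and then reorder by some $Y\in T_k$. Your bookkeeping through the order $d$ of $\lambda$ is slightly cleaner, and it also verifies explicitly that $\lambda$ has order exactly $2m$, whereas the paper's argument only records $\lambda^{m_0}=-1$.
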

   \begin{proof} Let $\sigma \Sigma_{k+1}=B\cdot T_k$ with $B\in \D_2^{k,n}(\underline{a})$, such that $a_i\neq \pm a_j$ for all $i\neq j$. We argue by contradiction, assuming that $\underline{a}$ does not satisfy the \hyperref[def1]{$(\lambda,m)$-condition}, while the action of $\Pb_n^\pm(\F_p)$ on $\sigma$ is orientation-reversing. Then by \autoref{rev,a}, there exist $\lambda\in\F_p^\times$ and $X=(\pi,\varepsilon_0,\dots,\varepsilon_k)\in T_k$ such that $\sign(X)=-1$ and \[\lambda \underline{a}=\underline{a}\cdot X.\]This implies that \begin{equation}\label{lambdaa} \lambda a_i=\varepsilon_i a_{\pi(i)}\quad\text{for all $0\leq i\leq k$}.
    \end{equation}
    \begin{claim}\label{claim1}
        If $\pi$ has a cycle of length $\ell>1$, then $\lambda^{\ell}=\pm 1$. Further, if $\ell$ is even, then $\lambda^{\ell}=-1.$
    \end{claim}\begin{proof}[Proof of Claim] Let $i_0$ be an entry in a cycle of length $\ell>1$. Then $\pi^{\ell}(i_0)=i_0$. 
    By \eqref{lambdaa}, we have \begin{align*}
     \lambda a_{i_0}&=\varepsilon_{i_0}a_{\pi(i_0)},\\
     \lambda a_{\pi(i_0)}&=\varepsilon_{\pi(i_0)}a_{\pi^2(i_0)},\\ &\ \vdots\\\lambda a_{\pi^{\ell-2}(i_0)}&=\varepsilon_{\pi^{\ell-2}(i_0)}a_{\pi^{\ell-1}(i_0)},\\\lambda a_{\pi^{\ell-1}(i_0)}&=\varepsilon_{\pi^{\ell-1}(i_0)}a_{\pi^{\ell}(i_0)}=\varepsilon_{\pi^{\ell-1}(i_0)}a_{i_0}.
     \end{align*}
        As $\ell>1$ and $a_i\neq \pm a_j$ for all $i\neq j$, it follows that $a_{i_0},\dots,a_{\pi^{\ell-1}(i_0)}$ are all nonzero. It then implies that \begin{equation}\label{eq:aii}\lambda^sa_{i_0}=\varepsilon_{i_0}\varepsilon_{\pi(i_0)}\varepsilon_{\pi^2(i_0)}\dots\varepsilon_{\pi^{s-1}(i_0)}a_{\pi^{s}(i_0)}~\text{for all $0\leq s\leq \ell$}.\end{equation}
        Since $\pi^{\ell}(i_0)=i_0$, it  follows by \eqref{eq:aii} that\[\lambda^{\ell}=\varepsilon_{i_0}\varepsilon_{\pi(i_0)}\dots\varepsilon_{\pi^{\ell-1}(i_0)}=\pm 1.\]
        
        If $\ell$ is even, \[\lambda^{\frac{\ell}{2}}a_{i_0}=\varepsilon_{i_0}\varepsilon_{\pi(i_0)}\dots\varepsilon_{\pi^{\frac{\ell}{2}-1}(i_0)}a_{\pi^{\frac{\ell}{2}}(i_0)}.\]
        Thus,\[\lambda^{\ell}a^2_{i_0}=\left(\lambda^{\frac{\ell}{2}}a_{i_0}\right)^2=\left(\varepsilon_{\pi(i_0)}\varepsilon_{\pi^2(i_0)}\dots\varepsilon_{\pi^{\frac{\ell}{2}-1}(i_0)} a_{\pi^{\frac{\ell}{2}}(i_0)}\right)^2= \left(a_{\pi^{\frac{\ell}{2}}(i_0)}\right)^2.\]
        As $a_i\neq \pm a_j$ for all $i\neq j$, we conclude that \[\lambda^{\ell}=-1.\qedhere\] 
\end{proof}
        Since $\sign(X)=-1$, then the permutation $\pi$ must contain a cycle of even length $m_0$. Let $i_0$ be an entry in that cycle. 
        Assume $\pi$ has a cycle of length $\ell>1$ such that $\ell\neq m_0$ and let $i_1$ be an entry in that cycle. It follows by the claim that $\ell$ is odd, and \[\lambda^{\ell-m_0}=\lambda^\ell(\lambda^{m_0})^{-1}=\pm 1.\] As $\ell>1$ and $a_i\neq \pm a_j$ for all $i\neq j$, \eqref{lambdaa} gives that $a_{i_1}\neq 0$. Moreover, \eqref{eq:aii} gives \[\lambda^{m_0-\ell} a_{i_0}=\varepsilon_{i_0}\dots\varepsilon_{\pi^{m_0-\ell-1}(i_0)}a_{\pi^{m_0-\ell}(i_0)}\quad\text{if $m_0>\ell$},\]and \[\lambda^{\ell-m_0} a_{i_1}=\varepsilon_{i_1}\dots\varepsilon_{\pi^{\ell-m_0-1}(i_1)}a_{\pi^{\ell-m_0}(i_1)}\quad\text{if $\ell>m_0$}.\]Since$\lambda^{m_0-\ell}=\lambda^{\ell-m_0}=\pm 1,$ we obtain \[a_{i_0}=\pm a_{\pi^{m_0-\ell}(i_0)}\quad\text{if $m_0>\ell$},\]and\[a_{i_1}=\pm a_{\pi^{\ell-m_0}(i_1)}\quad\text{if $\ell>m_0$}.\] As $a_i\neq \pm a_j$ for all $i\neq j$, this gives a contradiction.

        Thus we conclude that $\pi$ has only cycles of length $m_0$ or $1$. We consider the following two cases.
        \begin{itemize}
            \item If all cycles of $\pi$ have length $m_0$, then $m_0$ divides $k+1$. Additionally, \eqref{eq:aii} implies that  the tuple $(a_{i_0},a_{\pi(i_0)},\dots,a_{\pi^{\ell-1}(i_0)})$ is equal to\[(a_{i_0},\varepsilon_{i_0}\lambda a_{i_0},\dots,\varepsilon_{i_0}\varepsilon_{\pi(i_0)}\varepsilon_{\pi^2(i_0)}\dots\varepsilon_{\pi^{m_0-1}(i_0)}\lambda^{m_0-1}a_{i_0}).\] Write $\pi$ as a product of disjoint cycles \[(r_1\; r_2 \; \dots \;r_{m_1})(r_{m_1+1}\; \dots\; r_{m_1+m_2})\dots .\] Let $\tau\in\Sigma_{k+1}$ be the permutation defined by $\tau(i)=r_i$ for all $i$. Consequently, for $Z=(\tau,\varepsilon'_0,\dots,\varepsilon'_k)\in T_k$ for some $\varepsilon'_i=\{-1,1\}$, the tuple $\underline{a}\cdot Z$, for $Z=(\tau,\varepsilon'_0,\dots,\varepsilon'_k)\in T_k$, is composed entirely of tuples of the form \[(b_j,\lambda b_j,\dots,\lambda^{m_0-1}b_j)\]for some $b_j\in \F_p^\times$. There are \[q=\frac{k+1}{m_0}\] such tuples. Moreover, $q$ is odd since $\sign(Z)=\sign(\tau)=\sign(\pi)=\sign(X)=-1$. Thus $\underline{a}$ satisfies the \hyperref[def1]{$(\lambda,m)$-condition}, which gives a contradiction to our assumption.  
            \item If $\pi$ has a cycle of length $1$, so $\pi(i)=i$ for some $i$. Then from \eqref{lambdaa}, \[\lambda a_i=\varepsilon_ia_i.\]
If $a_i\neq 0$, then $\lambda=\varepsilon_i=\pm 1$. Since $m_0$ is even, it implies that \[\lambda^{m_0}=(\pm 1)^{m_0}=1,\]which gives a contradiction to $\lambda^{m_0}=-1$. Thus, for all fixed points $i$ of $\pi$ we must have $a_i=0$. Since $a_i\neq \pm a_j$ for all $i\neq j$, there is exactly one fixed point of $\pi$, and so exactly one cycle of length $1$. Furthermore, by the previous case, we have that the tuple $\underline{a}\cdot Z$ is composed of $q$ tuples of the form\[(b_j,\lambda b_j,\dots,\lambda^{m_0-1}b_j),\]for some $b_j\in\F_p^\times$, and one entry equal to $0$, where $q$ is odd. Thus $\underline{a}$ satisfies the \hyperref[def1]{$(\lambda,m)$-condition}, which gives a contradiction to our assumption.\qedhere\end{itemize} \end{proof}  

 \begin{lemma}\label{D2,eq,rv} 
   Let $p$ be a prime, $n\geq 2$ and $1\leq k\leq n-1$. Let $\sigma\in \SBD(\F_p^n)_k$ and let $B\in \D_2^{k,n}(\underline{a})$ for some $\underline{a}=(a_0,\dots,a_k)\in \F_p^{k+1}\setminus\{\underline{0}\}$ such that $\sigma \Sigma_{k+1}=B\cdot T_k$. Assume $a_i=\pm a_j$ for some $i\neq j$. Then the action of $\Pb_n^\pm(\F_p)$ on $\sigma$ is orientation-reversing.
\end{lemma}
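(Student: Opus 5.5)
By \autoref{rev,a}, it suffices to exhibit $\lambda\in\F_p^\times$ and $X\in T_k$ with $\sign(X)=-1$ and $\lambda\underline{a}=\underline{a}\cdot X$. The natural candidate is $\lambda=1$ and $X$ a signed transposition swapping the two coordinates $i$ and $j$ with $a_i=\pm a_j$.

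Suppose $a_i=\varepsilon a_j$ with $i\neq j$ and $\varepsilon\in\{-1,1\}$. Let $\pi=(i\; j)\in\Sigma_{k+1}$, which is odd. Define signs by $\varepsilon_i=\varepsilon$, $\varepsilon_j=\varepsilon$, and $\varepsilon_l=1$ for $l\neq i,j$, and set $X=(\pi,\varepsilon_0,\dots,\varepsilon_k)$. Then $\sign(X)=\sign(\pi)=-1$.

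We check the identity coordinatewise, using the action $\underline{a}\cdot X=(\varepsilon_0a_{\pi(0)},\dots,\varepsilon_ka_{\pi(k)})$ from \autoref{def:a}:
\begin{itemize}
\item the $i$-th entry of $\underline{a}\cdot X$ is $\varepsilon_i a_j=\varepsilon a_j=a_i$;
\item the $j$-th entry is $\varepsilon_j a_i=\varepsilon a_i=\varepsilon^2 a_j=a_j$;
\item every other entry is unchanged.
\end{itemize}
Hence $\underline{a}\cdot X=\underline{a}=1\cdot\underline{a}$, so the hypothesis of \autoref{rev,a} holds with $\lambda=1$.

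The case $a_i=a_j=0$ needs no separate treatment: $\varepsilon$ may be taken arbitrary and the same computation goes through. It also covers $p=2$, where $\pm$ coincide.

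With $\lambda=1$ and this $X$, \autoref{rev,a} (which relies on properties (b)–(d) of \autoref{BD/Pn}) gives that the action of $\Pb_n^\pm(\F_p)$ on $\sigma$ is orientation-reversing.

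There is no real obstacle. The only care needed is the sign bookkeeping in the $T_k$-action, i.e. checking that $\varepsilon_i$ multiplies $a_{\pi(i)}$, and that is exactly the computation above.
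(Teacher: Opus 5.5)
Your proof is correct and matches the paper's argument: take $\lambda=1$ and the signed transposition $X=((i\; j),\varepsilon_0,\dots,\varepsilon_k)$ with $\varepsilon_i=\varepsilon_j=\varepsilon$ and all other signs equal to $1$, then apply \autoref{rev,a}. Your coordinatewise verification of $\underline{a}\cdot X=\underline{a}$ spells out a step the paper only asserts.
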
   
      \begin{proof}
      Let $\sigma \Sigma_{k+1}=B\cdot T_k$ with $B\in \D_2^{k,n}(\underline{a})$ such that $a_i=\pm a_j$ for some $i\neq j$.
          We recall from \autoref{rev,a} that the action of $\Pb_n^\pm(\F_p)$ on $\sigma$ is orientation-reversing if and only if there exist $\lambda\in \F_p^\times$ and $X=(\pi,\varepsilon_0,\dots,\varepsilon_k)\in T_k$ such that $\sign(X)=-1$ and \[\lambda \underline{a}=\underline{a}\cdot X.\] 
          Suppose $a_{i_0}=\varepsilon a_{i_1}$ for some $ i_0\neq i_1$ and $\varepsilon\in\{-1,1\}$. Let $\pi$ be the transposition $(i_0\; i_1)$ in $\Sigma_{k+1}$ and \[\varepsilon_i=\begin{cases}
              \varepsilon&\text{if $i=i_0,i_1$}\\
              1&\text{otherwise}.
          \end{cases}\]Then, the element $X=(\pi,\varepsilon_0,\dots,\varepsilon_k)\in T_k$ and $\sign(X)=-1$. Thus, for $\lambda=1$, we obtain\[\lambda \underline{a}=\underline{a}\cdot X.\]This completes the proof.
\end{proof}
We summarize the above lemmas in the following corollary.
\begin{corollary}\label{cor:st,pr}
Let $p$ be a prime, $n\geq 2$ and $2\leq k\leq n-1$. Let $\sigma\in\SBD(\F_p^n)_k$ and let $B\in\MD(\F_p^n)_k$ such that $ \sigma\cdot\Sigma_{k+1}=B\cdot T_k$. Then $\sigma$ belongs to $\SBD(\F_p^n)_k^\mathrm{pr}$ if and only if $B$ belongs to some set \[\D_2^{k,n}(\underline{a})\quad\text{with $\underline{a}=(a_0,\dots,a_k)\in\F_p^{k+1}\setminus\{\underline{0}\}$},\] where the entries satisfy $a_i\neq \pm a_j$ for all $i\neq j$ and do not satisfy the \hyperref[def1]{$(\lambda,m)$-condition}. 

\end{corollary}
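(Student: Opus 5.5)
This corollary is a bookkeeping consequence of \autoref{BD/Pn} and the four lemmas \autoref{S1}, \autoref{D2,dif,rv}, \autoref{D2,dif,pv} and \autoref{D2,eq,rv}. The proof classifies $B$ by its $\Pb_n^\pm(\F_p)$-orbit and reads off orientation behaviour in each case. By \autoref{lem:goodmat}, $\sigma\in\SBD(\F_p^n)_k^\mathrm{pr}$ if and only if $B\in\MD(\F_p^n)_k^\mathrm{pr}$. This membership depends only on the orbit of $B$ under $\Pb_n^\pm(\F_p)$ and $T_k$, so the statement does not depend on the chosen representative $B$ of $\sigma\Sigma_{k+1}$. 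By property (d) of \autoref{BD/Pn}, $B$ lies either in $\D_1^{k,n}$ (possible only when $k<n-1$) or in $\D_2^{k,n}(\underline{a})$ for some nonzero $\underline{a}\in\F_p^{k+1}$.

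For the ``only if'' direction, suppose $\sigma$ is orientation-preserving.
\begin{itemize}
\item If $B\in\D_1^{k,n}$, then $k<n-1$ forces $n\geq 3$. Since $k\geq 2\geq 1$, \autoref{S1} shows the action is orientation-reversing, a contradiction. Hence $B\in\D_2^{k,n}(\underline{a})$.
\item If $a_i=\pm a_j$ for some $i\neq j$, then \autoref{D2,eq,rv} gives orientation-reversing, a contradiction.
\item If $\underline{a}$ satisfied the $(\lambda,m)$-condition, then \autoref{D2,dif,rv} gives orientation-reversing, a contradiction.
\end{itemize}
So $\underline{a}$ has the stated properties.

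For the ``if'' direction, suppose $B\in\D_2^{k,n}(\underline{a})$, the entries satisfy $a_i\neq\pm a_j$ for all $i\neq j$, and the $(\lambda,m)$-condition fails. Then \autoref{D2,dif,pv} gives orientation-preserving directly.

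The one point that needs care is well-definedness: $\underline{a}$ is determined only up to scaling by $\F_p^\times$ (property (b)), and replacing $B$ by $B\cdot X$ changes $\underline{a}$ to $\underline{a}\cdot X$ (property (c)). The condition $a_i\neq\pm a_j$ is visibly invariant under both operations. The $(\lambda,m)$-condition is also invariant: it is stated up to an arbitrary $Y\in T_k$, so it absorbs right multiplication by $X$, and scaling $\underline{a}$ by $\mu\in\F_p^\times$ just rescales each $b_i$ by $\mu$. In any case, the lemmas are applied to whichever $\underline{a}$ arises, so the characterization holds for every valid choice.
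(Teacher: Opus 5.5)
Your proposal is correct and follows the paper, which gives no separate argument and presents the corollary as a summary of \autoref{S1}, \autoref{D2,dif,rv}, \autoref{D2,dif,pv} and \autoref{D2,eq,rv}, layered on the orbit decomposition of \autoref{BD/Pn}. Your case split, and your check that the conditions are invariant under scaling $\underline{a}$ and under the $T_k$-action, are exactly the bookkeeping that this summary leaves implicit.
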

\begin{definition}\label{pr,0}
    Let $p$ be a prime, $n\geq 2$ and $2\leq k\leq n-1$. Define 
    \[ \MD(\F_p^n)_k^{\mathrm{pr},0}=\left\{B\in\MD(\F_p^n)_k^\mathrm{pr}\middle\vert\begin{array}{c} B\in \D_2^{k,n}(\underline{a})\text{ for some $\underline{a}=(a_0,\dots,a_k)\in\F_p^n\setminus\{\underline{0}\}$},\\
        a_{i_0}=0\text{ for a unique $i_0$}, a_i\neq \pm a_j\text{ for all $i\neq j$},\\
        \underline{a}\text{ does not satisfy the \hyperref[def1]{$(\lambda,m)$-condition}}\end{array}\right\}\]
         and \[ \MD(\F_p^n)_k^{\mathrm{pr},1}=\left\{B\in\MD(\F_p^n)_k^\mathrm{pr}\middle\vert\begin{array}{c} B\in\D_2^{k,n}(\underline{a})\text{for some $\underline{a}=(a_0,\dots,a_k)\in\F_p^n\setminus\{\underline{0}\}$},\\a_{i}\neq 0\text{ for all $i$}, a_i\neq \pm a_j\text{ for all $i\neq j$},\\
        \underline{a}\text{ does not satisfy the \hyperref[def1]{$(\lambda,m)$-condition}}\end{array}\right\}.\]

\end{definition}
\begin{remark}\label{rmk,pr}
    It follows by \autoref{cor:st,pr}  \[\Pb_n^\pm(\F_p)\backslash\MD(\F_p^n)_k^\mathrm{pr}/T_k=\left(\Pb_n^\pm(\F_p)\backslash\MD(\F_p^n)_k^{\mathrm{pr},0}/T_k\right)\quad\sqcup\quad \left(\Pb_n^\pm(\F_p)\backslash\MD(\F_p^n)_k^{\mathrm{pr},1}/T_k\right).\]
\end{remark}
Additionally, we prove the following result.
\begin{lemma}\label{bij}
    Let $p$ be a prime, $n\geq 2$ and $2\leq k\leq n-1$. There is a bijection, \[\Pb_n^\pm(\F_p)\backslash \MD(\F_p^n)_{k+1}^{\mathrm{pr},0}/T_{k+1}\rightarrow \Pb_n^\pm(\F_p)\backslash \MD(\F_p^n)_k^{\mathrm{pr},1}/T_k
    \]given by by forgetting the unique zero entry: \[\D_2^{k+1,n}(a_0,\dots,a_{k+1})\cdot T_{k+1}\mapsto\D_2^{k,n}(a_0,\dots,\widehat{0},\dots,a_{k+1})\cdot T_k.\]
\end{lemma}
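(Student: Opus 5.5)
The plan is to work entirely at the level of coefficient vectors $\underline{a}$, using \autoref{BD/Pn}: orbits in $\Pb_n^\pm(\F_p)\backslash\MD(\F_p^n)_k/T_k$ of type $\D_2$ are in bijection with the orbits of $\F_p^\times\times T_k$ on $\F_p^{k+1}\setminus\{\underline{0}\}$. Here we use properties (b), (c), (d): $\D_2^{k,n}(\underline{a})\cdot T_k=\D_2^{k,n}(\underline{b})\cdot T_k$ if and only if $\underline{b}=\mu\,\underline{a}\cdot X$ for some $\mu\in\F_p^\times$ and $X\in T_k$. Note that $k+1\leq n-1$, so both index sets make sense. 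By \autoref{cor:st,pr}, $\MD^{\mathrm{pr},0}_{k+1}$ corresponds to vectors $\underline{a}\in\F_p^{k+2}$ with exactly one zero entry, $a_i\neq\pm a_j$ for $i\neq j$, and failing the $(\lambda,m)$-condition. Likewise, $\MD^{\mathrm{pr},1}_k$ corresponds to vectors in $\F_p^{k+1}$ with all entries nonzero, pairwise $a_i\neq\pm a_j$, and failing the $(\lambda,m)$-condition.

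\textbf{Step 1 (well-definedness).} Given $\underline{a}\in\F_p^{k+2}$ of the first type, let $\underline{a}'\in\F_p^{k+1}$ denote $\underline{a}$ with its unique zero deleted. Then $\underline{a}'$ has all entries nonzero and pairwise $\neq\pm$. If $\underline{b}=\mu\,\underline{a}\cdot X$, the zero of $\underline{b}$ sits at $X^{-1}$ of the zero position. Deleting it, $X$ induces an element $X'\in T_k$ (restrict the permutation and signs to the complement, relabelled order-preservingly) with $\underline{b}'=\mu\,\underline{a}'\cdot X'$. Hence the map descends to orbits.

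\textbf{Step 2 (inverse).} Conversely, insert a zero at the last position: $\underline{a}'\mapsto(\underline{a}',0)$. Any $X'\in T_k$ extends to $X\in T_{k+1}$ fixing the last index, which gives well-definedness. Moreover, any vector with a single zero is $T_{k+1}$-equivalent, by a permutation moving the zero to the end, to one of this form. So the two maps are mutually inverse on orbits.

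\textbf{Step 3 (matching the $(\lambda,m)$-condition).} This is the main point. By \autoref{def1}, for $k+1$ even (Case 2 in $T_{k+1}$) the condition on $(\underline{a}',0)$ reads: some $Y$ brings it to $q$ blocks $(b,\lambda b,\dots,\lambda^{m-1}b)$ followed by a $0$, with $m\mid k+1$ and $q$ odd. For $k$ odd (Case 1 in $T_k$) the condition on $\underline{a}'$ is the same block form without the $0$, with $m\mid k+1$. Since $Y$ can be taken to fix the zero position (after conjugating, because the zero is unique and the blocks have nonzero entries), these coincide. When $k+1$ is odd, $k$ is even, so Case 2 would apply to $\underline{a}'$ and requires a zero entry, which $\underline{a}'$ lacks. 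Meanwhile Case 1 cannot apply to $(\underline{a}',0)$, since $k+1$ is odd. In this subcase I must check that the condition fails on both sides. The obstacle is the parity bookkeeping: Case 1 is indexed by $k$ odd, but for $T_{k+1}$ the relevant parity is that of $k+1$. A more robust route is to avoid the case split altogether via \autoref{D2,dif,rv} and \autoref{D2,dif,pv}. Orientation-reversal of $\underline{a}$ is equivalent to the existence of $\lambda$ and an odd $X$ with $\lambda\underline{a}=\underline{a}\cdot X$ (\autoref{rev,a}). Any such $X$ must fix the unique zero index, as that is the only index with $a_i=0$; so it restricts to an odd $X'$ with $\lambda\underline{a}'=\underline{a}'\cdot X'$. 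Conversely, any odd $X'$ extends by fixing the zero. Thus the orientation-preserving property is equivalent on both sides, which is exactly the membership in the $\mathrm{pr}$ sets, and this completes the bijection.
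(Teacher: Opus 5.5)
Your proof is correct where the statement makes sense. It uses the same bijection as the paper (delete the unique zero; inverse: append a zero), but your verification is genuinely different and more complete.

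The paper's proof only asserts that the clauses defining $\MD(\F_p^n)^{\mathrm{pr},0}_{k+1}$ carry over to the truncated tuple. It writes down the inverse without checking well-definedness on double cosets, or that $(\underline{a}',0)$ is again orientation-preserving. Filling this in along the paper's lines would require two things:
\begin{itemize}
\item matching Case~2 of \autoref{def1} in dimension $k+1$ with Case~1 in dimension $k$ (both are vacuous when $k$ is even);
\item invoking \autoref{cor:st,pr}, including the harder direction \autoref{D2,dif,pv}, to land back in $\MD(\F_p^n)^{\mathrm{pr}}_{k+1}$.
\end{itemize}

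Your Steps~1--2 supply the double-coset bookkeeping from properties (b)--(d) of \autoref{BD/Pn}. Your final step via \autoref{rev,a} is the key difference: any witness $(\lambda,X)$ with $X$ odd must fix the unique zero index, so restriction and extension preserve the sign of $X$. This proves that orientation-preservation is equivalent on both sides at once, with no parity bookkeeping.

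To identify this with membership in the $\mathrm{pr},0$ and $\mathrm{pr},1$ sets, you only need the easy implications \autoref{D2,eq,rv} and \autoref{D2,dif,rv}. These show that the extra clauses in \autoref{pr,0} hold automatically for orientation-preserving $B$. So \autoref{D2,dif,pv} is not actually needed, and your route is the more economical one; it is worth saying this explicitly rather than citing \autoref{D2,dif,pv}.

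There are two slips, neither of which affects your final argument.
\begin{itemize}
\item From $k\le n-1$ you cannot infer $k+1\le n-1$. At $k=n-1$ the set $\MD(\F_p^n)_{n}$ is not defined, since it would require $n+1$ independent vectors in $\F_p^n$, while the right-hand side need not be empty. The lemma should be read for $2\le k\le n-2$, and that is all the proof of \autoref{hom:B} uses.
\item In your discarded parity discussion, the claim that Case~1 cannot apply to $(\underline{a}',0)$ because $k+1$ is odd is backwards. Case~1 is exactly the case that applies when the dimension index $k+1$ is odd. It fails because it requires every entry of $(\underline{a}',0)\cdot Y$ to be nonzero.
\end{itemize}
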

\begin{proof}
    Let $\D_2^{k+1,n}(a_0,\dots,a_{k+1})\cdot T_{k+1}\in \Pb_n^\pm(\F_p)\backslash \MD(\F_p^n)_{k+1}^{\mathrm{pr},0}/T_{k+1}$ with $a_{i_0}=0$. Define \[\Phi\colon \Pb_n^\pm(\F_p)\backslash \MD(\F_p^n)_{k+1}^{\mathrm{pr},0}/T_{k+1}\rightarrow \Pb_n^\pm(\F_p)\backslash \MD(\F_p^n)_{k}^{\mathrm{pr},1}/T_{k}\] by \[\Phi\left(\D_2^{k+1,n}(a_0,\dots,a_{k+1})\cdot T_{k+1}\right)=\D_2^{k,n}(a_0,\dots,\widehat{a_{i_0}},\dots,a_{k+1})\cdot T_{k}.\]Since $a_i\neq\pm a_j$ for all $i\neq j$, then none of the entries of $(a_0,\dots,\widehat{a_{i_0}},\dots,a_{k+1})$ is zero. Moreover, the tuple $(a_0,\dots,\widehat{a_{i_0}},\dots,a_{k+1})$ satisfies $a_i\neq \pm a_j$ for all $i\neq j$, and does not satisfy the \hyperref[def1]{$(\lambda,m)$-condition} since $\D_2^{k+1,n}(a_0,\dots,a_{k+1})\cdot T_{k+1}\in \Pb_n^\pm(\F_p)\MD(\F_p^n)_{k+1}^{\mathrm{pr},0}/T_{k+1}$.
    
Now let $\D_2^{k,n}(a_0,\dots,a_{k})\cdot T_k\in \Pb_n^\pm(\F_p)\MD(\F_p^n)_{k}^{\mathrm{pr},}/T_{k}$. Define the inverse map $\Psi$ by \[\Psi\left(\D_2^{k,n}(a_0,\dots,a_{k})\cdot T_k\right)=\D_2^{k,n}(a_0,\dots,a_{k},0)\cdot T_k\cdot T_{k+1}.\qedhere\]
\end{proof}

\paragraph{Additive simplices} 
Recall from \autoref{matrixrep} that for all primes $p$, $n\geq 2$ and $2\leq k\leq n$,\[ \left(\SBDA(\F_p^n)_k - \SBD(\F_p^n)_k\right)/\Sigma_{k+1}\cong \MDA(\F_p^n)_k/G_k.\] Thus, we may identify the coset $\sigma \Sigma_{k+1}$ with the coset $B\cdot G_k $ for some $B\in \MDA(\F_p^n)_k$.
We start with the following lemma.

\begin{lemma} \label{DA1}
     Let $p$ be a prime, $n\geq 3$ and $2\leq k < n-1$. Let $\sigma$ be an additive $k$-simplex in $\SBDA(\F_p^n)_k$ and let $B\in \DA_1^{k,n}$ such that $\sigma \Sigma_{k+1}=B\cdot G_k$. Then the action of $\Pb_n^\pm(\F_p)$ on $\sigma$ is orientation-reversing.\end{lemma}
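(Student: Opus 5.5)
The plan is to mirror the proof of \autoref{S1}, using the orbit description of \autoref{BDA/Pn} (or \autoref{BDA,p=3} when $p=3$) in place of \autoref{BD/Pn}. First, by \autoref{lem:goodmat}, the action of $\Pb_n^\pm(\F_p)$ on the additive simplex $\sigma$ is orientation-reversing if and only if the action on $B$ is. That is, it suffices to find $A\in\Pb_n^\pm(\F_p)$ and $X\in G_k$ with $\sign(X)=-1$ such that $A\cdot B=B\cdot X$.

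Next I choose a concrete odd element of $G_k$. The simplest choice is $X=((1\;2),1,\Id,1,\dots,1)$, which swaps $v_1$ and $v_2$ and fixes the rest; its sign is $\sign((1\;2))=-1$. By property (c) of \autoref{BDA/Pn}, which is stated for all $X\in G_k$, we have $\DA_1^{k,n}\cdot X=\DA_1^{k,n}$, so $B\cdot X\in\DA_1^{k,n}$. The same identity holds for $p=3$ by property (c) of \autoref{BDA,p=3}. The point is that the condition $e_1\notin\operatorname{span}\{v_0,\dots,v_k\}$ is insensitive to permuting and negating columns.

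Finally, $\Pb_n^\pm(\F_p)$ acts transitively on $\DA_1^{k,n}$. This is Case 1 of the proof of property (d) of \autoref{BDA/Pn}, where it is remarked that the argument does not use $p\neq 3$. Hence there is $A\in\Pb_n^\pm(\F_p)$ with $A\cdot B=B\cdot X$, which finishes the argument.

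The main obstacle is checking that transitivity really applies, in particular that the determinant condition is satisfiable. Since $k<n-1$, the vectors $v_1,\dots,v_k$ together with $e_1$ span a space of dimension $k+1<n$. So both partial bases $\{v_1,\dots,v_k\}$ and $\{u_1,\dots,u_k\}$ can be extended by $e_1$ and further vectors to bases of determinant $1$, leaving a free vector to rescale the determinant. The map sending one basis to the other then has determinant $1$, fixes $e_1$ up to a scalar, and sends $v_0=-v_1-v_2$ to $u_0$, so it lies in $\Pb_n^\pm(\F_p)$. This is exactly where the hypothesis $k<n-1$ enters: it guarantees that $\DA_1^{k,n}$ is nonempty and that the extension is possible.
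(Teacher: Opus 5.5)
Your proposal is correct and is essentially the paper's argument. You reduce to $B$ via \autoref{lem:goodmat}, use $G_k$-invariance of $\DA_1^{k,n}$ to get $B\cdot X\in\DA_1^{k,n}$ for an odd $X$, and invoke transitivity of $\Pb_n^\pm(\F_p)$ on $\DA_1^{k,n}$ to produce $A$ with $A\cdot B=B\cdot X$; the paper leaves the transitivity step implicit and cites property (b) where (c) is meant. One small overstatement: the hypothesis $k<n-1$ is not what makes the extension possible, since, as in Case 1 of property (d), rescaling $e_1$ itself fixes the determinant, so the same argument also works for $k=n-1$.
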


\begin{proof} We recall that \[\DA_1^{k,n} =  \left\{ \left(v_0|\dots|v_k\right)\in \MDA(\F_p^n)_k ~\middle\vert e_1\not\in \operatorname{span}\{v_0,\dots,v_k\}\right\}.\]Let $ \sigma \Sigma_{k+1}=B\cdot G_k$ with $B\in \DA_1^{k,n}$. We want to show that the action of $\Pb_n^\pm(\F_p)$ on $\sigma$ is orientation-reversing.
We have by \autoref{lem:goodmat} that \[\left(\SBDA(\F_p^n)_k ^\mathrm{rv} -\SBD(\F_p^n)_k^\mathrm{rv}\right)/\Sigma_{k+1}\cong \MDA(\F_p^n)_k^\mathrm{rv}/G_k.\] 
It is enough to show that the action of $\Pb_n^\pm(\F_p)$ on $B$ is orientation-reversing. We have by property (b) of \autoref{BDA/Pn}, that \[B\cdot X\in \DA_1^{k,n}\quad\text{for all $X\in G_k$}.\]In particular, \[B\cdot X\in \DA_1^{k,n}\quad\text{for $X\in G_k$ with $\sign(X)=-1$}.\]Thus, there exist some $A\in \Pb_n^\pm(\F_p)$ and $X\in G_k$ such that $\sign(X)=-1$, and \[A\cdot B=B\cdot X.\]
Therefore, the action of $\Pb_n^\pm(\F_p)$ on $\sigma$ is orientation-reversing.\end{proof}
Let $\sigma$ be a additive $k$-simplex such that
$\sigma\Sigma_{k+1}=B\cdot G_k$ for some $B\in \DA_2^{k,n}(\underline{a})$.
We give a general description of such simplices $\sigma$ on which
the action of $\Pb_n^\pm(\F_p)$ is orientation-reversing.
\begin{lemma}\label{rev',a}
    Let $p\neq 3$ be a prime, $n\geq 2$ and $2\leq k\leq n-1$. Let $\sigma$ be an additive $k$-simplex in $\SBDA(\F_p^n)_k$ and let $B\in \DA_2^{k,n}(\underline{a})$ for some $\underline{a}=(a_0,\dots,a_k)\in A_k$ such that $\sigma \Sigma_{k+1}=B\cdot G_k$. Then the action of $\Pb_n^\pm(\F_p)$ on $\sigma$ is orientation-reversing if and only if there exist some $\lambda\in \F_p^\times$ and $X\in G_k$ with $\sign(X)=-1$, such that \[\lambda\underline{a}=\underline{a}\cdot X.\]
\end{lemma}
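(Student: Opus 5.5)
The proof will follow the proof of \autoref{rev,a} almost verbatim, replacing \autoref{BD/Pn} by \autoref{BDA/Pn} (which is where $p\neq 3$ is needed) and $T_k$ by $G_k$. The aim is a chain of equivalences reducing orientation-reversal of $\sigma$ to a condition on the tuple $\underline{a}$ alone.

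\emph{Step 1: pass from $\sigma$ to $B$.} By \autoref{lem:goodmat}, specifically the isomorphism
\[\left(\SBDA(\F_p^n)_k^\mathrm{rv}-\SBD(\F_p^n)_k^\mathrm{rv}\right)/\Sigma_{k+1}\cong \MDA(\F_p^n)_k^\mathrm{rv}/G_k,\]
the action of $\Pb_n^\pm(\F_p)$ on the additive simplex $\sigma$ is orientation-reversing if and only if $B\in\MDA(\F_p^n)_k^\mathrm{rv}$. By \autoref{def:action on B}, this means there exist $A\in\Pb_n^\pm(\F_p)$ and $X\in G_k$ with $\sign(X)=-1$ such that $A\cdot B=B\cdot X$.

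\emph{Step 2: pass from $B$ to orbits.} By property (d) of \autoref{BDA/Pn}, $\DA_2^{k,n}(\underline{a})$ is exactly the $\Pb_n^\pm(\F_p)$-orbit of $B$. So the existence of $A$ with $A\cdot B=B\cdot X$ is equivalent to $B\cdot X\in\DA_2^{k,n}(\underline{a})$. By property (c), $B\cdot X\in\DA_2^{k,n}(\underline{a})\cdot X=\DA_2^{k,n}(\underline{a}\cdot X)$. Since distinct orbits are disjoint, this is in turn equivalent to $\DA_2^{k,n}(\underline{a}\cdot X)=\DA_2^{k,n}(\underline{a})$.

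One point needs checking here: $\underline{a}\cdot X$ must lie in $A_k$, so that property (b) applies. This holds because $G_k$ permutes and rescales the first three entries uniformly by $\varepsilon$, which preserves both the condition $a_0+a_1+a_2=0$ and the condition $\underline{a}\neq\underline{0}$.

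\emph{Step 3: pass from orbits to tuples.} Property (b) of \autoref{BDA/Pn} gives $\DA_2^{k,n}(\underline{a}\cdot X)=\DA_2^{k,n}(\underline{a})$ if and only if $\underline{a}\cdot X=\lambda\underline{a}$ for some $\lambda\in\F_p^\times$. Combining Steps 1–3 yields the claimed equivalence.

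The main obstacle is only bookkeeping. One must confirm that the orbit description in \autoref{BDA/Pn}(d) covers $k\le n-1$, so that both $\DA_1^{k,n}$ and the sets $\DA_2^{k,n}$ occur, and that the disjointness of orbits legitimately converts "$B\cdot X$ lies in the orbit of $B$" into equality of the corresponding sets. Beyond that, no new computation is needed.
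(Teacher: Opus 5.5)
Your proposal is correct and follows the paper's proof essentially step for step. The paper likewise passes from $\sigma$ to $B$ via \autoref{lem:goodmat}, turns the existence of $A$ with $A\cdot B=B\cdot X$ into $\DA_2^{k,n}(\underline{a}\cdot X)=\DA_2^{k,n}(\underline{a})$ using properties (c) and (d) of \autoref{BDA/Pn}, and finishes with property (b); your explicit check that $\underline{a}\cdot X$ stays in $A_k$ (so that (b) and (d) apply to it) is a detail the paper leaves implicit.
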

\begin{proof}
 Recall that \[A_k=\left\{\underline{a}=(a_0,\dots,a_k)\in\F_p^{k+1}\setminus\{\underline{0}\} ~\middle\vert a_0+a_1+a_2=0\right\}\] and \[\DA_2^{k,n}(\underline{a})=\left\{B=\left(v_0|\dots|v_k\right)\in \MDA(\F_p^n)_k ~\middle\vert \lambda e_1=a_0v_0+\dots+a_kv_k~\text{for some $\lambda\in\F_p^\times$}\right\}.\] 
Let $\sigma \Sigma_{k+1}=B\cdot G_k$ with $B\in \DA_2^{k,n}(\underline{a})$ for some $\underline{a}=(a_0,\dots,a_k)\in A_k$. Since  \[\left(\SBDA(\F_p^n)_k ^\mathrm{rv} -\SBD(\F_p^n)_k^\mathrm{rv}\right)/\Sigma_{k+1}\cong \MDA(\F_p^n)_k^\mathrm{rv}/G_k\] by \autoref{lem:goodmat}, we have the following equivalences:
   \begin{align*}\text{The action of $\Pb_n^\pm(\F_p)$ on $\sigma$} &~\text{is orientation-reversing}\\&\Longleftrightarrow \text{there exists $X\in G_k$ with $\sign(X)=-1$ and $B\cdot X=A\cdot B$ for some $A\in \Pb_n^\pm(\F_p)$}\\&\Longleftrightarrow \text{there exists $X\in G_k$ with $\sign(X)=-1$ and $B\cdot X\in \DA_2^{k,n}(\underline{a})$}\\ &\Longleftrightarrow \text{there exists $X\in G_k$ with $\sign(X)=-1$ and $\DA_2^{k,n}(\underline{a}\cdot X)=\DA_2^{k,n}(\underline{a})$},\end{align*}where the last equivalence follows from \eqref{DA2,rel2}. We have by \autoref{BDA/Pn} that \[\DA_2^{k,n}(\underline{a})=\DA_2^{k,n}(\underline{b})\quad\Longleftrightarrow\quad \underline{a}=\lambda\underline{b}~\text{for some $\lambda\in \F_p^\times$}.\]
   It implies that the action of $\Pb_n^\pm(\F_p)$ on $\sigma$ is orientation-reversing if and only if there exist $\lambda\in \F_p^
    \times$ and $X\in G_k$ such that $\sign(X)=-1$ and \[\lambda \underline{a}=\underline{a}\cdot X.\qedhere\]
\end{proof}

\begin{lemma}\label{DA2,eq,rv}
 Let $p\neq 3$ be a prime, $n\geq 2$ and $2\leq k\leq n-1$. Let $\sigma$ be an additive $k$-simplex in $\SBDA(\F_p^n)_k$ and let $B\in \DA_2^{k,n}(\underline{a})$ for some $\underline{a}=(a_0,\dots,a_k)\in A_k$ such that $\sigma \Sigma_{k+1}=B\cdot G_k$. Assume $a_i=\pm a_j$ for some $i, j\leq 2$ or $i,j\geq 3$. Then the action of $\Pb_n^\pm(\F_p)$ on $\sigma$ is orientation-reversing. 
\end{lemma}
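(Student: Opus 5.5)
By \autoref{rev',a}, it suffices to produce $\lambda\in\F_p^\times$ and $X\in G_k$ with $\sign(X)=-1$ and $\lambda\underline{a}=\underline{a}\cdot X$. I will take $\lambda=1$ and let $X$ be a single transposition, adjusted by signs, mirroring the proof of \autoref{D2,eq,rv}. The main care is that in $G_k$ the first three slots share one common sign $\varepsilon$, so I split into two cases according to whether the coincident pair lies among $\{0,1,2\}$ or among $\{3,\dots,k\}$.

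\textbf{Case $i,j\geq 3$.} Suppose $a_i=\varepsilon' a_j$ with $\varepsilon'\in\{\pm1\}$. Take
\[X=(\Id,1,\tau,\varepsilon_3,\dots,\varepsilon_k),\qquad \tau=(i\; j)\in\Sigma_{\{3,\dots,k\}},\]
with $\varepsilon_i=\varepsilon_j=\varepsilon'$ and all other $\varepsilon_\ell=1$. Then $\sign(X)=\sign(\tau)=-1$. The first three entries of $\underline{a}\cdot X$ are unchanged, and positions $i,j$ become $\varepsilon' a_j=a_i$ and $\varepsilon' a_i=a_j$. Hence $\underline{a}\cdot X=\underline{a}$.

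\textbf{Case $i,j\leq 2$.} Let $\ell$ be the third index in $\{0,1,2\}$.

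If $a_i=a_j$, take $X=((i\; j),1,\Id,1,\dots,1)$. This has sign $-1$ and fixes $\underline{a}$.

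If $a_i=-a_j$, then $a_0+a_1+a_2=0$ forces $a_\ell=0$. Take
\[X=((i\; j),-1,\Id,1,\dots,1).\]
This gives $\underline{a}\cdot X=(\text{entries }-a_j,\,-a_i,\,-a_\ell=0 \text{ in slots } i,j,\ell;\ a_3,\dots,a_k)$, whose first three entries agree with those of $\underline{a}$. The tail $a_3,\dots,a_k$ is untouched, so again $\underline{a}\cdot X=\underline{a}$ with $\sign(X)=-1$.

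The only delicate point is this last subcase. We cannot put independent signs on slots $i$ and $j$, so the argument relies on the relation $a_0+a_1+a_2=0$ forcing $a_\ell=0$, which makes the global sign $\varepsilon=-1$ harmless. In every case \autoref{rev',a} then gives that the action on $\sigma$ is orientation-reversing.
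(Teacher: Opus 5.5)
Your proof is correct and is essentially the paper's argument. You take $\lambda=1$ and a single signed transposition, either in $\Sigma_{\{0,1,2\}}$ or in $\Sigma_{\{3,\dots,k\}}$, and in the subcase $a_i=-a_j$ with $i,j\leq 2$ you use the common sign $\varepsilon=-1$, which is harmless because $a_0+a_1+a_2=0$ forces the third entry to vanish; you then conclude via \autoref{rev',a}, exactly as the paper does.
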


\begin{proof}
By \autoref{rev',a}, it suffices to find 
$\lambda\in\F_p^\times$ and 
$X\in G_k$ with $\sign(X)=-1$ such that
\[
  \lambda\underline{a}
  =\underline a\cdot X.
\]
Set $\lambda=1$. Choose indices $i_0<i_1$ 
either in $\{0,1,2\}$ or in $\{3,\dots,k\}$ so that
$a_{i_0}=\varepsilon'a_{i_1}$ with $\varepsilon'\in\{-1,1\}$.  
Define
\[
  X=(\pi,\varepsilon,\tau,\varepsilon_3,\dots,\varepsilon_k)
  \in G_k
\]
by
\[
  \pi =
  \begin{cases}
    (i_0\; i_1)\in \Sigma_{\{0,1,2\}} &\text{if $i_0,i_1\leq2$},\\
    \Id                  & \text{otherwise},
  \end{cases}\]
 \[
  \tau =
  \begin{cases}
    \Id                  &\text{$i_0,i_1\leq2$},\\
    (i_0\; i_1)\in\Sigma_{\{3,\dots,k\}} & \text{otherwise},
  \end{cases}
\]
 
\[\varepsilon =
 \begin{cases}
         -1&\text{if $i_0,i_1\leq 2$ and $a_{i_0}=-a_{i_1}$}\\
         1&\text{otherwise}.
     \end{cases}\] 
     and 
\[
  \varepsilon_i =
  \begin{cases}
    \varepsilon' &\text{if $i=i_0,i_1$ and $i_0,i_1\geq 3$},\\
    1            & \text{otherwise}.
  \end{cases}
\]
 
We have $\sign(X)=\sign(\pi)\sign(\tau)=-1$.

\medskip
\begin{mycases}
    \case  $i_0,i_1\in\{0,1,2\}$. Then $\pi=(i_0\; i_1)$ swaps two of the first three entries, and all
$\varepsilon_i=1$. Without loss of generality, we assume $i_0=1$ and $i_1=2$.
 \subcase $a_1=a_2$. Then $\varepsilon=1$. We obtain \begin{align*}\underline{a}\cdot X&=(a_0,a_2,a_1,a_3,\dots,a_k)\\&=(a_0,a_1,a_2,a_3,\dots,a_k)\\&=\lambda\underline{a}.\end{align*}
\subcase $a_1=-a_2$. Then $a_0=0$ and $\varepsilon=-1$. We compute\begin{align*}\underline{a}\cdot X&=(0,-a_2,-a_1,a_2,\dots,a_k)\\&=(0,a_1,a_2,a_3,\dots,a_k)\\&=\lambda\underline{a}.\end{align*}
\case $i_0,i_1\in\{3,\dots,k\}$. Then $\tau=(i_0\; i_1)$ and $\pi=\Id$, with
$\varepsilon_{i_0}=\varepsilon_{i_1}=\varepsilon'$ and $\varepsilon_i=1$ otherwise. Thus
\[
  \underline a\cdot g
  =(\dots,\varepsilon' a_{i_1},\dots,\varepsilon' a_{i_0},\dots)
  =(\dots,a_{i_0},\dots,a_{i_1},\dots)
  =\lambda \underline a.
\]
\end{mycases}
We conclude that the action on $\sigma$ is
orientation-reversing.
\end{proof}

\begin{lemma}\label{rev,eq}Let $p\neq 3$ be a prime, $n\geq 2$ and $2\leq k\leq n-1$. Let $\sigma$ be an additive $k$-simplex in $\SBDA(\F_p^n)_k$ and let $B\in \DA_2^{k,n}(\underline{a})$ for some $\underline{a}=(a_0,\dots,a_k)\in A_k$ such that $\sigma \Sigma_{k+1}=B\cdot G_k$. If the action of $\Pb_n^\pm(\F_p)$ on $\sigma$ is orientation-reversing, then $a_i=\pm a_j$ for some $ i, j\leq 2$ or $i, j\geq 3$. 
\end{lemma}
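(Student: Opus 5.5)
We prove the contrapositive. Assume that $a_i\neq\pm a_j$ for all $i\neq j$ with $i,j\le 2$, and likewise for all $i\neq j$ with $i,j\ge 3$, and suppose the action on $\sigma$ is orientation-reversing. By \autoref{rev',a} there are $\lambda\in\F_p^\times$ and $X=(\pi,\varepsilon,\tau,\varepsilon_3,\dots,\varepsilon_k)\in G_k$ with $\sign(\pi)\sign(\tau)=-1$ and $\lambda\underline a=\underline a\cdot X$. Componentwise this reads
\[\lambda a_i=\varepsilon a_{\pi(i)}\quad (i\le 2),\qquad \lambda a_i=\varepsilon_i a_{\tau(i)}\quad (i\ge 3).\]
We split according to which of $\pi,\tau$ is odd and derive a contradiction in each case.

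\emph{Case $\sign(\pi)=-1$.} Then $\pi$ is a transposition $(j\;\ell)$ fixing some $i\in\{0,1,2\}$, so $\lambda a_i=\varepsilon a_i$, $\lambda a_j=\varepsilon a_\ell$ and $\lambda a_\ell=\varepsilon a_j$.
\begin{itemize}
\item If $a_i\neq 0$, then $\lambda=\varepsilon$, hence $a_j=a_\ell$. This contradicts the hypothesis.
\item If $a_i=0$, then $a_0+a_1+a_2=0$ gives $a_j=-a_\ell$, again a contradiction.
\end{itemize}

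\emph{Case $\sign(\tau)=-1$.} Then $\tau$ has a cycle of even length $m\ge 2$ on $\{3,\dots,k\}$. We argue as in the Claim inside the proof of \autoref{D2,dif,pv}.
\begin{itemize}
\item If one entry indexed by this cycle were $0$, all entries of the cycle would be $0$, so two of them would agree. Hence all these entries are nonzero.
\item Iterating $m/2$ times gives $\lambda^{m/2}a_{i_0}=\pm a_{\tau^{m/2}(i_0)}$. Squaring yields $\lambda^m a_{i_0}^2=a_{\tau^{m/2}(i_0)}^2$.
\item Since also $\lambda^m=\pm1$ and the two entries are not equal up to sign, we must have $\lambda^m=-1$ and $p\neq 2$. (In characteristic $2$, squaring is injective, so $\lambda^m=1$ would force $a_{i_0}=a_{\tau^{m/2}(i_0)}$.) In particular $\lambda\neq\pm1$.
\end{itemize}
Now $\pi$ is even, so it is either the identity or a $3$-cycle.
\begin{itemize}
\item If $\pi=\Id$, then $\lambda a_i=\varepsilon a_i$ with $\lambda\neq\varepsilon$ forces $a_0=a_1=a_2=0$, contradicting $a_0\neq\pm a_1$.
\item If $\pi$ is a $3$-cycle, the block $(a_0,a_1,a_2)$ is not identically zero by the same argument, so all three entries are nonzero. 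After relabeling, $a_1=\varepsilon\lambda a_0$ and $a_2=\lambda^2a_0$. Going around the cycle gives $\lambda^3=\varepsilon$, and $a_0+a_1+a_2=0$ becomes $1+\varepsilon\lambda+\lambda^2=0$.
\item Hence $\lambda$ is a primitive cube root of unity if $\varepsilon=1$, or a primitive sixth root of unity if $\varepsilon=-1$. Because $m$ is even, $\lambda^m$ is then a power of $\lambda^2$, so it is a cube root of unity.
\item A cube root of unity cannot equal $-1$ when $p$ is odd. This contradicts $\lambda^m=-1$.
\end{itemize}

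The main obstacle is this last subcase, where $\pi$ is a $3$-cycle. There one has to combine the relation $a_0+a_1+a_2=0$ with the even-cycle constraint $\lambda^m=-1$ coming from $\tau$. The remaining cases are direct adaptations of \autoref{D2,dif,pv}.
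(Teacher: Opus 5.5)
Your proof is correct, but it is organized differently from the paper's.

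The paper splits according to the value of $\lambda$:
\begin{itemize}
\item For $\lambda=\pm1$, an odd $\pi$ or an odd $\tau$ immediately gives $a_i=\pm a_j$.
\item For $\lambda\neq\pm1$ and $\lambda^3\neq\pm1$, an odd $\pi$ has a fixed point, which forces a zero entry, and an even $\pi$ forces $a_0=a_1=a_2=0$.
\item For $\lambda\neq\pm1$ and $\lambda^3=\pm1$ with $\tau$ odd, it takes an even cycle of $\tau$ of length $m$, observes $m>3$, and reads off $a_{i_0}=\pm a_{\tau^3(i_0)}$ at distance $3$ along the cycle.
\end{itemize}

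You instead split by which of $\pi,\tau$ is odd. In the $\tau$-odd case you reuse the half-cycle squaring trick from the Claim in \autoref{D2,dif,pv} to obtain $\lambda^m=-1$. This gives $p$ odd and $\lambda\neq\pm1$ in one step, and the even $\pi$ is then ruled out directly. The benefit is a uniform treatment of the $\tau$-odd case, with no need to argue that $3\mid m$ or $m>3$; the cost is the squaring argument.

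One small remark on your closing commentary. In the $3$-cycle subcase you only need $\lambda^3=\varepsilon$. That gives $\lambda^6=1$, so $\lambda^m=(\lambda^2)^{m/2}$ is a cube root of unity and cannot equal $-1$ for odd $p$. The quadratic $1+\varepsilon\lambda+\lambda^2=0$ and the identification of $\lambda$ as a primitive root of unity are never used. So the relation $a_0+a_1+a_2=0$ enters only in your $\pi$-odd case, not in the subcase you called the main obstacle.
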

\begin{proof}
Let $\sigma \Sigma_{k+1}=B\cdot G_k$ with $B\in \DA_2^{k,n}(\underline{a})$.
   Assume that the action of $\Pb_n^\pm(\F_p)$ on $\sigma$ is orientation-reversing. Then by \autoref{rev',a}, there exist $\lambda\in\F_p^\times$ and $X=(\pi,\varepsilon,
    \tau,
    \varepsilon_3,\dots,\varepsilon_k)\in G_k$ such that $\sign(X)=-1$ and\[\lambda \underline{a}=\underline{a}\cdot X.\]This implies that \begin{equation}\label{0eq} \lambda a_i=\varepsilon a_{\pi(i)}\quad\text{for all $0\leq i\leq 2$},
    \end{equation} \begin{equation}\label{3eq}\lambda a_i=\varepsilon_i a_{\tau(i)}\quad\text{for all $3\leq i\leq k$}.
    \end{equation}Since $\sign(X)=\sign(\pi)\sign(\tau)=-1$, we split the proof into three cases:\begin{mycases}
    \case $\lambda=\pm 1$.
    \subcase If $\sign(\pi)=-1$, then $\pi=(i\,j)$ is a transposition in $\{0,1,2\}$. \eqref{0eq} then implies that \[\lambda a_i=\varepsilon a_j,\quad\text{and}\quad \lambda a_j=\varepsilon a_i.\] 
    Since $\lambda=\pm 1$, we conclude that $a_i=\pm a_j$.
    \subcase If $\sign(\tau)=-1$, then $\tau(i)\neq i$ for some $3\leq i\leq n$. It then implies by that \eqref{3eq} \[a_i=\pm a_j\quad\text{for some $i\neq j$}.\]
    \case $\lambda\neq \pm 1$ and $\lambda^3\neq \pm 1$. 
    \subcase If $\sign(\pi)=-1$, then $\pi$ has a fixed point in $\{0,1,2\}$. So $\pi(i_0)=i_0$ for some $0\leq i_0\leq 2$. Since $\lambda\neq \pm 1$, it implies by \eqref{0eq} that $a_{i_0}=0$. Using the identity \[a_0+a_1+a_2=0,\]we conclude that $a_i=-a_j$ for some $0\leq i,j\leq 2$.
    \subcase If $\sign(\pi)=1$ and $\pi=\Id$, then since $\lambda\neq \pm 1$, \eqref{0eq} implies that $a_0=a_1=a_2=0$. 
    \subcase If $\sign(\pi)=1$ and $\pi=(i\; j\; \ell)$ is a rotation in $\{0,1,2\}$, then \begin{align*} \lambda a_i&=\varepsilon a_j\\ \lambda a_j&=\varepsilon a_{\ell}\\ \lambda a_{\ell}&=\varepsilon a_i. 
    \end{align*}Observe that since $\lambda^3\neq \pm 1$, it follows that $a_i=a_j=a_{\ell}=0$.
    \case $\lambda\neq\pm 1$ and $\lambda^3=\pm 1$. 
    \subcase If $\sign(\pi)=-1$, then by the same argument from Case $2$, we obtain $a_i=-a_j$ for some $0\leq i,j \leq 2$.
    \subcase If $\sign(\tau) = -1$, pick any cycle of even length
    $m$ under $\tau$ and let $i_0$ be an index in that cycle. We use the following claim on $m$.
     \begin{claim}
        If $\tau$ has a cycle of length $\ell>1$, then $\lambda^{\ell}=\pm 1$. 
    \end{claim}
    \begin{proof}
         Let $i_0$ be an entry in a cycle of length $\ell>1$. Then $\pi^{\ell}(i_0)=i_0$. 
    By \eqref{3eq}, we have \begin{align*}
     \lambda a_{i_0}&=\varepsilon_{i}a_{\tau(i_0)},\\
     \lambda a_{\pi(i_0)}&=\varepsilon_{\pi(i_0)}a_{\tau^2(i_0)},\\ &\ \vdots\\\lambda a_{\tau^{\ell-2}(i_0)}&=\varepsilon_{\tau^{\ell-2}(i_0)}a_{\tau^{\ell-1}(i_0)},\\\lambda a_{\tau^{\ell-1}(i_0)}&=\varepsilon_{\tau^{\ell-1}(i_0)}a_{\tau^{\ell}(i_0)}=\varepsilon_{\tau^{\ell-1}(i_0)}a_{_0}.
     \end{align*}
        As $\ell>1$ and $a_i\neq \pm a_j$ for all $i\neq j$, it follows that $a_{i_0},\dots,a_{\pi^{\ell-1}(i_0)}$ are all nonzero. It then implies that \begin{equation}\label{eq:aiii}\lambda^sa_{i_0}=\varepsilon_{i_0}\varepsilon_{\pi(i_0)}\varepsilon_{\pi^2(i_0)}\dots\varepsilon_{\pi^{s-1}(i_0)}a_{\pi^{s}(i_0)}~\text{for all $0\leq s\leq \ell$}.\end{equation}
        Since $\pi^{\ell}(i_0)=i_0$, it  follows by \eqref{eq:aiii} that\[\lambda^{\ell}=\varepsilon_{i_0}\varepsilon_{\pi(i_0)}\dots\varepsilon_{\pi^{\ell-1}(i_0)}=\pm 1.\qedhere\]
    \end{proof}
    It follows by the claim that $\lambda^{m}=\pm 1$. Since $\lambda^3=\pm 1$ and $\lambda\neq \pm 1$, then $m>3$. Now using \eqref{eq:aiii} on $\ell=m$ and $s=3$, we obtain 
    \[a_{i_0}=\pm a_{\tau^3(i_0)}.\] As $m$ is even, we have that $\tau^3(i_0) \neq i_0$. Thus, $a_i=\pm a_j$ for some $i\neq j$.
\end{mycases}
\end{proof}
\begin{comment}\begin{corollary}\label{cor:p=2,5}
    Let $p=2,5$, $n\geq 2$ and $2\leq k\leq n$. Let $\sigma$ be an additive $k$-simplex in $ \SBDA(\F_p^n)$. Then the action of $\Pb_n^\pm(\F_p)$ on $\sigma$ is orientation-reversing.
\end{corollary}\end{comment}
The preceding lemmas can be summarized in the following corollary.
\begin{corollary}\label{cor:add,pr}
Let $p\neq 3$ be a prime, $n\geq 2$ and $2\leq k\leq n$. Let $\sigma$ be an additive $k$-simplex in $\SBDA(\F_p^n)_k$ and let $B\in\MDA(\F_p^n)_k$ such that $\sigma\Sigma_{k+1}=B\cdot G_k$. Then $\sigma$ belongs to $\left(\SBDA(\F_p^n)_k ^\mathrm{pr}-  \SBD(\F_p^n)_k^\mathrm{pr}\right)$ if and only if $B$ belongs to some set \[\DA_2^{k,n}(\underline{a})\quad\text{with $\underline{a}=(a_0,\dots,a_k)\in A_k$},\] where the entries satisfy $a_i\neq \pm a_j$ for all $i,j\leq 2$ and $i,j\geq 3$.
    
\end{corollary}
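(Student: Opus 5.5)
The corollary packages the preceding lemmas, so the plan is to sort every additive $k$-simplex into the orbit types of \autoref{BDA/Pn} and read off orientation behaviour from the lemmas. By \autoref{lem:goodmat}, $\sigma$ is in $\SBDA(\F_p^n)_k^\mathrm{pr}-\SBD(\F_p^n)_k^\mathrm{pr}$ exactly when $B\in\MDA(\F_p^n)_k^\mathrm{pr}$. Since $p\neq 3$, property (d) of \autoref{BDA/Pn} says $B$ lies either in $\DA_1^{k,n}$ (possible only when $k<n$) or in a unique orbit $\DA_2^{k,n}(\underline{a})$ with $\underline{a}\in A_k$. By properties (b) and (c), the conditions "$a_i\neq\pm a_j$ for $i\neq j$ both in $\{0,1,2\}$ or both in $\{3,\dots,k\}$" are invariant under $\underline{a}\mapsto\lambda\underline{a}\cdot X$. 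So the condition in the statement is well defined on the orbit $B\cdot G_k$.

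The first step disposes of $\DA_1^{k,n}$. For $k<n-1$, \autoref{DA1} shows the action on $\sigma$ is orientation-reversing. If $k=n-1$ and $e_1\notin\operatorname{span}\{v_0,\dots,v_k\}$, the same argument should work: by property (c) of \autoref{BDA/Pn}, $B\cdot X\in\DA_1^{k,n}$ for any $X$ with $\sign(X)=-1$, and transitivity from property (d) then supplies $A$ with $A\cdot B=B\cdot X$. (Transitivity there only required completing $v_1,\dots,v_k$ by $e_1$ to a determinant $\pm1$ basis.) Hence no $B$ in $\DA_1^{k,n}$ is orientation-preserving. When $k=n$, $\DA_1^{k,n}$ is empty.

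The second step treats $B\in\DA_2^{k,n}(\underline{a})$ with $k\leq n-1$. If some $a_i=\pm a_j$ with $i\neq j$ both $\leq2$ or both $\geq3$, \autoref{DA2,eq,rv} gives orientation-reversing. Conversely, if the action is reversing, \autoref{rev,eq} produces such a pair. Together these prove the equivalence in this range.

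The remaining step is $k=n$, which I expect to be the main obstacle: \autoref{rev',a}, \autoref{DA2,eq,rv} and \autoref{rev,eq} are stated only for $k\leq n-1$. I would re-run the proof of \autoref{rev',a} verbatim. It uses only property (b) of \autoref{BDA/Pn}, the relation \eqref{DA2,rel2}, and transitivity on $\DA_2^{n,n}(\underline{a})$, and all three hold for $k=n$ because the determinant $\pm1$ condition is preserved by $\Pb_n^\pm(\F_p)$ and by $G_n$. Thus reversing is again equivalent to $\lambda\underline{a}=\underline{a}\cdot X$ for some $\lambda\in\F_p^\times$ and some $X\in G_n$ with $\sign(X)=-1$. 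The two lemmas are then purely combinatorial statements about $\underline{a}$ and carry over unchanged.

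One point in \autoref{rev,eq} needs checking: its claim assumes the entries in a $\tau$-cycle are nonzero. If some entry in that cycle is zero, every entry in the cycle is zero, and with the cycle length at least $2$ we immediately get $a_i=a_j=0$, which is a pair of the required kind. Once the range $k\leq n$ is justified, the corollary follows.
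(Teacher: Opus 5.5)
Your proposal is correct and takes the paper's route: the paper proves this corollary only by remarking that it summarizes \autoref{DA1}, \autoref{rev',a}, \autoref{DA2,eq,rv} and \autoref{rev,eq}, which are exactly the lemmas you assemble. Your extra checks fill gaps the paper leaves implicit, since those lemmas are stated only for $k<n-1$ or $k\leq n-1$ while the corollary claims $2\leq k\leq n$. They consist of extending \autoref{DA1} to $k=n-1$, extending the three $\DA_2^{k,n}$ lemmas verbatim to $k=n$ (using property (d) of \autoref{BDA/Pn} and the preservation of the determinant condition under $G_n$), and treating the zero-entry case in the claim inside \autoref{rev,eq}.
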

\begin{definition}\label{prr,0}
    Let $p$ be a prime, $n\geq 2$ and $2\leq k\leq n-1$. Define the following.
    \begin{itemize}
        \item Define \[ \MDA(\F_p^n)_k^{\mathrm{pr},0}=\left\{B\in\MDA(\F_p^n)_k^{\mathrm{pr}}\middle\vert\begin{array}{c} 
       B\in\DA_2^{k,n}(\underline{a})\text{ for some $\underline{a}\in A_k$} \\ a_{i_0}=0\text{ for a unique $i_0\geq 3$}, a_i\neq \pm a_j\text{ for all $i\neq j$},\\
        \underline{a}\text{ does not satisfy the \hyperref[def1]{$(\lambda,m)$-condition}}\end{array}\right\}\]
         \item Define \[ \MDA(\F_p^n)_k^{\mathrm{pr},1}=\left\{B\in\MDA(\F_p^n)_k^{\mathrm{pr}}\middle\vert\begin{array}{c} 
       B\in\DA_2^{k,n}(\underline{a})\text{ for some $\underline{a}\in A_k$}\\ a_{i}\neq 0\text{ for all $i\geq 3$}, a_i\neq \pm a_j\text{ for all $i\neq j$},\\
        \underline{a}\text{ does not satisfy the \hyperref[def1]{$(\lambda,m)$-condition}}\end{array}\right\}.\]
    \end{itemize}
    
\end{definition}
\begin{remark}\label{rmk,prr}
    It follows by \autoref{cor:add,pr} that \[\Pb_n^\pm(\F_p)\backslash\MDA(\F_p^n)_k^\mathrm{pr}/\Sigma_{k+1}\cong \Pb_n^\pm(\F_p)\backslash\MDA(\F_p^n)_k^{\mathrm{pr},0}/G_k\quad\bigsqcup\quad \Pb_n^\pm(\F_p)\backslash\MDA(\F_p^n)_k^{\mathrm{pr},1}/G_k.\]
\end{remark}
\begin{lemma}\label{bij'}
    Let $p$ be a prime, $n\geq 2$ and $2\leq k\leq n-1$. There is a bijection, \[\Pb_n^\pm(\F_p)\backslash \MDA_{k+1}^{\mathrm{pr},0}(\F_p^n)/G_{k+1}\rightarrow \Pb_n^\pm(\F_p)\backslash \MDA(\F_p^n)_k^{\mathrm{pr},1}/G_k
    \]given by forgetting the unique zero entry:\[\DA_2^{k+1,n}(a_0,\dots,a_{k+1})\cdot G_{k+1}\mapsto\DA_2^{k,n}(a_0,\dots,\widehat{0},\dots,a_{k+1})\cdot G_k.\]
\end{lemma}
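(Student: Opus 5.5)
We mirror the proof of \autoref{bij}, working with orbit data in $A_k$ rather than $\F_p^{k+1}\setminus\{\underline{0}\}$. By \autoref{BDA/Pn} (and \autoref{BDA,p=3} when $p=3$), every class in $\Pb_n^\pm(\F_p)\backslash \MDA(\F_p^n)_{k+1}^{\mathrm{pr},0}/G_{k+1}$ has the form $\DA_2^{k+1,n}(\underline{a})\cdot G_{k+1}$, where $\underline{a}\in A_{k+1}$ has a unique zero entry $a_{i_0}$ with $i_0\geq 3$. We define
\[\Phi\left(\DA_2^{k+1,n}(\underline{a})\cdot G_{k+1}\right)=\DA_2^{k,n}(a_0,\dots,\widehat{a_{i_0}},\dots,a_{k+1})\cdot G_k.\]
Deleting an entry of index $\geq 3$ keeps $a_0+a_1+a_2=0$, so the truncated tuple lies in $A_k$. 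The truncated tuple also has no zero in positions $\geq 3$, keeps $a_i\neq\pm a_j$, and still fails the \hyperref[def1]{$(\lambda,m)$-condition}. So the image lies in $\MDA(\F_p^n)_k^{\mathrm{pr},1}$, using \autoref{cor:add,pr} to identify preserved simplices.

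\textbf{Well-definedness.} Suppose $\DA_2^{k+1,n}(\underline{a})\cdot G_{k+1}=\DA_2^{k+1,n}(\underline{b})\cdot G_{k+1}$. By properties (b) and (c) of \autoref{BDA/Pn}, this gives $\underline{b}=\mu\,\underline{a}\cdot X$ for some $\mu\in\F_p^\times$ and $X=(\pi,\varepsilon,\tau,\varepsilon_3,\dots,\varepsilon_{k+1})\in G_{k+1}$. Since the zero entries of $\underline{a}$ and $\underline{b}$ are unique, $\tau$ must send the zero position of $\underline{b}$ to that of $\underline{a}$. Deleting these positions, and relabelling the remaining indices in $\{3,\dots,k+1\}$ order-preservingly, yields an element $X'\in G_k$ with $\underline{b}'=\mu\,\underline{a}'\cdot X'$. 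Hence the images agree.

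\textbf{Inverse.} Define $\Psi(\DA_2^{k,n}(\underline{a})\cdot G_k)=\DA_2^{k+1,n}(a_0,\dots,a_k,0)\cdot G_{k+1}$. It is well defined because $G_k$ embeds in $G_{k+1}$ as the stabilizer of the last slot (acting with sign $1$ there), and appending a $0$ commutes with this embedding. Both composites are the identity: $\Phi\circ\Psi$ is immediate, and for $\Psi\circ\Phi$ we use a transposition in $\Sigma_{\{3,\dots,k+1\}}$ to move the zero to the last slot.

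\textbf{Main obstacle.} The hardest step is checking that $\Psi$ lands in the preserved part. We must show that if $\underline{a}$ with all entries $a_i$ ($i\geq3$) nonzero gives an orientation-preserving simplex, then so does $(\underline{a},0)$. We argue through \autoref{rev,eq}. If $(\underline{a},0)$ were reversed, there would exist $\lambda\in\F_p^\times$ and $X\in G_{k+1}$ with $\sign(X)=-1$ and $\lambda(\underline{a},0)=(\underline{a},0)\cdot X$. The unique zero forces $X$ to fix the last slot, so $X$ restricts to an odd element of $G_k$ reversing $\underline{a}$, a contradiction. The conditions in the definitions of $\MDA^{\mathrm{pr},0}$ and $\MDA^{\mathrm{pr},1}$ (in particular the $(\lambda,m)$-condition clause) are set up so that the sets correspond exactly, and we verify them against \autoref{cor:add,pr}. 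For $p=3$ the same argument runs with the $\TA_2$ parametrization, and we would check that case separately.
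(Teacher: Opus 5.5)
Your proposal is correct and follows the paper's proof. Both define $\Phi$ by deleting the unique zero, which sits in a slot $\geq 3$ so that $a_0+a_1+a_2=0$ survives, and define $\Psi$ by appending a zero; your well-definedness and ``$\Psi$ lands in the $\mathrm{pr},0$ set'' checks fill in what the paper leaves implicit (the reversal criterion you actually use is \autoref{rev',a}, or more directly \autoref{cor:add,pr}). The deferred case $p=3$ needs no $\TA_2$ argument: by \autoref{p=3} every additive $B$ with $e_1$ in its span is orientation-reversing, so both sets are empty, as they also are for $p=2$, where $a_i\neq\pm a_j$ cannot hold.
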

\begin{proof}
     Let $\DA_2^{k+1,n}(a_0,\dots,a_{k+1})\cdot G_{k+1}\in \Pb_n^\pm(\F_p)\backslash \MDA_{k+1}^{\mathrm{pr},0}(\F_p^n)/G_{k+1}$ with $a_{i_0}=0$. Define \[\Phi\colon \Pb_n^\pm(\F_p)\backslash\MDA(\F_p^n)_{k+1}^{\mathrm{pr},0}/G_{k+1}\rightarrow \Pb_n^\pm(\F_p)\backslash\MDA(\F_p^n)_k^{\mathrm{pr},1}/G_k\] by \[\Phi\left(\DA_2^{k+1,n}(a_0,\dots,a_{k+1})\cdot G_{k+1}\right)=\DA_2^{k,n}(a_0,\dots,\widehat{a_{i_0}},\dots,a_{k+1})\cdot G_{k}.\]Since $a_i\neq\pm a_j$ for all $i\neq j$, then none of the entries of $(a_0,\dots,\widehat{a_{i_0}},\dots,a_{k+1})$ is zero. Moreover, the tuple $(a_0,\dots,\widehat{a_{i_0}},\dots,a_{k+1})$ satisfies $a_i\neq \pm a_j$ for all $i\neq j$, and does not satisfy the \hyperref[def1]{$(\lambda,m)$-condition} since $\DA_2^{k+1,n}(a_0,\dots,a_{k+1})\cdot G_{k+1}\in \MDA(\F_p^n)_{k+1}^{\mathrm{pr},0}/G_{k+1}$.
    
Now let $\DA_2^{k,n}(a_0,\dots,a_{k})\cdot G_k\in\Pb_n^\pm(\F_p)\backslash\MDA(\F_p^n)_k^{\mathrm{pr},1}/G_k$. Define the inverse map $\Psi$ by \[\Psi\left(\DA_2^{k,n}(a_0,\dots,a_{k})\cdot G_k\right)=\DA_2^{k+1,n}(a_0,\dots,a_{k},0)\cdot G_k\cdot G_{k+1}.\qedhere\]  
\end{proof}
We now treat the case $p=3$.
\begin{lemma} \label{p=3}
     Let $n\geq 2$ and $2\leq k\leq n$. Let $\sigma$ be an additive $k$-simplex in $\SBDA(\F_3^n)_k$ and let $B\in \TA_2^{k,n}(\underline{a})$ for some $\underline{a}=(a_1,\dots,a_k)\in \F_3^{k}\setminus\{\underline{0}\}$ such that $\sigma \Sigma_{k+1}=B\cdot G_k$. Then the action of $\Pb_n^\pm(\F_3)$ on $\sigma$ is orientation-reversing.
\end{lemma}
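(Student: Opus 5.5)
The plan is to reduce orientation-reversal to a statement about the orbit sets $\TA_2^{k,n}(\underline{a})$ and then dispose of the finitely many shapes of $(a_1,a_2)\in\F_3^2$ by hand, mimicking the $p=3$ case of \autoref{faceorbit}.

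\textbf{Reduction.} By \autoref{lem:goodmat}, the action of $\Pb_n^\pm(\F_3)$ on $\sigma$ is orientation-reversing if and only if there are $A\in\Pb_n^\pm(\F_3)$ and $X\in G_k$ with $\sign(X)=-1$ and $A\cdot B=B\cdot X$. By property (d) of \autoref{BDA,p=3}, $\TA_2^{k,n}(\underline{a})$ is a single $\Pb_n^\pm(\F_3)$-orbit. Hence this condition is equivalent to the existence of $X\in G_k$ with $\sign(X)=-1$ and
\[
\TA_2^{k,n}(\underline{a})\cdot X=\TA_2^{k,n}(\underline{a}).
\]
Two observations will be used repeatedly.
\begin{itemize}
\item If $\TA_2^{k,n}(\underline{a})\cdot Y=\TA_2^{k,n}(\underline{b})$ and $X$ stabilizes $\TA_2^{k,n}(\underline{b})$, then $YXY^{-1}$ stabilizes $\TA_2^{k,n}(\underline{a})$, and $\sign(YXY^{-1})=\sign(X)$.
\item By \eqref{TA2,rel} with $\pi=\Id$ and $\tau=\Id$, flipping the sign $\varepsilon$ of the first block, or any individual $\varepsilon_i$ with $i\geq 3$, has sign $+1$. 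Combined with property (b), this lets me change the sign of any individual entry of $\underline{a}$ while staying in the same $G_k$-orbit.
\end{itemize}

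\textbf{Case analysis on $(a_1,a_2)\in\F_3^2$.}
\begin{itemize}
\item If $a_1=a_2$ (including $a_1=a_2=0$), take $X=((1\,2),1,\Id,1,\dots,1)$. By \eqref{TA2,rel}, $\TA_2^{k,n}(\underline{a})\cdot X=\TA_2^{k,n}(a_2,a_1,a_3,\dots,a_k)=\TA_2^{k,n}(\underline{a})$, and $\sign(X)=-1$.
\item If $a_1=-a_2\neq0$, take $X=((1\,2),-1,\Id,1,\dots,1)$. Then \eqref{TA2,rel} gives $\TA_2^{k,n}(-a_2,-a_1,a_3,\dots,a_k)=\TA_2^{k,n}(\underline{a})$, again with $\sign(X)=-1$.
\item The remaining case is exactly one of $a_1,a_2$ equal to $0$, say $(a_1,a_2)=(\pm1,0)$ or $(0,\pm1)$. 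First use \eqref{TA2,rel} with $\pi=(1\,2)$ to move to the form $(\pm1,0)$, then use the sign flips above to reach $(1,0,a_3',\dots,a_k')$ for suitable $a_i'$. Next, \eqref{TA2,rel2} applied to $(1,1,a_3',\dots,a_k')$, together with $X^2=\Id$ for the element used there, gives some $Y\in G_k$ with $\TA_2^{k,n}(\underline{a})\cdot Y=\TA_2^{k,n}(1,1,a_3',\dots,a_k')$. Finally conjugate the stabilizer from the first bullet by $Y$, using the first observation.
\end{itemize}

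\textbf{Expected obstacle.} The delicate point is the last case. I must make sure that the chain of moves from $(\pm1,0,\dots)$ to $(1,1,\dots)$ really lands on a tuple with $a_1=a_2$, with tail entries that are allowed to change by signs. The order of operations matters: flip the signs first, then apply \eqref{TA2,rel2}. I must also check that \eqref{TA2,rel2} is applied with its hypothesis $a_1a_2=1$ on the correct side, which is why I invert it via $X^2=\Id$. Once the orbit is matched to one of the first two bullets, the conjugation argument finishes the proof, since sign is invariant under conjugation.
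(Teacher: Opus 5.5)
Your proposal is correct and follows essentially the same route as the paper. Both reduce, via \autoref{lem:goodmat} and the orbit description in \autoref{BDA,p=3}, to finding a sign $-1$ element of $G_k$ that stabilizes $\TA_2^{k,n}(\underline{a})$; both use $((1\,2),\pm1,\Id,1,\dots,1)$ when $a_1=\pm a_2$, and otherwise conjugate by some $Y$ carrying $\TA_2^{k,n}(\underline{a})$ to $\TA_2^{k,n}(1,1,\dots)$ via \eqref{TA2,rel} and \eqref{TA2,rel2}. Your version just spells out this $Y$ (and inverting the element from \eqref{TA2,rel2} needs only $X^{-1}$, not $X^2=\Id$). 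One small inaccuracy is that the sign flips do not let you negate an arbitrary single entry, because the first-block sign $\varepsilon$ negates $a_1$ and $a_2$ together; this is harmless, since you only use it when one of $a_1,a_2$ is $0$.
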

\begin{proof}
     Let $\sigma \Sigma_{k+1}=B\cdot G_k$ with $B\in \TA_2^{k,n}(\underline{a})$. Since  \[\left(\SBDA(\F_3^n)_k ^\mathrm{rv} -\SBD(\F_3^n)_k^\mathrm{rv}\right)/\Sigma_{k+1}\cong \MDA(\F_3^n)_k^\mathrm{rv}/G_k\] by \autoref{lem:goodmat}, we have the following equivalences:
   \begin{align*}\Pb_n^\pm(\F_3)~\text{reverses the orientation} &~\text{of $\sigma$}\\&\Longleftrightarrow \text{there exists $X\in G_k$ with $\sign(X)=-1$ and $B\cdot X=A\cdot B$ for some $A\in \Pb_n^\pm(\F_3)$}\\&\Longleftrightarrow \text{there exists $X\in G_k$ with $\sign(X)=-1$ and $B\cdot X\in \TA_2^{k,n}(\underline{a})$}\\ &\Longleftrightarrow \text{there exists $X\in G_k$ with $\sign(X)=-1$ and $\TA_2^{k,n}(\underline{a})\cdot X=\TA_2^{k,n}(\underline{a})$},\end{align*} We have $a_1,a_2\in\{0,1,-1\}$. Thus, either \[a_1=\pm a_2\quad\text{or}\quad (a_1,a_2)\in\{(\pm 1,0), (0,\pm 1)\}.\]
        \begin{itemize}
             \item If $a_1=-a_2$, let $X=((1\,2),-1,\Id,1,\dots,1)\in G_k$. Then $\sign(X)=-1$ and \begin{align*}\TA_2^{k,n}(\underline{a})\cdot X&= \left\{\left(-v_0| -v_2| -v_1|v_3|\dots| v_k\right)\in\MDA(\F_3^n)_k~\middle\vert~\begin{array}{c}\lambda e_1= a_1(-v_2)+a_2(-v_1)+a_3v_3+\dots+a_kv_k \\ \text{for some $\lambda\in\F_3^\times$}\end{array}\right\}\\&=\TA_2^{k,n}(\underline{a}).\end{align*}
              \item If $a_1=a_2$, let $X=((1\,2),1,\Id,1,\dots,1)\in G_k$. Then $\sign(X)=-1$ and \begin{align*}\TA_2^{k,n}(\underline{a})\cdot X&=\left\{\left(v_0| v_2| v_1| v_3|\dots| v_k\right)\in\MDA(\F_3^n)_k~\middle\vert~\begin{array}{c}\lambda e_1= a_1v_2+a_2v_1+a_3v_3+\dots+a_kv_k \\ \text{for some $\lambda\in\F_3^\times$}\end{array}\right\}\\&= \TA_2^{k,n}(\underline{a}).
             \end{align*}
              \item If $(a_1,a_2)\in\{(\pm 1, 0),(0,\pm 1)\}$, it follows by \autoref{BDA,p=3} that \[\TA_2^{k,n}(\underline{a})\cdot Y=\TA_2^{k,n}(1,1,a_3,\dots,a_k)\]for some $Y\in G_k$. Let $X=((1\,2),1,\Id,1,\dots,1)\in G_k$ be as in the latter case, then \[\TA_2^{k,n}(1,1,a_3,\dots,a_k)\cdot X=\TA_2^{k,n
              }(1,1,a_3,\dots,a_k).\] Then, \[\TA_2^{k,n}(\underline{a})\cdot YXY^{-1}=\TA_2^{k,n}(1,1,a_3,\dots,a_k)\cdot XY^{-1}=\TA_2^{k,n}(1,1,a_3,\dots,a_k)\cdot Y^{-1}=\TA_2^{k,n}(\underline{a}),\]
             where $\sign(YXY^{-1})=\sign(X)=-1$. 
            \end{itemize} This completes the proof.\end{proof}
\subsection{Homology of the complex of partial frames}
We will prove that the symmetric $\Delta$-complex $\Gamma_{0,n}^\pm(p)\backslash\SB(\Z^n)$ is highly $\Q$-acyclic. Recall that 
$\Gamma_{0,n}^\pm(p)\backslash\SB(\Z^n)$ has dimension $n-1$. 
\begin{theorem}\label{hom:B}
    Let $p$ be a prime and $n\geq 2$.  \[\redhom_k(\Gamma_{0,n}^\pm(p)\backslash\SB(\Z^n);\Q)\cong 0 \quad\text{for}\quad k\leq n-2,\]and for all $n\geq 3$,\[\redhom_{n-1}(\Gamma_{0,n}^\pm(p)\backslash\SB(\Z^n);\Q)\cong\Q\left[\Pb_n^\pm(\F_p)\backslash\MD(\F_p^n)_{n-1}^{\mathrm{pr},1}\right]\otimes_{\Q[T_k]}\Q^\mathrm{sgn}.\]
\end{theorem}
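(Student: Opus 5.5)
By \autoref{good q} and \autoref{chaincoinv}, the chain complex of $\Gamma_{0,n}^\pm(p)\backslash\SB(\Z^n)$ is $\redchain_*(\Pb_n^\pm(\F_p)\backslash\SBD(\F_p^n);\Q)$. By \autoref{coinv} and \autoref{lem:goodmat}, in degree $k$ it has a basis indexed by $\Pb_n^\pm(\F_p)\backslash\MD(\F_p^n)_k^\mathrm{pr}/T_k$. I would first pin down these bases explicitly.

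In degree $-1$ the basis is $\Q$. In degree $0$ there are two vertex orbits, $\D_1^{0,n}$ (vertices not on the line $\langle e_1\rangle$) and $\D_2^{0,n}(1)$ (vertices on it), both orientation-preserving since $\Sigma_1$ is trivial. In degree $1$, by \autoref{S1}, \autoref{D2,eq,rv} and \autoref{D2,dif,pv}, the preserved orbits are $\D_2^{1,n}(a_0,a_1)$ with $a_0\neq\pm a_1$; the $(\lambda,m)$-condition for $k=1$ would require $\lambda$ of order $4$ with $a_1=\lambda a_0$, hence $a_1^2=-a_0^2$. For $k\geq 2$, by \autoref{cor:st,pr} and \autoref{rmk,pr}, the basis splits as $\mathrm{pr},0$ (exactly one zero entry) $\sqcup$ $\mathrm{pr},1$ (no zero entries). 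In the top degree $k=n-1$, $\D_1$ does not occur, so the same description holds.

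The key structural step is to show that for $2\le k\le n-2$ the differential pairs these pieces. Take a $\mathrm{pr},1$-generator $\D_2^{k,n}(\underline{a})$. Its faces are obtained by deleting a vector $v_i$. Such a face still contains $e_1$ in its span only if $a_i=0$, which never happens here. So every face lies in the $\D_1^{k-1,n}$ orbit, and that orbit is orientation-reversing by \autoref{S1} whenever $k-1<n-1$ and $k-1\geq 1$. Hence $\partial$ kills $\mathrm{pr},1$ modulo reversing orbits, which are zero in the chain complex; I will need care at $k=1$, where $\D_1^{0,n}$ is a genuine vertex. For a $\mathrm{pr},0$-generator in degree $k+1$ with $a_{i_0}=0$, every face deleting $v_j$ with $j\neq i_0$ has $e_1$ in its span with coefficient vector having a zero entry. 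Deleting $v_{i_0}$ instead gives exactly the $\mathrm{pr},1$-generator of \autoref{bij}. So $\partial_{k+1}$ restricted to $\mathrm{pr},0$ has the form (isomorphism of \autoref{bij} onto $\mathrm{pr},1$) plus terms in $\mathrm{pr},0$ of degree $k$. I would filter by number of zero entries to conclude that $\partial$ maps $\redchain^{\mathrm{pr},0}_{k+1}$ isomorphically, up to triangular terms, onto $\redchain^{\mathrm{pr},1}_k$. The signs must be checked to be consistent with the chosen orientations.

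This gives a filtration argument, or equivalently a discrete Morse matching. The complex is the cone-like pairing of $\mathrm{pr},0$ in degree $k+1$ with $\mathrm{pr},1$ in degree $k$ for $1\le k\le n-2$. The low degrees are handled separately: $\D_1^{0,n}$ is paired with $\Q$ in degree $-1$ via augmentation, and $\D_2^{0,n}(1)$ is paired with $\D_2^{1,n}(1,0)$, whose boundary is $\D_1^{0,n}-\D_2^{0,n}(1)$. The $\mathrm{pr},0$ tuples in degree $1$ are those with one zero entry, and up to scaling that is only $(1,0)$. All generators are then matched except the $\mathrm{pr},1$ ones in the top degree $n-1$, which have no higher simplex to pair with. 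The chain complex is therefore quasi-isomorphic to $\Q[\Pb_n^\pm(\F_p)\backslash\MD(\F_p^n)_{n-1}^{\mathrm{pr},1}]\otimes_{\Q[T_{n-1}]}\Q^\mathrm{sgn}$ concentrated in degree $n-1$. This yields vanishing for $k\le n-2$ and the stated top homology for $n\geq 3$. Since the top degree has no $\D_1$ term, the boundary of a top $\mathrm{pr},1$-cell vanishes, making it a cycle that is not a boundary.

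The main obstacle is verifying the triangularity precisely. I have to check that when a face of a $\mathrm{pr},0$ cell deletes some $v_j$ with $j\neq i_0$, the resulting cell is either orientation-reversing or again in $\mathrm{pr},0$, and never lands in another $\mathrm{pr},1$ cell. It never lands in $\mathrm{pr},1$ because the zero at $i_0$ persists. Reversing faces vanish automatically. So the remaining work is bookkeeping with the $(\lambda,m)$-condition, which is stable under deleting a zero entry by \autoref{bij}.
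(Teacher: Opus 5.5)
Your plan is the paper's argument. You split the basis of $\redchain_k(\Pb_n^\pm(\F_p)\backslash\SBD(\F_p^n);\Q)$ into $\mathrm{pr},0\sqcup\mathrm{pr},1$ via \autoref{cor:st,pr} and \autoref{rmk,pr}, show $\partial$ vanishes on $\mathrm{pr},1$, match $\mathrm{pr},0$ in degree $k+1$ with $\mathrm{pr},1$ in degree $k$ via \autoref{bij}, and handle the bottom degrees with $\D_2^{1,n}(1,0)$. The paper cites \autoref{n=2,BA} and \autoref{BD,p=2} separately for $n=2$ and $p=2$; your argument is uniform and covers those cases too.

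\textbf{The error.} Your face computation for a $\mathrm{pr},0$ cell is wrong, and it is exactly the step you call the main obstacle.

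\begin{itemize}
\item Let $B=(v_0|\dots|v_{k+1})\in\D_2^{k+1,n}(\underline a)$ with unique zero $a_{i_0}=0$. For $j\neq i_0$ you have $a_j\neq 0$.
\item Since $v_0,\dots,v_{k+1}$ are linearly independent, the expression $\lambda e_1=\sum_i a_iv_i$ is unique. Hence $e_1\notin\operatorname{span}\{v_i : i\neq j\}$.
\item So that face lies in $\D_1^{k,n}$. It does not lie in a $\D_2$ orbit with a zero coefficient, as you claimed. This is the same uniqueness argument you used correctly for $\mathrm{pr},1$ cells.
\item For $1\le k\le n-2$ these faces vanish by \autoref{S1}. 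Therefore $\partial(\D_2^{k+1,n}(\underline a)\otimes 1)=\pm\D_2^{k,n}(a_0,\dots,\widehat{a_{i_0}},\dots,a_{k+1})\otimes 1$ exactly.
\end{itemize}

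There are no triangular terms and no $(\lambda,m)$-bookkeeping beyond \autoref{bij}. Your reason that faces never land in $\mathrm{pr},1$ (``the zero at $i_0$ persists'') should be replaced by this. Your triangular version would in fact still have closed up: $\partial^2=0$ together with injectivity of the diagonal part forces any $\mathrm{pr},0\to\mathrm{pr},0$ component to vanish. But the premise was false.

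\textbf{The point you left open at $k=1$.} A $\mathrm{pr},1$ edge $\D_2^{1,n}(a_0,a_1)$ with $a_0a_1\neq 0$ has both faces in $\D_1^{0,n}$, by the same uniqueness argument. Its boundary is therefore $\D_1^{0,n}-\D_1^{0,n}=0$.
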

 Recall from \autoref{good q} that there is an isomorphism\[\Gamma_{0,n}^\pm(p)\backslash\SB(\Z^n)\cong \Pb_n^\pm(\F_p)\backslash\SBD(\F_p^n).\]Thus, it suffices to prove all our claims for $\Pb_n^\pm(\F_p)\backslash\SBD(\F_p^n)$. Additionally, we have from \autoref{coinv} that \[\redchain_k(\Pb_n^\pm(\F_p)\backslash\SBD(\F_p^n);\Q)=\Q\left[\Pb_n^\pm(\F_p)\backslash\SBD(\F_p^n)_k\right]\otimes_{\Q[\Sigma_{k+1}]}\Q^\mathrm{sgn}\]and has a non-canonical basis bijective to $\Pb_n^\pm(\F_p)\backslash\SBD(\F_p^n)^\mathrm{pr}_k/T_k$.
We start with the following lemma.

\begin{lemma}\label{lemm:vert}
Let $p$ be a prime. The chain group $\redchain_0\left(\Pb_n^\pm(\F_p)\backslash \SBD(\F_p^n);\Q\right)$ is two-dimensional with basis elements $\D_1^{0,n}\otimes 1$ and $\D_2^{0,n}(1)\otimes 1$.
\end{lemma}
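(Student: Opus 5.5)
We identify the $0$-chains through the chain of isomorphisms established earlier. By \autoref{coinv} and \autoref{lem:goodmat}, $\redchain_0\left(\Pb_n^\pm(\F_p)\backslash \SBD(\F_p^n);\Q\right)$ has a basis in bijection with $\Pb_n^\pm(\F_p)\backslash\MD(\F_p^n)_0^{\mathrm{pr}}/T_0$. So we first compute the full double quotient $\Pb_n^\pm(\F_p)\backslash\MD(\F_p^n)_0/T_0$, and then check that no $0$-simplex is orientation-reversing. Since $\Sigma_1$ is trivial and every element of $T_0=\{\pm1\}$ has sign $+1$, orientation reversal cannot occur in degree $0$. Hence $\MD(\F_p^n)_0^{\mathrm{pr}}=\MD(\F_p^n)_0$, and the relation $\sigma\otimes 1=0$ from \autoref{coinv} never applies.

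To find the orbits, we apply \autoref{BD/Pn} with $k=0$. The matrices in $\MD(\F_p^n)_0$ are single nonzero vectors $v_0\in\F_p^n$; since $0<n-1$ when $n\geq 2$, there is no determinant condition. By part (d), the $\Pb_n^\pm(\F_p)$-orbits are $\D_1^{0,n}=\{v_0 : e_1\notin\operatorname{span}\{v_0\}\}$, which is nonempty because $n\geq2$ (it contains $e_2$), together with the sets $\D_2^{0,n}(a)$ for $a\in\F_p^\times$. By part (b), $\D_2^{0,n}(a)=\D_2^{0,n}(1)$ for every $a\in\F_p^\times$, so there is a single orbit of the second kind, namely the nonzero multiples of $e_1$.

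Next we pass to $T_0$-orbits. Part (c) gives $\D_1^{0,n}\cdot X=\D_1^{0,n}$ and $\D_2^{0,n}(1)\cdot X=\D_2^{0,n}(\pm1)=\D_2^{0,n}(1)$ for every $X\in T_0$. Therefore the double quotient consists of exactly two elements, and $\D_1^{0,n}\otimes1$ and $\D_2^{0,n}(1)\otimes1$ form a basis, exactly as in \autoref{cor:vert} for $n=2$.

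No step is a real obstacle. The only point needing care is confirming that $\D_1^{0,n}$ is nonempty, which is where $n\geq 2$ is used.
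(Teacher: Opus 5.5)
Your proof is correct and follows essentially the same route as the paper. Both identify the $0$-chains with the $T_0$-coinvariants of $\Q[\Pb_n^\pm(\F_p)\backslash\MD(\F_p^n)_0]$: the paper does this directly via \autoref{matrixrep}, while you go through \autoref{coinv} and \autoref{lem:goodmat} after observing that no $0$-simplex can be orientation-reversing. Both then read off the two orbits from parts (b) and (d) of \autoref{BD/Pn}, and your explicit checks that both orbits are $T_0$-stable and that $\D_1^{0,n}$ is nonempty for $n\geq 2$ supply details the paper leaves implicit.
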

\begin{proof}
We recall that \[\D_1^{k,n} =  \left\{ \left(v_0|\dots|v_k\right)\in \MD(\F_p^n)_k ~\middle\vert e_1\not\in \operatorname{span}\{v_0,\dots,v_k\}\right\},\]
    and \[\D_2^{k,n}(\underline{a})=\left\{B=\left(v_0|\dots|v_k\right)\in \MD(\F_p^n)_k ~\middle\vert \lambda e_1=a_0v_0+\dots+a_kv_k~\text{for some $\lambda\in\F_p^\times$}\right\}\] for $\underline{a}=(a_0,\dots,a_k)\in\F_p^{k+1}\setminus\{\underline{0}\}$.

     By \autoref{qut}, the quotient $\Pb_n^\pm(\F_p)\backslash \SBD(\F_p^n)$ is a symmetric $\Delta$-complex. It therefore follows from \autoref{chaincpx} that \[\redchain_0(\Pb_n^\pm(\F_p)\backslash \SBD(\F_p^n);\Q)\cong\Q\left[\Pb_n^\pm(\F_p)\backslash \SBD(\F_p^n)_0\right]\otimes_{\Q[\Sigma_1]}\Q^\mathrm{sgn},\]which by \autoref{matrixrep}, is isomorphic to $\Q\left[\Pb_n^\pm(\F_p)\backslash \MD(\F_p^n)_0\right]\otimes_{\Q[T_0]}\Q^\mathrm{sgn}$.
    We recall by \autoref{BD/Pn} that $\Pb_2^\pm(\F_p)\backslash \MD(\F_p^n)_0=\{\D_1^{0,n}\}\cup \{\D_2^{0,n}(\lambda)\mid \text{ for some }\lambda\in\F_p^\times\}$, and \[\D_2^{k,n}(\underline{a})=\D_2^{k,n}(\underline{b})\quad\Longleftrightarrow\quad \underline a= \mu \underline b~\text{for some $\mu\in \F_p^\times$}.\] 
    In particular, $\D_2^{0,n}(\pm \lambda)=\D_2^{0,n}(1)$ for all $\lambda\in\F_p^\times$. We conclude \[\redchain_0(\Pb_n^\pm(\F_p)\backslash \SBDA(\F_p^n);\Q)\cong \Q[\D_1^{0,n}\otimes 1,\D_2^{0,n}(1)\otimes 1].\qedhere\]
\end{proof}

In order to prove \autoref{hom:B}, we consider the cases $p=2$ and $p\neq 2$ separately, starting with $p=2$.
\begin{proposition}\label{BD,p=2}
     Let $n\geq 2$.\[\redhom_k\left(\Pb_n^\pm(\F_2)\backslash\SBD(\F_2^n);\Q\right)\cong 0.\]
 \end{proposition}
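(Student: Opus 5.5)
Over $\F_2$ we have $-1=1$ and $\F_2^\times=\{1\}$, so the sign data in $T_k$ are trivial and the scalar $\lambda$ in \autoref{rev,a} is forced to be $1$. The plan is to compute the chain complex $\redchain_*(\Pb_n^\pm(\F_2)\backslash\SBD(\F_2^n);\Q)$ explicitly and check exactness by hand. By \autoref{coinv} and \autoref{lem:goodmat}, $\redchain_k$ has a basis indexed by $\Pb_n^\pm(\F_2)\backslash\MD(\F_2^n)_k^{\mathrm{pr}}/T_k$. By \autoref{BD/Pn}, every orbit is either $\D_1^{k,n}$ or $\D_2^{k,n}(\underline a)$ with $\underline a\in\F_2^{k+1}\setminus\{\underline 0\}$. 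So the work is to decide which of these orbits are orientation-preserving.

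\textbf{Degrees $k\ge 2$.} For $k\geq 1$ with $k<n-1$, the orbit $\D_1^{k,n}$ is orientation-reversing by \autoref{S1}; for $k=n-1$ it does not exist. For $\D_2^{k,n}(\underline a)$ with $k\ge 2$, the tuple $\underline a$ has at least three entries in $\{0,1\}$. By pigeonhole, $a_i=a_j=\pm a_j$ for some $i\ne j$, so \autoref{D2,eq,rv} shows the orbit is orientation-reversing. Hence $\redchain_k=0$ for all $k\ge 2$.

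\textbf{Degree $1$.} The orbit $\D_1^{1,n}$ is reversing when $n\ge 3$ and absent when $n=2$. Among $\underline a\in\{(1,1),(1,0),(0,1)\}$, the tuple $(1,1)$ is reversing by \autoref{D2,eq,rv}. The tuples $(1,0)$ and $(0,1)$ lie in the same $T_1$-orbit by \eqref{D2,rel2}. For $(1,0)$, \autoref{rev,a} with $\lambda=1$ would require an odd $X$, i.e.\ the swap, to fix $(1,0)$, which it does not. So $\redchain_1$ is one-dimensional, spanned by the class of some $B=(v_0|v_1)\in\D_2^{1,n}(1,0)$; for such $B$ we have $v_0=e_1$.

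\textbf{Degree $0$ and exactness.} By \autoref{lemm:vert}, $\redchain_0$ has basis $\D_1^{0,n}\otimes1$ and $\D_2^{0,n}(1)\otimes1$. Here $\D_1^{0,n}$ exists because $0<n-1$. The augmentation onto $\redchain_{-1}=\Q$ is surjective, with one-dimensional kernel spanned by $\D_2^{0,n}(1)-\D_1^{0,n}$. The faces of $B$ are $v_1$, which lies in $\D_1^{0,n}$ since $e_1\notin\operatorname{span}\{v_1\}$, and $v_0=e_1$, which lies in $\D_2^{0,n}(1)$. Hence $\partial_1(B\otimes 1)=\pm(\D_1^{0,n}-\D_2^{0,n}(1))\otimes 1\neq 0$. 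Therefore $\partial_1$ is injective and its image is exactly the kernel of the augmentation. The complex $0\to\Q\to\Q^2\to\Q\to0$ is then exact, so every $\redhom_k$ vanishes.

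The main care point is the degree-$1$ orientation check and the sign bookkeeping in $\partial_1$; only the non-vanishing of $\partial_1$ matters, so the sign is irrelevant. One should also confirm that the determinant condition at $k=n-1$ causes no trouble, which is automatic since $\det=\pm1$ means $\det=1$ over $\F_2$.
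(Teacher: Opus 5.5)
Your proof is correct and follows the paper's argument. You show $\redchain_k=0$ for $k\ge 2$ by pigeonhole on $\{0,1\}$, find that $\redchain_1$ is one-dimensional and spanned by the $\D_2^{1,n}(1,0)$-class, and check that $\partial_1$ sends this class to $\pm(\D_1^{0,n}-\D_2^{0,n}(1))\otimes 1$, which spans $\ker\varepsilon$. The only difference is bookkeeping: you do the $k=1$ orientation check directly from \autoref{rev,a} and dispose of $\D_1^{k,n}$ via \autoref{S1}, whereas the paper cites \autoref{cor:st,pr} for both, even though that corollary is stated only for $k\ge 2$.
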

 \begin{proof}

Recall from \autoref{coinv} \[\redchain_k\left(\Pb_n^\pm(\F_2)\backslash \SBD(\F_2^n);\Q\right)\cong \Q\left[\Pb_n^\pm(\F_2)\backslash \SBD(\F_2^n)^\mathrm{pr}_k\right]\otimes_{\Q[\Sigma_{k+1}]}\Q^\mathrm{sgn}.\] 
we consider the following chain complex \[
\oset{$n-1$}{\Q\left[\Pb_n^\pm(\F_2)\backslash \SBD(\F_2^n)^\mathrm{pr}_n\right]\otimes_{\Q[\Sigma_{n+1}]}\Q^\mathrm{sgn}} \overset{\partial_{n-1}}\rightarrow \dots\overset{\partial_{2}}\rightarrow \oset{$1$}{\Q\left[\Pb_n^\pm(\F_2)\backslash \SBD(\F_2^n)^\mathrm{pr}_1\right]\otimes_{\Q[\Sigma_{2}]}\Q^\mathrm{sgn}} \overset{\partial_{1}}\rightarrow \oset{$0$}{\Q^2}\overset{\varepsilon} \rightarrow \Q
\] where $\partial_{k}$ is the differential and $\varepsilon$ is the augmentation map. We recall that by \autoref{lemm:vert}, we have \[ \redchain_0\left(\Pb_n^\pm(\F_2)\backslash\SBD(\F_2^n);\Q\right)\cong \Q^2.\]
From \autoref{lem:goodmat}, $\redchain_k\left(\Pb_n^\pm(\F_2)\backslash \SBD(\F_2^n);\Q\right)$ has a non-canonical basis bijective to \[\Pb_n^\pm(\F_2)\backslash\SBD(\F_2^n)_k^{\mathrm{pr}}/\Sigma_{k},\]which by \autoref{lem:goodmat} is again bijective to \[\Pb_n^\pm(\F_2)\backslash\MD(\F_p^n)_k^{\mathrm{pr}}/T_k.\]Recall from \autoref{cor:st,pr} that $B\in\MD(\F_2^n)^\mathrm{pr}_k$ if and only if $B\in\D_2^{k,n}(\underline{a})$ for some $\underline{a}=(a_0,\dots,a_k)\in\F_2^{k+1}\backslash\{\underline{0}\}$  where the entries satisfy $a_i\neq \pm a_j$ for all $i\neq j$ and do not satisfy the \hyperref[def1]{$(\lambda,m)$-condition}. 

For $k\geq 2$, $a_0,\dots,a_k\in\F_2$ cannot all be distinct, so there exist some $i\neq j$ with $a_i=a_j$. This implies that $\Pb_n^\pm(\F_2)\backslash\MD(\F_2^n)_k^{\mathrm{pr}}/T_k$ is empty, and so 
\[\redchain_k\left(\Pb_n^\pm(\F_2)\backslash \SBD(\F_2^n);\Q\right)\cong 0\quad\text{for all $k\geq 2$}.\]
As for degree $k=1$, $B\in\MD(\F_2^n)^\mathrm{pr}_k$ if and only if $B\in \D_2^{1,n}(1,0)$ or in $\D_2^{1,n}(0,1)$. We know from \autoref{coinv} that\[\D_2^{1,n}(\underline{a})\otimes 1=\sign(X)\D_2^{1,n}(\underline{a})\otimes\sign(X)~\text{for all $X\in T_1$}.\]
It is then enough to compute\begin{equation}\label{eq:B}\partial_1\left(\D_2^{1,n}(1,0)\otimes 1\right)=\D_1^{0,n}\otimes 1-\D_2^{0,n}(1)\otimes 1.\end{equation}Note that this a similar argument to that in \autoref{n=2,BA} for $n=2$, where $\D_1^{0,2}=V$ and $\D_2^{0,2}=U$.

As $\ker \varepsilon$ is generated by such elements, by \autoref{lemm:vert}, we obtain $\operatorname{im}(\partial_1)=\ker \varepsilon$. 
Additionally, it follows from \eqref{eq:B} that \[\ker\partial_1\cong  0.\] 
Therefore \[\redhom_k\left(\Pb_n^\pm(\F_2)\backslash\SBD(\F_2^n);\Q\right)\cong 0\quad\text{for all $k$}.\]This also shows that \[\redhom_{n-1}(\Pb_n^\pm(\F_2)\backslash\SBD(\F_2^n);\Q)\cong\Q\left[\Pb_n^\pm(\F_2)\backslash\MD(\F_2^n)_{n-1}^{\mathrm{pr},1}\right]\otimes_{\Q[T_k]}\Q^\mathrm{sgn}\cong 0.\qedhere\]
 \end{proof}
We now treat the case $p\geq 3$.
\begin{proof}[Proof of \autoref{hom:B}]

We previously showed in \autoref{n=2,BA} that the result holds for $n=2$ and all primes $p$, and in \autoref{BD,p=2} that it holds for $p=2$ and all $n\geq 2$. In both cases,\[\redhom_k\left(\Pb_n^\pm(\F_p)\backslash\SBD(\F_p^n);\Q\right)\cong 0\] for all $k\leq n-2$. It remains to verify the statement for $n\geq 3$ and $p\geq 3$. So we will assume that $n,p\geq 3$ for the rest of this proof. Recall from \autoref{coinv} \[\redchain_k\left(\Pb_n^\pm(\F_p)\backslash \SBD(\F_p^n);\Q\right)\cong \Q\left[\Pb_n^\pm(\F_p)\backslash \SBD(\F_p^n)_k\right]\otimes_{\Q[\Sigma_{k+1}]}\Q^\mathrm{sgn},\]with \begin{equation}\label{rvv}\Pb_n^\pm(\F_p)\sigma\otimes1=0 \quad\text{if $\sigma\in\SBD(\F_p^n)_k^\mathrm{rv}$.}\end{equation} In particular, \[\redchain_k\left(\Pb_n^\pm(\F_p)\backslash \SBD(\F_p^n);\Q\right)\cong \Q\left[\Pb_n^\pm(\F_p)\backslash \SBD(\F_p^n)_k^\mathrm{pr}\right]\otimes_{\Q[\Sigma_{k+1}]}\Q^\mathrm{sgn}.\]
We consider the following chain complex \[
\oset{$n-1$}{\Q\left[\Pb_n^\pm(\F_p)\backslash \SBD(\F_p^n)^\mathrm{pr}_n\right]\otimes_{\Q[\Sigma_{n+1}]}\Q^\mathrm{sgn}} \overset{\partial_{n-1}}\rightarrow \dots\overset{\partial_{2}}\rightarrow \oset{$1$}{\Q\left[\Pb_n^\pm(\F_p)\backslash \SBD(\F_p^n)^\mathrm{pr}_1\right]\otimes_{\Q[\Sigma_{2}]}\Q^\mathrm{sgn}} \overset{\partial_{1}}\rightarrow \oset{$0$}{\Q^2}\overset{\varepsilon} \rightarrow \Q
\] where $\partial_{k}$ is the differential and $\varepsilon$ is the augmentation map. We recall that by \autoref{lemm:vert}, we have \[ \redchain_0\left(\Pb_n^\pm(\F_p)\backslash\SBD(\F_p^n);\Q\right)\cong \Q^2.\] To prove the statement, we will show that at each degree of the chain complex the image coincides with the kernel. 
From \autoref{coinv}, $\redchain_k\left(\Pb_n^\pm(\F_p)\backslash \SBD(\F_p^n);\Q\right)$ has a non-canonical basis bijective to \[\Pb_n^\pm(\F_p)\backslash\SBD(\F_p^n)_k^{\mathrm{pr}}/\Sigma_{k+1},\]which by \autoref{lem:goodmat} and \autoref{rmk,pr}, is again bijective to \[\Pb_n^\pm(\F_p)\backslash\MD(\F_p^n)_k^{\mathrm{pr},0}/T_k\quad\bigsqcup\quad \Pb_n^\pm(\F_p)\backslash\MD(\F_p^n)_k^{\mathrm{pr},1}/T_k.\]
   Recall from \autoref{cor:st,pr} that $B\in\MD(\F_p^n)^\mathrm{pr}_k$ if and only if $B\in\D_2^{k,n}(\underline{a})$ for some $\underline{a}=(a_0,\dots,a_k)\in\F_p^{k+1}\backslash\{\underline{0}\}$  where the entries satisfy $a_i\neq \pm a_j$ for all $i\neq j$ and do not satisfy the \hyperref[def1]{$(\lambda,m)$-condition}. 

Under the above identifications, we compute the differentials in terms of the elements $\D_2^{k,n}(\underline{a})\otimes1\subset \Q\left[\Pb_n^\pm(\F_p)\backslash \MD(\F_p^n)^{\mathrm{pr},i}_k\right]\otimes_{\Q[T_k]}\Q^\mathrm{sgn}$, for $i=0,1$, satisfying the above conditions.

Consider the case $a_i\neq0$ for all $i$. Then $\partial_k$ sends $\D_2^{k,n}(\underline{a})\otimes1$ to $\D_1^{k,n}\otimes1$ which is zero by \eqref{rvv} and \autoref{cor:st,pr}. 

For the other case, suppose that $a_{i_0}=0$ for some $i_0$. $\partial_k$ sends $\D_2^{k,n}(\underline{a})\otimes1$ to \[\pm \D_2^{k-1,n}(a_0,\dots,\widehat{a_{i_0}},\dots,a_k)\otimes 1\] if $k\geq 2$, and to \[\pm (\D_2^{0,n}(1)\otimes 1-\D_1^{0,n}\otimes 1)\] if $k=1$. We recall from \autoref{lemm:vert}, that \[\redchain_0(\Pb_n^\pm(\F_p)\backslash\SBD(\F_p^n);\Q)\cong \Q[\D_1^{0,n}\otimes 1,\D_2^{0,n}(1)\otimes 1].\] Thus \begin{equation}\label{k}\ker\varepsilon\cong\Q\left[\D_1^{0,n}\otimes 1-\D_2^{0,n}(1)\otimes 1 \right]\cong \Image\partial_1,\end{equation}and 
 $\partial_k$ is zero on $\Q\left[ \Pb_n^\pm(\F_p)\backslash\MD_k^{\mathrm{pr},1}(\F_p^n)\right]\otimes_{\Q[T_k]}\Q^\mathrm{sgn}$. We conclude from \autoref{bij} \begin{equation*}\label{ker1}
    \ker\partial_k=\Q\left[ \Pb_n^\pm(\F_p)\backslash\MD_k^{\mathrm{pr},1}(\F_p^n)\right]\otimes_{\Q[T_k]}\Q^\mathrm{sgn}=\Image\partial_{k+1}\quad\text{for all $k\geq 1$.}
\end{equation*}
Together with \eqref{k}, we obtain
\[\redhom_k(\Pb_n^\pm(\F_p)\backslash \SBD(\F_p^n);\Q)\cong 0\quad\text{for all $k\leq n-2$},\]and \[\redhom_{n-1}(\Pb_n^\pm(\F_p)\backslash \SBD(\F_p^n);\Q)\cong\Q\left[ \Pb_n^\pm(\F_p)\backslash\MD_k^{\mathrm{pr},1}(\F_p^n)\right]\otimes_{\Q[T_k]}\Q^\mathrm{sgn}.\qedhere\]
\end{proof}

\begin{corollary}\label{BD,p=3,5}
    Let $p$ be a prime and $n\geq 2$ such that $p\leq 2n-1$. Then, \[\redhom_{n-1}\left( \Pb_n^\pm(\F_p)\backslash \SBD(\F_p^n);\Q\right)\cong 0.\] 
\end{corollary}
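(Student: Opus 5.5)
By \autoref{hom:B} (for $n\geq 3$), the group $\redhom_{n-1}$ equals $\Q\left[\Pb_n^\pm(\F_p)\backslash\MD(\F_p^n)_{n-1}^{\mathrm{pr},1}\right]\otimes_{\Q[T_{n-1}]}\Q^\mathrm{sgn}$. Its basis is indexed by the orbits $\D_2^{n-1,n}(\underline{a})\cdot T_{n-1}$ for which $\underline{a}=(a_0,\dots,a_{n-1})$ has all entries nonzero, satisfies $a_i\neq\pm a_j$ for $i\neq j$, and does not satisfy the $(\lambda,m)$-condition. So the plan is to show that this set of orbits is empty whenever $p\leq 2n-1$. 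The cases $n=2$ (that is, $p\leq 3$) and $p=2$ are already covered: \autoref{n=2,BA} gives $\redhom_1=0$ for $p\in\{2,3\}$, and \autoref{BD,p=2} handles $p=2$. So I would assume $n\geq 3$ and $p$ odd.

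The key step is a pigeonhole count. The nonzero residues in $\F_p^\times$ split into $\frac{p-1}{2}$ classes $\{x,-x\}$. The condition ``all $a_i\neq 0$ and $a_i\neq\pm a_j$ for $i\neq j$'' asks for $n$ entries lying in $n$ distinct classes. This needs $n\leq\frac{p-1}{2}$, that is, $p\geq 2n+1$. Hence for $p\leq 2n-1$ (and indeed for $p\leq 2n$) some pair satisfies $a_i=\pm a_j$. By \autoref{D2,eq,rv} the corresponding simplex is then orientation-reversing, so $\MD(\F_p^n)_{n-1}^{\mathrm{pr},1}=\emptyset$. The quoted formula therefore gives $\redhom_{n-1}\cong 0$.

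The only step requiring care is the boundary case $n=2$ combined with $p=3$, where \autoref{hom:B} only asserts the vanishing in degrees $\leq n-2$. There I would invoke \autoref{n=2,BA} directly. Alternatively, I would run the same argument on the top-degree chains: for $n=2$, $p=3$ there are no orientation-preserving $1$-simplices with both entries nonzero, so the top-degree kernel computed in the proof of \autoref{hom:B} vanishes as well. Apart from this, the argument is a direct count and I do not expect any further obstacle.
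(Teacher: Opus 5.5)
Your argument is correct and is essentially the paper's: the pigeonhole count on the $\frac{p-1}{2}$ classes of $\F_p^\times/\{\pm1\}$ rules out every top-degree generator $\D_2^{n-1,n}(\underline{a})$ described in \autoref{hom:B}, and $p=2$ is immediate. One caveat on your $n=2$ step: \autoref{n=2,BA} concerns the augmented quotient $\Gamma_0^\pm(p)\backslash\SBA(\Z^2)$, so it gives the claim for $\Pb_2^\pm(\F_p)\backslash\SBD(\F_p^2)$ only after you note that there are no orientation-preserving $2$-simplices when $p\in\{2,3\}$ (\autoref{facecount}). Your alternative direct check of the top-degree kernel is the cleaner route, and it is more careful than the paper, which applies \autoref{hom:B} even though that result's top-degree formula is stated only for $n\geq 3$.
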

\begin{proof}
    If $p=2$, then $\F_2^\times=\{1\}$, so for any choice of $(a_0,\dots,a_{n-1})$ we necessarily have $a_i=\pm a_j$ for $i\neq j$. Hence every generator vanishes in this case.

Now assume that $p$ is odd. The quotient $\F_p^\times/\{\pm1\}$ has cardinality $(p-1)/2$. Since $p\leq 2n-1$, we have
\[
\frac{p-1}{2}\leq n-1.
\]
Thus, among any $n$ elements $a_0,\dots,a_{n-1}\in\F_p^\times$, there exist distinct indices $i\neq j$ such that $a_i=\pm a_j$. Consequently, no element $\D_2^{n-1,n}(a_0,\dots,a_{n-1})$ can satisfy $a_i\neq \pm a_j$ for all $i\neq j$. Therefore the result follows by \autoref{hom:B}.
\end{proof}

\subsection{Homology of the complex of partial augmented frames}
We now prove a high acyclicity result for  $\Gamma_{0,n}^\pm(p)\backslash \SBA(\Z^n)$.

\begin{theorem} \label{hom:BA} Let $p$ be a prime and $n\geq 2$ such that $p\in\{2,3,5,7,13\}$ or $p\leq 6n-8$.
    \[\redhom_k(\Gamma_{0,n}^\pm(p)\backslash\SBA(\Z^n);\Q)\cong 0\quad\text{for $k\leq n-1$}.\]
\end{theorem}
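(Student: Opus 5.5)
By \autoref{good q}, it suffices to work with $Y=\Pb_n^\pm(\F_p)\backslash\SBDA(\F_p^n)$. Let $Y'=\Pb_n^\pm(\F_p)\backslash\SBD(\F_p^n)$ be the subcomplex of standard simplices. I would use the long exact sequence of the pair $(Y,Y')$. By \autoref{hom:B}, $\redhom_k(Y';\Q)=0$ for $k\le n-2$, and $\redhom_{n-1}(Y';\Q)$ is spanned by the classes $\D_2^{n-1,n}(\underline a)\otimes 1$ for $\underline a$ in $\MD(\F_p^n)_{n-1}^{\mathrm{pr},1}$. Hence the claim reduces to two statements:
\begin{itemize}
\item[(i)] $\homology_k(Y,Y';\Q)=0$ for $k\le n-1$;
\item[(ii)] the connecting map $\homology_n(Y,Y';\Q)\to\redhom_{n-1}(Y';\Q)$ is surjective. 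Equivalently, every class $\D_2^{n-1,n}(\underline a)\otimes1$ with all $a_i\neq0$ and $a_i\neq\pm a_j$ is a boundary of additive $n$-simplices, modulo standard boundaries.
\end{itemize}

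For (i), the relative chains are spanned by orientation-preserving additive simplices. For $p\neq3$ these are described by \autoref{cor:add,pr}. For $p=3$, \autoref{p=3} together with \autoref{DA1} shows that all additive simplices in the relevant range are reversing, so the relative complex vanishes for $k<n$. For $p\ne 3$ I would mimic the proof of \autoref{hom:B}, splitting the relative chains via \autoref{rmk,prr} into the parts $\mathrm{pr},0$ and $\mathrm{pr},1$:
\begin{itemize}
\item A generator $\DA_2^{k,n}(\underline a)$ with all $a_i\neq0$ for $i\ge3$ has faces that either drop one of $v_0,v_1,v_2$, landing in $Y'$ (zero relatively), or drop some $v_i$ with $i\ge3$, landing in $\DA_1$, which is reversing by \autoref{DA1}. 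So such generators are cycles.
\item A generator with a unique zero entry $a_{i_0}$, $i_0\ge 3$, maps to $\pm$ the generator with that entry forgotten.
\end{itemize}
\autoref{bij'} then gives exactness in degrees $3\le k\le n-1$. The bottom relative degree $k=2$ needs separate care: there, $\DA_2^{2,n}(a_0,a_1,a_2)$ with $a_i\ne\pm a_j$ is a relative cycle that is not hit. By the $n=2$ counting in \autoref{facecount}, such orientation-preserving triples exist exactly when $p\notin\{2,3,5,7,13\}$ modulo $\approx$. So for general $p$ I expect the relative group to be nonzero, and I would instead work directly with the absolute complex of $Y$, rather than relying on (i) alone.

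So the refined plan is to compute $\redhom_k(Y)$ directly with a filtration by the number of indices $i\ge3$. The basic building blocks are additive triples $(a_0,a_1,a_2)$ with pairwise $a_i\ne\pm a_j$, possibly extended by further entries. In degree $2$, these relative cycles must be killed by boundaries of $3$-simplices $(a_0,a_1,a_2,0)$, which in turn requires that such $3$-simplices be orientation-preserving (no $(\lambda,m)$-type symmetry) and exist.

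The remaining task is to show that every non-boundary in degrees $\le n-1$ is a cone on a zero entry. This is a pigeonhole count on $\F_p^\times/\{\pm1\}$, analogous to \autoref{BD,p=3,5}. An orientation-preserving generator in degree $k$ needs roughly $k+1$ pairwise non-$\pm$ values with further constraints on the triple, and the bound $p\le 6n-8$ should be exactly what prevents top-degree survivors. I expect the main obstacle to be the bookkeeping of the interaction between the $\{0,1,2\}$ block symmetries $G_k$ (including the $\approx$ relations and their $\lambda$ of order dividing $6$) and the $(\lambda,m)$-condition, in particular verifying surjectivity (ii). I would first try to show that each top class $\D_2^{n-1,n}(\underline a)$ is $\pm\partial$ of an additive $n$-simplex obtained by splitting one coordinate $a_i=a'+a''$ into a triple, choosing $a',a''$ so that the resulting simplex is orientation-preserving. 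The count of available splittings is what should produce the threshold $6n-8$, with the small exceptional primes handled by the $n=2$ computation in \autoref{n=2,BA}.
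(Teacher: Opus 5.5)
Your reduction to (i) and (ii) through the pair $(Y,Y')$ is the right one, and it is the paper's. But you then abandon (i) on a false premise. For $n\ge 3$ the relative $2$-cycle $\DA_2^{2,n}(a_0,a_1,a_2)\otimes 1$ is a boundary, because the simplex $\DA_2^{3,n}(a_0,a_1,a_2,0)$ does the job:
\begin{itemize}
\item it exists, since $3\le n$;
\item it is orientation-preserving by \autoref{cor:add,pr}, since the block $\{3\}$ contributes no symmetry;
\item its faces omitting $v_0,v_1,v_2$ are standard, so they vanish relatively;
\item its face omitting $v_3$ is $\DA_2^{2,n}(a_0,a_1,a_2)$.
\end{itemize}
This is exactly \autoref{bij'} at $k=2$. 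Hence $\homology_k(Y,Y';\Q)=0$ for all $2\le k\le n-1$, and no filtration of the absolute complex is needed. The count in \autoref{facecount} concerns the top degree $k=n=2$, not the range $k\le n-1$; moreover, orientation-preserving triples exist for $p=7,13$ as well.

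The real gap is (ii), where you give only a heuristic, and your criterion (make the new simplex orientation-preserving) is not the relevant one. What is needed is the following.
\begin{itemize}
\item Using $T_{n-1}$, which changes a generator only by a sign, normalize a top generator so that $a_0-a_1=\pm a_\ell$ for some $\ell\ge 2$.
\item Take the additive $n$-simplex with $b_1=3^{-1}(2a_0-a_1)$, $b_2=3^{-1}(2a_1-a_0)$, $b_0=-b_1-b_2$, and $b_i=a_{i-1}$ for $i\ge 3$. The relation itself forces $b_0,b_1,b_2$ to be pairwise non-$\pm$, so this is a relative cycle.
\item Its image under the connecting map is
\[\D_2^{n-1,n}(a_0,a_1,a_2,\dots)\otimes 1-\D_2^{n-1,n}(-a_0,a_1-a_0,a_2,\dots)\otimes 1+\D_2^{n-1,n}(-a_1,a_0-a_1,a_2,\dots)\otimes 1 .\]
\item The last two terms vanish only because they contain a repeated $\pm$-entry.
\end{itemize}

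So you must show that every $n$-tuple of pairwise non-$\pm$ classes in $\F_p^\times$ contains a relation $a_i\pm a_j\pm a_\ell=0$ with distinct indices. A pigeonhole count only reaches $p\le 2n-1$, where the top group is already zero by \autoref{BD,p=3,5}. The threshold $6n-8=3\cdot 2n-8$ comes from the Dias da Silva--Hamidoune restricted-sumset bound (\autoref{DH}) applied to $A=\{\pm a_i\}$. Without it, or an equivalent theorem, the step is unproved.

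Finally, the case $(n,p)=(3,13)$ is not covered by $p\le 6n-8$, which for $n=3$ means $p\le 10$. Contrary to your last sentence, it is not covered by the $n=2$ computation of \autoref{n=2,BA} either. For example, $\D_2^{2,3}(1,2,5)\otimes 1$ admits no relation $\pm 1\pm 2\pm 5=0$ in $\F_{13}$, so no single additive simplex kills it. One needs an explicit computation with linear combinations, such as $\partial\bigl(\DA_2^{3,3}(4,3,6,5)\otimes 1\bigr)=2\,\D_2^{2,3}(1,2,5)\otimes 1-\D_2^{2,3}(1,2,3)\otimes 1$, as in \autoref{BA ctd}.
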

Recall from \autoref{good q} that there is an isomorphism\[\Gamma_{0,n}^\pm(p)\backslash\SB(\Z^n)\cong \Pb_n^\pm(\F_p)\backslash\SBD(\F_p^n).\]Thus, it suffices to prove all our claims for $\Pb_n^\pm(\F_p)\backslash\SBD(\F_p^n)$.

We will treat the cases $p=2$ and $p\neq 2$ separately, starting with $p=2$. 
 \begin{proposition}\label{BDA,p=2}
     Let $n\geq 2$. \[\redhom_k\left(\Pb_n^\pm(\F_2)\backslash\SBD(\F_2^n);\Q\right)\cong 0\quad\text{for all $k$}\]
 \end{proposition}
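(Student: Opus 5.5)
The statement concerns the quotient $\Pb_n^\pm(\F_2)\backslash\SBD(\F_2^n)$, the same complex treated in \autoref{BD,p=2}. I will re-derive the vanishing directly from the chain complex rather than just cite that proposition. By \autoref{coinv} and \autoref{lem:goodmat}, $\redchain_k\left(\Pb_n^\pm(\F_2)\backslash\SBD(\F_2^n);\Q\right)$ has a basis indexed by $\Pb_n^\pm(\F_2)\backslash\MD(\F_2^n)_k^{\mathrm{pr}}/T_k$. The plan is to show that this chain complex is $0\to\Q\to\Q^2\to\Q$ in degrees $1,0,-1$, and then check exactness by hand.

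\emph{Degrees $k\geq 2$.} By \autoref{BD/Pn}(d), every orbit in $\MD(\F_2^n)_k$ is either $\D_1^{k,n}$ or some $\D_2^{k,n}(\underline{a})$.
\begin{itemize}
\item The orbit $\D_1^{k,n}$ exists only for $k<n-1$, and in that range it is orientation-reversing by \autoref{S1}.
\item For $\D_2^{k,n}(\underline{a})$, the vector $\underline{a}\in\F_2^{k+1}$ has at least three entries, all in $\{0,1\}$. So $a_i=a_j$ for some $i\neq j$, and the orbit is orientation-reversing by \autoref{D2,eq,rv}.
\end{itemize}
Hence $\redchain_k=0$ for $k\geq 2$.

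\emph{Degree $1$.} The orbit $\D_1^{1,n}$ is reversing by \autoref{S1} when $n\geq 3$, and it does not exist when $n=2$. The orbit $\D_2^{1,n}(1,1)$ is reversing by \autoref{D2,eq,rv}. The orbits $\D_2^{1,n}(1,0)$ and $\D_2^{1,n}(0,1)$ are identified under $T_1$ by \eqref{D2,rel2}. So $\redchain_1$ is spanned by the single element $\D_2^{1,n}(1,0)\otimes 1$.

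\emph{Degree $0$.} \autoref{lemm:vert} gives $\redchain_0=\Q[\D_1^{0,n}\otimes1,\D_2^{0,n}(1)\otimes1]$.

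Next I compute $\partial_1$ on the degree-$1$ generator. For $B=(v_0|v_1)\in\D_2^{1,n}(1,0)$, the vector $e_1$ is a multiple of $v_0$. Therefore the face $v_0$ lies in $\D_2^{0,n}(1)$, while $e_1\notin\operatorname{span}\{v_1\}$, so the face $v_1$ lies in $\D_1^{0,n}$. This gives
\[
\partial_1\left(\D_2^{1,n}(1,0)\otimes1\right)=\pm\left(\D_1^{0,n}\otimes1-\D_2^{0,n}(1)\otimes1\right)\neq 0 .
\]
In particular the generator is a nonzero basis element, and $\partial_1$ is injective, so $\redhom_1=0$ and all higher homology vanishes trivially.

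For the lower degrees, the augmentation $\varepsilon$ is surjective, so $\redhom_{-1}=0$. Its kernel is one-dimensional and spanned by $\D_1^{0,n}\otimes1-\D_2^{0,n}(1)\otimes1$, which is exactly $\operatorname{im}\partial_1$, so $\redhom_0=0$. Thus $\redhom_k=0$ for all $k$.

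The only point requiring care is the degree-$1$ bookkeeping. One must confirm that $\D_1^{1,n}$ is reversing (or absent when $n=2$), and that the surviving generator maps to a nonzero boundary. The computation above handles both.
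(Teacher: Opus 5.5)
As literally printed, this statement is word for word \autoref{BD,p=2}, and your argument is a correct proof of it. It is essentially the paper's own proof of \autoref{BD,p=2}; you just treat the orbits $\D_1^{k,n}$ and the degree-$1$ count a little more explicitly.

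However, the $\SBD$ in this statement is a typo for $\SBDA$. Several things in the paper show this:
the label;
its placement as the $p=2$ case of \autoref{hom:BA};
the paper's proof, whose conclusion is $\redhom_k(\Pb_n^\pm(\F_2)\backslash\SBDA(\F_2^n);\Q)\cong 0$ for all $k$;
and its later citations in the proofs of \autoref{BA connected}, \autoref{BA connected'} and \autoref{mapcomp}, which all concern the complex of partial augmented frames.
Measured against that, your proposal only reproduces the input the paper cites for the subcomplex. It never addresses the additive simplices, which is where the content of this proposition lies.

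The missing step is short.
\begin{enumerate}
\item Work with the pair $\rel_n(2)=\left(\Pb_n^\pm(\F_2)\backslash\SBDA(\F_2^n),\Pb_n^\pm(\F_2)\backslash\SBD(\F_2^n)\right)$. By \autoref{coinv} and \autoref{lem:goodmat}, $\C_k(\rel_n(2);\Q)$ has a basis indexed by $\Pb_n^\pm(\F_2)\backslash\MDA(\F_2^n)_k^{\mathrm{pr}}/G_k$. So it suffices to show that every additive simplex is orientation-reversing.
\item For $B\in\DA_1^{k,n}$, use the argument of \autoref{DA1}: $\DA_1^{k,n}$ is a single $G_k$-stable $\Pb_n^\pm(\F_2)$-orbit, and $G_k$ contains odd elements. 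This argument works for all $2\leq k<n$, although the lemma is stated only for $k<n-1$.
\item For $B\in\DA_2^{k,n}(\underline{a})$ with $\underline{a}\in A_k$, the relation $a_0+a_1+a_2=0$ in $\F_2$ forces $a_i=a_j$ for some $i\neq j$ in $\{0,1,2\}$. Then $X=((i\,j),1,\Id,1,\dots,1)\in G_k$ is odd and satisfies $\underline{a}\cdot X=\underline{a}$. By \autoref{BDA/Pn}(c),(d), there is $A\in\Pb_n^\pm(\F_2)$ with $A\cdot B=B\cdot X$.
\item Hence $\C_*(\rel_n(2);\Q)=0$. The long exact sequence of the pair, combined with your computation, gives $\redhom_k(\Pb_n^\pm(\F_2)\backslash\SBDA(\F_2^n);\Q)\cong\redhom_k(\Pb_n^\pm(\F_2)\backslash\SBD(\F_2^n);\Q)=0$ for all $k$. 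This is exactly the paper's argument.
\end{enumerate}
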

 \begin{proof}
We consider the following chain complex of the pair $\rel_n(2)=\left(\Pb_n^\pm(\F_2)\backslash\SBDA(\F_2^n), \Pb_n^\pm(\F_2)\backslash\SBD(\F_2^n)\right)$,\[
\oset{$n$}{\C_{n}\left(\rel_n(2);\Q\right)} \overset{\partial_{n}}\longrightarrow \oset{$n-1$}{\C_{n-1}\left(\rel_n(2);\Q\right)} \overset{\partial_{n-1}}\longrightarrow\dots\overset{\partial} \longrightarrow \oset{$2$}{\C_{2}\left(\rel_n(2);\Q\right)} \longrightarrow \oset{$1$}{0} \longrightarrow \oset{$0$}{0}
\] where $\partial_{k}$ is the differential. From \autoref{BD,p=2}, we already know that \[\redhom_k\left(\Pb_n^\pm(\F_2)\backslash\SBD(\F_2^n);\Q\right)\cong 0\quad\text{for all $k$}.\]
Thus, it suffices to prove that  \begin{equation}\label{C}\C_k\left(\rel_n(2);\Q\right)\cong 0\quad\text{for all $k\geq 2$}.\end{equation}
Then, applying the long exact sequence of the pair $\rel_n(2)$, together with \eqref{C}, we conclude that\[ \redhom_k\left( \Pb_n^\pm(\F_2)\backslash\SBDA(\F_2^n);\Q\right)\cong 0\quad\text{for all $k$}.\] 
As \[\C_k\left( \rel_n(2);\Q\right)=\redchain_k\left( \Pb_n^\pm(\F_2)\backslash\SBDA(\F_2^n);\Q\right)~/~\redchain_k\left( \Pb_n^\pm(\F_2)\backslash\SBD(\F_2^n);\Q\right), \]it implies by \autoref{coinv} that $\C_k(\rel_n(2);\Q)$ has a non-canonical basis bijective to  \[\Pb_n^\pm(\F_2)\backslash\left(\SBDA(\F_2^n)_k^\mathrm{pr}-\SBD(\F_p^2)_k^\mathrm{pr}\right)/\Sigma_{k+1},\] which is again bijective by \autoref{lem:goodmat} to $\Pb_n^\pm(\F_2)\backslash\MDA(\F_2^n)_k^\mathrm{pr}/G_k.$ We have from \autoref{cor:add,pr} that $B\in\MDA(\F_p)_k^\mathrm{pr}$ if and only if $B\in\DA_2^{k,n}(\underline{a})$ for $\underline{a}=(a_0,\dots,a_k)\in A_k$ such that $a_i \neq \pm a_j$ for all $i \neq j\leq 2$ and $i,j\geq 3$.  

As $p=2$ and $k\geq 2$, there exist some $i\neq j\geq 2$ with $a_i=a_j$. This implies that \[\Pb_n^\pm(\F_2)\backslash\MDA(\F_2^n)_k^\mathrm{pr}/G_k\] is empty and so
\[\C_k\left(\rel_n(2);\Q\right)\cong 0\quad\text{for all $k\geq 2$}.\qedhere\]
\end{proof}
We now treat the case $p\neq 2$. The proof consists of three steps. In the first step, we show that\[\redhom_k\left(\Pb_n^\pm(\F_p)\backslash\SBDA(\F_p^n);\Q\right)\cong 0~\text{for all $k\leq n-2$ and all primes $p$}.\] In the second step, we show that \[\redhom_{n-1}\left(\Pb_n^\pm(\F_p)\backslash\SBDA(\F_p^n);\Q\right)\cong 0\quad\text{for $n\geq 3$ and $p\leq 6n-8$}.\] Finally, in the third step, we show that \[\redhom_{n-1}\left(\Pb_n^\pm(\F_p)\backslash\SBDA(\F_p^n);\Q\right)\cong 0\quad\text{for $n=3$ and $p=13$}.\] 

\begin{remark} The range from the second step already covers all cases with $n \geq 3$ and $p \leq 7$, as well as $n \geq 4$ and $p = 13$. Together with the third step (the case $n=3, p=13$) and \autoref{n=2,BA} (which treats $n=2$ for $p\in\{2,3,5,7,13\}$), this establishes the result for every $n$ when $p\in\{2,3,5,7,13\}$.
\end{remark}
In addition to proving vanishing results, our approach for the first step (\autoref{BA connected}) gives an explicit description of a generating set for the top relative homology group \[\homology_{n}\left( \Pb_n^\pm(\F_p)\backslash \SBDA(\F_p^n),\Pb_n^\pm(\F_p)\backslash \SBD(\F_p^n);\Q\right),\]which we use in the proofs of \autoref{BA connected'} and \autoref{BA ctd}.
\begin{lemma}\label{BDA:p=3}
    Let $n\geq 2$ and $p=3$. \[\redhom_k\left(\Pb_n^\pm(\F_3)\backslash\SBDA(\F_3^n);\Q\right)\cong 0\quad\text{for all $k$}.\]
\end{lemma}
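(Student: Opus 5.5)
The strategy mirrors \autoref{BDA,p=2}. We compare $\Pb_n^\pm(\F_3)\backslash\SBDA(\F_3^n)$ with its subcomplex $\Pb_n^\pm(\F_3)\backslash\SBD(\F_3^n)$ through the relative pair
\[\rel_n(3)=\left(\Pb_n^\pm(\F_3)\backslash\SBDA(\F_3^n),\ \Pb_n^\pm(\F_3)\backslash\SBD(\F_3^n)\right),\]
and show two things. First, the relative chain groups vanish. Second, the subcomplex is $\Q$-acyclic. The long exact sequence of the pair then gives $\redhom_k(\Pb_n^\pm(\F_3)\backslash\SBDA(\F_3^n);\Q)\cong 0$ for all $k$.

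\textbf{Relative chains vanish.} By \autoref{coinv} and \autoref{lem:goodmat}, $\C_k(\rel_n(3);\Q)$ has a basis in bijection with $\Pb_n^\pm(\F_3)\backslash\MDA(\F_3^n)_k^\mathrm{pr}/G_k$ for $2\le k\le n$, and these groups are zero for $k\le 1$. By \autoref{BDA,p=3}(d), every orbit is either $\DA_1^{k,n}$ (present only for $k<n$) or some $\TA_2^{k,n}(\underline a)$.
\begin{itemize}
\item For the orbits $\TA_2^{k,n}(\underline a)$, \autoref{p=3} shows that every such additive simplex is orientation-reversing.
\item For $\DA_1^{k,n}$ with $2\le k<n$, the argument of \autoref{DA1} applies verbatim. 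It uses only that $\DA_1^{k,n}\cdot X=\DA_1^{k,n}$, which \autoref{BDA,p=3}(c) gives, together with transitivity on $\DA_1^{k,n}$, noted in the proof of \autoref{BDA/Pn} to hold for all $p$. We then pick any $X\in G_k$ of sign $-1$; for instance $X=((1\,2),1,\Id,1,\dots,1)$ works even when $k=2$. This gives orientation reversal.
\end{itemize}
Hence $\MDA(\F_3^n)_k^\mathrm{pr}=\emptyset$ and $\C_k(\rel_n(3);\Q)=0$ for all $k$.

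\textbf{The subcomplex is acyclic.} By \autoref{hom:B}, $\redhom_k(\Pb_n^\pm(\F_3)\backslash\SBD(\F_3^n);\Q)=0$ for $k\le n-2$. For $k=n-1$, \autoref{BD,p=3,5} applies since $3\le 2n-1$ whenever $n\ge2$. For $n=2$ one may instead cite \autoref{n=2,BA} directly for $\SBA$. Combining this with the vanishing relative chains in the long exact sequence finishes the proof.

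\textbf{Main obstacle.} The delicate step is the uniform orientation-reversal claim for $\DA_1^{k,n}$ and $\TA_2^{k,n}$ at $p=3$, particularly the edge cases $k=2$ and $k=n$. At $k=n$, $\DA_1$ is absent and only \autoref{p=3} is needed. The one thing to verify is that \autoref{p=3} covers $k=n$ and not merely $k\le n-1$; its statement allows $2\le k\le n$, so this holds.
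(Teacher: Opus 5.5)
Your proposal is correct and follows essentially the same route as the paper: every additive simplex is orientation-reversing (via \autoref{DA1} and \autoref{p=3}), so the relative chains of $\rel_n(3)$ vanish; the $\SBD$-subcomplex is acyclic by \autoref{hom:B} and \autoref{BD,p=3,5}; and the long exact sequence of the pair finishes the argument. You also note that the argument of \autoref{DA1} extends verbatim to $k=n-1$, where $\DA_1^{n-1,n}$ is nonempty but the lemma as stated requires $k<n-1$; the paper's proof glosses over this case, so the extra care is worthwhile.
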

\begin{proof}
    Let $\sigma$ be an additive $k$-simplex in $\SBDA(\F_p)_k$. We have from \autoref{DA1} and \autoref{p=3}, that the action of $\Pb_n^\pm(\F_3)$ on $\sigma$ is orientation-reversing. Let $\rel_n(3)=\left(\Pb_n^\pm(\F_3)\backslash\SBDA(\F_3^n), \Pb_n^\pm(\F_3)\backslash\SBD(\F_3^n)\right)$. It then implies by \autoref{coinv}, that $\C_k\left( \rel_n(3);\Q\right)\cong 0$ for all $k$. Thus $\homology_k(\rel_n(3);\Q)\cong 0$ for all $k$. Moreover, by \autoref{BD,p=3,5} and \autoref{hom:B}, \[\redhom_k(\Pb_n^\pm(\F_3)\backslash\SBD(\F_3^n);\Q)\cong 0\quad\text{for all $k$}.\] 
It then follows by the long exact sequence of $\rel_n(3)$ that \begin{equation*}\redhom_k\left(\Pb_n^\pm(\F_3)\backslash\SBDA(\F_3^n);\Q\right)\cong 0\quad\text{for all $k$}.\qedhere\end{equation*} 
\end{proof}
\begin{proposition}[Step One]\label{BA connected}
    Let $p$ be a prime and $n\geq 3$. \[\redhom_k\left(\Pb_n^\pm(\F_p)\backslash\SBDA(\F_p^n);\Q\right)\cong 0\quad\text{for all $k\leq n-2$},\]and \[\redhom_{n}(\rel_n(p);\Q)\cong \Q\left[ \Pb_n^\pm(\F_p)\backslash\MDA_k^{\mathrm{pr},1}(\F_p^n)\right]\otimes_{\Q[G_k]}\Q^\mathrm{sgn}.\]
\end{proposition}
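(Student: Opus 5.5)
We will work with the pair $\rel_n(p)=\left(\Pb_n^\pm(\F_p)\backslash\SBDA(\F_p^n),\Pb_n^\pm(\F_p)\backslash\SBD(\F_p^n)\right)$ and combine the long exact sequence of this pair with \autoref{hom:B}. The cases $p=2$ and $p=3$ are already settled by \autoref{BDA,p=2} and \autoref{BDA:p=3}; there the relative chains vanish, so $\redhom_n(\rel_n(p);\Q)=0$. The right-hand side of the second claim is also zero, since $\MDA^{\mathrm{pr},1}$ is empty in those cases. We therefore assume $p\ge 5$. By \autoref{coinv}, \autoref{lem:goodmat} and \autoref{rmk,prr}, the relative chain group $\C_k(\rel_n(p);\Q)$, for $2\le k\le n$, has a basis indexed by
\[\Pb_n^\pm(\F_p)\backslash\MDA(\F_p^n)_k^{\mathrm{pr},0}/G_k\ \sqcup\ \Pb_n^\pm(\F_p)\backslash\MDA(\F_p^n)_k^{\mathrm{pr},1}/G_k,\]
and it vanishes for $k\le 1$. \autoref{cor:add,pr} describes these generators as classes $\DA_2^{k,n}(\underline a)\otimes 1$ with $a_i\neq\pm a_j$ within the blocks $\{0,1,2\}$ and $\{3,\dots,k\}$.

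The first step is to compute the relative differential, in the same way as in the proof of \autoref{hom:B}. A face of an additive simplex obtained by deleting an index $i\in\{0,1,2\}$ is a standard simplex, so it vanishes in the relative complex. A face obtained by deleting an index $i\ge 3$ is the additive simplex with $v_i$ removed. If $a_i\neq 0$, then $e_1$ is no longer in the span, and the face lies in $\DA_1^{k-1,n}$; this class is orientation-reversing by \autoref{DA1} (when $k-1<n-1$, which always holds here), so it is zero. If $a_i=0$, the face is $\pm\DA_2^{k-1,n}(\ldots,\widehat{a_i},\ldots)$. Consequently $\partial_k$ vanishes on the $\mathrm{pr},1$-part, and on the $\mathrm{pr},0$-part it is, up to sign, the bijection of \autoref{bij'} onto the $\mathrm{pr},1$-part in degree $k-1$. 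It follows that $\homology_k(\rel_n(p);\Q)=0$ for $k\le n-1$. In degree $n$ there is no $\mathrm{pr},0$-part, since $\MDA_n$ has full rank and cannot carry a zero coefficient with $e_1$ in the span in the required way; also $\C_{n+1}=0$. Hence
\[\homology_n(\rel_n(p);\Q)\cong \Q\left[\Pb_n^\pm(\F_p)\backslash\MDA(\F_p^n)_n^{\mathrm{pr},1}\right]\otimes_{\Q[G_n]}\Q^\mathrm{sgn},\]
which is the second claim.

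The second step is the long exact sequence
\[\homology_{k+1}(\rel_n(p);\Q)\to\redhom_k(\Pb\backslash\SBD)\to\redhom_k(\Pb\backslash\SBDA)\to\homology_k(\rel_n(p);\Q).\]
For $k\le n-2$ the term $\redhom_k(\Pb\backslash\SBD)$ vanishes by \autoref{hom:B}, and $\homology_k(\rel)$ vanishes by the first step. Therefore $\redhom_k(\Pb_n^\pm(\F_p)\backslash\SBDA(\F_p^n);\Q)=0$ for $k\le n-2$.

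I expect the main obstacle to be the bookkeeping in the differential: signs, the reordering needed to move the deleted index to the end, and the check that the image of a $\mathrm{pr},0$-generator really lands in the $\mathrm{pr},1$-part, including the $(\lambda,m)$-type condition implicit in \autoref{prr,0}. The cleanest route is to compare directly with \autoref{bij'}, which is already phrased for exactly this map.
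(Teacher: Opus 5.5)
Your route is the paper's own: the same pair $\rel_n(p)$, the same splitting of the relative basis into $\mathrm{pr},0$ and $\mathrm{pr},1$ parts, the same face computation, \autoref{bij'} to identify kernels with images, and the long exact sequence with \autoref{hom:B}. The first claim goes through as you wrote it. It only needs $\homology_k(\rel_n(p);\Q)=0$ for $k\le n-2$, and that only involves the $\mathrm{pr},0$-parts in degrees $\le n-1$.

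\textbf{The degree-$n$ step is wrong.} It is not true that there is no $\mathrm{pr},0$-part in degree $n$. Full rank of $(v_1|\dots|v_n)$ does not prevent a zero coefficient at an index $\ge 3$; it just means $e_1$ lies in the span of the remaining vectors. For example, take $n=3$, $p=7$ and $\underline a=(1,2,4,0)\in A_3$. The matrix with $v_1=e_1-3e_2$, $v_2=e_2$, $v_3=e_3$, $v_0=-v_1-v_2$ lies in $\DA_2^{3,3}(\underline a)$. This orbit is orientation-preserving by \autoref{cor:add,pr}, since $1,2,4$ lie in distinct classes of $\F_7^\times/\{\pm1\}$. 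In general, \autoref{bij'} with $k=n-1$ (append a zero coefficient) puts the degree-$n$ $\mathrm{pr},0$-part in bijection with the degree-$(n-1)$ $\mathrm{pr},1$-part.

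Your claim also contradicts your own assertion that $\homology_{n-1}(\rel_n(p);\Q)=0$. If the degree-$n$ $\mathrm{pr},0$-part were empty, then $\partial_n=0$. In that case $\homology_{n-1}(\rel_n(p);\Q)$ would be the whole degree-$(n-1)$ $\mathrm{pr},1$-part, which is nonzero in the example above. That vanishing is recorded in \eqref{rel=0} and is used later in Step Two.

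\textbf{The fix} is to argue in degree $n$ exactly as in lower degrees, which is what the paper does. The map $\partial_n$ kills the $\mathrm{pr},1$-part. It sends the $\mathrm{pr},0$-basis, up to sign, bijectively onto the degree-$(n-1)$ $\mathrm{pr},1$-basis, so it is injective on the $\mathrm{pr},0$-part. Hence $\ker\partial_n$ is exactly the $\mathrm{pr},1$-part, and since $\C_{n+1}=0$ this equals $\homology_n(\rel_n(p);\Q)$.

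\textbf{A smaller point.} Take a degree-$n$ $\mathrm{pr},1$-generator and delete an index $i\ge 3$. The resulting face lies in $\DA_1^{n-1,n}$. So you need \autoref{DA1} for $k=n-1$, which is outside its stated range, and your parenthetical ``which always holds here'' fails for $k=n$. The same proof works there: $\Pb_n^\pm(\F_p)$ acts transitively on $\DA_1^{n-1,n}$, and $\DA_1^{n-1,n}\cdot X=\DA_1^{n-1,n}$ for all $X\in G_{n-1}$.
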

\begin{proof}
The case $p\leq 3$ has already been established in \autoref{BDA,p=2} and \autoref{BDA:p=3}. So we let $p\geq 5$. Consider the long exact sequence of the pair $\rel_n(p)=\left(\Pb_n^\pm(\F_p)\backslash\SBDA(\F_p^n), \Pb_n^\pm(\F_p)\backslash\SBD(\F_p^n)\right)$,  \[
\oset{$n$}{\C_n(\rel_n(p);\Q)} \overset{\partial_{n}}\longrightarrow\oset{$n-1$}{\C_{n-1}(\rel_n(p);\Q)} \overset{\partial_{n-1}}\longrightarrow \dots \longrightarrow \oset{$2$}{\C_{2}(\rel_n(p);\Q)} \overset{\partial_{2}}\longrightarrow \oset{$1$}0 \longrightarrow \oset{$0$} 0
\] where $\partial_{k}$ is the differential.
To prove our claim, we will prove that the image coincides with the kernel.
\autoref{coinv} says that \[\C_k(\rel_n(p);\Q)\cong\Q\left[\Pb_n^\pm(\F_p)\backslash\left(\SBDA(\F_p^n) _k-\SBD(\F_p^n)_k\right)\right]\otimes_{\Q[\Sigma_{k+1}]}\Q^\mathrm{sgn},\]with \begin{equation}\label{rvv'}\Pb_n^\pm(\F_p)\sigma\otimes 1=0\quad\text{if $\sigma\in \left(\SBDA(\F_p^n) _k-\SBD(\F_p^n)_k\right)^\mathrm{pr}$.}\end{equation}

From \autoref{lem:goodmat}, $\redchain_k\left(\Pb_n^\pm(\F_p)\backslash \SBDA(\F_p^n);\Q\right)$ has a non-canonical basis bijective to \[\Pb_n^\pm(\F_p)\backslash\left(\SBD(\F_p^n)_k-\SBD(\F_p^n)_k\right)^{\mathrm{pr}}/\Sigma_{k},\]which by \autoref{lem:goodmat} and \autoref{rmk,pr}, is again bijective to \[\Pb_n^\pm(\F_p)\backslash\MDA(\F_p^n)_k^{\mathrm{pr},0}/G_k\quad\bigsqcup\quad \Pb_n^\pm(\F_p)\backslash\MDA(\F_p^n)_k^{\mathrm{pr},1}/G_k.\]
Recall from \autoref{cor:add,pr} that $B\in\MDA(\F_p^n)_k^\mathrm{pr}$ if and only if $B\in\DA_2^{k,n}(\underline{a})$ for $\underline{a}=(a_0,\dots,a_k)\in A_k$ such that $a_i \neq \pm a_j$ for all $i \neq j\leq 2$ and $i,j\geq 3$.

Under the above identifications, we compute the differentials in terms of the elements $\DA_2^{k,n}(\underline{a})\otimes1\subset \Q\left[\Pb_n^\pm(\F_p)\backslash \MDA(\F_p^n)^{\mathrm{pr},i}_k\right]\otimes_{\Q[G_k]}\Q^\mathrm{sgn}$, for $i=0,1$, satisfying the above conditions.

We observe the following: For any \[B=(v_0|\dots|v_k) \in \DA_2^{k,n}(a_0,\dots,a_k),\] there exists $\lambda\in\F_p^\times$, such that \begin{equation}\label{e'}\lambda e_1=a_0v_0+\dots+a_kv_k.\end{equation}
Since $v_0+v_1+v_2=0$, we can write $\lambda e_1$ in several equivalent forms: \begin{equation}\begin{aligned}\label{e11}
    \lambda e_1&=(2a_1+a_2)v_1+(2a_2+a_1)v_2+a_3v_3+\dots+a_kv_k\\&=(2a_0+a_2)v_0+(2a_2+a_0)v_2+a_3v_3+\dots+a_kv_k\\&=(2a_0+a_1)v_0+(2a_1+a_0)v_1+a_3v_3+\dots+a_kv_k.
\end{aligned}\end{equation}

Moreover, if $i\in\{0,1,2\}$, \eqref{e'} implies that the $i^\mathrm{th}$ face map of $\partial_k$ sends $\DA_2^{k,n}(\underline{a})\otimes 1$ to $\D_2^{k-1,n}(\underline{c})\otimes 1$ for some $\underline{c}\in\F_p^{k}$. Since $\partial_k$ is the differential on the relative chain complex, then $\D_2^{k-1,n}(\underline{c})\otimes 1$ is zero.

We now consider the case where $a_{i} \neq 0$ for all $i\geq 3$. Then \eqref{e'} implies that the $i^\mathrm{th}$ face map of $\partial_k$ sends $\DA_2^{k,n}(\underline{a})\otimes 1$ to $\DA_1^{k-1,n}\otimes 1$, which is zero by \eqref{rvv'} and \autoref{cor:add,pr}. 

Now suppose that $a_{i_0}=0$ for some $i_0\geq 3$. Then $\partial_k$ sends $\DA_2^{k,n}(\underline{a})\otimes 1$ to \[\pm \DA_2^{k-1,n}(a_0,\dots,\widehat{a_{i_0}},\dots,a_k)\otimes 1.\]

Thus $\partial_k$ is zero on $\Q\left[ \Pb_n^\pm(\F_p)\backslash\MDA_k^{\mathrm{pr},1}(\F_p^n)\right]\otimes_{\Q[G_k]}\Q^\mathrm{sgn}$. We conclude from \autoref{bij'}, that for all $k\geq 2$,
\begin{equation*}
    \ker\partial_k=\Q\left[ \Pb_n^\pm(\F_p)\backslash\MDA_k^{\mathrm{pr},1}(\F_p^n)\right]\otimes_{\Q[G_k]}\Q^\mathrm{sgn}=\Image\partial_k.
\end{equation*}
  Therefore, \begin{equation}\label{rel=0}\homology_k(\rel_n(p);\Q)\cong 0\quad\text{for all $2\leq k\leq n-1$}.\end{equation}
  Since $\homology_k(\Pb_n^\pm(\F_p)\backslash \SBD(\F_p^n);\Q)\cong 0$ for all $k\leq n-2$ and all $p$ (by \autoref{hom:B}), it follows by the long exact sequence on $\rel_n$ that \[\redhom_k\left(\Pb_n^\pm(\F_p)\backslash \SBDA(\F_p^n);\Q\right)\cong 0\quad\text{for all $k\leq n-2$}.\]Additionally, \[\redhom_{n}(\rel_n(p);\Q)=\ker\partial_n= \Q\left[ \Pb_n^\pm(\F_p)\backslash\MDA_k^{\mathrm{pr},1}(\F_p^n)\right]\otimes_{\Q[G_k]}\Q^\mathrm{sgn}.\qedhere\]\end{proof} 
  Before proving \autoref{BA connected'}, we state a theorem by Dias da Silva and Hamidoune \cite{[DdSH}.
  \begin{theorem}
      [\cite{[DdSH}]\label{DH}Let $A\subseteq \F_p$. Define the set \[S_A=\{a_1+a_2+a_3\mid a_i\in A, a_i\neq a_j~\text{for all $i\neq j$}\}.\]Then $|S_A|\geq \min\{p,3|A|-8\}$.
\end{theorem}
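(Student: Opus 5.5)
The plan is to prove the bound by the polynomial method, through Alon's Combinatorial Nullstellensatz, following Alon--Nathanson--Ruzsa. This replaces the exterior-algebra argument of Dias da Silva--Hamidoune. Write $k=|A|$ and $m=\min\{p,3k-8\}$. If $m\leq 0$ there is nothing to prove, so assume $m\geq 1$; this forces $k\geq 3$. Since $A\subseteq\F_p$ we also have $k\leq p$, and the case $p=2$ is then vacuous, so we may take $p$ odd when needed.

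Suppose for contradiction that $|S_A|\leq m-1$. Choose a set $C\subseteq\F_p$ with $S_A\subseteq C$ and $|C|=m-1$; this is possible because $m-1\leq p-1$. Consider
\[
f(x_1,x_2,x_3)=(x_1-x_2)(x_1-x_3)(x_2-x_3)\prod_{c\in C}(x_1+x_2+x_3-c)\in\F_p[x_1,x_2,x_3],
\]
which has total degree $m+2$. Every point of $A^3$ is a zero of $f$: either two coordinates coincide, killing the Vandermonde factor, or the coordinates are distinct, so their sum lies in $S_A\subseteq C$.

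Next, pick exponents $c_1>c_2>c_3\geq 0$ with $c_1\leq k-1$ and $c_1+c_2+c_3=m+2$. Such exponents exist because $m+2\leq 3k-6=(k-1)+(k-2)+(k-3)$, so we can lower $(k-1,k-2,k-3)$ while keeping the entries distinct. The coefficient of $x_1^{c_1}x_2^{c_2}x_3^{c_3}$ in $f$ is the same as in $(x_1-x_2)(x_1-x_3)(x_2-x_3)(x_1+x_2+x_3)^{m-1}$, because it is a top-degree monomial. Expanding the Vandermonde as $\sum_{\sigma}\operatorname{sign}(\sigma)\,x^{\sigma(2,1,0)}$ and extracting multinomial coefficients gives the standard Alon--Nathanson--Ruzsa evaluation
\[
\frac{(m-1)!}{c_1!\,c_2!\,c_3!}\,(c_1-c_2)(c_1-c_3)(c_2-c_3).
\]
This is the identity $\det\bigl(1/(c_i-j)!\bigr)=\prod_{i<j}(c_i-c_j)/\prod c_i!$.

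This step is the main obstacle. The product formula itself is a routine determinant identity, so the real work is checking that the coefficient is nonzero mod $p$. This holds because $m-1<p$, every $c_i\leq k-1<p$, and every difference $0<c_i-c_j<k\leq p$, so no factor is divisible by $p$.

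Finally, apply the Combinatorial Nullstellensatz with $S_1=S_2=S_3=A$. Here $|A|=k>c_i$ for each $i$, and $\deg f=\sum c_i$ with a nonzero coefficient on $x_1^{c_1}x_2^{c_2}x_3^{c_3}$. The Nullstellensatz therefore produces a point of $A^3$ at which $f$ does not vanish, contradicting the vanishing established above. Hence $|S_A|\geq m=\min\{p,3|A|-8\}$.
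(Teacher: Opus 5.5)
Your argument is correct, and it takes a different route from the paper. The paper gives no proof of this statement: it imports the $h=3$ case of the Dias da Silva--Hamidoune theorem $|h^\wedge A|\ge\min\{p,\,h|A|-h^2+1\}$, whose original proof uses exterior algebra (Grassmann derivatives). What you have written is the Alon--Nathanson--Ruzsa reproof specialized to $h=3$, and all the bookkeeping checks out:
\begin{itemize}
\item $m\ge 1$ forces $k\ge 3$;
\item padding $S_A$ to a set $C$ with $|C|=m-1$ uses $m\le p$;
\item the choice of exponents uses $m+2\le 3k-6$;
\item the coefficient is nonzero mod $p$ because $(m-1)!\,(c_1-c_2)(c_1-c_3)(c_2-c_3)$ is prime to $p$. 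You do not actually need $c_i<p$ here, since the denominator $c_1!c_2!c_3!$ can simply be cleared.
\end{itemize}

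One cosmetic point. With columns ordered $j=0,1,2$ one has $\det\bigl(1/(c_i-j)!\bigr)=\prod_{i<j}(c_j-c_i)/\prod_i c_i!$, which for three variables is the negative of the identity you quoted. Your final coefficient formula $\frac{(m-1)!}{c_1!c_2!c_3!}(c_1-c_2)(c_1-c_3)(c_2-c_3)$ is nonetheless correct. The Vandermonde is $\det(x_i^{2-j})$, and reversing the columns restores the sign. In any case the sign is irrelevant for nonvanishing.

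Citing the result keeps the paper short and delivers the general-$h$ statement at no cost. Your route makes the paper self-contained, needing only the Combinatorial Nullstellensatz and a determinant identity. It also generalizes verbatim to arbitrary $h$: use the Vandermonde factor of degree $\binom{h}{2}$ and exponents $c_1>\dots>c_h$ with $c_1\le k-1$. This recovers exactly the Dias da Silva--Hamidoune bound.
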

 \begin{corollary}\label{bound}
   Let $p>5$ be a prime and let $n\geq \frac{p+8}{6}$. Then for any $a_1,\dots,a_n\in \F_p^\times/\{-1,1\}$ distinct classes, there exist distinct indices $i,j,\ell$ such that $a_i+a_j+a_{\ell}=0$.
\end{corollary}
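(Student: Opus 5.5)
The plan is to apply \autoref{DH} to a suitably chosen subset $A\subseteq\F_p$ built from signed representatives of the classes $a_1,\dots,a_n$. Fix representatives $a_1,\dots,a_n\in\F_p^\times$ of the $n$ distinct classes in $\F_p^\times/\{-1,1\}$, and let
\[A=\{a_1,\dots,a_n,-a_1,\dots,-a_n\}.\]
Since the classes are distinct and $p$ is odd, the $2n$ elements of $A$ are pairwise distinct, so $|A|=2n$. The hypothesis $n\geq \frac{p+8}{6}$ gives $3|A|-8=6n-8\geq p$. Hence \autoref{DH} yields $|S_A|\geq p$, that is $S_A=\F_p$, and in particular $0\in S_A$.

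Concretely, this means there are three pairwise distinct elements $x,y,z\in A$ with $x+y+z=0$. Write $x=\varepsilon_1 a_i$, $y=\varepsilon_2 a_j$, $z=\varepsilon_3 a_\ell$ with $\varepsilon_t\in\{\pm1\}$.

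The remaining step is to show that the indices $i,j,\ell$ are distinct. Suppose instead that two of them coincide, say $i=j$. Since $x\neq y$, we must have $y=-x$, and then $z=-(x+y)=0$. This contradicts $z\in A\subseteq\F_p^\times$. The other coincidences are handled identically. So $i,j,\ell$ are distinct, and $\varepsilon_1a_i+\varepsilon_2a_j+\varepsilon_3a_\ell=0$. Read in $\F_p^\times/\{\pm1\}$, this is the relation $a_i+a_j+a_\ell=0$ up to signs, which is exactly what the statement asserts (the statement must be interpreted with signs, since sums are not defined on classes).

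The only delicate points are this distinctness argument and checking that $|A|=2n$; both rely on $p$ being odd and all $a_i$ being nonzero. The hypothesis $p>5$ is not actually used beyond guaranteeing that $p$ is odd.
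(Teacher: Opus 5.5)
Your proof is correct and is essentially the paper's argument: the same symmetric set $A=\{\pm a_1,\dots,\pm a_n\}$ with $|A|=2n$, the same application of \autoref{DH} via $3|A|-8=6n-8\geq p$, and the same observation that if two members of the zero-sum triple came from the same class, the third would be $0\notin A$. You are also right to make explicit that the conclusion holds only up to signs on the representatives, which the paper leaves implicit. The paper invokes $p>5$ only to note that the hypothesis is non-vacuous ($\frac{p+8}{6}\leq\frac{p-1}{2}$), which is consistent with your remark that it is not logically needed beyond $p$ being odd.
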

\begin{proof}
     Let $A = \{ a_1,-a_1,\dots, a_n,-a_n\} \subseteq \F_p^\times$. So $|A| = 2n$. Since $p>5$, we have $\frac{p+8}{6}\leq \frac{p-1}{2}$. 
Thus the assumption $n\geq \frac{p+8}{6}$ is compatible with the existence of $n$ distinct classes in $\F_p^\times/\{\pm1\}$.

Moreover, since $
n \geq \frac{p+8}{6},$ it follows that
   \[|A| = 2n \geq \frac{2(p+8)}{6} = \frac{p+8}{3},\]
   and hence $3|A|-8 \geq p.$ 
   Now apply the Dias da Silva--Hamidoune \autoref{DH} on $A$, we obtain
   \[
   |S_A| = |\{a + b + c \mid a,b,c \in A \text{ are distinct}\}|\geq p. \]
   Thus $S_A = \F_p $. In particular, $0 \in S_A$, so there exist distinct elements $ b_i, b_j, b_\ell \in A$ such that  \[
   b_i + b_j + b_\ell = 0.\]
   By definition of $A$, each element $ b_m$ is of the form $ \pm a_k$ for some index $ k $. Since $b_i,b_j,b_{\ell}$ are distinct elements in $A$, two of them, say $b_i$ and $b_j$ can correspond to the same class in $\F_p^\times/\{-1,1\}$ only if they differ by sign, i.e. if $ b_i = -b_j $. However, if $b_i=-b_j$, then \[b_i+b_j+b_{\ell}=b_{\ell},\]which cannot be zero because $0\not\in A\subseteq\F_p^\times$. Thus no such cancellation can occur, and the elements $b_i,b_j,b_{\ell}$ must correspond to three distinct classes $ a_i, a_j, a_\ell \in\F_p^\times\setminus\{-1,1\}$ satisfying
   \[a_i+a_j+a_\ell =0,\]as required.
\end{proof}
  \begin{proposition}[Step Two]\label{BA connected'}
    Let $n\geq 3$ and $p\leq 6n-8$. Then \[\redhom_{n-1}\left(\Pb_n^\pm(\F_p)\backslash\SBDA(\F_p^n);\Q\right)\cong 0.\]
\end{proposition}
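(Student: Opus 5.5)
We will run the long exact sequence of the pair $\rel_n(p)=\left(\Pb_n^\pm(\F_p)\backslash\SBDA(\F_p^n),\Pb_n^\pm(\F_p)\backslash\SBD(\F_p^n)\right)$ in degrees $n$ and $n-1$:
\[
\homology_n(\rel_n(p);\Q)\xrightarrow{\ \delta\ }\redhom_{n-1}\left(\Pb_n^\pm(\F_p)\backslash\SBD(\F_p^n);\Q\right)\longrightarrow \redhom_{n-1}\left(\Pb_n^\pm(\F_p)\backslash\SBDA(\F_p^n);\Q\right)\longrightarrow \homology_{n-1}(\rel_n(p);\Q).
\]
By \eqref{rel=0} in the proof of \autoref{BA connected}, $\homology_{n-1}(\rel_n(p);\Q)=0$ for $p\geq 5$, since $2\leq n-1$. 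The cases $p\leq 3$ are already covered by \autoref{BDA,p=2} and \autoref{BDA:p=3}. It therefore suffices to show that the connecting map $\delta$ is surjective.

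Both ends have explicit bases. By \autoref{hom:B}, the target of $\delta$ is spanned by the classes $\D_2^{n-1,n}(\underline{a})\otimes 1$ in which all $a_i$ are nonzero, $a_i\neq\pm a_j$ for $i\neq j$, and the $(\lambda,m)$-condition fails. By \autoref{BA connected}, the source is spanned by the classes $\DA_2^{n,n}(\underline{c})\otimes 1$ with $\underline{c}\in A_n$ satisfying the nondegeneracy conditions of $\MDA^{\mathrm{pr},1}$.

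The connecting map is the boundary restricted to the faces lying in $\SBD$. These are the faces $d_0,d_1,d_2$, which delete one of $v_0,v_1,v_2$. Using the rewritings \eqref{e11}, the face deleting $v_i$ sends $\DA_2^{n,n}(c_0,\dots,c_n)$ to a $\D_2^{n-1,n}$ whose first two coefficients are the appropriate pair among $2c_1+c_2,\ 2c_0+c_2,\ 2c_0+c_1$, and whose remaining coefficients are $c_3,\dots,c_n$.

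The key step is to hit each generator. Take a target generator $\underline{a}=(a_0,\dots,a_{n-1})$. Since $p\leq 6n-8$ gives $n\geq\frac{p+8}{6}$, \autoref{bound} (for $p>5$) supplies distinct indices $i,j,\ell$ and signs with $\pm a_i\pm a_j\pm a_\ell=0$. After acting by $T_{n-1}$, we may assume $a_0+a_1+a_2=0$, up to the sign bookkeeping. The natural move is then to build a preimage by inverting the linear relation. Choose $\underline{c}\in A_n$ such that one face, say $d_0$, returns the pair $(a_0,a_1)$ with the tail $(a_2,\dots,a_{n-1})$. Since $p\neq 3$, solving $2c_1+c_2=a_0$, $2c_2+c_1=a_1$ is possible.

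The other two faces of $\underline{c}$ will then have to vanish in $\redchain_{n-1}$ of $\SBD$. Here the three-term relation $a_0+a_1+a_2=0$ is what we will exploit. It should force those faces to have coefficients with some $\pm$-coincidence, so that by \autoref{D2,eq,rv} they are orientation-reversing and hence zero. Alternatively, the relation can be used to show that the two extra faces reproduce the same generator up to sign.

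The case $p=5$, which \autoref{bound} excludes, is handled by \autoref{BD,p=3,5}: there $p\leq 2n-1$, so the $\SBD$ homology is already zero and $\delta$ is trivially surjective.

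The main obstacle is this triangle-face bookkeeping. We must choose $\underline{c}$ so that $\DA_2^{n,n}(\underline{c})$ is itself orientation-preserving, so that its remaining two $\SBD$-faces cancel or vanish, and so that the signs and the $(\lambda,m)$-condition are tracked correctly. If a direct choice does not work, we will instead show that the cokernel of $\delta$ is spanned by classes that are killed by such triangle relations, arguing generator by generator modulo the image.
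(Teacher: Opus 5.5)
Your overall plan matches the paper's: the long exact sequence of $\rel_n(p)$, the vanishing of $\homology_{n-1}(\rel_n(p);\Q)$, the case $p=5$ via \autoref{BD,p=3,5}, and surjectivity of $\delta$ through explicit preimages $\DA_2^{n,n}(\underline c)\otimes 1$ obtained from $2c_1+c_2=a_0$, $2c_2+c_1=a_1$, with \autoref{bound} supplying a three-term relation. However, the one step that carries the content fails as you set it up. If $\underline c$ is chosen so that the face deleting $v_0$ is $(a_0,a_1,a_2,\dots,a_{n-1})$, then by \eqref{e11} the faces deleting $v_1$ and $v_2$ are
\[
\D_2^{n-1,n}(-a_0,\,a_1-a_0,\,a_2,\dots,a_{n-1})\otimes 1,\qquad \D_2^{n-1,n}(-a_1,\,a_0-a_1,\,a_2,\dots,a_{n-1})\otimes 1 .
\]
Under your normalization $a_0+a_1+a_2=0$, a coincidence $a_1-a_0=\pm a_2=\mp(a_0+a_1)$ would force $a_0=0$ or $a_1=0$. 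So these faces are not forced to vanish, and in general they are other nonzero basis elements. Your first fallback also fails: for these faces to equal $\pm\D_2^{n-1,n}(\underline a)\otimes 1$ you would need something like $a_1-a_0=\pm a_1$. The last fallback, ``arguing generator by generator modulo the image'', is not an argument. The sign bookkeeping you postponed is exactly the content of the proof.

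The fix is to normalize the difference, not the sum. Sign changes in $T_{n-1}$ have $\sign=+1$, so they leave $\D_2^{n-1,n}(\underline a)\otimes 1$ unchanged, and permutations only change the overall sign. Use them to arrange $a_0-a_1=\pm a_2$ (or $\pm a_\ell$ for some $\ell\ge 2$). Then $a_1-a_0$ agrees up to sign with a tail entry, both extra faces vanish by \autoref{D2,eq,rv}, and $\delta\bigl(\DA_2^{n,n}(\underline c)\otimes 1\bigr)=\D_2^{n-1,n}(\underline a)\otimes 1$; this is exactly the paper's computation.

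This normalization also settles the orientation question you left open. Take $c_1=3^{-1}(2a_0-a_1)$, $c_2=3^{-1}(2a_1-a_0)$, $c_0=-c_1-c_2$, and $c_i=a_{i-1}$ for $i\ge 3$. The tail entries are nonzero and pairwise $\pm$-distinct. For $i,j\le 2$, the conditions $c_i\neq\pm c_j$ reduce to $a_0,a_1\neq 0$, $a_0\neq\pm a_1$, $a_1\neq 2a_0$ and $a_0\neq 2a_1$. If $a_1=2a_0$, the relation $a_0-a_1=\pm a_2$ gives $a_2=\pm a_0$; if $a_0=2a_1$, it gives $a_2=\pm a_1$. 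Both contradict $\pm$-distinctness. So $\DA_2^{n,n}(\underline c)\otimes 1$ satisfies \autoref{cor:add,pr} and is a genuine class in $\homology_n(\rel_n(p);\Q)$.
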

  \begin{proof} Let $n\geq 3$. We recall that we have already proven the statement for $p=2$ in \autoref{BDA,p=2}, and $p=3$ in \autoref{BDA:p=3}. We now treat the cases $p=5$ and $p>5$ separately, starting with $p=5$.
  \begin{itemize}
      \item If $p=5$, we can apply \autoref{BD,p=3,5} since $n\geq 3$ implies $5\leq 2n-1$. Thus $\redhom_{n-1}(\Pb_n^\pm(\F_5)\backslash\SBD(\F_5^n);\Q)\cong 0$. Additionally, we have just shown in \eqref{rel=0} that $\homology_{n-1}(\rel_n(5);\Q)\cong 0.$ Therefore, \[\redhom_{n-1}(\Pb_n^\pm(\F_5)\backslash\SBDA(\F_5^n);\Q)\cong 0.\]
      \item If $p>5$, we will show that the map \[\partial\colon \homology_n(\rel_n(p);\Q)\longrightarrow \redhom_{n-1}(\Pb_n^\pm(\F_p)\backslash \SBD(\F_p^n);\Q)\]is surjective for $5< p\leq 6n-8$. Since $\homology_{n-1}(\rel_n(p);\Q)\cong 0$ for all primes $p$ (by \eqref{rel=0}), it then follows that $ \redhom_{n-1}\left(\Pb_n^\pm(\F_p)\backslash\SBDA(\F_p^n):\Q\right)\cong 0$ for $5< p\leq 6n-8$. 
      
     By \autoref{hom:B}, $\redhom_{n-1}(\Pb_n^\pm(\F_p)\backslash \SBD(\F_p^n);\Q)$ is generated by elements \[\D_2^{n-1,n}(\underline{a})\otimes 1\subset \Q\left[ \Pb_n^\pm(\F_p)\backslash\MDA_k^{\mathrm{pr},1}(\F_p^n)\right]\otimes_{\Q[G_k]}\Q^\mathrm{sgn}\] for $\underline{a}=(a_0,\dots,a_{n-1})\in\F_p^n\setminus\{\underline{0}\}$ such that \begin{equation}\begin{aligned}\label{nonzero}&a_i\neq 0~\text{for all } i,\quad a_i\neq \pm a_j~\text{for all $i\neq j$},\\ &\underline{a}\text{ does not satisfy the \hyperref[def1]{$(\lambda,m)$-condition}}.\end{aligned}\end{equation} Let $\underline{a}\in\F_p^n\setminus\{\underline{0}\}$ satisfying \eqref{nonzero}, such that $a_i-a_j=\pm a_\ell$ for some distinct indices $i,j,\ell$. Without loss of generality, we assume $i=1$ and $j=2$. If necessary, we first replace $a_0$ or $a_1$ by its negative. Namely, if
$a_0=2a_1$, then we may replace $a_0$ by $-a_0=-2a_1$ since
\[\D_2^{n-1,n}(a_0,a_1,\dots,a_{n-1})\otimes 1
=\D_2^{n-1,n}(-a_0,a_1,\dots,a_{n-1})\otimes 1.\]
Similarly, if $a_1=2a_0$, then we replace $a_1$ by
$-a_1=-2a_0$. After making this replacement if necessary, we define $\underline{b}=(b_0,\dots,b_n)\in \F_p^{n+1}\setminus\{\underline{0}\}$ such that \[b_1=3^{-1}(2a_0-a_1),\]\[b_2=3^{-1}(2a_1-a_0),\]\[b_0=-b_1-b_2=3^{-1}(-a_0-a_1),\]\[b_i=a_{i-1}~\text{for all $i\geq 3$}.\]
  It implies by \eqref{nonzero} that $b_i\neq 0$ for all $i\geq 3$, and $b_i\neq \pm b_j$ for all $i,j\leq 2$ and $i,j\geq 3$. Thus, by \autoref{BA connected}, $\DA_2^{n,n}(\underline{b})\otimes 1\in \homology_n(\rel_n(p);\Q).$
  
  Now since $a_0-a_1=\pm a_\ell$ for some $\ell\geq 2$, \[\D_2^{n-1,n}(-a_0,a_1-a_0,\dots,a_{n-1})\otimes 1=\D_2^{n-1,n}(-a_1,a_0-a_1,\dots,a_{n-1})\otimes 1=0\]by \eqref{rvv} and \autoref{cor:st,pr}. We thus obtain by \eqref{e11}\begin{align*}\partial\left(\DA_2^{n,n}(\underline{b})\otimes 1\right)=&\D_2^{n-1,n}(a_0,\dots,a_{n-1})\otimes 1\\&-\D_2^{n-1,n}(-a_0,a_1-a_0,\dots,a_{n-1})\otimes 1\\&+\D_2^{n-1,n}(-a_1,a_0-a_1,\dots,a_{n-1})\otimes 1\\=&\D_2^{n-1,n}(a_0,\dots,a_{n-1})\otimes 1\end{align*}
  Consequently \[\D_2^{n-1,n}(a_0,\dots,a_{n-1})\otimes 1\in \Image \partial.\] 
  We deduce that if for every tuple $(a_0,\dots,a_{n-1})\in (\F_p/\{-1,1\})^n$ satisfying \eqref{nonzero}, there exist distinct indices $i,j,\ell$ such that \[a_i+a_j+a_{\ell}=0,\] then $\partial$ is surjective. 
Since $n\geq \frac{p+8}{6}$, it follows by \autoref{bound} that $\partial$ is surjective. Therefore, \[\homology_n(\Pb_n^\pm(\F_p)\backslash \SBDA(\F_p^n);\Q)\cong 0\quad\text{for $p\leq 6n-8$}.\qedhere\] \end{itemize} \end{proof}

\begin{proposition}[Step Three]\label{BA ctd}
    Let $n= 3$ and $p=13$. Then \[\redhom_{n-1}\left(\Pb_n^\pm(\F_p)\backslash\SBDA(\F_p^n);\Q\right)\cong 0.\]
\end{proposition}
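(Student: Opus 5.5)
The approach follows Step Two (\autoref{BA connected'}), except that the Dias da Silva--Hamidoune bound is replaced by an explicit check over $\F_{13}$. By \eqref{rel=0}, $\homology_{2}(\rel_3(13);\Q)\cong 0$. The long exact sequence of the pair $\rel_3(13)$ therefore reduces the claim to showing that the connecting map
\[\partial\colon \homology_3(\rel_3(13);\Q)\longrightarrow \redhom_2\left(\Pb_3^\pm(\F_{13})\backslash\SBD(\F_{13}^3);\Q\right)\]
is surjective.

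By \autoref{hom:B}, the target is spanned by the classes $\D_2^{2,3}(a_0,a_1,a_2)\otimes 1$ with $a_i\neq 0$, $a_i\neq\pm a_j$ for $i\neq j$, and $\underline a$ not satisfying the \hyperref[def1]{$(\lambda,m)$-condition}. The first step is to enumerate these generators. Since $k=2$ is even and all entries are nonzero, Case 2 of the $(\lambda,m)$-condition cannot occur. A generator is therefore just an unordered triple of three distinct classes in $\F_{13}^\times/\{\pm1\}=\{1,2,3,4,5,6\}$, taken up to scaling by $\F_{13}^\times$. There are $\binom{6}{3}=20$ such triples. Scaling acts on them through the cyclic group $\F_{13}^\times/\{\pm1\}$ of order $6$, which gives a small number of orbits.

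The second step reuses the construction from Step Two. If some $a_i+\epsilon_j a_j+\epsilon_\ell a_\ell=0$ for suitable signs, then the lift $\DA_2^{3,3}(\underline b)\otimes 1\in\homology_3(\rel_3(13);\Q)$ built there has $\partial$ equal to $\pm\D_2^{2,3}(\underline a)\otimes 1$, because the other two face terms vanish by \eqref{rvv} and \autoref{cor:st,pr}. So I would check orbit by orbit whether a representative satisfies $\pm a\pm b\pm c\equiv 0 \bmod 13$. For example, $\{1,2,3\}$ works via $1+2-3=0$, $\{1,3,4\}$ works, and $\{1,4,5\}$ works.

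The expected obstacle is an orbit with no signed-sum-zero relation. A candidate is $\{1,5,8\}\equiv\{1,5,5\}$, but that triple is not even admissible, so the real test is a triple like $\{1,3,9\}$, whose classes are the cube roots of unity. For $\{1,3,9\}$ one checks that $1\pm3\pm9\not\equiv0$ in every sign combination. Such a triple cannot be handled by a single additive simplex. For it I would use a relation in $\homology_2$ of $\Pb_3^\pm(\F_{13})\backslash\SBD(\F_{13}^3)$: either show directly that the class is orientation-reversing through a $3$-cycle, or show it is a combination of other boundaries. The second route would take $\partial$ of an additive simplex whose three face terms involve this triple together with two already-handled triples, for instance by choosing $\underline b$ in \eqref{e11} so that only one of the two extra faces is nonzero. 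A final check would then confirm that the number of remaining generators matches the rank of the image, which completes the surjectivity and hence the vanishing.
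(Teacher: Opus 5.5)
\textbf{There is a genuine gap.} Your reduction to surjectivity of $\partial\colon\homology_3(\rel_3(13);\Q)\to\redhom_2(\Pb_3^\pm(\F_{13})\backslash\SBD(\F_{13}^3);\Q)$, via \eqref{rel=0} and the long exact sequence of $\rel_3(13)$, matches the paper. So does your treatment of generators admitting a relation $\pm a\pm b\pm c\equiv 0$. The gap is in the one step that separates $p=13$ from Step Two, and there you pick the wrong class.

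The triple $\{1,3,9\}$ is not an obstacle. We have $1+3+9=13\equiv 0$, and since $9\equiv -4$ this is the class-triple $\{1,3,4\}$, which you had already handled (the paper uses $\DA_2^{3,3}(-5,7,-2,4)$). Carrying out your enumeration, the $20$ triples fall into four scaling orbits, represented by $(1,2,3)$, $(1,3,4)$, $(1,2,4)$, $(1,2,5)$. The last two orbits contain no triple with a signed-sum relation. For $\{1,2,4\}$, the classes of $1\pm2$, $1\pm4$, $2\pm4$ are $\{1,3\}$, $\{3,5\}$, $\{2,6\}$, never the remaining entry; the same check works for $\{1,2,5\}$. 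Your proposal gives no preimage for these two generators.

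Neither of your suggested remedies closes this. A $3$-cycle is an even permutation, so it cannot make a simplex orientation-reversing. In any case, the generators in \autoref{hom:B} are orientation-preserving by construction.

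The boundary-relation route is the right idea, but finding those relations is the entire content of the proof. Moreover, the scheme you describe (target plus already-handled faces, one extra face vanishing) cannot get started:
\begin{itemize}
\item every additive-simplex boundary with a face in the $(1,2,5)$-orbit has either a second face in that orbit or a face in the $(1,2,4)$-orbit;
\item every nonzero boundary with a face in the $(1,2,4)$-orbit also has a face in the $(1,2,5)$-orbit.
\end{itemize}
The paper resolves this with
\[\partial\left(\DA_2^{3,3}(4,3,6,5)\otimes 1\right)=2\,\D_2^{2,3}(1,2,5)\otimes 1-\D_2^{2,3}(1,2,3)\otimes 1,\]
in which the face $\D_2^{2,3}(1,3,5)\otimes 1=-\D_2^{2,3}(1,2,5)\otimes 1$ lies in the target's own orbit. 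Only after that does it use
\[\partial\left(\DA_2^{3,3}(4,3,6,4)\otimes 1\right)=\D_2^{2,3}(1,2,4)\otimes 1-\D_2^{2,3}(1,3,4)\otimes 1-\D_2^{2,3}(1,2,5)\otimes 1.\]
Until you exhibit explicit boundaries like these for both orbits, surjectivity of $\partial$, and hence the vanishing, is not established.
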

\begin{proof} Set \[\rel_3(13)=\left(\Pb_3^\pm(\F_{13})\backslash \SBDA(\F_{13}^3),\Pb_3^\pm(\F_{13})\backslash \SBD(\F_{13}^3)\right).\]
Similarly to the proof of \autoref{BA connected'}, we will show that the map \[\partial:\homology_3(\rel_3(13);\Q)\longrightarrow \redhom_{2}(\Pb_3^\pm(\F_{13})\backslash \SBD(\F_{13}^3);\Q)\] is surjective, though we will use a different approach. By \autoref{hom:B}, \[\redhom_{2}( \Pb_3^\pm(\F_p)\backslash \SBD(\F_{13}^3);\Q) \cong\Q\left[\Pb_3^\pm(\F_{13})\backslash\MD(\F_{13}^3)_{2}^{\mathrm{pr},1}\right]\otimes_{\Q[T_2]}\Q^\mathrm{sgn}.\]
By property (b) of \autoref{BD/Pn}, for every generator of $\redhom_2(\Pb_3^\pm(\F_{13})\backslash \SBD(\F_{13}^3);\Q)$, we have \[\D_2^{2,3}(a_0,a_1,a_2)\otimes 1=\D_2^{2,3}(1,a_1a_0^{-1},a_2a_0^{-1})\otimes 1.\]Moreover, by \autoref{coinv} \begin{equation}\label{tensor}\D_2^{2,3}(a_0,a_1,a_2)\otimes 1=\sign(X)\D_2^{2,3}(\varepsilon_0 a_{\pi(0)},\varepsilon_1a_{\pi(1)},\varepsilon_2a_{\pi(2)})\otimes1,\end{equation}for all $X=(\pi,\varepsilon_0,\varepsilon_1,\varepsilon_2)\in T_2$. Thus we may assume that $a_0=1$ and $a_1,a_2$ lie in $\{1,\dots,6\}\subset\F_{13}$. We then obtain the following spanning set for $\redhom_2(\Pb_3^\pm(\F_{13})\backslash\SBD(\F_{13}^3);\Q)$: \[
\begin{array}{llll}
\D_2^{2,3}(1,2,3)\otimes 1 & \D_2^{2,3}(1,2,4)\otimes 1 & \D_2^{2,3}(1,2,5)\otimes 1 & \D_2^{2,3}(1,2,6)\otimes 1\vspace{0.5em}\\
& \D_2^{2,3}(1,3,4)\otimes 1 & \D_2^{2,3}(1,3,5)\otimes 1 & \D_2^{2,3}(1,3,6)\otimes 1\vspace{0.5em}\\
& & \D_2^{2,3}(1,4,5)\otimes 1 & \D_2^{2,3}(1,4,6)\otimes 1\vspace{0.5em}\\
& & & \D_2^{2,3}(1,5,6)\otimes 1
\end{array}
\]
We now apply \eqref{tensor} together with property (b) of \autoref{BD/Pn} to identify further generators in the homology group.
\begin{itemize} 
\item $\D_2^{2,3}(1,2,6)\otimes 1=\D_2^{2,3}(2,4,12)\otimes 1=\D_2^{2,3}(2,4,-1)\otimes 1=\D_2^{2,3}(1,2,4)\otimes 1$
\item $\D_2^{2,3}(1,3,5)\otimes 1=\D_2^{2,3}(5,15,25)\otimes 1=\D_2^{2,3}(5,2,-1)\otimes 1=-\D_2^{2,3}(1,2,5)\otimes 1$
\item $\D_2^{2,3}(1,3,6)\otimes 1=\D_2^{2,3}(4,12,24)\otimes 1=\D_2^{2,3}(4,-1,-2)\otimes 1=\D_2^{2,3}(1,2,4)\otimes 1$
\item $\D_2^{2,3}(1,4,5)\otimes 1=\D_2^{2,3}(3,12,15)\otimes 1=\D_2^{2,3}(3,-1,2)\otimes 1=\D_2^{2,3}(1,2,3)\otimes 1$
\item $\D_2^{2,3}(1,4,6)\otimes 1=\D_2^{2,3}(2,8,12)\otimes 1=\D_2^{2,3}(2,-5,-1)\otimes 1=\D_2^{2,3}(1,2,5)\otimes 1$
\item $\D_2^{2,3}(1,5,6)\otimes 1=\D_2^{2,3}(2,10,12)\otimes 1=\D_2^{2,3}(2,-3,-1)\otimes 1=\D_2^{2,3}(1,2,3)\otimes 1$
\end{itemize}
It follows that $\redhom_{2}( \Pb_3^\pm(\F_p)\backslash \SBD(\F_{13}^3);\Q)$ is generated by \[\{\D_2^{2,3}(1,2,3)\otimes 1, \D_2^{2,3}(1,2,4)\otimes 1 ,\D_2^{2,3}(1,2,5)\otimes 1, \D_2^{2,3}(1,3,4)\otimes 1\}.\] 
 Recall by \autoref{BA connected} that\[
\homology_3\left(\Pb_3^\pm(\F_{13})\backslash \SBDA(\F_{13}^3),\Pb_3^\pm(\F_{13})\backslash \SBD(\F_{13}^3);\Q\right)
\cong \Q\left[ \Pb_3^\pm(\F_{13})\backslash\MDA_3^{\mathrm{pr},1}(\F_{13}^3)\right]\otimes_{\Q[G_3]}\Q^\mathrm{sgn}.\]
Moreover, following \eqref{e11}, the boundary map $\partial$ is given on generators by 
\begin{align*}\partial\left(\DA_2^{3,3}(\underline{b})\otimes 1\right)=&\D_2^{2,3}(2b_1+b_2,2b_2+b_1,b_3)\otimes 1\\-&\D_2^{2,3}(2b_0+b_2,2b_2+b_0,b_3)\otimes 1\\+&\D_2^{2,3}(2b_0+b_1,2b_1+b_0,b_3)\otimes 1.\end{align*}
Additionally, we have from \eqref{rvv} and \autoref{cor:add,pr} that $\D_2^{2,3}(a_0,a_1,a_2)\otimes 1=0$ if $a_i=\pm a_j$ for some $i\neq j$. Using again property (b) of \autoref{BD/Pn} and \eqref{tensor} on each term on the right hand side, we compute the following.
\begin{itemize}[leftmargin=*]
\item $\begin{aligned}[t]\partial\left(\DA_2^{3,3}(4,3,6,3)\otimes 1\right)
    &= \D_2^{2,3}(-1,2,3)\otimes 1
       - \D_2^{2,3}(1,3,3)\otimes 1
       + \D_2^{2,3}(-2,-3,3) \otimes 1\\
    &= \D_2^{2,3}(1,2,3)\otimes 1-0+0\\&=\D_2^{2,3}(1,2,3)\otimes 1\end{aligned}$
  \item $\begin{aligned}[t]
  \partial\left(\DA_2^{3,3}(4,3,6,5)\otimes 1\right)
    &= \D_2^{2,3}(-1,2,5)\otimes 1
       \quad- \underbrace{\D_2^{2,3}(1,3,5)\otimes 1}_{-\D_2^{2,3}(1,2,5)\otimes 1}
       + \underbrace{\D_2^{2,3}(-2,-3,5)\otimes 1}_{\D_2^{2,3}(10,15,-25)\otimes 1=\D_2^{2,3}(-3,2,1)\otimes 1} \\
       &=\D_2^{2,3}(1,2,5)\otimes 1+\D_2^{2,3}(1,2,5)\otimes 1-\D_2^{2,3}(1,2,3)\otimes 1\\
    &= 2\D_2^{2,3}(1,2,5)\otimes 1
       - \D_2^{2,3}(1,2,3)\otimes 1.
  \end{aligned}
  $
  \item $\begin{aligned}[t]
  \partial\left(\DA_2^{3,3}(-5,7,-2,4)\otimes 1\right)
    &= \D_2^{2,3}(-1,3,4)\otimes 1
       - \D_2^{2,3}(1,4,4)\otimes 1
       + \D_2^{2,3}(-3,-4,4)\otimes 1\\&= \D_2^{2,3}(1,3,4)\otimes 1-0+0\\
    &= \D_2^{2,3}(1,3,4)\otimes 1\end{aligned}$
   \item $\begin{aligned}[t]
  \partial\left(\DA_2^{3,3}(4,3,6,4)\otimes 1\right)
    &= \D_2^{2,3}(-1,2,4)\otimes 1
       - \D_2^{2,3}(1,3,4)\otimes 1
       \quad+ \underbrace{\D_2^{2,3}(-2,-3,4)\textbf{}}_{\D_2^{2,3}(12,18,-24)\otimes 1=\D_2^{2,3}(-1,5,2)\otimes 1} \\
    &= \D_2^{2,3}(1,2,4)\otimes 1
       - \D_2^{2,3}(1,3,4)\otimes 1
       - \D_2^{2,3}(1,2,5)\otimes 1.
  \end{aligned}
  $
\end{itemize}
These computations show that each generator of $\redhom_{2}( \Pb_3^\pm(\F_p)\backslash \SBD(\F_{13}^3);\Q)$ lies in the image of $\partial$. Therefore, $\partial$ is surjective for $p=13$.\end{proof}
\section{Twisted and untwisted actions}\label{sec7}
Let $p$ be a prime, and let $Y=\SBD(\F_p^n)$ or $\SBDA(\F_p^n)$.  In this section, we study the chains \[\redchain_*\left(\Pb_n^\pm(\F_p)\backslash Y;\Q\right)\]as well as their homology groups.

\subsection{Simplices with twisted and untwisted actions}\label{subsec:tw}
Let $p$ be a prime and $n\geq 2$. Let $Y=\SBD(\F_p^n)$ or $\SBDA(\F_p^n)$.
We classify the simplices in $Y$ based on whether they are twisted or untwisted under the action of $\Pb_n^\pm(\F_p)$, as defined in \autoref{def:tw} with respect to $\chi=\det\colon \Pb_n^\pm(\F_p)\rightarrow\Q^\times$.

We recall that an element $\sigma \in \SBD(\F_p^n)_k$ is called a standard $k$-simplex, while an element $\sigma \in \SBDA(\F_p^n)_k -   \SBD(\F_p^n)_k$ is called an additive $k$-simplex.

\paragraph{Standard simplices}

Recall that \autoref{matrixrep} says that \[\redchain_k\left(\Pb_n^\pm(\F_p)\backslash\SBD(\F_p^n);\Q\right)\cong \Q\left[\Pb_n^\pm(\F_p)\backslash\SBD(\F_p^n)\right]\otimes_{\Q[\Sigma_{k+1}]}\Q^\mathrm{sgn}\cong \MD(\F_p^n)_k/T_k.\] Thus, we may identify the coset $\sigma \Sigma_{k+1}$ with the coset $B\cdot T_k $ for some $B\in \MD(\F_p^n)_k$.
\begin{lemma}\label{ksmall}
    Let $p$ be a prime, $n\geq 2$ and $1\leq k < n-1$. If $\sigma\in \SBD(\F_p^n)_k$, then $\sigma$ is twisted.
\end{lemma}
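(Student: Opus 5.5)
Recall from \autoref{def:tw} and \autoref{def:action on B} that, via \autoref{lem:goodmat}, $\sigma$ is twisted if and only if for a representative $B=(v_0|\dots|v_k)\in\MD(\F_p^n)_k$ with $\sigma\Sigma_{k+1}=B\cdot T_k$ there exist $A\in\Pb_n^\pm(\F_p)$ and $X\in T_k$ with $\det(A)\sign(X)=-1$ and $A\cdot B=B\cdot X$. The plan is to produce such a pair using the trivial element of $T_k$, that is, $X=\Id$. Then we only need some $A\in\Pb_n^\pm(\F_p)$ with $\det(A)=-1$ that fixes each $v_i$. Since $k<n-1$, the span $W=\operatorname{span}\{v_0,\dots,v_k\}$ has dimension $k+1\leq n-1$, so there is room to act nontrivially off $W$. (For $p=2$ we have $-1=1$ in $\F_2$, so twisted in the sense of $\chi=\det$ needs $\det(A)=-1$ as a rational number. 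Then $\Pb_n^\pm(\F_2)$ has trivial determinant character and the claim would fail. So we must interpret things as the paper does, with $\det$ valued in $\{\pm1\}\subset\F_p^\times$ lifted to $\Q^\times$. For $p=2$ this forces the trivial character, and I expect this case is excluded or treated separately; I would state the argument for $p$ odd and flag $p=2$.)

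We split into cases according to the orbit types of \autoref{BD/Pn}.

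In the first case, $B\in\D_1^{k,n}$, so $e_1\notin W$. Complete $v_0,\dots,v_k,e_1$ to a basis $v_0,\dots,v_k,e_1,w_{k+2},\dots,w_{n-1}$ of $\F_p^n$; this uses $k+2\leq n$. Define $A$ to fix $v_0,\dots,v_k$ and $e_1$, to send $w_{k+2}\mapsto -w_{k+2}$, and to fix the remaining $w_j$. When $k+2=n$ there are no $w$'s, so we instead put $e_1\mapsto -e_1$, which still preserves the line spanned by $e_1$. In either case $\det A=-1$ and $A e_1\in\F_p^\times e_1$, so $A\in\Pb_n^\pm(\F_p)$.

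In the second case, $B\in\D_2^{k,n}(\underline a)$, so $e_1\in W$. Complete $v_0,\dots,v_k$ to a basis by some $u$ (possible since $k+1\leq n-1$) and further vectors. Let $A$ fix all of $W$ and the other completing vectors, and send $u\mapsto -u$. Then $Ae_1=e_1$ and $\det A=-1$.

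In both cases $A\cdot B=B=B\cdot\Id$ with $\det(A)\sign(\Id)=-1$, so $B\in\MD(\F_p^n)_k^{\mathrm{tw}}$ and hence $\sigma$ is twisted. There is no real obstacle beyond the bookkeeping of the identification in \autoref{lem:goodmat} and the $p=2$ caveat. The key point to state clearly is that the hypothesis $k<n-1$ is exactly what guarantees a complementary direction on which $A$ can act by $-1$.
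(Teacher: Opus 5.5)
Your argument is correct for odd $p$, and it uses the same idea as the paper: take $X=\Id\in T_k$ together with an element $A\in\Pb_n^\pm(\F_p)$ of determinant $-1$ that fixes every column of $B$. Such an $A$ exists because $k+1\leq n-1$. Your case split is actually needed. The paper completes $v_0,\dots,v_k$ to a basis whose last vector is $\lambda e_1$. That is impossible when $B\in\D_2^{k,n}(\underline{a})$, because then $e_1$ already lies in $\operatorname{span}\{v_0,\dots,v_k\}$. Your second case (fix that span pointwise and negate a complementary vector) is what handles these simplices. The paper later relies on them when it classifies untwisted simplices.

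Your $p=2$ caveat is accurate for the literal definition. Since $\det$ is trivial on $\Pb_n^\pm(\F_2)$, ``twisted'' would just mean orientation-reversing. For $n\geq 3$, a simplex with $B\in\D_2^{1,n}(1,0)$ is orientation-preserving, as the paper itself records in its $p=2$ computation, so it would be untwisted.

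You need not leave the case open, though. The notion matters only through the $\Gamma_{0,n}^\pm(2)$-coinvariants of $\redchain_*\otimes\Q^{\det}$, and there the argument works directly. Extend the integral frame to a $\Z$-basis $v_0,\dots,v_{n-1}$ and negate $v_{n-1}$. The resulting matrix is $\equiv I \bmod 2$, so it lies in $\Gamma_{0,n}^\pm(2)$; it fixes $\sigma$ and has determinant $-1$, so $\sigma$ is twisted.

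For $p=2$ the kernel of reduction $\Gamma_{0,n}^\pm(2)\to\Pb_n^\pm(\F_2)$ contains exactly such determinant $-1$ matrices. That is why the $\Pb$-level character degenerates. The paper's own proof treats $p=2$ only formally, since its $A$ reduces to the identity over $\F_2$.
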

\begin{proof}
    Write $\sigma\Sigma_{k+1}=B\cdot T_k$ for some $B=(v_0|\dots|v_k)\in\MD(\F_p^n)_k$. We want to show that the action of $\Pb_n^\pm(\F_p)$ on $\sigma$ is twisted. We have by \autoref{lem:goodmat} that \[\SBD(\F_p^n)_k^\mathrm{tw}/\Sigma_{k+1}\cong \MD(\F_p^n)_k^\mathrm{tw}/T_k.\]
Thus, it is enough to show that the action of $\Pb_n^\pm(\F_p)$ on $B$ is twisted.
    Extend $\{v_0,\dots,v_k\}$ to a determinant-$1$ basis of $\F_p^n$
\[
  \{v_0,\dots,v_k,\dots,v_{n-1}\},
  \quad\text{with}~
  v_{n-1}=\lambda e_1,
\]for some $\lambda\in\F_p^\times$.
Now set
\[
  X = (\Id,1,\dots,1)\in T_k.
\]
Next define $A\in \Pb_n^\pm(\F_p)$ by its action on the basis:
\begin{align*}
    v_{n-1}&\longmapsto -v_{n-1}\\
    v_i&\longmapsto v_i~\text{otherwise}.
\end{align*}Note that $A\cdot e_1=-e_1$.
Since $k<n-1$, then \[A\cdot B=B\cdot X.\]Moreover, \[\det(A)\sign(X)=-1.\qedhere\]
\end{proof}
\begin{corollary}\label{D1,tw}
Let $p$ be a prime, $n\geq 2$ and $1\leq k<n-1$. Let $\sigma\in\SBD(\F_p^n)_k$ and let $B\in\D_1^{k,n}$ such that $\sigma\Sigma_{k+!}B\cdot T_k$. Then $\sigma$ is twisted.
\end{corollary}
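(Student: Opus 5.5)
This is an immediate consequence of \autoref{ksmall}, since $\D_1^{k,n}\subseteq\MD(\F_p^n)_k$ and every standard $k$-simplex with $1\le k<n-1$ is twisted. To make the argument self-contained for the specific case $B\in\D_1^{k,n}$, we can use the defining feature of $\D_1^{k,n}$, namely $e_1\notin\operatorname{span}\{v_0,\dots,v_k\}$, and run the same construction as in the proof of \autoref{ksmall}.

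By \autoref{lem:goodmat}, $\SBD(\F_p^n)_k^\mathrm{tw}/\Sigma_{k+1}\cong\MD(\F_p^n)_k^\mathrm{tw}/T_k$, so it suffices to produce $A\in\Pb_n^\pm(\F_p)$ and $X\in T_k$ with $A\cdot B=B\cdot X$ and $\det(A)\sign(X)=-1$.

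Write $B=(v_0|\dots|v_k)$ and extend $\{v_0,\dots,v_k\}$ to a basis $\{v_0,\dots,v_{n-1}\}$ of $\F_p^n$ with $v_{n-1}=e_1$. This is possible precisely because $e_1\notin\operatorname{span}\{v_0,\dots,v_k\}$ and $k<n-1$. Define $A$ by $v_{n-1}\mapsto -v_{n-1}$ and $v_i\mapsto v_i$ for $i<n-1$. Then $A\cdot e_1=-e_1$ and $\det(A)=-1$, so $A\in\Pb_n^\pm(\F_p)$. Taking $X=(\Id,1,\dots,1)$, we have $A\cdot B=B=B\cdot X$ and $\det(A)\sign(X)=-1$, so $B$ is twisted, and hence so is $\sigma$.

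There is no real obstacle here. The only point that needs checking is that $A$ fixes $e_1$ up to a scalar, and this is guaranteed by the choice $v_{n-1}=e_1$. (Even without that choice, \autoref{ksmall} already covers the case.)
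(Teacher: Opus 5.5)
Your proposal is correct and is essentially the paper's proof. The paper deduces the corollary in one line from \autoref{ksmall}, and your explicit reflection is exactly the construction in the proof of \autoref{ksmall}; note that the choice of $v_{n-1}$ as a multiple of $e_1$ made there is only possible in the $\D_1^{k,n}$ case you isolate, so your restriction is the natural setting for that argument.

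One caveat, which your argument shares with the paper's proof of \autoref{ksmall}, concerns $p=2$. Over $\F_2$ the reflection $v_{n-1}\mapsto -v_{n-1}$ is the identity, so the sign $\det(A)=-1$ is not visible in $\Pb_n^\pm(\F_2)$. It must instead come from an integral lift in $\Gamma^\pm_{0,n}(2)$ that negates a complementary basis vector of $\Z^n$. Alternatively, if $\det$ is read as trivial on $\Pb_n^\pm(\F_2)$, the orientation-reversing element of \autoref{S1} supplies the twist; that lemma applies because $1\le k<n-1$ forces $n\ge 3$.
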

\begin{proof}
Since $\D_1^{k,n}$ only exists for $k<n-1$, the result follows from \autoref{ksmall}.
\end{proof}
    We now find all twisted simplices in $\SBD(\F_p^n)_k$ for $k=n-1.$
\begin{lemma}\label{twisted:BD}
Let $p$ be a prime and $n\geq 2$. Let $\sigma\in \SBD(\F_p^n)_{n-1}$ and let $B\in \D_2^{n-1,n}(\underline{a})$ for some $\underline{a}=(a_0,\dots,a_k)\in \F_p^{k+1}\setminus\{\underline{0}\}$ such that $\sigma \Sigma_{n}=B\cdot T_{n-1}$. Then $\sigma$ is twisted if and only if there exist some $\lambda\in \F_p^\times$ and $X=(\pi,\varepsilon_0,\dots,\varepsilon_k)\in T_k$ such that\[\lambda \underline{a}=\underline{a}\cdot X,\]and \[\prod_{i=0}^k\varepsilon_i=-1. \] 
\end{lemma}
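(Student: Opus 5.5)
The plan is to mimic the proof of \autoref{rev,a}, replacing the sign condition by the twisting condition and then eliminating $\det(A)$ by a determinant computation, as in \autoref{twisted edge}. Here $k=n-1$ throughout.

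By \autoref{lem:goodmat}, $\sigma$ is twisted if and only if $B$ is twisted. That is, there exist $A\in\Pb_n^\pm(\F_p)$ and $X=(\pi,\varepsilon_0,\dots,\varepsilon_{n-1})\in T_{n-1}$ with $\det(A)\sign(X)=-1$ and $A\cdot B=B\cdot X$.

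The first step removes $\det(A)$. Since $k=n-1$, the matrix $B$ is square with $\det(B)=\pm1$, in particular invertible. Taking determinants of $A\cdot B=B\cdot X$ gives
\[\det(A)\det(B)=\det(B)\,\sign(\pi)\prod_i\varepsilon_i.\]
Hence $\det(A)=\sign(X)\prod_i\varepsilon_i$, and so $\det(A)\sign(X)=\prod_i\varepsilon_i$. One small point is that the equality $\det(A)=\pm1$ holds in $\F_p$ and has to be read in $\{\pm1\}\subset\Q^\times$. For $p=2$ both sides equal $1$ in $\F_2$, so this identification needs a remark. I would note that the condition $\prod\varepsilon_i=-1$ is interpreted on the $T_k$ side, where $\varepsilon_i\in\{\pm1\}\subset\Z$. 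So for $p=2$, $\det(A)$ should be lifted consistently; since $\det$ is viewed as a character to $\Q^\times$, I would handle this the same way the paper does in \autoref{twisted edge}. After this step, twistedness is equivalent to the existence of $A\in\Pb_n^\pm(\F_p)$ and $X\in T_{n-1}$ with $\prod\varepsilon_i=-1$ and $A\cdot B=B\cdot X$.

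The second step converts this into a condition on $\underline a$. Using properties (c) and (d) of \autoref{BD/Pn}, the existence of $A$ with $A\cdot B=B\cdot X$ is equivalent to $B\cdot X\in\D_2^{n-1,n}(\underline a)$. This in turn is equivalent to $\D_2^{n-1,n}(\underline a\cdot X)=\D_2^{n-1,n}(\underline a)$. By property (b), that holds exactly when $\lambda\underline a=\underline a\cdot X$ for some $\lambda\in\F_p^\times$. Both directions run through this chain of equivalences, exactly as in \autoref{rev,a}, and the set of admissible $X$ (those with $\prod\varepsilon_i=-1$) is untouched throughout.

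The only delicate point I expect is the first step, namely justifying $\det(A)\sign(X)=\prod\varepsilon_i$ as elements of $\{\pm1\}$, including the characteristic-$2$ issue. The remainder is a direct transcription of the proof of \autoref{rev,a}.
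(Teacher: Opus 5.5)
Your proposal is correct and follows the paper's proof. You reduce to matrices via \autoref{lem:goodmat}. You then take determinants in $A\cdot B=B\cdot X$, using that $B$ is invertible since $k=n-1$, to get $\det(A)\sign(X)=\prod_i\varepsilon_i$, which turns the twisting condition into $\prod_i\varepsilon_i=-1$. Finally you run the orbit argument through properties (b)--(d) of \autoref{BD/Pn}, exactly as in \autoref{rev,a}.

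Your caveat about $p=2$ is well taken. The paper handles it no more carefully, implicitly reading $\det(A)$ through the formal signs $\varepsilon_i$ (equivalently, through integer lifts in $\Gamma_{0,n}^\pm(p)$), so your argument is as complete as the paper's.
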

\begin{proof}
 Recall that \[\D_2^{n-1,n}(\underline{a})=\left\{B=\left(v_0|\dots|v_{n-1}\right)\in \MD(\F_p^n)_{n-1} ~\middle\vert \lambda e_1=a_0v_0+\dots+a_{n-1}v_{n-1}~\text{for some $\lambda\in\F_p^\times$}\right\}.\] 
Let $\sigma \Sigma_{n}=B\cdot T_{n-1}$ with $B\in \D_2^{n-1,n}(\underline{a})$ for some $\underline{a}=(a_0,\dots,a_{n-1})\in \F_p^{n}\setminus\{\underline{0}\}$. Since  \[\SBD(\F_p^n)_{n-1}^\mathrm{tw}/\Sigma_{n}\cong \MD(\F_p^n)_{n-1}^\mathrm{tw}/T_{n-1}\] by \autoref{lem:goodmat}, we have the following equivalences:
\begin{align*}\Pb_n^\pm(\F_p)~\text{twists the}&~\text{simplex $\sigma$}\\&\Longleftrightarrow \text{there exist $A\in \Pb_n^\pm(\F_p)$ and $X\in T_{n-1}$ such that $\det(A)\sign(X)=-1$ and $B\cdot X=A\cdot B$}\\&\Longleftrightarrow \text{there exist $X=(\pi,\varepsilon_0,\dots,\varepsilon_{n-1})\in T_{n-1}$ such that $B\cdot X\in \D_2^{n-1,n}(\underline{a})$, and $\prod\limits_{i=0}^{n-1}\varepsilon_i=-1$.}\\ &\Longleftrightarrow \text{there exist $X=(\pi,\varepsilon_0,\dots,\varepsilon_{n-1})\in T_{n-1}$ such that $\D_2^{n-1,n}(\underline{a}\cdot X)=\D_2^{n-1,n}(\underline{a})$, and $\prod\limits_{i=0}^{n-1}\varepsilon_i=-1$},\end{align*} where the second equivalence follows from the fact that $B\cdot X=A\cdot B$ implies $\prod\limits_{i=0}^{n-1}\varepsilon_i=\det (A)\sign(X)$, and the last equivalence follows from properties (c) and (d) of \autoref{BD/Pn}. 

Moreover, we have by property (b) of \autoref{BD/Pn} that \[\D_2^{n-1,n}(\underline{a})=\D_2^{n-1,n}(\underline{b})\quad\Longleftrightarrow\quad \underline{a}=\lambda\underline{b}~\text{for some $\lambda\in \F_p^\times$}.\]
This implies that $\sigma$ is twisted if and only if there exist $\lambda\in \F_p^
    \times$ and $X=(\pi,\varepsilon_0,\dots,\varepsilon_{n-1})\in T_k$ such that \[\lambda \underline{a}=\underline{a}\cdot X,\]and \[\prod\limits_{i=0}^{n-1}\varepsilon_i=-1.\qedhere\]
\end{proof}
We show in the following lemma that if $n$ is odd, then all simplices $\sigma\in\SBD(\F_p^n)_{n-1}$ are twisted.
\begin{lemma}\label{D2:tw,odd}
     Let $p$ be a prime and $n\geq 3$ is odd. Let $\sigma\in \SBD(\F_p^n)_{n-1}$ and let $B\in \D_2^{n-1,n}(\underline{a})$ for some $\underline{a}=(a_0,\dots,a_{n-1})\in \F_p^{n}\setminus\{\underline{0}\}$ such that $\sigma \Sigma_{n}=B\cdot T_{n-1}$. Then $\sigma$ is twisted. \end{lemma}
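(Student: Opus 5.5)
The plan is to apply \autoref{twisted:BD} with the simplest possible choice: $\pi=\Id$, $\lambda=-1$, and every sign $\varepsilon_i=-1$. Take
\[
X=(\Id,-1,-1,\dots,-1)\in T_{n-1}.
\]
By the action in \autoref{def:a}, this gives
\[
\underline{a}\cdot X=(-a_0,\dots,-a_{n-1})=-\underline{a}.
\]
So $\lambda\underline{a}=\underline{a}\cdot X$ holds with $\lambda=-1\in\F_p^\times$. The product of the signs is $\prod_{i=0}^{n-1}\varepsilon_i=(-1)^n=-1$, because $n$ is odd. Both conditions of \autoref{twisted:BD} are therefore satisfied, and $\sigma$ is twisted.

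It is worth checking this directly, as \autoref{twisted:BD} packages several equivalences. Take $A=-I_n$. It lies in $\Pb_n^\pm(\F_p)$, since $A e_1=-e_1$ and $\det(A)=(-1)^n=-1$. Then $A\cdot B=-B=B\cdot X$. Also $\sign(X)=\sign(\Id)=1$, so $\det(A)\sign(X)=-1$. This is exactly the definition of a twisted matrix in \autoref{def:action on B}, and \autoref{lem:goodmat} transfers the conclusion back to $\sigma$. This direct route avoids \autoref{twisted:BD} altogether and does not even use that $B\in\D_2^{n-1,n}(\underline{a})$.

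There is one degenerate point: when $p=2$ we have $-1=1$. In that case $\det(A)=1$ in $\F_2$, so the twisting character would be trivial. Here $\det$ is read as a $\pm1$-valued character to $\Q^\times$, which requires $p\neq 2$. For $p=2$, $\Pb_n^\pm(\F_2)$ has trivial determinant and the twisted coefficients coincide with untwisted ones. I would either note that the statement is intended for $p$ odd in this respect, or follow the paper's conventions for $p=2$. The only real subtlety is this bookkeeping of $\det$ as a $\pm1$ character; the main argument is one line.
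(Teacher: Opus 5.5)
Your argument is the paper's proof: it takes $X=(\Id,-1,\dots,-1)$ and $\lambda=-1$ and invokes \autoref{twisted:BD}, and your direct check with $A=-I_n$ is simply that lemma unpacked, valid for odd $p$. Your worry about $p=2$ is legitimate, since the paper passes over it, and it closes easily: $\det$ is trivial on $\Pb_n^\pm(\F_2)$, and because $n\geq 3$ two entries of $\underline{a}\in\F_2^n$ coincide, so \autoref{D2,eq,rv} gives $A\cdot B=B\cdot X$ with $\sign(X)=-1$, which is already a twist.
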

\begin{proof}
    Let $\sigma \Sigma_{n}=B\cdot T_{n-1}$ with $B\in \D_2^{n-1,n}(\underline{a})$ for some $\underline{a}=(a_0,\dots,a_{n-1})\in \F_p^{n}\setminus\{\underline{0}\}$. We have by \autoref{twisted:BD} that $\sigma$ is twisted if there exists $\lambda\in \F_p^\times$ and $X=(\pi,\varepsilon_0,\dots,\varepsilon_{n-1})\in T_{n-1}$ such that \[\lambda \underline{a}=\underline{a}\cdot X,\]and \[\prod\limits_{i=0}^{n-1}\varepsilon_i=-1.\]Take $X=(\Id,-1,\dots,-1)\in T_{n-1}$ and $\lambda=-1$. Then \[\lambda \underline{a}=\underline{a}\cdot X,\]and \[\prod\limits_{i=0}^{n-1}\varepsilon_i=(-1)^n=-1.\]
        This completes the proof.\end{proof}
We now turn to the case where $n$ is even, where additional phenomena can occur.
\begin{lemma}\label{D2:even,tw}
    Let $p$ be a prime, $n\geq 2$ even. Let $\sigma\in \SBD(\F_p^n)_{n-1}$ and let $B\in \D_2^{n-1,n}(\underline{a})$ for some $\underline{a}=(a_0,\dots,a_{n-1})\in \F_p^{n}\setminus\{\underline{0}\}$ such that $\sigma \Sigma_{n}=B\cdot T_{n-1}$. Then $\sigma$ is twisted if and only if either $a_{i_0}=0$ for some $0\leq i_0\leq n-1$ or $\underline{a}$ satisfies the \hyperref[def1]{$(\lambda,m)$-condition}\footnote{Only  Case 1 of the \hyperref[def1]{$(\lambda,m)$-condition} applies with $k=n-1$ odd.}
\end{lemma}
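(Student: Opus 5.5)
By \autoref{twisted:BD}, $\sigma$ is twisted if and only if there exist $\lambda\in\F_p^\times$ and $X=(\pi,\varepsilon_0,\dots,\varepsilon_{n-1})\in T_{n-1}$ with $\lambda\underline{a}=\underline{a}\cdot X$ and $\prod_i\varepsilon_i=-1$. The whole proof therefore reduces to analysing the equations
\[
\lambda a_i=\varepsilon_i a_{\pi(i)}\qquad (0\leq i\leq n-1).
\]
The plan is to treat the two "if" directions by explicit constructions and the "only if" direction by a cycle analysis like the one in \autoref{D2,dif,pv}.

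\textbf{The "if" directions.}
\begin{itemize}
\item If $a_{i_0}=0$, take $\lambda=1$, $\pi=\Id$, $\varepsilon_{i_0}=-1$ and all other $\varepsilon_i=1$. Then $\lambda\underline{a}=\underline{a}\cdot X$ and the product of the signs is $-1$.
\item If $\underline{a}$ satisfies the $(\lambda,m)$-condition (Case 1, since $k=n-1$ is odd), reuse the element $X$ built in \autoref{D2,dif,pv}'s companion, \autoref{D2,dif,rv}. There $\pi$ is a product of $q$ disjoint $m$-cycles, with $\varepsilon=-1$ at the last slot of each cycle. This gives $\lambda\underline{b}=\underline{b}\cdot X$ with $\prod\varepsilon_i=(-1)^q=-1$, because $q$ is odd.
\item Conjugating by $Y$ gives $\lambda\underline{a}=\underline{a}\cdot YXY^{-1}$. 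Conjugation in $T_{n-1}$ permutes the sign entries and multiplies pairs of them, so the product of signs $\prod\varepsilon_i$ is a homomorphism $T_{n-1}\to\{\pm1\}$. Hence the product of signs is still $-1$ after conjugation.
\end{itemize}

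\textbf{The "only if" direction: setup.} Suppose $\lambda\underline{a}=\underline{a}\cdot X$ with $\prod\varepsilon_i=-1$, and that all $a_i\neq0$. We must show the $(\lambda,m)$-condition holds. For a cycle $C$ of $\pi$ of length $\ell$, iterate the equations around the cycle as in the Claim of \autoref{D2,dif,pv}. This gives $\lambda^\ell=\prod_{i\in C}\varepsilon_i$, using that $a_{i_0}\neq 0$. Taking the product over all cycles,
\[
\prod_C \lambda^{\ell_C}=\lambda^{n}=-1 .
\]

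\textbf{The "only if" direction: main obstacle.} The hard part is extracting the rigid structure when the $a_i$ are not pairwise $\neq\pm$. The previous lemma assumed $a_i\neq\pm a_j$; here coincidences are allowed. I will first try to show directly that equal lengths can be arranged.
\begin{itemize}
\item Let $d$ be the order of $\lambda$. Since $\lambda^n=-1$, $d$ is even and $d\nmid n$.
\item Along each cycle the entries are $a_{i_0}$ times $\pm$ powers of $\lambda$. So up to $T_{n-1}$, the tuple decomposes into blocks $(b,\lambda b,\dots,\lambda^{\ell-1}b)$ with $\lambda^\ell=\pm1$.
\item Each such block can be refined into blocks of length $m:=d/2$, since $\lambda^m=-1$, with appropriate sign changes. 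Concretely: if $\lambda^\ell=\pm1$ then $m\mid\ell$. The block of length $\ell$ is $\ell/m$ consecutive runs $(b',\lambda b',\dots,\lambda^{m-1}b')$, with $b'$ equal to $b\lambda^{jm}=\pm b$.
\item Thus $m\mid n$. Then $q=n/m$ must be odd, since $\lambda^n=(\lambda^m)^q=(-1)^q=-1$.
\item For $m$ to be even we need $4\mid d$. This follows because $q$ odd and $n$ even force $m$ even, as $n=mq$.
\end{itemize}
I expect the delicate points to be the divisibility $m\mid\ell$ (valid because $\lambda^\ell=\pm1$ gives $\lambda^{2\ell}=1$, so $d\mid2\ell$) and the bookkeeping of the signs $\varepsilon$ in the reordering element $Y$. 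This yields exactly Case 1 of \autoref{def1}.
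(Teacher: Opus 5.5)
Your argument is correct for odd $p$ and has the same skeleton as the paper's proof. It uses the same two explicit constructions for the ``if'' direction. For the ``only if'' direction it uses the identity $\lambda^{\ell}=\prod_{i\in C}\varepsilon_i$ on each cycle, a splitting of cycles into consecutive blocks, and the parity count $\lambda^n=(\lambda^m)^q=-1$.

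The one real difference is the block length. The paper takes $m=\gcd_i(m_i)$, which only yields $\lambda^m=-1$. That means $\lambda$ has order $2m'$ with $m/m'$ odd, whereas \autoref{def1} asks for order exactly $2m$. For example, a single $6$-cycle with $\lambda$ of order $4$ gives the paper $m=6$, and one has to refine further to $m=2$, $q=3$. Your choice $m=d/2$, with $m\mid\ell$ coming from $d\mid 2\ell$, produces the exact order directly, so your version is slightly tighter. You also check that $\prod_i\varepsilon_i$ is a homomorphism on $T_{n-1}$ and hence survives conjugation by $Y$, which the paper leaves implicit.

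One caveat applies to both arguments: they use $-1\neq 1$ in $\F_p$. In yours this is the step ``$\lambda^n=-1$ forces $d$ even''; in the paper's it is ``$(\pm1)^q=-1$''. For $p=2$ the statement is in fact false. Take $n=2$ and $\underline a=(1,1)$. Then $\lambda=1$ and $X=(\Id,-1,1)$ satisfy $\lambda\underline a=\underline a\cdot X$ with sign product $-1$, so $\sigma$ is twisted by \autoref{twisted:BD}, consistent with \autoref{twisted edge}. Yet $\underline a$ has no zero entry, and $\F_2^\times$ is trivial, so the $(\lambda,m)$-condition cannot hold. You should add the hypothesis that $p$ is odd.
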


\begin{proof} Let $\sigma \Sigma_{n}=B\cdot T_{n-1}$ with $B\in\D_2^{n-1,n}(\underline{a})$ for $\underline{a}=(a_0,\dots,a_{n-1})\in\F_p^n\setminus\{\underline{0}\}$.
    We will first prove the ``if” direction by showing that, in each case, there exist $\lambda\in \F_p^\times$ and $X=(\pi,\varepsilon_0,\dots,\varepsilon_{n-1})\in T_{n-1}$ such that \[\lambda\underline{a}=\underline{a}\cdot X,\] and \[\prod_{i=0}^{n-1}\varepsilon_i=-1.\] 
    It will then follow by \autoref{twisted:BD} that $\sigma$ is twisted. \begin{mycases}
        \case If $a_{i_0}=0$ for some $0\leq i_0\leq n-1$, then take $X=(\Id,\varepsilon_0,\dots,\varepsilon_{n-1})$ with \[\varepsilon_i=\begin{cases}-1&\text{if $i=i_0$}\\             1&\text{otherwise}  
        \end{cases},\]and set $\lambda=1$. Thus,  \[\lambda \underline{a}=\underline{a}\cdot X,\]and \[\prod_{i=0}^{n-1}\varepsilon_i=\varepsilon_{i_0}=-1.\]
        \case Let $\underline{b}=(b_1, \lambda b_1, \dots, \lambda^{m-1} b_1,\dots,b_q, \lambda b_q, \dots, \lambda^{m-1} b_q)$. Let $\pi$ be the following odd permutation \[\pi=(0\;1\; \dots\; m-1)(m\;\dots\; 2m-1)\dots(qm-m\;\dots\;qm-1)\quad\text{in $\Sigma_{n}$},\]and \[\varepsilon_i=\begin{cases}
            -1 &\text{if $i\in\{m-1,2m-1,\dots,qm-1\}$}\\
            1&\text{otherwise}.
        \end{cases}.\]
        As $q$ is odd, taking the product over all $i=0,\dots,k$ shows\[\prod_{i=0}^{n-1}\varepsilon_i=(-1)^q=-1.\]
        Let $X=(\pi,\varepsilon_0,\dots,\varepsilon_{n-1})$. Then $\lambda \underline{b}=\underline{b}\cdot X$ as in the proof of \autoref{D2,dif,rv}.
        
        Since \[\underline{a}\cdot Y=(b_1, \lambda b_1, \dots, \lambda^{m-1} b_1,\dots,b_q, \lambda b_q, \dots, \lambda^{m-1} b_q),\] then \[\lambda \underline{a}\cdot Y=\lambda\underline{b}=\underline{b}\cdot X.\] It follows that\[\lambda\underline{a}=\underline{a}\cdot YXY^{-1}.\]\end{mycases}
        Therefore, $\sigma$ is twisted by \autoref{twisted:BD}.
        
We now proceed to prove the converse. Assume the simplex $\sigma$ is twisted. Then by \autoref{twisted:BD}, there exists $\lambda\in \F_p^\times$ and $X=(\pi,\varepsilon_0,\dots,\varepsilon_{n-1})\in T_{n-1}$ such that \[\lambda\underline{a}=\underline{a}\cdot X,\]and \[\prod_{i=0}^{n-1}\varepsilon_i=-1.\] This implies \begin{equation}\label{eq:l}\lambda a_i=\varepsilon_ia_{\pi(i)}\quad\text{for all $0\leq i\leq n-1$}.\end{equation}
We consider two different cases. \begin{itemize}
        \item If $a_i=0$ for some $i$, then we are in Case 1.
        \item If $a_i\neq 0$ for all $i$, then for each $0\leq i\leq n-1$, we let $m_i$ be the length of the cycle of $\pi$ that contains $i$. Then $\pi^{m_i}(i)=i$ for every $i$. It then implies by \eqref{eq:l} that \begin{equation}\label{eq:L}\lambda^\ell a_{i}=\varepsilon_i\varepsilon_{\pi(i)}\dots\varepsilon_{\pi^{\ell-1}(i)}a_{\pi^\ell(i)}\quad\text{for every $1\leq \ell\leq m_i$}.\end{equation}
        Thus, for every $0\leq i\leq n-1$, the tuple $(a_{i},a_{\pi(i)},\dots,a_{\pi^{m_i-1}(i)})$ is equal to\[(a_{i},\varepsilon_{i}\lambda a_{i},\dots,\varepsilon_{i}\varepsilon_{\pi(i)}\varepsilon_{\pi^2(i)}\dots\varepsilon_{\pi^{m_i-1}(i)}\lambda^{m_i-1}a_{i}).\]  
    Let $m=\gcd\limits_i(m_i)$. Thus each cycle of length $m_i$ can be partitioned into $\frac{m_i}{m}$ consecutive blocks of length $m$. Write $\pi$ as a product of disjoint cycles \[(r_1\; r_2 \; \dots \;r_{m_1})(r_{m_1+1}\; \dots\; r_{m_1+m_2})\dots .\] Let $\tau\in\Sigma_{n+1}$ be the permutation defined by $\tau(j)=r_j$ for all $j$. Consequently, for $Z=(\tau,\varepsilon'_3,\dots,\varepsilon'_n)\in T_{n-1}$ for some $\varepsilon'_i=\{-1,1\}$, the tuple $\underline{a}\cdot Z$, is equal to a tuple of the form \[(b_1,\lambda b_1,\dots,\lambda^{m-1}b_1,\dots,b_q,\lambda b_q,\dots,\lambda^{m-1}b_q)\]for some $b_j\in \F_p^\times$ with $q=\frac{n}{m}$. 
    Moreover, applying \eqref{eq:L} to $\ell=m_i$, we obtain \[\lambda^{m_i}a_{i}=\varepsilon_i\varepsilon_{\pi(i)}\dots\varepsilon_{\pi^{m_i-1}(i)}a_{\pi^{m_i}(i)}=\varepsilon_i\varepsilon_{\pi(i)}\dots\varepsilon_{\pi^{m_i-1}(i)}a_i.\]Since $a_i\neq 0$ for all $i$, it implies that\[\lambda^{m_i}=\varepsilon_i\varepsilon_{\pi(i)}\dots\varepsilon_{\pi^{m_i-1}(i)}=\pm 1.\]
    As $m=\gcd\limits_i(m_i)$, so $m=\sum_i\mu_im_i$ for some $\mu_i\in\Z$. Thus, \[\lambda^m=\prod_i\left(\lambda^{m_i}\right)^{\mu_i}=\prod_i(\pm 1)^{\mu_i}=\pm 1.\]
    On the other hand, multiplying $\lambda a_i=\varepsilon_ia_{\pi(i)}$ over all $i$ gives \[\lambda^n=\prod_{i=0}^{n-1}\varepsilon_i.\] But $n=qm$, so \[\prod_{i=0}^{n-1}\varepsilon_i=\lambda^{n}=(\lambda^m)^q=(\pm 1)^q.\] Since  $\prod\limits_{i=0}^{n}\varepsilon_i=-1$, we conclude that $(\pm 1)^q=-1$. Therefore, \[\lambda^m=-1\quad\text{and}\quad q~\text{is odd},\] 
   Finally, because $n=qm$ is even and $q$ is odd, $m$ must be even, which is exactly the condition in Case $2$.\qedhere\end{itemize}\end{proof}

We summarize the above lemmas in the following corollary.
\begin{corollary}\label{cor:st,utw}
Let $p$ be a prime, $n\geq 2$ and $2\leq k\leq n-1$. Let $\sigma\in\SBD(\F_p^n)_k$ and let $B\in\MD(\F_p^n)_k$ such that $\sigma\Sigma_{k+1}=B\cdot T_k$. Then $\sigma$ belongs to $\SBD(\F_p^n)^\mathrm{utw}$ if and only if $k=n-1$ is odd and $B$ belongs to some set \[\D_2^{n-1,n}(\underline{a})\quad\text{with $\underline{a}=(a_0,\dots,a_k)\in\left(\F_p^\times\right)^{n-1}$},\] such that $\underline{a}$ does not satisfy the \hyperref[def1]{$(\lambda,m)$-condition}.
\end{corollary}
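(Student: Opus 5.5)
The plan is to assemble the corollary directly from the preceding lemmas, splitting according to whether $k<n-1$ or $k=n-1$, and for $k=n-1$ according to the parity of $n$.

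First suppose $2\le k<n-1$. By \autoref{ksmall}, every $\sigma\in\SBD(\F_p^n)_k$ is twisted. So no such $\sigma$ is untwisted. On the other side of the equivalence, the condition ``$k=n-1$'' fails. Hence both sides are false and the statement holds.

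Now let $k=n-1$. By \autoref{BD/Pn}(d), any $B\in\MD(\F_p^n)_{n-1}$ lies in some $\D_2^{n-1,n}(\underline a)$, since $\D_1^{n-1,n}$ does not occur in top degree. Moreover, via \autoref{lem:goodmat}, being twisted is a property of the $\Pb_n^\pm(\F_p)\times T_{n-1}$-orbit, so it may be tested on $B$.
\begin{itemize}
\item If $n$ is odd, then $k=n-1$ is even. \autoref{D2:tw,odd} shows every such $\sigma$ is twisted, which matches the failure of the condition ``$k$ odd''.
\item If $n$ is even, then $k$ is odd. \autoref{D2:even,tw} says $\sigma$ is twisted if and only if some $a_{i_0}=0$ or $\underline a$ satisfies the $(\lambda,m)$-condition. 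Negating, $\sigma$ is untwisted exactly when all $a_i\in\F_p^\times$ and $\underline a$ fails the $(\lambda,m)$-condition.
\end{itemize}

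The main point needing care is well-definedness with respect to the representative. The class $\D_2^{n-1,n}(\underline a)$ determines $\underline a$ only up to scaling, by \autoref{BD/Pn}(b), and the coset $B\cdot T_{n-1}$ changes $\underline a$ by the $T_{n-1}$-action, by \autoref{BD/Pn}(c). I would check two things:
\begin{itemize}
\item Nonvanishing of all entries is preserved under both scaling and signed permutations, which is clear.
\item The $(\lambda,m)$-condition is invariant under both. Scaling by $\mu$ replaces each $b_j$ by $\mu b_j$ with the same $\lambda$. A $T_{n-1}$-translate is absorbed into the element $Y$ in \autoref{def1}.
\end{itemize}
With this invariance the characterization does not depend on the chosen $B$, and the corollary follows. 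The statement's ``$(\F_p^\times)^{n-1}$'' should be read as $n$-tuples with all entries nonzero.
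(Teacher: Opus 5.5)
Your route is the same as the paper's. Its proof is just ``we summarize the above lemmas'', namely \autoref{ksmall} for $k<n-1$, \autoref{D2:tw,odd} for $n$ odd and \autoref{D2:even,tw} for $n$ even. Your check that the criterion does not depend on the choice of $B$ or on scaling $\underline a$ is a correct addition the paper leaves implicit.

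The genuine problem is at $p=2$, and you inherit it from \autoref{D2:even,tw}. That lemma's proof, and the proof of \autoref{twisted:BD}, use the identity $\det(A)\sign(X)=\prod_i\varepsilon_i$ and the step ``$(\pm1)^q=-1$, hence $q$ is odd'' as statements in $\F_p$. Both carry no information when $-1=1$. For $p=2$ the character $\det\colon\Pb_n^\pm(\F_2)\to\Q^\times$ is trivial, so twisted just means orientation-reversing.

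Here is a counterexample. Take $n\geq4$ even, $k=n-1$, and $B=(v_0|\dots|v_{n-1})\in\D_2^{n-1,n}(1,\dots,1)$, for instance $v_0=e_1+\dots+e_n$ and $v_i=e_{i+1}$ for $i\geq1$.
\begin{itemize}
\item The tuple $(1,\dots,1)$ has all entries in $\F_2^\times$. It cannot satisfy the $(\lambda,m)$-condition, since $\F_2^\times$ has no element of order $2m\geq4$. So the right-hand side of the corollary holds.
\item Let $A$ swap $v_0,v_1$ and fix the other $v_i$. Then $A$ fixes $e_1=\sum_i v_i$, so $A\in\Pb_n^\pm(\F_2)$.
\item Moreover $A\cdot B=B\cdot X$ with $X=((0\,1),1,\dots,1)$, and $\sign(X)=-1$. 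So $\sigma$ is twisted, and the ``if'' direction fails.
\end{itemize}
The same verdict holds upstairs in $\Gamma_{0,n}^\pm(2)$. There, negating one frame vector is $\equiv I\bmod 2$, fixes the frame, and has determinant $-1$.

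So your argument, like the paper's, proves the statement only for odd $p$. The proofs of \autoref{ksmall} and \autoref{D2:tw,odd} also implicitly need $-1\neq1$ in $\F_p$, though their conclusions stay true at $p=2$.

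To repair this, treat $p=2$ separately:
\begin{itemize}
\item For $B$ in some $\D_2^{k,n}(\underline a)$ with $k\geq2$, any $\underline a\in\F_2^{k+1}$ has two equal entries $a_i=a_j$. Then $X=((i\,j),1,\dots,1)$ gives $\underline a\cdot X=\underline a$ with $\sign(X)=-1$, so $\sigma$ is orientation-reversing.
\item For $B\in\D_1^{k,n}$, \autoref{S1} gives the same conclusion.
\end{itemize}
Hence no standard $k$-simplex with $k\geq2$ is untwisted when $p=2$, and only the ``only if'' direction survives, vacuously. The corollary should assume $p$ odd or state a separate $p=2$ case. This does not harm \autoref{thD}, where at $p=2$ it only makes the target of $\partial$ zero.
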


\paragraph{Additive simplices}
Recall that \autoref{matrixrep} says that for all $k\leq n$,\[ \left(\SBDA(\F_p^n)_k - \SBD(\F_p^n)_k\right)/\Sigma_{k+1}\cong \MDA(\F_p^n)_k/G_k.\] Thus, we may identify the coset $\sigma \Sigma_{k+1}$ with the coset $B\cdot G_k $ for some $B\in \MDA(\F_p^n)_k$. We also recall that a simplex $\sigma\in \SBDA(\F_p^n)_k - \SBD(\F_p^n)_k$, is called an additive $k$-simplex.
\begin{lemma}\label{ksmalll}
    Let $p$ be a prime, $n\geq 3$ and $2\leq k < n$. If $\sigma\in \SBDA(\F_p^n)_k -  \SBD(\F_p^n)_k$, then $\sigma$ is twisted.
\end{lemma}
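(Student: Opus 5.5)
The plan is to copy the proof of \autoref{ksmall}: exhibit an element of $\Pb_n^\pm(\F_p)$ of determinant $-1$ that fixes the additive simplex pointwise (up to sign), and pair it with the trivial element of $G_k$.

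Write $\sigma\Sigma_{k+1}=B\cdot G_k$ with $B=(v_0|\dots|v_k)\in\MDA(\F_p^n)_k$, so $v_0+v_1+v_2=0$ and $v_1,\dots,v_k$ are linearly independent. By the twisted analogue in \autoref{lem:goodmat},
\[\left(\SBDA(\F_p^n)_k^\mathrm{tw}-\SBD(\F_p^n)_k^\mathrm{tw}\right)/\Sigma_{k+1}\cong\MDA(\F_p^n)_k^\mathrm{tw}/G_k,\]
so it suffices to find $A\in\Pb_n^\pm(\F_p)$ and $X\in G_k$ with $A\cdot B=B\cdot X$ and $\det(A)\sign(X)=-1$. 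We take $X=(\Id,1,\Id,1,\dots,1)$, so $\sign(X)=1$ and $B\cdot X=B$. The task is then to find $A$ with $\det(A)=-1$, $A e_1\in\F_p^\times e_1$, and $Av_i=v_i$ for $1\le i\le k$. Such an $A$ automatically fixes $v_0=-v_1-v_2$.

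Since $k<n$, the partial basis $\{v_1,\dots,v_k\}$ can be extended by at least one more vector. We split into two cases according to whether $e_1$ lies in $W=\operatorname{span}\{v_1,\dots,v_k\}$.

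\textbf{Case $e_1\notin W$.} Extend to a basis $\{v_1,\dots,v_k,\dots,v_n\}$ with $v_n=e_1$, which is possible because $e_1\notin W$. Define $A$ by $v_n\mapsto -v_n$ and $v_i\mapsto v_i$ otherwise. Then $\det A=-1$ and $Ae_1=-e_1$, so $A\in\Pb_n^\pm(\F_p)$.

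\textbf{Case $e_1\in W$.} Extend to a basis $\{v_1,\dots,v_n\}$ and define $A$ by $v_n\mapsto -v_n$, fixing the others; here $n>k$ ensures $v_n\notin W$. Since $e_1\in W$, we get $Ae_1=e_1$, and $\det A=-1$, so again $A\in\Pb_n^\pm(\F_p)$.

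In both cases $A\cdot B=B=B\cdot X$ and $\det(A)\sign(X)=-1$, so $\sigma$ is twisted.

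The only point needing care is that $A$ acts on $B$ compatibly with the $G_k$-action, i.e.\ that the equation $A\cdot B=B\cdot X$ is the right condition. Because $A$ fixes each column of $B$ exactly, this is immediate. The hypothesis $k<n$ is used precisely to guarantee the extra basis vector $v_n$ on which the reflection acts.
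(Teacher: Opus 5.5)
Your proof is correct and is essentially the paper's argument: take the trivial element $X=(\Id,1,\Id,1,\dots,1)\in G_k$ together with a reflection $A\in\Pb_n^\pm(\F_p)$ that fixes $v_1,\dots,v_{n-1}$ and negates $v_n$.

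The one difference is your case split on whether $e_1\in W=\operatorname{span}\{v_1,\dots,v_k\}$, and it is a genuine improvement rather than a cosmetic one. The paper extends $\{v_1,\dots,v_k\}$ to a basis with $v_n=\lambda e_1$. That extension is impossible when $e_1\in W$, i.e.\ whenever $B$ lies in some $\DA_2^{k,n}(\underline a)$ (or $\TA_2^{k,n}(\underline a)$ when $p=3$). So the paper's proof as written only treats $B\in\DA_1^{k,n}$.

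Your second case supplies exactly the missing half: there $A$ fixes $W\ni e_1$ pointwise, so $Ae_1=e_1$. You are also right to drop the paper's normalization to a determinant-$1$ basis, since $\det A$ does not depend on the choice of basis.

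The same two-case argument repairs the identical gap in the paper's proof of \autoref{ksmall} for standard simplices.
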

\begin{proof}
    Write $\sigma\Sigma_{k+1}=B\cdot G_k$ for some $B=(v_0|\dots|v_k)\in\MDA(\F_p^n)_k$.
 We want to show that $\sigma$ is twisted.
We have by \autoref{lem:goodmat} that \[\left(\SBDA(\F_p^n)_k ^\mathrm{tw}- \SBD(\F_p^n)_k^\mathrm{tw}\right)/\Sigma_{k+1}\cong \MDA(\F_p^n)_k^\mathrm{tw}/G_k.\] 
It is enough to show that $B$ is twisted.
  Extend $\{v_1,\dots,v_k\}$ to a determinant-$1$ basis of $\F_p^n$
\[
  \{v_1,\dots,v_n\},
  \quad
  v_{n}=\lambda e_1,
\]for some $\lambda\in\F_p^\times$.
Now set
\[
  X=(\Id,1,\Id,1\dots,1)\in G_k.
\]
Next define $A\in \Pb_n^\pm(\F_p)$ by its action on the basis:
\begin{align*}
    v_{n}&\longmapsto -v_{n}\\
    v_i&\longmapsto v_i~\text{otherwise}.
\end{align*}Note that $A\cdot e_1=-e_1$.
Since $k<n$, then \[A\cdot B=B\cdot X.\]Furthermore,\[\det(A)\sign(X)=-1.\] \
This completes the proof.
\end{proof}
\begin{corollary}\label{DA1,tw}
      Since $\DA_1^{k,n}$ only exists for $k<n$, then every $\sigma\in \SBDA(\F_p^n)_k$ with $\sigma \Sigma_{k+1}=B\cdot G_k$ such that $B\in \DA_1^{k,n}$ is twisted.
\end{corollary}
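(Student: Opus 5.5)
This is immediate from \autoref{ksmalll}; the only thing to check is that the hypotheses of that lemma are met. Let $\sigma\in\SBDA(\F_p^n)_k$ with $\sigma\Sigma_{k+1}=B\cdot G_k$ and $B\in\DA_1^{k,n}$.

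\emph{Step 1: the index range.} By definition, $\DA_1^{k,n}$ consists of matrices $(v_0|\dots|v_k)$ in $\MDA(\F_p^n)_k$ with $e_1\notin\operatorname{span}\{v_0,\dots,v_k\}$. When $k=n$, the vectors $v_1,\dots,v_n$ are a basis of $\F_p^n$, so this span is all of $\F_p^n$ and $\DA_1^{n,n}$ is empty. Hence we may assume $2\le k<n$. Nonemptiness of $\DA_1^{k,n}$ also requires room for $e_1$ outside a span of dimension $k\ge 2$, which forces $n\ge 3$. These are exactly the hypotheses of \autoref{ksmalll}.

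\emph{Step 2: $\sigma$ is additive.} Since $B\in\MDA(\F_p^n)_k$, the identification in \autoref{matrixrep} places $\sigma$ in $\SBDA(\F_p^n)_k\setminus\SBD(\F_p^n)_k$. So $\sigma$ is an additive simplex.

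\emph{Step 3: conclude.} By \autoref{ksmalll}, every additive $k$-simplex with $k<n$ is twisted, so $\sigma$ is twisted.

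For completeness, the mechanism behind \autoref{ksmalll} in this case is as follows. Complete $v_1,\dots,v_k$ to a determinant-$1$ basis whose last vector is a multiple of $e_1$; this is possible precisely because $e_1\notin\operatorname{span}\{v_1,\dots,v_k\}$, which is where membership in $\DA_1^{k,n}$ enters. Let $A$ negate that last basis vector and fix the others. Then $A\in\Pb_n^\pm(\F_p)$ with $\det(A)=-1$. Since $A$ fixes each $v_i$, we have $A\cdot B=B\cdot X$ for the identity $X\in G_k$, and $\det(A)\sign(X)=-1$, which is the twisting condition. There is no real obstacle; the only point needing care is that the case $k=n$ does not occur.
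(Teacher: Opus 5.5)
Your proof is correct and follows the paper's reasoning. Since $v_1,\dots,v_n$ span $\F_p^n$, the set $\DA_1^{n,n}$ is empty, so $2\le k<n$ (which forces $n\ge 3$), and the claim is a direct instance of \autoref{ksmalll}. Your sketch of the mechanism is the argument in the proof of \autoref{ksmalll}: extend $v_1,\dots,v_k$ by a multiple of $e_1$ and let $A$ negate it. The choice $v_n=\lambda e_1$ made there is available precisely because $B\in\DA_1^{k,n}$.
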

\begin{lemma}\label{product}
    Let $p$ be a prime and $n\geq 2$. Let $B\in\MDA(\F_p^n)_n$, $A\in\Pb_n^\pm(\F_p)$ and $X=(\pi,\varepsilon,\tau,\varepsilon_3,\dots,\varepsilon_n)\in G_n$ such that $B\cdot X=A\cdot B$. Then \[\det(A)=\sign(X)\prod_{i=3}^n\varepsilon_i.\]
\end{lemma}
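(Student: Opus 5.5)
The plan is to take determinants of both sides of the matrix identity $B\cdot X=A\cdot B$. Since $B=(v_0|v_1|\dots|v_n)$ is $n\times(n+1)$, there is no determinant of $B$ itself. We therefore pass to the square submatrix $B'=(v_1|\dots|v_n)$, which has $\det(B')=\pm1$ because $B\in\MDA(\F_p^n)_n$. Left multiplication by $A$ acts columnwise, so $A\cdot B'=(Av_1|\dots|Av_n)$ and $\det(A\cdot B')=\det(A)\det(B')$. The main task is to express $(Av_1|\dots|Av_n)$ as $B'$ times an explicit signed matrix $M$. Then $\det(A)=\det(M)$, using that $\det(B')$ is a unit.

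From $B\cdot X=A\cdot B$ we read off $Av_0=\varepsilon v_{\pi(0)}$, $Av_1=\varepsilon v_{\pi(1)}$, $Av_2=\varepsilon v_{\pi(2)}$, and $Av_i=\varepsilon_i v_{\tau(i)}$ for $i\ge3$. The columns $i\geq 3$ contribute a signed permutation block on $\{3,\dots,n\}$, with determinant $\sign(\tau)\prod_{i=3}^n\varepsilon_i$. The columns $1,2$ are the delicate part. If $\pi$ fixes $0$, then $M$ restricted to $\{1,2\}$ is $\varepsilon$ times a permutation matrix, with determinant $\varepsilon^2\sign(\pi)=\sign(\pi)$. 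If $\pi(i)=0$ for some $i\in\{1,2\}$, we substitute $v_0=-v_1-v_2$. We then compute the $2\times 2$ block directly in each of the remaining cases of $\pi\in\Sigma_{\{0,1,2\}}$: the transpositions $(0\,1)$ and $(0\,2)$, and the two $3$-cycles. In every case its determinant should again be $\sign(\pi)$.

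Since $M$ is block diagonal with blocks on $\{1,2\}$ and $\{3,\dots,n\}$, we get $\det(M)=\sign(\pi)\sign(\tau)\prod_{i\ge3}\varepsilon_i=\sign(X)\prod_{i=3}^n\varepsilon_i$, which is the claim. The only real obstacle is the case check on $\pi$ for the $\{1,2\}$-block, i.e.\ verifying that rewriting $v_0$ in terms of $v_1,v_2$ never spoils the sign. A cleaner way to see this is that $\Sigma_3$ acts on the lattice $\{x_0+x_1+x_2=0\}$ via its $2$-dimensional reflection representation, whose determinant is the sign character. Alternatively we can do the four explicit $2\times2$ computations, for example $\pi=(0\,1)$ gives the columns $(-1,-1)^t$ and $(0,1)^t$, with determinant $-1$.
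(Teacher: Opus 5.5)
Your proposal is correct and follows essentially the same route as the paper: restrict $A$ to $W=\operatorname{span}\{v_1,v_2\}$, where it acts as $\varepsilon T_\pi$ with $\det(\varepsilon T_\pi)=\det(T_\pi)=\sign(\pi)$. Then multiply by the determinant $\sign(\tau)\prod_{i=3}^n\varepsilon_i$ of the signed permutation block $v_i\mapsto\varepsilon_i v_{\tau(i)}$ on $\operatorname{span}\{v_3,\dots,v_n\}$. The only cosmetic difference is that the paper checks $\det(T_\pi)=\sign(\pi)$ on the generators $(0\,1)$ and $(1\,2)$ and uses multiplicativity of $\pi\mapsto T_\pi$, rather than checking every $\pi$ or appealing to the reflection representation.
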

\begin{proof}
    Suppose $B=(v_0|\dots|v_n)\in \MDA(\F_p^n)_n$, then the vectors $v_1,\dots,v_n$ form a basis of $\F_p^n$ and $v_0+v_1+v_2=0$. Let
\[C=(v_1|\dots|v_n)\in \GL_n(\F_p).\]
From the relation $v_0+v_1+v_2=0$, the subspace
\[W=\operatorname{span} \{v_1,v_2\}\]
is preserved by the action of $\Sigma_{\{0,1,2\}}$ on $\{v_0,v_1,v_2\}$. For $\pi\in \Sigma_{\{0,1,2\}}$, let
\[T_\pi\colon W\to W\]
be the linear map determined by
\[T_\pi(v_i)=v_{\pi(i)} \quad (i=0,1,2).\]
\begin{claim}
    $\det(T_\pi)=\sign(\pi)$.
\end{claim}
\begin{proof}
    It suffices to check the claim on generators of $\Sigma_3$, for example the transpositions $(0\;1)$ and $(1\;2)$.

For $\pi=(1\;2)$, we have
\[T_{(1\;2)}(v_1)=v_2,\quad T_{(1\;2)}(v_2)=v_1.
\]So with respect to the basis $(v_1,v_2)$,
\[T_{(1\;2)}=
\begin{pmatrix}
0&1\\
1&0
\end{pmatrix},\]
and hence
\[\det(T_{(1\;2)})=-1=\sign((1\;2)).\]

For $\pi=(0\;1)$, we have
\[T_{(0\;1)}(v_1)=v_0=-v_1-v_2,\quad T_{(0\;1)}(v_2)=v_2.\]
Therefore\[T_{(0\;1)}=
\begin{pmatrix}
-1&0\\
-1&1
\end{pmatrix},\]
so
\[\det(T_{(0\;1)})=-1=\sign((0\;1)).\]

Since $(0\;1)$ and $(1\;2)$ generate $S_3$, every $\pi\in \Sigma_3$ can be written as a product of these transpositions. Thus
\[
\det(T_\pi)=\sign(\pi)\]
for all $\pi\in \Sigma_3$.
\end{proof}
$X$ acts on $B$ in the following way: on $W=\operatorname{span} \{v_1,v_2\}$, this action is $\varepsilon T_\pi$, while on $\operatorname{span} \{v_3,\dots,v_n\}$ it is given by sending $v_i$ to $\varepsilon_iv_{\tau(i)}$. Since $B\cdot X=A\cdot B$, then 
\[\det(A)=\det(X)
=\det(\varepsilon T_\pi)\cdot \det(\varepsilon_3v_{\tau(3)}|\dots|\varepsilon_n v_{\tau(n)}).\]Since $W$ has dimension $2$,
\[\det(\varepsilon T_\pi)=\varepsilon^2\det(T_\pi)=\det(T_\pi)=\sign(\pi),
\]and the second factor is
\[
\sign(\tau)\prod_{i=3}^n \varepsilon_i.
\]Hence
\[\det(A)=\sign(\pi)\sign(\tau)\prod_{i=3}^n \varepsilon_i.\]
By definition,
\[\sign(X)=\sign(\pi)\sign(\tau),
\]so
\[\det(A)=\sign(X)\prod_{i=3}^n \varepsilon_i.\qedhere\]
\end{proof}
\begin{lemma}\label{twisted:BDA}
    Let $p\neq 3$ be a prime and $n\geq 2$. Let $\sigma$ be an additive $n$-simplex in $\SBDA(\F_p^n)_{n}$ and let $B\in \DA_2^{n,n}(\underline{a})$ for some $\underline{a}=(a_0,\dots,a_n)\in A_n$ such that $\sigma \Sigma_{n+1}=B\cdot G_n$. Then $\sigma$ is twisted if and only if there exist some $\lambda\in\F_p^\times$ and $X=(\pi,\varepsilon,\tau,\varepsilon_3,\dots,\varepsilon_n)\in G_n$ such that\[\lambda \underline{a}=\underline{a}\cdot X,\]and \[\prod_{i=3}^n\varepsilon_i=-1. \] 
\end{lemma}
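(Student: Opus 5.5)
The plan is to follow the proof of \autoref{twisted:BD} for standard simplices, using \autoref{product} in place of the determinant computation for standard frames. By \autoref{lem:goodmat},
\[\left(\SBDA(\F_p^n)_n^\mathrm{tw}-\SBD(\F_p^n)_n^\mathrm{tw}\right)/\Sigma_{n+1}\cong \MDA(\F_p^n)_n^\mathrm{tw}/G_n.\]
So $\sigma$ is twisted if and only if there exist $A\in\Pb_n^\pm(\F_p)$ and $X\in G_n$ with $\det(A)\sign(X)=-1$ and $B\cdot X=A\cdot B$.

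The first step is to remove $A$ from the sign condition. Since $k=n$, the vectors $v_1,\dots,v_n$ form a basis, so $A$ is uniquely determined by the relation $B\cdot X=A\cdot B$. \autoref{product} then gives
\[\det(A)=\sign(X)\prod_{i=3}^n\varepsilon_i,\]
and therefore $\det(A)\sign(X)=\prod_{i=3}^n\varepsilon_i$. The condition becomes: there exists $X=(\pi,\varepsilon,\tau,\varepsilon_3,\dots,\varepsilon_n)\in G_n$ with $\prod_{i\ge 3}\varepsilon_i=-1$ such that $B\cdot X\in \Pb_n^\pm(\F_p)\cdot B$.

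The second step uses \autoref{BDA/Pn}, which applies because $p\neq 3$. By property (d), $\Pb_n^\pm(\F_p)\cdot B=\DA_2^{n,n}(\underline{a})$. By property (c), $B\cdot X\in\DA_2^{n,n}(\underline{a}\cdot X)$, so the condition $B\cdot X\in \Pb_n^\pm(\F_p)\cdot B$ is equivalent to $\DA_2^{n,n}(\underline{a}\cdot X)=\DA_2^{n,n}(\underline{a})$. By property (b), this holds if and only if $\lambda\underline{a}=\underline{a}\cdot X$ for some $\lambda\in\F_p^\times$. Since $G_n$ preserves $A_n$, the tuple $\underline{a}\cdot X$ lies in $A_n$, so property (b) applies.

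Combining the two steps gives both implications at once, since every step above is an equivalence. In the forward direction we take the $A$ and $X$ provided by twistedness. In the reverse direction, we start from $\lambda$ and $X$, and property (d) supplies an $A$ with $A\cdot B=B\cdot X$.

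The main obstacle is the sign bookkeeping. We must check that the sign entering the twisted condition is exactly $\sign(\pi)\sign(\tau)$, as defined on $G_n$, and that it agrees with the sign of the corresponding permutation in $\Sigma_{n+1}$ under the identification of \autoref{matrixrep}. Once this matches, \autoref{product} cancels $\sign(X)$ and leaves only $\prod_{i\ge 3}\varepsilon_i$.
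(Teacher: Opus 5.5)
Your proposal is correct and follows essentially the same route as the paper. It translates twistedness into the matrix condition via \autoref{lem:goodmat}, uses \autoref{product} to replace $\det(A)\sign(X)$ by $\prod_{i\ge 3}\varepsilon_i$, and then applies properties (b)--(d) of \autoref{BDA/Pn} to turn $B\cdot X\in\Pb_n^\pm(\F_p)\cdot B$ into $\lambda\underline{a}=\underline{a}\cdot X$. Your observation that $G_n$ preserves $A_n$, which is needed for property (b) to apply, is a detail the paper leaves implicit.
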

\begin{proof}
  Recall that \[A_n=\left\{\underline{a}=(a_0,\dots,a_n)\in\F_p^{n+1}\setminus\{\underline{0}\} ~\middle\vert a_0+a_1+a_2=0\right\}\] and \[\DA_2^{n,n}(\underline{a})=\left\{B=\left(v_0|\dots|v_n\right)\in \MDA(\F_p^n)_n ~\middle\vert \lambda e_1=a_0v_0+\dots+a_nv_n~\text{for some $\lambda\in\F_p^\times$}\right\}.\] 
Let $\sigma \Sigma_{n+1}=B\cdot G_n$ with $B\in \DA_2^{n,n}(\underline{a})$ for some $\underline{a}=(a_0,\dots,a_n)\in A_n$. Since  \[\left(\SBDA(\F_p^n)_n - \SBD(\F_p^n)_n\right)^\mathrm{tw}/\Sigma_{n+1}\cong \MDA(\F_p^n)_n^\mathrm{tw}/G_n\] by \autoref{lem:goodmat}, we have the following equivalences:
\begin{align*}\text{$\sigma$ is twisted}&\Longleftrightarrow \text{there exist $A\in \Pb_n^\pm(\F_p)$ and $X\in G_n$ such that $\det(A)\sign(X)=-1$ and $B\cdot X=A\cdot B$}\\&\Longleftrightarrow \text{there exist $X=(\pi,\varepsilon,\tau,\varepsilon_3,\dots,\varepsilon_n)\in G_n$ such that $B\cdot X\in \DA_2^{n,n}(\underline{a})$, and $\prod\limits_{i=3}^n\varepsilon_i=-1$.}\\ &\Longleftrightarrow \text{there exist $X=(\pi,\varepsilon,\tau,\varepsilon_3,\dots,\varepsilon_n)\in G_n$ such that $\DA_2^{n,n}(\underline{a}\cdot X)=\DA_2^{n,n}(\underline{a})$, and $\prod\limits_{i=3}^n\varepsilon_i=-1$},\end{align*} where the second equivalence follows from \autoref{product}, and the last equivalence from \autoref{BDA/Pn}.

    Moreover, we have by \autoref{BDA/Pn} that \[\DA_2^{n,n}(\underline{a})=\DA_2^{n,n}(\underline{b})\quad\Longleftrightarrow\quad \underline{a}=\lambda\underline{b}~\text{for some $\lambda\in \F_p^\times$}.\]
   It follows that $\sigma$ is twisted if and only if there exist $\lambda\in \F_p^
    \times$ and $X=(\pi,\varepsilon,\tau,\varepsilon_3,\dots,\varepsilon_n)\in G_n$ such that \[\lambda \underline{a}=\underline{a}\cdot X,\]and \[\prod\limits_{i=3}^n\varepsilon_i=-1.\qedhere\]\end{proof}

\begin{lemma}
     Let $p\neq 3$ be a prime and $n\geq 3$ odd. Let $\sigma$ be an additive $n$-simplex in $\SBDA(\F_p^n)_n$ and let $B\in \DA_2^{n,n}(\underline{a})$ for some $\underline{a}=(a_0,\dots,a_n)\in A_n$ such that $\sigma \Sigma_{n+1}=B\cdot G_n$. Then $\sigma$ is twisted.\end{lemma}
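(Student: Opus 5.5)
The plan is to mirror the proof of \autoref{D2:tw,odd} in the additive setting, using the criterion of \autoref{twisted:BDA}. By that lemma it suffices to produce some $\lambda\in\F_p^\times$ and some $X=(\pi,\varepsilon,\tau,\varepsilon_3,\dots,\varepsilon_n)\in G_n$ with
\[
\lambda\underline{a}=\underline{a}\cdot X\quad\text{and}\quad \prod_{i=3}^n\varepsilon_i=-1 .
\]

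I would take $\lambda=-1$ and
\[
X=(\Id,-1,\Id,-1,\dots,-1)\in G_n,
\]
that is, $\pi=\Id$, $\tau=\Id$, $\varepsilon=-1$, and $\varepsilon_i=-1$ for all $3\leq i\leq n$. By \autoref{def:a'}, this gives
\[
\underline{a}\cdot X=(-a_0,-a_1,-a_2,-a_3,\dots,-a_n)=-\underline{a}=\lambda\underline{a},
\]
so the first condition holds for every $\underline{a}\in A_n$. For the second condition, note that
\[
\prod_{i=3}^n\varepsilon_i=(-1)^{n-2}.
\]
Since $n$ is odd, $n-2$ is odd, so this product equals $-1$. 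Hence \autoref{twisted:BDA} shows that $\sigma$ is twisted.

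The only thing to watch is that \autoref{twisted:BDA} is stated for $p\neq 3$, which is exactly our hypothesis, so there is no real obstacle. As a sanity check, one can verify directly via \autoref{product}. The matrix $A=-I$ lies in $\Pb_n^\pm(\F_p)$, since $n$ odd gives $\det(-I)=-1$ and $-I$ sends $e_1$ to $-e_1$. It satisfies $A\cdot B=B\cdot X$, because $B\cdot X$ is $B$ with every column negated. Finally,
\[
\det(A)\sign(X)=(-1)\cdot 1=-1,
\]
which confirms the twisting directly from the definition, consistent with the lemma.
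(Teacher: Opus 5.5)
Your proof is correct and is the same as the paper's: it also applies Lemma~\ref{twisted:BDA} with $\lambda=-1$ and $X=(\Id,-1,\Id,-1,\dots,-1)$, and gets $\prod_{i=3}^n\varepsilon_i=(-1)^{n-2}=-1$. Your extra check with $A=-I$ is also valid and even bypasses that lemma: for $n$ odd, $-I$ lies in $\Pb_n^\pm(\F_p)$, fixes every $\pm$-frame, and has determinant $-1$, so every simplex is twisted directly by the definition.
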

\begin{proof}
  Let $\sigma \Sigma_{n+1}=B\cdot G_n$ with $B\in \DA_2^{n,n}(\underline{a})$ for some $\underline{a}=(a_0,\dots,a_n)\in A_n$. We have by \autoref{twisted:BDA} that $\sigma$ is twisted if there exists $\lambda\in \F_p^\times$ and $X=(\pi,\varepsilon,\tau,\varepsilon_3,\dots,\varepsilon_n)\in G_n$ such that \[\lambda \underline{a}=\underline{a}\cdot X,\]and \[\prod\limits_{i=3}^n\varepsilon_i=-1.\]
       Take $X=(\Id,-1,\Id,-1,\dots,-1)\in G_k$ and $\lambda=-1$. Then \[\lambda\underline{a}=\underline{a}\cdot X,\]and \[\prod\limits_{i=3}^{n}\varepsilon_i=(-1)^{n-2}=-1.\qedhere\]
\end{proof}

\begin{lemma}\label{DA,tw,even}
     Let $p\neq 3$ be a prime, $n\geq 2$ is even. Let $\sigma$ be an additive $n$-simplex in $\SBDA(\F_p^n)_n$ and let $B\in \DA_2^{n,n}(\underline{a})$ for some $\underline{a}=(a_0,\dots,a_n)\in A_n$ such that $\sigma \Sigma_{n+1}=B\cdot G_n$. Then $\sigma$ is twisted if and only if one the following cases is satisfied. \begin{description}
        \item[Case 1.] If $a_{i_0}=0$ for some $3\leq i_0\leq n$
        \item[Case 2. (Additive $(\lambda,m)$-condition)]\label{twistedcond} If there exist $\lambda\in\F_p^\times$ and an even integer $m$ satisfying \[\lambda~\text{has order $2m$},\] \[m\mid n-2,\]\[q=\frac{n-2}{m}~\text{is an odd integer},\]and\[\underline{a}\cdot Y=(0,0,0,b_1,\lambda b_1,\dots,\lambda^{m-1}b_1,\dots,b_q,\lambda b_q,\dots,\lambda^{m-1}b_q),\]for some $Y\in G_n$ and $b_1,\dots,b_q\in \F_p^\times$. 
    \end{description}
\end{lemma}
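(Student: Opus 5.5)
The proof follows \autoref{D2:even,tw}, with \autoref{twisted:BDA} playing the role of \autoref{twisted:BD}. By \autoref{twisted:BDA}, $\sigma$ is twisted if and only if there exist $\lambda\in\F_p^\times$ and $X=(\pi,\varepsilon,\tau,\varepsilon_3,\dots,\varepsilon_n)\in G_n$ with $\lambda\underline a=\underline a\cdot X$ and $\prod_{i=3}^n\varepsilon_i=-1$. Componentwise this reads
\[\lambda a_i=\varepsilon a_{\pi(i)}\ (0\le i\le 2),\qquad \lambda a_i=\varepsilon_i a_{\tau(i)}\ (3\le i\le n).\]
The relevant constraint is only on the signs $\varepsilon_3,\dots,\varepsilon_n$ attached to the last $n-2$ coordinates. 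The triple $(a_0,a_1,a_2)$ enters only through the existence of some $(\pi,\varepsilon)$ realizing the scalar $\lambda$ on it.

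For the ``if'' direction, in Case 1 take $\lambda=1$ and $\pi=\tau=\Id$, with $\varepsilon=1$, $\varepsilon_{i_0}=-1$ and all other $\varepsilon_i=1$. Since $a_{i_0}=0$, this gives $\underline a\cdot X=\underline a$ and $\prod\varepsilon_i=-1$. In Case 2, first conjugate by $Y$, exactly as in \autoref{D2,dif,rv}. On the block $(0,0,0)$ take $\pi=\Id$ and $\varepsilon=1$; this works because the block is zero. On the last $n-2$ coordinates take $\tau$ to be the product of the $q$ consecutive $m$-cycles, and put $\varepsilon_i=-1$ exactly at the last index of each block. Using $\lambda^m=-1$, the same computation as in \autoref{D2,dif,rv} gives $\lambda\underline b=\underline b\cdot X$, and $\prod\varepsilon_i=(-1)^q=-1$ since $q$ is odd. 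Then $YXY^{-1}$ does the job for $\underline a$, because conjugation does not change $\prod_{i\ge3}\varepsilon_i$: conjugating permutes the $\varepsilon_i$ for $i\ge 3$ and multiplies them by products of $Y$'s signs that cancel in pairs.

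For the converse, assume all $a_i\neq 0$ for $i\ge3$. As in \autoref{D2:even,tw}, multiply the relations over $i\ge 3$ to get $\lambda^{n-2}=\prod_{i\ge3}\varepsilon_i=-1$. Let $m_i$ be the length of the $\tau$-cycle through $i$; each such cycle gives $\lambda^{m_i}=\pm1$. With $m=\gcd_i m_i$ we get $\lambda^m=\pm1$, and writing $n-2=qm$ yields $(\lambda^m)^q=-1$. Hence $\lambda^m=-1$ and $q$ is odd. Since $n-2$ is even, $m$ is even. Reordering along the cycles with some $Z\in G_n$ (with $\pi$-component $\Id$) puts the last $n-2$ coordinates in the form $(b_j,\lambda b_j,\dots,\lambda^{m-1}b_j)_{j=1}^q$. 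Minimality of $m$ forces the order of $\lambda$ to be exactly $2m$; if this fails, the cycles can be re-blocked with a smaller $m$, as in the even case of \autoref{D2:even,tw}.

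The remaining and hardest step is to show that $a_0=a_1=a_2=0$. From $\lambda^m=-1$ with $m$ even, $\lambda\neq\pm1$. On the triple, the relation $\lambda a_i=\varepsilon a_{\pi(i)}$ gives $\lambda^{\mathrm{ord}(\pi)}=\pm1$ unless the triple vanishes. If $\pi$ fixes some index, then that $a_i=0$, and combined with $a_0+a_1+a_2=0$ the transposed pair forces the rest to vanish, because $\lambda\neq\pm1$. If $\pi$ is a $3$-cycle, then $\lambda^3=\pm1$ for a nonzero triple; together with $\lambda^m=-1$ and $\lambda\ne\pm1$ this gives $\lambda^{\gcd(3,2m)}=\pm 1$, a contradiction when $3\nmid m$. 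When $3\mid m$ a direct check is needed: a nonzero triple with $\lambda a_i=\varepsilon a_{\pi(i)}$ and $a_0+a_1+a_2=0$ forces $\varepsilon\lambda$ to be a primitive cube root of unity, and this case must be ruled out or absorbed, which is where I expect the main difficulty. I would first try to show that in this situation one can replace $X$ by an element with $\pi=\Id$ acting on a zero triple. If that fails, the statement must be read as allowing such triples, and the argument would be adjusted accordingly.
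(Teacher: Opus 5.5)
Your reduction via \autoref{twisted:BDA}, the ``if'' direction, and the derivation of $\lambda^{n-2}=-1$, $\lambda^m=-1$, $q$ odd and $m$ even all follow the paper's proof. Your remarks on the conjugation invariance of $\prod_{i\ge 3}\varepsilon_i$, and on re-blocking so that $\lambda$ has order exactly $2m$, are correct and fill in points the paper passes over. The converse, however, is left unfinished. You do not rule out a nonzero triple $(a_0,a_1,a_2)$ permuted by a $3$-cycle when $3\mid m$, and you allow that the statement might need reinterpreting there. The Additive $(\lambda,m)$-condition requires the first three coordinates of $\underline a\cdot Y$ to vanish, so this is exactly the step the converse depends on. As written, the proof is incomplete.

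The missing step is one line. A nonzero triple with $\lambda a_i=\varepsilon a_{\pi(i)}$ for a $3$-cycle $\pi$ gives $\lambda^3=\varepsilon^3=\pm1$, hence $\lambda^6=1$. You have already shown that $m$ is even, so $3\mid m$ implies $6\mid m$. Then $\lambda^m=(\lambda^6)^{m/6}=1$, contradicting $\lambda^m=-1$; this uses $-1\neq 1$ in $\F_p$, as both your argument and the paper's tacitly do. Combined with your $3\nmid m$ case, this shows $\lambda^3\neq\pm1$, so a $3$-cycle can only act on the zero triple. Your fixed-point analysis handles the remaining permutations, giving $a_0=a_1=a_2=0$, and there is no need to modify $X$.

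The paper reaches the same point in the opposite order. Before touching the triple, it shows $\lambda^3\neq\pm1$ directly: if $\lambda\neq\pm1$ and $\lambda^3=\pm1$, then $\lambda^{m_i}=\pm1$ forces $3\mid m_i$ for every $\tau$-cycle, so $n-2=3M$. Then $\lambda^{n-2}=-1$ forces $\lambda^3=-1$ with $M$ odd, contradicting that $n$ is even. After that, the case analysis on $\pi$ kills the triple immediately.
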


\begin{proof}
    This proof is very similar to that of \autoref{D2:even,tw}. We recall that \autoref{twisted:BDA} says that $\sigma$ is twisted if and only if $\lambda\in \F_p^\times$ and $X=(\pi,\varepsilon,\tau,\varepsilon_3,\dots,\varepsilon_n)\in G_{n}$ such that \[\lambda\underline{a}=\underline{a}\cdot X,\]and \[\prod_{i=3}^{n}\varepsilon_i=-1.\]We start with proving the ``if" direction.\begin{mycases}
        \case If $a_{i_0}=0$ for some $i_0\geq 3$, we take $X=(\Id,1,\Id,\varepsilon_3,\dots,\varepsilon_{n})\in G_n$ where \[\varepsilon_i=\begin{cases}-1&\text{if $i=i_0$}\\             1&\text{otherwise}  
        \end{cases}.\] Set $\lambda=1$. Then,  \[\lambda\underline{a}=\underline{a}\cdot X,\]and \[\prod_{i=3}^{n}\varepsilon_i=\varepsilon_{i_0}=-1.\]
        \case Let $\underline{b}=(0,0,0,b_1, \lambda b_1, \dots, \lambda^{m-1} b_1,\dots,b_q, \lambda b_q, \dots, \lambda^{m-1} b_q)$.  Let $X=(\Id,1,\tau,\varepsilon_3,\dots,\varepsilon_{n})\in G_n$ where $\tau$ is the permutation  \[\tau=\left(3\; \dots\; m+2\right)\left(m+3\;\dots\; 2m+2\right)\dots((q-1)m+3\;\dots\; qm+2),\]and \[\varepsilon_i=\begin{cases}
            -1 &\text{if $i\in\{m+2,2m+2,\dots,qm+2$\}}\\
            1&\text{otherwise}.
        \end{cases}.\]
        As $q$ is odd, it implies that \[\prod_{i=3}^{n-1}\varepsilon_i=(-1)^q=-1.\]
        Moreover, given that $\lambda$ has order $2m$, it follows that $\lambda^{m}=-1$. Therefore \begin{align*}\lambda \underline{b}&=\lambda(0,0,0,b_1, \lambda b_1, \dots, \lambda^{m-1} b_1,\dots,b_q, \lambda b_q, \dots, \lambda^{m-1} b_q)\\&=\left(0,0,0,\lambda b_1,\lambda^2b_1,\dots,\lambda^m b_1,\dots,\lambda b_q,\lambda^2 b_q,\dots,\lambda^m b_q\right)\\&=\left(0,0,0,\lambda b_1,\lambda^2b_1,\dots,- b_1,\dots,\lambda b_q,\lambda^2 b_q,\dots,- b_q\right)\\&=(0,0,0,b_1, \lambda b_1, \dots, \lambda^{m-1} b_1,\dots,b_q, \lambda b_q, \dots, \lambda^{m-1} b_q)\cdot X\\&=\underline{b}\cdot X.\end{align*}
        Since \[\underline{a}\cdot Y=(0,0,0,b_1, \lambda b_1, \dots, \lambda^{m-1} b_1,\dots,b_q, \lambda b_q, \dots, \lambda^{m-1} b_q),\] it implies that $\lambda \underline{a}\cdot Y=\lambda \underline{b}=\underline{b}\cdot X$. Thus, \[\lambda\underline{a}=\underline{a}\cdot YXY^{-1}.\]
    \end{mycases}We conclude by \autoref{twisted:BDA} that $\sigma$ is twisted.
    
   We now prove the converse and assume that $\sigma$ is twisted. Then by \autoref{twisted:BDA}, there exist $\lambda\in\F_p^\times$ and $X=(\pi,\varepsilon,\tau,\varepsilon_3,\dots,\varepsilon_n)\in G_n$ such that \[\lambda \underline{a}=\underline{a}\cdot X\quad\text{and}\quad \prod_{i=3}^n\varepsilon_i=-1.\]
   Thus,\begin{equation}\label{eq0} \lambda a_i=\varepsilon a_{\pi(i)}\quad\text{for all $0\leq i\leq 2$},
    \end{equation} \begin{equation}\label{eq3} \lambda a_i=\varepsilon_i a_{\tau(i)}\quad\text{for all $3\leq i\leq n$}.
    \end{equation}\begin{itemize}
        \item If $a_i=0$ for some $i\geq 3$, then we are in Case 1.
        \item If $a_i\neq 0$ for all $i\geq 3$, we show that $\lambda^3\neq \pm 1$.
     \begin{itemize}
    \item If $\lambda=\pm 1$, we set \[\underline{a'}=\underline{a}\cdot Y\]for some $Y=(\Id,\varepsilon',\Id,\varepsilon_3',\dots,\varepsilon_n')\in G_n$ with $\varepsilon,\varepsilon_i$ are chosen such that, for each $i$, $a_i'$ is equal to the unique value of $a_i$ in $\{1,\dots,\frac{p-1}{2}\}$. We have \[\lambda \underline{a'}=\underline{a'}\cdot (Y^{-1}XY)=\underline{a'
    }\cdot X',\]where $X'=Y^{-1}XY.$
    It then follows by \eqref{eq3} that \[\lambda a'_i=\varepsilon_ia'_{\tau(i)}\quad\text{for all $i\geq 3$}.\] 
    Since $a_i'\neq 0$ for all $i\geq 3$, we deduce that \[\lambda =\varepsilon_i\quad\text{for all $i\geq 3$}.\]Moreover, multiplying $\lambda a_i'=\varepsilon_ia'_{\tau(i)}$ over all $i$ gives \[\lambda^{n-2}=\prod_{i=3}^{n}\varepsilon_i.\]As $n$ is even, it follows that \[\prod\limits_{i=3}^n\varepsilon_i=\lambda^{n-2}=1.\]Since $\prod\limits_{i=3}^n\varepsilon_i=-1$, then $\lambda\neq \pm 1$.
    \item If $\lambda\neq\pm 1$ and $\lambda^3=\pm 1$, then for each $i\geq 3$, we let $m_i$ be the length of the cycle of $\tau$ that contains $i$. In particular, \[\tau^{m_i}(i)=i.\]  
    Since $a_i\neq 0$ for all $i\geq 3$, it then implies by \eqref{eq3} that for every $3\leq i\leq n$ and $ \ell\leq m_i$,\begin{equation}\label{eq:ll}\lambda^\ell  a_{i}=\varepsilon_{i}\varepsilon_{\tau(i)}\varepsilon_{\tau^2(i)}\dots\varepsilon_{\tau^{l-1}(i)}a_{\tau^{l}(i)}.\end{equation} Furthermore,  since $a_i\neq 0$ for all $i$, it implies that 
    \[\lambda^{m_i}=\varepsilon_{i}\varepsilon_{\tau(i)}\varepsilon_{\tau^2(i)}\dots\varepsilon_{\tau^{m_i-1}(i)}=\pm 1.\]
Since $\lambda\neq \pm 1$ and $\lambda^3=\pm 1$, then every cycle-length $m_i$ must be a multiple of $3$.
Hence, $n-2=3m$ for some positive integer $m$. Moreover, \[\lambda^{n-2}=\prod\limits_{i=3}^n\varepsilon_i=-1.\] As $\lambda^3=\pm1$, it follows that \[(\pm 1)^m=(\lambda^3)^m=\lambda^{n-2}=-1.\]
Consequently, $m$ is odd. This gives a contradiction to $n$ being even. Then this case cannot occur.\end{itemize} 
  Thus \[\lambda^3\neq \pm 1.\] Then by the same argument used in \autoref{rev,eq}, we have the following three cases: \begin{itemize} 
    \item If $\sign(\pi)=-1$, then $\pi$ has a fixed point $i_0$ in $\{0,1,2\}$, and is a transposition $(ij)$ for $i,j\in\{0,1,2\}\backslash i_0$. This implies by \eqref{eq0} that $a_{i_0}=0$. Using the identity \[a_0+a_1+a_2=0,\]we conclude that $a_{i}=-a_{j}$. Moreover, we have \[\lambda a_i=\varepsilon a_j\quad\text{and}\quad \lambda a_j=\varepsilon a_i.\] Since $\lambda\neq \pm 1$, it follows that $a_1=a_2=a_2=0$. 
    \item If $\pi=\Id$, then $a_0=a_1=a_2=0$. 
    \item If $\pi=(i\; j\; \ell)$ is a rotation in $\{0,1,2\}$, then \begin{align*} \lambda a_i&=\varepsilon a_j\\ \lambda a_j&=\varepsilon a_{\ell}\\ \lambda a_{\ell}&=\varepsilon a_i. 
    \end{align*} It implies that $\lambda^3a_ia_ja_\ell=\varepsilon^3a_ia_ja_\ell$. If $a_{i_0}\neq 0$ for some $i_0\in\{i,j,\ell\}$, then all $a_i,a_j,a_\ell$ are non zero, which contradicts $\lambda^3\neq \pm 1$. Thus $a_i=a_j=a_{\ell}=0$.\end{itemize}
    Thus we have showed that $a_0=a_1=a_2=0$. Since $a_i\neq 0$ for all $i\geq 3$, we have by \eqref{eq3}, that for every $3\leq i\leq n$, \[(a_{i},a_{\pi(i)},\dots,a_{\pi^{m_i-1}(i)})=(a_{i},\varepsilon_{i}\lambda a_{i},\dots,\varepsilon_{i}\varepsilon_{\pi(i)}\varepsilon_{\pi^2(i)}\dots\varepsilon_{\pi^{m_i-1}(i)}\lambda^{m_i-1}a_{i}).\] Let $m_i$ be the length of the cycle of $\tau$ that contains $i$, and let $m=\gcd\limits_i(m_i)$. Thus each cycle of length $m_i$ decomposes into $\frac{m_i}{m}$ consecutive blocks of length $m$. Write $\tau$ as a product of disjoint cycles \[(r_1\; r_2 \; \dots \;r_{m_1})(r_{m_1+1}\; \dots\; r_{m_1+m_2})\dots .\] Let $\pi'\in\Sigma_{n-2}$ be the permutation defined by $\pi'(\ell)=r_\ell$ for all $\ell$. Consequently, for $Z=(\tau,\varepsilon'_3,\dots,\varepsilon'_n)\in \Sigma_{\{3,\dots,n\}}\ltimes \{-1,1\}^n-2$ for some $\varepsilon'_i=\{-1,1\}$, the tuple $(a_3,\dots,a_n)\cdot Z$ is equal to a tuple of the form \[(b_1,\lambda b_1,\dots,\lambda^{m_0-1}b_1,\dots, b_q\lambda b_q,\dots,\lambda^{m-1}b_q)\]for some $b_j\in \F_p^\times$ with $q=\frac{n-2}{m}$. Let $Y=(\Id,1,\pi',\varepsilon_3',\dots,\varepsilon_n')$.
    Then \[\underline{a}\cdot Y=(b_1,\lambda b_1,\dots,\lambda^{m_0-1}b_1,\dots, b_q\lambda b_q,\dots,\lambda^{m-1}b_q).\] Moreover, we have by \eqref{eq:ll}  $\lambda^{m_i}=\pm 1$ for all $i$, and so $\lambda^m=\pm 1$. Additionally, \[(\pm 1)^q=(\lambda^m)^q=\lambda^{n-2}=\prod_{i=3}^n\varepsilon_i=-1.\] We conclude that $\lambda^m=-1$ and that $q$ is odd. Consequently, $m$ is even, and we are in Case $2$.\qedhere\end{itemize}
\end{proof}
The preceding lemmas can be summarized in the following corollary.
\begin{corollary}\label{cor:add,utw}
Let $p\neq 3$ be a prime, $n\geq 2$ and $2\leq k\leq n$. Let $\sigma$ be an additive $k$-simplex in $\SBDA(\F_p^n)_k$ and let $B\in\MDA(\F_p^n)_k$ such that $\sigma\Sigma_{k+1}=B\cdot G_k$. Then $\sigma$ belongs to $\left(\SBDA(\F_p^n)_k\setminus \SBD(\F_p^n)_k\right)^\mathrm{utw}$ if and only if $k=n$ is even and $B$ belongs to some set \[\DA_2^{n,n}(\underline{a})\quad\text{with $\underline{a}=(a_0,\dots,a_n)\in A_n$},\] where the entries do not satisfy $a_i= 0$ for some $i\geq 3$ and the \hyperref[twistedcond]{Additive $(\lambda,m)$-condition}.
\end{corollary}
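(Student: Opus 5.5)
This is a summary corollary, so the plan is to assemble the preceding lemmas on additive simplices, splitting on whether $k<n$ or $k=n$, and on whether $B$ lies in $\DA_1^{k,n}$ or in some $\DA_2^{k,n}(\underline{a})$. Since $p\neq 3$, \autoref{BDA/Pn}(d) says the $\Pb_n^\pm(\F_p)$-orbits in $\MDA(\F_p^n)_k$ are exactly $\DA_1^{k,n}$ and the sets $\DA_2^{k,n}(\underline{a})$ with $\underline{a}\in A_k$. By \autoref{lem:goodmat}, $\sigma$ is untwisted if and only if $B$ is, so it suffices to decide twistedness for each such orbit.

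\emph{Case $k<n$.} Here \autoref{ksmalll} shows that every additive $k$-simplex is twisted; this includes the $\DA_1^{k,n}$ orbits, by \autoref{DA1,tw}. So no untwisted simplices occur, consistent with the requirement $k=n$ in the statement. One point to check is the edge case $n=2$, where the only $k\geq 2$ is $k=n=2$, so nothing needs to be shown below $n$.

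\emph{Case $k=n$.} Now $\DA_1^{n,n}$ is empty, so $B\in\DA_2^{n,n}(\underline{a})$ for some $\underline{a}\in A_n$.
\begin{itemize}
\item If $n$ is odd, the lemma immediately preceding \autoref{DA,tw,even} shows $\sigma$ is twisted. It uses $X=(\Id,-1,\Id,-1,\dots,-1)$ and $\lambda=-1$, together with \autoref{twisted:BDA}.
\item If $n$ is even, \autoref{DA,tw,even} says $\sigma$ is twisted exactly when either some $a_i=0$ with $i\geq 3$, or $\underline{a}$ satisfies the additive $(\lambda,m)$-condition.
\end{itemize}
Taking complements, $\sigma$ is untwisted if and only if $n$ is even, all $a_i\neq 0$ for $i\ge 3$, and the additive $(\lambda,m)$-condition fails. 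This is the claimed characterization.

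The main obstacle is making sure this characterization is well defined on orbits. Membership in $\DA_2^{n,n}(\underline{a})$ determines $\underline{a}$ only up to scaling, by \autoref{BDA/Pn}(b), and the $G_n$-action moves $\underline{a}$ to $\underline{a}\cdot X$, by \eqref{DA2,rel2}. So I will check that both conditions are invariant under $\underline{a}\mapsto\mu\,\underline{a}\cdot X$:
\begin{itemize}
\item The vanishing of some $a_i$ with $i\geq3$ is invariant, because $G_n$ permutes the indices $\{3,\dots,n\}$ among themselves up to sign.
\item The additive $(\lambda,m)$-condition is invariant, because it is phrased up to an arbitrary $Y\in G_n$, and rescaling the $b_j$ preserves its form.
\end{itemize}
With both invariances in hand, the corollary follows.
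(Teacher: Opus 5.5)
Your proposal is exactly the paper's argument. The paper states this corollary as a summary of \autoref{ksmalll} and \autoref{DA1,tw} (for $k<n$), the odd-$n$ lemma preceding \autoref{DA,tw,even}, and \autoref{DA,tw,even} itself, and your assembly of these is a valid deduction from them. That includes your handling of the $n=2$ edge case, and your orbit-invariance check is harmless (indeed automatic, since those lemmas hold for every $\underline{a}$ with $B\in\DA_2^{n,n}(\underline{a})$).

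The one weakness is inherited from the paper rather than introduced by you. The $\lambda=\pm1$ step in the proof of \autoref{DA,tw,even} chooses representatives in $\{1,\dots,\frac{p-1}{2}\}$, so it needs $p$ odd. In fact, for $p=2$ and even $n\geq 4$, the choice $\lambda=1$, $X=(\Id,1,\Id,-1,1,\dots,1)$ satisfies the criterion of \autoref{twisted:BDA} for every $\underline{a}\in A_n$. So the stated characterization is only secured for $p\geq 5$.
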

We now treat the case $p=3$.
\begin{lemma} \label{tw:p=3}
     Let $n\geq 2$, and let $\sigma$ be an additive $n$-simplex in $\SBDA(\F_3^n)_n$ and let $B\in \TA_2^{n,n}(\underline{a})$ for some $\underline{a}=(a_1,\dots,a_n)\in \F_3^{n}\setminus\{\underline{0}\}$ such that $\sigma \Sigma_{n+1}=B\cdot G_n$. Then $\sigma$ is twisted if and only if one of the following cases is satisfied.\begin{mycases}
         \case If $n$ is odd,
         \case If $a_{i_0}=0$ for some $3\leq i_0\leq n$.
     \end{mycases}
\end{lemma}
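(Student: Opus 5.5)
The plan is to reduce, as in \autoref{twisted:BDA}, twistedness to a condition on the coefficient vector $\underline{a}$, now using the $p=3$ parametrization of \autoref{BDA,p=3}. First, as in the earlier lemmas, use \autoref{lem:goodmat} to say that $\sigma$ is twisted if and only if there are $A\in\Pb_n^\pm(\F_3)$ and $X\in G_n$ with $B\cdot X=A\cdot B$ and $\det(A)\sign(X)=-1$. By \autoref{product}, $\det(A)\sign(X)=\prod_{i=3}^n\varepsilon_i$. By properties (c) and (d) of \autoref{BDA,p=3}, the condition $B\cdot X=A\cdot B$ for some $A$ is equivalent to $\TA_2^{n,n}(\underline{a})\cdot X=\TA_2^{n,n}(\underline{a})$. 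So $\sigma$ is twisted if and only if some $X=(\pi,\varepsilon,\tau,\varepsilon_3,\dots,\varepsilon_n)\in G_n$ stabilizes the orbit $\TA_2^{n,n}(\underline{a})$ and satisfies $\prod_{i\geq 3}\varepsilon_i=-1$.

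For the ``if'' direction I would exhibit explicit elements. If $n$ is odd, take $X=(\Id,-1,\Id,-1,\dots,-1)$. Then $B\cdot X=-B$, so $X$ fixes $\TA_2^{n,n}(\underline{a})$ by \eqref{TA2,rel} and property (b), and $\prod_{i\ge3}\varepsilon_i=(-1)^{n-2}=-1$. If $a_{i_0}=0$ for some $i_0\geq 3$, take $X$ with $\varepsilon_{i_0}=-1$ and all other entries trivial. Then \eqref{TA2,rel} gives $\underline{a}\mapsto\underline{a}$, since the sign acts on a zero entry.

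For the converse, assume $n$ is even and $a_i\neq 0$ for all $i\geq 3$, and suppose some $X$ stabilizes the orbit. Following the proof of \autoref{BDA,p=3}(b), I would first check that the restriction of $\underline{a}$ to the last $n-2$ coordinates, namely $(a_3,\dots,a_n)$, is an invariant of the orbit up to an overall sign $\pm1$. This holds because $e_1$ is determined up to a scalar in $\F_3^\times=\{\pm1\}$, and the coefficients on $v_3,\dots,v_n$ in the basis $v_1,\dots,v_n$ are unaffected by how $v_0,v_1,v_2$ are permuted. Comparing these coefficients for $B\cdot X$ and $B$ gives $\lambda a_i=\varepsilon_i a_{\tau(i)}$ for $i\geq 3$ with $\lambda=\pm1$. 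Since every $a_i\in\{\pm1\}$, it follows that $\varepsilon_i=\lambda a_i a_{\tau(i)}$. Taking the product over $i\geq3$, the factors $a_ia_{\tau(i)}$ telescope to $\prod a_i^2=1$, so $\prod_{i\ge 3}\varepsilon_i=\lambda^{n-2}=1$ because $n$ is even. This contradicts $\prod\varepsilon_i=-1$.

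The main obstacle is justifying the orbit invariant in the third step. The $G_n$-action mixes $v_0$ into the first two slots, so \eqref{TA2,rel} and \eqref{TA2,rel2} do not describe the action of every $X$. I would handle this directly: for $\pi\in\Sigma_{\{0,1,2\}}$, the span $\operatorname{span}\{v_1,v_2\}=\operatorname{span}\{v_0,v_1,v_2\}$ is preserved, so the tail coefficients transform exactly as $\lambda a_i=\varepsilon_i a_{\tau(i)}$ regardless of $\pi$ and $\varepsilon$.
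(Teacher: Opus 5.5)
Your proposal is correct and follows essentially the same route as the paper: the same reduction via \autoref{lem:goodmat} and \autoref{product} to the existence of an $X\in G_n$ stabilizing $\TA_2^{n,n}(\underline{a})$ with $\prod_{i\geq 3}\varepsilon_i=-1$, the same explicit elements for the two ``if'' cases, and for the converse the same comparison of the coefficients on $v_3,\dots,v_n$, which is unaffected by how $\pi$ mixes $v_0,v_1,v_2$. The only cosmetic difference is at the last step: you solve $\varepsilon_i=\lambda a_i a_{\tau(i)}$ using $a_i=\pm1$, whereas the paper multiplies the relations and cancels $\prod_{i\geq 3} a_i$; both give $\prod_{i\geq 3}\varepsilon_i=(\pm1)^{n-2}=1$ for $n$ even.
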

\begin{proof}
     Let $\sigma \Sigma_{n+1}=B\cdot G_n$ with $B\in \TA_2^{n,n}(\underline{a})$. Since  \[\left(\SBDA(\F_p^n)_n - \SBD(\F_p^n)_n\right)^\mathrm{tw}/\Sigma_{n+1}\cong \MDA(\F_p^n)_n^\mathrm{tw}/G_n\] by \autoref{lem:goodmat}, we have the following equivalences:
     \begin{align*}\text{$\sigma$ is twisted}&\Longleftrightarrow \text{there exist $A\in \Pb_n^\pm(\F_3)$ and $X\in G_n$ such that $\det(A)\sign(X)=-1$ and $B\cdot X=A\cdot B$}\\&\Longleftrightarrow \text{there exist $X=(\pi,\varepsilon,\tau,\varepsilon_3,\dots,\varepsilon_n)\in G_n$ such that $B\cdot X\in \TA_2^{n,n}(\underline{a})$, and $\prod\limits_{i=3}^n\varepsilon_i=-1$.}\\ &\Longleftrightarrow \text{there exist $X=(\pi,\varepsilon,\tau,\varepsilon_3,\dots,\varepsilon_n)\in G_n$ such that $\TA_2^{n,n}(\underline{a})\cdot X=\TA_2^{n,n}(\underline{a})$, and $\prod\limits_{i=3}^k\varepsilon_i=-1$},\end{align*} where the second equivalence follows from \autoref{product}.\begin{mycases}
        \case If $n$ is odd, we take $X=(\Id,-1,\Id,-1,\dots,-1)\in G_n$. Then \begin{align*}\TA_2^{n,n}(\underline{a})\cdot X&=\left\{\left(-v_0| -v_1|\dots| -v_n\right)\in\MDA(\F_3^n)_n~\middle\vert~\begin{array}{c}\lambda e_1= a_1v_1+a_2v_2+\dots+a_nv_n \\ \text{for some $\lambda\in\F_3^\times$}\end{array}\right\}\\&=\left\{\left(-v_0| -v_1|\dots| -v_n\right)\in\MDA(\F_3^n)_n~\middle\vert~\begin{array}{c}-\lambda e_1= -a_1v_1-a_2v_2-\dots-a_nv_n \\ \text{for some $\lambda\in\F_3^\times$}\end{array}\right\}\\&= \left\{\left(-v_0| -v_1| \dots| -v_n\right)\in\MDA(\F_3^n)_n~\middle\vert~\begin{array}{c}-\lambda e_1= a_1(-v_1)+a_2(-v_2)+\dots+a_n(-v_n) \\ \text{for some $\lambda\in\F_3^\times$}\end{array}\right\}\\&=\TA_2^{k,n}(\underline{a}),
             \end{align*} and \[\prod\limits_{i=3}^n\varepsilon_i=(-1)^{n-2}=-1.\]
        \case If $k=n$ and $a_{i_0}=0$ for some $3\leq i_0\leq n$,  we take $X=(\Id,1,\Id,\varepsilon_3,\dots,\varepsilon_{n-1})$ where \[\varepsilon_i=\begin{cases}-1&\text{if $i=i_0$}\\    1&\text{otherwise.}  
        \end{cases}\] 
        Then, \begin{align*}\TA_2^{n,n}(\underline{a})\cdot X&=\left\{\left(v_0|\dots| -v_{i_0}|\dots| v_n\right)\in\MDA(\F_3^n)_n~\middle\vert~\begin{array}{c}\lambda e_1= a_1v_1+\dots+0\cdot(-v_{i_0})+\dots+a_nv_n \\ \text{for some $\lambda\in\F_3^\times$}\end{array}\right\}\\&= \TA_2^{k,n}(\underline{a}),
             \end{align*}and \[\prod_{i=3}^{n}\varepsilon_i=\varepsilon_{i_0}=-1.\]
        \end{mycases}
        Now conversely, assume that $\sigma$ is twisted. We further assume that $n$ is even, and $a_i \neq 0$ for all $i \geq 3$, and we argue by contradiction. Let $B=(v_0|\dots| v_n)\in \TA_2^{n,n}(\underline{a})$. Then, there exists $C=(u_0|\dots| u_n)\in \TA_2^{n,n}(\underline{a})$ and $X=(\pi,\varepsilon,\tau,\varepsilon_3,\dots,\varepsilon_n)\in G_n$, such that \[B=C\cdot X\quad\text{and}\quad\prod\limits_{i=3}^n\varepsilon_i=-1.\]
        It implies that there exists some $\lambda\in \F_3^\times$ such that\[\lambda e_1=\sum_{i=1}^na_iv_i=\varepsilon a_1u_{\pi(1)}+\varepsilon a_2u_{\pi(2)}+\varepsilon_3 a_3u_{\tau(3)}+\dots+\varepsilon_n a_nu_{\tau(n)}.\]
        Furthermore, $\mu e_1=\sum\limits_{i=1}^na_iu_i$ for some $\mu\in\F_3^\times$. Thus we obtain \[\lambda\mu^{-1}a_1u_1+\dots+\lambda\mu^{-1}a_nu_n=\varepsilon a_1u_{\pi(1)}+\varepsilon a_2u_{\pi(2)}+\varepsilon_3 a_3u_{\tau(3)}+\dots+\varepsilon_n a_nu_{\tau(n)}.\]It follows by the linear independence of $u_3,\dots,u_n$ that \[\lambda a_{i}=\mu \varepsilon_ia_{\tau(i)}\quad\text{for all $i=3,\dots,n$}.\]Multiplying those $n-2$ equations, we get\[\lambda^{n-2}\prod_{i=3}^na_i=\mu^{n-2}\prod_{i=3}^na_i\varepsilon_i.\]As $a_i\neq 0$ for all $i\geq 3$, we can cancel $\prod\limits_{i=3}^na_i$ to get  \[\left(\lambda\mu^{-1}\right)^{n-2}=\prod_{i=3}^n\varepsilon_i=-1.\] Since $\lambda,\mu\in\F_3^\times=\{-1,1\}$, this gives a contradiction to $n$ being even.
\end{proof}
\subsection{Relation to partial frames complexes}
Following the flat $\SL_n(\Z)$-resolution of $\St_n(\Q)\otimes\Q$ in \cite[Lemma 3.2]{CP},
\[\redchain_n(\SBA(\Z^n);\Q) \rightarrow \redchain_{n-1}(\SB(\Z^n);\Q)\rightarrow  \St_n(\Q)\otimes\Q \rightarrow 0,\] we will explain in this section how the Steinberg module is related to the homology of our symmetric $\Delta$-complexes $\Gamma^\pm_{0,n}(p)\backslash \SB(\Z^n)$ and $\Gamma^\pm_{0,n}(p)\backslash \SBA(\Z^n)$.

In \cite{MPP}, Miller--Patzt--Putman use the above resolution to establish a relation between $\St_n(\Q)_{\Gamma_n(p)}$ and the complex of determinant-$1$ partial (augmented) frames, ultimately proving \autoref{isoSt1}.
\begin{definition}
    Let $R$ be a Euclidean domain. Define $\BDA(R^n)'$ to be the subcomplex of $\BDA(R^n)$ consisting of simplices $\{v_0^\pm,\dots,v_k^\pm\}$ such that the $R$-$\operatorname{span}$ of the $\pm$-vectors is a proper submodule for $R^n$. 
\end{definition}
Recall that, via \autoref{notation:sdelta}, $\mathrm{S}(\BDA(R^n)')$ denotes the symmetric $\Delta$-complex associated to $\BDA(R^n)'$. By abuse of notation, we will denote it by $\SBDA(R^n)'$.

\begin{lemma} [{\cite[Lemma 3.23]{MPP}}]\label{isoSt1}
    Let $p$ be a prime and $n \geq 2$. Then,
    \[\St_n(\Q)_{\Gamma_n(p)} \cong \homology_{n-1}(\BDA(\F_p^n),\BDA(\F_p^n)').\]
\end{lemma}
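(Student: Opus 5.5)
The plan is to take $\Gamma_n(p)$-coinvariants of the Bykovski\u{\i}--Church--Putman presentation \eqref{STresol} and identify the resulting terms with chains on $\BDA(\F_p^n)$ and $\BD(\F_p^n)$. Coinvariants are right exact, so applying $(-)_{\Gamma_n(p)}$ to
\[\redchain_n(\BA(\Z^n)) \to \redchain_{n-1}(\B(\Z^n)) \to \St_n(\Q) \to 0\]
gives an exact sequence. In it, $\St_n(\Q)_{\Gamma_n(p)}$ is the cokernel of
\[\redchain_n(\BA(\Z^n))_{\Gamma_n(p)} \to \redchain_{n-1}(\B(\Z^n))_{\Gamma_n(p)}.\]
Since $\Gamma_n(p)$ is torsion-free for $p\ge 3$, it acts freely on simplices and preserves orientations. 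Combining \autoref{chaincoinv} with \autoref{lem1} and \autoref{lem2} then identifies these coinvariants with $\redchain_{n-1}(\BD(\F_p^n))$ and $\redchain_n(\BDA(\F_p^n))$. For $p=2$, where $-I\in\Gamma_n(2)$, I would rely on the $\pm$-frame formulation: $-I$ fixes $\pm$-vectors, so no orientation is reversed. The boundary map in the coinvariant sequence is then the simplicial boundary.

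Next I would rewrite the cokernel as a relative homology group. The key observation is that the top simplices of $\BD(\F_p^n)$ (full frames, dimension $n-1$) and the top simplices of $\BDA(\F_p^n)$ (augmented frames, dimension $n$) both span all of $\F_p^n$. On the other hand, every simplex of $\BDA(\F_p^n)$ of dimension at most $n-2$ spans a proper subspace, as does every $(n-1)$-simplex containing an augmented triple. Consequently:
\begin{itemize}
\item the relative chains $\C_k(\BDA(\F_p^n),\BDA(\F_p^n)')$ vanish for $k\le n-2$;
\item $\C_{n-1}$ is $\redchain_{n-1}(\BD(\F_p^n))$;
\item $\C_n$ is $\redchain_n(\BDA(\F_p^n))$, since no $n$-simplex spans a proper subspace.
\end{itemize}
Therefore $\homology_{n-1}$ of the pair is the cokernel of $\partial_n$ taken modulo faces lying in $\BDA'$. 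This requires checking that the faces of an augmented $n$-simplex $\{v_0,\dots,v_n\}$ which drop one of $v_0,v_1,v_2$ are full frames, while the faces dropping some $v_i$ with $i\ge 3$ span proper subspaces and so die in the relative complex. This matches the cokernel description precisely when the integral map $\redchain_n(\BA)\to\redchain_{n-1}(\B)$ is read as the Church--Putman map, which sends an augmented simplex to the alternating sum of its three full-frame faces. It does so because the resolution is really $\homology_n(\BA,\BA^{\le})\to \homology_{n-1}(\B,\B^{\le})$, relative to the proper-span subcomplexes.

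The main obstacle is making this last identification precise. The presentation in \eqref{STresol} is phrased with $\redchain$, whereas the relative description needs the relative version of the Church--Putman resolution. I would first restate \autoref{resol} in its original relative form, with top chains modulo proper-span chains, and then verify that $\Gamma_n(p)$-coinvariants commute with passing to these relative top chains. This holds because the proper-span subcomplexes are $\Gamma_n(p)$-invariant and map onto $\BDA(\F_p^n)'$ under the quotient of \autoref{lem2}. The torsion issue at $p=2$ is the second point needing care.
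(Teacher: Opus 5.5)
Your proposal is correct and follows the argument the paper attributes to Miller--Patzt--Putman and reproduces in its proof of \autoref{isoSt2}. That argument has three steps: the relative chains of the pair are concentrated in degrees $n-1$ and $n$; the coinvariants of the two terms of the (relative) Bykovski\u{\i} presentation are identified with these chains via \autoref{lem1} and \autoref{lem2}; and coinvariants are right exact. One correction: $\Gamma_n(p)$ does not act freely on all simplices (vertex stabilizers such as that of $e_1^\pm$ are infinite), and for $p=2$ other stabilizing elements besides $-I$ occur. What you actually need holds uniformly for every prime, including $p=2$: an element of $\Gamma_n(p)$ stabilizing a simplex cannot permute its vertices, because it is trivial mod $p$ while distinct vertices of a partial (augmented) frame stay distinct up to sign mod $p$. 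Hence no orientation is reversed, and the integral coinvariants are free on orbits of simplices.
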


\begin{lemma} \label{isoSt2}
    Let $p$ be a prime and $n \geq 2$. Let $\rel'_n(p)=\left( \Pb_n^\pm(\F_p)\backslash \SBDA(\F_p^n),\Pb_n^\pm(\F_p)\backslash \SBDA(\F_p^n)'\right)$. Then,
    \[(\St_n(\Q)\otimes\Q)_{\Gamma_{0,n}^\pm(p)} \cong \homology_{n-1}(\rel'_n(p);\Q).\]
\end{lemma}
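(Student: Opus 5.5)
We will deduce this from \autoref{isoSt1} by passing from $\Gamma_n(p)$-coinvariants to $\Gamma_{0,n}^\pm(p)$-coinvariants using the short exact sequence \eqref{ses}. Since $\Gamma_n(p)$ is normal in $\Gamma_{0,n}^\pm(p)$ with quotient $\Pb_n^\pm(\F_p)$, we have
\[(\St_n(\Q)\otimes\Q)_{\Gamma_{0,n}^\pm(p)}\cong\left((\St_n(\Q)\otimes\Q)_{\Gamma_n(p)}\right)_{\Pb_n^\pm(\F_p)}.\]
By \autoref{isoSt1}, tensored with $\Q$, the inner coinvariants are $\homology_{n-1}(\BDA(\F_p^n),\BDA(\F_p^n)';\Q)$. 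We must check that this isomorphism is $\Pb_n^\pm(\F_p)$-equivariant. We expect this to follow because the isomorphism in \cite{MPP} is induced by the natural maps of the resolution \eqref{eq:resol} to the quotient complexes of \autoref{lem1} and \autoref{lem2}, and these maps are $\Gamma_{0,n}^\pm(p)$-equivariant. Via \autoref{deltasimp}, we replace $\BDA$ by $\SBDA$ throughout.

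The second step is to commute $\Pb_n^\pm(\F_p)$-coinvariants with relative homology. Over $\Q$, coinvariants of a finite group form an exact functor, because averaging splits $M\to M_G$ onto the invariants. Hence
\[\homology_{n-1}(\SBDA(\F_p^n),\SBDA(\F_p^n)';\Q)_{\Pb_n^\pm(\F_p)}\cong\homology_{n-1}\left(\C_*(\SBDA(\F_p^n),\SBDA(\F_p^n)';\Q)_{\Pb_n^\pm(\F_p)}\right).\]
By \autoref{chaincoinv}, applied to both $\SBDA(\F_p^n)$ and the invariant subcomplex $\SBDA(\F_p^n)'$, and using exactness of coinvariants again for the quotient $\redchain_*(Y)/\redchain_*(Y')$, this chain complex is $\C_*(\rel'_n(p);\Q)$. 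Its homology is $\homology_{n-1}(\rel'_n(p);\Q)$.

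An alternative route avoids the equivariance question. Starting from \eqref{STresol}, right exactness of coinvariants presents $(\St_n\otimes\Q)_{\Gamma_{0,n}^\pm(p)}$ as the cokernel of $\redchain_n(\SBA(\Z^n);\Q)_{\Gamma}\to\redchain_{n-1}(\SB(\Z^n);\Q)_\Gamma$. By \autoref{chaincoinv} and \autoref{good q}, this is the cokernel of $\redchain_n\to\redchain_{n-1}$ restricted to the simplices of $\Pb_n^\pm(\F_p)\backslash\SBDA(\F_p^n)$ that span. This cokernel must then be identified with $\homology_{n-1}(\rel'_n(p);\Q)$. That identification uses two facts: the spanning simplices are exactly the $(n-1)$-simplices of $\SBD$ and the $n$-simplices of $\SBDA$ not in $'$, and the relative chain complex vanishes in degree $n-2$ and below. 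The second fact holds because fewer than $n$ vectors, or a non-spanning augmented set, span a proper subspace.

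The main obstacle is the equivariance in the first route. If it is unclear, we would use the second route. There the delicate point is the same degree bookkeeping as in MPP, namely checking that an augmented $(n-1)$-simplex never spans. This holds since it has only $n-1$ independent vectors.
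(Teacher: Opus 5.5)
Your proposal is correct, and your second route is exactly the paper's proof. The paper notes that $\C_*(\rel'_n(p);\Q)$ is concentrated in degrees $n$ and $n-1$, where it equals $\redchain_n(\Pb_n^\pm(\F_p)\backslash\SBDA(\F_p^n);\Q)$ and $\redchain_{n-1}(\Pb_n^\pm(\F_p)\backslash\SBD(\F_p^n);\Q)$. It identifies these with the $\Gamma_{0,n}^\pm(p)$-coinvariants of the two terms of \eqref{STresol} via \autoref{chaincoinv} and \autoref{good q}, and concludes by right-exactness of coinvariants.

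Your first route is genuinely different. It passes through $\Gamma_n(p)$, uses \autoref{isoSt1} as a black box, and then commutes $\Pb_n^\pm(\F_p)$-coinvariants past homology by semisimplicity of $\Q[\Pb_n^\pm(\F_p)]$.

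\begin{itemize}
\item \emph{What it buys.} It describes the answer structurally, as the $\Pb_n^\pm(\F_p)$-coinvariants of the Miller--Patzt--Putman module $\homology_{n-1}(\BDA(\F_p^n),\BDA(\F_p^n)';\Q)$.
\item \emph{What it costs.} It needs the isomorphism of \autoref{isoSt1} to be equivariant, which is not part of that statement. Checking it means unwinding how the isomorphism is built, i.e.\ running your second route at level $\Gamma_n(p)$, so the detour saves no work.
\item \emph{A correction for $p=2$.} Since $\Gamma_n(p)\subset\SL_n(\Z)$ and $-1\equiv 1 \bmod 2$, the quotient $\Gamma_{0,n}^\pm(2)/\Gamma_n(2)$ is an extension of $\Pb_n^\pm(\F_2)$ by a determinant $\Z/2$. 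Its generator acts trivially on $\BDA(\F_2^n)$, so, given equivariance, it acts trivially on the coinvariants. The orbit-level argument behind \autoref{good q} is unaffected by this.
\item \emph{Exactness is more than you need.} The relative complex lives in only two degrees upstairs as well, so $\homology_{n-1}$ is a cokernel and right-exactness of coinvariants already suffices. That is why the paper's argument never uses that $\Pb_n^\pm(\F_p)$ is finite or that the coefficients have characteristic zero.
\end{itemize}
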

Our proof follows the same argument used by Miller--Patzt--Putman in \autoref{isoSt1}.
\begin{proof}
Since $\Pb_n^\pm(\F_p)\backslash \SBDA(\F_p^n)$ is $n$-dimensional and $\Pb_n^\pm(\F_p)\backslash \SBDA(\F_p^n)'$ is $(n-1)$-dimensional complex containing the $(n-2)$-skeleton of $\Pb_n^\pm(\F_p)\backslash \SBDA(\F_p^n)$, we have 
\begin{equation}\label{rel}\C_{k}(\rel'_n(p))\cong 
\begin{cases}
    \redchain_n(\Pb_n^\pm(\F_p)\backslash \SBDA(\F_p^n);\Q) &~\text{if}~k=n\\
    \redchain_{n-1}(\Pb_n^\pm(\F_p)\backslash \SBD(\F_p^n);\Q) &~\text{if}~k=n-1\\
    0 &~\text{otherwise}\\
\end{cases}.\end{equation} Consequently,
    \[\homology_{n-1}(\rel'_n(p);\Q) \cong \coker\left(\redchain_n(\Pb_n^\pm(\F_p)\backslash \SBDA(\F_p^n);\Q)\rightarrow \redchain_{n-1}(\Pb_n^\pm(\F_p)\backslash \SBD(\F_p^n);\Q)\right).\]
    Recall by \eqref{ses} the short exact sequence \[1 \longrightarrow \Gamma_n(p) \longrightarrow \Gamma_{0,n}^\pm(p) \longrightarrow  \Pb_n^\pm(\F_p) \longrightarrow 1 .\] Thus,
    \[\redchain_{n-1}(\SB(\Z^n);\Q)_{\Gamma_{0,n}^\pm(p)} \cong \left(\redchain_{n-1}(\SB(\Z^n);\Q)_{\Gamma_n(p)}\right)_{\Pb_n^\pm(\F_p)} \]and \[\redchain_{n}(\SBA(\Z^n);\Q)_{\Gamma_{0,n}^\pm(p)} \cong \left(\redchain_{n}(\SBA(\Z^n);\Q)_{\Gamma_n(p)}\right)_{\Pb_n^\pm(\F_p)}.\]
    It then implies by \autoref{chaincoinv} and \autoref{good q} that\[\redchain_{n-1}(\SB(\Z^n);\Q)_{\Gamma_{0,n}^\pm(p)}\cong \redchain_{n-1}(\Pb_n^\pm(\F_p)\backslash \SBD(\F_p^n);\Q) \]and \[\redchain_{n}(\SBA(\Z^n);\Q)_{\Gamma_{0,n}^\pm(p)} \cong \redchain_n(\Pb_n^\pm(\F_p)\backslash \SBDA(\F_p^n);\Q). \]
    Therefore, the lemma follows from the fact that taking coinvariants is right-exact.
\end{proof}
We prove a version of the previous lemma with the determinant representation.
\begin{lemma} \label{isoSt3}
    Let $p$ be a prime and $n \geq 2$.
    \[(\St_n(\Q)\otimes\Q^{\det})_{\Gamma_{0,n}^\pm(p)} \cong \homology_{n-1}\left( \left(\C_*(\SBDA(\F_p^n),\SBDA(\F_p^n)';\Q)\otimes\Q^{\det} \right)_{\Pb_n^\pm(\F_p)}\right).\]
\end{lemma}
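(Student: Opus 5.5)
The plan is to mirror the proof of \autoref{isoSt2}, now carrying the twist by $\Q^{\det}$ throughout. First, tensor the Bykovski\u{\i}--Church--Putman presentation \eqref{STresol} with $\Q^{\det}$. Tensoring over $\Q$ with a one-dimensional module is exact, so we get an exact sequence of $\Gamma_{0,n}^\pm(p)$-modules
\[\redchain_n(\SBA(\Z^n);\Q)\otimes\Q^{\det}\rightarrow \redchain_{n-1}(\SB(\Z^n);\Q)\otimes\Q^{\det}\rightarrow \St_n(\Q)\otimes\Q^{\det}\rightarrow 0.\]
Taking $\Gamma_{0,n}^\pm(p)$-coinvariants is right exact. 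Hence $(\St_n(\Q)\otimes\Q^{\det})_{\Gamma_{0,n}^\pm(p)}$ is the cokernel of the induced map on coinvariants of the first two terms.

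Next, I would compute these coinvariants in two stages using \eqref{ses}. Since $\Gamma_n(p)\subset\SL_n(\Z)$, the determinant is trivial on $\Gamma_n(p)$. Therefore
\[\left(M\otimes\Q^{\det}\right)_{\Gamma_{0,n}^\pm(p)}\cong\left(M_{\Gamma_n(p)}\otimes\Q^{\det}\right)_{\Pb_n^\pm(\F_p)},\]
where $\det$ on $\Pb_n^\pm(\F_p)$ is the induced character with values $\pm1$. This is exactly the manipulation already used in \autoref{n=2,coinv2}. For $M=\redchain_{n-1}(\SB(\Z^n);\Q)$ and $M=\redchain_n(\SBA(\Z^n);\Q)$, \autoref{chaincoinv} together with \autoref{lem1}, \autoref{lem2} and \autoref{lem:Sy} identifies $M_{\Gamma_n(p)}$ with $\redchain_{n-1}(\SBD(\F_p^n);\Q)$ and with $\redchain_n(\SBDA(\F_p^n);\Q)$, respectively. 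These identifications are $\Pb_n^\pm(\F_p)$-equivariantly compatible with the differential. So the cokernel becomes
\[\coker\left(\left(\redchain_n(\SBDA(\F_p^n);\Q)\otimes\Q^{\det}\right)_{\Pb_n^\pm(\F_p)}\rightarrow\left(\redchain_{n-1}(\SBD(\F_p^n);\Q)\otimes\Q^{\det}\right)_{\Pb_n^\pm(\F_p)}\right).\]

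Finally, I would identify this cokernel with the homology of the relative complex, as in \eqref{rel}. The subcomplex $\SBDA(\F_p^n)'$ contains the whole $(n-2)$-skeleton: every simplex of dimension at most $n-2$ has at most $n-1$ linearly independent vectors, so its span is proper. It also contains all $(n-1)$-simplices except the standard ones spanning $\F_p^n$, namely full frames; additive $(n-1)$-simplices only span $n-1$ independent vectors. It contains no $n$-simplices. Consequently $\C_k(\SBDA(\F_p^n),\SBDA(\F_p^n)';\Q)$ is $\redchain_n(\SBDA(\F_p^n);\Q)$ for $k=n$, the span of the top simplices of $\SBD(\F_p^n)$, i.e.\ $\redchain_{n-1}(\SBD(\F_p^n);\Q)$ (\emph{top-degree part}), for $k=n-1$, and $0$ otherwise. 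Tensoring with $\Q^{\det}$ and taking $\Pb_n^\pm(\F_p)$-coinvariants degreewise preserves this shape. The complex is then concentrated in degrees $n$ and $n-1$, so its $H_{n-1}$ is the cokernel above.

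The main obstacle is the bookkeeping in the last step. One must check that the relative group in degree $n-1$ really coincides with $\redchain_{n-1}(\SBD(\F_p^n);\Q)$, i.e.\ that additive $(n-1)$-simplices and non-spanning standard ones lie in the subcomplex. One must also check that the differential agrees with the one coming from the resolution. In addition, the coinvariant functor must commute with passing to the quotient by the subcomplex; this holds because coinvariants are right exact and the chain groups split as permutation modules.
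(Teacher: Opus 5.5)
Your proposal is correct and follows the paper's proof: you tensor the presentation \eqref{STresol} with $\Q^{\det}$ and pass to $\Gamma_{0,n}^\pm(p)$-coinvariants in two stages via \eqref{ses}, using that $\det$ is trivial on $\Gamma_n(p)$; you then identify the $\Gamma_n(p)$-coinvariants with chains on $\SBD(\F_p^n)$ and $\SBDA(\F_p^n)$ via \autoref{chaincoinv} and \autoref{good q}, and conclude by right exactness, the relative complex being concentrated in degrees $n$ and $n-1$ exactly as in \eqref{rel}. One caveat, which the paper's proof shares: for $p=2$ the matrix $\mathrm{diag}(-1,1,\dots,1)$ lies in the kernel of $\Gamma_{0,n}^\pm(2)\to\Pb_n^\pm(\F_2)$ but not in $\Gamma_n(2)$, so \eqref{ses} is not exact there and $\det$ does not descend to $\Pb_n^\pm(\F_2)$, which means your ``induced character'' step is only valid for odd $p$.
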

\begin{proof}Since $\Gamma_n(p)$ acts trivially on $\Q^{\det}$, it follows from \eqref{ses}, as well as \autoref{chaincoinv} and \autoref{good q} that\[\left(\redchain_{n-1}(\SB(\Z^n);\Q)\otimes\Q^{\det}\right)_{\Gamma_{0,n}^\pm(p)}\cong \left(\redchain_{n-1}(\SBD(\F_p^n);\Q)\otimes\Q^{\det}\right)_{\Pb_n^\pm(\F_p)}. \]
Similarly, we have\[\left(\redchain_{n}(\SBA(\Z^n);\Q)\otimes\Q^{\det}\right)_{\Gamma_{0,n}^\pm(p)}\cong \left(\redchain_{n}(\SBDA(\F_p^n);\Q)\otimes\Q^{\det}\right)_{\Pb_n^\pm(\F_p)}.\] Therefore, the lemma follows from the fact that tensoring and taking coinvariants is right-exact.\end{proof}

\subsection{Proof of \autoref{thD}}
We are now ready to prove \autoref{thD}. 
    Let $p$ be a prime and $n\geq 2$ such that one of the following conditions holds \begin{itemize}
        \item $n=2$ and $p\in\{2,3,5,7,13\}$,
        \item $n$ is odd,
        \item $p\leq 6n-8$.
    \end{itemize}Then \[\left(\St_n(\Q)\otimes\Q^{\det}\right)_{\Gamma_{0,n}^\pm(p)}\cong 0.\]

\begin{proof}[Proof of \autoref{thD}]
Recall that the case $n=2$ has already been proven in \autoref{det St}. So it remains to prove the statement for $n\geq 3$.

We showed in \autoref{isoSt3} that for all $n\geq 2$ and all primes, \begin{align*}(\St_n(\Q)\otimes\Q^{\det})_{\Gamma_{0,n}^\pm(p)} &\cong \homology_{n-1}\left( \left(\C_*(\SBDA(\F_p^n),\SBDA(\F_p^n)';\Q)\otimes\Q^{\det} \right)_{\Pb_n^\pm(\F_p)}\right)\\&\overset{\eqref{rel}}{\cong} \coker \left(\left(\redchain_n(\SBDA(\F_p^n);\Q)\otimes\Q^{\det} \right)_{\Pb_n^\pm(\F_p)} \overset{\partial}\longrightarrow \left(\redchain_{n-1}(\SBD(\F_p^n);\Q)\otimes\Q^{\det} \right)_{\Pb_n^\pm(\F_p)}\right).
    \end{align*}
    By \autoref{coinv,tw}, \[\left(\redchain_n(\SBD(\F_p^n);\Q)\otimes\Q^{\det} \right)_{\Pb_n^\pm(\F_p)}\cong \left(\Q\left[\SBD(\F_p)_{n-1}^\mathrm{utw}\right]\otimes_{\Q[\Sigma_{n}]}\Q^\mathrm{sgn}\otimes\Q^{\det}\right)_{\Pb_n^\pm(\F_p)}\] has a non-canonical basis bijective to \[\Pb_n^\pm(\F_p)\backslash\SBD(\F_p^n)_{n-1}^\mathrm{utw}/\Sigma_{n},\] which is empty by \autoref{cor:st,utw} when $n$ is odd. Thus \[\left(\St_n(\Q)\otimes\Q^{\det}\right)_{\Gamma_{0,n}^\pm(p)}\cong 0\quad\text{if $n$ is odd}.\]
   To prove the final part of the theorem, we will show that $\partial$ is surjective for even $n$ and $p \leq 6n-8$. We first treat the case $p\neq 3$. Our argument is very similar to that of \autoref{BA connected}.

   From \autoref{lem:goodmat}, we have \[\Pb_n^\pm(\F_p)\backslash\SBD(\F_p^n)_{n-1}^\mathrm{utw}/\Sigma_{n}\cong \Pb_n^\pm(\F_p)\backslash\MD(\F_p^n)_{n-1}^\mathrm{utw}/T_{n-1}.\] And by \autoref{cor:st,utw}, $B\in \MD(\F_p^n)_{n-1}^\mathrm{utw}$ if and only if $n$ is even and $B$ in $\D_2^{n-1,n}(\underline{a})$ such that \begin{equation}\begin{aligned}\label{nonzeros}
       \underline{a}=(a_0,\dots,a_{n-1})\in\left(\F_p^\times\right)^{n},\quad \underline{a}\text{ does not satisfy the \hyperref[def1]{$(\lambda,m)$-condition}.}
   \end{aligned}\end{equation}
 Fix $\underline{a}\in\F_p^n$ satisfy \eqref{nonzeros}. We first consider the case where $a_i=\pm a_j$ for some $i\neq j$. From \autoref{coinv,tw}, we have \begin{equation}\label{tww}
     \D_2^{n-1,n}(\underline{a})\otimes1\otimes1=\pm\D_2^{n-1,n}(\underline{a}\cdot X)\otimes1\otimes1
 \end{equation} for all $X\in T_k$. Thus we may assume without loss of generality that $a_i=a_j$ and $i=0$, $j=1$. Then we define $\underline{b}=(b_0,\dots,b_n)\in \F_p^{n+1}\setminus\{\underline{0}\}$ such that \[b_1=3^{-1}(2a_0-a_1),\]\[b_2=3^{-1}(2a_1-a_0),\]\[b_0=-b_1-b_2=3^{-1}(-a_0-a_1),\]\[b_i=a_{i-1}~\text{for all $i\geq 3$}.\]
  It implies by \eqref{nonzeros} that $b_i\neq 0$ for all $i\geq 3$, and $\underline{b}$ does not satisfy the \hyperref[twistedcond]{Additive $(\lambda,m)$-condition}.
  
  Under the above identifications, we compute $\partial$ in terms of the elements \[\DA_2^{n,n}(\underline{b})\otimes 1\otimes 1,\quad\text{and}\quad \D_2^{n-1,n}(\underline{a})\otimes 1\otimes 1 .\] By \eqref{e11}, we have \begin{equation}\label{DA}\begin{aligned}\partial\left(\DA_2^{n,n}(\underline{b})\otimes 1\otimes 1)\right)=&\D_2^{n-1,n}(a_0,\dots,a_{n-1})\otimes 1\otimes 1\\&-\D_2^{n-1,n}(-a_0,a_1-a_0,\dots,a_{n-1})\otimes 1\otimes1\\&+\D_2^{n-1,n}(-a_1,a_0-a_1,\dots,a_{n-1})\otimes 1\otimes 1\end{aligned}\end{equation}Since $a_1-a_0=0$, then by \autoref{coinv,tw} and \autoref{cor:st,utw},\[\D_2^{n-1,n}(-a_0,a_1-a_0,\dots,a_{n-1})\otimes 1\otimes 1=\D_2^{n-1,n}(-a_1,a_0-a_1,\dots,a_{n-1})\otimes1\otimes 1=0.\] 
  We thus conclude that for any $\underline{a}\in\F_p^n\setminus\{\underline{0}\}$ satisfying \eqref{nonzeros} such that $a_i=\pm a_j$, \[\D_2^{n-1,n}(a_0,\dots,a_{n-1})\otimes 1\otimes 1\in\Image\partial.\]
Now assume that $a_i\neq \pm a_j$ for all $i\neq j$. As $p\leq 6n-8$, we have by \autoref{bound} that there exist distinct indices $i,j,\ell$ such that $a_i-a_j=\pm a_{\ell}$. Using \eqref{tww}, we assume without loss of generality that $i=0$ and $j=1$.
Using the same construction as before, \eqref{DA} shows that \[\D_2^{n-1,n}(a_0,\dots,a_{n-1})\otimes 1\otimes 1-\D_2^{n-1,n}(-a_0,a_1-a_0,\dots,a_{n-1})\otimes 1+\D_2^{n-1,n}(-a_1,a_0-a_1,\dots,a_{n-1})\otimes 1\otimes 1\in\Image\partial.\]Since $a_1-a_0=\pm a_\ell$, then the previous case gives that \[\D_2^{n-1,n}(-a_0,a_1-a_0,\dots,a_{n-1})\otimes 1 \otimes 1\quad\text{and}\quad\D_2^{n-1,n}(-a_1,a_0-a_1,\dots,a_{n-1})\otimes 1\otimes 1\]lie in the image of $\partial$. It follows that \[\D_2^{n-1,n}(a_0,\dots,a_{n-1})\otimes 1\otimes 1\in\Image\partial.\] Therefore, $\partial$ is surjective for $p>3$.

 We now treat the case $p=3$.
 By \autoref{tw:p=3}, $B\in\MDA(\F_3^n)_n^\mathrm{utw}$ if and only if $n$ is even and $B\in \TA_2^{n,n}(b_1,\dots,b_n)$ such that $b_i\neq 0$ for all $i\geq 3$. Recall that \[\TA_2^{n,n}(b_1,\dots,b_n)=\left\{\left(v_0|\dots|v_n\right)\in\MDA(\F_3^n)_n~\middle\vert~\lambda e_1= b_1v_1+\dots+b_nv_n ~\text{for some $\lambda\in\F_3^\times$}\right\}.\]

 We first make the following observation: For $B=(v_0|\dots|v_n)\in \TA_n^{n,n}(b_1,\dots,b_n)$ with $b_i\neq 0$ for all $i\geq 3$, using $v_0=-v_1-v_2$, we can write \begin{align*}
     \lambda e_1&=b_1v_1+b_2v_2+b_3v_3+\dots+b_nv_n\\ &=-b_1v_0+(b_2-b_1)v_2+b_3v_3+\dots+b_nv_n\\&=-b_2v_0+(b_1-b_2)v_2+b_3v_3+\dots+b_nv_n,
 \end{align*}for some $\lambda\in\{-1,1\}$. Consequently, $\partial$ is defined in the following way\begin{align*}\partial\left(\TA_2^{n,n}(b_1,b_2,\dots,b_n)\otimes 1\otimes 1\right)=&\D_2^{n-1,n}(b_1,b_2,b_3,\dots,b_{n})\otimes 1\otimes 1\\-&\D_2^{n-1,n}(-b_1,b_2-b_1,b_3,\dots,b_n)\otimes 1\otimes 1\\+&\D_2^{n-1,n}(-b_2,b_1-b_2,b_3,\dots,b_{n})\otimes 1\otimes 1.\end{align*}

 Now let $n$ be even and let $\underline{a}\in\F_p^{n}$ that satisfies \eqref{nonzeros}. Since $\TA_2^{n,n}(\underline{a})\otimes 1\otimes 1=\pm\TA_2^{n,n}(\underline{a})\cdot X\otimes 1\otimes 1$ by \autoref{coinv,tw}, and $a_i\neq 0$ for all $i$, we assume without loss of generality that $a_0=a_1=1$. Choose \[(b_1,b_2,b_3,\dots,b_n)=(1,1,a_2,\dots,a_{n-1}).\] So, for every $i\geq 3$, $b_i$ is nonzero. Moreover, 
 \begin{align*}\partial\left(\TA_2^{n,n}(1,1,a_2,\dots,a_{n-1})\otimes 1\otimes 1\right)&=\D_2^{n-1,n}(1,1,a_2,\dots,a_{n-1})\otimes 1\otimes 1\\&-\D_2^{n-1,n}(-1,0,a_2,\dots,a_{n-1})\otimes 1\otimes 1\\&+\D_2^{n-1,n}(-1,0,a_2,\dots,a_{n-1})\otimes 1\otimes 1.\end{align*}Since \[\D_2^{n-1,n}(-1,0,a_2,\dots,a_{n-1})\otimes 1\otimes 1=\D_2^{n-1,n}(-1,0,a_2,\dots,a_{n-1})\otimes 1\otimes 1=0,\]by \autoref{coinv,tw} and \autoref{cor:st,utw}, thus \[\partial\left(\TA_2^{n,n}(b_1,b_2,\dots,b_{n-1})\otimes 1\otimes 1\right)=\D_2^{n-1,n}(a_0,a_1,a_2,\dots,a_{n-1})\otimes 1\otimes 1.\]
 We conclude that $\partial$ is surjective for $p=3$.
  This completes the proof.
 \end{proof}

\section{Homology of the Tits building}\label{sec8}
For a poset $X$, we denote its order complex by $\oc(X)$, whose $k$-simplices are subsets $\{x_0,\dots,x_k\}\subseteq X$ with $x_0<\dots<x_k$. 
In the special case of the posets $\T_n(R)$ and $\T_n^\pm(R)$, we write $\mathcal{T}_n(R)$ and $\mathcal{T}_n^\pm(R)$ for their order complexes, respectively. (See \autoref{titsbldg} and \autoref{pmTitsbldg}).

We will prove that, for all primes $p$ and all $n\geq 3$, the order complex $\oc\left(\Gamma_{0,n}^\pm(p)\backslash \T_n(\Q)\right)$ is contractible. Our proof uses combinatorial Morse theory.

\begin{proposition}
[\cite{Bestvina}]\label{morse theory} Let $X$ be a simplicial complex and let $Y$ be a full subcomplex of $X$. Let $S$ be the set of all vertices of $X$ that are not in $Y$. Suppose there is no edge between any pair of vertices $s,t\in S$. Then \[X/Y\cong \bigvee_{s\in S}\Sigma\left(\Link_X(s)\right)\]
    In particular, if $Y$ is contractible and $\Link_X(s)$ is contractible for every $s \in S$, then $X$ is contractible. 
\end{proposition}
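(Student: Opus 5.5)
This is the discrete Morse theory lemma attributed to Bestvina, so the plan is the standard quotient argument. Let $S$ be the set of vertices of $X$ not in $Y$. Because $Y$ is full, a simplex of $X$ lies outside $Y$ exactly when it contains at least one vertex of $S$. Because no two vertices of $S$ are joined by an edge, such a simplex contains exactly one vertex $s\in S$.

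First, I would decompose $X$. Each simplex not in $Y$ has the form $s*\tau$ with $\tau$ a simplex (possibly empty) of $\Link_X(s)$. Since $s$ is the only vertex of $s*\tau$ in $S$, every vertex of $\tau$ lies in $Y$, so $\tau\subseteq Y$. Hence
\[X=Y\cup\bigcup_{s\in S}\mathrm{Star}_X(s),\qquad \mathrm{Star}_X(s)\cap Y=\Link_X(s),\]
and for $s\neq t$ we have $\mathrm{Star}_X(s)\cap \mathrm{Star}_X(t)\subseteq Y$. So $X$ is obtained from $Y$ by attaching the cones $C(\Link_X(s))$ along the inclusions $\Link_X(s)\hookrightarrow Y$, and these cones are independent of one another modulo $Y$.

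Second, I would pass to the quotient. Collapsing $Y$ collapses each base $\Link_X(s)$ to the common basepoint. Each cone becomes $C(\Link_X(s))/\Link_X(s)\cong\Sigma\Link_X(s)$ (the unreduced suspension modulo one cone point, which is homotopy equivalent to the suspension). The images are glued only at the basepoint, so $X/Y\cong\bigvee_{s\in S}\Sigma(\Link_X(s))$. One degenerate case needs a convention: if $\Link_X(s)$ is empty, $\Sigma\emptyset=S^0$ accounts for an isolated vertex.

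Third, for the ``in particular'' statement I would argue directly rather than through the quotient, since collapsing $Y$ only identifies homotopy types when $Y$ is contractible. If each $\Link_X(s)$ is contractible, then attaching a cone along a contractible subcomplex is a homotopy equivalence $Y\simeq Y\cup C(\Link_X(s))$, because $(C L, L)$ is a CW pair with $L$ contractible, so $CL$ deformation retracts onto $L$ rel $L$. Doing this for all $s$ simultaneously (they are disjoint off $Y$) gives $Y\simeq X$, hence $X$ is contractible whenever $Y$ is. Alternatively, if $Y$ is contractible, $X\simeq X/Y$ is a wedge of contractible spaces, since $(X,Y)$ is a CW pair.

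The main obstacle is the bookkeeping in the first step: using fullness of $Y$ and the no-edge hypothesis to show that each simplex outside $Y$ has a unique $S$-vertex and that its opposite face lies in $Y$. The rest is standard homotopy theory of cone attachments.
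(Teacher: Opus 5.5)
Your proof is correct. The paper gives no proof of its own here: the proposition is quoted from Bestvina and used once, in \autoref{tits:acyclic}. Your write-up is a self-contained version of the standard argument behind that citation. Fullness plus the no-edge hypothesis force every simplex outside $Y$ to have the form $s*\tau$ with $\tau\in\Link_X(s)\subseteq Y$. So $X$ is $Y$ with the cones $s*\Link_X(s)$ attached along $\Link_X(s)$, and these cones pairwise meet only inside $Y$; collapsing $Y$ then gives the wedge.

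Two small remarks. First, $C(L)/L$ is already homeomorphic to the unreduced suspension $\Sigma L$, with the collapsed base serving as the second suspension point (and $C(\emptyset)/\emptyset=S^0$ matches your convention). So your parenthetical about passing to a homotopy equivalence is unnecessary, and the $\cong$ in the statement is a genuine homeomorphism of CW complexes.

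Second, your direct treatment of the ``in particular'' clause proves something stronger: $X$ deformation retracts onto $Y$ whenever every $\Link_X(s)$ is contractible, with no hypothesis on $Y$. That is essentially all the paper needs. In \autoref{tits:acyclic} the set $S$ is the single vertex $[(V_{n-1},\pm\omega_{n-1}')]$, so the no-edge condition is vacuous. Its link is the order complex of the chain $[(V_1,\pm\omega_1')]<\dots<[(V_{n-2},\pm\omega_{n-2}')]$, which is a simplex, and $\oc(Y)$ is a cone. The citation buys brevity; your argument buys self-containment and shows that, in the paper's application, only the elementary ``attach a cone along a contractible subcomplex'' step is actually used.
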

\begin{proposition} \label{tits:acyclic}
    For all primes $p$ and  $n \geq 3$, the order complex $\oc\left(\Gamma_{0,n}^\pm(p)\backslash \T_n(\Q)\right)$ is contractible.
\end{proposition}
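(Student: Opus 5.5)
By \autoref{tits}, the quotient poset has $2(n-1)$ elements $u_k=[(U_k,\pm\omega_k)]$ and $v_k=[(V_k,\pm\omega_k')]$ for $1\le k\le n-1$. The relations are $u_k<u_l$ and $v_k<v_l$ for $k<l$, and $v_k<u_l$ for $k<l$, with no $u_k<v_l$. I will first confirm that this is exactly the order: for $k<l$, $V_k\subset V_{l}\subset U_{l+1}$ and also $V_k\subset U_l$ directly, since $V_k=\operatorname{span}\{e_2,\dots,e_{k+1}\}\subset U_l$ when $k+1\le l$. The order complex $X=\oc$ of this poset is therefore a finite simplicial complex on the vertices $u_1,\dots,u_{n-1},v_1,\dots,v_{n-1}$. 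Its simplices are the chains, that is, sets with distinct dimension indices in which every $v_k$ has a smaller index than every $u_l$.

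The plan is to apply \autoref{morse theory}. I take $Y$ to be the full subcomplex on all vertices except the $v_k$ with $2\le k\le n-1$. Those removed $v$'s are pairwise adjacent, so they cannot all go in $S$ at once. I will instead peel them off one at a time (or in an independent set), repeatedly applying the proposition with $S$ a single vertex.

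The simplest route is a filtration. Set $Y_0=\oc(\{u_1,\dots,u_{n-1}\})$, which is a simplex and hence contractible. Then add $v_{n-1},v_{n-2},\dots,v_1$ one at a time, letting $Y_j$ be the full subcomplex on the $u$'s together with $v_{n-1},\dots,v_{n-j}$. Each step adds one vertex $s=v_k$, so the no-edge hypothesis is trivially satisfied, and $Y_{j-1}$ is full in $Y_j$. The link of $v_k$ in $Y_j$ is the order complex of all elements comparable to $v_k$ among the vertices already present. These are the $u_l$ with $l>k$ and the $v_l$ with $l>k$, since the smaller $v$'s have not been added yet. All of them lie above $v_k$, so the link is the order complex of the subposet $P_k=\{u_l,v_l : l>k\}$.

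The step I expect to need care is showing that this link is contractible. For $k\le n-2$ the poset $P_k$ contains $u_{n-1}$. Every $u_l$ with $l>k$ satisfies $u_l\le u_{n-1}$, and every $v_l$ with $l<n-1$ satisfies $v_l<u_{n-1}$. The only element not below $u_{n-1}$ is $v_{n-1}$, which is maximal in $P_k$ and incomparable to $u_{n-1}$. So $P_k$ is not a cone, and I will handle this by a further reduction. In $P_k$, $v_{n-1}$ lies above $v_{k+1},\dots,v_{n-2}$ only, so its link there is the order complex of the chain $\{v_l : k<l<n-1\}$. This chain is a simplex and is contractible provided $k<n-2$. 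Removing $v_{n-1}$ then leaves $P_k\setminus\{v_{n-1}\}$, which has maximum $u_{n-1}$ and is therefore a cone. Applying \autoref{morse theory} once more, $P_k$ is contractible for $k\le n-3$.

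The problematic cases are $k=n-2$ and $k=n-1$. For $k=n-1$, $P_{n-1}$ is empty, so adding $v_{n-1}$ first to $Y_0$ would attach a disjoint vertex. I avoid this by reordering the filtration: add the $v_k$ in increasing order of $k$, so that the link of $v_k$ consists of the elements below it among the $u$'s, which is empty. That fails too, so I will instead choose the filtration adaptively.

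The choice I will commit to is to start from the full subcomplex $Y$ on $\{u_l\}\cup\{v_{n-1}\}$. This is contractible because $v_{n-1}$ is joined only to $v$'s, so within $Y$ it is isolated; I need to check this. Since $v_{n-1}$ is comparable only to the $v$'s, $Y$ is disconnected, and I must not start there. Instead I will start from the full subcomplex on $\{u_l\}\cup\{v_{n-2},v_{n-1}\}$. For $n\ge3$, $v_{n-2}<u_{n-1}$ and $v_{n-2}<v_{n-1}$, so this subcomplex is $Y_0$ with the path $v_{n-1}-v_{n-2}-u_{n-1}$ attached, which is contractible. I then add $v_{n-3},\dots,v_1$ in decreasing order. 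The link of each $v_k$ is $P_k$ with $k\le n-3$, which is contractible by the argument above. Iterating \autoref{morse theory} then gives that $X$ is contractible, and the hypothesis $n\ge3$ is used exactly so that $v_{n-2}$ exists to connect $v_{n-1}$ to the rest.
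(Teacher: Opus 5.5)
Your final argument is correct. The base complex on $\{u_1,\dots,u_{n-1},v_{n-2},v_{n-1}\}$ is indeed a simplex with the path $v_{n-1}-v_{n-2}-u_{n-1}$ attached. When you add $v_k$ for $k\le n-3$, its link is the order complex of $P_k=\{u_l,v_l : l>k\}$. You correctly show this is contractible: removing $v_{n-1}$ leaves a cone with apex $u_{n-1}$, and the link of $v_{n-1}$ is the nonempty chain $v_{k+1}<\dots<v_{n-2}$.

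The paper uses the same tool, Bestvina's combinatorial Morse lemma, but in a single step. Its proof is exactly your argument for $P_k$ at $k=0$. Since $P_0$ is the whole poset, and $0\le n-3$ precisely when $n\ge 3$, you can remove $v_{n-1}$ to leave a cone on $u_{n-1}$. The link of $v_{n-1}$ is then the chain $v_1<\dots<v_{n-2}$, which is nonempty because $n\ge 3$. So the filtration, the special base complex, and the worry about $k=n-2,n-1$ are unnecessary; your route only adds a longer induction around the same key observation.

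When you write this up, drop the abandoned attempts: the first choice of $Y$, the two failed filtration orders, and the start from $\{u_l\}\cup\{v_{n-1}\}$. These currently contain tentative claims that are false, for instance that this last complex is contractible.
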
 
\begin{proof}
We recall from \autoref{Titsbldg:dq} the isomorphism \[\Gamma_{0,n}^\pm(p)\backslash \T_n(\Q)\cong \Pb_n^\pm(\F_p)\backslash \T_n^\pm(\F_p).\]
In order to prove our claim, we will prove that the order complex $\oc\left(\Pb_n^\pm(\F_p)\backslash \T_n^\pm(\F_p)\right)$ is contractible.

We have by \autoref{tits} that $\Pb_n^\pm(\F_p)\backslash \T_n^\pm(\F_p)$ is the poset \[\{\left[(U_1,\pm\omega_1)\right],\dots,\left[(U_{n-1},\pm \omega_{n-1})\right],\left[(V_1,\pm \omega_1')\right],\dots,\left[(V_{n-1},\pm \omega_{n-1}')\right]\},\] where
\[
U_k=\operatorname{span}\{ e_1,\dots,e_k\}, 
\quad
V_k=\operatorname{span}\{ e_2,\dots,e_{k+1}\},
\]
and $\omega_k=e_1\wedge\cdots\wedge e_k$, $\omega_k'=e_2\wedge\cdots\wedge e_{k+1}$.
Let $n\geq 3$. Let $Y$ be the full poset \[\{\left[(U_1,\pm \omega_1)\right],\dots,\left[(U_{n-1},\pm \omega_{n-1})\right], \left[(V_1,\pm \omega_1')\right],\dots,\left[(V_{n-2},\pm\omega_{n-2}')\right]\}.\]

\begin{figure}[H]
\centering
\begin{tikzpicture}[node distance=0.8cm and 0.3cm]

  % Left column: bullet and text nodes
  \node (A1) {\tiny$\bullet$};
  \node (A1.1)[left=-0.1cm of A1]{$\left[(V_{n-1},\pm \omega_{n-1}')\right]$};
  
  \node (B1) [below=of A1] {\tiny$\bullet$}; 
  \node (B1.1)[left=-0.1cm of B1]{$\left[(V_{n-2},\pm\omega_{n-2}')\right]$};

  \node (C1) [below=of B1] {\tiny$\bullet$};

   \node (D1) [below=-0.2cm of C1]
   {$\vdots$};
   
  \node (E1) [below=of D1] {\tiny$\bullet$};
  \node (E1.1)[left=-0.1cm of E1]{$\left[(V_2,\pm\omega_2')\right]$};
  
  \node (F1) [below=of E1] {\tiny$\bullet$};
  \node (F1.1) [left=-0.1cm of F1] {$\left[(V_1,\pm \omega_1')\right]$};
  
  % Right column: bullet and text nodes
  \node (A2) [right=1cm of A1] {\tiny$\bullet$};
  \node (A2.1)[right=-0.1cm of A2]{$\left[(U_{n-1},\pm \omega_{n-1})\right]$};
  
  \node (B2) [below=of A2] {\tiny$\bullet$};
   \node (B2.1)[right=-0.1cm of B2]{$\left[(U_{n-2},\pm\omega_{n-2})\right]$};

  \node (C2) [below=of B2] {\tiny$\bullet$};
  
   \node (D2) [below=-0.2cm of C2]
   {$\vdots$};
   
  \node (E2) [below=of D2] {\tiny$\bullet$};
  \node (E2.1)[right=-0.1cm of E2]{$\left[(U_2,\pm\omega_2)\right]$};

  \node (F2) [below=of E2] {\tiny$\bullet$};
  \node (F2.1) [right=-0.1cm of F2] {$\left[(U_1,\pm \omega_1)\right]$};
 
  % Connect the bullet nodes with lines
  \draw (A1) -- (B1) -- (C1);
  \draw (A2) -- (B2) -- (C2);
  \draw (D2) -- (E2);
  \draw (D1) -- (E1);
  \draw (B1) -- (A2);
  \draw (C1) -- (B2);
  \draw (E1) -- (F1) -- (E2) -- (F2);
 \begin{scope}
  \draw[red, thick, fill=none, rounded corners=4pt]
   ($(B1.north west)+(0pt,0pt)$)
  -- ($(A2.north east)+(0pt,7pt)$)
  -- (B2.east)
  -- (C2.east)
  -- (D2.south east)
  -- (E2.south east)
  -- (F2.south east)
  -- (F1.south west)
  -- (E1.west)
  -- (D1.north west)
  -- ($(C1.west)+(0.5pt,2pt)$)   
  -- cycle;
\end{scope}\end{tikzpicture}
\caption{The order complex whose poset is $\Pb_n^\pm(\F_p)\backslash \T^\pm_n(\F_p)$, with the subposet $Y$ enclosed in red.}\label{fig1}
\end{figure}
We observe that $\oc(Y)$ has a cone point at $\left[(U_{n-1},\pm \omega_{n-1})\right]$. Thus, $\oc(Y)$ is contractible. Moreover, \[\Link\left[(V_{n-1},\pm \omega_{n-1}')\right]=\oc\left(\{\left[(V_1,\pm \omega_1')\right]<\dots<\left[(V_{n-2},\pm\omega_{n-2}')\right]\}\right)\]is also contractible. Therefore, by \autoref{morse theory}, $\oc\left(\Pb_n^\pm(\F_p)\backslash \T_n^\pm(\F_p)\right)$ is contractible.\end{proof}

\begin{remark}
    For $n=2$, $\oc\left(\Pb_2^\pm(\F_p)\backslash \T_2^\pm(\F_p)\right)$ is a discrete set of $2$ elements, and so not contractible.
\end{remark}

\section{Spectral sequence argument}\label{sec:spsq}

In this section, we review results from \cite{Quillen} and \cite{Charney} about map-of-poset spectral sequence, and we prove \autoref{thB} and \autoref{thC}. 

\subsection{Map-of-poset spectral sequence}
\begin{definition}
    Let $X$ be a poset. We consider $X$ as a category whose elements as objects and a single morphism from $x_0\in X$ and $x_1 \in X$ exactly when $x_0\leq x_1$. \end{definition}
For every functor $F \colon X \rightarrow \mathrm{Ab}$, we define the chain complex \[\C_p(X;F)=\bigoplus\limits_{x_0<\dots <x_p}F(x_0)\] where the differential is the alternating sum of $d_i\colon C_k(X;F) \rightarrow C_{k-1}(X;F)$ given by \begin{itemize}
    \item $d_0\colon F(x_0)\rightarrow F(x_1)$ is induced by the relation $x_0<x_1$,
    \item $d_i\colon F(x_0)\rightarrow F(x_0)$ for $i>0$ is the identity map, since deleting $x_i$ does not change $x_0$. 
\end{itemize} The poset homology of $X$ is defined as \[\homology_k(X;F):=\homology_k(\C_*(X;F)).\] We can also define the augmented chain complex \[\redchain_k(X;\Q)=\begin{cases} 
        \Q &\text{if $k=-1$},\\
        \bigoplus\limits_{x_0<\dots <x_k}\Q &\text{if $k\geq 0$},\\
        0 &\text{otherwise}.
        \end{cases}\] The reduced poset homology is then defined as the homology of this augmented chain complex
\[\redhom_k(X; \Q) := \homology_k(\redchain_*(X; \Q)).\]Note that $\redhom_k(X;\Q)\cong \redhom_k(\mathcal{O}(X);\Q)$.
 Let \[X_{>x}=\{x'\in X\mid x'>x\}.\] For a map $f \colon Y \rightarrow X$ of posets, we write \[f_{\leq x}=\{y\in Y\mid  f(y)\leq x \}.\] 
    We recall the following lemma of Charney \cite{Charney}, which is very useful for computing poset homology.
\begin{lemma} \label{lem: Charney} Let $X$ be a poset and let $F\colon X \rightarrow \mathrm{Ab}$ be a functor that is supported on elements
of an antichain $A\subset X$. Then 
\[\homology_k(X;F) = \bigoplus\limits_{x_0\in A}\redhom_{k-1}(X_{>x_0}; F(x_0)).\]
\end{lemma}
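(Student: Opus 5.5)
The statement to prove is Charney's lemma: if $F\colon X\to\mathrm{Ab}$ is supported on an antichain $A$, then $\homology_k(X;F)\cong\bigoplus_{x_0\in A}\redhom_{k-1}(X_{>x_0};F(x_0))$. The plan is to split the chain complex $\C_*(X;F)$ directly by the initial element of each chain and identify each summand with a shifted augmented chain complex of an upper link.

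\textbf{Step 1: only chains starting in $A$ survive.} A $k$-chain $x_0<\dots<x_k$ contributes $F(x_0)$. Since $F$ vanishes off $A$, we get
\[
\C_k(X;F)=\bigoplus_{x_0\in A}\ \bigoplus_{x_0<x_1<\dots<x_k}F(x_0).
\]

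\textbf{Step 2: $d_0$ vanishes.} The face $d_0$ is the map $F(x_0)\to F(x_1)$ induced by $x_0<x_1$. Since $A$ is an antichain and $x_1>x_0\in A$, we have $x_1\notin A$. Hence $F(x_1)=0$ and $d_0=0$.

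\textbf{Step 3: the complex splits over $x_0$.} The remaining faces $d_i$ with $i\ge1$ delete $x_i$, so they keep $x_0$ fixed and act as the identity on $F(x_0)$. Therefore $\C_*(X;F)$ is a direct sum over $x_0\in A$ of subcomplexes $C^{(x_0)}_*$. In $C^{(x_0)}_k$, the generators are chains $x_1<\dots<x_k$ in $X_{>x_0}$, with $k\ge0$. The empty chain corresponds to $k=0$.

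\textbf{Step 4: identify each summand.} Under the reindexing $k\mapsto k-1$, each $C^{(x_0)}_*$ is the augmented chain complex $\redchain_{*-1}(X_{>x_0};F(x_0))$. The empty chain corresponds to the degree $-1$ term. The differential $\sum_{i\ge1}(-1)^i d_i$ matches $-\sum_{j\ge0}(-1)^j d_j$ on $\redchain$. In particular, $d_1$ on a $1$-chain $(x_0<x_1)$ sends it to the empty chain, which is exactly the augmentation.

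\textbf{Step 5: conclude.} An overall sign does not change homology. Homology commutes with direct sums, which gives the stated formula.

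The only delicate points are the sign and degree bookkeeping in Step 4, and checking that the augmentation is produced correctly by $d_1$ in degree $1$. Both are routine.
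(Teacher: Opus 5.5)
Your proof is correct and complete: since $x_1>x_0\in A$ forces $x_1\notin A$, the face $d_0$ vanishes, so $\C_*(X;F)$ splits over the minimal element $x_0\in A$. Each summand is then $\redchain_{*-1}(X_{>x_0};\Z)\otimes F(x_0)$ up to a global sign, with the degree $-1$ term correctly covering the case $X_{>x_0}=\emptyset$. The paper does not prove this lemma itself; it only cites Charney's Lemma~1.3, and your direct chain-level splitting is the standard argument for it.
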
 
    \begin{proof}
        See e.g. {\cite[Lemma 1.3]{Charney}} for a proof.
    \end{proof}
    
Our main interest in poset homology is a reduced version of the map-of-poset spectral sequence from Patzt and Wilson's unpublished notes \cite{PW}.

\begin{theorem} \label{red spsq}Let $f \colon Y \rightarrow X$ be a map of posets. Then there exists a spectral sequence 
    \[E^2_{kh}=\homology_k(Y;y\mapsto \redhom_h(f_{\leq y};\Q))\Longrightarrow\homology_{k+h+1}(f),\]where $\homology_{*}(f)$ is the homology of the mapping cone of $f_*\colon\C_*(X;\Q)\rightarrow \C_*(Y;\Q)$.
    \end{theorem}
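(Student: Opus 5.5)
We build the spectral sequence from the double complex attached to the bisimplicial set of pairs, rather than from a Grothendieck-type composite-functor argument.

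\emph{The double complex.} Consider the double complex $D_{k,h}=\bigoplus_{y_0<\dots<y_k}\redchain_h(f_{\leq y_0};\Q)$. Here $k\geq 0$ and $h\geq -1$, and the $h=-1$ row contributes a copy of $\Q$ for each chain. A generator is a pair consisting of a chain $y_0<\dots<y_k$ in $Y$ and a chain $x_0<\dots<x_h$ in $f_{\leq y_0}$, or the empty chain when $h=-1$.
\begin{itemize}
\item The vertical differential is the differential of $\redchain_*(f_{\leq y_0};\Q)$.
\item The horizontal differential is the alternating sum of face maps. The $d_0$ face uses the inclusion $f_{\leq y_0}\subseteq f_{\leq y_1}$; the other faces are identities.
\end{itemize}
These two differentials commute, so we obtain a total complex $\mathrm{Tot}(D)$ with the usual sign conventions.

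\emph{First filtration.} Filter $\mathrm{Tot}(D)$ by $k$ and take vertical homology first. This gives $E^1_{k,h}=\bigoplus_{y_0<\dots<y_k}\redhom_h(f_{\leq y_0};\Q)$. Its horizontal differential is exactly the chain complex $\C_k(Y;y\mapsto\redhom_h(f_{\leq y};\Q))$ defined earlier. Hence $E^2_{k,h}$ is the stated poset homology with functor coefficients, and the spectral sequence converges to $\homology_*(\mathrm{Tot}(D))$ because the double complex is bounded (the posets are finite-dimensional).

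\emph{Second filtration and identification of the abutment.} It remains to identify $\homology_{n}(\mathrm{Tot}(D))$ with $\homology_{n+1}(f)$; the degree shift comes from $h$ starting at $-1$.

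Split $D$ into two pieces: the row $h=-1$, and the subcomplex $D^{\geq 0}$ of rows with $h\geq 0$. This gives a short exact sequence of total complexes
\[0\to \mathrm{Tot}(D^{\geq 0})\to\mathrm{Tot}(D)\to \C_*(Y;\Q)[-1]\to 0.\]
The quotient is the row $h=-1$, which is $\C_*(Y;\Q)$ shifted by one degree; the connecting map is the vertical augmentation.

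Now filter $D^{\geq 0}$ by $h$ and take horizontal homology first. Fix a chain $x_0<\dots<x_h$ in $X$. The chains $y_0<\dots<y_k$ in $Y$ with $f(y_0)\geq x_h$ form the order complex of the upper fiber $\{y: f(y)\geq x_h\}$. The key check is that the constraint is imposed on $y_0$ only, and $f$ is monotone, so all $y_i$ satisfy $f(y_i)\geq f(y_0)\geq x_h$.

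This upper fiber is not contractible in general, so this route does not obviously collapse. I therefore fall back on the simplest realization: replace this second filtration by the standard comparison (the Quillen fiber-lemma style argument). The row $h\geq 0$ part computes the chains of the poset of pairs $\{(x,y): x\leq f(y)\}$. That pair poset is homotopy equivalent to $Y$ via the projection, because each fiber $\{x\leq f(y)\}$ has a maximal element $f(y)$ and is therefore contractible. Under this equivalence, the map from the pair poset to $X$ corresponds to $f$.

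Consequently $\mathrm{Tot}(D)$ is quasi-isomorphic to the mapping cone of $\C_*(Y;\Q)\to\C_*(X;\Q)$, with the degree shift matching. The statement writes the direction of $f_*$ as reversed relative to this, which I read as a typo.

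\emph{Main obstacle.} The hardest step is this last identification: verifying that the two filtrations and the augmentation row assemble into exactly the cone of $f_*$, with the correct shift, up to quasi-isomorphism.
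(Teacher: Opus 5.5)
There is a genuine gap: the fibers sit in the wrong poset. With the paper's notation $f_{\le x}=\{y\in Y\mid f(y)\le x\}$, the coefficient system lives on the \emph{target} and the fibers lie in the source. The statement is self-consistent only if $f$ is read as a map $X\to Y$, which is how it is used in Proposition~\ref{hom(Psi)}: there the coefficients $V\mapsto\redhom_h(\Psi_{\le V};\Q)$ live on the target $\Pb_n^\pm(\F_p)\backslash\T_n^\pm(\F_p)$ of $\Psi$. So the typo you noticed in $f_*$ also affects the $E^2$-term.

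Your substitute $f_{\le y}=\{x\in X\mid x\le f(y)\}$ has maximum $f(y)$, so it is contractible. Every column of your $D$ is therefore the augmented chain complex of a cone. Your $E^1$ and $E^2$ pages vanish identically, and $\mathrm{Tot}(D)$ is acyclic. No identification can turn this into the cone of $f_*$: for $Y=\emptyset$ you get $D=0$, while the cone is $\C_*(X;\Q)$.

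The failure shows up concretely in your last step. The row $h=-1$ of $D$ is $\bigoplus_{y_0<\dots<y_k}\Q=\C_*(Y;\Q)$, not $\C_*(X;\Q)$. Since the vertical differential lowers $h$, that row is a subcomplex, not a quotient. The augmentation into it realizes the projection of your pair poset onto $Y$, which you yourself show is a quasi-isomorphism. The projection to $X$ never occurs in $D$, so ``the map from the pair poset to $X$ corresponds to $f$'' has nothing to attach to.

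The repair is to let the outer chains run over the target. Take $D_{p,q}=\bigoplus_{x_0<\dots<x_p}\redchain_q(f_{\le x_0};\Q)$, spanned by a chain $x_0<\dots<x_p$ in $X$ together with a chain $y_0<\dots<y_q$ in $Y$ with $f(y_q)\le x_0$ (the empty chain when $q=-1$).

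\emph{First filtration.} Filtering by $p$ gives the correct term $E^2_{pq}=\homology_p(X;x\mapsto\redhom_q(f_{\le x};\Q))$.

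\emph{Second filtration.} Once the $Y$-chain is fixed, the admissible $X$-chains are exactly the chains of $X_{\ge f(y_q)}$. This is the analogue of the upper fiber you ran into, but it now has a minimum element and is contractible. Hence each row $q\ge 0$ has horizontal homology $\C_q(Y;\Q)$, concentrated in $p=0$ and represented by the $0$-chain $f(y_q)$. The row $q=-1$ is $\C_*(X;\Q)$.

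\emph{Identifying the abutment.} $\mathrm{Tot}(D)$ is, after a shift by one, the cone of the augmentation $\epsilon\colon\mathrm{Tot}(D^{\ge 0})\to\C_*(X;\Q)$. Projection onto the column $p=0$ is a quasi-isomorphism $\pi\colon\mathrm{Tot}(D^{\ge0})\to\C_*(Y;\Q)$. Moreover $\epsilon\simeq f_*\circ\pi$, via the homotopy that concatenates $f(y_0)\le\dots\le f(y_q)\le x_0<\dots<x_p$ (degenerate chains count as zero). This gives $\homology_n(\mathrm{Tot}(D))\cong\homology_{n+1}(f)$ for $f_*\colon\C_*(Y;\Q)\to\C_*(X;\Q)$.

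This is the standard double-complex argument going back to Quillen \cite{Quillen}; the paper itself gives no proof and refers to \cite{PW}.
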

    \begin{proof}
        See the Appendix in {\cite{PW}} for a proof.
    \end{proof}
\subsection{The proof of \autoref{thC}}
We recall the statement of \autoref{thC}. Let $p$ be a prime and $n\geq 3$. If $p\in\{2,3,5,7,13\}$ or $p\leq 6n-14$, then \[\left(\St_n(\Q)\otimes\Q\right)_{\Gamma_{0,n}^\pm(p)}\cong 0.\] Since \autoref{tits:acyclic} says that the order complex $\oc\left(\Pb_n^\pm(\F_p)\backslash \T_n^\pm(\F_p)\right)$ is contractible for all $n\geq 3$, the result will follow after proving that the map \begin{equation}\label{map} \left(\St_n(\Q)\otimes \Q\right)_{\Gamma_{0,n}^\pm(p)}\longrightarrow \redhom_{n-2}(\Gamma_{0,n}^\pm(p)\backslash \T_n(\Q);\Q)\end{equation} defined in the introduction is an isomorphism when $p\in\{2,3,5,7,13\}$ or $p\leq 6n-14$.
\begin{definition}
    Let $Y$ be a $\Delta$-complex. Define the associated poset $\mathcal{P}(Y)=\{\sigma\in Y\mid \sigma\text{ is a simplex}\}$ with order given by\[\sigma\leq \tau\Longleftrightarrow \text{$\sigma$ is a face of $\tau$}.\]
\end{definition}
We first prove the following two lemmas.
\begin{lemma} \label{lem:compmap}
      For all primes $p$ and $n\geq 2$, let \[\rel'_n(p):=\left(\Pb_n^\pm(\F_p)\backslash \SBDA(\F_p^n),\Pb_n^\pm(\F_p)\backslash \SBDA(\F_p^n)'\right).\]The map in \eqref{map} equals to the composition  \begin{equation}\label{compmap}\homology_{n-1}(\rel'_n(p);\Q) \overset{\partial}\longrightarrow \redhom_{n-2}(\Pb_n^\pm(\F_p)\backslash \SBDA(\F_p^n)';\Q)\overset{\Psi_*}\longrightarrow \redhom_{n-2}(\Pb_n^\pm(\F_p)\backslash \T_n^{\pm}(\F_p);\Q)\end{equation}
    where the maps are:
    \begin{itemize}
        \item $\partial$ is the boundary map in the long exact sequence of a pair in the homology.
        \item $\Psi\colon \mathcal{P}((\Pb_n^\pm(\F_p)\backslash \SBDA(\F_p^n)')\longrightarrow \Pb_n^\pm(\F_p)\backslash \T_n^\pm(\F_p)$ is the poset map taking the orbit of $\sigma= \left(v_0^\pm,\dots,v_k^\pm\right)$ in $\SBDA(\F_p^n)'$ to $(U_k,\pm \omega_k)$ if $e_1\in \operatorname{span}\{v_0,\dots,v_k\}$ and to $(V_k,\pm \omega_k')$ otherwise.
    \end{itemize} 
\end{lemma}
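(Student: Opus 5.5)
We lift the whole comparison to the level of $\Gamma_n(p)$-coinvariants, where the analogous compatibility was established by Miller--Patzt--Putman, and then descend along the surjection $\Gamma_{0,n}^\pm(p)\to\Pb_n^\pm(\F_p)$ of \eqref{ses}. We first recall how the map \eqref{map} arises concretely. An element of $\St_n(\Q)\otimes\Q$ is represented, via the presentation \eqref{STresol}, by the image of an $(n-1)$-chain in $\redchain_{n-1}(\SB(\Z^n);\Q)$. The standard identification of that chain with a class in $\redhom_{n-2}(\mathcal{T}_n(\Q);\Q)$ is the following. A frame $(v_0^\pm,\dots,v_{n-1}^\pm)$ is sent to the apartment sphere, i.e.\ the boundary of the simplex read off through the poset map sending a proper face $\{v_{i_0},\dots,v_{i_k}\}$ to the subspace it spans. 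This is the connecting map of the pair $(\SB(\Z^n),\SB(\Z^n)^{(n-2)})$ (equivalently of $(\SBA,\SBA')$) followed by the map \emph{span} from the poset of simplices of $\SBA(\Z^n)'$ to $\T_n(\Q)$.

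Our first step is to write this as a commutative square before any quotients. The top row is
\[
\homology_{n-1}(\SBA(\Z^n),\SBA(\Z^n)';\Q)\xrightarrow{\partial}\redhom_{n-2}(\SBA(\Z^n)';\Q)\xrightarrow{\mathrm{span}_*}\redhom_{n-2}(\mathcal{T}_n(\Q);\Q),
\]
and the first group is identified with $\St_n(\Q)\otimes\Q$ exactly as in the proof of \autoref{isoSt2}. The key fact here is that $\SBA(\Z^n)'$ has the full $(n-2)$-skeleton, so the relative chains are concentrated in degrees $n,n-1$. We check that this composite is the Solomon--Tits identification. Both the apartment class and this composite are computed on a single frame by the same formula: the alternating sum of the faces of the frame, each mapped to its span and realized in the barycentric subdivision. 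It suffices to check this on generators.

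Next we apply $\Gamma_{0,n}^\pm(p)$-coinvariants, or equivalently pass to quotient complexes, which commutes with each step. Relative chains and their boundary maps are natural in the complex, so by \autoref{chaincoinv} and \autoref{good q} the quotient maps induce compatible maps on connecting homomorphisms. The span map is $\GL_n(\Z)$-equivariant, so it descends to the quotient posets. Under \autoref{TitsMPP} and \autoref{Titsbldg:dq}, a $\pm$-oriented subspace spanned by $v_0,\dots,v_k$ in $\F_p^n$ lies in the orbit $[(U_k,\pm\omega_k)]$ if it contains $e_1$ and in $[(V_k,\pm\omega'_k)]$ otherwise, by \autoref{subspaces}. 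This is precisely the description of $\Psi$. For this we need that reduction mod $p$ of $\mathrm{span}_\Q\{v_i\}\cap\Z^n$ with its $\pm$-orientation is the $\pm$-oriented span of the reduced vectors, which holds because partial frames span direct summands. The map \eqref{map} was defined by passing through the quotient of $\T_n(\Q)$, so naturality of the square yields the stated equality.

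We expect the main obstacle to be the first step, namely pinning down, with signs, that the connecting map followed by $\mathrm{span}_*$ realizes the Solomon--Tits isomorphism and not merely some isomorphism. To handle it, we would compare with \cite[Lemma~3.23]{MPP} and its proof, where the same composite is used at level $\Gamma_n(p)$. The remaining care point is that $\Psi$ is only a map out of the face poset $\mathcal{P}$, so we must identify the homology of the symmetric $\Delta$-complex quotient with that of its face poset. We would do this rationally using \autoref{h} and barycentric subdivision.
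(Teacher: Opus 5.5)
Your proposal is correct and is the same argument as the paper's. The paper's proof simply cites \autoref{isoSt2} and asserts that under this identification \eqref{map} becomes the connecting map followed by $\Psi_*$. Your lift to $\SBA(\Z^n)$ (connecting map plus span giving the apartment class), naturality under the quotient maps, and use of \autoref{subspaces} to identify the descended span map with $\Psi$ supply exactly the details that assertion leaves implicit.

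One correction to your final remark. Barycentric subdivision does not identify the quotient symmetric $\Delta$-complex with the order complex of its face poset, because the quotient is not regular: for $n\geq 3$ the edge orbits $\D_2^{1,n}(1,c)$ with $c\neq 0$ are loops at the vertex $\D_1^{0,n}$. For this lemma you need no such isomorphism, only the natural map from the realization to that order complex, or equivalently the span map descended to $\Gamma_{0,n}^\pm(p)\backslash|\mathrm{sd}\,\SBA(\Z^n)'|$.
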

\begin{proof}
      We recall that \[
U_k=\operatorname{span}\{ e_1,\dots,e_k\}, 
\quad
V_k=\operatorname{span}\{ e_2,\dots,e_{k+1}\},
\]
and $\omega_k=e_1\wedge\cdots\wedge e_k$, $\omega_k'=e_2\wedge\cdots\wedge e_{k+1}$. We also recall from \autoref{Titsbldg:dq} that \[\Gamma_{0,n}^\pm(p)\backslash \T_n(\Q) \cong \T_n^\pm(\F_p).\] 
    We showed in \autoref{isoSt2} that \[(\St_n(\Q)\otimes\Q)_{\Gamma_{0,n}^\pm(p)} \cong \homology_{n-1}(\rel'_n(p);\Q).\] 
    Under this identification, the map in \eqref{map} agrees with the map in \eqref{compmap}.
\end{proof}
\begin{lemma}\label{mapcomp}
    For all primes $p$ and $n\geq 2$, let \[\rel'_n(p):=\left(\Pb_n^\pm(\F_p)\backslash \SBDA(\F_p^n),\Pb_n^\pm(\F_p)\backslash \SBDA(\F_p^n)'\right).\]The map \[\homology_{n-1}(\rel'_n(p);\Q) \overset{\partial}\longrightarrow \redhom_{n-2}(\Pb_n^\pm(\F_p)\backslash \SBDA(\F_p^n)';\Q)\] always surjective, and is injective if $p\in\{2,3,5,7,13\}$ or $p\leq 6n-8$. 
\end{lemma}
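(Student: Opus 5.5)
Write $Q=\Pb_n^\pm(\F_p)\backslash \SBDA(\F_p^n)$ and $Q'=\Pb_n^\pm(\F_p)\backslash \SBDA(\F_p^n)'$. I would read both claims off the long exact sequence of the pair $\rel'_n(p)=(Q,Q')$ in rational homology:
\[
\redhom_{n-1}(Q;\Q)\longrightarrow \homology_{n-1}(\rel'_n(p);\Q)\overset{\partial}\longrightarrow \redhom_{n-2}(Q';\Q)\longrightarrow \redhom_{n-2}(Q;\Q).
\]
Surjectivity of $\partial$ amounts to $\redhom_{n-2}(Q';\Q)\to\redhom_{n-2}(Q;\Q)$ being injective. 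Injectivity of $\partial$ amounts to $\redhom_{n-1}(Q;\Q)\to \homology_{n-1}(\rel'_n(p);\Q)$ being zero. So the plan is to control $\redhom_{n-2}(Q;\Q)$ and $\redhom_{n-1}(Q;\Q)$.

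\textbf{Surjectivity.} For $n\ge 3$ this is immediate, since \autoref{BA connected} gives $\redhom_{n-2}(Q;\Q)=0$ for every prime $p$. For $n=2$ the degree is $k=0$, and $\redhom_0(Q;\Q)=0$ by the computation at the start of the proof of \autoref{n=2,BA}, which holds for all $p$. In both cases the next term of the sequence vanishes, so $\partial$ is onto.

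\textbf{Injectivity.} When $p\in\{2,3,5,7,13\}$ or $p\le 6n-8$, \autoref{hom:BA} gives $\redhom_{n-1}(Q;\Q)\cong\redhom_{n-1}(\Gamma_{0,n}^\pm(p)\backslash\SBA(\Z^n);\Q)=0$. For $n=2$ with $p\in\{2,3,5,7,13\}$ the same vanishing is \autoref{n=2,BA}. The preceding term of the sequence is then zero, so $\partial$ is injective.

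The main obstacle is bookkeeping rather than mathematics. \autoref{hom:BA} is stated for $\Gamma_{0,n}^\pm(p)\backslash\SBA(\Z^n)$, and I must identify this with $Q$ via \autoref{good q}. I must also check that the pair and reduced long exact sequence are valid in the symmetric $\Delta$-complex setting. Relative chains are quotients of cellular chain groups, as in the definition of $\C_k(Y,Z;K)$, so the short exact sequence of chain complexes holds and the long exact sequence follows. The reduced version also needs $Q'$ to be nonempty, which holds because it contains all vertices.
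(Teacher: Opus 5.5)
Your proposal is correct and follows the paper's proof. Both use the long exact sequence of the pair: $\redhom_{n-2}(Q;\Q)=0$ gives surjectivity of $\partial$, and \autoref{hom:BA} (transported along \autoref{good q}) gives $\redhom_{n-1}(Q;\Q)=0$ and hence injectivity. Your separate handling of the case $n=2$, via the degree-$0$ computation in the proof of \autoref{n=2,BA}, is slightly more careful than the paper, which cites \autoref{BA connected} even though that proposition is stated only for $n\geq 3$.
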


\begin{proof}
 Consider the long exact sequence of the pair $\rel'_n(p)$:
    \begin{align*}\redhom_{n-1}(\Pb_n^\pm(\F_p)\backslash \SBDA(\F_p^n);\Q) \longrightarrow &\homology_{n-1}(\rel'_n(p);\Q) \\&\overset{\partial}\longrightarrow\redhom_{n-2}((\Pb_n^\pm(\F_p)\backslash \SBDA(\F_p^n)';\Q) \longrightarrow\redhom_{n-2}(\Pb_n^\pm(\F_p)\backslash \SBDA(\F_p^n);\Q).\end{align*} By \autoref{BDA,p=2} and \autoref{BA connected}, we have that \[\redhom_{n-2}(\Pb_n^\pm(\F_p)\backslash \SBDA(\F_p^n);\Q)\cong0 \]for all primes $p$ and all $n\geq 2$, which implies that $\partial$ is surjective. Furthermore, \autoref{hom:BA} states that \[\redhom_{n-1}(\Pb_n^\pm(\F_p)\backslash \SBDA(\F_p^n);\Q)\cong 0\] for $p\in\{2,3,5,7,13\}$ or $p\leq 6n-8$.  Therefore, the result follows.
\end{proof}
\begin{proposition}\label{hom(Psi)}
      For all primes $p$ and $n\geq 2$, let \[\Psi\colon \mathcal{P}((\Pb_n^\pm(\F_p)\backslash \SBDA(\F_p^n)')\longrightarrow \Pb_n^\pm(\F_p)\backslash \T_n^\pm(\F_p)\]be the poset map defined in \eqref{compmap}. Then for all $k$,      \[\homology_k(\Psi;\Q)\cong \redhom_{k-1}\left(\Pb_n^\pm(\F_p)\backslash \SBDA(\F_p^{n-1});\Q\right).\]
\end{proposition}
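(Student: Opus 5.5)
The plan is to apply the reduced map-of-posets spectral sequence of \autoref{red spsq} to $f=\Psi\colon \mathcal{P}(\Pb_n^\pm(\F_p)\backslash \SBDA(\F_p^n)')\to \Pb_n^\pm(\F_p)\backslash \T_n^\pm(\F_p)=:X$. Here $X$ plays the role of the target poset $Y$ in that theorem. This gives
\[E^2_{kh}=\homology_k\bigl(X;\,x\mapsto \redhom_h(\Psi_{\leq x};\Q)\bigr)\Longrightarrow \homology_{k+h+1}(\Psi).\]
The first task is to identify the fibers $\Psi_{\leq x}$ for the $2(n-1)$ elements $x=[(U_j,\pm\omega_j)]$ and $x=[(V_j,\pm\omega_j')]$ of $X$ described in \autoref{tits}.

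A simplex lies in $\Psi_{\leq x}$ exactly when its orbit has a representative whose span lies in $U_j$ or in $V_j$. For $x=[(V_j,\pm\omega'_j)]$, the span avoids $e_1$. The stabiliser of $V_j$ in $\Pb_n^\pm(\F_p)$ acts on it through all of $\GL^\pm(V_j)$, or at least through a group containing $\SL(V_j)$. So $\Psi_{\le x}$ should be the quotient of a determinant-$1$ augmented partial frame complex of a $j$-dimensional space by a group acting with few orbits. The aim is to show this fiber is $\Q$-acyclic in the relevant degrees, using a cone-type argument: in the relative complex, simplices not containing $e_1$ have orientation-reversing stabilisers by \autoref{S1} and \autoref{DA1}.

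For $x=[(U_j,\pm\omega_j)]$ the fiber is the quotient of the complex of augmented partial frames of $U_j\cong\F_p^j$ by the stabiliser of $U_j$. This stabiliser surjects onto $\Pb_j^\pm(\F_p)$ (the first-column-fixing group of $U_j$), together with the extra determinant freedom coming from the complement. For $j<n$, I expect this fiber to be the full complex $\Pb_j^\pm(\F_p)\backslash \SBDA(\F_p^j)$ modulo the sign identification. The hope is that only the top piece, coming from $j=n-1$, survives, with $\redhom_h$ equal to $\redhom_h(\Pb_{n-1}^\pm(\F_p)\backslash \SBDA(\F_p^{n-1});\Q)$, and that all other columns vanish or cancel.

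Next I would use \autoref{lem: Charney}. If the functor $x\mapsto\redhom_h(\Psi_{\le x})$ is supported only on the single maximal element $x_0=[(U_{n-1},\pm\omega_{n-1})]$, an antichain, then $E^2_{kh}=\redhom_{k-1}(X_{>x_0};\cdot)$. Since $X_{>x_0}$ is empty, this is nonzero only for $k=0$, giving $E^2_{0h}=\redhom_h(\Pb_{n-1}^\pm(\F_p)\backslash \SBDA(\F_p^{n-1});\Q)$. The sequence then collapses and yields $\homology_{k}(\Psi)\cong E^2_{0,k-1}$, which is the claimed isomorphism.

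The main obstacle is the fiber computation: proving that $\Psi_{\le x}$ is $\Q$-acyclic for every $x$ other than $[(U_{n-1},\pm\omega_{n-1})]$, including the $V$-type elements and the smaller $U_j$, and identifying the top fiber exactly. For the acyclic fibers I would first try the orientation-reversing criteria of \autoref{sec6}. These should show that the quotient chain complex vanishes, or is a cone, because the extra coordinates outside the span supply a sign flip, as in \autoref{ksmall}. For $[(U_{n-1},\pm\omega_{n-1})]$, I would compare orbits under its stabiliser with $\Pb_{n-1}^\pm(\F_p)$-orbits via the matrix descriptions of \autoref{BD/Pn} and \autoref{BDA/Pn}.
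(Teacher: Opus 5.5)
Your spectral-sequence setup, the acyclicity of the $V$-type fibers via \autoref{S1} and \autoref{DA1}, and the identification $\Psi_{\le[(U_j,\pm\omega_j)]}\cong\Pb_j^\pm(\F_p)\backslash\SBDA(\F_p^j)$ all agree with the paper. The gap is in the step you flag as the main obstacle: it is false, so it cannot be proved.

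For $j<n-1$, the fiber over $[(U_j,\pm\omega_j)]$ is only $(j-2)$-acyclic (\autoref{BA connected}), not $\Q$-acyclic. For instance, when $n\ge 4$ the fiber over $[(U_2,\pm\omega_2)]$ has nonzero $\redhom_1$ for $p\notin\{2,3,5,7,13\}$, by \autoref{n=2,BA}. The sign flip of \autoref{ksmall} does not help. Negating a basis vector outside the span gives $A$ with $\det A=-1$ but $X$ with $\sign(X)=+1$. That makes simplices twisted for $\Q^{\det}$-coefficients, but not orientation-reversing. With trivial coefficients, the orientation-preserving simplices of \autoref{D2,dif,pv} survive in these fibers.

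Consequently, the functor $V\mapsto\redhom_h(\Psi_{\le V};\Q)$ is not supported on $[(U_{n-1},\pm\omega_{n-1})]$ alone. For fixed $h$ it is supported on $[(U_h,\pm\omega_h)]$ and $[(U_{h+1},\pm\omega_{h+1})]$. These form a chain, not an antichain, so \autoref{lem: Charney} cannot be applied to this functor directly.

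The missing idea is that the lower $U_j$ contributions vanish on $E^2$ because of the upper sets, not because the fibers are acyclic. Split the fiber functor by a short exact sequence
\[0\to B_h\to\bigl(V\mapsto\redhom_h(\Psi_{\le V};\Q)\bigr)\to A_h\to 0,\]
with $A_h$ supported on $\dim V=h$ and $B_h$ on $\dim V=h+1$. Each support is an antichain, so \autoref{lem: Charney} applies to $A_h$ and $B_h$ separately.

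For $e_1\in V$ with $\dim V<n-1$, the upper set $\{[(U_{\dim V+1},\pm\omega_{\dim V+1})]<\dots<[(U_{n-1},\pm\omega_{n-1})]\}$ is a nonempty chain. Its reduced homology therefore vanishes in every degree, including degree $-1$, which kills those summands. For $\dim V=n-1$ the upper set is empty. This leaves exactly $E^2_{0,n-1}\cong\redhom_{n-1}$ and $E^2_{0,n-2}\cong\redhom_{n-2}$ of $\Pb_{n-1}^\pm(\F_p)\backslash\SBDA(\F_p^{n-1})$.

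The statement for all other $k$ then follows from \autoref{BA connected} applied to $\Pb_{n-1}^\pm(\F_p)\backslash\SBDA(\F_p^{n-1})$, together with dimension reasons.
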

\begin{proof}
      We consider the reduced map-of-posets spectral sequence of the map \[\Psi\colon \mathcal{P}(\Pb_n^\pm(\F_p)\backslash \SBDA(\F_p^n)')\longrightarrow \Pb_n^\pm(\F_p)\backslash \T^\pm_n(\F_p).\] 
    \autoref{red spsq} says that there exists a spectral sequence \[E_{kh}^2=\homology_k(\Pb_n^\pm(\F_p)\backslash \T^\pm_n(\F_p);[V\mapsto\redhom_h(\Psi_{\leq V};\Q)])\Rightarrow \homology_{k+h+1}(\Psi;\Q).\]
    We first consider the case $V \in \Pb_n^\pm(\F_p)\backslash \T^\pm_n(\F_p)$ with $e_1\not\in V$. For each $0\leq k\leq \dim V$, set \[Z_k=\left\{\Pb_n^\pm(\F_p)\sigma \in\Pb_n^\pm(\F_p)\backslash \SBDA(\F_p^n)'_k~\middle\vert\begin{array}{c}\sigma\Sigma_{k+1}=B\cdot T_k\text{ for some $B\in \D_1^{k,n}$}\\ \text{or $\sigma\Sigma_{k+1}=B\cdot G_k$ for some $B\in\DA_1^{k,n}$}\end{array}\right\},\]and $Z:=\{Z_k\}_{0\leq k\leq \dim V}$ considered as a subcomplex of  $\Pb_n^\pm(\F_p)\backslash \SBDA(\F_p^n)'$. Then\[\Psi_{\leq V}\cong \mathcal{P}(Z).\] It follows by \autoref{S1} and \autoref{DA1}, that for every $1\leq k\leq \dim V$, the action of $\Pb_n^\pm(\F_p)$ on every simplex representing an orbit in $Z_k$ is orientation-reversing. Therefore, \[\redhom_h(\Psi_{\leq V};\Q)\cong 0\quad\text{for all $h$}.\] 
    Now we consider the other case that $V \in \Pb_n^\pm(\F_p)\backslash \T^\pm_n(\F_p)$ with $e_1\in V$, we have \[\Psi_{\leq V}\cong \Pb^\pm_{\dim V}(\F_p)\backslash \SBDA(\F_p^{\dim V}),\]and so
    \[\redhom_h(\Psi_{\leq V};\Q)\cong \redhom_h(\Pb_{\dim V}^\pm(\F_p)\backslash\SBDA(\F_p^{\dim V});\Q).\] Thus, we fix $V \in \Pb_n^\pm(\F_p)\backslash \T^\pm_n(\F_p)$ with $e_1\in V$. We have from \autoref{BA connected} that $\Pb_{\dim V}^\pm(\F_p)\backslash\SBDA(\F_p^{\dim V})$ is $(\dim V-2)$-acyclic. It implies that \[\redhom_h(\Psi_{\leq V};\Q)) \cong 0\quad\text{for all}~ \dim V \neq h, h +1.\] 
    Consider the short exact sequence of functors
    \begin{equation}\label{functorseq}0 \longrightarrow B_h \longrightarrow \left(V\mapsto\redhom_h(\Psi_{\leq V};\Q) \right)\longrightarrow A_h \longrightarrow 0,\end{equation} where
    \[A_h(V) = \begin{cases}
        \redhom_h(\Psi_{\leq V};\Q) &~\text{if}~\dim V =h,\\
        0&~\text{otherwise}.
    \end{cases}\]and  \[B_h(V) = \begin{cases}
        \redhom_h(\Psi_{\leq V};\Q) &~\text{if}~\dim V =h+1,\\
        0&~\text{otherwise}.
    \end{cases}\]
     Now using \autoref{lem: Charney}, we have
    \[\homology_k\left(\Pb_n^\pm(\F_p)\backslash \T_n^\pm(\F_p);A_h\right) = \bigoplus\limits_{\substack{V\in \Pb_n^\pm(\F_p)\backslash \T_n^\pm(\F_p)\\ \dim V=h\\e_1\in V}} \redhom_{k-1}\left(\left(\Pb_n^\pm(\F_p)\backslash \T_n^\pm(\F_p)\right)_{>V};\redhom_h(\Psi_{\leq V};\Q)\right),\]and \[\homology_k\left(\Pb_n^\pm(\F_p)\backslash \T_n^\pm(\F_p);B_h\right) = \bigoplus\limits_{\substack{V\in \Pb_n^\pm(\F_p)\backslash \T_n^\pm(\F_p)\\\dim V=h+1\\e_1\in V}} \redhom_{k-1}\left(\left(\Pb_n^\pm(\F_p)\backslash \T_n^\pm(\F_p)\right)_{>V};\redhom_h(\Psi_{\leq V};\Q)\right).\]
    Since $e_1\in V$, then as indicated in Figure \ref{fig2}, the order complex $\oc\left(\Pb_n^\pm(\F_p)\backslash \T_n^\pm(\F_p))_{>V}\right)$ is isomorphic to $\partial_{n-\dim V-2}$. Thus, it is contractible unless $\dim V=n-1$. In fact, $\oc\left(\left(\Pb_n^\pm(\F_p)\backslash \T_n^\pm(\F_p)\right)_{>V}\right)$ is empty if $\dim V=n-1$.
\begin{figure}[H]
\centering
\begin{tikzpicture}[node distance=0.8cm and 0.3cm]

  % Left column: bullet and text nodes
  \node (A1) {\tiny$\bullet$};
  
  \node (B1) [below=of A1] {\tiny$\bullet$}; 
  
  \node (C1) [below=of B1] {\tiny$\bullet$};

   \node (D1) [below=-0.2cm of C1]
   {$\vdots$};
   
  \node (E1) [below=of D1] {\tiny$\bullet$};
  
  \node (F1) [below=of E1] {\tiny$\bullet$};
  
  \node (G1) [below=-0.1cm of F1] {};
  \node[left=of G1] (G1t) {Orbits not containing $e_1$};
  
  % Right column: bullet and text nodes
  \node (A2) [right=1cm of A1] {\tiny$\bullet$};
  
  \node (B2) [below=of A2] {\tiny$\bullet$}; 

  \node (C2) [below=of B2] {\tiny$\bullet$};
  \node[right=-0.1cm of C2] (C2t) {$V$};
  
   \node (D2) [below=-0.2cm of C2]
   {$\vdots$};
   
  \node (E2) [below=of D2] {\tiny$\bullet$};

  \node (F2) [below=of E2] {\tiny$\bullet$};
  
  \node (G2) [below=-0.1cm of F2] {};
  \node[right=of G2] (G2t) {Orbits containing $e_1$};
  
  % Connect the bullet nodes with lines
  \draw (A1) -- (B1) -- (C1);
  \draw (A2) -- (B2) -- (C2);
  \draw (D2) -- (E2);
  \draw (D1) -- (E1);
  \draw (B1) -- (A2);
  \draw (C1) -- (B2);
  \draw (E1) -- (F1) -- (E2) -- (F2);

 \draw[red]  ellipse[x radius=0.3cm,y radius=1cm, color=red, fit=(A2) (B2) , inner sep=2mm];

\end{tikzpicture}
\caption{$\Pb_n^\pm(\F_p)\backslash \T^\pm_n(\F_p)$, with $(\Pb_n^\pm(\F_p)\backslash \T^\pm_n(\F_p))_{>V}$ enclosed in red.}\label{fig2}\end{figure}
   This implies that 
    \[\homology_k\left(\Pb_n^\pm(\F_p)\backslash \T_n^\pm(\F_p);A_h\right) \cong 0\quad\text{unless $(k,h)=(0,n-1)$},\]and
    \[\homology_k\left(\Pb_n^\pm(\F_p)\backslash \T_n^\pm(\F_p);B_h\right)\cong0\quad\text{unless $(k,h)=(0,n-2)$}.\]Thus, the only potential nonzero terms in our spectral sequence are \[E^2_{0,n-1}\quad\text{and}\quad E^2_{0,n-2},\]where 
\[E^2_{0,n-1}\cong \homology_0\left(\Pb_n^\pm(\F_p)\backslash \T_n^\pm(\F_p);A_{n-1}\right)\cong \bigoplus_{\substack{V\in \Pb_n^\pm(\F_p)\backslash \T_n^\pm(\F_p)\\\dim V=n-1\\e_1\in V}} \redhom_{n-1}(\Pb_{n-1}^\pm(\F_p)\backslash\SBDA(\F_p^{n-1});\Q)\]
    and
    \[E^2_{0,n-2}\cong \homology_0\left(\Pb_n^\pm(\F_p)\backslash \T_n^\pm(\F_p);B_{n-2}\right)\cong \bigoplus_{\substack{V\in \Pb_n^\pm(\F_p)\backslash \T_n^\pm(\F_p)\\\dim V=n-1\\e_1\in V}} \redhom_{n-2}(\Pb_{n-1}^\pm(\F_p)\backslash\SBDA(\F_p^{n-1});\Q).\]
Since \autoref{tits} says that there is exactly one orbit of $\T_n^\pm(\F_p)$ of dimension $n-1$ containing the vector $e_1$, it implies that
\[E^2_{0,n-1}\cong \redhom_{n-1}(\Pb_{n-1}^\pm(\F_p)\backslash\SBDA(\F_p^{n-1});\Q),\]
and
\[E^2_{0,n-2}\cong\redhom_{n-2}(\Pb_{n-1}^\pm(\F_p)\backslash\SBDA(\F_p^{n-1});\Q).\]
  Thus, \begin{equation*}
  \homology_k(\Psi;\Q)\cong\begin{cases}
      \redhom_{n-1}(\Pb_{n-1}^\pm(\F_p)\backslash\SBDA(\F_p^{n-1});\Q)&\text{if $k=n$}\\
      \redhom_{n-2}(\Pb_{n-1}^\pm(\F_p)\backslash\SBDA(\F_p^{n-1});\Q)&\text{if $k=n-1$}\\
      0&\text{otherwise}.
  \end{cases}\end{equation*}Furthermore, we know from \autoref{BA connected} that \[\redhom_{k}(\Pb_{n-1}^\pm(\F_p)\backslash\SBDA(\F_p^{n-1});\Q)\cong0\quad\text{for all $k\leq n-3$}.\] This completes the proof.
\end{proof}
\begin{corollary}\label{iso, BA,BA'}
Let $p$ be a prime and $n\geq 3$. Then
    \[\redhom_{k}\left(\Pb_n^\pm(\F_p)\backslash \SBDA(\F_p^n)';\Q\right)\cong  \redhom_{k}(\Pb_{n-1}^\pm(\F_p)\backslash\SBDA(\F_p^{n-1});\Q).\]
\end{corollary}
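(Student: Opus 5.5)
The natural route is to use the long exact sequence of the map $\Psi$ from \autoref{lem:compmap}, together with the contractibility of the target established in \autoref{tits:acyclic}. By definition, $\homology_*(\Psi;\Q)$ is the homology of the mapping cone of $\Psi_*$, so there is a long exact sequence
\[\cdots\to \redhom_{k}\bigl(\mathcal{P}(\Pb_n^\pm(\F_p)\backslash \SBDA(\F_p^n)');\Q\bigr)\to \redhom_k\bigl(\Pb_n^\pm(\F_p)\backslash \T_n^\pm(\F_p);\Q\bigr)\to \homology_k(\Psi;\Q)\to \redhom_{k-1}\bigl(\mathcal{P}(\Pb_n^\pm(\F_p)\backslash \SBDA(\F_p^n)');\Q\bigr)\to\cdots\]
I would first check the degree convention of the mapping cone in \autoref{red spsq} (that $\homology_{k+1}(\Psi)$ sits between $\redhom_{k+1}$ of the target and $\redhom_k$ of the source, in reduced form). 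This is consistent with \autoref{hom(Psi)}, whose index shift $k\mapsto k-1$ is exactly what this convention produces.

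Next, I would identify the homology of the poset $\mathcal{P}(\Pb_n^\pm(\F_p)\backslash \SBDA(\F_p^n)')$ with the rational homology of the symmetric $\Delta$-complex $\Pb_n^\pm(\F_p)\backslash \SBDA(\F_p^n)'$. This is the same identification implicitly used in \autoref{lem:compmap}. It holds because the face poset of the quotient computes the rational homology of its realization, just as orbit complexes do via \autoref{h} and \autoref{chaincoinv}. The reason is that each simplex of $\SBDA(\F_p^n)$ has distinct vertices, so the barycentric subdivision of the quotient is the order complex of the face poset. At the same time, orientation-reversing simplices are killed rationally, which is compatible with the coinvariant description.

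With that in place, since $n\geq 3$, \autoref{tits:acyclic} gives $\redhom_*(\Pb_n^\pm(\F_p)\backslash \T_n^\pm(\F_p);\Q)=0$. The long exact sequence then yields isomorphisms $\homology_{k+1}(\Psi;\Q)\cong \redhom_k(\Pb_n^\pm(\F_p)\backslash \SBDA(\F_p^n)';\Q)$ for all $k$. Combining this with \autoref{hom(Psi)}, which gives $\homology_{k+1}(\Psi;\Q)\cong \redhom_{k}(\Pb_{n-1}^\pm(\F_p)\backslash\SBDA(\F_p^{n-1});\Q)$, proves the corollary.

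The main obstacle is the bookkeeping in the step identifying the poset homology with the symmetric $\Delta$-complex homology, together with the mapping-cone degree shift. The spectral sequence conventions must align so that the two shifts cancel rather than compound. I would settle this by testing low degrees, for instance $k=-1,0$, where the source is connected by \autoref{BA connected} and both sides vanish.
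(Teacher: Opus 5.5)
Your outline is the paper's own proof: the long exact sequence \eqref{les} of the mapping cone of $\Psi$, vanishing of the target by \autoref{tits:acyclic}, then \autoref{hom(Psi)}. The gap is the step you single out yourself, namely identifying the homology of the face poset $\mathcal{P}(\Pb_n^\pm(\F_p)\backslash\SBDA(\F_p^n)')$ with the rational homology of the symmetric $\Delta$-complex.

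Your justification fails because distinctness of vertices does not survive the quotient. An order complex of a face poset recovers a complex only when that complex is regular. For instance, an edge in $\D_2^{1,n}(1,c)$ with $c\neq 0$ has both endpoints in the vertex orbit $[\D_1^{0,n}]$. If also $c^4\neq 1$, it is orientation-preserving by \autoref{rev,a}. In the quotient it is therefore a loop, i.e.\ a rational $1$-cycle, whereas in the face poset it is a single element above $[\D_1^{0,n}]$.

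In fact the identification is false here. Any two vertices of a simplex of $\SBDA(\F_p^n)$ are linearly independent. Hence $v=[\D_1^{0,n}]$ lies below every element of the face poset except $u=[\D_2^{0,n}(1)]$. Every element strictly above $u$ contains an edge through the vertex proportional to $e_1$, so it lies above $[\D_2^{1,n}(1,0)]$, and $\mathcal{P}_{>u}$ has a minimum. The order complex is therefore the cone on $\mathcal{P}\setminus\{u\}$ with apex $v$, glued to the cone $u*\mathcal{P}_{>u}$ along the contractible $\mathcal{P}_{>u}$. It is contractible for every $n\geq 3$ and every $p$, so the poset long exact sequence can only produce $0$.

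But for $n=3$ and $p\notin\{2,3,5,7,13\}$ the symmetric $\Delta$-complex has $\redhom_1\neq 0$, which is exactly what the corollary asserts and what \autoref{thB} needs. Testing $k=-1,0$ cannot detect this, since the discrepancy sits in degree $1$. The paper makes the same identification silently, both in \eqref{les} and when it computes the fibres $\Psi_{\leq V}$ in \autoref{hom(Psi)} as symmetric $\Delta$-complexes. Your argument inherits that gap rather than closing it.

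A route that works avoids posets and compares rational chains directly.
\begin{enumerate}
\item Embed $\F_p^{n-1}=\operatorname{span}\{e_1,\dots,e_{n-1}\}\subset\F_p^n$ and $\Pb_{n-1}^\pm(\F_p)\hookrightarrow\Pb_n^\pm(\F_p)$ via $A\mapsto\operatorname{diag}(A,1)$. This gives an equivariant inclusion $\SBDA(\F_p^{n-1})\to\SBDA(\F_p^n)'$, hence a map of quotient symmetric $\Delta$-complexes.
\item By the orbit classifications \autoref{BD/Pn}, \autoref{BDA/Pn} and \autoref{BDA,p=3}, this map is a bijection from the source orbits onto the target orbits other than $\D_1^{n-2,n}$ and $\DA_1^{n-1,n}$. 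Those two are orientation-reversing, by \autoref{S1} and the argument of \autoref{DA1}.
\item The orientation criteria \autoref{rev,a}, \autoref{rev',a} and \autoref{p=3} depend only on the coefficient vector. Hence orientation-preserving orbits correspond.
\item By \autoref{coinv}, the induced map of rational chain complexes is an isomorphism.
\end{enumerate}
This proves the corollary for all $n\geq 3$ and all primes $p$. Combined with \autoref{n=2,BA}, it also confirms the nonvanishing of $\redhom_1$ used above.
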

\begin{proof}
We have by \autoref{tits:acyclic} that for all $n\geq 3$, $\oc\left(\Pb_n^\pm(\F_p)\backslash \T_n^\pm(\F_p)\right)$ is contractible. It implies by \eqref{les}, that for all $n\geq 3$\[\redhom_{k}\left(\Pb_n^\pm(\F_p)\backslash \SBDA(\F_p^n)';\Q\right)\cong \homology_{k+1}(\Psi;\Q).\]
The result thus follows by \autoref{hom(Psi)}.
\end{proof}
We now proceed to prove \autoref{thC}. 

\begin{proof}[Proof of \autoref{thC}]
    By \autoref{lem:compmap} we have that the map in \eqref{map} equals to the composition\begin{equation}\label{spsq}\homology_{n-1}(\rel'_n(p);\Q) \overset{\partial}\longrightarrow \redhom_{n-2}(\Pb_n^\pm(\F_p)\backslash \SBDA(\F_p^n)';\Q)\overset{\Psi_*}\longrightarrow \redhom_{n-2}(\Pb_n^\pm(\F_p)\backslash T_n^{\pm}(\F_p);\Q),\end{equation} where $\partial$ is the boundary map from the long exact sequence of the pair \[\rel'_n(p)=\left(\Pb_n^\pm(\F_p)\backslash \SBDA(\F_p^n),\Pb_n^\pm(\F_p)\backslash \SBDA(\F_p^n)'\right),\] and $\Psi$ is the poset map defined in that lemma. 
    Moreover, \autoref{mapcomp} states that $\partial$ is always surjective, and injective if $p\in\{2,3,5,7,13\}$ or $p\leq 6n-8$. 
    
    Since the order complex $\oc\left(\Pb_n^\pm(\F_p) \backslash \T_n^\pm(\F_p)\right)$ is contractible for $n \geq 3$ and all primes $p$ (by \autoref{tits:acyclic}), it follows that $\Psi_*$ is surjective.
    Thus, it remains to show that $\Psi_*$ is injective when $p\in\{2,3,5,7,13\}$ or $p\leq 6n-14$. 
  
  Consider the following the long exact sequence
 \begin{equation}\label{les}\dots \longrightarrow \homology_{n-1}(\Psi;\Q)\longrightarrow\redhom_{n-2}(\mathcal{P}(\Pb_n^\pm(\F_p)\backslash \SBDA(\F_p^n)');\Q)\overset{\Psi_*}{\longrightarrow} \redhom_{n-2}(\Pb_n^\pm(\F_p)\backslash \T_n^\pm(\F_p);\Q)\longrightarrow\dots\end{equation} It implies by \autoref{hom(Psi)} that $\Psi_*$ is injective if \[ \redhom_{n-2}(\Pb_{n-1}^\pm(\F_p)\backslash\SBDA(\F_p^{n-1});\Q)\cong 0.\]
By \autoref{hom:BA}, this term vanishes if \[p\in\{2,3,5,7,13\}\] or \[p\leq 6(n-1)-8=6n-14.\] Thus, $\Psi_*$ is injective if $p\in\{2,3,5,7,13\}$ or $p\leq 6n-14$. 
This completes the proof.\end{proof}

\subsection{The proof of \autoref{thB}}
We recall that \autoref{thB} states that $\left(\St_n(\Q)\otimes\Q \right)_{\Gamma^+_{0,n}(p)}$ does not vanish for $n = 2$ for every prime $p$, and for $n=3$ for all primes $p \notin \{2,3,5,7,13\}$.

\begin{proof}
We begin by recalling the isomorphism from \eqref{rel:SL,GL}:
 \[\left(\St_n(\Q)\otimes\Q\right)_{\Gamma_{0,n}^+(p)}\cong \left(\St_n(\Q)\otimes\Q\right)_{\Gamma_{0,n}^\pm(p)}\oplus\left(\St_n(\Q)\otimes\Q^{\det}\right)_{\Gamma_{0,n}^\pm(p)}.\] 
 To prove non-vanishing of the left-hand side, it suffices to show that at least one of the summands on the right does not vanish.
We already showed in \autoref{dim: St2} that
\[\left(\St_2(\Q)\otimes\Q\right)_{\Gamma_{0}^\pm(p)}\ncong 0\]for all primes $p$. So the case $n=2$ is settled.

Now let $n = 3$. By \autoref{n=2,BA}, we have
\[\redhom_{1}(\Pb_{2}^\pm(\F_p)\backslash\SBDA(\F_p^{2});\Q)\ncong 0\quad\text{for $p\not\in\{2,3,5,7,13\}$}.\]
By \autoref{iso, BA,BA'}, we have an isomorphism \[\redhom_{1}\left(\Pb_3^\pm(\F_p)\backslash \SBDA(\F_p^3)';\Q\right)\cong  \redhom_{1}(\Pb_{2}^\pm(\F_p)\backslash\SBDA(\F_p^{2});\Q).\]It follows that $\redhom_{1}\left(\Pb_3^\pm(\F_p)\backslash \SBDA(\F_p^3)';\Q\right)$ also does not vanish for $p\not\in\{2,3,5,7,13\}$.

Moreover, by \autoref{mapcomp}, the map \[\homology_{2}(\rel'_3(p);\Q) \overset{\partial}\longrightarrow \redhom_{1}(\Pb_3^\pm(\F_p)\backslash \SBDA(\F_p^3)';\Q)\] is always surjective. Since $\left(\St_3(\Q)\otimes\Q\right)_{\Gamma^\pm_{0,3}(p)}\cong\homology_{2}(\rel'_3(p);\Q)$ by \autoref{isoSt2}, we conclude that
\[\left(\St_3(\Q)\otimes\Q \right)_{\Gamma^\pm_{0,3}(p)}\ncong 0\quad\text{for $p\not\in\{2,3,5,7,13\}$}.\qedhere\]\end{proof}

\printbibliography
\end{document}